\title{Polypositroids}
\author{Thomas Lam \and Alexander Postnikov}
\address{Department of Mathematics, University of Michigan, Ann Arbor, MI 48109}
\address{Department of Mathematics, M.I.T., Cambridge, MA 02139}
\date{October 9, 2020}
\theoremstyle{plain}
\newtheorem{theorem}{Theorem}[section]
\newtheorem{proposition}[theorem]{Proposition}
\newtheorem{corollary}[theorem]{Corollary}
\newtheorem{conjecture}[theorem]{Conjecture}
\newtheorem{lemma}[theorem]{Lemma}
\newtheorem{question}[theorem]{Question}
\theoremstyle{definition}
\newtheorem{definition}[theorem]{Definition}
\newtheorem{example}[theorem]{Example}
\theoremstyle{remark}
\newtheorem{remark}[theorem]{Remark}
\def\R{\mathbb{R}}
\def\N{\mathbb{N}}
\def\Z{\mathbb{Z}}
\def\v{\mathbf{v}}
\def\u{\mathbf{u}}
\def\T{{\mathcal{T}}}
\def\E{{\mathcal{E}}}
\def\Vert{\mathrm{Vert}}
\def\emb{\mathit{f}}
\def\Strand{\mathrm{Strand}}
\def\Area{\mathrm{Area}} 
\def\Perim{\mathrm{Perim}} 
\def\val{\mathrm{val}}
\def\cloud{\mathrm{cloud}}
\def\Octa{\mathrm{Octa}}
\def\Memb{\mathrm{Memb}}
\def\lift{\mathrm{lift}}
\def\proj{\mathrm{proj}}
\def\d{\mathbf{d}}
\def\up{\mathrm{up}}
\def\des{\mathrm{des}}
\def\conv{\mathrm{conv}}
\def\env{\mathrm{env}}
\def\id{\mathrm{id}}
\def\sort{\mathrm{sort}}
\def\Gr{\mathrm{Gr}}
\def\GL{\mathrm{GL}}
\def\Csub{{\mathcal{C}}_{\rm sub}}
\def\Calc{{\mathcal{C}}_{\rm alc}}
\def\Cpoly{{\mathcal{C}}_{\rm pol}}
\def\CsubW{{\mathcal{C}}_{{\rm sub}}^W}
\def\CpolyW{{\mathcal{C}}_{{\rm pre}}^{W,c}}
\def\cyc{\mathcal{I}}
\def\B{{\mathcal B}}
\def\F{{\mathcal F}}
\def\Int{{\rm Int}}
\def\sp{{\rm sp}}
\def\v{{\mathbf{v}}}
\def\C{\mathbb{C}}
\def\<{\left<}
\def\>{\right>}
\def\sp{{\rm span}}
\def\id{{\rm id}}
\def\tR{{\tilde R}}
\def\A{{\mathcal{A}}}
\def\tbeta{{\tilde \beta}}
\def\tgamma{{\tilde \gamma}}
\def\tdelta{{\tilde \delta}}
\def\M{{\mathcal{M}}}
\def\Ncal{{\mathcal{N}}}
\def\sbinom{{\mbox{$\binom{[n]}{k}$}}}
\def\I{{\mathcal{I}}}
\def\SS{{\mathcal{S}}}
\def\col{{\rm col}}
\begin{document}

\begin{abstract}
We initiate the study of a class of polytopes, which we coin {\it polypositroids}, defined to be those polytopes that are simultaneously generalized permutohedra (or polymatroids) and alcoved polytopes.  Whereas positroids are the matroids arising from the totally nonnegative Grassmannian, polypositroids are ``positive" polymatroids.  We parametrize polypositroids using Coxeter necklaces and 
balanced graphs, and describe the cone of polypositroids by extremal rays and facet inequalities.  We introduce a notion of {\it $(W,c)$-polypositroid} for a finite Weyl group $W$ and a choice of Coxeter element $c$.  We connect the theory of $(W,c)$-polypositroids to cluster algebras of finite type and to generalized associahedra.
We discuss {\it membranes,} 
which are certain triangulated 2-dimensional surfaces 
inside polypositroids.  Membranes 
extend the notion of plabic graphs from positroids to polypositroids.
\end{abstract}

\maketitle
\setcounter{tocdepth}{1}
\tableofcontents

\section{Introduction}

\subsection{Polypositroids}
The aim of this work is to study a new class of polytopes, which we
call {\it polypositroids}.  These are polytopes which are
simultaneously generalized permutohedra (or polymatroids) 
and alcoved polytopes.

A permutohedron is the convex hull of the orbit of a point in $\R^n$
under the action of the symmetric group $S_n$.  A
{\it generalized permutohedron} is obtained from a permutohedron by
parallel translation of some of the facets.  The class of
generalized permutohedra include many classical polytopes: the usual
permutohedron, the associahedron, hypersimplices, matroid polytopes,
and many others.  Generalized permutohedra are polytopal analogues of matroids, and are essentially
equivalent to the {\it polymatroids} of Edmonds and the closely related {\it submodular functions} \cite{Edm,P,CL}.

An {\it alcoved polytope} is a polytope whose facets are normal to
roots in the type $A_{n-1}$ root system.  In \cite{LP}, we studied
the class of integer alcoved polytopes, which are (convex) unions of alcoves in the
affine Coxeter arrangement of type $A$.  In the present paper, we work with alcoved polytopes that
may not be integral.

Our initial motivation for studying polypositroids as a subclass of polymatroids comes from the theory of
total positivity.  Lusztig \cite{Lus} and Postnikov \cite{TP} have defined the totally nonnegative Grassmannian
$\Gr(k,n)_{\geq 0}$, a subspace of the real Grassmannian $\Gr(k,n)$ of $k$-planes in $\R^n$.
Any point $X \in \Gr(k,n)$ gives rise to a (realizable) matroid $M_X$.  When $X
\in \Gr(k,n)_{\geq 0}$ is totally nonnegative, the matroid $M_X$ is called a
{\it positroid}, short for ``positive matroid".  Positroids were classified in
\cite{TP} and \cite{Oh}, and the geometry of {\it
positroid varieties\/} in the Grassmannian was studied by Knutson-Lam-Speyer
\cite{KLS}.

To a matroid $M$ on the set $\{1,2,\ldots,n\}$, one has an associated matroid polytope $P_M \subset \R^n$.
Our investigations began with the observation that a matroid $M$ is a
positroid if and only if $P_M$ is an alcoved polytope (Theorem \ref{thm:positroid}).  Polypositroids are thus ``positive polymatroids".  In the present work, we study polypositroids from a discrete geometer's perspective, leaving aside potential connections to Grassmannians, and so on.
  
\medskip
Whereas matroids are notoriously difficult to parametrize, the subclass of positroids were parametrized in \cite{TP,Oh}.  In Theorem~\ref{thm:main}, we give a parametrization of polypositroids, showing that they are in bijection with {\it Coxeter necklaces} and with {\it balanced graphs}.  Coxeter necklaces and balanced graphs are generalizations of the {\it Grassmann necklaces} and {\it decorated permutations} of \cite{TP}.

The set of generalized permutohedra attains a cone structure under Minkowski sum, and the corresponding cone is the {\it cone of submodular functions} $\Csub$ \cite{Edm}.  Alcoved polytopes and polypositroids also form cones $\Calc$ and $\Cpoly$.  There is a (projection) map of cones $\Csub \to \Calc$ and we show in Theorem~\ref{thm:subpoly} that the image of this map is $\Cpoly$.  In Corollary~\ref{cor:rays} and Theorem~\ref{thm:conenoncross}, we describe the extremal rays and give defining inequalities for the cone $\Cpoly$.  For example, the extremal rays of $\Cpoly$ are indexed by directed cycles on $\{1,2,\ldots,n\}$. 


\medskip
The normal fan of the permutohedron is the {\it braid fan} associated to the arrangement of hyperplanes $\{x_i-x_j =0\}$.  Generalized permutohedra are exactly the polytopes with normal fan a coarsening of the braid fan.  In Section~\ref{sec:normal}, we study the possible normal fans $\Ncal(P)$ for a generic simple polypositroid $P$.  In contrast to generalized permutohedra, there is more than one such normal fan, and each such $\Ncal(P)$ is a coarsening of the braid fan.

We show in Lemma~\ref{lem:Talternating} that each maximal cone $C_T$ of $\Ncal(P)$ is labeled by an {\it alternating noncrossing} tree.  The maximal cones $C_w$, $w \in S_n$ satisfying $C_w \subset C_T$ are described in Proposition~\ref{prop:braidcoarse} in terms of a dual {\it noncrossing circular-alternating tree}.  In Theorem~\ref{thm:ensemble}, we show that the normal fan $\Ncal(P)$ is characterized by a {\it matching ensemble}, a collection of perfect matchings, one for each bipartite subgraph of the complete graph, satisfying certain axioms.  Our matching ensembles are a variant of the matching ensembles of Oh and Yoo \cite{OY}, and the matching fields of Sturmfels and Zelevinsky \cite{SZ}.  In Theorem~\ref{thm:fvector} we show that all generic simple polypositroids have the same $f$-vector, identical to that of the {\it cyclohedron}.

\subsection{Coxeter polypositroids}
In the second part of this work, we generalize the theory of polypositroids to the root-system theoretic setting.  Let $V$ be a real vector space, and $R \subset V$ be a crystallographic root system with Weyl group $W$.  A {\it generalized $W$-permutohedron} is a polytope $P \subset V$ whose edges are in the directions of $R$. 

Now fix the choice of a Coxeter element $c$.  Define the {\it twisted root system} $\tR:= (I-c)^{-1} R$.  
We define a \emph{$(W,c)$-twisted alcoved polytope} to be a polytope $P \subset V$ whose facet normals belong to the twisted roots $\tR$.  These polytopes are $c$-twisted variants of the alcoved polytopes we studied in \cite{LP2}.  A \emph{$(W,c)$-polypositroid} is a polytope that is simultaneously a generalized $W$-permutohedron and a $(W,c)$-twisted alcoved polytope.  When $R$ is of type 
$A_{n-1}$ and $c$ is the long cycle $(12 \cdots n)$, this definition reduces to the earlier one.  

In general, the class of $(W,c)$-polypositroids cannot be parametrized in the same manner that we did for polypositroids.  We introduce a larger class of {\it $(W,c)$-prepolypositroids} as a compromise.  The set of $(W,c)$-prepolypositroids is defined as a cone, by giving the facet inequalities \eqref{eq:conefacets} satisfied by the support function.
We define $(W,R^+,c)$-Coxeter necklaces and $(W,c)$-balanced arrays and show in Theorem~\ref{thm:threecones} that these objects are in bijection with $(W,c)$-prepolypositroids.  We also show that there is a projection map from the cone $\CsubW$ of $W$-submodular functions (see \cite{ACEP}) to the cone $\CpolyW$ of $(W,c)$-prepolypositroids and we conjecture (Conjecture~\ref{conj:genpermalc}) that this map is surjective.

In Section~\ref{sec:cluster}, we connect the theory of $(W,c)$-prepolypositroids to cluster algebras.  Let $\A(W,R^+,c)$ be the cluster algebra of finite type associated to the Coxeter element $c$, as studied by Yang and Zelevinsky \cite{YZ}.  We show using the work of Padrol, Palu, Pilaud, and Plamondon \cite{PPPP}, in Theorem~\ref{thm:moreinequalities} that each exchange relation of $\A(W,R^+,c)$ gives rise to an inequality satisfied by the cone $\CpolyW$ of $(W,c)$-prepolypositroids.  We show that $(W,c)$-prepolypositroids are closely related to generalized associahedra, one of our main examples of $(W,c)$-polypositroids.

In Section~\ref{sec:Wnormal}, with the aim of studying the normal fans of $(W,c)$-prepolypositroids, we develop notions of \emph{alternating} and \emph{$c$-noncrossing} for root systems, and a notion of \emph{$c$-noncrossing tree}.  These notions are related to the theory of finite type cluster algebras, and to the theory of reflection factorizations of Coxeter elements.

\subsection{Membranes}
The third part of the paper is devoted to {\it membranes.} 
In Section~\ref{sec:root_membranes},
we define $R$-membranes, for a root system $R\subset V$, 
which are essentially triangulated 2-dimensional surfaces in $V$
homeomorphic to wedges of disks such that every edge of every
triangle in them is a parallel translation of a root from $R$.
An $R$-membrane is {\it minimal\/} if it has minimal possible surface area
among all membranes with the same {\it boundary loop.}
We view the problem of describing minimal membranes as a discrete
version of {\it Plateau's problem\/} 
from geometric measure theory
concerning the existence of a minimal surface with a given boundary.

In the rest of the paper we discuss membranes of type $A$.
In Section~\ref{sec:Mem_A} we show that membranes of type $A$
are closely related to Postnikov's {\it plabic graphs}, introduced in the study 
of the totally nonnegative Grassmannian $\Gr(k,n)_{\geq 0}$ \cite{TP}.  
For each positroid, there is a class of {\it reduced\/} 
plabic graphs connected with each other by 
local moves. Each reduced plabic graph gives a parametrization
of the associated positroid cell in $\Gr(k,n)_{\geq 0}$.

Membranes (of type $A$) are in bijection with plabic graphs 
with faces labelled by integer vectors.
In Section~\ref{sec:moves_membranes} we show that 
local moves of plabic graphs correspond to octahedron
and tetrahedron moves of membranes.
In Section~\ref{sec:minimal_membranes_reduced} we show
that minimal membranes correspond to reduced plabic graphs.

In Sections~\ref{sec:positr_membr}, \ref{sec:polypositroid_membr},
and~\ref{sec:postr_lifts} we discuss special classes of membranes
associated with positroids and polypositroids.
In Section~\ref{sec:semisimple_membranes} we define {\it semisimple\/}
membranes as membranes that project bijectively onto the Coxeter plane.
We show that notions of semisimple membranes and
minimal membranes are equivalent to each other 
(for a particular class of boundary loops).

The structures that we study in this paper are related to the theory of 
{\it cluster algebras\/} in several different ways.  
While in Section~\ref{sec:cluster} we connect $(W,c)$-prepolypositroids 
to cluster algebras of finite type, in
Section~\ref{sec:higher_octahedron} we connect membranes (of type $A$)
to a certain class of cluster algebras, which in general are not 
of finite type.
We discuss the {\it higher octahedron
recurrence\/} as a certain rational recurrence relation on variables
$x_\lambda$ labelled by integer vectors $\lambda$.  This relation naturally
extends the octahedron recurrence on $\Z^3$ \cite{Spe} to a higher dimensional integer
lattice.
With each polypositroid $P$, we associate a cluster algebra $\A_P$ generated 
by some finite subset of variables $x_\lambda$.
Minimal membranes for the polypositroid $P$ correspond to a class of 
clusters of this cluster algebra, and local moves of membranes
correspond to cluster {\it mutations}.  Remarkably, the class of cluster algebras $\A_P$ is the same as the subclass associated to positroids, which Galashin and Lam have shown \cite{GL} to be isomorphic to the coordinate rings
of open positroid varieties \cite{KLS}. 

Finally, in Section~\ref{sec:asympt_membr} we 
propose an area of study, 
which we dub the ``Asymptotic Cluster Algebra''.
We pose a problem related to asymptotics
of membranes under dilations.  The setup of membranes extend several 
models from statistical physics.

\medskip

\subsection*{Acknowledgements}
T.L.\ was supported by NSF DMS-0600677, DMS-0652641, DMS-0901111, DMS-0968696, DMS-1160726, DMS-1464693, and DMS-1953852.
A.P.\ was supported by NSF DMS-0546209, DMS-1100147, DMS-1362336,
DMS-1500219, and DMS-1764370.

\part{Polypositroids}\label{part:1}

Throughout the paper, we use the following notation.
Let $[n]:=\{1,2,\ldots,n\}$. 
Let $\binom{[n]}{k}$ denote the set of $k$-element subsets of $[n]$.
Also let $e_1,\dots,e_n$ denote the standard basis of~$\R^n$.

\section{Positroid polytopes}
\label{sec:positroids}
The material of this section serves as motivation, and is largely independent of the rest of the work.

Let $\Gr(k,n)$ denote the Grassmannian of $k$-planes in $\R^n$.  We may represent a point $X \in \Gr(k,n)$ as a $k \times n$ matrix.  For a $k$-element subset $I\in \binom{[n]}{k}$, the {\it Pl\"ucker coordinate} $\Delta_I(X)$ is defined to be the $k \times k$ minor indexed by the columns $I$.  The {\it matroid} 
$\M_X$ is given by $\M_X:= \{I \in \text{$\binom{[n]}{k}$} \mid \Delta_I(X) \neq 0\}$.

The {\it totally nonnegative Grassmannian} $\Gr(k,n)_{\geq 0}$ is the subspace of $\Gr(k,n)$ represented by matrices all of whose Pl\"ucker coordinates are nonnegative.  The matroid of a totally nonnegative point $X \in \Gr(k,n)_{\geq 0}$ is called a {\it positroid}.

The matroid polytope $P_\M$ of a matroid $\M$ is the convex hull of the vectors $e_I$, $I \in \M$, where $e_I:=e_{i_1} + \cdots + e_{i_k} \in \R^n$ if $I = \{i_1,\ldots,i_k\}$.  By \cite{GGMS}, matroid polytopes are exactly those polytopes whose vertices belong to $\{e_I \mid I \in \binom{[n]}{k}\}$ and whose edges are parallel to vectors of the form $e_i - e_j$.  We have the following characterization of the matroid polytopes of positroids.

\begin{theorem}\label{thm:positroid}
A matroid $\M$ is a positroid if and only if the matroid polytope $P_\M$ is an alcoved polytope, that is, it is given by inequalities of the form $c_{ij} \leq x_i + x_{i+1} + \cdots + x_j \leq b_{ij}$, for $1 \leq i < j \leq n$.
\end{theorem}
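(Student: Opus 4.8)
The plan is to bridge the polytopal condition with the combinatorial one using Grassmann necklaces and Schubert matroids, following \cite{TP, Oh, KLS}. For $a \in [n]$, write $<_a$ for the cyclic order $a < a+1 < \cdots < n < 1 < \cdots < a-1$, and let $I_a \in \binom{[n]}{k}$ be the lexicographically minimal (equivalently, greedy) basis of $\M$ with respect to $<_a$; then $(I_1,\dots,I_n)$ is the Grassmann necklace of $\M$ in the sense of \cite{TP}. Let $\mathcal{SM}_a$ be the Schubert matroid with bases $\{J : J \ge I_a \text{ in the } <_a\text{-Gale order}\}$; it has rank $k$, and its matroid polytope is cut out inside the hypersimplex $\Delta_{k,n}$ by the ``prefix'' inequalities $\sum_{i \in [a,j]} x_i \le |I_a \cap [a,j]|$, where $[a,j] = \{a, a+1, \dots, j\}$ runs over the cyclic intervals beginning at $a$. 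The greedy property of $I_a$ gives $|I_a \cap [a,j]| = \mathrm{rk}_\M([a,j])$, and since the restriction of any basis of $\M$ to $[a,j]$ is independent we get $\M \subseteq \mathcal{SM}_a$. Because every cyclic interval $C \subseteq [n]$ has the form $[a,j]$ for a unique starting index $a$, intersecting the descriptions of the $P_{\mathcal{SM}_a}$ over all $a$ yields
\[
\bigcap_{a=1}^n P_{\mathcal{SM}_a} \;=\; \widetilde P_\M \;:=\; \Bigl\{ x \in \Delta_{k,n} \;:\; \textstyle\sum_{i \in C} x_i \le \mathrm{rk}_\M(C) \text{ for every cyclic interval } C \Bigr\},
\]
and $\M \subseteq \mathcal{SM}_a$ for all $a$ gives $P_\M \subseteq \widetilde P_\M$.

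The next point is that, for a matroid polytope, being alcoved in the sense of the statement is equivalent to the equality $P_\M = \widetilde P_\M$. On the affine hyperplane $\{\sum_i x_i = k\}$ the sum $x_i + x_{i+1} + \cdots + x_j$ over a cyclic interval equals $k$ minus the sum over the complementary cyclic interval, so up to this relation the inequalities $c_{ij} \le x_i + \cdots + x_j \le b_{ij}$ are exactly a system of \emph{upper} bounds on cyclic-interval sums; and since $P_\M$ is a matroid base polytope, $\max_{x \in P_\M} \sum_{i \in C} x_i = \mathrm{rk}_\M(C)$. Hence any description of $P_\M$ by such inequalities must have $b_{ij}, c_{ij}$ at least as slack as the bounds defining $\widetilde P_\M$, so the chain $P_\M \subseteq \widetilde P_\M \subseteq P_\M$ collapses to $P_\M = \widetilde P_\M$; conversely, $P_\M = \widetilde P_\M$ is visibly a description of the required shape.

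It remains to combine these observations with two external inputs. If $\M$ is a positroid, then by \cite{Oh} (see also \cite{KLS}) its matroid polytope is the cyclic rank relaxation, $P_\M = \bigcap_a P_{\mathcal{SM}_a} = \widetilde P_\M$, so $P_\M$ is alcoved. Conversely, if $P_\M$ is alcoved then $P_\M = \widetilde P_\M = \bigcap_a P_{\mathcal{SM}_a}$ by the first paragraph; since $P_\M$ is a matroid polytope its vertices are the $0/1$ vectors $e_J$ with $J$ a basis of $\M$, and these are then precisely the common bases of the $\mathcal{SM}_a$, so (using that a matroid is recovered from its polytope \cite{GGMS}) we get $\M = \bigcap_a \mathcal{SM}_a$ as matroids. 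By \cite{Oh}, the intersection of the cyclic Schubert matroids attached to a Grassmann necklace is precisely the positroid with that necklace, so $\M$ is a positroid.

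I expect the crux to be the bridge between the polytopal condition (alcoved) and the combinatorial one (positroid): concretely, the input from \cite{Oh, KLS} that for a positroid the cyclic rank relaxation $\widetilde P_\M$ is integral and equal to $P_\M$, and, in the reverse direction, that when this relaxation happens to be tight the common bases of the cyclic Schubert matroids automatically form a (positroid) matroid. Some care is also needed in the second paragraph: one must check that only \emph{cyclic} intervals are required (not all sub-intervals), using $\sum_i x_i = k$ to trade an interval for its complement, and that the lower-bound inequalities are subsumed by the upper-bound inequalities on complementary intervals — so that a possibly asymmetric ``$c_{ij} \le \cdots \le b_{ij}$'' description is genuinely equivalent to the symmetric cyclic-rank description $\widetilde P_\M$.
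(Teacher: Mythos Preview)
Your proof is correct and follows essentially the same route as the paper's: both identify the alcoved envelope of $P_\M$ with the intersection $\bigcap_a P_{\mathcal{SM}_a}$ of cyclically rotated Schubert matroid polytopes, observe that this is the matroid polytope of the positroid envelope $\M_\I=\bigcap_a \mathcal{SM}_a$ via Oh's theorem, and conclude that $P_\M$ is alcoved if and only if $P_\M$ equals its alcoved envelope if and only if $\M$ equals its positroid envelope. The paper packages the argument a bit more tersely via the alcoved/positroid envelope language, whereas you unpack the two directions and the reduction of the two-sided bounds $c_{ij}\le\cdots\le b_{ij}$ to one-sided cyclic-interval bounds explicitly, but the content is the same.
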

We discovered Theorem~\ref{thm:positroid} over a decade ago, and it has since found a number of applications, for example to {\it positively oriented matroids} \cite{ARW17}.  
See \cite{ALS, Ear, LPW} for other appearances of positroid polytopes.

To prove Theorem~\ref{thm:positroid}, we use a classification of positroids due to Oh \cite{Oh}; see also \cite[Section 8.2]{Lam}.
 The {\it Bruhat partial order} on $\binom{[n]}{k}$ is defined as follows.  For two subsets $I, J \in \binom{[n]}{k}$, we write $I \leq J$ if $I = \{i_1 < i_2 <\cdots < i_k\}$, $J = \{j_1 < j_2 < \cdots < j_k\}$ and we have $i_r \leq j_r$ for $r = 1,2,\ldots,k$.  
For $I \in \binom{[n]}{k}$, the {\it Schubert matroid} $\SS_I$ is defined as 
$$
\SS_I:= \{J \in \sbinom \mid I \leq J\}
$$
and has minimal element $I$ in the Bruhat order. 
For $a \in [n]$, let $<_a$ denote the 
cyclically rotated order on $[n]$ with minimum $a$,
i.e., $a<_a (a+1) <_a\cdots <_a n <_a 1 <_a\cdots <_a (a-1)$,
 which induces a partial order $\leq_a$ on $\binom{[n]}{k}$.  Let $\SS_{I,a} := \{J \in \binom{[n]}{k} \mid I \leq_a J\}$ denote the cyclically rotated Schubert matroid.
Equivalently, $\SS_{I,a} := c^{a-1}(\SS_{c^{-a+1}(I)})$,
where $c$ is the long cycle $(1,2,\dots,n)$ in the symmetric group $S_n$ 
naturally acting on $[n]$ and ${[n]\choose k}$.

A {\it $(k,n)$-Grassmann necklace} $\I = (I_1,I_2,\ldots,I_n)$ is a $n$-tuple of $k$-element subsets of $[n]$ satisfying the following condition: for each $a \in [n]$, we have
\begin{enumerate}
\item $I_{a+1} = I_a$ if $a \notin I_a$, 
\item $I_{a+1} = (I_a \setminus \{a\}) \cup \{a'\}$ if $a \in I_a$,
\end{enumerate}
with indices taken modulo $n$.  

\begin{theorem}[\cite{Oh, TP}] \label{thm:Oh}
Let $\I= (I_1,I_2,\ldots,I_n)$ be a $(k,n)$-Grassmann necklace.  Then the intersection of cyclically rotated Schubert matroids
\begin{equation}\label{eq:MI}
\M_\I = \SS_{I_1,1} \cap \SS_{I_2,2} \cap \cdots \cap \SS_{I_n,n}
\end{equation}
is a positroid, and the map $\I \mapsto \M_\I$ gives a bijection between $(k,n)$-Grassmann necklaces and  positroids of rank $k$ on $[n]$.
\end{theorem}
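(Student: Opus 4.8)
The plan is to split the statement into a piece of soft matroid bookkeeping and one genuinely non-trivial fact — that $\M_\I$ is a positroid for every Grassmann necklace $\I$, which is the essential content of Oh's theorem \cite{Oh}. The bookkeeping reduces the full statement to that fact together with Postnikov's result from \cite{TP} that $\M\mapsto\I(\M)$ is a bijection from rank-$k$ positroids on $[n]$ to $(k,n)$-Grassmann necklaces, where $I_a(\M)$ denotes the $\leq_a$-minimal basis of $\M$.

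First the soft part. For any matroid $\M$ of rank $k$ and any $a\in[n]$, the $\leq_a$-minimal basis $I_a(\M)$ exists and is unique, being produced by the greedy algorithm run through $[n]$ in the order $<_a$; a standard exchange argument (using that $a\notin I_a(\M)$ forces $a$ to be a loop of $\M$, in which case $<_a$ and $<_{a+1}$ restrict identically to $[n]\setminus\{a\}$, and the strong basis-exchange axiom when $a\in I_a(\M)$) shows $\I(\M):=(I_1(\M),\dots,I_n(\M))$ is a Grassmann necklace — this is the part of \cite{TP} one may simply cite. Next, iterating the local rules (1)--(2) shows that any Grassmann necklace $\I$ satisfies $I_b\leq_b I_a$ for all $a,b$; hence each $I_a$ lies in every $\SS_{I_b,b}$ and therefore in $\M_\I:=\bigcap_b\SS_{I_b,b}$, so $\M_\I\neq\emptyset$, and since $I_b$ is by construction the $\leq_b$-minimum of $\SS_{I_b,b}$ it is \emph{a fortiori} the $\leq_b$-minimum of the smaller set $\M_\I$, which gives the identity $\I(\M_\I)=\I$. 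Granting the key fact below, these observations complete the proof: the map $\I\mapsto\M_\I$ takes necklaces to rank-$k$ positroids, is injective by the identity $\I(\M_\I)=\I$, and is a two-sided inverse to $\M\mapsto\I(\M)$ — for a positroid $\M$, the matroid $\M_{\I(\M)}$ is again a positroid with the same Grassmann necklace $\I(\M)$ (by the identity applied to $\I=\I(\M)$), so $\M=\M_{\I(\M)}$ by the injectivity of $\M\mapsto\I(\M)$ from \cite{TP}. Being the inverse of a bijection, $\I\mapsto\M_\I$ is itself a bijection onto all rank-$k$ positroids.

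It remains to prove the key fact that $\M_\I$ is a positroid. I would first note that each cyclically rotated Schubert matroid $\SS_{I,a}$ is itself a positroid: the ordinary Schubert matroid $\SS_I$ is realized by a totally nonnegative echelon (``staircase'') matrix, and the cyclic shift, which acts on $\Gr(k,n)_{\geq 0}$ by $[v_1,\dots,v_n]\mapsto[v_2,\dots,v_n,(-1)^{k-1}v_1]$, carries $\SS_I$ to $\SS_{I,a}$. One must then show the set-theoretic intersection $\bigcap_a\SS_{I_a,a}$ of these positroids is again a positroid. For this I would use a totally nonnegative representative: by \cite{TP} there is $X\in\Gr(k,n)_{\geq 0}$ whose positroid has Grassmann necklace $\I$, and it suffices to prove $\M_X=\M_\I$. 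The inclusion $\M_X\subseteq\M_\I$ is soft (any basis of any matroid is $\leq_a$-above that matroid's $\leq_a$-minimal basis, by Gale's theorem). For the reverse inclusion $\M_\I\subseteq\M_X$, suppose $J\in\M_\I$ with $\Delta_J(X)=0$; chasing three-term Pl\"ucker relations — in each of which all terms are nonnegative, so that a vanishing product forces both of its summands to vanish with no cancellation — one propagates the vanishing along a chain of subsets staying inside $\M_\I$ up to some $\Delta_{I_a}(X)$, contradicting $I_a\in\M_X$.

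I expect this final inclusion $\M_\I\subseteq\M_X$ to be the main obstacle: it is the only step where total nonnegativity is used essentially, and it resists a purely combinatorial argument, because for an arbitrary $n$-tuple of subsets carrying the Schubert structure the intersection $\bigcap_a\SS_{I_a,a}$ need not even be a matroid — the Grassmann-necklace compatibility (1)--(2) must be exploited. A combinatorial alternative to the Pl\"ucker argument would be to verify the identity $\M_\I=\bigcap_a\SS_{I_a,a}$ directly on Postnikov's Le-diagram or decorated-permutation model of the positroid with necklace $\I$, where both sides admit transparent descriptions; yet another route is to run the matroid basis-exchange axiom for $\bigcap_a\SS_{I_a,a}$ using the explicit greedy membership test for rotated Schubert matroids and then invoke the criterion of \cite{GGMS} to recognize the result as the base set of a matroid.
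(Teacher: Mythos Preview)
The paper does not prove this theorem at all: it is stated as a cited result from \cite{Oh,TP} and is immediately used as a black box in the proof of Theorem~\ref{thm:positroid}. There is therefore no ``paper's own proof'' to compare your proposal against.

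That said, your outline is a faithful account of how the result is established in the original references. The decomposition into (i) the soft bookkeeping showing $\I(\M_\I)=\I$ and reducing the bijection to the single assertion that $\M_\I$ is a positroid, together with (ii) the hard step of proving that assertion, is exactly the logical structure of Oh's argument. Your identification of the inclusion $\M_\I\subseteq\M_X$ as the crux is correct, and both of the routes you suggest --- the sign-coherent three-term Pl\"ucker chase, and the combinatorial verification via Le-diagrams/decorated permutations --- appear in the literature (the former is closer to \cite{Oh}, the latter to the treatment in \cite[Section~8]{Lam}). One small comment: in your bookkeeping you should note that the claim ``$\M_\I$ is a matroid'' is not automatic from the intersection description and is part of what must be proved; you acknowledge this in your final paragraph, but it is worth flagging earlier that the statement ``$\M_\I$ is a positroid'' subsumes ``$\M_\I$ is a matroid''.
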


\begin{proof}[Proof of Theorem~\ref{thm:positroid}]
Let $\Delta(k,n)$ denote the hypersimplex, the convex hull of all points $e_I$, for $I \in \binom{[n]}{k}$.
The matroid polytope $P_{\SS_{I,a}}$ is the intersection of the hypersimplex $\Delta(k,n)$ with the inequalities
$$
x_a + x_{a+1} + \cdots + x_b \geq \#(I \cap [a,b])
$$
for $i = 1,2,\ldots,n$.  In particular, $P_{\SS_{I,a}}$ is an alcoved polytope.

Let $\M$ be an arbitrary matroid.  
Recall, that any matroid has a unique minimal base 
in the Bruhat partial order $\leq$ on ${[n]\choose k}$, 
and thus in any cyclically rotated partial order $\leq_a$.
Denote by $I_a(\M)$ the minimal base of $\M$ with respect to $\leq_a$.
Let $Q = \env(P_\M)$ be the alcoved envelope of $P_\M$, i.e., the smallest alcoved polytope that contains $P_\M$.  Then $Q$ is given by the intersection of the rotated Schubert matroid polytopes $P_{\SS_{I_a(\M),a}}$ for $a =1,2,\ldots,n$ (see Lemma~\ref{lem:envelope}).  It is known \cite[Lemma 16.3]{TP} that for any matroid $\M$, the $n$-tuple $\I(\M)=(I_1(\M),I_2(\M),\ldots,I_n(\M))$ is a $(k,n)$-Grassmann necklace, and $\M_\I$ is called the {\it positroid envelope} of $\M$ \cite{KLS}.  Thus, $Q$ is the matroid polytope of the positroid envelope of $\M$.  In particular, $P_\M$ is alcoved if and only if $Q = P_\M$ if and only if $\M$ is a positroid.
 \end{proof}

A \emph{decorated permutation} on $[n]$ is a pair $\pi^: = (\pi,\col)$ where $\pi$ is a permutation of $[n]$ and $\col$ is an assignment of one of two colors ``black" and ``white" to each of the fixed points $\{i \in [n] \mid \pi(i) = i\}$.  We say that $i \in [n]$ is an {\it anti-exceedance} of $\pi^:$ if $\pi^{-1}(i) > i$ or $\pi(i) = i$ and $i$ is colored white.  Given a $(k,n)$-Grassmann necklace $\I = (I_1,I_2,\ldots,I_n)$, we define a decorated permutation $\pi^:(\I)$ by 
\begin{enumerate}
\item if $I_{a+1} = I_a - \{a\} \cup \{a'\}, a' \neq a$, then $\pi(a) = a'$;
\item $I_{a+1} = I_a$ and $a \notin I_a$, then $\pi(i) =i$ and $i$ is colored black;
\item $I_{a+1} = I_a$ and $a \in I_a$, then $\pi(i) = i$ and $i$ is colored white.
\end{enumerate}
\cite[Lemma 16.2]{TP} states that the map $\I \to \pi^:(\I)$ is a bijection between $(k,n)$-Grassmann necklaces and decorated permutations on $[n]$ with $k$ anti-exceedances.

\section{Polypositroids}
In this paper we consider several classes of convex polytopes in $\R^n$.  All
polytopes lie in an affine hyperplane $H=H_k:=\{x \in \R^n \mid x_1 + \cdots + x_n = k\}$,
for some constant $k$.  For the majority of this work, the reader may assume that the hyperplane $H$ has been fixed.

\subsection{Generalized permutohedra}


\begin{definition}[\cite{P}]
A polytope $P\subset \R^n$ is called a {\it generalized permutohedron\/} if all edges of $P$
are directed as multiples of the vectors $e_i-e_j$.
\end{definition}

The class of generalized permutohedra include many classical polytopes: the usual permutohedron, the associahedron, hypersimplices,
any many others; see~\cite{P}.  
Let us give several alternative ways to describe the class of generalized permutohedra.
For a permutation $w\in S_n$, let $v_w = -(w^{-1}(1),\dots,w^{-1}(n))\in\R^n$, and let
$P_n := \conv(v_w\mid w\in S_n)$ be the standard permutohedron in $\R^n$.  For the following result see \cite[Theorem 15.3]{PRW} and \cite{CL}.

\begin{theorem}\label{thm:PRW}
The following are equivalent for a polytope $P$ in $\R^n$:
\begin{enumerate}
\item
The polytope $P$ is a generalized permutohedron.
\item
The normal fan of $P$ is a coarsening of the normal fan of the standard permutohedron $P_n$.
\item
The vertices of $P$ can be (possibly redundantly) labelled $v_w'$, $w\in S_n$, such that for any edge $(v_u,v_w)$ in $P_n$, there is a nonnegative real $t$ such that
$(v_u' - v_w') = t(v_u-v_w)$.
\end{enumerate}
\end{theorem}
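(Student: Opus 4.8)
The plan is to establish the cycle of implications $(2) \Rightarrow (3) \Rightarrow (1) \Rightarrow (2)$, which is the most economical route given the standard normal-fan machinery. The implication $(1) \Rightarrow (2)$ is the heart of the matter: one must show that if every edge of $P$ is a multiple of some $e_i - e_j$, then every wall (codimension-one face) of the normal fan $\Ncal(P)$ is cut out by a hyperplane $\{x_i = x_j\}$, so that $\Ncal(P)$ coarsens the braid fan $\Ncal(P_n)$. The key observation is the duality between edges of $P$ and walls of $\Ncal(P)$: an edge in direction $u$ is dual to a wall lying in the hyperplane $u^\perp$. Since $u = e_i - e_j$ forces $u^\perp = \{x : x_i = x_j\}$, each maximal cone of $\Ncal(P)$ is a union of chambers of the braid arrangement, and any fan whose maximal cones are unions of braid chambers and whose walls lie on braid hyperplanes is exactly a coarsening of the braid fan. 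I would phrase this carefully using the fact that the braid fan is the complete fan determined by the arrangement $\{x_i = x_j\}$, and a complete fan is refined by an arrangement fan iff all its walls lie in the arrangement.

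For $(2) \Rightarrow (3)$: if $\Ncal(P)$ coarsens $\Ncal(P_n)$, then for each $w \in S_n$ the braid chamber $C_w$ (the normal cone of the vertex $v_w$ of $P_n$) lies inside a unique maximal cone of $\Ncal(P)$; label the corresponding vertex of $P$ by $v_w'$. If $(v_u, v_w)$ is an edge of $P_n$, then $C_u$ and $C_w$ are adjacent braid chambers sharing a wall $F \subset \{x_i = x_j\}$ with $u = e_i - e_j$ (up to sign). Either $v_u' = v_w'$ (the two chambers lie in the same cone of $\Ncal(P)$), in which case take $t = 0$; or $v_u'$ and $v_w'$ are distinct vertices of $P$ whose normal cones are adjacent across a subwall of $F$, hence whose connecting edge is normal to $F$, i.e., parallel to $(e_i - e_j)$, and has the correct orientation because the normal fans are consistently oriented — giving $v_u' - v_w' = t(v_u - v_w)$ for some $t > 0$. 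Checking the sign/orientation of $t$ is a routine but necessary bookkeeping step using that $v_u - v_w$ points from $C_w$ to $C_u$.

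For $(3) \Rightarrow (1)$: let $e$ be an edge of $P$ with vertices $x, y$, and let $N$ be the normal cone to $e$, a codimension-one cone. Pick a braid chamber $C_w$ whose interior meets $\Int(N)$; then $v_w'$ is a vertex of $e$, say $v_w' = x$. Walking across the braid hyperplanes bounding $C_w$, at least one neighboring chamber $C_u$ has $v_u' = y$ (since $N$ is codimension one and separates chambers labeling $x$ from those labeling $y$), and $(v_u, v_w)$ is an edge of $P_n$ parallel to some $e_i - e_j$; by hypothesis $y - x = v_u' - v_w' = t(v_u - v_w)$ is a multiple of $e_i - e_j$. Hence every edge of $P$ has the required direction, so $P$ is a generalized permutohedron.

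The main obstacle I anticipate is the edge-wall duality argument in $(1) \Rightarrow (2)$: one has to be precise that the normal fan of a polytope whose edges all lie in a fixed hyperplane arrangement's wall directions is genuinely refined by that arrangement's fan — the subtlety is that \emph{a priori} a maximal cone of $\Ncal(P)$ could fail to be convex-union-of-chambers if some lower-dimensional face contributed a stray wall, so I would argue via: the support of the codimension-one skeleton of $\Ncal(P)$ equals the union of the hyperplanes $u^\perp$ over edge directions $u$ of $P$, which is contained in $\bigcup_{i<j}\{x_i = x_j\}$, and invoke the standard fact that a complete polyhedral fan all of whose walls lie in a hyperplane arrangement $\mathcal H$ is a coarsening of the fan of $\mathcal H$.
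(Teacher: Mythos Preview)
The paper does not prove this theorem; it is quoted from \cite[Theorem~15.3]{PRW} and \cite{CL}. Your cycle $(2)\Rightarrow(3)\Rightarrow(1)\Rightarrow(2)$ is the standard route, and your arguments for $(1)\Rightarrow(2)$ and $(2)\Rightarrow(3)$ are correctly sketched.

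There is, however, a real gap in your $(3)\Rightarrow(1)$. You pick a braid chamber $C_w$ meeting the normal cone $N$ of an edge $e$ of $P$ and assert ``then $v_w'$ is a vertex of $e$''. This presupposes that the labeling $w\mapsto v_w'$ handed to you in (3) agrees with the normal-cone labeling of $P$, i.e., that $v_w'$ is always the vertex of $P$ whose normal cone contains $C_w$. But hypothesis (3) only gives an abstract $S_n$-labeling of the vertex set of $P$ satisfying the edge constraints; nothing in (3) tells you \emph{a priori} where $v_w'$ sits relative to $\Ncal(P)$. Your subsequent sentence (``at least one neighboring chamber $C_u$ has $v_u'=y$'') has the same unproven assumption built in.

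The fix is to establish that compatibility first, which amounts to proving $(3)\Rightarrow(2)$ directly. For a generic linear functional $\psi$ in the interior of $C_w$, orient each edge of $P_n$ in the direction of increasing $\psi$; the unique sink is $v_w$. By (3), whenever $(v_u,v_{u'})$ is an edge of $P_n$ with $\psi(v_u)<\psi(v_{u'})$ one gets $\psi(v_u')\leq\psi(v_{u'}')$, so walking along any increasing path from $v_u$ to $v_w$ yields $\psi(v_u')\leq\psi(v_w')$ for every $u\in S_n$. Since every vertex of $P$ is some $v_u'$, the vertex $v_w'$ maximizes $\psi$ on $P$, and hence $C_w$ lies in the normal cone of $v_w'$. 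Thus $\Ncal(P)$ coarsens the braid fan, and $(1)$ then follows from the edge--wall duality you already described. Once this step is inserted, your argument is complete.
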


For a polytope $P$, we define the {\it support function} $f_P: (\R^n)^* \to \R$ given by
$$
f_P(h) = \max_{v \in P} h(v).
$$
The function $f_P$ is a piecewise linear function on $(\R^n)^*$ whose maximal domains of linearity are exactly the top-dimensional cones of the normal fan of $P$.  

The normal fan of the standard permutohedron $P_n$ is the braid fan (see Section~\ref{ssec:braid}), and by Theorem~\ref{thm:PRW}(2), a generalized permutohedron $P$ is uniquely determined by the values $f_P(S):= f_P(h_S)$ where $h_S(x_1,\ldots,x_n)= \sum_{i \in S} x_i$, and $S$ varies over proper non-empty subsets of $[n]$.  The polytope $P$ is then given by 
\begin{equation}\label{eq:permfacet}
x_1+\cdots + x_n = k = f_P([n]) \quad\text{and}\quad
\sum_{i\in S} x_i \leq f_P(S).
\end{equation}
We write $f_P|_{2^{[n]}}$ for the function sending a subset $S \subset [n]$ to $f_P(S)$, and more generally we will use notation such as $f_P|_\SS$ for a collection $\SS \subseteq 2^{[n]}$ of subsets.

\subsection{Alcoved polytopes}

Let $h_1,\dots,h_n$ be the basis of $(\R^n)^*$ such that $h_i(x) = x_1 + x_2 + \cdots + x_i$.  Thus $h_i:= h_{[1,i]}$.

\begin{definition} \cite{LP} \label{def:alcoved} \  A polytope $P\subset \R^n$ is called an {\it alcoved polytope\/} if it is given by inequalities of
the form 
\begin{equation}\label{eq:alcoveineq}
(h_i - h_j)(x) \leq a_{ij}
\end{equation} and the equation $x_1 + \cdots + x_n = k$, for some real\footnote{In \cite{LP}
   we required that $a_{ij}$ and $k$ were integer numbers while in this paper we allow any real coefficients.  We will call
   the polytopes from \cite{LP} {\it integer\/} alcoved polytopes.  Their vertices are always integer lattice points.}
numbers $a_{ij}$ and $k$.
\end{definition}
Alcoved polytopes are also called {\it polytropes} in \cite{JK}.   We will always assume in \eqref{eq:alcoveineq} that the $a_{ij}$ have been chosen to be minimal, that is, they are values of the support function.

\begin{lemma}\label{lem:add}
The set of alcoved polytopes in 
the affine hyperplane $H_k$ and the set of alcoved polytopes in $H_\ell$ are in natural bijection via the isomorphism $+(\ell{-}k) e_1: H_k \to H_\ell$ adding $(\ell-k)$ to the first coordinate.  This bijection preserves the values of $a_{ij}$.
\end{lemma}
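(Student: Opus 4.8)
The plan is to exhibit the map explicitly and check it does what is claimed. First I would fix the affine isomorphism $\phi = \phi_{k,\ell} \colon H_k \to H_\ell$ defined by $\phi(x_1,x_2,\dots,x_n) = (x_1 + (\ell-k), x_2,\dots,x_n)$. It is immediate that if $x_1 + \cdots + x_n = k$ then $\phi(x)_1 + \cdots + \phi(x)_n = \ell$, so $\phi$ does land in $H_\ell$, and it is a bijection with inverse $\phi_{\ell,k}$. So the content is to show that $\phi$ carries alcoved polytopes to alcoved polytopes and preserves the constants $a_{ij}$.

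Next I would translate the defining inequalities under $\phi$. Recall $h_i(x) = x_1 + \cdots + x_i$, so $h_i(\phi(x)) = h_i(x) + (\ell-k)$ for every $i \geq 1$ (the shift lives in the first coordinate, which every $h_i$ sees), while $h_0 := 0$ is unchanged — but the inequalities \eqref{eq:alcoveineq} only involve differences $h_i - h_j$ with $i,j \in \{1,\dots,n\}$ (and, for the alcoved description, one should also allow the cyclic-type inequalities, equivalently $h_0$; but on $H_k$ we have $h_n \equiv k$, so every relevant functional is a difference of two $h_i$'s modulo the constant $k$). The key point is that $(h_i - h_j)(\phi(x)) = (h_i - h_j)(x)$: the shift $(\ell-k)$ cancels. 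Hence $x \in H_k$ satisfies $(h_i-h_j)(x) \leq a_{ij}$ for all $i<j$ if and only if $\phi(x) \in H_\ell$ satisfies the same inequalities with the same constants $a_{ij}$. Therefore $P \subset H_k$ is the alcoved polytope cut out by $\{a_{ij}\}$ if and only if $\phi(P) \subset H_\ell$ is the alcoved polytope cut out by the same $\{a_{ij}\}$, which is exactly the asserted bijection.

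Finally I would note that the $a_{ij}$ being values of the support function is automatically preserved: since $\phi$ is an affine isomorphism carrying $P$ onto $\phi(P)$, and $(h_i - h_j) \circ \phi = h_i - h_j$, we get $\max_{v \in \phi(P)} (h_i-h_j)(v) = \max_{v \in P}(h_i - h_j)(\phi^{-1}(\phi(v))) = \max_{v\in P}(h_i-h_j)(v)$, so the minimal (support-function) value of $a_{ij}$ for $\phi(P)$ equals that for $P$. This also handles the one subtlety worth a sentence: one should make sure the inequality list used to define ``alcoved'' is the same in $H_k$ and $H_\ell$ — it is, because Definition~\ref{def:alcoved} uses the fixed functionals $h_i - h_j$ together with the single equation $x_1 + \cdots + x_n = \text{const}$, and $\phi$ matches the equations and fixes all the functionals.

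I do not anticipate a genuine obstacle here; the only thing to be careful about is bookkeeping with $h_n$, which is the constant $k$ (resp.\ $\ell$) on the relevant hyperplane, so that an inequality nominally involving $h_n$ becomes an honest affine inequality whose constant shifts by $(\ell-k)$ — but such inequalities can always be rewritten using only $h_1,\dots,h_{n-1}$ and the differences among them, on which $\phi$ acts trivially. So the ``hard part'' is merely stating the normalization cleanly; the rest is the one-line cancellation $(h_i - h_j)\circ\phi = h_i - h_j$.
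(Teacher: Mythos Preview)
Your argument is correct: the cancellation $(h_i-h_j)\circ\phi = h_i-h_j$ is exactly the point, and your remarks about the support function and the role of $h_n$ are fine. The paper does not actually prove this lemma, treating it as immediate, so your write-up is simply the natural one-line verification spelled out in full.
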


An alcoved polytope $P$ is called {\it generic} if it is full-dimensional in $H$, and each equality $(h_i - h_j)(x) = a_{ij}$ defines a facet of $P$, for all 
$i,j\in [n]$, $i \neq j$.

\begin{definition}\label{def:env}
Let $Q \subset H$ be a bounded subset.  The {\it alcoved envelope} 
$\env(Q)\subset H$ is defined to be the smallest alcoved polytope containing $Q$.
Equivalently, $\env(Q)$ is given by inequalities~(\ref{eq:alcoveineq}),
where $a_{ij}=\mathrm{supremum}_{x\in Q} (h_i-h_j)(x)$.

\end{definition}

\begin{example}
For $n=3$, the class of generalized permutohedra coincides with the class of alcoved polytopes.
However, for $n\geq 4$, neither of these two
classes of polytopes contains the other.  For example, the standard permutohedron $P_4 = \conv((w(1),w(2),w(3),w(4))\mid w\in S_4)\subset \R^4$
is a generalized permutohedron but it is not an alcoved polytope, because one of its facets is given by $x_1 + x_3 \leq 7$.
On the other hand, the simplex $\conv(e_1 + e_3,\, e_2 + e_3,\, e_2+e_4,\, e_3+e_4)\subset \R^4$ is an alcoved polytope but it is not a generalized permutohedron,
because it has the edge $(e_1+e_3,\,e_2 + e_4)$.   Here, ``conv'' means the convex hull of points.
\end{example}

Define the {\it cyclic interval\/} $[r,s]$ in $[n]$ as
$$
[r,s]:=\left\{\begin{array}{ll}
\{r,r+1,\dots,s\}&\text{if } 1\leq r\leq s \leq n,\\
\{r,r+1,\dots,n,1,2,\dots,s\}&\text{if } 1\leq s < r \leq n.
\end{array}
\right\}
$$
and set $h_{[r,s]}:= \sum_{i \in [r,s]} x_i \in (\R^n)^*$.  Alcoved polytopes $P$ are exactly all polytopes of the form
\begin{equation}\label{eq:frs}
x_1+\cdots + x_n = k \quad\text{and}\quad
\sum_{i\in [r,s]} x_i \leq f_{[r,s]}
\end{equation}
for {\it cyclic intervals} $[r,s]\subset [n]$, where $f_{[r,s]}= f_P([r,s]):= f_P(h_{[r,s]})$.  Note that we have 
\begin{equation}\label{eq:af}
a_{ij}  = \begin{cases} f_{[j+1,i]} &\mbox{if $i > j$} \\
 f_{[j+1,i]} -f_{[n]} =   f_{[j+1,i]} -k& \mbox{if $i < j$} 
\end{cases}
\end{equation}
and it will be convenient to use both sets of parameters $f_{[r,s]}$ and $a_{ij}$.

If $x =(x_1,x_2,\ldots,x_n) \in \R^n$, we shall use the shorthands
$x_{[r,s]} := h_{[r,s]}(x)= \sum_{i \in [r,s]} x_i$ and $x_S:= h_S(x) = \sum_{i \in S} x_i$.

\begin{remark} 
The class of generalized permutohedra in $\R^n$ is closed under 
the operation of taking the Minkowski sum polytopes, 
but is not closed under the operation of taking the intersection of 
polytopes.
On the other hand, the class of alcoved polytopes in $\R^n$ is closed under
the operation of taking the intersection of polytopes (if it is nonempty),
but is not closed under the operation of taking the Minkowski sum of polytopes.
\end{remark}

\subsection{Polypositroids}

\begin{definition}\label{def:poly}
A {\it polypositroid\/} is a polytope $P\subset H_k$ which is both a generalized permutohedron and an alcoved polytope.
\end{definition}
As for alcoved polytopes, polypositroids in $H_k$ and in $H_\ell$ are naturally in bijection.  One of the main results of this paper is an explicit parametrization of all polypositroids.

We give some examples of polypositroids.   
For $a \in \R$ and $P$ a polytope, the notation $aP$ denotes the polytope $\{ax \mid x
\in P\}$.  For a cyclic interval $[r,s]$, let
$$
\Delta_{[r,s]} := \conv(e_r, e_{r+1},\ldots,e_s)
$$
denote the corresponding coordinate simplex.

The {\it cyclohedron} is the Minkowski sum of simplices
\begin{equation}\label{E:cyclo}
P = \sum_{[r,s] \neq [n]} c_{r,s}\Delta_{[r,s]} + b \Delta{[n]}
\end{equation}
where each $c_{r,s} > 0$ and $b > 0$.   By \cite[Thereom 7.4]{P}, the cyclohedron $P$ is both an alcoved polytope and a generalized permutohedron, and in addition $P$ is simple.  The $f_{[r,s]}$ from \eqref{eq:frs} are given by
\begin{equation}\label{eq:cyclfrs}
f_{[r,s]}=  b + \sum_{[t,u] \cap [r,s] \neq \emptyset} c_{t,u}
\end{equation}
and we also set $k:= b+ \sum c_{r,s}$ so that $P \subset H_k$.  

If we set $c_{r,s} = 0$ whenever $r>s$
(i.e., whenever $[r,s]$ is not an honest interval), 
then we obtain the {\it associahedron}, which is also a polypositroid.  Another possibility is to consider the polytope $P(b,c_{r,s})$ defined by \eqref{E:cyclo}, where we allow $b \in \R$ and $c_{r,s} \geq 0$.  When $b < 0$, we use the {\it Minkowski difference}:
$$
P - Q := \{ x \in \R^n \mid x + Q \subset P\}.
$$
It is not hard to see that the deformed cyclohedron $P(b,c_{r,s})$ is either empty, or a polypositroid.  When $b \geq 0$, the polytope $P(b,c_{r,s})$ is a deformation of a cyclohedron: the normal fan of $P(b,c_{r,s})$ is a coarsening of that of the cyclohedron.  However, when $b < 0$, this may no longer be the case.

\section{The cone of polypositroids}
The numbers $f_P(S)$ in \eqref{eq:permfacet} are not arbitrary.  
It is well known that support functions of generalized permutohedra 
are exactly the {\it submodular functions},
e.g., see \cite[Proposition~12]{MPSSW}, \cite[Section~12]{AA}, \cite[Theorem 3.11]{CL}. 

\begin{theorem}
The polytope $P$ given by \eqref{eq:permfacet} is a generalized permutohedron if and only if $f_P$ is submodular, that is
\begin{equation}\label{eq:submod}
f_P(S) + f_P(T) \geq f_P(S \cap T) + f_P(S \cup T)
\end{equation}
for any subsets $S,T \subset [n]$, where we assume that $f_P(\emptyset) = 0$.
\end{theorem}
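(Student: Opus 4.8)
The plan is to prove both implications by relating submodularity of $f_P$ to the edge directions of $P$, using the normal fan characterization from Theorem~\ref{thm:PRW}. For the forward direction, suppose $P$ is a generalized permutohedron. Given subsets $S, T \subset [n]$, I want to show \eqref{eq:submod}. The key observation is that $h_{S\cap T} + h_{S\cup T} = h_S + h_T$ as linear functionals, so the submodularity inequality is equivalent to $f_P(h_S) + f_P(h_T) \geq f_P(h_S + h_T)$, which is precisely the statement that $f_P$ is \emph{subadditive} on the pair of rays spanned by $h_S$ and $h_T$. Since $f_P(h) = \max_{v\in P} h(v)$ is a support function, it is automatically convex and positively homogeneous, hence subadditive \emph{everywhere}: $f_P(h + h') \leq f_P(h) + f_P(h')$ for all $h, h'$. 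So the forward direction in fact holds for \emph{any} polytope $P$ and does not use the generalized permutohedron hypothesis at all; I would remark on this.

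The substance is the converse: if $f_P$ is submodular then $P$ (defined by \eqref{eq:permfacet}) is a generalized permutohedron. Here I would use Theorem~\ref{thm:PRW}(2): it suffices to show the normal fan of $P$ is a coarsening of the braid fan, equivalently that $f_P$, extended to all of $(\R^n)^*$, is linear on each maximal cone of the braid fan. The braid fan cone $C_w$ corresponding to $w\in S_n$ consists of functionals $h$ with $h = \sum_{i} \lambda_i h_{S_i}$ where $S_i = \{w(1),\dots,w(i)\}$ is the associated flag of subsets and $\lambda_i \geq 0$; concretely $C_w = \{h : h_{w(1)} \geq h_{w(2)} \geq \cdots \geq h_{w(n)}\}$ in coordinates. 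The claim is that on $C_w$ one has $f_P(h) = \sum_{i=1}^{n-1}\lambda_i f_P(S_i) + \lambda_n k$, i.e., the max of $h$ over $P$ is attained at the single vertex $v_w$ obtained by saturating the chain of inequalities $x_{S_1} \leq f_P(S_1) \subset x_{S_2}\leq f_P(S_2) \subset \cdots$. The crucial point where submodularity enters is verifying that the point $v_w$ defined by $v_w(S_i) = f_P(S_i)$ for all $i$ in the flag actually lies in $P$, i.e., satisfies $x_S \leq f_P(S)$ for \emph{every} $S$; this is the standard greedy-algorithm argument for polymatroids (Edmonds), and the submodular inequalities \eqref{eq:submod} are exactly what make the greedy solution feasible. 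Given that $v_w \in P$ and that $h\in C_w$ is a nonnegative combination of the $h_{S_i}$ plus the constraint functional, linear programming duality (or just direct inspection) shows $v_w$ maximizes $h$, so $f_P$ is linear on $C_w$.

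So the proof structure is: (1) forward direction is immediate from convexity/subadditivity of any support function (no hypothesis needed); (2) for the converse, invoke Theorem~\ref{thm:PRW}(2), reduce to showing $f_P$ is linear on each braid cone $C_w$; (3) identify the candidate maximizer $v_w$ via the saturated flag of inequalities; (4) use the greedy/polymatroid argument, powered by submodularity, to check $v_w \in P$; (5) conclude by LP duality that $v_w$ is the maximizer on $C_w$, hence $f_P$ is linear there. The main obstacle is step (4): one must show that the point determined by $v_w(\{w(1)\}) = f_P(\{w(1)\})$, $v_w(\{w(1),w(2)\}) = f_P(\{w(1),w(2)\})$, etc.\ (which pins down all coordinates) satisfies $v_w(S)\leq f_P(S)$ for an arbitrary subset $S$. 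This is the classical fact that the greedy algorithm solves linear optimization over a polymatroid, and the proof is an induction on $|S|$ using \eqref{eq:submod} to "uncross" $S$ against the initial segments $S_i$ of the flag. I would either reproduce this short induction or simply cite Edmonds \cite{Edm} (and \cite{P,CL}), since the theorem statement explicitly attributes the result to the literature (``It is well known...''); a one-paragraph proof sketch citing the greedy algorithm, together with the trivial forward direction, is the appropriate level of detail here.
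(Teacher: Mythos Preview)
Your converse direction is fine and is indeed the standard Edmonds greedy argument; since the paper itself does not prove this theorem but simply cites \cite{MPSSW,AA,CL}, a sketch at that level would be entirely appropriate.

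However, your forward direction contains a genuine error. You claim that \eqref{eq:submod} is equivalent to the subadditivity statement $f_P(h_S)+f_P(h_T)\geq f_P(h_S+h_T)$, and hence holds for any polytope. This is false. From $h_S+h_T=h_{S\cap T}+h_{S\cup T}$ and subadditivity you only get
\[
f_P(h_S)+f_P(h_T)\ \geq\ f_P(h_{S\cap T}+h_{S\cup T})\ \leq\ f_P(h_{S\cap T})+f_P(h_{S\cup T}),
\]
and the second inequality goes the wrong way. Concretely, take $n=4$ and $P=\conv\{e_1+e_3,\,e_2+e_4\}\subset H_2$ (a segment with edge direction $e_1-e_2+e_3-e_4$, so \emph{not} a generalized permutohedron). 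Then $f_P(\{1,2\})=f_P(\{2,3\})=1$ but $f_P(\{2\})=1$ and $f_P(\{1,2,3\})=2$, so \eqref{eq:submod} fails. The forward direction genuinely needs the hypothesis.

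The correct argument for the forward direction uses Theorem~\ref{thm:PRW}(2): since $S\cap T\subseteq S\cup T$ is a chain, the functionals $h_{S\cap T}$ and $h_{S\cup T}$ lie in the closure of a common maximal braid cone, hence (for a generalized permutohedron) are maximized at a common vertex $v$. Then
\[
f_P(S\cap T)+f_P(S\cup T)=h_{S\cap T}(v)+h_{S\cup T}(v)=h_S(v)+h_T(v)\leq f_P(S)+f_P(T).
\]
Once you replace your subadditivity paragraph with this, the proposal is correct.
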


\begin{definition}
The {\it submodular cone}, or the cone of generalized permutohedra, is the cone $\Csub$ of all functions $f :2^{[n]} \to \R$ satisfying \eqref{eq:submod}.
\end{definition}

The cone structure of $\Csub$ corresponds to taking Minkowski sums of generalized permutohedra.  Let us now turn to alcoved polytopes.

\begin{theorem} \label{thm:triangle}
Suppose $n \geq 3$.  Let $P$ be an alcoved polytope given by \eqref{eq:alcoveineq} (with $a_{ij}$ minimal).  Then the $a_{ij}$ satisfy the triangle inequality
\begin{equation}\label{eq:triangle}
a_{ij} + a_{jk} \geq a_{ik}
\end{equation}
for distinct $i,j,k \in [n]$.  Conversely, any $a_{ij}$ satisfying \eqref{eq:triangle} define a (nonempty) alcoved polytope in $H_k$.
\end{theorem}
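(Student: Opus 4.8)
The plan is to prove both directions of Theorem~\ref{thm:triangle} by working with the support-function parameters $a_{ij}$ and exploiting the projection relationship between alcoved polytopes and their facet data.

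\medskip

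\emph{The triangle inequality holds.} Suppose $P$ is a nonempty alcoved polytope in $H_k$ with minimal $a_{ij}$, so that $a_{ij} = \max_{x \in P}(h_i - h_j)(x)$. For fixed distinct $i,j,k$, pick $x^* \in P$ achieving $a_{ik}$, i.e. $(h_i - h_k)(x^*) = a_{ik}$. Then
\[
a_{ik} = (h_i - h_k)(x^*) = (h_i - h_j)(x^*) + (h_j - h_k)(x^*) \leq a_{ij} + a_{jk},
\]
since $x^* \in P$ forces $(h_i - h_j)(x^*) \leq a_{ij}$ and $(h_j - h_k)(x^*) \leq a_{jk}$. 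This is the easy half, and it does not even use minimality of the $a_{ij}$ beyond the definition of the support function; any valid choice works, but minimality is what makes the converse meaningful.

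\medskip

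\emph{Any $a_{ij}$ satisfying \eqref{eq:triangle} give a nonempty alcoved polytope.} Here the content is nonemptiness, plus the claim that the $a_{ij}$ are then automatically the values of the support function (i.e.\ minimal), although the statement as phrased only asks for nonemptiness. The natural approach is to exhibit a point. Define $x \in \R^n$ by first setting auxiliary quantities $h_i(x) := -a_{i,n}$ for $i < n$ — that is, let the partial sums $x_1 + \cdots + x_i$ be prescribed — after normalizing so that $h_n(x) = k$; concretely set $x_i = h_i(x) - h_{i-1}(x)$. One then checks $(h_i - h_j)(x) = h_i(x) - h_j(x) = -a_{i,n} + a_{j,n} \leq a_{ij}$ whenever the triangle inequality $a_{j,n} + a_{ij} \geq a_{i,n}$ holds — but that requires the triangle inequality with $k = n$ as the middle index, i.e.\ $a_{in} \le a_{ij} + a_{jn}$, which is exactly \eqref{eq:triangle} applied to $(i,j,n)$. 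Wait: we need to handle all pairs including those involving index $n$. For pairs $(i,n)$ and $(n,j)$ the bound is an equality by construction. So the single point $x$ defined by these partial sums lies in $P$, proving $P \neq \emptyset$. The remaining subtlety — that these $a_{ij}$ are minimal — can be addressed by noting that for each pair $(i,j)$ one can similarly construct a point attaining equality, by "rooting" the partial-sum construction at $j$ instead of $n$; the triangle inequalities guarantee consistency. If the theorem as stated only requires nonemptiness, the first such point suffices.

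\medskip

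The main obstacle is bookkeeping with the cyclic/affine structure: partial sums $h_i$ are only defined for $i \in [n]$, the relation $h_n(x) = k$ is a constraint, and the parameters $a_{ij}$ come in two flavors according to whether $i > j$ or $i < j$ (cf.\ \eqref{eq:af}), so one must be careful that the "triangle inequality" $a_{ij} + a_{jk} \geq a_{ik}$ is being applied to the right combination. I would organize the converse by first reducing to the affine chart where one coordinate (say the partial sum up to $n$) is pinned, reducing everything to the $n-1$ free partial sums $h_1(x), \ldots, h_{n-1}(x)$ subject to pairwise difference constraints $h_i(x) - h_j(x) \leq a_{ij}$; this is a classic shortest-path / difference-constraint feasibility problem, feasible precisely when there is no negative cycle, and the triangle inequalities (being the 3-cycle conditions) propagate to rule out all negative cycles by induction on cycle length. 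That inductive reduction of arbitrary negative cycles to triangles is the one genuinely non-trivial step, and it is where I would spend the most care.
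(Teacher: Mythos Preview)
Your forward direction is fine and matches the paper's one-line observation. Your converse, however, takes a genuinely different route. The paper first proves Proposition~\ref{prop:triangle}: \emph{strict} triangle inequalities yield a \emph{generic} alcoved polytope, established by a minimization argument showing every putative facet is actually attained; it then notes the cyclohedron lies in this open cone, and passes to the closure by a limiting argument. Your approach---recognize the system $(h_i-h_j)(x)\le a_{ij}$ as a difference-constraint system and exhibit an explicit feasible point (the Bellman--Ford solution)---is more direct and avoids both the genericity detour and the limit.

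That said, your explicit construction has two slips to fix. The sign is wrong: you want $h_i(x)=a_{in}$ (with $h_n(x)=0$, then translate into $H_k$), not $-a_{in}$; with the corrected sign the check for $i,j\ne n$ becomes exactly the triangle inequality $a_{in}\le a_{ij}+a_{jn}$, i.e.\ \eqref{eq:triangle} for $(i,j,n)$. More importantly, the pair $(n,j)$ is \emph{not} an equality as you claim: one needs $-a_{jn}\le a_{nj}$, i.e.\ the $2$-cycle inequality $a_{nj}+a_{jn}\ge 0$. This does follow from the hypotheses---add $a_{ij}+a_{jk}\ge a_{ik}$ and $a_{ik}+a_{kj}\ge a_{ij}$ to get $a_{jk}+a_{kj}\ge 0$, using any third index, which is exactly where $n\ge 3$ enters---but you must say so (the paper makes the same deduction inside the proof of Proposition~\ref{prop:triangle}). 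Your closing shortest-path paragraph is the right general picture: the triangle inequalities let you shortcut any cycle down to a $2$-cycle, nonnegative by the above, so there are no negative cycles and the system is feasible. The rooting-at-$j$ variant you mention then shows each $a_{ij}$ is actually attained, giving minimality and hence the identification of $\Calc$ with this cone.
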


\begin{proposition}\label{prop:triangle}
Let $n \geq 3$.  Let $P$ be an alcoved polytope given by the
inequalities $(h_i - h_j)(x) \leq a_{ij}$.  Then $P$ is {\it
generic} if and only if for any three pairwise distinct indices $i,
j, k \in [n]$ we have the strict triangle inequality $a_{ij} + a_{jk} >
a_{ik}$.
\end{proposition}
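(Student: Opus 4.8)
The plan is to prove the contrapositive in each direction using the geometric description of facets. First I would recall that by Theorem~\ref{thm:triangle} the inequalities \eqref{eq:triangle} (the weak triangle inequalities) are exactly the condition for the region cut out by $(h_i-h_j)(x)\le a_{ij}$ together with $x_1+\cdots+x_n=k$ to be a nonempty alcoved polytope $P$, and that we always take the $a_{ij}$ minimal, i.e. $a_{ij}=f_P(h_i-h_j)=\max_{x\in P}(h_i-h_j)(x)$. Genericity asks that $P$ be full-dimensional in $H$ and that each hyperplane $(h_i-h_j)(x)=a_{ij}$ actually meets $P$ in a facet, i.e. in a face of dimension $n-2$ inside $H$.

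For the ``if'' direction, suppose all strict triangle inequalities $a_{ij}+a_{jk}>a_{ik}$ hold. I would first argue full-dimensionality: the inequality system is then ``strictly feasible'' in the sense that one can find a point $x$ with $(h_i-h_j)(x)<a_{ij}$ strictly for all $i\ne j$; indeed if the weak inequalities are all strict there is slack at every constraint simultaneously (a standard averaging/perturbation argument, or one can invoke that the defining inequalities of the alcoved envelope in the proof of Theorem~\ref{thm:triangle} become strict), so $P$ contains an interior point of $H$ and $\dim P = n-1$. Next, fix a pair $(i,j)$; I must exhibit a point $x\in P$ with $(h_i-h_j)(x)=a_{ij}$ for which no other constraint is tight and which is a relative-interior point of the face $\{(h_i-h_j)=a_{ij}\}\cap P$ of the right dimension. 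Starting from an interior point and moving in the direction that increases $h_i-h_j$ until that single constraint becomes tight, the strict triangle inequalities guarantee we hit the hyperplane $(h_i-h_j)=a_{ij}$ before any other hyperplane $(h_k-h_\ell)=a_{k\ell}$ becomes tight (because tightness of $(h_i-h_j)$ together with $(h_i-h_k)$ or $(h_k-h_j)$ would force $a_{ij}=a_{ik}+a_{kj}$, contradicting strictness; one checks all the combinations of which pair could be simultaneously tight reduce to an equality among the $a$'s). Hence this face is cut out inside the hyperplane $H\cap\{(h_i-h_j)=a_{ij}\}$ by the remaining strict inequalities, so it is $(n-2)$-dimensional, i.e. a facet.

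For the ``only if'' direction, suppose some strict triangle inequality fails, i.e. $a_{ij}+a_{jk}=a_{ik}$ for distinct $i,j,k$ (equality, since by Theorem~\ref{thm:triangle} the weak inequality always holds). Then for every $x\in P$ we have $(h_i-h_k)(x)=(h_i-h_j)(x)+(h_j-h_k)(x)\le a_{ij}+a_{jk}=a_{ik}$, with equality in the combined inequality forcing equality in both summands. Consequently the face $F_{ik}:=P\cap\{(h_i-h_k)(x)=a_{ik}\}$ is contained in $\{(h_i-h_j)(x)=a_{ij}\}\cap\{(h_j-h_k)(x)=a_{jk}\}$, i.e. $F_{ik}$ lies in the intersection of at least two distinct hyperplanes of the arrangement (note these hyperplanes are genuinely distinct because $h_i-h_j$, $h_j-h_k$, $h_i-h_k$ are pairwise non-proportional for distinct $i,j,k$). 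Such a face has codimension at least $2$ in $H$, so it is not a facet; therefore $P$ is not generic.

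The main obstacle I expect is the case analysis in the ``if'' direction confirming that no \emph{second} constraint goes tight prematurely along the ray that activates $(h_i-h_j)$. One must check that simultaneous tightness of $(h_i-h_j)=a_{ij}$ and any other $(h_k-h_\ell)=a_{k\ell}$ — across the various ways $\{k,\ell\}$ can overlap $\{i,j\}$ — always produces a linear relation among the $a$'s that contradicts one of the strict triangle inequalities (using that differences $h_i-h_j$ span only an $(n-1)$-dimensional space with the single relation $\sum$ of consecutive gaps $=0$, equivalently $h_i-h_j = (h_i-h_k)+(h_k-h_j)$). Once this cocircuit-style bookkeeping is done, both directions close cleanly, and the argument is essentially the observation that genericity of an alcoved polytope is equivalent to the $a_{ij}$ being the ``exact distances'' of a metric with no coincidental triangle degeneracies.
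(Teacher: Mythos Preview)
Your ``only if'' direction is fine and matches what the paper calls trivial. But the ``if'' direction has a genuine gap, and your invocation of Theorem~\ref{thm:triangle} is circular, since in the paper that theorem is deduced \emph{from} this proposition.

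The real problem is your ray argument. You claim that once you move from an interior point until $(h_i-h_j)=a_{ij}$ becomes tight, simultaneous tightness of any other constraint $(h_k-h_\ell)=a_{k\ell}$ would force an equality among the $a$'s contradicting a strict triangle inequality. This is only true when $\{k,\ell\}$ overlaps $\{i,j\}$ in a way that makes $h_i-h_j$, $h_k-h_\ell$ linearly dependent with a third constraint functional. When $\{k,\ell\}\cap\{i,j\}=\emptyset$ (possible for $n\ge4$), the functionals $h_i-h_j$ and $h_k-h_\ell$ are linearly independent, and having both tight at a single point imposes no relation whatsoever on the $a_{ij}$. Your ray could perfectly well hit such a disjoint constraint first, or simultaneously, without contradicting any triangle inequality. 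The ``case analysis'' you anticipate as an obstacle is not just tedious bookkeeping --- the disjoint case simply does not close.

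The paper's argument is quite different. It works in the unrestricted polyhedron in $\R^n$ and, for a fixed hyperplane $A=\{(h_c-h_d)=a_{cd}\}$, directly shows $P\cap A\neq\emptyset$ by minimizing the ``violation'' function $d_P(x)=\max_{i,j}\max(0,(h_i-h_j)(x)-a_{ij})$ over $A$. The key step is this: at a minimizer $v$ with $d_P(v)>0$, the set of maximally-violated constraints cannot contain a chain $(g,e),(e,f)$, because then $(h_g-h_f)(v)=a_{ge}+a_{ef}+2d_P(v)>a_{gf}+d_P(v)$ by the strict triangle inequality, contradicting the definition of $d_P$. This lets one perturb $v$ to reduce the number of maximally-violated constraints, eventually forcing $d_P(v)=0$. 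The strict triangle inequality enters precisely through this chain argument, not through any claim about \emph{simultaneous} tightness at a single point of $P$.
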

\begin{proof}
The ``only if'' direction is trivial.  We prove the ``if''
direction.  Let $P$ denote the ``alcoved polyhedron''
inside $\R^n$ given by the inequalities $(h_i - h_j)(x) \leq a_{ij}$.  The intersections $P \cap H_k$ for varying $k$ are linearly isomorphic via Lemma~\ref{lem:add}.  Thus it suffices to show that $P$ itself is generic.

Since $n \geq 3$, summing $a_{cd}+a_{de} > a_{ce}$ and
$a_{dc}+a_{ce}> a_{de}$ for distinct $c,d,e$ we obtain $a_{cd} + a_{dc}
> 0$.  It follows that $P$ is not strictly contained in any hyperplane $(h_c-h_d)(x) = a_{cd}$, and therefore $P$ is full-dimensional.  

Fix $ c\neq d$ and let $A = \{x \in
\R^n \mid (h_c-h_d)(x) = a_{cd}\}$.  To show that $P \cap A$ is a facet of $P$ it suffices to show that $P \cap A \neq \emptyset$ because by the same reasoning as above $P \cap A$ would be full-dimensional in $A$.  

For $x \in \R^n$,
define
$$
d_P(x) = \max_{i,j}(\max(0,(h_i-h_j)(x)-a_{ij})).
$$
Thus $x \in P$ if and only if $d_P(x) = 0$.  Since the function
$d_P$ is continuous, it is straightforward to see that it achieves a
minimum on $A$.  Let $v \in A$ be such a minimum, and assume $d_P(v)
> 0$. We also assume that $v$ is chosen so that
$$
R(v) = \#\{(i,j) \mid (h_i-h_j)(v) = a_{ij} + d_P(v)\}
$$
is minimal.  Note that since $v \in A$, we have $(c,d) \notin R(v)$.
 It follows from $a_{cd} + a_{dc}
> 0$ that $(d,c) \notin R(v)$.

Suppose $(e,f) \in R(v)$.  Assume that $e \notin \{c,d\}$; the case
$f \notin \{c,d\}$ is similar.  If there does not exist $(g,e) \in
R(v)$, then we can modify $v$ slightly so that $h_e(v)$ decreases
but the other $h_{e'}(v)$ remain unchanged, and reducing the size of
$R(v)$. This would contradict the construction of $v$.  But if
$(g,e) \in R(v)$ then using the condition of the Proposition, we
have $(h_g-h_f)(v) = (h_g-h_e)(v) + (h_e-h_f)(v) = a_{ge}+a_{ef} + 2d_P(v) > a_{gf}+d_P(v)$,
contradicting the definition of $d_P(v)$.  Thus we conclude that $d_P(v)
= 0$, as desired.
\end{proof}

\begin{proof}[Proof of Theorem~\ref{thm:triangle}]
By our assumption that the $a_{ij}$-s are taken minimal, it is clear that \eqref{eq:triangle} holds for any alcoved polytope. 
By Proposition~\ref{prop:triangle}, the inequalities $a_{ij} + a_{jk} >
a_{ik}$ define an open cone $C \subset \R^{n(n-1)}$, each point of which represents a generic alcoved polytope.  It follows  from \eqref{eq:cyclfrs} that $C$ is nonempty: the cyclohedron is a generic alcoved polytope.  The closure of $C$ is thus the closed cone cut out by \eqref{eq:triangle}.  The corresponding limits of generic alcoved polytopes are nonempty alcoved polytopes, finishing the proof of the theorem.
\end{proof}

\begin{definition}
The {\it triangle inequality cone}, or the cone of alcoved polytopes, is the cone $\Calc \subset \R^{n(n-1)}$ of all $a_{ij}$ satisfying \eqref{eq:triangle}.
\end{definition}

\begin{remark}
We caution the reader that the cone $\Csub$ contains the information $f_P([n]) = k$ and thus parametrizes generalized permutohedra in various affine hyperplanes $H_k$.  In contrast, an alcoved polytope is determined by a point in $\Calc$ together with the value of $k$.  Equivalently, $\Calc$ is the cone of alcoved polytopes inside $H_0$.
\end{remark}

The cone structure of $\Calc$ corresponds to the composition of the following two operations: first take the Minkowski sum $P_1 + P_2$ of two alcoved polytopes, and then take the alcoved envelope $\env(P_1+P_2)$ (see Definition~\ref{def:env}).

\begin{definition}
The {\it polypositroid cone} $\Cpoly \subset \Calc$ is the subset of $\Calc$ representing alcoved polytopes that are polypositroids.
\end{definition}

The fact that $\Cpoly$ is closed under the addition and positive scalar multiplication is a consequence of Theorem~\ref{thm:subpoly} or Theorem~\ref{thm:conenoncross} below.  Let $\cyc \subset 2^{[n]}$ denote the collection of all nonempty cyclic intervals, including $[n]$ itself.
We define a map $\pi_{\cyc}: \R^{2^{[n]}} \to \R^{\cyc} \simeq \R^{n(n-1)}$ by first projecting to cyclic intervals, and then applying the transformation \eqref{eq:af}.  

\begin{theorem}\label{thm:subpoly}
We have $\pi_{\cyc}(\Csub) = \Cpoly$.
\end{theorem}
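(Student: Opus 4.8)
The plan is to prove the two inclusions $\pi_{\cyc}(\Csub) \subseteq \Cpoly$ and $\Cpoly \subseteq \pi_{\cyc}(\Csub)$ separately. For the first inclusion, start with a submodular function $f \in \Csub$, so the polytope $P$ defined by \eqref{eq:permfacet} is a generalized permutohedron. Let $a_{ij} = \pi_{\cyc}(f)$ via \eqref{eq:af}, and let $P'$ be the alcoved polytope cut out by the inequalities $(h_i-h_j)(x) \le a_{ij}$ together with $x_1 + \cdots + x_n = k$. Since each inequality $(h_i-h_j)(x) \le a_{ij}$ is, after rewriting via \eqref{eq:af}, one of the inequalities $\sum_{i \in [r,s]} x_i \le f([r,s])$ appearing in the generalized-permutohedron description \eqref{eq:permfacet}, we have $P \subseteq P'$, and in fact $P' = \env(P)$ is the alcoved envelope of $P$. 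The content is to show that $\env(P) = P$, i.e. that $P$ is already alcoved, so that $a_{ij} \in \Cpoly$. The key point is that the facet inequalities of a generalized permutohedron $P$ indexed by \emph{non-cyclic} subsets $S$ are implied by those indexed by cyclic intervals, \emph{provided} $P$ is a polypositroid — but here we do not yet know $P$ is a polypositroid. So instead I would argue: $\env(P)$ is an alcoved polytope, hence (once we check it is a generalized permutohedron) a polypositroid, and the essential step is that the value of the support function of $\env(P)$ on a cyclic interval $[r,s]$ agrees with that of $P$, namely $f_{\env(P)}([r,s]) = f([r,s])$; this is immediate from the definition of $\env$. Then I must verify that $\env(P)$ is a generalized permutohedron, equivalently that its support function (extended to all of $2^{[n]}$) is submodular. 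For this I would use the explicit formula for the support function of $\env(P)$: for an arbitrary subset $S$, $f_{\env(P)}(S) = \min \sum_j f([r_j,s_j])$ over all ways of writing (a superset of) $S$ as a disjoint union of cyclic intervals, or more precisely the support function of the alcoved polytope with the given cyclic data. Checking submodularity of this "cyclic-interval closure" of a submodular function is the technical heart.

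Here is the cleaner route I actually favor, which avoids the envelope-submodularity computation. Use Theorem~\ref{thm:triangle}: the cone $\Calc$ is exactly the triangle-inequality cone. The map $\pi_{\cyc}$ restricted to $\Csub$ lands in $\Calc$ because a submodular $f$ satisfies, for the cyclic intervals involved, inequalities that translate precisely into the triangle inequalities $a_{ij} + a_{jk} \ge a_{ik}$: indeed submodularity applied to $S = [k+1,i]$ and $T = [k+1,j]$ (or the appropriate cyclic intervals, splitting into cases $i,j,k$ in cyclic order or not) yields exactly \eqref{eq:triangle} after applying \eqref{eq:af}. So $\pi_{\cyc}(\Csub) \subseteq \Calc$. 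To upgrade this to landing in $\Cpoly$, I must show the alcoved polytope $P'$ with data $\pi_{\cyc}(f)$ is a generalized permutohedron. By Theorem~\ref{thm:PRW} it suffices to show the normal fan of $P'$ is a coarsening of the braid fan; equivalently, every edge of $P'$ is parallel to some $e_i - e_j$. This I would establish directly from the alcoved description: an edge of an alcoved polytope is the intersection of $n-2$ facet hyperplanes $(h_i - h_j)(x) = a_{ij}$ with the hyperplane $H_k$; the claim is that the edge direction of $P'$ (as opposed to a general alcoved polytope) is always of the form $e_p - e_q$ because the relevant facets are those that are "inherited" from the generalized permutohedron $P$. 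The cleanest way to see this: $P' = \env(P)$ and $P \subseteq P'$; every vertex of $P'$ is a vertex of $P$ (since $\env$ only enlarges, but here the cyclic inequalities already define $P'$ and the extra non-cyclic inequalities of $P$ are redundant for a submodular $f$ — this is the matroid-theoretic fact that generalized permutohedra are determined by cyclic intervals once... ) — hmm, this last parenthetical is exactly what is false in general, so I retract it.

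Let me commit to the following structure. For $\pi_{\cyc}(\Csub) \subseteq \Cpoly$: given $f \in \Csub$ with polytope $P$, set $a = \pi_{\cyc}(f)$ and let $P' $ be the associated alcoved polytope; we have $P \subseteq P'$ and $f_{P'}([r,s]) = f_P([r,s])$ for cyclic $[r,s]$ by construction. Now invoke the characterization (to be proved, or cited from Theorem~\ref{thm:conenoncross} / Corollary~\ref{cor:rays} which describe $\Cpoly$) that an alcoved polytope $P'$ is a polypositroid iff a certain \emph{local} condition on its support-function values on cyclic intervals holds; verify that condition follows from submodularity of $f$. Alternatively — and this is the approach I would write up — prove directly that $P' = \env(P)$ is a generalized permutohedron by showing its vertices are among those of $P$: take a vertex $v$ of $P'$, cut out by some collection of tight cyclic inequalities; show that the same point satisfies all the (non-cyclic) generalized-permutohedron inequalities of $P$ with at least as much tightness as needed, using submodularity to deduce $\sum_{i \in S} v_i \le f(S)$ from the tight cyclic ones. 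This reduces to: if $f$ is submodular and $v$ satisfies $\sum_{i\in[r,s]} v_i = f([r,s])$ for a "spanning" family of cyclic intervals and $\le$ for all cyclic intervals, then $\sum_{i \in S} v_i \le f(S)$ for all $S$ — which follows by writing the indicator of $S$ as a $\pm$-combination of indicators of cyclic intervals and applying submodularity repeatedly. Once $P' = P$ is a generalized permutohedron it is a polypositroid, so $a \in \Cpoly$.

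For the reverse inclusion $\Cpoly \subseteq \pi_{\cyc}(\Csub)$: let $a \in \Cpoly$, represented by a polypositroid $P$ lying in some $H_k$. Since $P$ is a generalized permutohedron, its support function $f_P: 2^{[n]} \to \R$ is submodular by the theorem preceding the definition of $\Csub$, so $f_P \in \Csub$. Since $P$ is also alcoved, its alcoved facet data $a_{ij}$ are recovered from $f_P$ restricted to cyclic intervals via \eqref{eq:af} (using that for an alcoved polytope $a_{ij}$ is the value of the support function on $h_i - h_j = h_{[j+1,i]} - (\text{const})$, which is a cyclic-interval support-function value up to the affine correction in \eqref{eq:af}). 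Hence $a = \pi_{\cyc}(f_P) \in \pi_{\cyc}(\Csub)$. This direction is essentially bookkeeping with the definitions.

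The main obstacle is the forward direction, specifically the claim that the alcoved envelope of a generalized permutohedron is again a generalized permutohedron — equivalently, that the cyclic-interval restriction map does not "lose submodularity" when one re-extends. I expect to handle this via the combinatorial lemma: the indicator vector $\mathbf{1}_S$ of any subset $S \subseteq [n]$ can be obtained from indicator vectors of cyclic intervals by a sequence of operations $\mathbf{1}_{A} , \mathbf{1}_{B} \mapsto \mathbf{1}_{A \cup B}$ (discarding $\mathbf{1}_{A \cap B}$ with a sign), and submodularity is exactly the statement that $f$ behaves subadditively under such operations; iterating gives $f_P(S) = f_{\env(P)}(S)$ and hence $\env(P) = P$. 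Getting the signs and the induction order right in this lemma — essentially expressing an arbitrary $0/1$ vector as an alternating telescoping sum of cyclic-interval $0/1$ vectors — is the one place where care is required; everything else is immediate from the definitions and Theorems~\ref{thm:triangle} and \ref{thm:PRW}.
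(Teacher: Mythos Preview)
Your reverse inclusion $\Cpoly \subseteq \pi_{\cyc}(\Csub)$ is correct and is exactly what the paper does: a polypositroid $P$ is a generalized permutohedron, so its support function $f_P$ lies in $\Csub$, and since $P$ is also alcoved, $\pi_{\cyc}(f_P)$ recovers its alcoved data.

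The forward inclusion has a genuine gap. You correctly identify that the key is Proposition~\ref{prop:alcgen}: the alcoved envelope of a generalized permutohedron is again a generalized permutohedron. But your committed strategy in the final paragraph aims for the stronger statement $\env(P)=P$, via a telescoping/submodularity argument on indicator vectors. That statement is false: the standard permutohedron $P_4$ is a generalized permutohedron that is \emph{not} alcoved (one of its facets is $x_1+x_3 \le 7$), so $\env(P_4) \supsetneq P_4$. Your ``combinatorial lemma'' would, if it worked, prove $f_P(S)=f_{\env(P)}(S)$ for all $S$, which contradicts this example. What must actually be shown is that $f_{\env(P)}$ is submodular in its own right, not that it coincides with $f_P$; none of the approaches you sketch addresses this directly.

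The paper's route is quite different and bypasses any support-function computation. It first builds the machinery of Coxeter necklaces: for a generalized permutohedron $P$ with vertices $\{v_w\}_{w\in S_n}$, the sequence $\v=(v_{\id},v_c,v_{c^2},\ldots,v_{c^{n-1}})$ is a Coxeter necklace (Proposition~\ref{prop:genpermbalanced}), and $\env(P)=Q(\v)=\bigcap_i (v_{c^{i-1}}+C_i)$ by Lemma~\ref{lem:envelope}. Independently, Theorem~\ref{thm:main} shows (via a geometric argument about face graphs being noncrossing and alternating, Lemmas~\ref{lem:possibleedge} and~\ref{lem:noncrossing}) that $Q(\v)$ is a generalized permutohedron for \emph{any} Coxeter necklace $\v$. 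Chaining these gives Proposition~\ref{prop:alcgen}, and then Theorem~\ref{thm:subpoly} follows immediately. So the real work is in proving Theorem~\ref{thm:main}, and the argument there is about edge directions of $Q(\v)$ rather than about submodularity of an extended support function.
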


Theorem~\ref{thm:subpoly} will be proved in Section~\ref{ssec:subpolyproof}.
Note that $\pi_{\cyc}: \Csub \to \Cpoly$ is a homomorphism of cones: it commutes with addition and with scalar multiplication, and sends $0$ to $0$.

\begin{theorem}\label{thm:conenoncross}
The cone $\Cpoly$ is the subcone of $\Calc$ satisfying
\begin{equation}\label{eq:anoncross}
a_{ik} + a_{jl} \geq a_{il} + a_{jk}
\end{equation}
for any four indices $i,j,k,l$ in cyclic order.
%
\end{theorem}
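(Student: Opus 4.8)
I would prove the two inclusions $\Cpoly\subseteq C$ and $C\subseteq\Cpoly$ separately, where $C$ denotes the subcone of $\Calc$ cut out by \eqref{eq:anoncross}. Both directions pass through the projection $\pi_{\cyc}\colon\Csub\to\Calc$ and Theorem~\ref{thm:subpoly}, which identifies $\pi_{\cyc}(\Csub)=\Cpoly$.

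\emph{The inclusion $\Cpoly\subseteq C$.} Given $(a_{ij})\in\Cpoly$, choose by Theorem~\ref{thm:subpoly} a submodular $f\in\Csub$ with $\pi_{\cyc}(f)=(a_{ij})$, and rewrite \eqref{eq:anoncross} in terms of $f$ via the dictionary \eqref{eq:af}. A short case check on the position of the wrap-around $n\mapsto 1$ relative to $i,j,k,l$ shows that the ``$-k$'' corrections in \eqref{eq:af} contribute $0$ in total, so that \eqref{eq:anoncross} is equivalent to
\[
f_{[k+1,i]}+f_{[l+1,j]}\ \ge\ f_{[l+1,i]}+f_{[k+1,j]}.
\]
Since $i,j,k,l$ lie in cyclic order, the cyclic intervals $S=[k+1,i]$ and $T=[l+1,j]$ overlap in the single arc $S\cap T=[l+1,i]$, with $S\cup T=[k+1,j]$; hence the display is precisely the submodular inequality \eqref{eq:submod} for $f$ applied to $(S,T)$, and therefore holds.

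\emph{The inclusion $C\subseteq\Cpoly$.} Let $(a_{ij})\in C$, let $P\subset H_0$ be the corresponding (nonempty) alcoved polytope, and write $f_{[r,s]}$ for its cyclic-interval support values, recovered from $(a_{ij})$ by \eqref{eq:af} with $f_{[n]}=0$. By Theorem~\ref{thm:subpoly} it suffices to show that the full support function $f_P(U)=\max_{x\in P}x_U$ is submodular: then $f_P\in\Csub$, $\pi_{\cyc}(f_P)=(a_{ij})$, and $(a_{ij})\in\Cpoly$ (equivalently, submodularity of $f_P$ forces the alcoved polytope $P$ to be a generalized permutohedron). By LP duality, $f_P(U)$ equals the minimum of $\sum_t f_{[r_t,s_t]}$ over all finite families of cyclic intervals $[r_t,s_t]$ with $\sum_t\mathbf 1_{[r_t,s_t]}\equiv\mathbf 1_U\pmod{\mathbf 1_{[n]}}$. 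To prove $f_P(S)+f_P(T)\ge f_P(S\cap T)+f_P(S\cup T)$ I would first dispose of the case $S\cup T=[n]$ (and symmetrically $S\cap T=\emptyset$): since $x_{[n]}=0$ on $P$ we get $x_S+x_T=x_{S\cap T}$ for every $x\in P$, whence $f_P(S)+f_P(T)\ge f_P(S\cap T)=f_P(S\cap T)+f_P(S\cup T)$ as $f_P(S\cup T)=f_P([n])=0$. In the remaining case I would take optimal families for $S$ and for $T$, superpose them into a family realizing $\mathbf 1_{S\cap T}+\mathbf 1_{S\cup T}$ (mod $\mathbf 1_{[n]}$) at cost $f_P(S)+f_P(T)$, and then \emph{uncross}: whenever two intervals $I,J$ of the current family meet with neither containing the other --- so that $I\cap J$ and $I\cup J$ are again single cyclic intervals, provided $I\cup J\ne[n]$ --- transfer weight from $\{I,J\}$ to $\{I\cup J,\ I\cap J\}$. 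This preserves the class mod $\mathbf 1_{[n]}$, does not increase the cost by \eqref{eq:anoncross} (the identification from the first half, run backwards), and strictly increases the bounded potential $\sum(\text{weight})\cdot|\text{interval}|^2$, so the process terminates in a laminar family. For a laminar family of cyclic intervals with indicator sum $\equiv\mathbf 1_{S\cap T}+\mathbf 1_{S\cup T}\pmod{\mathbf 1_{[n]}}$, its pairwise-disjoint maximal members realize $\mathbf 1_{S\cup T}$, and what remains, being laminar with a $0$--$1$ indicator sum, realizes $\mathbf 1_{S\cap T}$; this yields $f_P(S\cap T)+f_P(S\cup T)\le f_P(S)+f_P(T)$.

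The first inclusion is essentially bookkeeping with \eqref{eq:af}. I expect the genuine difficulty to lie in the uncrossing step of the second: one must show that the exceptional configuration $I\cup J=[n]$ (where $I\cap J$ is a union of two arcs and \eqref{eq:anoncross} does not directly supply the needed inequality) can always be avoided --- for instance by a careful choice of the initial families, or by a separate argument using the triangle inequalities \eqref{eq:triangle} --- and one must make the termination and the final separation step precise. An alternative route I would keep in reserve is to combine the first inclusion with an explicit description of the extremal rays of $C$, which should be the polypositroids attached to directed cycles (cf.\ Corollary~\ref{cor:rays}): verifying directly that each such ray lies in $\Cpoly$ would force $C\subseteq\Cpoly$, hence equality.
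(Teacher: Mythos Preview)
Your first inclusion is correct and pleasantly direct: lifting to a submodular $f$ via Theorem~\ref{thm:subpoly} and reading \eqref{eq:anoncross} as the submodular inequality for the two cyclic intervals $[k+1,i]$ and $[l+1,j]$ works exactly as you say. This is a cleaner argument for $\Cpoly\subseteq C$ than the paper's, which extracts the same inequality only for generic simple polypositroids via the matching-ensemble machinery (Proposition~\ref{P:four}).

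The second inclusion, however, has a real gap precisely where you locate it, and your proposed remedies do not work. The ``bad'' uncrossing move --- replacing $\{I,J\}$ by the two arcs $\{A,B\}$ of $I\cap J$ when $I\cup J=[n]$ --- would require the linear inequality $f_I+f_J\ge f_A+f_B$ to be a consequence of \eqref{eq:triangle} and \eqref{eq:anoncross}. It is not. Take $n=4$ and the polypositroid $P$ equal to the segment from $e_1$ to $e_3$, translated into $H_0$; then with $I=[3,1]$, $J=[1,3]$, $A=\{1\}$, $B=\{3\}$ one computes $f_I+f_J=0$ but $f_A+f_B=1$. Since $P\in\Cpoly\subseteq C$, the inequality $f_I+f_J\ge f_A+f_B$ is \emph{false} on $C$, so it cannot be derived from triangle inequalities or from \eqref{eq:anoncross}, and no ``careful choice of initial families'' can make that move available. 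Any uncrossing proof would therefore need a genuinely different mechanism for this configuration, not a patch; your proposal does not supply one, and the extremal-ray alternative you mention would require an independent identification of the rays of $C$, which is essentially the statement to be proved.

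The paper proceeds along an entirely different path for $C\subseteq\Cpoly$. It works on the \emph{interior}: if the strict inequalities $a_{ik}+a_{jl}>a_{il}+a_{jk}$ hold and $P$ is simple, then every vertex tree of $P$ is forced to contain only noncrossing matchings (Proposition~\ref{P:four}), hence every $T_v$ is a noncrossing alternating tree, and Lemma~\ref{lem:possibleedge} then shows each edge of $P$ is a root direction, so $P$ is a polypositroid. The cyclohedron witnesses that this open cone is nonempty, and a closure argument finishes. No LP duality or uncrossing is used; the work is done by the normal-fan analysis of Sections~\ref{ssec:facegraphs} and~\ref{sec:normal}.
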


Theorem~\ref{thm:conenoncross} will be proved in Section~\ref{ssec:conenoncrossproof}.

\begin{example}
Let $P$ be the cyclohedron of \eqref{E:cyclo}.  Then \eqref{eq:anoncross} is immediate from \eqref{eq:cyclfrs}.
\end{example}

\begin{remark}
We have described the three cones $\Csub, \Calc, \Cpoly$ in terms of defining (possibly redundant) inequalities.  
The extremal rays of $\Csub$ (modulo translation) 
are hard to describe explicitly; 
see, e.g., \cite{Ng, MPSSW, ACEP} for discussions of the rays.
Among the rays of $\Csub$ are all connected matroids \cite{Ng}. 

However, we will give in Corollary~\ref{cor:rays} an explicit description of the rays of $\Cpoly$ (modulo translation).
\end{remark}
%
%

\section{Alcoved envelopes }\label{sec:alcenv}

Let $C \subset H_0 \subset \R^n$ denote the following polyhedral $(n-1)$-dimensional pointed cone
\begin{equation}\label{eq:typeAcone}
C:=\{x\in \R^n\mid x_1 \leq 0,\, x_1+ x_2\leq 0,\dots ,x_1 +\cdots+ x_{n-1}\leq 0,\, x_1+\cdots + x_n =0\}.
\end{equation}
Define the {\it dominance order\/} on the hyperplane $H=H_k$ as the
partial order $x \preceq  y$ if and only if $x_1 \leq y_1$, $x_1 +
x_2 \leq y_1 + y_2$, $\ldots$, or equivalently, $x-y \in C$. For any
bounded subset $Q\subset H$, there is a unique maximal in the
dominance order point $v=v(Q)\in H$ such that $Q\subset v + C$.
Informally, $v+C$ is the cone containing $Q$ such that every facet
of $v+C$ touches $Q$.  Note that the cone $C$ is an alcoved polyhedron: its facets have normals given by $h_i-h_n$.

Let $c = (12\cdots n) \in S_n$ be the long cycle, with $c = 23 \cdots n1$ in
one-line notation.  The permutations $w \in S_n$ act on $\R^n$ by
the formula
$$
w \cdot (x_1, x_2,\ldots, x_n) =
(x_{w^{-1}(1)},x_{w^{-1}(2)},\ldots,x_{w^{-1}(n)}).
$$
Then $c$ acts on $\R^n$ as the {\it cyclic shift\/} linear operator
given by $c(e_i) = e_{i+1}$, for $i=1,\dots,n$. (Here and below we
assume that indices $i$ are taken modulo $n$.)  For $i \in \Z/n\Z$, define the cyclically shifted cone $C_i := c^{i-1}(C)$.  Thus, $C_1 = C$.  

If $\v = (v^{(1)},\ldots,v^{(n)})$ is a sequence of points in $H$ we
denote by $Q(\v)$ the intersection 
\begin{equation}\label{eq:Qdef}
Q(\v):=\bigcap_{i \in \Z/n\Z}
(v^{(i)}+C_{i}).
\end{equation}
It is clear that $Q(\v)$ is an alcoved polytope whenever it is non-empty.

For a bounded
subset $Q\subset H$, define $v^{(i)} = v^{(i)}(Q) := c^{i-1}(v(c^{1-i}(Q)))$.  Again, the
cone $v^{(i)} + C_i$ contains $Q$, and every facet of $v^{(i)} + C_i$ touches $Q$.

\begin{lemma}\label{lem:envelope}
For the cones $C_i$ and points $v^{(i)}$, $i \in \Z/n\Z$, as above, the
alcoved envelope of $Q$ is the intersection of the following cones
$$
\env(Q) = \bigcap_{i \in \Z/n\Z} (v^{(i)}+C_i).
$$
\end{lemma}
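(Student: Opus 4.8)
The plan is to show that $\bigcap_i (v^{(i)}+C_i)$ is both an alcoved polytope containing $Q$ and that it is contained in every alcoved polytope containing $Q$; the latter property characterizes $\env(Q)$. Write $E := \bigcap_{i \in \Z/n\Z}(v^{(i)}+C_i)$. That $E$ is an alcoved polytope (when nonempty) is already observed after \eqref{eq:Qdef}, since each $C_i$ is an alcoved polyhedron with facet normals among $\{h_r - h_s\}$; and $E \supseteq Q$ because each cone $v^{(i)}+C_i$ was constructed to contain $Q$. So it remains to show $E$ is the \emph{smallest} such polytope, equivalently that every facet inequality of $E$ is tight on $Q$, i.e.\ equals the corresponding value of the support function $a_{rs}(Q) = \sup_{x \in Q}(h_r - h_s)(x)$ from Definition~\ref{def:env}.

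The key step is to recognize the facets of the single cone $v+C$. By \eqref{eq:typeAcone}, $C$ has facet normals $h_1 - h_n, h_2 - h_n, \ldots, h_{n-1}-h_n$, so $v+C$ is cut out by $(h_j - h_n)(x) \le (h_j - h_n)(v)$ for $j = 1,\ldots,n-1$. The defining property of $v = v(Q)$ — that it is the dominance-maximal point with $Q \subseteq v+C$, so that every facet of $v+C$ touches $Q$ — translates into the statement that $(h_j - h_n)(v(Q)) = \sup_{x \in Q}(h_j - h_n)(x)$ for each $j$. Here one should prove the basic claim that such a dominance-maximal $v(Q)$ exists and is given coordinatewise by these suprema: boundedness of $Q$ gives finiteness, and the partial sums $(h_j - h_n)(v) = v_1 + \cdots + v_j - k$ can each independently be taken as small as the corresponding supremum over $Q$ while still containing $Q$, since decreasing one partial-sum bound only shrinks $v+C$. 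Then applying the cyclic shift: since $v^{(i)}(Q) = c^{i-1}(v(c^{1-i}(Q)))$ and $C_i = c^{i-1}(C)$, conjugating by $c^{i-1}$ shows that the facets of $v^{(i)}+C_i$ have normals $c^{i-1}(h_j - h_n)$ and that each such facet touches $Q$; in terms of the cyclic-interval functions \eqref{eq:frs}, as $i$ and $j$ range, these normals run over all $h_{[r,s]}$ for cyclic intervals $[r,s] \ne [n]$, and the corresponding bounds are exactly $f_{[r,s]}(Q) = \sup_{x \in Q} h_{[r,s]}(x)$.

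Putting this together: $E$ is cut out by the inequalities $h_{[r,s]}(x) \le f_{[r,s]}(Q)$ over all cyclic intervals $[r,s] \ne [n]$ (together with $x_1 + \cdots + x_n = k$), and each bound is the exact supremum of the corresponding linear functional on $Q$. By the description \eqref{eq:frs} of alcoved polytopes via cyclic intervals and the equivalent formula for $\env(Q)$ in Definition~\ref{def:env}, this set of inequalities is precisely the one defining $\env(Q)$. Hence $E = \env(Q)$. The main obstacle I expect is the careful verification that $v(Q)$ — the dominance-maximal point with $Q \subseteq v+C$ — exists, is unique, and has the claimed coordinates; once that "one cone" statement is pinned down, the cyclic-shift bookkeeping and the translation into cyclic-interval support-function values are routine, using only that conjugation by $c$ permutes the relevant linear functionals.
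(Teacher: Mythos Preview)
Your proposal is correct and follows essentially the same approach as the paper: both identify $\env(Q)$ as the intersection of the halfspaces $\{h_{[r,s]}(x)\le f_{[r,s]}\}$ with $f_{[r,s]}=\sup_{x\in Q}h_{[r,s]}(x)$, and then group these halfspaces by the starting index $r$ to recognize $\bigcap_s S_{[r,s]}$ as the shifted cone $v^{(r)}+C_r$. Your write-up is more explicit about the existence and coordinates of $v(Q)$ and about the cyclic-shift bookkeeping, but the argument is the same.
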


\begin{proof}  Let $f_{[r,s]}$ be the minimal number such that $Q$ belongs to the halfspace
$S_{[r,s]}=\{x\in H\mid \sum_{i\in[r,s]} x_i \leq f_{[r,s]}\}$, that is the (unique) facet of $S_{[r,s]}$ touches $Q$.
Then the alcoved envelope of $Q$ is the alcoved polytope $\env(Q) = \bigcap_{r,s} S_{[r,s]}$.  For fixed $r$, the
intersection $\bigcap_{s} S_{[r,s]}$ is the affine translation $v^{(r)} + C_r$ of the cone $C_r$ that satisfies the conditions
of the lemma.   So $\env(Q) = \bigcap_{r \in \Z/n\Z} (v^{(r)}+C_r)$, as needed.
\end{proof}


\begin{proposition}\label{prop:genpermbalanced} Suppose $P$ is a
generalized permutohedron in $H$ with vertices $\{v_w \mid w \in S_n\}$. Then
\begin{enumerate}
\item
the sequence $(v_{\id}, v_{c},
v_{c^{2}},\ldots,v_{c^{n-1}})$ is balanced,
\item
one has $v_{c^{i-1}} = v_{i}(P)$ for $i = 1, \ldots, n$.
\end{enumerate}
\end{proposition}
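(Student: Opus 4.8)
The plan is to establish part (2) first; part (1) will then follow with little extra work. The key input will be the following reformulation of Theorem~\ref{thm:PRW}: under the labelling of the vertices of $P$ by $S_n$, for each $w \in S_n$ the vertex $v_w$ is the one selected by every linear functional in the closed braid chamber $D_w := \{h \in (\R^n)^* \mid h(e_{w(1)}) \ge h(e_{w(2)}) \ge \cdots \ge h(e_{w(n)})\}$; equivalently, $f_P(h) = h(v_w)$ for all $h \in D_w$. To obtain this, write $\widehat v_u := -(u^{-1}(1),\dots,u^{-1}(n))$ for the vertex of $P_n$ whose normal cone is $D_u$. If $(\widehat v_u, \widehat v_w)$ is an edge of $P_n$, then by Theorem~\ref{thm:PRW}(3) the vector $v_u - v_w$ is a nonnegative multiple of $\widehat v_u - \widehat v_w$, so $h(v_u) \ge h(v_w)$ whenever $h(\widehat v_u) \ge h(\widehat v_w)$. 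Given a generic $h$ in the interior of $D_w$ (so that $h$ is nonconstant on every edge of $P_n$), greedy ascent along the $1$-skeleton of $P_n$ in the direction of increasing $h(\widehat v_\bullet)$ terminates at $\widehat v_w$, and along this walk $h(v_\bullet)$ is weakly increasing; hence $h(v_w) = \max_{u} h(v_u) = f_P(h)$. The general case follows by continuity.

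Granting this, part (2) is a direct computation. With $c = (12\cdots n)$, the permutation $c^{i-1}$ has one-line notation $(i, i+1, \dots, n, 1, \dots, i-1)$, so along the sequence $e_{c^{i-1}(1)}, \dots, e_{c^{i-1}(n)}$ the covector $h_{[i, i+j-1]}$ --- which is $1$ on the cyclic interval $[i, i+j-1]$ and $0$ elsewhere --- takes the nonincreasing values $1, \dots, 1, 0, \dots, 0$; hence $h_{[i, i+j-1]} \in D_{c^{i-1}}$. The $n$ covectors $h_{[i,i]}, h_{[i,i+1]}, \dots, h_{[i,i+n-2]}, h_{[n]}$ form a basis of $(\R^n)^*$ (their consecutive differences are the coordinate functionals), so a point of $H_k$ is pinned down by its values on them. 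By the reformulation, $h_{[i, i+j-1]}(v_{c^{i-1}}) = f_P([i, i+j-1])$ for $1 \le j \le n-1$, while $h_{[n]}(v_{c^{i-1}}) = k$. On the other hand, since $c$ acts on $\R^n$ as the cyclic shift, one has $h_{[i, i+j-1]} \circ c^{i-1} = h_{[1,j]}$ and $h_{[1,j]} \circ c^{1-i} = h_{[i, i+j-1]}$; combined with $v_i(P) = c^{i-1}(v(c^{1-i}(P)))$ and the defining property of $v(\cdot)$, this gives
\[
h_{[i,i+j-1]}(v_i(P)) = h_{[1,j]}(v(c^{1-i}(P))) = f_{c^{1-i}(P)}([1,j]) = f_P([i,i+j-1]),
\]
together with $h_{[n]}(v_i(P)) = k$. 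Thus $v_{c^{i-1}}$ and $v_i(P)$ agree on a basis of functionals, so they coincide.

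For part (1): by (2) the sequence $(v_{\id}, v_c, \dots, v_{c^{n-1}})$ is nothing but $(v_1(P), \dots, v_n(P))$, and so by Lemma~\ref{lem:envelope} the alcoved polytope $Q(v_1(P), \dots, v_n(P)) = \bigcap_i (v_i(P) + C_i)$ equals the alcoved envelope $\env(P)$. Since $\env(P)$ is cut out by the tight inequalities $x_{[r,s]} \le f_P([r,s])$, its support function agrees with $f_P$ on all cyclic intervals; because each $v^{(i)}(\cdot)$ depends only on those values, $v^{(i)}(\env(P)) = v_i(P) = v_{c^{i-1}}$ for every $i$, and this is precisely the assertion that $(v_1(P), \dots, v_n(P))$ is balanced. (If one works instead with a purely combinatorial form of the balance condition, the required relations among the $v_{c^{i-1}}$ unwind from the identities $h_{[i,i+j-1]}(v_{c^{i-1}}) = f_P([i,i+j-1])$ obtained above together with the submodularity of $f_P$, via Theorems~\ref{thm:subpoly} and~\ref{thm:conenoncross}.)

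The one step I expect to require genuine care is the reformulation of Theorem~\ref{thm:PRW} used throughout --- that is, verifying that the vertex labelling provided by that theorem is the one compatible with the braid fan --- which rests on the connectivity-plus-monotonicity argument on the $1$-skeleton of $P_n$ sketched in the first paragraph; everything else is bookkeeping with cyclic shifts and the basis $\{h_{[i, i+j-1]}\}_{j}$.
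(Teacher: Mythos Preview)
Your argument for part~(2) is correct and takes a somewhat different route from the paper's: you pin down both $v_{c^{i-1}}$ and $v_i(P)$ by their values on the basis of functionals $h_{[i,\cdot]}$, whereas the paper argues that the edges of $P$ incident to $v_{c^{i-1}}$ all point into $C_i$ (by Theorem~\ref{thm:PRW}(3)), so that $P\subset v_{c^{i-1}}+C_i$ directly.

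Your proof of part~(1), however, has a genuine gap. The assertion ``$v^{(i)}(\env(P))=v_i(P)=v_{c^{i-1}}$ \dots\ is precisely the assertion that $(v_1(P),\dots,v_n(P))$ is balanced'' is a non-sequitur: the equality $v^{(i)}(\env(P))=v_i(P)$ holds tautologically for any bounded $P$ (both sides depend only on the values $f_P([r,s])$) and says nothing about the signs of the coordinates of $v^{(i+1)}-v^{(i)}$, which is what ``balanced'' means. The parenthetical fallback via Theorems~\ref{thm:subpoly} and~\ref{thm:conenoncross} is circular in the paper's logic (Theorem~\ref{thm:subpoly} is proved through Proposition~\ref{prop:alcgen}, which rests on this very part~(1)), and in any event those theorems are not what is needed. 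What \emph{does} work is the bare submodularity you mention: from your identities $(v_{c^{i-1}})_{[i,j]}=f_P([i,j])$ one reads off the individual coordinates as successive differences, and then $(v_{c^i}-v_{c^{i-1}})_j\ge0$ for $j\ne i$ is exactly $f_P([i,j-1])+f_P([i+1,j])\ge f_P([i+1,j-1])+f_P([i,j])$, i.e., submodularity on the pair $[i,j-1],[i+1,j]$ (with $j=i+1$ handled by $S=\{i\}$, $T=\{i+1\}$). You have not carried this out. The paper instead proves~(1) first and more directly, walking from $v_{\id}$ to $v_c$ along the chain $v_{\id},v_{s_1},v_{s_1s_2},\dots,v_c$ and reading off from Theorem~\ref{thm:PRW}(3) that each step lies in $\R_{\ge0}(e_k-e_1)$ for some $k\ge2$.
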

\begin{proof}
For (1), we show that all but the first coordinate of $v_{c} -
v_{\id}$ is nonnegative, and a similar argument shows that the
entire sequence is balanced.  We have $c = 23 \cdots 1$ in one-line notation.  By Theorem \ref{thm:PRW}(3),
\begin{align*}
v_{213 \cdots n} - v_{123\cdots n} &\in &&\R_{\geq 0}((-2,-1,-3,\ldots) - (-1,-2,-3,\ldots)) = \R_{\geq0}(e_2-e_1) \\
v_{2314 \cdots n} - v_{213\cdots n} &\in&& \R_{\geq 0}((-3,-1,-2,\ldots) - (-2,-1,-3,\ldots)) = \R_{\geq 0}(e_3-e_1) \\
\cdots  \\
v_{23 \cdots (n-1) n 1}-v_{23 \cdots(n-1) 1 n}  &\in &&\R_{\geq0}((-n,-1,-2,\ldots,1-n) - (1-n,-1,-2,\ldots,-n))  \\
&&& =\R_{\geq 0}(e_n-e_1).
\end{align*}
It follows that all but the first coordinate of $v_{c} - v_{\id}$
is nonnegative.

For (2), we note that all edges of $P$ incident to $v_{c^{i-1}}$ are
in the same direction as an edge of the cone $C_{i}$.  It follows that
$P \subset v_{c^{i-1}} + C_i$, so that $v_{c^{i-1}} = v_i(P)$.
\end{proof}

\section{Parametrization of polypositroids}
\label{sec:para_poly}

\subsection{Coxeter necklaces}
\begin{definition}\label{def:Anecklace}
Let $\v = (v^{(1)},v^{(2)},\ldots,v^{(n)})$ be a sequence of points
in $H$. We say that $\v$ is a {\it Coxeter necklace} if, for each $i$, we have
\begin{equation}
\label{eq:neckdef}
v^{(i+1)} - v^{(i)} \mbox{ is nonnegative in all coordinates except the
$i$-th coordinate.}
\end{equation}
Here, the superscript $i$ is taken modulo $n$.
\end{definition}

\begin{remark}\label{rem:Grassmann}
The set of $(k,n)$-Grassmann necklaces (see Section~\ref{sec:positroids}) is exactly the set of Coxeter necklaces $\v= (v^{(1)},v^{(2)},\ldots,v^{(n)})$ in $H_k$ such that 
each $v^{(i)}$ is a $01$-vector.
\end{remark}

The sum of two Coxeter necklaces $\v \in H_k$ and $\v'\in H_{k'}$ is a Coxeter necklace $\v''\in H_{k+k'}$.  Recall that the polytope $Q(\v)$ was defined in \eqref{eq:Qdef}.

\begin{lemma}
The space of Coxeter necklaces for varying $H=H_k$ form a cone.  The map $\v \to Q(\v)$ induces a homomorphism of cones from Coxeter necklaces to $\Calc$.
\end{lemma}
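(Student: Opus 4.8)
The plan is to prove the lemma in two parts, corresponding to the two assertions: (1) that Coxeter necklaces form a cone, and (2) that $\v \mapsto Q(\v)$ is a cone homomorphism into $\Calc$.

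For part (1), the statement is essentially immediate from the definition. The defining condition \eqref{eq:neckdef} says that $v^{(i+1)} - v^{(i)}$ lies in a certain closed convex cone (the set of vectors with all coordinates nonnegative except possibly the $i$-th, intersected with the hyperplane direction $\{x_1 + \cdots + x_n = 0\}$). First I would observe that if $\v \in H_k$ and $\v' \in H_{k'}$ are Coxeter necklaces, then $\v + \v'$ has $(v+v')^{(i+1)} - (v+v')^{(i)} = (v^{(i+1)} - v^{(i)}) + (v'^{(i+1)} - v'^{(i)})$, which is a sum of two vectors each nonnegative outside the $i$-th coordinate, hence nonnegative outside the $i$-th coordinate; and the sum lies in $H_{k+k'}$. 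Positive scalar multiplication is equally clear, as is the fact that $0 \in H_0$ is a (degenerate) Coxeter necklace. So the space of all Coxeter necklaces (over all $H_k$) is closed under addition and nonnegative scaling, i.e., forms a cone.

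For part (2), I need two things: first, that $Q(\v)$ is always a nonempty alcoved polytope when $\v$ is a Coxeter necklace (so the map is well-defined into $\Calc$), and second, that $Q(\v + \v') = \env(Q(\v) + Q(\v'))$ (matching the cone structure on $\Calc$ described just before Section~\ref{sec:alcenv}, where addition is Minkowski sum followed by alcoved envelope). For the first point, I would show that the Coxeter necklace condition is exactly what guarantees the cones $v^{(i)} + C_i$ have a common point: indeed, for a generalized permutohedron $P$, Proposition~\ref{prop:genpermbalanced} shows $(v_{\id}, v_c, \ldots, v_{c^{n-1}})$ is balanced (a stronger condition) and $v_{c^{i-1}} = v_i(P)$, so $P = Q(\v)$ for that necklace; more directly, I would verify that the condition $v^{(i+1)} - v^{(i)} \in$ (nonnegative outside $i$-th coord) forces $v^{(i)} \in v^{(i+1)} + C_{i+1}$ after the appropriate cyclic rotation, and chaining these inclusions around the cycle shows each $v^{(j)}$ lies in $v^{(i)} + C_i$ for all $i$, so $v^{(1)} \in Q(\v)$ and $Q(\v) \ne \emptyset$; nonemptiness plus the remark after \eqref{eq:Qdef} gives that $Q(\v)$ is an alcoved polytope. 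For the homomorphism property, the key identity is that the support-function value $f_{Q(\v)}([r,s])$ is determined by a single $v^{(i)}$ (namely the one whose cone $C_i$ contributes the facet $\sum_{i \in [r,s]} x_i \le f_{[r,s]}$), as in the proof of Lemma~\ref{lem:envelope}: for fixed $r$, the cone $\bigcap_s S_{[r,s]}$ is $v^{(r)} + C_r$. Since the map $\v \mapsto (f_{[r,s]})$ is coordinate-wise linear in the relevant $v^{(i)}$, and since $Q(\v) + Q(\v') \subseteq Q(\v + \v')$ with the latter alcoved (hence equal to its own envelope and containing $\env(Q(\v)+Q(\v'))$), while the facet-touching property forces equality of the defining $f_{[r,s]}$ values, I would conclude $Q(\v + \v') = \env(Q(\v) + Q(\v'))$.

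The main obstacle I anticipate is the careful bookkeeping in showing $Q(\v+\v') = \env(Q(\v)+Q(\v'))$: one must check that the minimal $f_{[r,s]}$ for the Minkowski sum is the sum of the minimal $f_{[r,s]}$'s for the two pieces, i.e., that $f_{P_1 + P_2}(h) = f_{P_1}(h) + f_{P_2}(h)$ for support functions (standard) and then that passing to the alcoved envelope does not change the cyclic-interval support values — which holds because $\env$ only affects non-cyclic-interval directions and, for alcoved polytopes, the cyclic-interval inequalities already cut out the polytope via \eqref{eq:frs}. Tracking the cyclic index shifts (the $c^{i-1}$ twists in the definition of $v^{(i)}(Q)$ and $C_i$) without sign errors is the fiddly part, but it is routine given Lemma~\ref{lem:envelope}.
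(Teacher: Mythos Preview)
Your proposal is correct and matches the paper's approach. The paper gives no explicit proof of this lemma; it is left as essentially immediate once Lemma~\ref{lem:balancedvertex} is established, since knowing that each $v^{(r)}\in Q(\v)$ forces $f_{[r,s]}(Q(\v))=v^{(r)}_{[r,s]}$, making the map $\v\mapsto(a_{ij})$ manifestly linear. Your plan recovers exactly this content, together with the trivial closure of the necklace condition under addition and scaling.

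One small caution: your ``chaining'' argument for nonemptiness is not quite right as stated. From $v^{(i)}\in v^{(i+1)}+C_{i+1}$ and $v^{(i+1)}\in v^{(i+2)}+C_{i+2}$ you cannot directly conclude $v^{(i)}\in v^{(i+2)}+C_{i+2}$, since $C_{i+1}\not\subset C_{i+2}$. The actual argument (Lemma~\ref{lem:balancedvertex}) fixes the target cone $C_i$ and sums the one-step differences $v^{(k+1)}-v^{(k)}$, going around the cycle in whichever direction avoids the positive coordinate landing inside the interval $[i,\ell]$. This is the ``appropriate cyclic rotation'' you allude to, but it is a case split rather than a transitive chain.
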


It will follow from Theorem~\ref{thm:main} below that the image of this homomorphism of cones is $\Cpoly$.

\begin{lemma}\label{lem:balancedvertex}
Suppose that $\v=(v^{(1)},v^{(2)},\ldots,v^{(n)})$ is a Coxeter necklace. Then
each $v^{(i)}$ is a vertex of $Q(\v)$.
\end{lemma}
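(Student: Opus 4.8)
The plan is to show directly that each $v^{(i)}$ lies on the boundary of $Q(\v)$ in a maximal way, i.e.\ that it is the unique point of $Q(\v)$ in the intersection of the $n-1$ facet hyperplanes bounding the cone $v^{(i)} + C_i$. By the cyclic symmetry $C_i = c^{i-1}(C)$ and the fact that the Coxeter necklace condition \eqref{eq:neckdef} is invariant under the simultaneous cyclic relabelling $\v \mapsto (c(v^{(n)}), c(v^{(1)}), \ldots, c(v^{(n-1)}))$, it suffices to treat the case $i = 1$. So I would fix $i=1$, write $v := v^{(1)}$, and aim to show $v$ is a vertex of $Q(\v)$.

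First I would observe that $v \in Q(\v)$: indeed, the defining property $v^{(j)} = c^{j-1}(v(c^{1-j}(Q)))$ — or more concretely, the Coxeter necklace inequalities — should give $v \in v^{(j)} + C_j$ for every $j \in \Z/n\Z$. Concretely, unwinding $C_j = c^{j-1}(C)$, membership $v \in v^{(j)} + C_j$ amounts to the inequalities $(h_{[j+1,\,j+1]} + \cdots + h_{[j+1,\,m]})(v - v^{(j)}) \le 0$ for the relevant cyclic partial sums, which telescope into a consequence of \eqref{eq:neckdef} summed around the necklace from index $j$ up to index $1$. This is the routine bookkeeping step: one checks that each cyclic partial-sum inequality cutting out $v^{(j)} + C_j$ is implied by adding up the coordinatewise inequalities $v^{(m+1)} - v^{(m)} \ge 0$ (in all but the $m$-th coordinate) for $m = j, j+1, \ldots$. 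The only subtlety is keeping track of which coordinate is excluded at each step and confirming that the excluded coordinates never interfere with the partial sum in question; this works out precisely because in the inequality defining the cone $C_1$, the $k$-th partial-sum inequality $x_1 + \cdots + x_k \le 0$ only involves coordinates $1, \ldots, k$, and the excluded coordinate at stage $m$ is $m$ itself.

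Next, and this is the crux, I would show that $v$ is the \emph{unique} point of $Q(\v)$ lying in $\bigcap_{\ell=1}^{n-1}\{x : x_1 + \cdots + x_\ell = (v^{(1)})_1 + \cdots + (v^{(1)})_\ell\}$, the apex of the cone $v^{(1)} + C_1$. Since $Q(\v) \subseteq v^{(1)} + C_1$, any point of $Q(\v)$ in all $n-1$ of these hyperplanes together with the ambient equation $x_1 + \cdots + x_n = k$ must equal $v^{(1)}$ (the cone $C_1$ is pointed, so its apex is a single point). Hence $v^{(1)}$, being the intersection of $Q(\v)$ with $n-1$ supporting hyperplanes of $Q(\v)$ whose normals $h_1, h_1 - h_2$-type vectors span a complement to the lineality of $H$, is a face of dimension $0$, i.e.\ a vertex — provided $v^{(1)}$ actually belongs to all of them, which is immediate from the definition of $v^{(1)} + C_1$ having its apex at $v^{(1)}$.

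I expect the main obstacle to be the first step — verifying cleanly that the Coxeter necklace inequalities imply $v^{(1)} \in v^{(j)} + C_j$ for \emph{all} $j$, not just the neighbouring ones — since one must sum the defining inequalities around the full cycle and the "excluded coordinate" at each stage shifts. The cleanest way around this is probably to argue that the necklace condition \eqref{eq:neckdef} is exactly equivalent (via \eqref{eq:af} and telescoping) to the statement $v^{(1)} \preceq_j v^{(j)}$ in each cyclically rotated dominance order, where $\preceq_j$ is the dominance order associated to the cone $C_j$; then $v^{(1)} \in \bigcap_j (v^{(j)} + C_j) = Q(\v)$ is a tautology, and the vertex claim follows as above. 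Alternatively, one can invoke Lemma~\ref{lem:envelope}: the points $v^{(j)}$ are, by construction of a necklace, consistent with being the $v^{(j)}(Q(\v))$, so $\env(Q(\v)) = Q(\v)$ and each apex $v^{(j)}$ is touched by $Q(\v)$, forcing $v^{(j)} \in Q(\v)$.
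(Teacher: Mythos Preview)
Your main approach is correct and essentially the same as the paper's: reduce by cyclic symmetry, then show $v^{(1)} \in v^{(j)} + C_j$ for all $j$ via a telescoping sum of the necklace inequalities \eqref{eq:neckdef}, splitting into two cases depending on which way around the cycle is shorter. The paper's proof does exactly this telescoping (stated as $v^{(i)}_{[1,j]} \leq v^{(1)}_{[1,j]}$, the transposed but equivalent statement), and in fact leaves implicit the step you spell out --- that containment in $Q(\v)$ implies vertexhood because $v^{(i)}$ is the apex of the pointed cone $v^{(i)} + C_i \supseteq Q(\v)$. Your two alternative routes at the end are less solid: the necklace condition is a local condition and is not literally \emph{equivalent} to the global dominance statements without the telescoping you already need, and invoking Lemma~\ref{lem:envelope} is circular here since $\v$ is not yet known to arise as the $v^{(j)}(Q)$ of any set $Q$.
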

\begin{proof}
It suffices to show that $v^{(i)} \in Q(\v)$.  We will show that
$v^{(i)}_{[1,j]} \leq v^{(1)}_{[1,j]}$; the full set of inequalities
follows by cyclic symmetry.  Suppose $i \leq j+1$.  Then
$$
(v^{(i)}-v^{(1)})_{[1,j]} = \sum_{k \in [2,i]}
(v^{(k)}-v^{(k-1)})_{[1,j]} \leq 0
$$
using the definition \eqref{eq:neckdef} and the fact $k-1 \in [1,j]$.
Suppose $i > j +1$.  Then
$$
(v^{(1)}-v^{(i)})_{[1,j]} = \sum_{k \in [i+1,1]}
(v^{(k)}-v^{(k-1)})_{[1,j]} \geq 0
$$
using the definition \eqref{eq:neckdef} and the fact that $k-1 \notin
[1,j]$.
\end{proof}

It follows from Lemma~\ref{lem:balancedvertex} that the map $\v \to Q(\v)$ is injective.  


\subsection{Balanced digraphs}
\begin{definition}
Let $G$ be a real-weighted directed graph on the vertex set $[n]$.  
Let $m_{ij} \in
\R$ denote the weight of the edge $i \to j$ of $G$.  
(We assume that $m_{ij}=0$ if $G$ does not contain the edge $i\to j$.)
We say that $G$ is {\it balanced} if
\begin{enumerate}
\item
the weight of non-loop edges are nonnegative, that is $m_{ij} \in
\R_{\geq 0}$ when $i \neq j$;
\item the total outdegree is
equal to the the total indegree of each vertex; that is, for each
$i$ one has
\begin{equation}\label{eq:bal}
\sum_{j=1}^n m_{ij} = \sum_{j =1}^n m_{ji}.
\end{equation}
\end{enumerate}
\end{definition}
\begin{remark}
The set of decorated permutations (see Section \ref{sec:positroids}) is the same as the set of balanced graphs satisfying the following two conditions: (a) every vertex has one non-zero incoming edge and one
non-zero outgoing edge, and (b) all edges have weight $\pm 1$ (only loop edges can have weight $-1$).
\end{remark}

\begin{lemma}\label{lem:sum}
Suppose $G$ is a balanced digraph.  Then the sum
$$
S_i(G) := \sum_{j \in [n]} (m_{j,j} + m_{j+1,j} + \cdots + m_{i,j})
$$
does not depend on $i$.
\end{lemma}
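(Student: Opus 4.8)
The claim is that the quantity $S_i(G) = \sum_{j \in [n]} (m_{j,j} + m_{j+1,j} + \cdots + m_{i,j})$ is independent of $i \in \Z/n\Z$. The plan is to show directly that $S_{i+1}(G) - S_i(G) = 0$ using the balancing condition \eqref{eq:bal}. Observe that passing from $S_i$ to $S_{i+1}$ changes, for each fixed source $j$, the inner sum $m_{j,j} + m_{j+1,j} + \cdots + m_{i,j}$ (a sum over the cyclic interval $[j,i]$ of the column-$j$ weights) by adding the single term $m_{i+1,j}$ — here indices are read modulo $n$, and the one edge case $j = i+1$ is harmless since then the cyclic interval $[j,i] = [i+1,i]$ already wraps all the way around and $[j,i+1]$ is the full cycle, so the difference is still exactly $m_{i+1,j} = m_{i+1,i+1}$. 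Hence
\begin{equation}\label{eq:Sdiff}
S_{i+1}(G) - S_i(G) = \sum_{j \in [n]} m_{i+1,j}.
\end{equation}

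\textbf{Key step.} Now I would invoke the balancing identity \eqref{eq:bal} with the vertex index equal to $i+1$: it gives $\sum_{j=1}^n m_{i+1,j} = \sum_{j=1}^n m_{j,i+1}$, i.e. the total outdegree of vertex $i+1$ equals its total indegree. That by itself does not make the right-hand side of \eqref{eq:Sdiff} vanish, so a second observation is needed: I should instead write $S_{i+1}(G) - S_i(G)$ more symmetrically. The cleanest route is to first rewrite $S_i(G)$ itself. Reindexing the inner sum, $m_{j,j} + m_{j+1,j} + \cdots + m_{i,j} = \sum_{k \in [j,i]} m_{k,j}$, so $S_i(G) = \sum_{j \in [n]} \sum_{k \in [j,i]} m_{k,j}$, which is the sum of $m_{k,j}$ over all ordered pairs $(k,j)$ such that $k$ lies in the cyclic interval from $j$ to $i$ inclusive — equivalently, $i$ does \emph{not} lie strictly inside the open cyclic interval from $k$ to $j$. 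Then $S_{i+1} - S_i$ picks up exactly those pairs $(k,j)$ for which moving the endpoint from $i$ to $i+1$ newly includes $k$, and a careful bookkeeping shows these are precisely the pairs $(i+1, j)$ over all $j$ together with a cancelling contribution; assembling the signs yields $\sum_j m_{i+1,j} - \sum_j m_{j,i+1}$, which is $0$ by \eqref{eq:bal}.

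\textbf{Cleaner alternative.} Rather than risk sign errors in the cyclic-interval bookkeeping, I would actually prefer the following slicker argument. Define $T_i := \sum_{j \in [n]} \sum_{k \in [j,i]} m_{k,j}$ as above and also the transpose-type sum using the other balancing side. By condition \eqref{eq:bal}, for each vertex $v$ we may replace $\sum_j m_{v,j}$ by $\sum_j m_{j,v}$ freely. Splitting $\sum_{k \in [j,i]} m_{k,j}$ at the "wrap point" and using that $\sum_{k \in [n]} m_{k,j} = \sum_{k \in [n]} m_{j,k}$ (balancing at $j$, summing the whole row), one gets $S_i(G) = \sum_{j} \big( \sum_{k \in [n]} m_{k,j} - \sum_{k \in [i+1, j-1]} m_{k,j} \big) = \sum_j \sum_{k \in [n]} m_{j,k} - \sum_j \sum_{k \in [i+1,j-1]} m_{k,j}$; the first double sum is the total edge weight of $G$, independent of $i$, and by swapping the roles of $j$ and $k$ in the second sum and again applying \eqref{eq:bal}, one checks it is also independent of $i$.

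\textbf{Main obstacle.} The only real difficulty here is purely bookkeeping: correctly handling the cyclic (mod $n$) indexing in the definition of $S_i$ — in particular the degenerate case $j=i$ or $j = i+1$ where the cyclic interval $[j,i]$ collapses or becomes the whole cycle — and making sure the telescoping/cancellation in \eqref{eq:Sdiff} lands exactly on a balancing relation of the form $\sum_j m_{i+1,j} = \sum_j m_{j,i+1}$. There is no conceptual content beyond \eqref{eq:bal}; once the difference $S_{i+1} - S_i$ is written as a difference of an outdegree sum and an indegree sum at a single vertex, the lemma follows immediately. I would present the direct computation \eqref{eq:Sdiff}-style argument as the proof, being explicit about the modular indexing conventions, since it is the shortest.
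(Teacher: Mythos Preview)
Your approach is the same as the paper's---compute the difference $S_{i+1}-S_i$ (or $S_i-S_{i+1}$) and invoke \eqref{eq:bal}---but you mishandle the edge case $j=i+1$, which is exactly where the second half of the balancing identity enters. When $j=i+1$, the inner sum in $S_i$ is $m_{i+1,i+1}+m_{i+2,i+1}+\cdots+m_{i,i+1}=\sum_{k\in[n]}m_{k,i+1}$ (a full cycle), while the inner sum in $S_{i+1}$ is the single term $m_{i+1,i+1}$; the interval $[i+1,i+1]$ is a singleton, not the full cycle as you claim. So the $j=i+1$ contribution to $S_{i+1}-S_i$ is $m_{i+1,i+1}-\sum_k m_{k,i+1}$, not $m_{i+1,i+1}$. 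Combining with the $j\neq i+1$ terms (each contributing $m_{i+1,j}$) gives directly
\[
S_{i+1}-S_i=\sum_{j\in[n]}m_{i+1,j}-\sum_{k\in[n]}m_{k,i+1}=0
\]
by \eqref{eq:bal}. This is the one-line computation the paper gives (with the sign reversed): $S_i-S_{i+1}=\sum_j m_{j,i+1}-\sum_j m_{i+1,j}=0$.

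Your displayed formula \eqref{eq:Sdiff} is therefore incorrect, and your subsequent realization that ``that by itself does not make the right-hand side vanish'' is a symptom of this bookkeeping error, not a sign that a second idea is needed. The ``cleaner alternative'' is unnecessary and in fact more convoluted; once the edge case is handled correctly, the direct difference already lands on exactly the outdegree-minus-indegree expression at vertex $i+1$.
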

\begin{proof}
We have $S_i - S_{i+1} = \sum_{j \in [n]} m_{j,i+1} - \sum_{j \in
[n]} m_{i+1,j} = 0. $\end{proof} For a balanced digraph $G$, we
define $S(G) = S_i(G)$ to be the sum of Lemma \ref{lem:sum}.

Note that the space of balanced graphs forms a cone in a natural way: $G= \alpha G' + \beta G''$ has weights given by $m_{ij}= \alpha m'_{ij} + \beta m''_{ij}$.
\begin{proposition}\label{prop:rays}
Every balanced graph $G$ is a linear combination of balanced directed cycles (including loops and cycles of length $2$) such that the coefficient of those cycles that are not loops is nonnegative.
\end{proposition}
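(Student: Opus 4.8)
The plan is to decompose a balanced digraph $G$ into directed cycles by a greedy flow-decomposition argument, keeping track of the fact that only the loop components are allowed negative coefficients. Write $m_{ij}$ for the edge weights of $G$, with $m_{ij}\geq 0$ for $i\neq j$ and the balance condition \eqref{eq:bal} at every vertex. I would first separate the loop part: set $G = G_{\mathrm{loops}} + G'$, where $G_{\mathrm{loops}}$ collects the (possibly negative) loop weights $m_{ii}$, each of which is already a scalar multiple of a single-vertex ``cycle'', and $G'$ has $m'_{ii}=0$ and $m'_{ij}=m_{ij}\geq 0$ for $i\neq j$. The balance condition is not inherited by $G'$ in general, so the first genuine step is to restore it: I would add back loops to $G'$ to make it balanced, i.e.\ define $G'' = G' + \sum_i \delta_i L_i$ where $L_i$ is the loop at $i$ and $\delta_i\in\R$ is chosen so that $G''$ satisfies \eqref{eq:bal}; concretely $\delta_i$ is the (signed) imbalance of $G'$ at $i$, which may be negative, but again each $\delta_i L_i$ is a scalar multiple of a loop so negative coefficients there are harmless.

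The core of the argument is then: a digraph $G''$ with \emph{nonnegative} edge weights (loops included) satisfying the balance condition \eqref{eq:bal} is a nonnegative linear combination of directed cycles. I would prove this by induction on the number of edges with positive weight. If there are no positive-weight edges we are done (the empty combination). Otherwise, pick an edge $i_0\to i_1$ with $m''_{i_0 i_1}>0$; by balance at $i_1$ there is an outgoing edge $i_1\to i_2$ with positive weight, and continuing we must eventually revisit a vertex, producing a directed cycle $Z$ (possibly a loop) all of whose edges have positive weight. Let $t>0$ be the minimum of $m''_e$ over edges $e$ of $Z$. Then $G'' - t\cdot Z$ still has nonnegative edge weights, still satisfies \eqref{eq:bal} (subtracting a cycle preserves balance at every vertex), and has strictly fewer positive-weight edges. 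By induction $G'' - tZ$ is a nonnegative combination of cycles, hence so is $G''$. Assembling, $G = G_{\mathrm{loops}} + \bigl(G'' - \sum_i\delta_i L_i\bigr)$ is a linear combination of directed cycles in which every non-loop cycle appears with a nonnegative coefficient, as claimed.

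I would present the loop-extraction and rebalancing as one short paragraph and the cycle-peeling induction as the second; the decomposition itself is the standard flow-decomposition lemma and needs only the remark that subtracting a directed cycle preserves both nonnegativity (by the minimality choice of $t$) and the balance condition. The only point requiring a little care — and the one I'd flag as the main obstacle, though it is minor — is bookkeeping of signs: making sure that the only negative coefficients introduced are attached to loops $L_i$, which is automatic since the negative contributions come exactly from $m_{ii}$ and from the rebalancing shifts $\delta_i$, both of which are loop-supported. Termination of the cycle search (one always returns to a previously visited vertex, in a finite graph) and termination of the induction (the positive-edge count strictly decreases at each step) are immediate.
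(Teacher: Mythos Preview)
Your approach is essentially the same as the paper's: strip off the loops, then peel off directed cycles one at a time via the standard flow-decomposition argument. The paper does exactly this in three sentences, writing ``We may assume that $m_{ii}=0$'' and then subtracting a cycle of weight equal to the minimum positive edge weight.

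One conceptual slip worth flagging: your ``rebalancing'' step is both unnecessary and, as written, impossible. In the balance condition \eqref{eq:bal} the term $m_{ii}$ appears on \emph{both} sides (once as an outgoing edge, once as an incoming edge), so it cancels; consequently $G'$ (with loops deleted) is already balanced, and your $\delta_i$ are all zero. For the same reason, adding a loop $\delta_i L_i$ can never correct an imbalance: it shifts both sides of \eqref{eq:bal} by the same amount. So the sentence ``the balance condition is not inherited by $G'$ in general'' is false, and the proposed fix could not work if it were true. Fortunately this does not damage the argument: with $\delta_i=0$ you have $G''=G'$ and the cycle-peeling goes through exactly as you describe. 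Just delete the rebalancing paragraph and state directly that removing the loops preserves balance.
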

\begin{proof}
Let $G$ be a balanced graph.  We may assume that $m_{ii}=0$.  Let $m_{ij} >0$ be minimal amongst the positive weights.  Then the unweighted graph underlying $G$ must contain a directed cycle $C$ containing the edge $i \to j$.  All the weights $m_e$ for $e$ an edge of $C$ satisfy $m_e \geq m_{ij}$.  Thus, we may write $G = G' + C(m_{ij})$ where $C(m_{ij})$ is the balanced directed cycle where all edges have weight $m_{ij}$, and $G'$ is still a balanced graph.  But $G'$ has fewer edges (with nonzero weight) than $G$, so repeating this reduction we deduce the proposition.
\end{proof}

Let $\v = (v^{(1)},v^{(2)},\ldots,v^{(n)})$ be a balanced sequence of points in $H$.
We define a weighted directed graph $G(\v)$ by the formula
\begin{equation}
\label{eq:Gv}
m_{ij} = \begin{cases} (v^{(i+1)}- v^{(i)})_j &\mbox{ if $j \neq
i$,} \\
v^{(i+1)}_i & \mbox{if $j = i$.} \end{cases} 
\end{equation}



\begin{lemma}\label{lem:balancedseqgraph}
The map $\v \mapsto G(\v)$ is a bijection between Coxeter necklaces in $H$, and balanced graphs $G$ satisfying $S(G) = k$.  Furthermore, allowing $k$ to vary, we obtain an isomorphism of cones.
\end{lemma}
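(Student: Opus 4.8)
The plan is to exhibit an explicit inverse to $\v \mapsto G(\v)$ and verify that both maps send the balanced condition to the Coxeter-necklace condition and respect the cone structures. Given a balanced digraph $G$ with weights $m_{ij}$, define points $v^{(1)}, \ldots, v^{(n)} \in \R^n$ by setting $v^{(i+1)} - v^{(i)}$ to be the vector whose $j$-th coordinate is $m_{ij}$ for $j \neq i$, and whose $i$-th coordinate is $m_{ii} + (v^{(i+1)}-v^{(i)})$ forced by living in a fixed $H$ — that is, the $i$-th coordinate is chosen so that the total coordinate sum of $v^{(i+1)}$ matches that of $v^{(i)}$ once we also track the ``loop contribution''. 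Concretely, I would first pin down $v^{(1)}$ and then propagate: $v^{(i+1)}_j = v^{(i)}_j + m_{ij}$ for $j\neq i$ and $v^{(i+1)}_i = m_{ii}$. The key point is that going once around the cycle, $v^{(n+1)}$ must equal $v^{(1)}$; this is precisely where the balanced condition \eqref{eq:bal} is used, since the net change in coordinate $j$ around the loop is $\sum_i m_{ij} - \sum_i m_{ji}$ (the outgoing edges at $j$ subtract, the incoming add, with the loop term cancelling), which vanishes exactly when $G$ is balanced.

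The steps, in order, are: (1) Starting from a Coxeter necklace $\v$, check that $G(\v)$ as defined in \eqref{eq:Gv} is balanced — nonnegativity of $m_{ij}$ for $i \neq j$ is immediate from \eqref{eq:neckdef}, and the degree condition \eqref{eq:bal} at vertex $i$ follows by summing the coordinates of $v^{(i+1)} - v^{(i)}$: because all $v^{(\ell)}$ lie in the same hyperplane $H$, this coordinate-sum is zero, which rearranges into $\sum_j m_{ij} = \sum_j m_{ji}$ after accounting for the special $j=i$ term $v^{(i+1)}_i$ versus the incoming terms $v^{(i+1)}_i - v^{(i)}_i$ being replaced appropriately. (2) Conversely, given a balanced $G$, construct $\v$ by the propagation just described, fixing $v^{(1)}$ by requiring $v^{(1)} \in H$ (the first coordinate, say, absorbs the constraint); verify the loop closes using \eqref{eq:bal}, verify each $v^{(i+1)} - v^{(i)}$ satisfies \eqref{eq:neckdef} using condition (1) of balancedness (nonnegative off-diagonal weights), and verify $\v$ lands in $H$. (3) Check the two constructions are mutually inverse, which is a direct unwinding of the formulas. (4) Identify the constant: the sum $S(G)$ of Lemma~\ref{lem:sum} should equal $v^{(1)}_1 + v^{(1)}_2 + \cdots$, i.e. the value $k$ with $\v \subset H_k$; I would compute $S_n(G) = \sum_j (m_{jj} + m_{j+1,j} + \cdots + m_{nj})$ and telescope it against the definition of $v^{(n+1)} = v^{(1)}$ to recognize it as $\sum_j v^{(1)}_j = k$. (5) Finally, linearity of all the formulas in the weights/coordinates gives that the bijection is a cone isomorphism as $k$ varies.

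The main obstacle I anticipate is bookkeeping the ``diagonal'' term correctly: the definition \eqref{eq:Gv} treats $m_{ii} = v^{(i+1)}_i$ asymmetrically from the off-diagonal $m_{ij} = (v^{(i+1)} - v^{(i)})_j$, so when I sum coordinates to extract the balanced condition I must be careful that the $i$-th coordinate contributes $v^{(i+1)}_i - v^{(i)}_i = m_{ii} - v^{(i)}_i$ and that $v^{(i)}_i$ is itself $m_{i-1,i}$, an incoming edge weight — so the telescoping of ``loop edge minus previous-vertex-$i$-coordinate'' is exactly what converts the hyperplane constraint into \eqref{eq:bal}. Getting the indices modulo $n$ and the direction of the shift right in this one computation is the crux; everything else is routine. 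I would also note that injectivity of $\v \mapsto Q(\v)$ was already established (via Lemma~\ref{lem:balancedvertex}), but that is not needed here since we are producing an explicit two-sided inverse. The statement about cones then follows because adding Coxeter necklaces adds the corresponding $\v$ coordinatewise, hence adds the $m_{ij}$ coordinatewise, and $S(G)$ is additive, matching $H_k + H_{k'} \to H_{k+k'}$.
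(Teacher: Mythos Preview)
Your approach matches the paper's: build the explicit inverse and check both directions, then note linearity. One bookkeeping point to correct: with your override rule $v^{(i+1)}_i=m_{ii}$, loop closure does \emph{not} use balancedness --- each coordinate $j$ gets reset at step $j$, so $v^{(n+1)}$ is completely determined by $G$ alone and closure simply forces $v^{(1)}_j = m_{jj}+m_{j+1,j}+\cdots+m_{nj}$ (there is no free parameter to ``pin down'' via the hyperplane). Balancedness enters instead when you check that all $v^{(i)}$ lie on a common hyperplane: from the forced closed form one gets $v^{(i)}_i = \sum_j m_{ji}$ (not $m_{i-1,i}$ as you wrote), whence $\sum_j(v^{(i+1)}-v^{(i)})_j = \sum_j m_{ij} - \sum_j m_{ji}$, which vanishes exactly by \eqref{eq:bal}. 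The paper streamlines all of this by writing down the closed formula $v^{(i+1)}_j = m_{jj}+m_{j+1,j}+\cdots+m_{ij}$ directly and verifying the chain $\sum_j m_{ij} = v^{(i)}_i = \sum_j m_{ji}$, from which $S(G)=\sum_j v^{(i+1)}_j=k$ also drops out immediately.
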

\begin{proof} We first check that $G(\v)$ is a balanced graph.
By definition of a balanced sequence, $m_{ij} \in \R_{\geq 0}$.  We
have for each $i \in [n]$,
\begin{align*}
\sum_{j \in [n]} m_{ij} &=\sum_{j \in [n]} (v^{(i+1)} - v^{(i)})_j +
v^{(i)}_i 
= v^{(i)}_i  
 = \sum_{j \in [n]} (v^{(j+1)} - v^{(j)})_i + v^{(i)}_i  
= \sum_{j \in [n]} m_{ji},
\end{align*}
so $G(\v)$ is balanced.  Finally, using $v^{(i+1)}_j = m_{j,j}+
\cdots + m_{i,j}$, one obtains $S(G) = \sum_j v^{(i+1)}_j = k$.

Conversely, suppose we are given a balanced graph satisfying $S(G) =
k$.  Then we define a sequence $\v$ by $v^{(i+1)}_j = m_{j,j}+
\cdots + m_{i,j}$.  It is easy to verify that $\v$ is a balanced
sequence in $H$, and that this is inverse to $\v \mapsto G(\v)$.

The last statement follows immediately from the linearity of \eqref{eq:Gv}.
\end{proof}
We write $\v(G)$ for the Coxeter necklace associated to a balanced digraph $G$.

\subsection{Parametrization}
\begin{theorem}\label{thm:main}
There are natural bijections between the following sets:
\begin{enumerate}
\item The set of all polypositroids $P \subset H_k$.
\item The set of all Coxeter necklaces in $H_k$.
\item The set of all balanced digraphs $G$ with $S(G) = k$.
\end{enumerate}
Furthermore, these bijections are compatible with the respective cone structures on the three sets.
\end{theorem}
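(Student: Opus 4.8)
The plan is to assemble the bijection from the components that have already been built in the preceding lemmas, so that the only genuinely new content is the equivalence between polypositroids and Coxeter necklaces, i.e.\ between the two ``geometric'' descriptions. First I would recall what is in hand: Lemma~\ref{lem:balancedseqgraph} gives an isomorphism of cones between Coxeter necklaces in $H_k$ and balanced digraphs $G$ with $S(G)=k$, via $\v\mapsto G(\v)$, so the bijection (2)$\leftrightarrow$(3) is done. It therefore suffices to establish (1)$\leftrightarrow$(2), compatibly with cones, and then compose. The candidate map from necklaces to polytopes is $\v\mapsto Q(\v)$, which by the discussion after \eqref{eq:Qdef} always produces an alcoved polytope, and which we have already observed (just before the statement of Lemma~\ref{lem:balancedvertex}) is injective, since Lemma~\ref{lem:balancedvertex} shows each $v^{(i)}$ is a vertex of $Q(\v)$ and $\v$ can thus be recovered from $Q(\v)$.

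The two things still to prove are: (a) $Q(\v)$ is a generalized permutohedron (hence a polypositroid) whenever $\v$ is a Coxeter necklace; and (b) every polypositroid $P\subset H_k$ arises as $Q(\v)$ for some Coxeter necklace $\v$. For (b), let $P$ be a polypositroid. Since $P$ is a generalized permutohedron, Proposition~\ref{prop:genpermbalanced} tells us that the sequence $\v_P:=(v_{\id},v_c,\dots,v_{c^{n-1}})$ of vertices of $P$ is balanced, i.e.\ is a Coxeter necklace, and that $v_{c^{i-1}}=v_i(P)$. Since $P$ is also alcoved, Lemma~\ref{lem:envelope} (with $Q=P$, so $\env(P)=P$) gives $P=\bigcap_i (v^{(i)}(P)+C_i)=\bigcap_i(v_{c^{i-1}}+C_i)=Q(\v_P)$. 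This shows surjectivity of $\v\mapsto Q(\v)$ onto polypositroids, and simultaneously that the inverse map $P\mapsto\v_P$ sends a polypositroid to its associated necklace. For (a), suppose $\v$ is a Coxeter necklace; we already know $Q(\v)$ is alcoved, so we must show it is a generalized permutohedron. The cleanest route I expect to use is the projection map from submodular functions: combine Theorem~\ref{thm:subpoly}, which says $\pi_{\cyc}(\Csub)=\Cpoly$, with the fact that the image of $\Csub$ under the associated polytope construction is exactly the set of alcoved polytopes that are polypositroids; alternatively, one argues directly that the normal fan of $Q(\v)$ is a coarsening of the braid fan by checking that each vertex $v^{(i)}$ together with the cone $C_i$ accounts for all of $Q(\v)$'s local structure and that all edge directions are of the form $e_a-e_b$ (using the Coxeter necklace inequalities \eqref{eq:neckdef} to control edge directions), and then invoke Theorem~\ref{thm:PRW}(2) or its edge-direction criterion.

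Finally, compatibility with cone structures: the cone structure on polypositroids is Minkowski sum followed by alcoved envelope; on Coxeter necklaces it is coordinatewise sum of the $v^{(i)}$ (noted before Lemma~\ref{lem:balancedseqgraph} via ``the sum of two Coxeter necklaces is a Coxeter necklace''); on balanced digraphs it is weightwise sum. Lemma~\ref{lem:balancedseqgraph} already records that $\v\mapsto G(\v)$ is a cone isomorphism, so it remains to check that $\v\mapsto Q(\v)$ is a homomorphism, i.e.\ $Q(\v+\v')=\env(Q(\v)+Q(\v'))$. One inclusion is automatic since both sides are alcoved and contain the Minkowski sum of vertices; for the reverse, one uses that for an alcoved polytope $R$, the support value $f_R([r,s])$ equals $\max_{x\in R} x_{[r,s]}$, that this maximum is attained at the relevant vertex $v^{(\cdot)}$, and that support functions add under Minkowski sum. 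I expect step (a) — verifying $Q(\v)$ is a generalized permutohedron — to be the main obstacle, since everything else is bookkeeping over results already proved; whether to route it through Theorem~\ref{thm:subpoly} or through a direct edge-direction/normal-fan argument is the main design decision, and I would lean on Theorem~\ref{thm:subpoly} to keep the argument short, noting that its proof is deferred to Section~\ref{ssec:subpolyproof} and does not depend circularly on Theorem~\ref{thm:main}.
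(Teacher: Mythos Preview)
Your treatment of the bijection (2)$\leftrightarrow$(3), of injectivity of $\v\mapsto Q(\v)$, and of surjectivity (your step (b)) matches the paper exactly. The problem is step (a), showing that $Q(\v)$ is a generalized permutohedron for every Coxeter necklace $\v$.

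Your primary route through Theorem~\ref{thm:subpoly} is circular. Look at Section~\ref{ssec:subpolyproof}: the proof of Theorem~\ref{thm:subpoly} goes through Proposition~\ref{prop:alcgen} (the alcoved envelope of a generalized permutohedron is a polypositroid), and that proposition is stated to follow ``immediately from Proposition~\ref{prop:genpermbalanced}(1) and Theorem~\ref{thm:main}.'' So Theorem~\ref{thm:subpoly} depends on Theorem~\ref{thm:main}, not the other way around. Even setting aside the dependency, Theorem~\ref{thm:subpoly} by itself does not obviously do what you want: knowing $\pi_{\cyc}(\Csub)=\Cpoly$ tells you that every polypositroid is the alcoved envelope of some generalized permutohedron, but it does not tell you that a given alcoved polytope $Q(\v)$ lies in $\Cpoly$ in the first place---that is precisely the statement ``$Q(\v)$ is a polypositroid'' that you are trying to prove.

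The paper's actual argument for (a) is the ``direct edge-direction'' route you mention only vaguely. It has real content: first, for \emph{generic} $\v$, Lemma~\ref{lem:noncrossing} shows that every face $F$ of $Q(\v)$ has a noncrossing alternating face graph $T_F$, and then Lemma~\ref{lem:possibleedge} shows that any edge of an alcoved polytope with a noncrossing $T_E$ is parallel to some $e_i-e_j$. This gives that $Q(\v)$ is a generalized permutohedron in the generic case. For non-generic $\v$, one perturbs the balanced digraph $G(\v)$ by adding $\varepsilon>0$ to every non-loop edge to obtain a generic necklace $\v_\varepsilon$, observes that $Q(\v)$ is a deformation (limit under facet translation) of $Q(\v_\varepsilon)$, and uses that the class of generalized permutohedra is closed under such limits. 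Your sketch ``each vertex $v^{(i)}$ together with the cone $C_i$ accounts for all of $Q(\v)$'s local structure'' does not capture this: the $v^{(i)}$ are only $n$ of the vertices of $Q(\v)$, and the real work is controlling the edges at \emph{all} vertices, which is what Lemmas~\ref{lem:possibleedge} and~\ref{lem:noncrossing} accomplish.
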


After Lemma~\ref{lem:balancedseqgraph}, it suffices to show that the map $\v \to Q(\v)$ is a bijection between Coxeter necklaces and polypositroids.  We delay the proof of Theorem~\ref{thm:main} to Section~\ref{ssec:proofmain}.

The following result follows easily from the definition of the cone of balanced digraphs.
\begin{corollary}\label{cor:conefacets}
The cone of balanced digraphs $G$ satisfying $S(G) = 0$ has $n(n-1)$ facets, given by the inequalities $m_{ij} \geq 0$ for $i \neq j$.  Thus, the cone $\Cpoly$ also has $n(n-1)$ facets.  
\end{corollary}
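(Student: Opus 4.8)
The plan is to read off the facet structure of the cone of balanced digraphs directly from its defining data, and then transport this to $\Cpoly$ via the cone isomorphisms established in Theorem~\ref{thm:main}. First I would fix the affine hyperplane $H = H_0$, so that balanced digraphs with $S(G) = 0$ are in bijection with Coxeter necklaces in $H_0$, which in turn (by the bijection $\v \mapsto Q(\v)$ of Theorem~\ref{thm:main}) are in bijection with polypositroids inside $H_0$, i.e.\ with points of $\Cpoly$. Since all three bijections are isomorphisms of cones, it suffices to compute the facets of the cone of balanced digraphs $G$ with $S(G) = 0$, which I will denote $\mathcal{B}_0$.

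Next I would describe $\mathcal{B}_0$ explicitly as a polyhedral cone in coordinates. A balanced digraph on $[n]$ is recorded by its edge weights $(m_{ij})_{i,j \in [n]}$; the linear conditions are the $n$ balance equations $\sum_j m_{ij} = \sum_j m_{ji}$ for $i \in [n]$ (of which, as usual, only $n-1$ are independent, since they sum to $0$) together with the single linear condition $S(G) = S_i(G) = 0$; the inequalities are $m_{ij} \geq 0$ for $i \neq j$, while the loop weights $m_{ii}$ are unconstrained. So $\mathcal{B}_0$ lives in the ambient space $\R^{n^2}$, cut out by $n$ linear equations and $n(n-1)$ inequalities. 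By Proposition~\ref{prop:rays}, $\mathcal{B}_0$ is spanned by balanced directed cycles of length $\geq 2$ (the loop-cycles have $S = $ their weight, so to stay in $S(G)=0$ one subtracts loop contributions; in any case the cone is pointed after quotienting appropriately, or one works modulo the lineality space spanned by differences of loops). The key point is that the cone has dimension $n(n-1)$: indeed $\mathcal{B}_0$ (modulo its lineality space, or in suitable affine slice) is full-dimensional because the cyclohedron, via Theorem~\ref{thm:main}, corresponds to an interior point of $\Cpoly$ and $\Cpoly$ is $n(n-1)$-dimensional by the earlier discussion identifying it with a subcone of $\Calc \subset \R^{n(n-1)}$.

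Then I would argue that each inequality $m_{ij} \geq 0$ (for $i \neq j$) is genuinely facet-defining, i.e.\ none is redundant. For this it suffices to exhibit, for each ordered pair $(i,j)$ with $i \neq j$, a balanced digraph $G \in \mathcal{B}_0$ with $m_{ij} = 0$ but $m_{kl} > 0$ for all other ordered pairs $k \neq l$; such a $G$ lies on the hyperplane $\{m_{ij} = 0\}$ but strictly inside every other halfspace, forcing $\{m_{ij}=0\}$ to cut out a face of codimension exactly one. To build such a $G$: start from a balanced digraph with \emph{all} off-diagonal weights strictly positive — for example, take a positive combination of all directed $2$-cycles and the directed Hamiltonian cycles, adjusting the loop weights so that $S(G) = 0$ — and then subtract a small positive multiple of a balanced cycle through the edge $i \to j$ chosen so that this edge's weight reaches $0$ first while all other off-diagonal weights remain positive; for instance subtract $\epsilon$ times the $2$-cycle $i \to j \to i$ for a suitable small $\epsilon$ equal to the starting weight $m_{ij}$, noting the $2$-cycle has $S = 0$ so this stays in $\mathcal{B}_0$. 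Concretely the simplest explicit witness is: put weight $1$ on every off-diagonal edge except set $m_{ij}$ arbitrary, take loop weights as forced by $S(G)=0$ — but one must check $m_{ij} \geq 0$ is the only binding constraint, which is why a perturbative argument starting from an interior point is cleaner. This verification that each $m_{ij}\geq 0$ is irredundant is the only real content; the rest is bookkeeping.

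The final step is to transfer the conclusion. Since $\v \mapsto Q(\v)$ and $\v \mapsto G(\v)$ are inverse cone isomorphisms (Lemma~\ref{lem:balancedseqgraph} and Theorem~\ref{thm:main}), they carry facets to facets bijectively, so $\Cpoly$ has exactly $n(n-1)$ facets, each being the image of a hyperplane $\{m_{ij} = 0\}$. I expect the main obstacle to be the irredundancy verification: one needs a concrete balanced digraph in $\mathcal{B}_0$ that saturates exactly one inequality, and producing it requires a small but careful explicit construction (or a clean perturbation argument off the cyclohedron point) rather than an abstract dimension count.
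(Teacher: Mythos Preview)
Your approach is exactly what the paper intends (the paper offers no proof beyond ``follows easily from the definition of the cone of balanced digraphs''), and you correctly isolate the only nontrivial point: showing that each inequality $m_{ij}\ge 0$ is irredundant by exhibiting a balanced digraph in $\mathcal{B}_0$ with $m_{ij}=0$ and all other off-diagonal weights strictly positive. The transfer to $\Cpoly$ via Theorem~\ref{thm:main} is fine.

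However, both of the explicit witnesses you propose fail. Subtracting the $2$-cycle $i\to j\to i$ from the all-ones point kills $m_{ji}$ at exactly the same rate as $m_{ij}$, so both reach zero together and you land on two facets at once, not one. Your alternative, ``weight $1$ on every off-diagonal edge except $m_{ij}$'', violates the balance condition at vertices $i$ and $j$. (You also assert that the $2$-cycle has $S=0$, which is false in general---e.g.\ for $n=3$ the cycle $1\to 2\to 1$ has $S=1$---but this is harmless, since the loop weights are unconstrained in sign and can always be adjusted at the end to restore $S=0$ without touching any off-diagonal entry.) A clean construction for $n\ge 3$: take the sum of all $2$-cycles on pairs $\{a,b\}\ne\{i,j\}$, plus one $3$-cycle $j\to i\to k\to j$ for any fixed $k\notin\{i,j\}$. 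This gives $m_{ij}=0$, $m_{ji}=1$, and every other off-diagonal weight at least $1$; then choose the loop weights to make $S=0$.
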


The group $H_0=\R^{n-1}$ of translations preserves the affine hyperplane $H_k$ and acts on the cone $\Cpoly$ of polypositroids via the formula $z: (a_{ij}) \mapsto (a_{ij} + (h_i-h_j)(z))$ where $z = (z_1,z_2,\ldots,z_n) \in H_0$.  The lineality space of the cone $\Cpoly$ can be identified with $\R^{n-1}$.  Let $\Cpoly' := \Cpoly/\R^{n-1}$ denotes the quotient cone, which may be identified with the set of polypositroids inside $H_0$, modulo translations.

\begin{corollary}\label{cor:rays}
  The cone $\Cpoly'$ is a pointed cone, with extremal rays corresponding to the polypositroids $Q(\v(G))$, where $G$ is a balanced directed cycle (including cycles of length two).
\end{corollary}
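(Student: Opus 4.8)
The plan is to combine two facts established earlier: Proposition~\ref{prop:rays}, which says every balanced digraph $G$ decomposes as a linear combination of balanced directed cycles (loops allowed) with nonnegative coefficients on all non-loop cycles, and Lemma~\ref{lem:balancedseqgraph} together with Theorem~\ref{thm:main}, which give an isomorphism of cones between balanced digraphs with $S(G)=k$, Coxeter necklaces in $H_k$, and polypositroids in $H_k$. First I would record that, since the bijection $G \mapsto Q(\v(G))$ is an isomorphism of cones, it suffices to analyze the cone $\B$ of balanced digraphs $G$ with $S(G)=0$ modulo its lineality space, and transport the answer back to $\Cpoly'$. Under $S(G)=0$, a loop at vertex $i$ with weight $t$ (i.e.\ the balanced cycle $m_{ii}=t$, all else $0$) is exactly a generator of the lineality space: such loops can be freely added or subtracted and, one checks from \eqref{eq:Gv}, correspond precisely to translations of the polypositroid, i.e.\ to the $\R^{n-1}$ we are quotienting by. (Here one must be slightly careful: an isolated loop at $i$ with weight $t$ is balanced and has $S=t$, so to land in $S=0$ one pairs it appropriately; the cleaner statement is that the span of all loops is the lineality space of the cone of balanced digraphs in the relevant slice.)

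Next I would argue pointedness. The cone $\B$ (with $S=0$) lies inside the orthant $\{m_{ij}\ge 0 : i\ne j\}$ in the coordinates indexed by ordered pairs $i\ne j$, after projecting away the loop coordinates. A cone that sits inside an orthant and whose only linear subspace is the one we have quotiented by is pointed; concretely, if $G$ and $-G$ both lie in $\B$ modulo loops, then all non-loop weights $m_{ij}$ of $G$ vanish, so $G$ is a combination of loops, i.e.\ $G \equiv 0$ in $\Cpoly'$. Hence $\Cpoly'$ is pointed.

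For the extremal rays, Proposition~\ref{prop:rays} shows the balanced directed cycles (of length $\ge 2$) span $\B$ modulo loops with nonnegative coefficients, so the extremal rays are among the rays spanned by single balanced directed cycles $C$ of length $\ge 2$ with all edge weights equal; call the corresponding polypositroid $Q(\v(G_C))$. It remains to show each such ray is genuinely extremal, i.e.\ cannot be written as a nonnegative combination of the others. For this I would use the facet description from Corollary~\ref{cor:conefacets}: the facets of $\Cpoly'$ (equivalently of $\B$ mod loops) are the $n(n-1)$ hyperplanes $m_{ij}=0$. A cycle $C$ with vertex set forming a cyclic word $v_1 \to v_2 \to \cdots \to v_\ell \to v_1$ uses edges $\{(v_r,v_{r+1})\}$ and has $m_{ij}=0$ for every other ordered pair; so $G_C$ lies on exactly $n(n-1)-\ell$ of the facet hyperplanes. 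One then checks that the intersection of these particular facet hyperplanes is one-dimensional (mod loops): a balanced digraph supported only on the edge set of $C$ must, by the balance condition \eqref{eq:bal} at each vertex of $C$, have all its edge weights equal, hence be a scalar multiple of $G_C$. That pins down the ray, proving extremality, and conversely every cycle arises this way, so the rays are exactly the $Q(\v(G))$ with $G$ a balanced directed cycle of length $\ge 2$ (we may include length-$2$ cycles, i.e.\ pairs $i \rightleftarrows j$).

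The main obstacle I anticipate is the bookkeeping around loops and the lineality space: making precise that ``balanced directed cycle'' in the statement is meant to range over cycles of length $\ge 2$ (loops being quotiented out), and verifying cleanly that the span of loops is exactly the lineality space $\R^{n-1}$ of translations under the isomorphisms of Lemma~\ref{lem:balancedseqgraph} and \eqref{eq:af}. Once that identification is nailed down, the pointedness and the extremality argument via the facet hyperplanes $m_{ij}=0$ are routine, since everything is happening inside an explicit orthant with an explicit facet structure coming from Corollary~\ref{cor:conefacets}.
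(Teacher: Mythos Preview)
Your proposal is correct and follows essentially the same route as the paper: transfer to the cone of balanced digraphs via Theorem~\ref{thm:main}, identify the lineality space with loop weights (translations), and invoke Proposition~\ref{prop:rays}. You are more thorough than the paper in explicitly verifying pointedness and in checking via Corollary~\ref{cor:conefacets} that each cycle is genuinely extremal rather than merely part of a spanning set; the paper's proof leaves these steps implicit.
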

\begin{proof}
By Theorem~\ref{thm:main}, we are equivalently considering the cone of balanced digraphs with $S(G) = 0$.  The translation action of $\R^{n-1}$ on $H_0$ corresponds to changing the weight of the $n$ loop edges of $G$.  Ignoring the weights of the loops, the statement then follows from Proposition~\ref{prop:rays}.
\end{proof}

\subsection{Face graphs}\label{ssec:facegraphs}
Let $P$ be an alcoved polytope and $F$ a face of
$P$. 
We define the graph $T_F$ of $P$ at $F$ to be the directed graph on
$[n]$ with a directed edge $(i \to j)$ whenever $F$ lies on the hyperplane $(h_j- h_i)(x) \leq a_{ji}$, or equivalently, on the hyperplane $x_{[i+1,j]} = f_{[i+1,j]}$.  We say that a digraph $T$ on $[n]$ is {\it noncrossing} if, when
the graph is drawn inside a circle with the vertices $[n]$ arranged
in clockwise order, there are no intersections in the interior of
the circle.  We say that a digraph $T$ on $[n]$ is {\it alternating}
if no vertex has both incoming and outgoing edges.

\begin{lemma}\label{lem:possibleedge}
Suppose $E$ is an edge of an alcoved polytope $P$,
with a noncrossing graph $T_E$. Then $E$ is parallel to $e_i - e_j$
for some $i$ and $j$.
\end{lemma}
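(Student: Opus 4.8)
The plan is to work with the edge direction of $E$ directly. An edge $E$ of an alcoved polytope is the solution set, inside $H$, of the facet equations $x_{[i+1,j]} = f_{[i+1,j]}$ for all edges $(i \to j)$ of $T_E$, together with $x_1 + \cdots + x_n = k$. The direction vector $u$ of the line containing $E$ is therefore characterized (up to scalar) by the linear conditions $u_{[i+1,j]} = 0$ for every edge $(i \to j)$ of $T_E$, together with $\sum_i u_i = 0$. Since $E$ is an edge (dimension $1$), these conditions cut the ambient $(n-1)$-dimensional space $H_0$ of directions down to a line, so the span of the equations $\{h_{[i+1,j]} : (i\to j) \in T_E\} \cup \{h_{[n]}\}$ has rank exactly $n-1$. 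The goal is to show that the unique-up-to-scalar solution $u$ has the form $u = t(e_i - e_j)$.

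First I would reformulate everything in terms of the partial sums $s_\ell := u_1 + u_2 + \cdots + u_\ell$ for $\ell = 0,1,\ldots,n$, so $s_0 = s_n = 0$ (using $\sum u_i = 0$) and $u_{[r,s]} = s_s - s_{r-1}$ when $r \le s$. Each edge $(i \to j)$ of $T_E$ imposes $s_j = s_{i}$ (in the cyclic-interval bookkeeping, with wraparound handled by the relation $s_n = s_0$). So the combinatorial data of $T_E$ induces an equivalence-type relation on the cyclic set of ``cut points'' $\{0,1,\ldots,n-1\} \cong \Z/n\Z$: the point before $i+1$ and the point after $j$ get identified. The key observation is that the noncrossing hypothesis on $T_E$, translated to these arcs connecting cut points on the circle, forces the arcs to be noncrossing chords among the $n$ cut points; noncrossing chords cannot link all $n$ points into a single class, and in fact the partition of $\{0,\ldots,n-1\}$ into level sets of $\ell \mapsto s_\ell$ that they generate is a noncrossing partition. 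Since the solution space for $(s_\ell)$ is exactly $1$-dimensional, this noncrossing partition must have exactly two blocks (one more block would force an extra degree of freedom). A noncrossing two-block partition of the cyclic set of cut points is precisely a pair of complementary cyclic intervals, which means $(s_\ell)$ takes only two values and jumps exactly twice, at two positions $i$ and $j$; hence $u = s_i'(e_i - e_j)$ up to sign and scalar, i.e. $u$ is parallel to $e_i - e_j$.

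The main obstacle I anticipate is making the dimension-counting step fully rigorous: I need to argue that a noncrossing collection of identifications of the $n$ cut points whose induced partition has $b$ blocks leaves exactly $b-1$ degrees of freedom for the vector $(s_0,\ldots,s_{n-1})$ subject to $s_0$ being pinned (here $s_0 = 0$), and conversely that each block can be assigned an arbitrary value independently. The ``noncrossing'' hypothesis is what guarantees the identifications are consistent (no two arcs that cross force a collapse we didn't intend) and that distinct blocks really are free — this is essentially the statement that a noncrossing partition's blocks are ``independent'' coordinates. I would prove this by induction on $n$, peeling off an innermost arc (an arc spanning a minimal set of cut points), which by noncrossingness only interacts with the cut points strictly inside it; the inductive step reduces $n$ and the number of blocks in a controlled way. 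Once that lemma is in hand, the bound ``$E$ is an edge $\Rightarrow$ exactly two blocks $\Rightarrow$ $u \parallel e_i - e_j$'' follows immediately.
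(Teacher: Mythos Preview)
Your approach is correct and closely parallels the paper's, with one nice simplification. The paper extracts a minimal subforest $T \subset T_E$ with $n-2$ edges, observes that it has two tree components $T_1, T_2$ which (by noncrossing) sit on complementary cyclic intervals $[i,j-1]$ and $[j,i-1]$, and then argues by induction on edge lengths that every coordinate $x_k$ with $k \notin \{i,j\}$ is fixed on $E$. Your partial-sum substitution $s_\ell = u_1 + \cdots + u_\ell$ replaces that induction entirely: the constraint from an edge $(i \to j)$ becomes simply $s_i = s_j$, so the direction space has dimension equal to the number of connected components of $T_E$ minus one, forcing exactly two components; noncrossing then forces those two components to be complementary cyclic intervals, and the two jump points of $(s_\ell)$ give the root direction directly. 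This is a bit cleaner than the paper's coordinate-by-coordinate argument and avoids passing to a spanning subforest.

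One clarification: in your last paragraph you suggest the noncrossing hypothesis is needed to make the dimension count work. It is not---the solution space of the equalities $\{s_i = s_j\}$ over the edges of \emph{any} graph has dimension equal to the number of connected components, so with $s_0 = 0$ pinned you get $b - 1$ regardless of crossings. The noncrossing hypothesis enters only at the final step, to conclude that a two-block partition of the cyclically ordered vertex set must consist of two complementary cyclic intervals (equivalently, that connected components of a noncrossing graph form a noncrossing partition). That step is the same one the paper passes over with ``it is clear,'' and your peeling-off-an-innermost-arc induction would prove it cleanly.
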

\begin{proof}
Let us take a minimal subgraph $T \subset T_E$ such that the
corresponding $(n-2)$ facets still define (the affine span of) $E$.
We claim that $T$ is a forest with two components.  It is enough to
show that the underlying undirected graph of $T$ has no cycles.  A cycle in $T$ would correspond to a
linear dependence in the equations defining $E$, contradicting the
minimality of $T$.  For the remainder of the proof, it is enough to
think of $T$ as an undirected forest.

Let the two components of $T$ be $T_1$ and $T_2$.  By the
noncrossing assumption, it is clear that $T_1$ and $T_2$ are
induced subgraphs of $T$ on cyclic intervals $[i,j-1]$ and
$[j,i-1]$.

Let $x = (x_1,\ldots,x_n)$ lie on $E$.  We claim that if $k \notin
\{i, j\}$ then $x_k$ is fixed (that is, constant on $E$). For
simplicity, we shall assume that $x_k$ lies in a component $T_1$ of
$T$ which is an induced subgraph on a usual interval (no
wraparound). First note that an edge of the form $(r, r+1) \in T_1$
completely determines $x_{r+1}$. Also if $(r, s) \in T$ is an edge,
and all but one of the coordinates $\{x_{r+1}, x_{r+2}, \ldots,
x_{s}\}$ is determined, then the last coordinate is also determined.
By induction on the length of edges, we thus see that for each edge
$(r,s) \in T_1$ all of $\{x_{r+1}, x_{r+2}, \ldots, x_{s}\}$ are
determined, proving our claim that $x_k$ is fixed for $k \notin \{i,j\}$.

Thus only the coordinates $x_i$ and $x_j$ vary on $E$.  Since $E \in
H$, we deduce that $E$ is parallel to $e_i - e_j$.
\end{proof}

\begin{remark}
The converse of Lemma \ref{lem:possibleedge} is false.  For example, consider an alcoved polytope $P$
in $H = \{x \in \R^5 \mid x_1 + x_2 + \cdots + x_5 = 0\}$ where $x_2 \leq 0$, $x_2 + x_3 \leq 0$, and $x_3+x_4 \geq 0$
are all facets, and so that $x_2 = x_2+x_3 = x_3+x_4 = 0$ defines an edge $E$ of $P$.  Then $T_E$ contains the directed
edges $(1 \to 2), (1 \to 3), (2 \to 4)$ and is alternating but not noncrossing.  However, the edge $E$ is clearly in the direction $e_1 - e_5$.
\end{remark}

Let us say that a Coxeter necklace $(v^{(1)},\ldots,v^{(n)})$ is
{\it generic} if every coordinate of $v^{(i+1)} - v^{(i)}$ is
non-zero, for every $i$.  This is equivalent to saying that all
 the non-loop edges of the graph $G(\v)$ are non-zero.


\begin{lemma}\label{lem:noncrossing}
Let $(v^{(1)},\ldots,v^{(n)})$ be a generic Coxeter necklace in
$H$. Then any face $F$ of the alcoved polytope $Q = \bigcap_{i\in \Z/n\Z}
(v^{(i)} + C_{i})$ has a noncrossing and alternating graph $T_F$.
\end{lemma}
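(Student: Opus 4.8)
The plan is to argue directly from the presentation $Q:=Q(\v)=\bigcap_{i\in\Z/n\Z}(v^{(i)}+C_i)$, using two preliminary observations. First, since $v^{(r)}\in Q$ by Lemma~\ref{lem:balancedvertex} while $Q\subseteq v^{(r)}+C_r$ by definition, the inequality $x_{[r,s]}\le v^{(r)}_{[r,s]}$ (a defining inequality of the cone $v^{(r)}+C_r$) is tight at the point $v^{(r)}\in Q$, so the support value of $Q$ on every cyclic interval $[r,s]$ is $f_{[r,s]}=v^{(r)}_{[r,s]}$. Second, from the telescoping identity $v^{(a')}-v^{(a)}=\sum_{e\in[a,a'-1]}\bigl(v^{(e+1)}-v^{(e)}\bigr)$ together with the Coxeter necklace condition and genericity --- which together say $(v^{(e+1)}-v^{(e)})_t>0$ for every $t\neq e$ --- I obtain the \emph{positivity estimate}: if $a\neq a'$ and $D\subseteq[n]$ is nonempty with $D\cap[a,a'-1]=\emptyset$, then $(v^{(a')}-v^{(a)})_D>0$, since each summand is a sum of strictly positive entries over the coordinates in $D$. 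It suffices to prove the lemma when $F$ is a vertex (as $F'\subseteq F$ gives $T_F\subseteq T_{F'}$ and both properties pass to subgraphs), but I will simply fix a point $x\in F$ throughout.

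\emph{Alternating.} Suppose a vertex $p$ has an incoming edge $i\to p$ and an outgoing edge $p\to j$ in $T_F$, with $i,j\neq p$, so $x_{[i+1,p]}=v^{(i+1)}_{[i+1,p]}$ and $x_{[p+1,j]}=v^{(p+1)}_{[p+1,j]}$. If $j=i$, the cyclic intervals $[i+1,p]$ and $[p+1,i]$ partition $[n]$, so $x_{[i+1,p]}+x_{[p+1,i]}=k=v^{(i+1)}_{[i+1,p]}+v^{(i+1)}_{[p+1,i]}$, which forces $(v^{(p+1)}-v^{(i+1)})_{[p+1,i]}=0$; this contradicts the positivity estimate, since $[p+1,i]$ is disjoint from $[i+1,p]$. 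If $j\neq i$, then $[i+1,p]\sqcup[p+1,j]=[i+1,j]\neq[n]$, so $x_{[i+1,j]}\le v^{(i+1)}_{[i+1,j]}=v^{(i+1)}_{[i+1,p]}+v^{(i+1)}_{[p+1,j]}$; expanding $x_{[i+1,j]}=x_{[i+1,p]}+x_{[p+1,j]}$ and cancelling gives $(v^{(p+1)}-v^{(i+1)})_{[p+1,j]}\le 0$, again contradicting the positivity estimate since $[p+1,j]$ is disjoint from $[i+1,p]$. Hence no vertex is both a head and a tail, and $T_F$ is alternating.

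\emph{Noncrossing.} Suppose $i_1\to j_1$ and $i_2\to j_2$ are edges of $T_F$ whose chords cross; then the four endpoints are distinct, the cyclic intervals $A:=[i_1+1,j_1]$ and $B:=[i_2+1,j_2]$ overlap with neither containing the other, and after possibly exchanging the two edges we may write $A=[a,b]$, $B=[c,d]$ with $A\cap B=[c,b]$, $A\cup B=[a,d]\neq[n]$, $A\setminus B=[a,c-1]$, and $B\setminus A=[b+1,d]$, all nonempty. On $F$ we have $x_{[a,b]}=f_{[a,b]}$ and $x_{[c,d]}=f_{[c,d]}$, while $x\in Q$ gives $x_{[a,d]}\le f_{[a,d]}$ and $x_{[c,b]}\le f_{[c,b]}$; combined with $x_{[a,b]}+x_{[c,d]}=x_{[a,d]}+x_{[c,b]}$ this yields $f_{[a,b]}+f_{[c,d]}\le f_{[a,d]}+f_{[c,b]}$. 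Substituting $f_{[r,s]}=v^{(r)}_{[r,s]}$ and simplifying (the two sides differ precisely by $v^{(a)}_{[b+1,d]}$ and $v^{(c)}_{[b+1,d]}$) gives $(v^{(c)}-v^{(a)})_{[b+1,d]}\le 0$. But $[b+1,d]=B\setminus A$ is disjoint from $[a,c-1]=A\setminus B$, so the positivity estimate gives $(v^{(c)}-v^{(a)})_{[b+1,d]}>0$ --- a contradiction. Therefore $T_F$ is noncrossing, which completes the proof.

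The conceptual content lies entirely in the positivity estimate of the first step; the step that requires care is the combinatorics of a crossing pair of chords: one must choose the labeling of the two edges so that $A\cap B$ and $A\cup B$ are genuine cyclic intervals with starting indices $c$ and $a$ respectively (which is exactly what makes the substitution $f_{[r,s]}=v^{(r)}_{[r,s]}$ applicable) and correctly identify the complementary arcs $[a,c-1]$ and $[b+1,d]$. I expect this bookkeeping, and not any analytic subtlety, to be the only real obstacle.
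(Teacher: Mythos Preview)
Your argument follows the same strategy as the paper's: assume $T_F$ contains a forbidden configuration (a crossing pair, or a vertex with both an incoming and an outgoing edge), combine the two resulting equalities with a suitable alcoved inequality, and contradict the strict positivity coming from genericity. Your noncrossing argument is correct and essentially identical to the paper's.

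There is a gap in the alternating case. When you write ``If $j\neq i$, then $[i+1,p]\sqcup[p+1,j]=[i+1,j]\neq[n]$'', this holds only when $j$ lies on the cyclic arc $(p,i)$. If instead $j\in(i,p)$ --- which nothing rules out for a pair of edges $i\to p$, $p\to j$ --- then $[i+1,p]\cup[p+1,j]=[n]$ with overlap $[i+1,j]$, and your displayed disjoint union is false (e.g.\ $n=6$, $i=1$, $p=4$, $j=2$ gives $[2,4]\cup[5,2]=[6]$ and $[2,4]\cap[5,2]=\{2\}$). The repair is short and uses only your own tools: in this case $x_{[i+1,p]}+x_{[p+1,j]}=k+x_{[i+1,j]}$, so $x_{[i+1,j]}=v^{(i+1)}_{[i+1,p]}+v^{(p+1)}_{[p+1,j]}-k$; combining with $x_{[i+1,j]}\le v^{(i+1)}_{[i+1,j]}$ and simplifying yields $(v^{(p+1)}-v^{(i+1)})_{[j+1,p]}\ge 0$. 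Now apply your positivity estimate with the roles of $a,a'$ swapped (i.e.\ walk around the necklace the other way): since $[j+1,p]$ is disjoint from $[p+1,i]$, one gets $(v^{(i+1)}-v^{(p+1)})_{[j+1,p]}>0$, the desired contradiction. The paper's proof treats the crossing and ``directed path'' cases by a single computation and is equally silent about this cyclic-order subcase; your write-up, once patched, is at least as complete.
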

\begin{proof}
Let $F$ be a face such that $T_F$ is either not noncrossing or not alternating.

Suppose $[r,s]$ and $[r',s']$ are cyclic intervals so that the
corresponding directed edges $(r-1) \to s$ and $(r'-1) \to s'$ in $T_F$ are
either crossing or form a directed path $(r-1) \to s = (r'-1) \to
s'$.  In the crossing case, we may assume that $r < r' \leq s < s'<
r$ (interpreted in a cyclic manner). We have the following equations
and inequalities
\begin{align}
\label{E:f1} x_{[r,s]} &= v^{(r)}_{[r,s]}  \\
\label{E:f2}x_{[r',s']}&= v^{(r')}_{[r',s']} \\
\label{E:f3}x_{[r,s']} &\leq v^{(r)}_{[r,s']}  \\
\label{E:f4}x_{[r',s]} &\leq v^{(r')}_{[r',s]}
\end{align}
for points $x = (x_1,x_2,\ldots,x_n)$ on the face $F$.  By \eqref{E:f1} and
\eqref{E:f4}, we have
$$
x_{[r,r'-1]} = x_{[r,s]}- x_{[r',s]} \geq v^{(r)}_{[r,s]} -
v^{(r')}_{[r',s]}
$$
and by \eqref{E:f2} and \eqref{E:f3}, we have
$$
x_{[r,r'-1]} = x_{[r,s']} - x_{[r',s']} \leq v^{(r)}_{[r,s']} -
v^{(r')}_{[r',s']}.
$$
Equating the two expressions for $x_{[r,r'-1]}$, we obtain
$$
v^{(r)}_{[s+1,s']} \geq v^{(r')}_{[s+1,s']}.
$$
This is impossible because $\v$ is generic balanced, implying that
the coordinates in the positions $[s+1,s']$ of $v^{(r')}- v^{(r)}$
are all positive.
\end{proof}

\subsection{Proof of Theorem~\ref{thm:main}}\label{ssec:proofmain}
Let $\v = (v^{(1)},\ldots,v^{(n)})$ be a Coxeter necklace in $H$.
We show that $Q(\v)$ is a polypositroid.  First, suppose that $\v$ is generic. Then by Lemma \ref{lem:noncrossing} and
Lemma \ref{lem:possibleedge}, $Q(\v)$ is a generalized
permutohedron, and thus a polypositroid.

Now suppose that $\v$ is not generic, and let $G(\v)$ be the balanced digraph under the bijection of Lemma~\ref{lem:balancedseqgraph}.  Let $G_\varepsilon$ be obtained from $G(\v)$ by adding $\varepsilon >0$ to every non-loop edge.  It immediate that $G(\varepsilon)$ is again a balanced digraph, and we define the Coxeter necklace $\v_\varepsilon$ by $G(\v_\varepsilon) = G_\varepsilon$.  Then 
$\v = \lim_{\varepsilon \to 0} \v_\varepsilon$ is a limit of the
generic balanced sequences $\v_\varepsilon$.  For sufficiently
small, but non-zero $\varepsilon$ the combinatorial type of the
polytope $Q(\v_\varepsilon)$ corresponding to $\v_\varepsilon$ does
not change.  The alcoved polytope $Q$ is thus a deformation of such
a $Q(\v_\varepsilon)$, in the sense of moving facets.  Since
$Q(\v_\varepsilon)$ is a generalized permutohedron, so is $Q(\v)$; see for example \cite{CL}.

Now suppose $P$ is a polypositroid with vertices $v_w$.  Since $P$
is a generalized permutohedron, by Proposition
\ref{prop:genpermbalanced} and Lemma \ref{lem:envelope}, $\v =
(v_{\id}, v_{c}, v_{c^{2}},\ldots,v_{c^{n-1}})$ is
balanced, and we have $\env(P) = Q(\v)$.  But $P$ is also alcoved,
so we have $P = \env(P) = Q(\v)$.  Thus the map $\v \mapsto Q(\v)$
is surjective.  Finally, it follows from Lemma \ref{lem:balancedvertex} that $\v
\mapsto Q(\v)$ is injective.  This proves the equivalence of (1) and (2) in Theorem~\ref{thm:main}.

\subsection{Proof of Theorem~\ref{thm:subpoly}}\label{ssec:subpolyproof}
The following result follows immediately from Proposition \ref{prop:genpermbalanced}(1) and Theorem
\ref{thm:main}.
\begin{proposition}\label{prop:alcgen}
The alcoved envelope of a generalized permutohedron is a generalized permutohedron, and thus a polypositroid.
\end{proposition}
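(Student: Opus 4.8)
The plan is to deduce Proposition~\ref{prop:alcgen} directly from the structural results already assembled, specifically Proposition~\ref{prop:genpermbalanced} and the equivalence of (1) and (2) in Theorem~\ref{thm:main} (which has just been proved in Section~\ref{ssec:proofmain}). Let $P$ be a generalized permutohedron in $H = H_k$, with vertices labelled $v_w$ for $w \in S_n$ as in Theorem~\ref{thm:PRW}(3). First I would invoke Proposition~\ref{prop:genpermbalanced}(1) to conclude that the cyclic sequence $\v := (v_{\id}, v_c, v_{c^2}, \ldots, v_{c^{n-1}})$ is balanced, i.e., a Coxeter necklace in the sense of Definition~\ref{def:Anecklace}. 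Then I would use Proposition~\ref{prop:genpermbalanced}(2), which gives $v_{c^{i-1}} = v_i(P)$, the dominance-maximal point of $H$ whose shifted cone $v_i(P) + C_i$ contains $P$. By Lemma~\ref{lem:envelope}, the alcoved envelope of $P$ is exactly $\env(P) = \bigcap_{i \in \Z/n\Z}(v^{(i)}(P) + C_i) = \bigcap_{i}(v_{c^{i-1}} + C_i) = Q(\v)$.

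It then remains to observe that $Q(\v)$ is a polypositroid. This is precisely the content of the equivalence (1)$\iff$(2) in Theorem~\ref{thm:main}: for any Coxeter necklace $\v$, the polytope $Q(\v)$ is a polypositroid, and in particular a generalized permutohedron. Hence $\env(P) = Q(\v)$ is a generalized permutohedron, which is what was to be shown. Since alcoved envelopes are alcoved by construction (Definition~\ref{def:env}), $\env(P)$ is both alcoved and a generalized permutohedron, i.e., a polypositroid.

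There is essentially no real obstacle here, because all the hard work has been front-loaded: Proposition~\ref{prop:genpermbalanced} supplies the balancedness and the identification $v_{c^{i-1}} = v_i(P)$, Lemma~\ref{lem:envelope} identifies $\env(P)$ with the cone intersection, and the already-established half of Theorem~\ref{thm:main} says that such a cone intersection is a polypositroid. The only point requiring a modicum of care is the bookkeeping that the sequence $\v$ built from the vertices $v_{\id}, v_c, \ldots, v_{c^{n-1}}$ is indexed so that $v^{(i)} = v_{c^{i-1}}$ matches the indexing convention in $Q(\v) = \bigcap_i (v^{(i)} + C_i)$ — but this is exactly how Proposition~\ref{prop:genpermbalanced} is stated, so the match is immediate. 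If one wanted a fully self-contained argument one could alternatively note that the labelling $v_w$ need not be by genuine vertices (it may be redundant), but Proposition~\ref{prop:genpermbalanced} already accommodates this, so no extra argument is needed.
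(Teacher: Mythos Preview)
Your proof is correct and follows essentially the same route as the paper, which simply states that the result ``follows immediately from Proposition~\ref{prop:genpermbalanced}(1) and Theorem~\ref{thm:main}.'' You have spelled out the details more carefully (invoking part (2) of Proposition~\ref{prop:genpermbalanced} and Lemma~\ref{lem:envelope} to pin down $\env(P)=Q(\v)$), but the argument is the same.
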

Recall that $\pi_\I$ denotes the composition of the restriction map $f_P \mapsto f_P|_{\cyc}$ with the transformation \eqref{eq:af} from $f_{[r,s]}$-coordinates to $a_{ij}$ coordinates.  Let $P$ be a generalized permutohedron and $f_P|_{2^{[n]}} \in \Csub$ be its support function.  Then $\pi_\I(f_P)$ represents the alcoved polytope $\env(P)$.  By Proposition~\ref{prop:alcgen}, we thus have $\pi_I(\Csub) \subseteq \Cpoly$.  But if $P$ is a polypositroid then $\env(P) = P$.  It follows that $\pi_\I(\Csub) = \Cpoly$. 

\section{Components of a polypositroid}
A {\it noncrossing partition} $\tau = (\tau_1|\tau_2|\cdots|\tau_\ell)$ of $[n]$ is a partition of $[n]$ such that there do not exist $1 \leq i < j < k <l \leq n$ such that $i,k \in \tau_a$ and $j,l \in \tau_b$ for $a \neq b$.
Let $G$ be a graph on $[n]$.  Then there exists a finest noncrossing partition $\tau(G)= (\tau_1|\tau_2|\cdots|\tau_\ell)$ of $[n]$ such that $G$ is the disjoint union of the induced subgraphs $G|_{\tau_a}$, $a = 1,2,\ldots,\ell$.  Note that the graphs $G|_{\tau_a}$ need not be connected.

If
$x = (x_1,x_2,\ldots,x_n) \in \R^n$, then $x|_{\tau_a}  \in \R^{\tau_a}$ denotes the
vector obtained by projecting $x$ to the components indexed by $\tau_a$.  Given $\v =
(v^{(1)},\ldots,v^{(n)})$ and $\tau_a = \{i_1,i_2,\ldots,i_t\}$, we define
$$
\v|_{\tau_a} := (v^{(i_1)}|_{\tau_a}, v^{(i_2)}|_{\tau_a}, \ldots,
v^{(i_t)}|_{\tau_a}).
$$
The following result should be compared to \cite[Theorem 7.6]{ARW} in the positroid case.

\begin{lemma} \label{lem:product}
Let $\v$ be a Coxeter necklace and let $\tau(G(\v))= (\tau_1|\tau_2|\cdots|\tau_\ell)$.
Then $\v|_{\tau_a}$, $a =1,2,\ldots, k$ are Coxeter necklaces satisfying $G(\v|_{\tau_a}) = G|_{\tau_a}$, and we have
$$
Q(\v)= Q(\v|_{\tau_1}) \times Q(\v|_{\tau_2})
\times\cdots \times Q(\v|_{\tau_\ell})
$$
where $Q(\v|_{\tau_a})$ lies inside $\R^{\tau_a}$.
\end{lemma}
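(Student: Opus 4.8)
The plan is to reduce everything to the combinatorics of balanced digraphs, where the noncrossing partition $\tau(G)$ is defined, and then translate back to the polytope level through the correspondences of Lemma~\ref{lem:balancedseqgraph} and Section~\ref{ssec:proofmain}. First I would record the key structural fact about $\tau(G) = (\tau_1|\cdots|\tau_\ell)$: since $G$ is the disjoint union of the induced subgraphs $G|_{\tau_a}$, the weight $m_{ij}$ is zero whenever $i$ and $j$ lie in different blocks. I then want to check that each $G|_{\tau_a}$ is itself a balanced digraph on $\tau_a$: nonnegativity of non-loop weights is inherited, and the balance condition \eqref{eq:bal} at a vertex $i \in \tau_a$ only involves edges $i \to j$ and $j \to i$ with $j \in \tau_a$ (all others vanish), so it holds for $G|_{\tau_a}$ as well.

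Next I would translate this to Coxeter necklaces. Using the explicit bijection of Lemma~\ref{lem:balancedseqgraph}, namely $v^{(i+1)}_j = m_{j,j} + m_{j+1,j} + \cdots + m_{i,j}$, one sees that the $j$-th coordinate of $v^{(i+1)}$ depends only on the weights $m_{\bullet,j}$ and, via the cyclic index range, on $i$. The point is that when we restrict attention to coordinates $j \in \tau_a$ and to the ``local clock'' running through the elements $i_1 < i_2 < \cdots < i_t$ of $\tau_a$ (taken in cyclic order), the formula for $\v|_{\tau_a}$ is exactly the balanced-digraph-to-necklace formula applied to $G|_{\tau_a}$; here I would use the noncrossing property of $\tau$ to ensure that the cyclic order on $\tau_a$ inherited from $[n]$ is a genuine cyclic order, so that ``$m_{j,j} + \cdots + m_{i,j}$'' over $\tau_a$ makes sense and matches. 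This gives $G(\v|_{\tau_a}) = G|_{\tau_a}$ and in particular shows $\v|_{\tau_a}$ is a Coxeter necklace on $\R^{\tau_a}$.

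The main obstacle, and the heart of the argument, is the product decomposition $Q(\v) = Q(\v|_{\tau_1}) \times \cdots \times Q(\v|_{\tau_\ell})$. Here I would argue that the defining inequalities of $Q(\v)$, namely $x_{[r,s]} \le v^{(r)}_{[r,s]}$ ranging over cyclic intervals $[r,s]$, ``factor'' along the blocks. The containment $\subseteq$ reduces to: if $[r,s]$ is a cyclic interval whose intersection with each block $\tau_a$ is a cyclic interval of $\tau_a$, then the inequality for $[r,s]$ is the sum of the block inequalities; noncrossingness of $\tau$ is precisely what guarantees $[r,s] \cap \tau_a$ is a cyclic sub-interval, and I would need the bookkeeping that $v^{(r)}_{[r,s]} = \sum_a v^{(r_a)}_{[r,s]\cap\tau_a}$ where $r_a$ is the first element of $[r,s]\cap\tau_a$, which follows from the coordinate formula above together with disjointness of the blocks in $G$. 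For the reverse containment $\supseteq$, I would take a point $x$ lying in each $Q(\v|_{\tau_a})$ and verify it satisfies every inequality $x_{[r,s]} \le v^{(r)}_{[r,s]}$ for arbitrary cyclic $[r,s]$; the subtlety is intervals $[r,s]$ that do \emph{not} meet a block in a single sub-interval, and here I expect to invoke that $Q(\v)$ is alcoved (indeed a polypositroid, by Theorem~\ref{thm:main}) so that it suffices to check the inequalities coming from the vertices $v^{(i)}$, reducing the claim to the fact that each $v^{(i)}$ is a vertex of $Q(\v)$ (Lemma~\ref{lem:balancedvertex}) and that $v^{(i)} = (v^{(i)}|_{\tau_1}, \ldots, v^{(i)}|_{\tau_\ell})$ is assembled from vertices of the factors. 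Once the set-theoretic equality of the polytopes is established, matching of the $\v|_{\tau_a}$ with $G|_{\tau_a}$ is the computation already sketched, and the lemma follows. An alternative, possibly cleaner, route to the product decomposition: show directly that the normal fan of $Q(\v)$ splits as a product of the normal fans of the $Q(\v|_{\tau_a})$ using the face graphs $T_F$ of Section~\ref{ssec:facegraphs} together with the noncrossing structure forced by $\tau$, but I would expect the inequality-based argument above to be the most self-contained.
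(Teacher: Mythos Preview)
Your core idea is right and matches the paper's: the absence of edges between blocks forces $v^{(r)}|_{\tau_b}$ to agree with $v^{(r')}|_{\tau_b}$ whenever $r,r'$ lie in the same gap of $\tau_b$, so each defining inequality $x_{[r,s]}\le v^{(r)}_{[r,s]}$ of $Q(\v)$ decomposes as a sum of block inequalities. However, your treatment of the two containments is tangled. The decomposition argument you give under ``$\subseteq$'' actually establishes $\prod_a Q(\v|_{\tau_a})\subseteq Q(\v)$ (a point satisfying all block inequalities satisfies each such sum), and the argument you then propose for ``$\supseteq$'' is the same direction again. The obstacle you flag---cyclic intervals $[r,s]$ not meeting a block in a single cyclic sub-interval---never occurs: any arc of the circle meets any subset in a cyclic interval of that subset (in its inherited cyclic order). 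And the fallback via Lemma~\ref{lem:balancedvertex} does not supply the missing containment $Q(\v)\subseteq\prod_a Q(\v|_{\tau_a})$.

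The paper avoids all of this by first observing that $\sum_{i\in\tau_a} v^{(j)}_i$ is independent of $j$, so $Q(\v)$ sits on the hyperplanes $\sum_{i\in\tau_a} x_i=k_a$, and then reducing by induction to the case $\tau=(\tau_1\mid\tau_2)$ with $\tau_1,\tau_2$ genuine cyclic intervals of $[n]$. In that situation the factor inequalities are literally a subset of the $Q(\v)$-inequalities, so $Q(\v)\subseteq Q(\v|_{\tau_1})\times Q(\v|_{\tau_2})$ is immediate once the hyperplane constraints are in hand; the reverse containment follows from a single instance of your decomposition, via $v^{(r)}|_{[t+1,s]}=v^{(t+1)}|_{[t+1,s]}$. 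The inductive reduction is precisely what makes the direction you are missing trivial; your direct approach to the general $\ell$-block case would require an additional argument deriving each block inequality from the cyclic-interval inequalities of $Q(\v)$ together with the hyperplane constraints.
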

\begin{proof}
The lemma holds more generally for $\tau$ any noncrossing partition such that $G$ is the disjoint union of the induced subgraphs $G|_{\tau_a}$ i.e., $\tau$ need not be chosen finest.  The sums $k_a:= v^{(j)}_{\tau_a} = \sum_{i \in \tau_a} v^{(j)}_i$ (not to be confused with the projection $v^{(j)}|_{\tau_a}$) do not depend on $j \in [n]$.  It follows that $Q(\v)$ lies in the
hyperplane $\sum_{i \in \tau_a} x_i= k_a$, and $Q(\v_{\tau_a})$ lies in the same hyperplane intersected with $\R^{\tau_a}$.

The first statement of the lemma is
straightforward.  For the second statement, we may assume by induction that $\tau = (\tau_1|\tau_2)$ where $\tau_a$ are cyclic intervals.  The polytope $Q(\v)$ is cut out by the inequalities $x_{[r,s]} \leq v^{(r)}_{[r,s]}$.  Suppose that $[r,s] \cap \tau_1$ and $[r,s] \cap \tau_2$ are both non-empty.  For simplicity, we suppose that $[r,s] \cap \tau_1 = [r,t]$ and $[r,s] \cap \tau_2 = [t+1,s]$.  The assumption that there are no edges in $G(\v)$ between $\tau_1$ and $\tau_2$ implies that $v^{(r)}|_{[t+1,s]}= v^{(t+1)}|_{[t+1,s]}$.  Thus $v^{(r)}_{[r,s]} = v^{(r)}_{[r,t]} + v^{(t+1)}|_{[t+1,s]}$.  It follows that the inequality $x_{[r,s]} \leq v^{(r)}_{[r,s]}$ is implied by the inequalities $x_{[r,t]} \leq v^{(r)}_{[r,t]} $ and $x_{[t+1,s]} \leq v^{(t+1)}|_{[t+1,s]}$.  The latter inequalities are among those cutting out $Q(\v_{\tau_1})$ and $Q(\v_{\tau_2})$ respectively.  It follows that $Q(\v) = Q(\v_{\tau_1}) \times Q(\v_{\tau_2})$.
\end{proof}

\begin{proposition}
Let $\v$ be a Coxeter necklace.  The dimension of the polypositroid
$Q(\v)$ is equal to $n - \#\{\text{parts in $\tau(G)$}\}$.
\end{proposition}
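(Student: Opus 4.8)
The plan is to combine Lemma~\ref{lem:product} with a direct dimension count for the case where $G(\v)$ cannot be refined further, i.e.\ when $\tau(G(\v))$ is the one-block partition $([n])$. Indeed, by Lemma~\ref{lem:product} we have a product decomposition $Q(\v) = Q(\v|_{\tau_1}) \times \cdots \times Q(\v|_{\tau_\ell})$ where each factor $Q(\v|_{\tau_a})$ lies in $\R^{\tau_a}$, and moreover $G(\v|_{\tau_a}) = G(\v)|_{\tau_a}$. Since $\dim Q(\v) = \sum_a \dim Q(\v|_{\tau_a})$ and $n = \sum_a |\tau_a|$, it suffices to prove that whenever a Coxeter necklace $\v$ has $\tau(G(\v))$ equal to the trivial one-block partition, the polypositroid $Q(\v) \subset \R^m$ (with $m = |[n]|$ in that block) is full-dimensional, i.e.\ $\dim Q(\v) = m - 1$ inside the hyperplane $H$. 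Applying this to each factor gives $\dim Q(\v) = \sum_a (|\tau_a| - 1) = n - \ell$, which is the claim.

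So the core step is: if $\tau(G(\v)) = ([n])$, then $Q(\v)$ is full-dimensional in $H$. First I would handle the generic case: if every non-loop edge of $G(\v)$ is strictly positive, then by Lemma~\ref{lem:noncrossing} every face of $Q(\v)$ has a noncrossing alternating graph, and in particular the facet structure is controlled; one checks that the inequalities $x_{[r,s]} \le v^{(r)}_{[r,s]}$ are not all implied by a proper subset, so $Q(\v)$ is a generic alcoved polytope and hence full-dimensional. For the non-generic case, I would use the deformation argument already appearing in the proof of Theorem~\ref{thm:main}: write $\v = \lim_{\varepsilon \to 0} \v_\varepsilon$ where $G(\v_\varepsilon)$ adds $\varepsilon$ to every non-loop edge. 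However, adding $\varepsilon$ changes which edges are present, hence could a priori change $\tau(G)$; the point is that adding $\varepsilon$ to non-loop edges only \emph{adds} edges, so it can only \emph{coarsen} $\tau$, and since $\tau(G(\v))$ is already the one-block partition, $\tau(G(\v_\varepsilon)) = ([n])$ as well. Then $Q(\v_\varepsilon)$ is full-dimensional by the generic case, and full-dimensionality is preserved under the limit provided the limiting polytope still has a vertex — but $v^{(1)}$ is a vertex of $Q(\v)$ by Lemma~\ref{lem:balancedvertex}, and more is true: the argument needs that $Q(\v)$ does not collapse into a smaller-dimensional flat.

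The cleaner route for the lower bound $\dim Q(\v) \ge m - \ell$ (equivalently $\ge m-1$ per block) is to exhibit enough affinely independent vertices. Since each $v^{(i)}$ is a vertex of $Q(\v)$ (Lemma~\ref{lem:balancedvertex}), I would show that when $\tau(G(\v))=([n])$, the $n$ vertices $v^{(1)}, \dots, v^{(n)}$ affinely span $H$: their pairwise differences $v^{(i+1)} - v^{(i)}$ encode, via \eqref{eq:Gv}, the outgoing edge weights at vertex $i$, and the hypothesis that $G(\v)$ is not a disjoint union across any nontrivial noncrossing partition forces these difference vectors to span the $(n-1)$-dimensional space $H_0$ (a dependence among them would, roughly, localize the graph to a proper cyclic-interval-respecting decomposition). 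Conversely the upper bound $\dim Q(\v) \le m - \ell$ follows because $Q(\v)$ is contained in the intersection of the hyperplanes $\sum_{i \in \tau_a} x_i = k_a$, which cut dimension by $\ell - 1$ below $\dim H = n-1$, as already noted in the proof of Lemma~\ref{lem:product}.

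I expect the main obstacle to be the affine-independence (or full-dimensionality) step in the one-block case: one must argue carefully that the absence of a nontrivial noncrossing decomposition of $G(\v)$ really does prevent $Q(\v)$ from lying in an extra hyperplane beyond the obvious $\sum_{i \in [n]} x_i = k$. The subtlety is that $Q(\v)$ could conceivably have a "hidden" equality $x_{[r,s]} = \text{const}$ holding on all of $Q(\v)$ without this being visible from a single defining inequality; ruling this out requires translating such a hidden equality back into a statement forcing $G(\v)$ to split along the cyclic intervals $[r,s]$ and $[s+1,r-1]$, which is essentially the contrapositive of the computation in the proof of Lemma~\ref{lem:product} (where the non-edge condition $v^{(r)}|_{[t+1,s]} = v^{(t+1)}|_{[t+1,s]}$ made the inequality redundant). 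I would make this precise by showing that if $x_{[r,s]}$ is constant on $Q(\v)$ then necessarily $v^{(r)}_{[r,s]} = v^{(r')}_{[r,s]}$ for all $r'$, and combined with the necklace inequalities \eqref{eq:neckdef} this forces all non-loop edges between $[r,s]$ and its complement to vanish, contradicting $\tau(G(\v)) = ([n])$ unless $[r,s]$ is trivial.
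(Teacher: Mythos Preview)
Your proposal is correct, and the approach you settle on in the final paragraph is exactly the paper's proof: after reducing via Lemma~\ref{lem:product} to the one-block case, the paper observes that an alcoved polytope of dimension less than $n-1$ in $H$ must lie on some hyperplane $(h_i-h_j)(x)=a_{ij}$, i.e.\ $x_{[j+1,i]}=\text{const}$; evaluating at the vertices $v^{(j+1)}$ and $v^{(i+1)}$ and telescoping the necklace differences then forces all edges of $G(\v)$ between $[j+1,i]$ and $[i+1,j]$ to vanish, contradicting $\tau(G(\v))=([n])$. The paper goes directly to this argument and skips your two preliminary attempts. Your instinct that the deformation route is problematic is right---full-dimensionality is genuinely not preserved under limits, so that approach would not close. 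The affine-independence-of-vertices route could be made to work, but completing it requires precisely the same translation (a linear relation among the $v^{(i)}$ forces missing edges across a cyclic-interval cut), so it offers no shortcut over the hyperplane argument you end with.
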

\begin{proof}
By induction and Lemma \ref{lem:product}, it suffices to show $\dim(Q(\v)) = n-1$ whenever $\tau(G)=([n])$ has a single part.

Assume that $\tau(G) = ([n])$ and that $Q(\v)$ has dimension less than $n-1$.  Then since $Q(\v)$
is alcoved, it must lie in some hyperplane $(h_i - h_j)(x) = a_{ij}$.  This implies that
$v^{(j+1)}_{[j+1,i]} = a_{ij} = v^{(i+1)}_{[j+1,i]}$.  But
$$(v^{(i+1)}-v^{(j+1)})_{[j+1,i]} = \sum_{k \in [j+2,i+1]}
(v^{(k)}-v^{(k-1)})_{[j+1,i]} \leq 0
$$
with equality if and only if there are no edges in $G(\v)$ from the
vertices $[j+1,i]$ to $[i+1,j]$.  The same argument shows that there
are no edges from $[i+1,j]$ to $[j+1,i]$, so $\tau(G)$ must be a refinement of the partition $\tau = ([i+1,j]|[j+1,i])$, contradicting our assumption.
\end{proof}

\section{Normal fans of polypositroids}\label{sec:normal}
\subsection{Normal fans to generic simple alcoved polytopes}
Recall that we say that an alcoved polytope is {\it generic} if every inequality
$(h_i-h_j)(x) \leq a_{ij}$ determines a facet.  The $f$-vector $(f_0,f_1,\ldots,f_{d})$ of a $d$-dimensional polytope $P$ is given by
$$f_i := \#\{\text{$i$-dimensional faces of $P$}\}.$$

\begin{theorem}\label{thm:fvector}
The $f$-vectors of any two generic simple alcoved polytopes $P\subset \R^n$  
are the same.  The face numbers are given by 
\begin{equation}\label{eq:cyclof}
f_{i}= \binom{n-1}{i} \binom{2n-i-2}{n-1} \qquad \mbox{ for $i=0,1,\dots,n-1$.}
\end{equation}
\end{theorem}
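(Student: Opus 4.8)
The plan is to show that any generic simple alcoved polytope $P \subset \R^n$ has the same $f$-vector as the cyclohedron, and then to read off \eqref{eq:cyclof} from the known face numbers of the cyclohedron. The first step is to reduce from ``generic simple alcoved polytope'' to ``generic simple polypositroid.'' By Proposition~\ref{prop:triangle}, the set of generic alcoved polytopes is an open cone $C$ inside $\Calc \subset \R^{n(n-1)}$, cut out by the strict triangle inequalities. Genericity is automatic on $C$; simplicity is also an open condition (a polytope stays simple under sufficiently small perturbation of its facets). Hence the locus of generic simple alcoved polytopes is open in $\R^{n(n-1)}$, and within this locus the combinatorial type (in particular the $f$-vector) is locally constant. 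The cyclohedron of \eqref{E:cyclo} is a generic simple alcoved polytope (this is recalled after \eqref{E:cyclo}), and it is a polypositroid. So it suffices to connect an arbitrary generic simple alcoved polytope to the cyclohedron by a path inside $C$ along which simplicity never fails.

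The key step is therefore a connectedness/deformation argument. I would argue as follows: the face fan data of a generic simple polytope is governed by its normal fan, which for alcoved polytopes is a coarsening data determined by the matching-ensemble / noncrossing-tree combinatorics developed in the paper (Lemma~\ref{lem:Talternating}, Theorem~\ref{thm:ensemble}, and the analysis in Section~\ref{sec:normal}). The cleanest route is: first, use Proposition~\ref{prop:alcgen} / Theorem~\ref{thm:main} to observe that generic simple alcoved polytopes are generic simple polypositroids (an alcoved polytope all of whose edges are forced to be in directions $e_i - e_j$ by Lemma~\ref{lem:possibleedge}, once one checks the relevant face graphs are noncrossing — which holds in the generic simple case by Lemma~\ref{lem:noncrossing} applied to the corresponding generic Coxeter necklace). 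Then, by Theorem~\ref{thm:main}, such polytopes correspond to generic balanced digraphs $G$ with all non-loop weights positive; the space of these is convex (an open convex cone), hence path-connected, and straight-line homotopy in the space of weights keeps the polytope generic. Along such a path the combinatorial type can only change when simplicity breaks, i.e.\ at a wall of the secondary-type subdivision; but the subset where the polytope is \emph{simple} is itself open, and one shows it is also connected because the normal fan of a generic simple polypositroid is one of the fans classified by Lemma~\ref{lem:Talternating}/Theorem~\ref{thm:ensemble}, and these all refine to the same $f$-vector. Concretely, I would invoke the structural results of Section~\ref{sec:normal} to say: every maximal cone of $\Ncal(P)$ is indexed by an alternating noncrossing tree, every generic simple polypositroid has the same collection of such cones up to the combinatorics of these trees, and counting faces of each dimension reduces to a count of chains of noncrossing-tree-type data, which is independent of $P$.

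Finally, to get the explicit formula I would either (a) compute the $f$-vector of the cyclohedron directly — the cyclohedron is the generalized associahedron of type $B_n$ / the type-$B$ associahedron, whose face numbers are the classical ones $f_i = \binom{n-1}{i}\binom{2n-i-2}{n-1}$ (these appear in Simion's work and in Postnikov's \cite{P}) — or (b) run the matching-ensemble count from Section~\ref{sec:normal} and recognize the resulting sum as \eqref{eq:cyclof}. Option (a) is the quickest: since the cyclohedron is a generic simple alcoved polytope with known $f$-vector, and all generic simple alcoved polytopes share its $f$-vector by the connectedness argument above, \eqref{eq:cyclof} follows.

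\textbf{Main obstacle.} The delicate point is the connectedness of the locus of generic \emph{simple} alcoved polytopes — openness alone is not enough, since a priori the simple locus could have several connected components realizing different $f$-vectors. The argument needs the structural description of normal fans of generic simple polypositroids (that their maximal cones are exactly indexed by alternating noncrossing trees, via Lemma~\ref{lem:Talternating} and Theorem~\ref{thm:ensemble}): this pins down the face poset combinatorially in a way that is manifestly independent of which particular generic simple alcoved polytope one started with, bypassing the need to literally connect everything to the cyclohedron by a simplicity-preserving path. I expect the bulk of the work to be in making this ``all such normal fans have the same face-lattice-counting data'' statement precise, which is really the content that Section~\ref{sec:normal} supplies.
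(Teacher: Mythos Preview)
Your approach has a genuine gap, and it misses the key idea of the paper's proof.

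First, the reduction from ``generic simple alcoved polytope'' to ``generic simple polypositroid'' is unjustified. Proposition~\ref{prop:alcgen} and Theorem~\ref{thm:main} say nothing of the sort: they say the alcoved envelope of a generalized permutohedron is a polypositroid, not that every alcoved polytope is one. Lemma~\ref{lem:noncrossing} only applies to $Q(\v)$ for a generic Coxeter necklace $\v$, i.e., already to polypositroids; you cannot invoke it for an arbitrary alcoved polytope. Theorem~\ref{thm:fvector} is stated for \emph{all} generic simple alcoved polytopes, and the paper's proof genuinely handles this generality.

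Second, even restricting to polypositroids, the connectedness argument fails for the reason you yourself flag: the locus of generic \emph{simple} polypositroids is not connected. The paper shows explicitly (for $n=4$) that there are several distinct normal fans, corresponding to different matching ensembles, and these sit in different chambers separated by non-simple walls. A straight-line homotopy in the cone of balanced digraphs will generically cross such walls. Your fallback---``all such normal fans have the same face-counting data by Section~\ref{sec:normal}''---is precisely the statement to be proved, and the structural results you cite (Lemma~\ref{lem:Talternating}, Theorem~\ref{thm:ensemble}) describe what the cones look like but do not by themselves count them.

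The paper's proof takes a completely different and much cleaner route. The normal fan of any generic simple alcoved polytope $P$ induces a \emph{local triangulation} of the root polytope $R = \conv\{h_i - h_j\}$ into simplices $\Delta_T$, each containing the origin as a vertex. Because the type~$A$ root system is unimodular, every $\Delta_T$ has normalized volume~$1$, and hence the number of interior lattice points of $m\Delta_T$ depends only on $m$ and $\dim \Delta_T$. It follows that the Ehrhart polynomial of $R$ determines---and is determined by---the number of simplices of each dimension in any local triangulation. Since the Ehrhart polynomial of $R$ is intrinsic to $R$, all local triangulations, and hence all generic simple alcoved polytopes, have the same $f$-vector. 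The explicit formula then comes from the cyclohedron as you suggest. The Ehrhart/unimodularity argument is the missing idea; it sidesteps entirely the connectedness issue.
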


  The {\it root polytope} $R$ is the convex hull of
the vectors $\{ h_i - h_j \mid i, j \in \{1,2,\ldots,n\}\}$, which
we think of as lying inside $(\R^n)^*/h_n$.   If $T$ is a directed tree on $[n]$ (or more generally, a directed
graph on $[n]$), we let $\Delta_T \subset (\R^n)^*$ denote the convex hull of the
points $\{h_i - h_j \mid j \to i \; \text{is an edge of $T$}\} \cup
\{0\}$.  
A {\it local triangulation} of $R$ is a triangulation such that every simplex is one of the $\Delta_T$. 

Let $P$ be a generic
simple alcoved polytope. The normal fan $\F_P$ of $P$ lies in
$(\R^n)^*/h_n$.  The condition that $P$ is generic implies that
every root $h_i - h_j \neq 0$ is an edge of $\F_P$.  The condition
that $P$ is simple implies that each maximal cone $C_v$ of $P$ is spanned
by $(n-1)$ roots.  Thus the collection of maximal cones of $\F_P$
induces a local triangulation of $P$.  Let $C_T := \sp_{\geq 0} \Delta_T$ be the cone spanned by $\Delta_T$.

\begin{proof}[Proof of Theorem~\ref{thm:fvector}]
The $f$-vector of $P$ is given by counting the number of cones of
each dimension of $\F_P$, which is the same as counting the number of
simplices (with the origin as a vertex) of each dimension in the corresponding local triangulation
of the root polytope $R$.

We claim that every local triangulation of $R$ has the same number
of simplices of each dimension.  To see this we note that since the type $A$ root system
is unimodular, every simplex $\Delta_T$ has the same volume (in fact, normalized volume 1).
The number $\#\{\Int(m\Delta) \cap \Z^r\}$ of integer points lying in the interior of an integer scalar multiple of a normalized volume 1 simplex $\Delta$ with integer coordinates depends only on the scalar multiple $m$ and the dimension $\dim(\Delta)$ of the simplex.

It follows easily from this that the
Erhart polynomial of $R$ can be written in terms of, and in fact
determines, the number of simplices (with the origin as a vertex) in each dimension of a local triangulation.  But clearly the Erhart
polynomial of $R$ does not depend on the triangulation of $R$.

The cyclohedron (defined in \eqref{E:cyclo}) is a generic and simple polypositroid.  Thus every generic simple polypositroid has the same $f$-vector as the cyclohedron. 
According to \cite{Sim},  the 
$f$-vector of the cyclohedron is given by (\ref{eq:cyclof}).
For example, the two-dimensional cyclohedron is a hexagon with face numbers $(f_0,f_1,f_2) = (6,6,1)$.
\end{proof}



\subsection{Matching ensembles}
Let $P$ be a generic simple alcoved polytope.   Let $\T(P)$ denote the set of vertex trees of $P$.  The data of $\T(P)$ is equivalent to the knowledge of the normal fan $\F_P$.   Our convention is that $\F_P$ denotes the outer normal fan.  Thus the fan $\F_P$ is complete, and the maximal cones $C_v$ of $\F_P$ are indexed by vertices $v$, such that $C_v$ is the positive span of the vectors $h_i - h_j$ for $j \to i$ an edge of $T_v$.

The first part of the following result is similar to \cite[Lemma 13.2]{P}.

\begin{lemma}\label{lem:Talternating} 
Let $P$ be a generic simple alcoved polytope, and $v$ a vertex
of $P$.  Then the tree $T_v$ is alternating.  
Furthermore, if $P$ is a polypositroid, then $T_v$ is in addition noncrossing. 
\end{lemma}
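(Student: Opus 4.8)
The plan is to prove the two assertions of Lemma~\ref{lem:Talternating} separately, using the face graph machinery developed in Section~\ref{ssec:facegraphs}. Recall that for a vertex $v$ of a generic simple alcoved polytope $P$, the graph $T_v$ has an edge $i\to j$ exactly when $v$ lies on the hyperplane $x_{[i+1,j]}=f_{[i+1,j]}$; since $P$ is simple, $T_v$ has exactly $n-1$ edges and the corresponding equations are linearly independent, so (as in the proof of Lemma~\ref{lem:possibleedge}) the underlying undirected graph of $T_v$ is a spanning tree of $[n]$.

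\textbf{Alternating.} First I would show $T_v$ is alternating for \emph{any} generic simple alcoved polytope, not just polypositroids. Suppose for contradiction that some vertex $m$ has both an incoming edge $i\to m$ and an outgoing edge $m\to j$ in $T_v$. Translating into $f$-coordinates, $v$ satisfies both $x_{[i+1,m]}=f_{[i+1,m]}$ and $x_{[m+1,j]}=f_{[m+1,j]}$. Adding these and using the triangle/superadditivity constraints \eqref{eq:frs}--\eqref{eq:af} — equivalently, the submodularity-type inequality $f_{[i+1,m]}+f_{[m+1,j]}\geq f_{[i+1,j]}$, which holds because the support function of an alcoved polytope is subadditive on a chain of adjacent cyclic intervals — gives $x_{[i+1,j]}(v)=f_{[i+1,m]}+f_{[m+1,j]}\geq f_{[i+1,j]}$, forcing $x_{[i+1,j]}(v)=f_{[i+1,j]}$ with equality. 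This says $v$ lies on three facet hyperplanes whose defining functionals $h_{[i+1,m]},h_{[m+1,j]},h_{[i+1,j]}$ are linearly dependent (the third is the sum of the first two), contradicting that $P$ is simple at $v$ unless the $a_{ij}$ violate strictness. More carefully: by Proposition~\ref{prop:triangle} genericity already forces the strict triangle inequalities, so the equality $f_{[i+1,m]}+f_{[m+1,j]}=f_{[i+1,j]}$ cannot hold, and hence $v$ cannot simultaneously satisfy the two equations unless $i\to m$ and $m\to j$ are not both in $T_v$. This is the argument essentially carried out in the proof of Proposition~\ref{prop:triangle}, and I would cite that lemma's internal reasoning (the manipulation with $d_P$, or directly the strict triangle inequality) rather than redo it.

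\textbf{Noncrossing.} Now assume $P$ is a polypositroid. By Theorem~\ref{thm:main}, $P=Q(\v)$ for a Coxeter necklace $\v$. If $\v$ is generic then Lemma~\ref{lem:noncrossing} directly states that every face of $Q(\v)$, in particular every vertex, has a noncrossing graph, and we are done. The remaining case is when $\v$ is not generic, i.e.\ some non-loop edge of $G(\v)$ vanishes. Here I would use a perturbation argument exactly parallel to the proof of Theorem~\ref{thm:main} in Section~\ref{ssec:proofmain}: form $\v_\varepsilon$ with $G(\v_\varepsilon)=G(\v)+\varepsilon(\text{all non-loop edges})$, so $\v_\varepsilon$ is a generic Coxeter necklace and $Q(\v_\varepsilon)$ degenerates to $Q(\v)=P$ as $\varepsilon\to 0$. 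For small $\varepsilon$ the combinatorial type of $Q(\v_\varepsilon)$ is constant, and $P$ is obtained from $Q(\v_\varepsilon)$ by moving facets together. The key point is that moving facets of $Q(\v_\varepsilon)$ can only \emph{coarsen} the normal fan, so each vertex $v$ of $P$ corresponds to a union of vertices of $Q(\v_\varepsilon)$, and $T_v^{P}=\bigcap T_{v'}^{Q(\v_\varepsilon)}$ over the merged vertices $v'$ — hence $T_v^P$ is a subgraph of a noncrossing graph, therefore noncrossing. (A subgraph of a noncrossing digraph is noncrossing.) Since we already proved $T_v$ is alternating in full generality, both conclusions hold.

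\textbf{Main obstacle.} The genuinely delicate point is the degenerate case of the noncrossing claim: one must be sure that passing from $Q(\v_\varepsilon)$ to its facet-limit $P$ only merges vertices and does not create a new vertex whose face graph is \emph{not} contained in any $T_{v'}^{Q(\v_\varepsilon)}$. This follows because the facets of $P$ are a subset of the facets of $Q(\v_\varepsilon)$ (the limit doesn't introduce new supporting hyperplanes among the alcoved ones), so every facet of $P$ through $v$ is a facet of $Q(\v_\varepsilon)$ through some $v'\to v$; one should phrase this cleanly, perhaps by noting that the support function $f_{Q(\v_\varepsilon)}$ converges pointwise to $f_P$ and the normal fan of $P$ is a coarsening of that of $Q(\v_\varepsilon)$ for $\varepsilon$ small, which is precisely the statement that $Q(\v)$ is a deformation of $Q(\v_\varepsilon)$ used already in Section~\ref{ssec:proofmain}. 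Granting that, the combinatorics of subgraph-of-noncrossing-is-noncrossing is immediate. Alternatively — and this might be cleanest — one can avoid perturbation entirely by proving the noncrossing claim directly for \emph{all} Coxeter necklaces using the same crossing-elimination computation as in Lemma~\ref{lem:noncrossing} but tracking where genericity was used; in the non-generic case one obtains $v^{(r)}_{[s+1,s']}\geq v^{(r')}_{[s+1,s']}$ and must instead argue that if this is an equality then the two crossing edges cannot both be forced at $v$ by a dimension/simplicity count. I would present the perturbation version as the main line and remark on the direct version.
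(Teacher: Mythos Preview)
Your proposal is essentially correct, and the noncrossing half is the same perturbation idea as the paper's, but you are making both parts harder than necessary.

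\textbf{Alternating.} Your strict-triangle-inequality argument is valid (once phrased carefully in the $a_{ij}$ variables: $(h_j-h_i)(v)=a_{mi}+a_{jm}>a_{ji}$ contradicts $v\in P$). The paper instead gives the one-line dual argument on the normal fan: if $j\to i$ and $k\to j$ are both in $T_v$, then $h_i-h_k=(h_i-h_j)+(h_j-h_k)$ lies in the maximal cone $C_v$ but is not one of its extremal rays; since $P$ is generic, every root $h_i-h_k$ must be a ray of $\F_P$, a contradiction. This avoids tracking cyclic intervals entirely.

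\textbf{Noncrossing.} Here you and the paper do the same thing: perturb to a polypositroid $Q(\v')$ with $\v'$ generic and invoke Lemma~\ref{lem:noncrossing}. The ``main obstacle'' you spend the last two paragraphs on is not an obstacle at all, because you are not using the hypothesis fully. Since $P$ is assumed \emph{generic and simple}, it sits in the interior of its combinatorial cell in the space of alcoved polytopes, so any sufficiently small perturbation of the facet parameters---in particular your $Q(\v_\varepsilon)$---has exactly the same combinatorial type as $P$. Hence $T_v^{P}=T_{v'}^{Q(\v_\varepsilon)}$ for the corresponding vertex, and Lemma~\ref{lem:noncrossing} applies directly. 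No coarsening, no subgraph argument, and no degenerate case is needed. This is precisely how the paper handles it: ``Since $P$ is generic and simple, the polytope $P'$ obtained from a small perturbation of the facets of $P$ will have the same combinatorial type as $P$.''
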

\begin{proof}
If $j \to i$ and $k \to j$ both belong to $T_v$, then $h_i - h_k \in C_v$ and is not one of the edges of $C_v$, contradicting the assumption that all roots are edges of $\F_P$.  We conclude that $T_v$ is alternating.

Now suppose that $P$ is a polypositroid.  Since $P$ is generic and simple, the polytope $P'$ obtained from a small pertubation of the facets of $P$ will have the same combinatorial
type as $P$.  We can pick such a $P'$ to be a polypositroid $Q(\v)$ for a generic Coxeter necklace $\v$.  It follows 
from Lemma \ref{lem:noncrossing} that the trees $T_v$ are noncrossing.
\end{proof}

Suppose $T$ is an alternating tree $[n]$.  A matching of $(I,J)$ in
$T$ is a collection of edges of $T$ which form a matching of $I$
with $J$, such that vertices in $I$ are sources, and the vertices in
$J$ are sinks.  Say that two directed alternating trees $T,T'$ on
$[n]$ are {\it compatible} if there do not exist disjoint subsets
$I, J \subset [n]$ of the same cardinality, such that both $T$ and
$T'$ contain matchings of $(I,J)$, and these matchings disagree.

\begin{lemma}[cf. {\cite[Lemma 12.6]{P}}] \label{lem:compatible}
Let $T, T'$ be distinct directed alternating trees on $[n]$.  The
intersection $C_T \cap C_{T'}$ is a common face of both
$C_T$ and $C_{T'}$ if and only if $T$ and $T'$ are
compatible.
\end{lemma}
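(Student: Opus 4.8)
The plan is to prove Lemma~\ref{lem:compatible} by reducing to the key combinatorial fact about normal fans of generic simple alcoved polytopes: two cones $C_T, C_{T'}$ (each a unimodular simplicial cone spanned by roots $h_i - h_j$) meet in a common face if and only if their relative interiors are disjoint, which, since the cones together with all their faces form the (complete) normal fan $\F_P$, is automatic for cones that actually appear in $\F_P$. So the real content is to translate the statement ``$\mathrm{relint}(C_T) \cap \mathrm{relint}(C_{T'}) = \emptyset$'' into the matching-compatibility condition. First I would recall that a point of $C_T$ is a nonnegative combination $\sum_{j\to i} c_{ji}(h_i - h_j)$, and I would introduce the flow/transportation description: such a sum equals $\sum_k d_k h_k$ where $d_k = \sum_{i\to k} c_{ik} - \sum_{k\to j} c_{kj}$ is the net flow into $k$. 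Since $T$ is alternating, each vertex is a pure source or pure sink (or isolated), so $d_k \geq 0$ at sinks and $d_k \leq 0$ at sources; the vector $(d_k)$ (taken modulo $h_n$, i.e. modulo the all-ones direction) records which point of $(\R^n)^*/h_n$ we landed on.

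The main step is then: a point $h = \sum_k d_k h_k$ lies in the relative interiors of both $C_T$ and $C_{T'}$ if and only if $T$ and $T'$ both contain a matching of the \emph{same} $(I,J)$ with the \emph{same} multiset of net flows, and these matchings can be taken to disagree — precisely the failure of compatibility. The direction ``not compatible $\Rightarrow$ cones overlap'' is the easier one: given disagreeing matchings $M \subseteq T$ and $M' \subseteq T'$ of $(I,J)$, put a small positive weight $\varepsilon$ on every edge of $M$ (inside $C_T$) and the same on every edge of $M'$ (inside $C_{T'}$); both produce the same boundary vector $\varepsilon(\sum_{j\in J} h_j - \sum_{i\in I} h_i)$, and one checks this point lies in the relative interior of a face of each of $C_T, C_{T'}$ that is not shared — so $C_T \cap C_{T'}$ is not a common face. (One must be slightly careful: one wants the common point to lie in $\mathrm{relint}(C_T \cap C_{T'})$ yet not be in a face spanned by a common subset of edges; this uses that $T\neq T'$ and that the matchings genuinely disagree.)

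For the converse — ``compatible $\Rightarrow$ common face'' — I would argue that if $C_T$ and $C_{T'}$ are cones in the complete fan $\F_P$, their intersection is automatically a common face; but to make the lemma self-contained I would instead show directly that if $T, T'$ are compatible alternating trees then $C_T \cap C_{T'}$ is spanned by the common edges. Suppose $h \in C_T \cap C_{T'}$; write $h$ via nonnegative edge weights $c$ on $T$ and $c'$ on $T'$. The support $\{e : c_e > 0\}$ of the $T$-representation, being a forest, decomposes into the edge sets of the connected components; within each component the induced subgraph of $T$ is a tree inducing a unique matching of its sources-with-positive-deficit onto its sinks-with-positive-surplus, and similarly for $T'$. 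Compatibility forces these matchings to agree, hence forces the supports — and the weights, by uniqueness of the representation of $h$ in terms of linearly independent roots — to coincide on $T$ and $T'$; so $h$ lies in the cone spanned by the common edges, which is a common face of $C_T$ and $C_{T'}$.

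The step I expect to be the main obstacle is making the bookkeeping in the converse rigorous: specifically, showing that a nonnegative combination of the (linearly independent, unimodular) roots of $T$ lying in a proper face must have support contained in a ``matched'' sub-forest, and that compatibility of $T$ with $T'$ then pins down that sub-forest uniquely. The subtlety is that $h$ can have several nonzero net-flow coordinates, so one is matching multisets rather than single pairs, and one needs the observation that in an alternating \emph{tree} the edge weights are determined by the net flows on the vertices — i.e. the ``transportation problem'' on a tree has a unique solution. I would isolate this as a short sublemma (unique flow decomposition on a forest), after which both directions of the equivalence follow by comparing the forced matchings of $T$ and $T'$ against the definition of compatibility.
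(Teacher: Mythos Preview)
Your treatment of the ``not compatible $\Rightarrow$ not a common face'' direction is essentially the paper's argument: from disagreeing matchings $M\subset T$ and $M'\subset T'$ of $(I,J)$ one forms $x=\sum_{j\in J}h_j-\sum_{i\in I}h_i$, notes that the minimal face of $C_T$ (resp.\ $C_{T'}$) through $x$ is $C_M$ (resp.\ $C_{M'}$), and concludes. The paper first shrinks $(I,J)$ so that $M\cap M'=\emptyset$, which makes the ``minimal face'' claim immediate; you should do the same to avoid the hand-waving you flag.

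The converse direction, however, has a genuine gap. Your plan is to take $h\in C_T\cap C_{T'}$, look at the support forest $S\subset T$ of the (unique) nonnegative expansion of $h$, and assert that each component of $S$ ``induces a unique matching of its sources-with-positive-deficit onto its sinks-with-positive-surplus''. This step fails: a component of $S$ is an arbitrary subtree, and there is no reason its sources and sinks are even equinumerous (e.g.\ a star with one sink and three sources), let alone that it contains a perfect matching between them. So there is no canonical matching to feed into the compatibility hypothesis, and the argument that ``compatibility forces the supports to coincide'' does not go through. (Your aside that one could instead invoke the fan $\F_P$ is circular: this lemma is precisely what is used later to show that the cones $C_T$ assemble into a fan.)

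The paper's proof of this direction is quite different and avoids flows altogether. Set $F=T\cap T'$. On the connected components $A$ of $F$ define a relation $A\prec A'$ if the unique $T$-path from $A$ to $A'$ is directed toward $A'$; define $\prec'$ analogously using $T'$. The key claim is that $A\prec A'$ iff $A'\prec' A$: if both $A\prec A'$ and $A\prec' A'$ held, the two directed paths (one in $T$, one in $T'$) together with edges of $F$ would produce an alternating cycle, and reading off its alternate edges gives two different matchings of the same $(I,J)$---contradicting compatibility. Now choose any real function $f$ constant on each component with $f(A)<f(A')$ whenever $A\prec A'$ (equivalently $A'\prec' A$), and let $\phi_f:(\R^n)^*/h_n\to\R$ be the linear map $h_i\mapsto f(i)$. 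Then $\phi_f\geq 0$ on $C_T$, $\phi_f\leq 0$ on $C_{T'}$, and $\phi_f=0$ on $C_F$, so $C_T\cap C_{T'}=C_F$ is a common face. This separating-hyperplane construction is the idea you are missing.
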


\begin{proof} 
Suppose $T$ and $T'$ are not compatible.  Let $I, J \subset [n]$ be
disjoint such that $T$ (resp. $T'$) contains a matching $M$ (resp.
$M'$) from $I$ to $J$, such that $M \neq M'$.  We assume that $I$
and $J$ are chosen to be minimal, so that $M \cap M' = \emptyset$.
Let 
$$
x=\sum_{j \in J} h_j - \sum_{i \in I} h_i =
\sum_{(i \to j) \in M} h_j - h_i = \sum_{(i
\to j) \in M'} h_j - h_i.
$$
Clearly, $x \in C_T \cap C_{T'}$.  The minimal face of
$C_T$ containing $x$ is $C_M$.  The minimal face of
$C_{T'}$ containing $x$ is $C_{M'}$.  Since $M \neq M'$,
we conclude that $C_T \cap C_{T'}$ is not a common face.

Conversely, suppose that $T$ and $T'$ are compatible.  Let $F = T
\cap T'$ be the intersection, a directed forest on $[n]$.  Define a
partial order $\prec$ on the connected components (denoted $A$) of $F$, by letting
$A \prec A'$ if there is a (necessarily unique) sequence $A=A_0,
A_1, \ldots,A_\ell = A'$ of distinct components of $F$ such that $T$
has a (unique) directed edge $f_i$ joining $A_i$ to $A_{i+1}$ for $i
\in [0,\ell-1]$. Similarly, define $\prec'$ using $T'$.  We claim
that $A \prec A'$ if and only if $A' \prec' A$.  Assuming otherwise,
the sequence of components from $A$ to $A'$ for $T$ and from $A'$ to
$A$ for $T'$ can be assumed to be distinct except for $A$ and $A'$.
 Using the directed edges $f_i \in T$ from $A$ to $A'$ and $g_i \in
T'$ from $A'$ to $A$, together with some of the edges in $F$, one
obtains an alternating cycle of even length, such that (picking an
orientation) the clockwise edges belong to $T$, and the
counterclockwise edges belong to $T'$.  This immediately contradicts
the compatibility of $T$ and $T'$.

Now, let $f:[n] \to \R$ be a function with the following properties: it is constant with value $f(A)$ on the
components $A$ of $F$, and such that $f(A) < f(A')$ if and only if $A
\prec A'$ if and only if $A' \prec' A$.  Assume that $f(n) = 0$.
 Then $f$ extends to a linear function $\phi_f: (\R^n)^*/h_n \mapsto \R$,
by setting $h_i \mapsto f(i)$.  It follows by construction that
$\phi_f(C_T) \geq 0$, $\phi_f(C_{T'}) \leq 0$, and
$\phi_f(C_F) = 0$.  It follows that $C_T \cap C_{T'}
= C_F$ is a common face of both cones.
\end{proof}

A {\it matching field} on $[n]$ is a collection $\E = \{M_{I,J}\}$ of
matchings, one for each pair $(I,J)$ of disjoint subsets of $[n]$ of
equal size, such that for each $I' \subset I$ and $J' \subset J$
where $I'$ is matched to $J'$ in $M_{I,J}$, we have $M_{I',J'}$ is
the restriction of $M_{I,J}$ to $(I',J')$.  If $M_{I',J'}$ is a
restriction of $M_{I,J}$, we shall say that $M_{I,J}$ contains
$M_{I',J'}$.

We shall call a matching field {\it noncrossing}, if every
matching $M_{I,J}$ is noncrossing when drawn on the circle.

\begin{theorem}\label{thm:ensemble}
For each generic simple alcoved polytope $P$, there is a unique
matching field $\E(P)$ such that $\E(P)$ and the set of vertex trees
$\T(P)$ are related by the condition: $T \in \T(P)$ if and only if
all matchings in $T$ belong to the $\E(P)$.

Furthermore, if $P$ is a polypositroid then $\E(P)$ consists of noncrossing trees.
\end{theorem}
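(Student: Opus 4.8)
The plan is to build $\E(P)$ explicitly out of the normal fan $\F_P$ and then check all the asserted properties against it. For disjoint equal-size subsets $I,J\subseteq[n]$, consider the point
$$
x_{I,J}:=\sum_{j\in J}h_j-\sum_{i\in I}h_i\ \in\ (\R^n)^*/\langle h_n\rangle .
$$
Since $\F_P$ is a complete fan, $x_{I,J}$ lies in the relative interior of a unique cone $C$ of $\F_P$, and $C$ is a face of some maximal cone $C_{T_v}$. Writing $x_{I,J}=\sum_{(i\to j)\in T_v}c_{ij}(h_j-h_i)$ with $c_{ij}\geq 0$ (possible as $C$ is a face of the simplicial cone $C_{T_v}$) and comparing $\bar h_k$-coefficients for $k\in[n-1]$, one finds that the $c_{ij}$ define a nonnegative flow on $T_v$ whose net in-flow at every vertex $k$ equals $[\,k\in J\,]-[\,k\in I\,]$ — the apparently free constraint at the vertex $n$ is forced by global flow conservation. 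A leaf-peeling argument on the alternating tree $T_v$ then forces the support $\{(i\to j): c_{ij}>0\}$ to be a perfect matching $M$ of $I$ (as sources) with $J$ (as sinks), with all coefficients equal to $1$; hence $x_{I,J}\in\mathrm{relint}(C_M)$ and $C=C_M$. I would then \emph{define} $\E(P)=\{M_{I,J}\}$ by $M_{I,J}:=M$. This is a matching field: if $I'\subseteq I$, $J'\subseteq J$ and $I'$ is matched to $J'$ inside $M_{I,J}$, then $M':=M_{I,J}|_{(I',J')}$ satisfies $x_{I',J'}=\sum_{(i\to j)\in M'}(h_j-h_i)\in\mathrm{relint}(C_{M'})$, and $C_{M'}\subseteq C_{M_{I,J}}\subseteq C_{T_v}$ is a face of $C_{T_v}$, hence a cone of $\F_P$; by uniqueness of the containing cone, $M_{I',J'}=M'$.

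Next I would establish the relation $T\in\T(P)\iff$ every matching in $T$ belongs to $\E(P)$. The forward direction is immediate from the construction: if $T=T_v$ and $M\subseteq T$ is a matching of a pair $(I,J)$, then $x_{I,J}=\sum_{(i\to j)\in M}(h_j-h_i)\in\mathrm{relint}(C_M)$ with $C_M$ a face of $C_{T_v}$, so $M_{I,J}=M$. For the converse, let $T$ be an alternating tree all of whose matchings lie in $\E(P)$. First, $T$ is compatible (in the sense preceding Lemma~\ref{lem:compatible}) with every vertex tree $T_v$: if $T$ and $T_v$ contained disagreeing matchings of a common pair $(I,J)$, then by the forward direction applied to $T_v$, and by hypothesis on $T$, both matchings would equal $M_{I,J}$ — a contradiction. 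Now $C_T$ is a full-dimensional simplicial cone, since a spanning tree on $[n]$ yields $n-1$ linearly independent root vectors $h_j-h_i$ in the $(n-1)$-dimensional space $(\R^n)^*/\langle h_n\rangle$. Because the maximal cones of the complete fan $\F_P$ cover the whole space while their boundaries have empty interior, $\mathrm{int}(C_T)$ meets $\mathrm{int}(C_{T_v})$ for some $v$; if $T\neq T_v$, then by Lemma~\ref{lem:compatible} the full-dimensional intersection $C_T\cap C_{T_v}$ is a common face of both cones, which forces $C_T=C_{T_v}$ and hence $T=T_v$, a contradiction. Therefore $T=T_v\in\T(P)$. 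I expect this converse to be the main obstacle, as it is where completeness of $\F_P$, Lemma~\ref{lem:compatible}, and the matching bookkeeping must all be combined.

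For uniqueness, suppose $\E'$ is any matching field satisfying the stated relation with $\T(P)$. For each disjoint equal-size pair $(I,J)$, the matching $M_{I,J}\in\E(P)$ lies inside some vertex tree $T_v$ (as $C_{M_{I,J}}\subseteq C_{T_v}$ forces $M_{I,J}\subseteq T_v$), so by the relation for $\E'$ it belongs to $\E'$; being a matching of $(I,J)$, this forces the $(I,J)$-component of $\E'$ to equal $M_{I,J}$. Hence $\E'=\E(P)$, and since $\E(P)$ itself satisfies the relation, it is the unique such matching field. Finally, if $P$ is a polypositroid then by Lemma~\ref{lem:Talternating} every vertex tree $T_v$ is noncrossing; since each $M_{I,J}\in\E(P)$ is a subgraph of some $T_v$ and every subgraph of a noncrossing graph is noncrossing, all matchings of $\E(P)$ are noncrossing, i.e.\ $\E(P)$ is a noncrossing matching field.
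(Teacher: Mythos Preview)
Your proof is correct and follows essentially the same route as the paper's: you construct $\E(P)$ by locating each $x_{I,J}$ in the complete fan $\F_P$, extract the matching from the containing vertex tree, and then use Lemma~\ref{lem:compatible} together with completeness of $\F_P$ to get the converse direction and conclude with Lemma~\ref{lem:Talternating}. Your treatment is more explicit than the paper's---you spell out the flow/leaf-peeling argument, the verification of the restriction property, and the uniqueness of $\E(P)$---but the strategy and the key lemmas invoked are the same.
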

\begin{proof}
Let $P$ be a generic simple alcoved polytope, and $\T(P)$ be its set
of vertex trees.  We claim that for each pair $(I,J)$ of disjoint
subsets of $[n]$ of equal cardinality, some tree $T \in \T(P)$
contains a matching of $(I,J)$.  To see this, consider the point
$x_{I,J}=\sum_{j \in J} h_j - \sum_{i \in I} h_i$.  Since $\{C_T \mid T \in \T(P)\}$ are the
maximal cones of a complete fan, it belongs to $C_T$
for some $T \in \T(P)$.  But $T$ is alternating by Lemma
\ref{lem:Talternating}, and it follows that $T$ contains a unique matching of
$(I,J)$.  It follows from Lemma \ref{lem:compatible} that $\T(P)$
determines a unique matching ensemble $\E(P)$.  Furthermore, $\T(P)$
is exactly the set of trees $T$ such that all matchings in $T$
belong to $\E(P)$: if $T'$ is another tree satisfying this condition
then by Lemma \ref{lem:compatible} $C_{T'}$ can be added to the
complete fan $\{C_T \mid T \in \T(P)\}$,
which is a contradiction.

The last sentence follows from Lemma~\ref{lem:Talternating}.
%
\end{proof}

\begin{definition}[cf. \cite{OY,SZ}]
Let $\E$ be a matching field.  Then we say that $\E$ satisfies the \emph{linkage axiom} if
\begin{enumerate}
\item
for any disjoint $(I,J)$ of equal size, and $j' \in [n] \setminus (I \cup J)$, there is an edge $(i,j) \in M_{I,J}$ such that the matching $M'_{I,J'} := M_{I,J} \setminus \{(i,j)\} \cup \{(i,j')\}$ belongs to $\E$, where $J' = J \setminus \{j\} \cup \{j'\}$;
\item
for any disjoint $(I,J)$ of equal size, and $i'\, \in [n] \setminus (I \cup J)$, there is an edge $(i,j) \in M_{I,J}$ such that the matching $M'_{I',J} := M_{I,J} \setminus \{(i,j)\} \cup \{(i',j)\}$ belongs to $\E$, where $I' = I \setminus \{i\} \cup \{i'\}$;
\end{enumerate}
If $\E$ satisfies the linkage axiom, we say that $\E$ is a {\it matching ensemble}.
\end{definition}

%
%

\begin{proposition}
Let $P$ be a generic simple alcoved polytope and let $\E(P)$ be the appearing in Theorem~\ref{thm:ensemble}.  Then $\E(P)$ satisfies the linkage axiom, and is a matching ensemble.
\end{proposition}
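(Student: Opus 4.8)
The plan is to verify the linkage axiom directly using the geometry of the normal fan $\F_P$, exploiting that $\{C_T \mid T \in \T(P)\}$ is a complete fan. Fix disjoint subsets $(I,J)$ of equal size and an element $j' \in [n]\setminus(I\cup J)$; I will prove part (1) of the linkage axiom (part (2) being entirely analogous, via the symmetry $h_i \leftrightarrow -h_i$ or by reversing the roles of sources and sinks). Consider the point $x_{I,J} = \sum_{j\in J} h_j - \sum_{i \in I} h_i$ in $(\R^n)^*/h_n$, which by Theorem~\ref{thm:ensemble} lies in $C_T$ for some $T\in\T(P)$, and $M_{I,J}$ is the unique matching of $(I,J)$ inside $T$. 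Now perturb: consider the ray $x_{I,J} + t(h_{j'} - h_j)$, or more symmetrically move a small amount in a generic direction, and track which maximal cones of the complete fan $\F_P$ are crossed.

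The key step is the following. Let $J' = J\setminus\{j\}\cup\{j'\}$ for an edge $(i,j)\in M_{I,J}$ to be determined, and consider $x_{I,J'} = \sum_{j''\in J'} h_{j''} - \sum_{i\in I} h_i = x_{I,J} + h_{j'} - h_j$ for any choice of $j\in J$. I want to find a $j \in J$ matched to some $i\in I$ in $M_{I,J}$ such that $x_{I,J'}$ lies in some cone $C_{T'}$ with $T'$ containing the matching $M_{I,J}\setminus\{(i,j)\}\cup\{(i,j')\}$. The natural approach: since $M_{I,J}\subset T$ and $T$ is a tree, deleting the edge $(i,j)$ disconnects $T$; the point $x_{I,J}$ lies on the face $C_{M_{I,J}}$ of $C_T$, and moving to $x_{I,J} + h_{j'} - h_j$ pushes us across into an adjacent cone. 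Because the fan is complete and every root is an edge of $\F_P$, the segment from $x_{I,J}$ to $x_{I,J'}$ passes through finitely many cones; I claim that for a suitable choice of $j$ (namely, choosing $j$ so that the directed edge $j'\to \cdot$ does not ``conflict'' — concretely, running a greedy/augmenting-path argument as in the standard proof that matching fields coming from triangulations satisfy linkage), the endpoint $x_{I,J'}$ lies in a cone $C_{T'}$ whose tree contains the required swapped matching. This is essentially the argument of Oh–Yoo \cite{OY}; I would cite their Theorem on matching ensembles from triangulations and adapt it, the only new ingredient being that our triangulations come from $f$-normal fans of generic simple alcoved polytopes rather than the hypersimplex.

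Alternatively — and this may be cleaner — I would use Theorem~\ref{thm:fvector} and the volume/Ehrhart argument already in play: the relevant statement is local, so it suffices to analyze the codimension-one cones (walls) of $\F_P$. Each wall separates two maximal cones $C_T, C_{T'}$ with $T,T'$ differing by a single edge flip (a ``square move'' on the underlying trees), and by Lemma~\ref{lem:compatible} the trees appearing are exactly the compatible ones. The linkage axiom then becomes the assertion that, starting from the cone containing $x_{I,J}$, one can reach the region where $x_{I,J'}$ lives by a sequence of wall-crossings that only modifies the $(I,J')$-matching in the prescribed single-edge way — which follows because crossing a wall either leaves the $(I,J)$-matching unchanged or performs one such flip, and completeness guarantees $x_{I,J'}$ is reached.

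The main obstacle I anticipate is making precise the claim that the endpoint $x_{I,J'}$ lies in a cone whose tree actually \emph{contains} the swapped matching $M_{I,J}\setminus\{(i,j)\}\cup\{(i,j')\}$ as opposed to some other matching of $(I,J')$: a priori $x_{I,J'}$ could land in a cone $C_{T'}$ whose $(I,J')$-matching differs from the desired swap at more than one edge. Resolving this requires choosing $j$ carefully — following the augmenting-path selection rule of \cite{OY} — and arguing, using the alternating and (for polypositroids) noncrossing structure from Lemma~\ref{lem:Talternating}, that this rule terminates with exactly the single-edge modification. I expect this to go through verbatim from the Oh–Yoo argument once the dictionary ``maximal cone $\leftrightarrow$ vertex tree'' is set up, so the proof will mostly be a citation with a remark that the hypersimplex is replaced by an arbitrary generic simple alcoved polytope.
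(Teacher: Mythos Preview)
Your plan has a genuine gap. You correctly identify the obstacle — that after moving from $x_{I,J}$ to $x_{I,J'}$ the matching you land on could differ from $M_{I,J}$ at more than one edge — but you do not resolve it. Deferring to Oh--Yoo is not enough: their setting is triangulations of $\Delta_{m-1}\times\Delta_{n-1}$, and transplanting their augmenting-path argument to local triangulations of the type~$A$ root polytope is itself nontrivial and is precisely the work you are trying to avoid. The wall-crossing picture you sketch does not, as stated, control which single edge gets swapped; crossing several walls along the segment can a priori change the $(I,J)$-matching arbitrarily, and nothing in your outline pins down the choice of $j$ \emph{before} you start moving.

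The paper's argument takes a completely different route that sidesteps this obstacle entirely. Rather than moving along a path in the fan, it chooses a single weighted point involving $I$, $J$, and $j'$ simultaneously, with the source vertices carrying weight $1+\tfrac{1}{k}$ instead of $1$, and looks at the unique tree $T\in\T(P)$ with $x\in C_T$. The non-integer weight is the whole trick: a short divisibility argument (using $\gcd(k,k+1)=1$) forces the induced subgraph $\tilde T = T|_{I\cup J\cup\{j'\}}$ to be connected, and then summing coefficients shows each component $A_s$ of $\tilde T\setminus\{j'\}$ satisfies $|A_s\cap I|=|A_s\cap J|$. Hence $T$ contains $M_{I,J}$ as the union of the matchings inside the $A_s$; the vertex $i\in I$ adjacent to $j'$ in $\tilde T$ is handed to you, and swapping its edge $(i,j)\in M_{I,J}$ for $(i,j')$ yields a matching of $(I,J')$ still lying in $T$, hence in $\E(P)$.

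So the missing idea is: do not chase a point along a segment and try to control wall-crossings; instead perturb the \emph{weights} to a non-integer value, land in one tree, and read off both the correct $j$ and the swapped matching from that single tree.
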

\begin{proof}
Let $k = |I| = |J|$.  Consider the vector 
$$
x = \left(1+ \frac{1}{k}\right)\sum_{i \in I} h_i + \sum_{j \in J \cup\{j'\}}  h_j.
$$
Then $x$ lies in the cone $C_T$ for some $T \in \T(P)$.  Let $\tilde T = T|_{I \cup J \cup \{j'\}}$ denote the induced subgraph on $I \cup J \cup \{j'\}$.  It is not difficult to see that $\tilde T$ must be connected, and thus itself a tree.  Let $A_1,A_2,\ldots,A_r \subset I \cup J$ be the (vertex sets of the) connected components of the forest $\tilde T \setminus \{j'\}$ obtained by removing $j'$.  Looking at the coefficients of $h_t$, $t \in A_s$ in $x$, we deduce that $|A_s \cap I| = |A_s \cap J|$ for each $s = 1,2,\ldots,r$.  It follows that the matching $M_{I,J} \in \E(P)$ restricts to a matching on $(A_s \cap I, A_s \cap J)$ for each $s = 1,2,\ldots,r$.  

Now let $i \in (A_1 \cap I)$ be the vertex in $A_1$ connected to $j'$ and let $(i,j) \in M_{I,J}$ be the edge of $M_{I,J}$ incident to $i$.  Then $T$ also contains the matching $M_{I,J} \setminus \{(i,j)\} \cup \{(i,j')\}$, and by Theorem~\ref{thm:ensemble} so does $\E(P)$.  This establishes condition (1) of the linkage axiom for $\E(P)$.  Condition (2) is similar.
%
%
%
\end{proof}

\begin{conjecture}\label{conj:matching}
Every noncrossing matching ensemble appears as $\E(P)$ for some generic simple polypositroid $P$.
\end{conjecture}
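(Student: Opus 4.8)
The plan is to realize an arbitrary noncrossing matching ensemble $\E$ as the matching ensemble of a generic simple polypositroid by a deformation/perturbation argument grounded in the parametrization of Theorem~\ref{thm:main}. Recall from that theorem and from Section~\ref{ssec:facegraphs} that every polypositroid is of the form $Q(\v)$ for a Coxeter necklace $\v$, and that for generic $\v$ the polytope $Q(\v)$ has all vertex trees noncrossing and alternating (Lemma~\ref{lem:noncrossing}), and is a generalized permutohedron (Theorem~\ref{thm:main}). So the target polytope will be some $Q(\v)$ with $\v$ generic; the content of the conjecture is that the \emph{combinatorial type}, i.e.\ the matching ensemble, can be prescribed.

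First I would set up the correspondence between noncrossing matching ensembles and candidate normal fans: given $\E$, the set $\T(\E)$ of noncrossing alternating trees all of whose matchings lie in $\E$ should, by the linkage axiom together with Lemma~\ref{lem:compatible}, assemble into a complete simplicial fan refining the braid fan — this is the ``converse'' direction to Theorem~\ref{thm:ensemble}, and proving that the cones $\{C_T : T \in \T(\E)\}$ tile $(\R^n)^*/h_n$ without overlaps is the combinatorial heart of the matter. Here one uses the linkage axiom exactly as in the matching-field literature \cite{OY,SZ}: it guarantees that for every point $x$ one can locate a tree $T \in \T(\E)$ with $x \in C_T$ by a greedy/flow argument, and Lemma~\ref{lem:compatible} guarantees distinct such cones meet in common faces. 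Second, I would show this fan is \emph{polytopal} and in fact the normal fan of a polypositroid: one constructs a piecewise-linear convex support function on it, e.g.\ by a sufficiently generic perturbation of the cyclohedron's support function \eqref{eq:cyclfrs} constrained to be linear on each $C_T$, and checks via Theorem~\ref{thm:conenoncross} (the inequalities \eqref{eq:anoncross}) that the resulting $a_{ij}$ still lie in $\Cpoly$ — since the cyclohedron lies in the interior of the relevant faces of $\Calc$ and $\Cpoly$, a small enough perturbation stays inside. The resulting polytope $P$ is then a generic simple polypositroid, and $\T(P) = \T(\E)$ by construction, so $\E(P) = \E$ by the uniqueness clause of Theorem~\ref{thm:ensemble}.

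The main obstacle I expect is the step showing that $\{C_T : T \in \T(\E)\}$ is genuinely a complete fan — that the linkage axiom alone forces the cones to cover everything and to be compatible pairwise. The delicate point is that a noncrossing matching ensemble carries more data than a matching field, and one must check that noncrossingness is preserved under the local moves implicit in the linkage axiom, i.e.\ that the exchanges $(i,j) \rightsquigarrow (i,j')$ never force a crossing; this is where a careful planar/combinatorial analysis is needed, and where the analogy with \cite{OY} must be adapted rather than quoted. A secondary difficulty is the polytopality step: not every complete simplicial fan refining the braid fan is a normal fan, so one genuinely needs the perturbation-of-the-cyclohedron argument (or an explicit Coxeter necklace) rather than an abstract existence claim, and verifying \eqref{eq:anoncross} survives the perturbation requires knowing the cyclohedron sits in the right relative interior. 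I would expect the write-up to spend most of its length on these two points, with the final identification $\E(P)=\E$ being essentially immediate from Theorem~\ref{thm:ensemble}.
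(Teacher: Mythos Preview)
This statement is labeled a \emph{conjecture} in the paper, not a theorem; the paper offers no proof, only supporting examples for $n=3$ and $n=4$ and a remark comparing the situation to the realizability of positively oriented matroids. So there is no ``paper's own proof'' to compare against.

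Your proposal is a reasonable outline of how one might attack the conjecture, but it is not a proof: the two difficulties you flag at the end are precisely the reasons this is open. First, the claim that the cones $\{C_T : T \in \T(\E)\}$ form a complete fan for an arbitrary noncrossing matching ensemble $\E$ is not established anywhere in the paper and is not a direct consequence of the linkage axiom together with Lemma~\ref{lem:compatible}; the matching-field results of \cite{OY,SZ} are in a different setting and their techniques have not been shown to transfer. Second, even granting completeness of the fan, the polytopality step is genuinely nontrivial: ``perturb the cyclohedron's support function constrained to be linear on each $C_T$'' is not a well-defined operation, since the normal fan of the cyclohedron need not coincide with the fan $\{C_T\}$ you are trying to realize, and a generic perturbation of $a_{ij}$ within $\Cpoly$ will change which trees appear as vertex trees rather than preserve a prescribed $\T(\E)$. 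You would need instead to exhibit, for each $\E$, a specific point in the interior of $\Cpoly$ whose resulting polypositroid has exactly the fan $\{C_T : T \in \T(\E)\}$, and this inverse problem is the substance of the conjecture.

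In short: your sketch correctly identifies the architecture of a potential proof and its two hard steps, but neither step is carried out, and the paper itself does not claim to know how to do them.
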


\begin{remark}  
In some way, matching ensembles are analogous to matroids,
and matching ensembles of the form $\E(P)$ are analogous to 
{\it realizable} matroids.
So Conjecture~\ref{conj:matching} is similar in spirit to the result
of \cite{ARW17} that positive oriented matroids are 
positroids, i.e., they are realizable.
\end{remark}

The following examples support Conjecture~\ref{conj:matching}.

\begin{example}
Let $n =3$.  In this case, every alcoved polytope is automatically a polypositroid.  Indeed, there is a single matching field on $\{1,2,3\}$, it satisfies the linkage axiom, and it is noncrossing.  Thus there is only one possible normal fan for a generic simple polypositroid.
\end{example}

\begin{example}
Let $n = 4$.  Let $P$ be a generic simple polypositroid.  We use Theorem~\ref{thm:ensemble} to understand the possible choices for $\T(P)$.  By the noncrossing condition, a matching field $\E(P)$ is uniquely determined except for matchings on 
$(I,J) = (\{1,3\},\{2,4\})$ and $(I,J) = (\{2,4\},\{1,3\})$, each of which there are two choices of matchings, giving four possibilities for $\E(P)$, all of which satisfy the linkage axiom.  By an explicit calculation, for example by computing whether putative vertices $v_T$ lie inside $P$,  we find that the matching $M_{\{1,3\},\{2,4\}}$ (resp. $M_{\{2,4\},\{1,3\}}$) depends on the sign of $a_{41}+a_{23}- a_{43}-a_{21}$ (resp. $a_{12}+a_{34} - a_{14}-a_{32}$).  (If $a_{41}+a_{23}- a_{43}-a_{21} = 0$, then $P$ is not simple.)  Suppose that $P$ arises from the balanced graph $G$ via Theorem~\ref{thm:main}.  Then we have  
\begin{align}
\begin{split}\label{eq:2sign}
a_{41}+a_{23}- a_{43}-a_{21} &= m_{43}+m_{13} - m_{24}-m_{34} \\
a_{12}+a_{34} - a_{14}-a_{32}&= m_{14}+m_{24} - m_{31}-m_{41}
\end{split} .
\end{align}
(Using \eqref{eq:bal}, the RHS can be written in a number of equivalent ways.)  It is easy to construct generic balanced $G$ such that the RHS has any of the four possible ordered pairs of signs.  For example, a balanced directed cycle $(1 \to 4 \to 3 \to 1)$ (resp. $(4 \to 3 \to 2 \to 4)$) makes the first (resp. second) quantity in \eqref{eq:2sign} positive and the second (resp. first) 0.  It follows that there are exactly four normal fans of generic simple polypositroids for $n = 4$.
\end{example}
%
%

\begin{example}\label{ex:cyclo}
We consider the cyclohedron defined in \eqref{E:cyclo}.

\begin{proposition}\label{prop:cyclo}
Let $P$ be a cyclohedron.  Then the set $\T(P)$ of vertex trees of $P$ is the set of noncrossing, alternating trees
on $[n]$ with the following additional property: there is a cyclic rotation $i \prec i+1 \prec \cdots \prec n \prec 1 \prec \cdots \prec i-1$ 
of the usual order on $[n]$ such that every edge $(i,j)$ of $T_v$ satisfies $i \prec j$.
\end{proposition}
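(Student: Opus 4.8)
Write $P$ for the cyclohedron \eqref{E:cyclo} with all $c_{r,s}>0$ and $b>0$; it is a generic simple polypositroid (cf.\ the discussion around \eqref{E:cyclo}), so Lemma~\ref{lem:Talternating} already gives that every vertex tree $T_v$ of $P$ is noncrossing and alternating. Hence the substance of the proposition is (a) that every $T_v$ is \emph{circular-monotone} --- it admits a cyclic rotation $\prec$ of $1<2<\cdots<n$ with $i\prec j$ for every directed edge $i\to j$ --- and (b) that conversely every noncrossing alternating circular-monotone tree on $[n]$ arises as some $T_v$. I would prove (a) by a direct computation with the Minkowski-sum structure of $P$, and (b) either by running that computation backwards or, more cheaply, by a cardinality count against Theorem~\ref{thm:fvector}.

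For (a): a vertex of $P$ has the form $v(\ell)$ for a generic covector $\ell\in(\R^n)^*$, obtained by selecting in each summand $\Delta_{[r,s]}$ (and in $\Delta_{[n]}$) the basis vector $e_m$ with $m$ the $\ell$-largest index of that cyclic interval. Combining this with \eqref{eq:cyclfrs} and \eqref{eq:frs}, and using that all the $c_{r,s}$ and $b$ are strictly positive, one finds that $i\to j$ is an edge of $T_{v(\ell)}$ if and only if every cyclic interval meeting $[i+1,j]$ has its $\ell$-largest index inside $[i+1,j]$; in particular the globally $\ell$-largest index lies in $[i+1,j]$. Applying this to the one-step enlargements $[j,i]$ and $[j+1,i+1]$ of the complementary arc shows that $\ell_j$ and $\ell_{i+1}$ both exceed $\ell_k$ for all $k$ in $[j+1,i]$. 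From this I would extract the key lemma that the globally $\ell$-smallest index $z$ lies in $[j+1,i]$ for \emph{every} edge $i\to j$ of $T_v$ --- using that $T_v$ is alternating to rule out the degenerate case where $z$ is interior to some $[i+1,j]$, and treating the small intervals $[i+1,j]=\{i+1\}$ or $\{j\}$ separately. Taking $\prec$ to be the rotation anchored at $z$, namely $z\prec z+1\prec\cdots\prec z-1$, the inequality $\ell_j,\ell_{i+1}>\ell_k$ for $k\in[j+1,i]$ (which contains $i$) forces $i\prec j$, establishing (a).

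For (b), the economical route invokes Theorem~\ref{thm:fvector}: $P$ has exactly $f_0=\binom{2n-2}{n-1}$ vertices, so after (a) it suffices to verify that the number of noncrossing alternating circular-monotone trees on $[n]$ is also $\binom{2n-2}{n-1}$; this is a self-contained enumeration, for instance by counting, for a fixed anchor, the increasing noncrossing alternating trees (a ballot-type count on a linearly ordered set) and summing over the $n$ anchors while subtracting off the trees that are monotone for more than one rotation. The alternative is to bypass the count: given such a tree $T$ with witnessing rotation $\prec$, construct an explicit generic $\ell$ decreasing along $\prec$ (with small generic perturbations), check via the selection rule that $v(\ell)$ equals the unique point $v_T$ of $H_k$ lying on the facets indexed by the edges of $T$, and then verify the remaining facet inequalities $x_{[r,s]}\leq f_{[r,s]}$ at $v_T$. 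I expect the main obstacle to be the bookkeeping in part (a): translating the many local ``the $\ell$-largest index of $[r,s]$ stays inside $[i+1,j]$'' conditions into the single clean statement that the $\ell$-smallest index is avoided by every edge-interval, together with the degenerate-interval cases; granted that, both the monotonicity claim and either version of part (b) are routine.
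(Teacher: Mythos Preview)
Your key lemma is false. Take $n=4$ and the generic covector $\ell=(1,4,2,3)$: your selection rule gives the vertex tree $\{1\to 2,\ 1\to 4,\ 3\to 2\}$, with facet intervals $\{2\}$, $[2,4]$, $[4,2]$. For the edge $3\to 2$ the complementary arc is $[j{+}1,i]=\{3\}$, and the global argmin $z=1$ does not lie in it. In fact $\bigcap_{i\to j}[j{+}1,i]=\emptyset$ for this vertex, so no rotation makes every edge $i\to j$ satisfy $i\prec j$: the assertion you set out to prove is false for this $v$.

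The source of the trouble is a direction slip in the printed statement. The paper's own proof (and Figure~\ref{fig:trees}) make clear that the intended condition is that every edge $i\to j$ is \emph{decreasing} for some cyclic rotation, i.e.\ $j\prec i$, not $i\prec j$. With the sign fixed, your approach collapses to a one-liner: you already observed that the global $\ell$-argmax $m$ lies in every facet interval $[i{+}1,j]$, and $m\in[i{+}1,j]$ is precisely the statement that the rotation anchored at $m$ makes $j\prec i$. No bookkeeping about argmins, no appeal to alternating, no degenerate cases. The paper takes a different route for (a): it uses~\eqref{eq:cyclfrs} to derive the strict inequality $f_{[r,s]}+f_{[r',s']}>f_{[r,s']}+f_{[r',s]}-k$ for $r,s,r',s'$ in cyclic order, which forbids the edge pair $(r{-}1)\to s$, $(r'{-}1)\to s'$ in any vertex tree, and then asserts that this pairwise exclusion characterizes the desired set among noncrossing alternating trees. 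Your (corrected) Minkowski-sum argument is arguably more transparent, since it produces the anchor $m$ explicitly.

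For (b) you and the paper agree: both count. The paper cites that noncrossing decreasing alternating trees on $[n]$ number $C_{n-1}=\frac{1}{n}\binom{2n-2}{n-1}$, multiplies by the $n$ rotations, and matches against $f_0(P)=\binom{2n-2}{n-1}$ from Theorem~\ref{thm:fvector}. Your inclusion--exclusion worry is unnecessary: the chain
\[
f_0(P)\ \le\ \#\{\text{cyclically decreasing noncrossing alternating trees}\}\ \le\ nC_{n-1}\ =\ f_0(P)
\]
forces equality throughout, so a posteriori each such tree is decreasing for exactly one rotation.
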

\begin{proof}
Let $r,s,r',s'$ be four indices in cyclic order.  Then it follows from \eqref{eq:cyclfrs} that $f_{[r,s]}+f_{[r',s']} > f_{[r,s']}+f_{[r',s]} - k $, so that
in any vertex tree $T_v$ the directed edges $(r-1) \to s$ and $(r'-1) \to s'$ cannot both be present.  The set of trees satisfying this condition is exactly the set of trees stated in the proposition.
To complete the proof it suffices to note from \eqref{eq:cyclof} that the cyclohedron has $\binom{2n-2}{n-1}$ vertices, and that the number of noncrossing, decreasing (every edge $(i \to j)$ satisfies $j < i$), alternating trees on $[n]$
is equal to the Catalan number $\frac{1}{n}\binom{2n-2}{n-1}$ (see \cite{P} or Remark~\ref{rem:bijectiontree} below).
\end{proof}

For example, let $n = 4$.  Then the cyclohedron has $20$ vertices, and each vertex tree is a rotation of one of the trees in Figure \ref{fig:trees}.

\begin{figure}
\begin{tikzpicture}[scale=0.8]
\tikzset{>=latex}
\draw[->,thick] (3,0) to [out=135,in=45] (0,0);
\draw[->,thick] (3,0) to [out=135,in=45] (1,0);
\draw[->,thick] (2,0) to [out=135,in=45] (1,0);
\begin{scope}[shift={(4,0)}]
\draw[->,thick] (3,0) to [out=135,in=45] (0,0);
\draw[->,thick] (2,0) to [out=135,in=45] (0,0);
\draw[->,thick] (2,0) to [out=135,in=45] (1,0);
\end{scope}
\begin{scope}[shift={(8,0)}]
\draw[->,thick] (3,0) to [out=135,in=45] (0,0);
\draw[->,thick] (2,0) to [out=135,in=45] (0,0);
\draw[->,thick] (1,0) to [out=135,in=45] (0,0);
\end{scope}
\begin{scope}[shift={(12,0)}]
\draw[->,thick] (3,0) to [out=135,in=45] (2,0);
\draw[->,thick] (3,0) to [out=135,in=45] (1,0);
\draw[->,thick] (3,0) to [out=135,in=45] (0,0);
\end{scope}
\begin{scope}[shift={(16,0)}]
\draw[->,thick] (3,0) to [out=135,in=45] (0,0);
\draw[->,thick] (3,0) to [out=135,in=45] (2,0);
\draw[->,thick] (1,0) to [out=135,in=45] (0,0);
\end{scope}
\end{tikzpicture}
\caption{Noncrossing, decreasing, alternating trees on $\{1,2,3,4\}$.}
\label{fig:trees}
\end{figure}
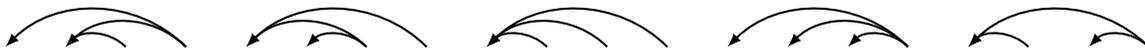

\end{example}

\begin{remark} The decomposition in Proposition~\ref{prop:cyclo} of $\T(P)$ into the $n$ cyclic rotations of the set of noncrossing, alternating, decreasing trees has the following geometric interpretation: the cones $C, c(C), c^2(C),\ldots,c^{n-1}(C)$ cover all of $H_0$ and intersect in lower-dimensional faces.
\end{remark}

\subsection{Proof of Theorem~\ref{thm:conenoncross}}\label{ssec:conenoncrossproof}

\begin{proposition}\label{P:four}
Let $P$ be a generic simple alcoved polytope given by the inequalities \eqref{eq:alcoveineq}.  Then $P$ is a polypositroid if and
only if for any four indices $i,j,k,l$ in cyclic order, we have
$a_{ik} + a_{jl} > a_{il} + a_{jk}$.
\end{proposition}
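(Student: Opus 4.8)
The plan is to prove Proposition~\ref{P:four} by reducing the abstract inequality condition to the concrete combinatorics of vertex trees developed in Lemma~\ref{lem:Talternating} and Theorem~\ref{thm:ensemble}. Recall that $P$ is a generic simple alcoved polytope, so every root $h_i-h_j$ is an edge of the normal fan $\F_P$, every maximal cone $C_v$ is simplicial and spanned by the roots of an alternating tree $T_v$ (Lemma~\ref{lem:Talternating}), and the maximal cones of $\F_P$ induce a local triangulation of the root polytope $R$. By Lemma~\ref{lem:Talternating}, $P$ is a polypositroid if and only if every $T_v$ is noncrossing (the forward direction is immediate; for the converse one uses that a perturbation of $P$ to a generic Coxeter-necklace polytope $Q(\v)$ has the same combinatorial type, combined with Lemma~\ref{lem:noncrossing} and Lemma~\ref{lem:possibleedge}, exactly as in the proof of Theorem~\ref{thm:main}). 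So it suffices to show: all vertex trees of $P$ are noncrossing if and only if $a_{ik}+a_{jl}>a_{il}+a_{jk}$ for all $i,j,k,l$ in cyclic order.

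First I would handle the ``only if'' direction. Suppose some inequality fails, say $a_{ik}+a_{jl} \le a_{il}+a_{jk}$ for indices in cyclic order. I want to produce a vertex tree containing two crossing edges. The natural candidates are the directed edges corresponding to the facets $x_{[?,i]} = f$ and $x_{[?,j]} = f$; concretely, in the $a_{ij}$-parametrization the facet $(h_p-h_q)(x)=a_{pq}$ corresponds to the directed edge $q \to p$ drawn as a chord, and two such chords cross precisely when the index pairs interleave cyclically. I would consider a suitable point of the normal fan — something like $x = h_i + h_j - h_k - h_l$ (up to sign conventions and reduction mod $h_n$) — and argue that the unique maximal cone $C_v$ containing it, which exists because $\F_P$ is a complete fan, must contain both crossing roots; the failure of the strict inequality is exactly what obstructs $x$ from lying in a cone whose tree uses the ``non-crossing'' pair of edges instead. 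This mirrors the argument in Lemma~\ref{lem:noncrossing} run in reverse, and the sign bookkeeping via \eqref{eq:af} linking $a_{ij}$ to $f_{[r,s]}$ is the routine part.

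For the ``if'' direction, assume all the strict inequalities hold and suppose for contradiction that some vertex tree $T_v$ contains crossing edges $(r-1)\to s$ and $(r'-1)\to s'$ with $r < r' \le s < s' < r$ cyclically. Following the computation in the proof of Lemma~\ref{lem:noncrossing}, the facet equalities $x_{[r,s]}=f_{[r,s]}$, $x_{[r',s']}=f_{[r',s']}$ together with the valid inequalities $x_{[r,s']}\le f_{[r,s']}$, $x_{[r',s]}\le f_{[r',s]}$ holding on the face $C_v$ force $f_{[r,s]}+f_{[r',s']} \ge f_{[r,s']}+f_{[r',s]}$; translating through \eqref{eq:af} this is exactly the negation of one of the assumed strict inequalities $a_{ik}+a_{jl} > a_{il}+a_{jk}$ with $\{i,j,k,l\}$ read off from $\{r,r',s,s'\}$, a contradiction. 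Hence every vertex tree is noncrossing, and $P$ is a polypositroid.

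The main obstacle I anticipate is the ``only if'' direction: showing that the failure of a single strict four-term inequality actually forces a crossing in \emph{some} vertex tree, rather than merely being consistent with one. This requires knowing that the relevant test point lands in a unique maximal cone (genericity and simplicity give this) and carefully identifying which tree that cone corresponds to — essentially a local analysis around one crossing quadrilateral, using that the two competing local triangulations of the rank-$3$ sub-root-polytope on $\{h_i,h_j,h_k,h_l\}$ (modulo the ambient flat) are distinguished precisely by the sign of $a_{ik}+a_{jl}-a_{il}-a_{jk}$. Once that local picture is pinned down, the global statement follows because a noncrossing tree cannot contain the ``bad'' pair of edges, so if every four-term inequality holds strictly, no vertex tree can be forced into a crossing. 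I would also remark that the passage from the strict (generic simple) statement of this proposition to the closed-cone statement of Theorem~\ref{thm:conenoncross} is then a limiting argument identical in structure to the proof of Theorem~\ref{thm:triangle}.
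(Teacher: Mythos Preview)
Your overall strategy---reduce to ``all vertex trees are noncrossing'' and then relate that to the four-term inequalities---is the same as the paper's, and your ``if'' direction is essentially identical to the paper's converse: if $T_v$ contained a crossing pair $(r{-}1)\to s$, $(r'{-}1)\to s'$, evaluating at $v$ gives $f_{[r,s]}+f_{[r',s']}\le f_{[r,s']}+f_{[r',s]}$, contradicting the strict hypothesis (equality is ruled out since it would put a cycle in $T_v$).

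Two comments. First, your justification for ``noncrossing vertex trees $\Rightarrow$ polypositroid'' is confused: the perturbation argument from Lemma~\ref{lem:Talternating} goes the other way (it assumes $P$ is a polypositroid in order to perturb to a generic Coxeter-necklace polytope). The correct argument here is simply Lemma~\ref{lem:possibleedge}: if every $T_v$ is noncrossing then so is every $T_E$, hence every edge of $P$ is a root direction, hence $P$ is a generalized permutohedron.

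Second, the direction you flag as the main obstacle is actually the cleaner one, and the paper dispatches it without the test-point analysis you sketch. By Theorem~\ref{thm:ensemble}, for any generic simple alcoved polytope there is a vertex $v$ whose tree $T_v$ contains a matching of $(\{k,l\},\{i,j\})$. If $P$ is a polypositroid, $T_v$ is noncrossing, so that matching must be $l\to i$, $k\to j$; evaluating at $v$ gives
\[
a_{il}+a_{jk}=(h_i-h_l)(v)+(h_j-h_k)(v)=(h_i-h_k)(v)+(h_j-h_l)(v)\le a_{ik}+a_{jl},
\]
with equality again excluded because it would force a cycle in $T_v$. This is exactly your test-point idea (the point $x=h_i+h_j-h_k-h_l$ lands in some $C_v$, and $T_v$ then contains a matching of $(\{k,l\},\{i,j\})$), but packaged through the matching-ensemble theorem so that no ad hoc local analysis of ``competing triangulations'' is needed.
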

\begin{proof}
Suppose $P$ is a generic simple polypositroid.  By the uniqueness part of Theorem \ref{thm:ensemble} (or directly from the proof),
there is a vertex tree $T_v$ of $P$ which contains a noncrossing
matching of $(\{k,l\},\{i,j\})$.  Since $i,j,k,l$ are in cyclic
order, the matching must match $i$ with $l$ and match $j$ with $k$.  We
thus have
\begin{align*}
a_{il} + a_{jk} &= (h_{i}-h_{l})(v) +(h_j - h_k)(v) \\
&= (h_i - h_k)(v) + (h_j - h_l)(v) \\
&\leq a_{ik} + a_{jl}.
\end{align*}
Equality cannot occur, for otherwise $T_v$ will contain a cycle.   Conversely, if $P$ is a generic simple alcoved polytope and 
$a_{ik} + a_{jl} > a_{il} + a_{jk}$ holds for any four indices $i,j,k,l$ in cyclic order, then we deduce that $\E(P)$ is noncrossing, so $\T(P)$ consists of noncrossing trees and by Lemma~\ref{lem:possibleedge} we conclude that $P$ is a polypositroid.  
\end{proof}

Now, the inequalities $a_{ik} + a_{jl} > a_{il} + a_{jk}$ define an open subcone $C$ of $\Calc$ each point of which represents a generic alcoved polytope.  By Example~\ref{ex:cyclo}, the cyclohedron is a generic simple polypositroid and thus $C$ is nonempty.  An open dense subset of $C' \subseteq C$ corresponds to generic alcoved polytopes that are simple.  Applying Proposition~\ref{P:four}, we see that these polytopes are generic simple polypositroids.  The closure of $C'$ is the closed cone cut out by \eqref{eq:anoncross}.  The corresponding limits of generic simple polypositroids are nonempty (possibly not generic, possibly not simple) polypositroids, finishing the proof of the Theorem~\ref{thm:conenoncross}.

\subsection{Duality for alternating trees}
\def\Talt{\T_{{\rm alt}}}
\def\Tcir{\T_{{\rm cir}}}

A noncrossing tree $T$ on $[n]$ is called {\it circular-alternating} if for each vertex $v$, the edges incident to $v$ alternate between incoming and outgoing as they are read in order when $T_v$ is drawn on a circle.  Thus for example, if $v$ is incident to $u_1,u_2,u_3,u_4$ with $u_1<v<u_2<u_3<u_4$, then the edges $(v,u_1)$, $(v,u_4)$, $(v,u_3)$, $(v,u_2)$ alternate in direction.

Let $\Talt$ denote the set of alternating noncrossing trees and let $\Tcir$ denote the set of noncrossing, circular-alternating trees.  For $T \in \Talt$, let $T'$ be obtained from $T$ as follows: place the
numbers $1',1,2',2,\ldots,n',n$ in clockwise order around a circle,
and draw $T$ using the numbers $1,2,\ldots,n$.  Then $T'$ is the
unique tree on the numbers $1',2',\ldots,n'$ such that each directed
edge $c' \to d'$ of $T'$ intersects a unique directed edge $a \to
b$ of $T$, and furthermore, $a, d', b, c'$ are in clockwise order.

\begin{proposition}\label{prop:Tbij}
The map $T \mapsto T'$ gives a bijection $\varphi: \Talt \to \Tcir$.
\end{proposition}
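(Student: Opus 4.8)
The plan is to verify that $T \mapsto T'$ is well-defined (i.e.\ $T' \in \Tcir$), and then to exhibit an explicit inverse, which will also show $T'$ is a tree. First I would pin down the combinatorics of the map at a single vertex. Fix $T \in \Talt$ and a vertex $a \in [n]$; since $T$ is alternating, $a$ is either a pure source or a pure sink. The edges of $T$ at $a$, drawn on the circle with the points $1',1,2',2,\dots,n',n$ in clockwise order, are separated by the edges of $T'$ in an interlaced fashion: between two consecutive (in circular order around $a$) edges of $T$ at $a$ there sits exactly one ``slot'' occupied by an edge of $T'$ crossing into the angular sector at $a$, and the two extreme slots (on either side of the fan of $T$-edges at $a$) are occupied by the two $T'$-edges at the primed vertices immediately clockwise and counterclockwise of $a$. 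Because all $T$-edges at $a$ point the same way (all out of $a$, or all into $a$), the clockwise-order condition ``$a,d',b,c'$ in clockwise order'' forces the $T'$-edges crossing the sectors at $a$ to alternate in direction: consecutive ones point to opposite sides. This is precisely the circular-alternating condition at the relevant primed vertices, and running this over all $a \in [n]$ gives circular-alternation of $T'$ at every primed vertex.

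Next I would establish that $T'$ is a noncrossing tree. Noncrossing is essentially built into the construction: each edge of $T'$ is routed through a gap left by the noncrossing tree $T$, and one checks that two $T'$-edges crossing would force the two $T$-edges they respectively cross to either cross or share an endpoint in a forbidden way. For the tree property (connected, $n-1$ edges, acyclic), the cleanest route is to count: $T$ has $n-1$ edges, each of which is crossed by exactly one edge of $T'$ by construction, so $T'$ has exactly $n-1$ edges on $n$ vertices; hence it suffices to show $T'$ is acyclic, or equivalently connected. A cycle in $T'$ would be a noncrossing closed curve, and by a standard planar-duality argument it would separate the primed circle, hence separate $\{1,\dots,n\}$ into two nonempty arcs with no $T$-edge crossing the cycle between them --- but every $T$-edge is crossed by a $T'$-edge, contradiction once one checks the crossing pattern carefully. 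So $T' \in \Tcir$.

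Finally I would construct the inverse $\psi: \Tcir \to \Talt$ by the symmetric recipe: given $S \in \Tcir$ on the primed vertices, place $1,1',2,2',\dots,n,n'$ clockwise (note the shift) and let $\psi(S)$ be the unique graph on $[n]$ whose edges cross the edges of $S$ with the analogous clockwise-order convention, orienting each new edge $a \to b$ so that the local alternation of $S$ is respected; one verifies $\psi(S)$ is alternating (the circular-alternation of $S$ at a primed vertex, having an even number of incident edges around it... in fact one must be slightly careful here) and noncrossing by the same arguments as above, and that $\psi(\varphi(T)) = T$ and $\varphi(\psi(S)) = S$ since both compositions act as the identity on the underlying crossing data. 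The main obstacle I anticipate is the bookkeeping around \emph{orientations}: checking that the single clockwise-order condition ``$a,d',b,c'$ clockwise'' simultaneously (i) is consistent --- i.e.\ it does determine a well-defined orientation of each $T'$-edge, with no contradiction from the two $T$-edges potentially adjacent to a given $T'$-edge's crossing --- and (ii) globally yields circular-alternation rather than merely a local constraint. Handling this will likely require introducing a sign/height function on the arcs of the circle cut out by the edges (analogous to the function $f$ in the proof of Lemma~\ref{lem:compatible}) and arguing that the orientation of each edge is the sign of the jump across it; this makes both consistency and alternation transparent and is the step I would write out in full detail.
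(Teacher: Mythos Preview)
Your approach can be made to work, but it is organized around the wrong local object, and that is why you end up anticipating a height-function argument to sort out the orientations. The paper's proof is two sentences in each direction and rests on a single observation: the noncrossing tree $T$ on the unprimed points cuts the disk into exactly $n$ regions, one containing each primed point $i'$, and the edges of $T'$ incident to $i'$ are by construction precisely those crossing the $T$-edges on the boundary of that region $D$. So circular-alternation at $i'$ is a statement about a single region. Its boundary (minus the circular arc) is a path in $T$; at each internal vertex $v$ of that path the two boundary edges both point into $v$ or both out of $v$ (because $T$ is alternating), so as you walk around $\partial D$ the $T$-edges alternate direction, and the orientation rule ``$a,d',b,c'$ clockwise'' then makes the $T'$-edges at $i'$ alternate incoming/outgoing. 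For the inverse, the same region picture applied to $T'\in\Tcir$ shows that circular-alternation at each corner forces all boundary edges of a region $D'$ to point the \emph{same} way, so the unprimed vertex inside $D'$ is unambiguously a source or a sink and one recovers a unique $T\in\Talt$ with $\varphi(T)=T'$.

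Your step fixing an unprimed vertex $a$ and analyzing the $T'$-edges crossing the fan of $T$-edges at $a$ has a gap as written: those $T'$-edges are not in general incident to a common primed vertex, so the sentence ``this is precisely the circular-alternating condition at the relevant primed vertices'' does not yet follow. What your local picture actually yields is that whenever two \emph{consecutive} $T'$-edges at some $i'$ cross two $T$-edges sharing the corner $a$, they alternate. Running over all $a$ does stitch together into circular-alternation at every $i'$, but only after you observe that the $T$-edges around the region containing $i'$ form a path whose corners are exactly the $a$'s you visit --- and that is the region argument. Reorganize by regions from the start and the orientation consistency, the tree property of $T'$ (it is the planar dual of $T$ with respect to the interior faces, hence itself a tree on $n$ vertices), and the inverse map all come out at once, with no auxiliary height function required.
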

\begin{proof}
The tree $T$ cuts the disk up into $n$ pieces, each containing exactly one of $1',2',\ldots,n'$.  Let $D$ be one of these components.  Then $D$ is bounded by an arc of the circle and a number of edges of $T$.  When read in order around the boundary of $D$, these edges of $T$ alternate in direction.  It follows that $T' \in \Tcir$.

In the other direction, let $D'$ be one of the components that $T' \in \Tcir$ divides the disk into.  Then the edges of $T'$ along the boundary of $D'$ are all in the {\it same} direction.  It follows that there is unique $T \in \Talt$ such that $\varphi(T) = T'$.
\end{proof}

For a tree $T$, let $C'_T$ denote the cone in $\R^n$ spanned by $e_i - e_j$ for each directed edge $j \to i$ in $T$.

\begin{proposition}\label{prop:dualcones}
For $T \in \Talt$, the cones $C_T$ and $C'_{\varphi(T)}$ are dual cones.
\end{proposition}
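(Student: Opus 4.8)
The plan is to show that $C_T$ and $C'_{\varphi(T)}$ are dual cones by producing, for each directed edge of $T$, a supporting linear functional that vanishes on all generators of $C'_{\varphi(T)}$ but one. Recall that $C_T \subset (\R^n)^*/h_n$ is spanned by the vectors $h_i - h_j$ for directed edges $j \to i$ in $T$, while $C'_{\varphi(T)} \subset \R^n$ is spanned by $e_c - e_d$ for directed edges $d \to c$ in $T' = \varphi(T)$. Since $T$ has $n-1$ edges, both cones are simplicial of full dimension $n-1$ in their respective $(n-1)$-dimensional ambient spaces, so it suffices to check that the natural pairing between $\R^n$ and $(\R^n)^*$ induces a perfect pairing between the spanning rays with the correct sign pattern: $\langle h_i - h_j, \ e_c - e_d\rangle \geq 0$ for all pairs of rays, with the matrix of pairings being (up to positive scaling of rows and columns) the identity. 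Here we use $\langle h_S, e_t\rangle = [t \in S]$, so $\langle h_i - h_j,\ e_c - e_d\rangle = \#([1,i]\cap\{c,d\}) - \#([1,j]\cap\{c,d\})$ with the convention $i > j$ or appropriate cyclic bookkeeping; the key point is that $h_i - h_j$ pairs with $e_c - e_d$ according to how many of $c,d$ lie in the cyclic interval $[j+1,i]$, counted with signs.

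The heart of the argument is combinatorial and rests on Proposition~\ref{prop:Tbij}. First I would set up the correspondence between edges: each directed edge $a \to b$ of $T$ (corresponding to the cyclic interval $[a+1,b]$, equivalently to the generator $h_b - h_a$ of $C_T$) crosses exactly one directed edge $c' \to d'$ of $T'$, with $a, d', b, c'$ in clockwise order around the circle holding $1', 1, 2', 2, \ldots, n', n$. I claim that the pairing $\langle h_b - h_a,\ e_{c}-e_{d}\rangle$ equals $1$ when $(a \to b)$ and $(d\to c)$ are the crossing pair, and $0$ when they are not. The vanishing for non-crossing pairs is the crucial computation: if the edge $a \to b$ of $T$ and the edge $d \to c$ of $T'$ do not cross, then the chord from $a$ to $b$ (drawn using the unprimed points) does not separate $c$ from $d$ (drawn using the primed points), which translates precisely into the statement that $c$ and $d$ lie on the same side of the cyclic interval $[a+1,b]$ — i.e., both in $[a+1,b]$ or both outside — making $\langle h_b - h_a,\ e_c - e_d\rangle = 0$. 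For the crossing pair, the chord $a$–$b$ does separate $c$ from $d$, and the clockwise order condition $a, d', b, c'$ pins down which one is inside, yielding pairing exactly $+1$ (not $-1$). Once this pairing matrix is shown to be (a permutation of) the identity, standard linear algebra gives that $C_T$ and $C'_{\varphi(T)}$ are dual: the dual of a simplicial cone $\mathrm{span}_{\geq 0}(u_1, \ldots, u_m)$ is $\mathrm{span}_{\geq 0}(u_1^*, \ldots, u_m^*)$ where $\langle u_i^*, u_j\rangle = \delta_{ij}$, after quotienting/restricting to the hyperplane $\sum x_i = 0$ and its dual, which is exactly the ambient setting here.

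The main obstacle I anticipate is the careful verification of the sign and the separation claim in the crossing/non-crossing dichotomy, keeping the cyclic and primed-versus-unprimed bookkeeping straight; in particular one must confirm that the clockwise-order convention $a, d', b, c'$ in the definition of $\varphi$ forces the pairing to be $+1$ rather than $-1$, so that the cones are genuinely dual and not merely dual up to a sign flip. A secondary point needing care is the passage to the quotient $(\R^n)^*/h_n$ and the restriction to $H_0 = \{\sum x_i = 0\}$: one should check that the functionals $h_i - h_j$ descend well-definedly and that duality in the $(n-1)$-dimensional quotient/slice is equivalent to the full-dimensional statement modulo the lineality. I would handle this by noting that $e_i - e_j$ always lies in $H_0$ and $h_i - h_j$ always annihilates the all-ones vector, so both cones live in genuinely dual $(n-1)$-dimensional spaces and no degeneracy arises. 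Everything else — simpliciality, dimension count, the final duality conclusion — is then routine.
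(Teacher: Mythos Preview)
Your proposal is correct and takes essentially the same approach as the paper: compute the pairing $(h_b-h_a)(e_d-e_c)$ and observe that it is $0$ when the chords $\overline{ab}$ and $\overline{c'd'}$ are noncrossing, and $+1$ precisely when $a,d',b,c'$ are in clockwise order, so that the pairing matrix between the generators of $C_T$ and $C'_{\varphi(T)}$ is the identity. The paper's proof is a one-line version of exactly this computation; your additional remarks about simpliciality and the passage to $(\R^n)^*/h_n$ versus $H_0$ are sound and just make explicit what the paper leaves implicit. (Do watch the direction bookkeeping you yourself flagged: with the edge of $T'$ written as $c'\to d'$, the generator of $C'_{T'}$ is $e_d-e_c$, not $e_c-e_d$.)
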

\begin{proof}
We have 
$$
(h_b - h_a)(e_{d}-e_{c}) = \begin{cases} 0 & \mbox{if $\overline{ab}$ and $\overline{d'a'}$ are noncrossing,} \\
1 & \mbox{if $a,d',b,c'$ are in clockwise order,} \\
-1 & \mbox{if $a,d',b,c'$ are in anti-clockwise order.}
\end{cases}
$$
The result then follows from the definition of $\varphi$.
\end{proof}

\begin{example}
A noncrossing alternating tree $T \in \Talt$ and the corresponding noncrossing, circular-alternating tree $\T'=\varphi(T)$ is given in Figure~\ref{fig:dualtrees}.  One can check that the cones $C_T = {\rm span}_{\geq 0}(h_3-h_4, h_2-h_4,h_2-h_6,h_2-h_1,h_5-h_6)$ and $C'_{T'}={\rm span}_{\geq 0}(e_3-e_4, e_5-e_3,e_1-e_5,e_2-e_1,e_5-e_6)$ are dual, in agreement with Proposition~\ref{prop:dualcones}.
\end{example}

Let $T$ be a directed tree on $[n]$.  An edge is {\it directed away from $n$} if it is directed away from $n$ as part of some path connected to $n$.  Define two statistics on $T$ by
\begin{align*}
\up(T) &= \#\{ \text{edges directed away from $n$}\} \\
\des(T) &= \#\{ \text{edges $i \to j$ with $i > j$}\}.
\end{align*}
The edges counted by $\des(T)$ are called descent edges.

\begin{proposition}\label{prop:updes}
Let $T \in \Talt$ and $T' = \varphi(T) \in \Tcir$.
Suppose that $e \in T$ is the unique edge intersecting $e' \in
T' $.  Then $e$ is directed away from $n$ in $T$ if and only if
$e'$ is a descent edge in $T'$.  In particular, we have $\up(T) = \des(T')$.
\end{proposition}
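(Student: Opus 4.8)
The plan is to set up a careful dictionary between the ``away from $n$'' orientation on edges of $T$ and the local picture of the dual tree $T'$, and then reduce the claim to a local statement about a single crossing pair of edges. First I would fix notation: write the cyclically arranged points as $1', 1, 2', 2, \ldots, n', n$ around the circle in clockwise order, with $T$ drawn on the unprimed vertices and $T' = \varphi(T)$ on the primed vertices. By the definition of $\varphi$, each edge $e = (a \to b)$ of $T$ crosses exactly one edge $e' = (c' \to d')$ of $T'$, and the four endpoints $a, d', b, c'$ appear in clockwise order. (Here I use $a \to b$ for the directed edge, with $a$ the source since $T$ is alternating.)

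The key reduction is this: ``$e$ is directed away from $n$ in $T$'' should be expressed purely in terms of the clockwise positions of $a, b$ relative to $n$. Since $T$ is a tree, removing $e$ splits $[n]$ into two arcs, and $e$ is directed away from $n$ precisely when $n$ lies in the component containing the source $a$ — equivalently, the unique path from $n$ to $b$ passes through $a$ then $e$. I would prove that because $T$ is noncrossing, the component of $T \setminus e$ containing $a$ is exactly the set of unprimed vertices lying in one of the two circular arcs cut off by the chord $\overline{ab}$, and similarly for $b$. Thus $e$ is directed away from $n$ iff $n$ lies on the $a$-side arc of the chord $\overline{ab}$. Dually, ``$e'$ is a descent edge in $T'$'' means its source $c'$ exceeds its target $d'$ in the linear order $1 < 2 < \cdots < n$ on the primed labels — equivalently, reading clockwise starting just after $n'$ (i.e. starting the cyclic order at $1'$), one encounters $d'$ before $c'$; said yet another way, $n'$ lies ``between'' $c'$ and $d'$ on the arc that goes $c' \to d'$ not containing... — I will need to pin down exactly which arc, using that $1', \ldots, n'$ sit in clockwise order and a descent means the clockwise step from $c'$ wraps past $n'$ before reaching $d'$.

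The main step is then to compare these two conditions using the clockwise order $a, d', b, c'$. Since $\overline{ab}$ and $\overline{c'd'}$ cross, $n$ (an unprimed vertex) and $n'$ (a primed vertex) lie in various of the four regions; because $n'$ immediately precedes $n$ in the clockwise cyclic order $1', 1, \ldots, n', n$, the positions of $n$ and $n'$ relative to the two chords are tightly linked. I expect to show: $n$ is on the $a$-side of $\overline{ab}$ iff $n'$ is on the arc from $c'$ to $d'$ witnessing a descent — this is a finite case check on where $n$ (hence $n'$) sits among the four arcs determined by $a, d', b, c'$. This local equivalence, applied to every edge, gives the edgewise statement, and summing over all edges of $T$ yields $\up(T) = \des(T')$. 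The main obstacle will be bookkeeping the orientation/arc conventions consistently — getting the ``which arc'' and ``which side of the chord'' conventions to line up so that the case check comes out uniformly rather than with sporadic sign errors; once the conventions are fixed, the case analysis itself is short. I would also double-check consistency against the worked $n = 6$ example (Figure~\ref{fig:dualtrees}) before committing to the final sign conventions.
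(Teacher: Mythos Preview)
Your overall strategy is sound, but one key claim is false and it makes the case check fail. You assert that ``the component of $T \setminus e$ containing $a$ is exactly the set of unprimed vertices lying in one of the two circular arcs cut off by the chord $\overline{ab}$.'' In the $n=6$ example of Figure~\ref{fig:dualtrees}, take $e = 6 \to 2$: removing $e$ gives components $\{5,6\}$ and $\{1,2,3,4\}$, whereas the two arcs of the chord $\overline{62}$ (as sets of unprimed vertices) are $\{6,1,2\}$ and $\{2,3,4,5,6\}$. The components are indeed cyclic intervals, but \emph{which} cyclic intervals depends on the rest of $T$, not just on $a$ and $b$. Your derived criterion ``$n$ lies on the $a$-side arc of $\overline{ab}$'' therefore does not detect ``$e$ is directed away from $n$''; already for $e = 1 \to 2$ in the same figure, $n=6$ lies on the long arc (the ``$a$-side'' in your sense, since $c'=1'$ sits there) yet $e$ is directed toward $6$, not away. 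So the proposed four-arc case analysis, as set up, gives the wrong answer on this edge.

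The missing ingredient is an explicit identification of the endpoints $c',d'$ of the dual edge, not merely the constraint that $a,d',b,c'$ are cyclically ordered. The paper states Proposition~\ref{prop:updes} without a separate proof, but the proof of Theorem~\ref{thm:dualtrees} supplies exactly what is needed: if $T_1 \ni a$ and $T_2 \ni b$ are the two components of $T \setminus e$, and $c$ (resp.\ $d$) is the first element of the cyclic interval $T_1$ (resp.\ $T_2$), then the dual edge is $e' = c' \to d'$. Equivalently, the components are the sets of unprimed vertices in the two arcs of the \emph{dual} chord $\overline{c'd'}$: one has $T_1 = [c,d-1]$ and $T_2 = [d,c-1]$. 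With this in hand the proposition is immediate: $e$ is directed away from $n$ iff $n \in [c,d-1]$, and since $n$ is the largest label this holds iff the interval $[c,d-1]$ wraps past $n$, i.e.\ iff $c > d$, i.e.\ iff $e' = c' \to d'$ is a descent.
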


Let $P$ be a generic simple polypositroid.  Let $\T(P) = \{T_v\}$ be the collection of its vertex trees: by Lemma~\ref{lem:Talternating}, $\T(P) \subset \Talt$. 

We now describe the
1-skeleton of $P$ in terms of the trees $T_v$.  Suppose $E = (v,v')$ is an edge of $P$.  The forest $T_E$ has two
components and  one has $T_v = T_E \cup \{e\}$ and $T_{v'} = T_E
\cup \{e'\}$ for distinct directed edges $e,e'$.  The graph $T_E
\cup \{e,e'\}$ has a unique (non-directed) cycle containing both $e$
and $e'$.

\begin{lemma}\label{lem:ee'}
The edges $e$ and $e'$ have the same direction along this cycle.
\end{lemma}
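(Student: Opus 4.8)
The plan is to exploit the fact that $E$ is an edge (a $1$-dimensional face) of the alcoved polytope $P$, so the defining equations of $T_E$ cut out a line inside $H$, and along that line the coordinates move in a one-parameter family. Concretely, by the analysis in the proof of Lemma~\ref{lem:possibleedge}, $T_E$ is (the union of) a forest with two components $T_1, T_2$, supported on complementary cyclic intervals, and as $x$ moves along $E$ only two coordinates $x_i, x_j$ vary, with $E$ parallel to $e_i - e_j$; write $x(t) = x_0 + t(e_i - e_j)$ for the parametrization. The edge $E$ is a facet-face of both $C_v$-facets... more precisely, $v$ and $v'$ are the two endpoints of $E$, attained at the two extreme values of $t$ on $P$. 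The directed edge $e \in T_v \setminus T_E$ records which inequality $x_{[a+1,b]} \le f_{[a+1,b]}$ becomes tight at $v$ (and is slack elsewhere on $E$), and similarly $e' \in T_{v'} \setminus T_E$ at $v'$.

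First I would show that the unique cycle $\gamma$ in $T_E \cup \{e, e'\}$ passes through both endpoints (in the underlying undirected graph) of $e$ and of $e'$, and that removing $e$ and $e'$ from $\gamma$ leaves exactly two paths, one lying in $T_1$ and one in $T_2$ — this is where the two-component structure of $T_E$ enters. Then I would translate the tightening condition into a sign condition: the inequality attached to a directed edge $a \to b$ of $T_v$ is $x_{[a+1,b]} = h_b(x) - h_a(x) \le f_{[a+1,b]}$, i.e.\ $(h_b - h_a)(x(t))$ is maximized at the endpoint $v$. Since $x(t) = x_0 + t(e_i - e_j)$, this says $(h_b - h_a)(e_i - e_j)$ has a definite sign determined by which endpoint of $E$ equals $v$. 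The same computation at $v'$ gives $(h_{b'} - h_{a'})(e_i - e_j)$ the \emph{opposite} sign, because $v$ and $v'$ are opposite endpoints of $E$. So the key arithmetic fact I need is: the pairing $(h_b - h_a)(e_i - e_j)$ detects, up to sign, how the edge $a \to b$ sits relative to the direction $e_i - e_j$, and combined with the noncrossing/interval structure this pins down the orientation of $e$ and $e'$ along $\gamma$.

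The final step is bookkeeping: having fixed an orientation of the cycle $\gamma$, I would check that "$(h_b - h_a)(e_i - e_j)$ positive at $v$ and negative at $v'$" forces $e$ and $e'$ to be traversed in the \emph{same} direction as one goes around $\gamma$. Here one uses that $e$ connects $T_1$ to $T_2$ and so does $e'$ (both are the "bridging" edges of their respective vertex trees across the interval partition $[i,j-1] \sqcup [j,i-1]$ from Lemma~\ref{lem:possibleedge}), so going around $\gamma$ one crosses from the $T_1$-side to the $T_2$-side along $e$ and back along $e'$; consistency of the sign of the $x_i$-vs-$x_j$ tradeoff at the two ends then forces agreement of direction. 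I expect the main obstacle to be the case analysis in matching up "directed edge tightens an upper-bound inequality at $v$" with "direction along $\gamma$": one must be careful that $e$ or $e'$ might be a wraparound cyclic interval, and that the two components $T_1, T_2$ could each contribute to the cycle, so the cleanest route is probably to reduce (as in Lemma~\ref{lem:possibleedge}) to the case where $T_1$ is an honest interval with no wraparound and handle the general case by a cyclic rotation $c^{\ell}$, which preserves the alcoved structure and all the relevant combinatorics.
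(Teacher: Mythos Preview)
Your proposal is correct and follows essentially the same route as the paper: both arguments parametrize $E$ in the direction $e_i - e_j$, observe that $e$ and $e'$ must each bridge the two components $T_1, T_2$ of $T_E$, and use that the facets of $e$ and $e'$ cut off opposite ends of $E$ to conclude $e$ goes $T_1 \to T_2$ while $e'$ goes $T_2 \to T_1$, hence same direction along the cycle. The paper phrases the sign analysis more directly (``if $e$ goes from $T_1$ to $T_2$, and $i \in T_1$ while $j \in T_2$, then the facet corresponding to $e$ bounds $x_j$ above''), which sidesteps the wraparound case analysis you anticipate; your explicit pairing $(h_b - h_a)(e_i - e_j)$ is just the computational form of that same observation.
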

\begin{proof}
The edges $e$ and $e'$ correspond to facets intersecting the edge
$E$, and the direction corresponds to a choice of one infinite
direction along the affine span of $E$, which we assume to be
parallel to $e_i - e_j$.  The direction is determined by which of
the two components $T_1$ and $T_2$ of $T_E$ the source (and hence
sink) of $e$ (resp. $e'$) lies in.  Indeed, if $e$ goes from $T_1$
to $T_2$, and $i \in T_1$ while $j \in T_2$ then the facet
corresponding to $e$ bounds the coordinate $x_j$ above.  It follows
that the ray in the direction of $v'$ emitting from $v$ goes in the
direction $\R_{\geq 0}(e_i - e_j)$.

But the two infinite directions corresponding to $e$ and $e'$ are
opposite, so $e$ must go from $T_1$ to $T_2$ (without loss of
generality), and $e'$ must go from $T_2$ to $T_1$.  But this implies
that $e$ and $e'$ have the same direction along the cycle containing
them both.
\end{proof}

For a vertex $v$ of a generalized permutohedron $P$, define the tree $T'_v$ as follows (cf. \cite{PRW}): $T'_v$ has
a directed edge $j \to i$ if there is an edge incident with $v$
which goes in the direction $\R_{\geq 0}(e_i - e_j)$.  When $P$ is a
polypositroid, we thus have two directed trees $T_v$ and $T'_v$ on
$[n]$ for each vertex $v \in P$.

\begin{theorem}\label{thm:dualtrees}
Let $P$ be a generic simple polypositroid.  Then for each vertex $v$ of $P$, we have $T'_v = \varphi(T_v)$.
\end{theorem}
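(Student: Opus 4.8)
The plan is to show the two trees $T_v$ and $T'_v$ are exchanged by $\varphi$ by checking that they are built from the same incidence data, namely the edges of $P$ at $v$. Fix a vertex $v$ of the generic simple polypositroid $P$, and recall that $T_v \in \Talt$ by Lemma~\ref{lem:Talternating}, so $\varphi(T_v) \in \Tcir$ is well-defined. Since $P$ is simple, exactly $n-1$ edges of $P$ emanate from $v$, and these correspond bijectively to the $n-1$ edges of $T_v$: if $E$ is an edge of $P$ at $v$, then $T_E$ is a forest with two components $T_1, T_2$ (as in the proof of Lemma~\ref{lem:possibleedge}), $T_v = T_E \cup \{e\}$ for a unique directed edge $e$ joining $T_1$ to $T_2$, and by the argument in the proof of Lemma~\ref{lem:ee'}, $E$ points in the direction $\R_{\geq 0}(e_i - e_j)$ where the facet dual to $e$ bounds $x_j$ above — concretely, if $e$ is the edge $j' \to i'$ of $T_v$ with $i' \in T_1$, $j' \in T_2$ (or vice versa), then $E$ has direction $e_i - e_j$ where $\{i,j\}$ is the pair of coordinates left free on $E$, with the sign determined by which component contains the source of $e$.

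The key step is to pin down, for each edge $e = (j' \to i')$ of $T_v$, exactly which directed edge $j \to i$ of $T'_v$ it produces. I would argue as follows: removing $e$ from $T_v$ splits $[n]$ into the two cyclic intervals $[i,j-1]$ and $[j,i-1]$ (this is the noncrossing structure used in Lemma~\ref{lem:possibleedge}), and the free coordinates on the corresponding edge $E$ of $P$ are precisely $x_i$ and $x_j$ where $i,j$ are the two ``cut points'' — the boundary indices of these cyclic intervals. Combining this with the direction analysis from Lemma~\ref{lem:ee'} (the source of $e$ determines which of $e_i-e_j$ versus $e_j-e_i$ is the outgoing ray at $v$), one reads off a directed edge $j \to i$ of $T'_v$. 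Thus $T'_v$ has a directed edge $j \to i$ exactly when $T_v$ has a directed edge whose removal separates $i$ and $j$ as the cut points, with the compatible orientation. This is precisely the combinatorial recipe defining $\varphi$: placing $1',1,2',2,\dots,n',n$ around the circle, the edge $c'\to d'$ of $\varphi(T_v)$ crosses the unique edge $a\to b$ of $T_v$ with $a,d',b,c'$ in clockwise order — and the pair of sectors separated by $a\to b$ are labeled by the cut-point indices. So I would verify that ``$(j \to i)$ produced by $e$'' matches ``$c'\to d'$ crosses $e$ with the clockwise condition'' after the identification of $j$ with one cut point and $i$ with the other.

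Concretely, I expect the cleanest route is: (i) establish the bijection between edges of $P$ at $v$, edges of $T_v$, and pairs (cyclic interval partition of $[n]$ into two arcs); (ii) using Proposition~\ref{prop:dualcones}, note that $C_{T_v}$ and $C'_{\varphi(T_v)}$ are dual cones, while $C_{T_v}$ is the normal cone to $v$ and $C'_{T'_v}$ is (by definition of $T'_v$ via \cite{PRW}) the cone spanned by the edge directions at $v$ — these two cones are likewise dual since for a simple vertex the tangent cone and normal cone are dual; (iii) conclude $C'_{T'_v} = C'_{\varphi(T_v)}$, and since a tree is recovered from its spanning cone $C'$ (the extremal rays $e_i - e_j$ determine the edge set), deduce $T'_v = \varphi(T_v)$. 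The main obstacle is step (ii): carefully justifying that the cone spanned by the edge directions at a simple vertex of $P$ equals the set $\{e_i - e_j : (j\to i) \in T'_v\}$ spanning $C'_{T'_v}$, and that this is genuinely the dual of the normal cone $C_{T_v}$ — i.e., matching the abstract duality of Proposition~\ref{prop:dualcones} with the geometric vertex–normal duality — rather than the sign bookkeeping, which is routine once the cut-point description is in hand.
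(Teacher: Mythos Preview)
Your proposal sketches two arguments. The combinatorial one in your second paragraph---for each tree-edge $e=(a\to b)$ of $T_v$, identify the two cyclic-interval cut points and read off the corresponding directed edge of $T'_v$ using the direction analysis from Lemma~\ref{lem:ee'}---is exactly the paper's proof. The paper just phrases it tersely: let $c,d$ be the cyclic minima of the two components $T_1\ni a$ and $T_2\ni b$; then by Lemma~\ref{lem:ee'} the edge of $P$ at $v$ dual to $e$ gives a directed edge $c'\to d'$ in $T'_v$, and one checks this is the edge of $\varphi(T_v)$ crossing $e$ with the required clockwise order $a,d',b,c'$.

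Your dual-cone route (ii)--(iii) is a genuinely different, more conceptual argument the paper does not use: $C_{T_v}$ is the outer normal cone at $v$, the cone spanned by the edge directions at $v$ is its geometric dual, and Proposition~\ref{prop:dualcones} says $C'_{\varphi(T_v)}$ is also dual to $C_{T_v}$; hence the two cones agree and you recover the tree from its extremal rays. What this buys is replacing the per-edge matching by a single invocation of a result already proved; what the paper's route buys is an explicit edge-by-edge bijection with no appeal to Proposition~\ref{prop:dualcones}.

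One genuine caution: the sign is not as routine as you suggest, and it is in fact the entire content of the dual-cone argument. The outer normal cone and the tangent cone at a simple vertex are \emph{negative} duals (each outer normal pairs $\le 0$ with each feasible edge direction), whereas Proposition~\ref{prop:dualcones} asserts a \emph{positive} duality between $C_{T_v}$ and $C'_{\varphi(T_v)}$. So the abstract argument only yields $C'_{T'_v}=\pm C'_{\varphi(T_v)}$, i.e.\ $T'_v$ is either $\varphi(T_v)$ or its edge-reversal. Pinning down the sign requires exactly one instance of the cut-point direction check from Lemma~\ref{lem:ee'}---so the two routes are not fully independent, and you should not wave this off.
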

\begin{proof}
Suppose $e = (a \to b)$ is a directed edge of $T_v$, such that $T_v
\backslash\{e\}$ has two components $T_1 \ni a$ and $T_2 \ni b$. Let
$c$ be the cyclic minimum of $T_1$ and $d$ the cyclic minimum of
$T_2$.  It follows from the discussion in the proof of Lemma
\ref{lem:ee'} that $T'_v$ has a directed edge from $c'$ to $d'$. One
can check that $c' \to d'$ intersects only $a \to b$, and that the
four vertices are in the stated order.
\end{proof}

\psset{unit=0.5mm}

\begin{figure}
\begin{tikzpicture}[scale=0.5]
\tikzset{>=latex}
\draw (0,0) circle (5);

\node at (0:5.4) {$1$};
\node at (-60:5.4){$2$};
\node at (-120:5.4){$3$};
\node at (-180:5.4){$4$};
\node at (-240:5.4){$5$};
\node at (-300:5.4){$6$};

\filldraw[black] (0:5) circle (3pt);
\filldraw[black] (-60:5) circle (3pt);
\filldraw[black] (-120:5) circle (3pt);
\filldraw[black] (-180:5)circle (3pt);
\filldraw[black] (-240:5)circle (3pt);
\filldraw[black] (-300:5)circle (3pt);

\draw[->,thick](0:5)--(-60:5);
\draw[->,thick](-300:5)--(-60:5);
\draw[->,thick](-300:5)--(-240:5);
\draw[->,thick](-180:5)--(-60:5);
\draw[->,thick](-180:5)--(-120:5);

\node at (-330:5.4){$1'$};
\node at (-30:5.4){$2'$};
\node at (-90:5.4){$3'$};
\node at (-150:5.4){$4'$};
\node at (-210:5.4){$5'$};
\node at (-270:5.4){$6'$};

\filldraw[fill=white] (-30:5) circle (3pt);
\filldraw[fill=white] (-90:5) circle (3pt);
\filldraw[fill=white] (-150:5) circle (3pt);
\filldraw[fill=white] (-210:5)circle (3pt);
\filldraw[fill=white] (-270:5)circle (3pt);
\filldraw[fill=white] (-330:5)circle (3pt);

\draw[->,dashed,thick](-330:5)--(-30:5);
\draw[->,dashed,thick](-270:5)--(-210:5);
\draw[->,dashed,thick](-210:5)--(-330:5);
\draw[->,dashed,thick](-90:5)--(-210:5);
\draw[->,dashed,thick](-140:5)--(-90:5);

\end{tikzpicture}
\caption{The bijection of Proposition~\ref{prop:Tbij}.}
\label{fig:dualtrees}
\end{figure}
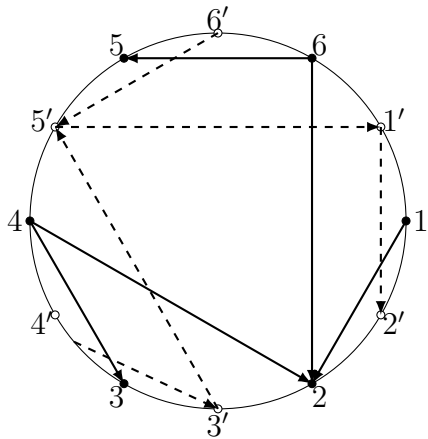

Let $h_P(t)$ denote the $h$-polynomial of a simple $d$-dimensional polytope $P$.  It is given by the equality $\sum_{i=0}^d f_i(P) t^i = h_P(t+1)$.
\begin{corollary}
Suppose $P$ is a generic simple polypositroid.  The $h$-polynomial
of $P$ is
$$
h_P(t) = \sum_{T'_v \in \T(P)} t^{\des(T_v)} = \sum_{T_v \in \T(P)} t^{\up(T_v)}.
$$
\end{corollary}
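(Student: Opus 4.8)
The plan is to compute the $h$-polynomial of $P$ via the standard shelling/$f$-vector relation $\sum_{i=0}^d f_i(P)\, t^i = h_P(t+1)$, but to organize the counting of faces according to their smallest-containing vertex. Concretely, I would use the fact that for a simple polytope $P$ and a generic linear functional, each face of $P$ has a unique vertex minimizing that functional, and the $h$-polynomial records the distribution of vertices by the number of edges along which the functional decreases (equivalently, by the dimension of the cone of "descent directions" at that vertex). Translated into our setting: at each vertex $v$ of $P$, the edges incident to $v$ are in the directions $\R_{\geq 0}(e_i - e_j)$ for the directed edges $j \to i$ of $T'_v$, and $T'_v = \varphi(T_v)$ by Theorem~\ref{thm:dualtrees}. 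So I would choose a generic linear functional $\ell$ so that "$\ell$ decreases along the edge in direction $e_i - e_j$" happens exactly when that edge is a descent edge of $T'_v$ (this is the point of the statistic $\des$, which counts edges $i \to j$ with $i > j$); then the coefficient of $t^m$ in $h_P(t)$ is the number of vertices $v$ with exactly $m$ descent edges in $T'_v$, i.e. $\sum_{T'_v \in \T(P)} t^{\des(T'_v)}$.

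The first key step is to make the choice of generic $\ell$ precise. Taking $\ell(x) = \sum_i \lambda_i x_i$ with $\lambda_1 > \lambda_2 > \cdots > \lambda_n$ generic, the edge of $P$ in direction $e_i - e_j$ (with $i$ the source label so that $e_i$ has coefficient $+1$) is decreasing from $v$ precisely when $\lambda_i < \lambda_j$, i.e. when $i > j$; this is exactly the condition defining a descent edge of $T'_v$. Since $P$ is simple, the number of such decreasing edges at $v$ is the "index" of $v$, and the basic theorem on $h$-vectors of simple polytopes gives $h_P(t) = \sum_v t^{\mathrm{ind}(v)} = \sum_{v} t^{\des(T'_v)}$. (One should check that $\ell$ is generic on $P$, i.e. never constant on an edge; this follows because every edge is parallel to some $e_i - e_j$ with $i \neq j$ and the $\lambda_i$ are distinct.)

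The second key step is purely combinatorial: rewrite the sum over $T'_v$ as a sum over $T_v$. Since $v \mapsto T_v$ and $v \mapsto T'_v = \varphi(T_v)$ are both bijections from the vertex set of $P$ onto $\T(P) \subset \Talt$ and $\varphi(\T(P)) \subset \Tcir$ respectively (Lemma~\ref{lem:Talternating}, Proposition~\ref{prop:Tbij}, Theorem~\ref{thm:dualtrees}), the sum $\sum_{T'_v} t^{\des(T'_v)}$ equals $\sum_{T_v} t^{\des(\varphi(T_v))}$, and by Proposition~\ref{prop:updes} we have $\des(\varphi(T_v)) = \up(T_v)$. This yields the second equality $h_P(t) = \sum_{T_v \in \T(P)} t^{\up(T_v)}$, finishing the corollary.

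I expect the main obstacle to be the bookkeeping around orientations and sign conventions in the first step: one must be careful that the direction $\R_{\geq 0}(e_i - e_j)$ assigned to an edge of $P$ (via the description of $T'_v$ in the paragraph before Theorem~\ref{thm:dualtrees}), the notion of "decreasing edge" with respect to $\ell$, and the definition of a descent edge $i \to j$ with $i > j$ all line up consistently, rather than up to a global reversal $t \mapsto t^{\dim P}/\text{something}$. Everything else — the $h$-vector formula for simple polytopes and the combinatorial rewriting — is standard or already established in the excerpt.
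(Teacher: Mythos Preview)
Your proposal is correct and follows essentially the same route as the paper. The paper's proof simply cites \cite{PRW} for the first equality $h_P(t)=\sum_v t^{\des(T'_v)}$ (which is exactly the simple-polytope $h$-vector formula via a generic linear functional that you spell out) and then invokes Proposition~\ref{prop:updes} for the second equality, just as you do; the only difference is that you unpack the \cite{PRW} argument rather than quoting it, and your anticipated orientation bookkeeping is indeed the one place where conventions must be matched (with the choice $\lambda_1<\cdots<\lambda_n$, or by invoking Dehn--Sommerville, everything lines up).
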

\begin{proof}
The first equality is shown in \cite{PRW}, and the second follows from Proposition~\ref{prop:updes}, or it can be proved in the same way as the first.
\end{proof}

\begin{remark} \label{rem:bijectiontree}
The bijection of Theorem \ref{thm:dualtrees} gives a bijection between noncrossing, decreasing, alternating trees on $[n]$, and rooted plane binary 
trees on $[n]$ equipped with the depth-first search labeling.
\end{remark}

\subsection{Coarsenings of braid arrangements} \label{ssec:braid}
Recall that the {\it braid arrangement} $\B_n \subset (\R^n)^*/h_n$ is the central arrangement which is the union of all hyperplanes of the form $y_i - y_j = 0$, $i \neq j \in [n]$, where $y = (y_1,y_2,\ldots,y_n)$ is identified with the linear function $y_1x_1 + y_2x_2 + \cdots + y_n x_n \in (\R^n)^*$.
We shall consider the hyperplane arrangement $\B_n$ as a complete fan, the {\it braid fan}.  The maximal cones of $\B_n$ are indexed by $w \in S_n$: 
$$
C_w = \{y = (y_1,y_2,\ldots,y_n) \in (\R^n)^* \mid y_{w(1)} \leq y_{w(2)} \leq \cdots \leq y_{w(n)}\}.
$$
and the rays of $\B_n$ are the $h_S= \sum_{i \in S} x_i \in (\R^n)^*/h_n$ for $S \in 2^{[n]}-\{\emptyset,[n]\}$.

The normal fan to the permutohedron $P_n \subset H$ is the braid fan $\B_n$.  More generally, any generalized permutohedron $P$ that is sufficiently generic has $\B_n$ as its normal fan.

Now, let $P$ be a generic simple polypositroid and $\F = \F(P) \subset  (\R^n)^*/h_n$ be its normal fan.  The rays of $\F$ are the $h_{[r,s]}$, $[r,s] \in \cyc$.  While there are many possibilities for $\F$, as we saw in Theorem~\ref{thm:fvector}, all such $\F$ have the same $f$-vector.  By definition, $P$ is a generalized permutohedron, so $\F$ is a coarsening of the fan $\B_n$.  In particular, each maximal cone $C_T$, $T \in \T(P)$ in $\F$ is a union of a number of the cones $C_w$.  

\begin{proposition}\label{prop:braidcoarse}
Let $T \in \Talt$ and $w \in S_n$.  We have $C_w \subset C_T$ if and only if for each edge $j \to i$ of $T' = \varphi(T)$, we have $w^{-1}(i) > w^{-1}(j)$.
\end{proposition}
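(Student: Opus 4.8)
The strategy is to read off the statement from the cone duality of Proposition~\ref{prop:dualcones}, together with the standard dictionary between braid cones and the one-line notation of a permutation. Write $T' = \varphi(T) \in \Tcir$. By Proposition~\ref{prop:dualcones} the cones $C_T \subset (\R^n)^*/h_n$ and $C'_{T'} \subset \R^n$ are dual, the pairing in question being the natural one, which descends to a well-defined pairing of $(\R^n)^*/h_n$ with $H_0 = \ker h_n$ since the generators $e_i - e_j$ of $C'_{T'}$ all lie in $H_0$. Consequently $C_T = \{\, y \mid \langle y, x\rangle \geq 0 \text{ for all } x \in C'_{T'}\,\}$, and because $C'_{T'}$ is generated as a cone by the vectors $e_i - e_j$ over the directed edges $j \to i$ of $T'$, a point $y$ lies in $C_T$ exactly when $\langle y, e_i - e_j\rangle = y_i - y_j \geq 0$ for every such edge. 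Hence $C_w \subseteq C_T$ holds if and only if, for each directed edge $j \to i$ of $T'$, the inequality $y_i \geq y_j$ is valid on all of $C_w$.

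The remaining step is to translate each inequality $y_i \geq y_j$ (with $i \neq j$) into a condition on $w$. Since $C_w = \{\, y \mid y_{w(1)} \leq y_{w(2)} \leq \cdots \leq y_{w(n)}\,\}$, the inequality $y_i \geq y_j$ holds throughout $C_w$ precisely when $i$ occurs weakly after $j$ in the one-line word of $w$, i.e.\ $w^{-1}(i) \geq w^{-1}(j)$: one direction is immediate from the defining chain of inequalities, and for the other one evaluates at the point of $C_w$ with $y_{w(k)} = k$. As $i \neq j$, this is the same as the strict inequality $w^{-1}(i) > w^{-1}(j)$. Quantifying over all edges $j \to i$ of $T'$ yields exactly the asserted criterion.

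I do not anticipate a genuine obstacle: the geometric content is entirely contained in Proposition~\ref{prop:dualcones}, and what is left is bookkeeping. The two points that deserve a sentence of care are that the pairing used to dualize $C_T$ against $C'_{T'}$ is well defined between $(\R^n)^*/h_n$ and $H_0$ --- which is exactly why the generators of $C'_{T'}$ were chosen in $H_0$ --- and that, since $C_T$ is presented as a dual cone, containment of the entire braid cone $C_w$ can be tested against the finitely many generators $e_i - e_j$ of $C'_{T'}$ rather than against all of $C'_{T'}$. Should one wish to avoid citing Proposition~\ref{prop:dualcones}, one could instead re-derive the needed orthogonality relations directly from the defining property of $\varphi$ --- each directed edge $c' \to d'$ of $T'$ crosses the one directed edge $a \to b$ of $T$ with $a, d', b, c'$ in clockwise order and no other edge of $T$ --- but invoking the proposition is cleaner.
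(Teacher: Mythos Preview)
Your proposal is correct and follows essentially the same approach as the paper: both invoke Proposition~\ref{prop:dualcones} to rewrite $C_w \subset C_T$ as the condition that $y_i - y_j \geq 0$ on $C_w$ for each edge $j \to i$ of $T'$, and then read off the condition on $w$. Your version simply spells out the final translation to $w^{-1}(i) > w^{-1}(j)$ in more detail than the paper does.
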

\begin{proof}
By Proposition~\ref{prop:dualcones}, the inclusion $C_w \subset C_T$ is equivalent to the condition that for all $y \in C_w$ and $x \in C'_{T'}$ we have $y(x) \geq 0$.  This is equivalent to the condition that $y(e_i - e_j) = y_i - y_j \geq 0$ for edges $j\to i$ of $T'$.
\end{proof}

\begin{corollary}\label{cor:Tw}
Let $T \in \Talt$.  We have $C_T = C_w$ for some $w \in S_n$ if and only if $\varphi(T) \in \Tcir$ is a path.  Furthermore, for such $T$, we have $T \in \T(P)$ for any generic simple polypositroid.  
\end{corollary}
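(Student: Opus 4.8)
The plan is to chain together the two descriptions of a maximal cone $C_T$ that have already been established: Proposition~\ref{prop:dualcones} identifies $C_T$ with the dual of $C'_{\varphi(T)}$, and Proposition~\ref{prop:braidcoarse} characterizes which braid chambers $C_w$ are contained in $C_T$ in terms of the edges of $T' = \varphi(T)$. A single braid chamber $C_w$ is an $(n-1)$-dimensional cone, and $C_T$ is also $(n-1)$-dimensional (it is spanned by the $n-1$ roots corresponding to the edges of the tree $T$), so $C_T = C_w$ holds if and only if $C_T$ contains exactly one braid chamber, i.e.\ there is a \emph{unique} $w \in S_n$ with $C_w \subseteq C_T$.

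First I would translate the condition in Proposition~\ref{prop:braidcoarse} into a statement about posets. The edges $j \to i$ of $T'$ impose the relations $w^{-1}(j) < w^{-1}(i)$; since $T'$ is a tree (in particular acyclic as an undirected graph), the directed graph $T'$ defines a partial order on $[n]$, and the permutations $w$ with $C_w \subseteq C_T$ are exactly the linear extensions of this partial order (read off via $w^{-1}$). There is a unique such linear extension precisely when the partial order defined by $T'$ is already a total order, which for a tree on $n$ vertices happens exactly when $T'$ is a directed path (a Hamiltonian path consistent with its own orientation). Thus $C_T = C_w$ for some $w$ if and only if $\varphi(T)$ is a path, giving the first claim. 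I should also check the edge case that the resulting path, oriented, really does give a consistent total order with no contradiction — but since $T'$ is acyclic this is automatic.

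For the second claim, suppose $T' = \varphi(T)$ is a path, so $C_T = C_w$ is a single braid chamber. I would argue that such a $T$ lies in $\T(P)$ for \emph{every} generic simple polypositroid $P$ as follows. By Lemma~\ref{lem:Talternating}, $\T(P) \subseteq \Talt$, and by Theorem~\ref{thm:ensemble} the normal fan $\F(P)$ is a complete fan whose maximal cones $\{C_{T_v}\}$ coarsen the braid fan $\B_n$. The chamber $C_w$ lies inside exactly one maximal cone $C_{T_v}$ of $\F(P)$; I claim $C_{T_v} = C_T$. Indeed, $C_w \subseteq C_{T_v}$ forces, by Proposition~\ref{prop:braidcoarse}, that every edge of $\varphi(T_v)$ is an ascent for $w^{-1}$; but the ascents of $w^{-1}$ that can appear as edges of a noncrossing alternating tree are constrained, and since $C_T = C_w$ already, $C_{T_v} \supseteq C_T = C_w$ combined with the fact that $C_w$ is a full-dimensional chamber and $C_T$ is full-dimensional forces $C_{T_v}$ and $C_T$ to share the chamber $C_w$ in their interiors, hence $C_{T_v} = C_T$ since distinct maximal cones of a fan have disjoint interiors. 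Therefore $T = T_v \in \T(P)$.

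The main obstacle I anticipate is the last step: making precise why $C_w \subseteq C_{T_v}$ together with $C_w = C_T$ forces $C_{T_v} = C_T$, i.e.\ why a full-dimensional braid chamber cannot be a \emph{proper} face-compatible sub-chamber sitting strictly inside a strictly larger cone $C_{T_v}$ while still equaling some other maximal cone $C_T$ of the \emph{same} fan. This is really the statement that in a complete simplicial-type fan coarsening $\B_n$, if one maximal cone happens to be a single braid chamber, then that braid chamber is not contained in the interior of any other maximal cone — which follows from the cones having pairwise disjoint interiors, but one must be careful that $C_T$ is genuinely among the maximal cones of $\F(P)$, not merely an abstract cone. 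The cleanest route is probably to note that $C_w$ has nonempty interior, the interior of $C_w$ meets the interior of exactly one maximal cone of $\F(P)$, and if that cone strictly contains $C_w = C_T$ then $C_T$ itself would have to be a union of braid chambers with more than one chamber, contradicting $C_T = C_w$.
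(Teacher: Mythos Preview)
Your argument for the first claim is correct and essentially the same as the paper's: via Proposition~\ref{prop:braidcoarse}, the braid chambers contained in $C_T$ correspond to linear extensions of the partial order defined by $T'=\varphi(T)$, and this order is total precisely when $T'$ is a directed path.

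Your argument for the second claim, however, has a genuine gap in its final step. You correctly observe that there is a unique maximal cone $C_{T_v}$ of $\F(P)$ whose interior meets the interior of $C_w=C_T$, so $C_{T_v}\supseteq C_T$. But your contradiction ``then $C_T$ itself would have to be a union of more than one braid chamber'' is backwards: it is $C_{T_v}$, not $C_T$, that would be such a union, and that is no contradiction at all. Nothing you have written rules out $C_{T_v}\supsetneq C_T$; full-dimensional simplicial cones can strictly contain one another.

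The paper closes this gap by a completely different route, via Theorem~\ref{thm:ensemble}. The key structural fact is that when $\varphi(T)$ is a path, the sources of $T$ form a cyclic interval and the sinks form the complementary cyclic interval. Consequently, for any $(I,J)$ matched inside $T$, the sets $I$ and $J$ lie on opposite arcs of the circle, so there is a \emph{unique} noncrossing matching of $(I,J)$; since $\E(P)$ is noncrossing, this unique matching must be $M_{I,J}\in\E(P)$, and it coincides with the matching sitting inside $T$. By Theorem~\ref{thm:ensemble} this forces $T\in\T(P)$.

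Your fan-theoretic approach can in fact be salvaged, but it needs an extra ingredient you did not invoke: genericity of $P$ implies that every root $h_i-h_j$ is a ray of $\F(P)$. If $C_{T_v}\supseteq C_T$, then each of the $n-1$ root-rays of $C_T$ lies in $C_{T_v}$; being a ray of the fan, each must be a face of $C_{T_v}$, hence one of its $n-1$ extremal rays. Thus $C_T$ and $C_{T_v}$ have the same extremal rays and coincide. This is the missing step; without it your argument is circular.
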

\begin{proof}
Let $T' \in\Tcir$.  If the underlying graph of $T'$ is a path, then $T'$ itself is a directed path.  In such a case, the condition of Proposition~\ref{prop:braidcoarse} uniquely determines $w \in S_n$, and conversely $w$ being uniquely determined implies that $T'$ is a directed path.  The last sentence follows from Theorem~\ref{thm:ensemble}: the sources (resp. the sinks) of $T$ form cyclic intervals when $T'$ is a path.
\end{proof}

\begin{example}
Let $n =3$.  In this case, the normal fan of a generic simple polypositroid is the braid arrangement.  There are six trees in $\Talt$:
\begin{center}
\tikzset{>=latex}
\begin{tikzpicture}[scale=0.9]
\draw[->,thick] (0,0) to [out=45,in=135] (1,0);
\draw[->,thick] (2,0) to [out=135,in=45] (1,0);
\begin{scope}[shift={(3,0)}]
\draw[->,thick] (1,0) to [out=45,in=135] (2,0);
\draw[->,thick] (1,0) to [out=135,in=45] (0,0);
\end{scope}
\begin{scope}[shift={(6,0)}]
\draw[->,thick] (0,0) to [out=45,in=135] (2,0);
\draw[->,thick] (1,0) to [out=45,in=135] (2,0);
\end{scope}
\begin{scope}[shift={(9,0)}]
\draw[->,thick] (2,0) to [out=135,in=45] (0,0);
\draw[->,thick] (2,0) to [out=135,in=45] (1,0);
\end{scope}
\begin{scope}[shift={(12,0)}]
\draw[->,thick] (2,0) to [out=135,in=45] (0,0);
\draw[->,thick] (1,0) to [out=135,in=45] (0,0);
\end{scope}
\begin{scope}[shift={(15,0)}]
\draw[->,thick] (0,0) to [out=45,in=135] (2,0);
\draw[->,thick] (0,0) to [out=45,in=135] (1,0);
\end{scope}
\end{tikzpicture}
\end{center}
and six in $\Tcir$:
\begin{center}
\tikzset{>=latex}
\begin{tikzpicture}[scale=0.9]
\draw[->,thick] (0,0) to [out=45,in=135] (1,0);
\draw[->,thick] (1,0) to [out=45,in=135] (2,0);
\begin{scope}[shift={(3,0)}]
\draw[->,thick] (2,0) to [out=135,in=45] (1,0);
\draw[->,thick] (1,0) to [out=135,in=45] (0,0);
\end{scope}
\begin{scope}[shift={(6,0)}]
\draw[->,thick] (0,0) to [out=45,in=135] (2,0);
\draw[->,thick] (2,0) to [out=135,in=45] (1,0);
\end{scope}
\begin{scope}[shift={(9,0)}]
\draw[->,thick] (2,0) to [out=135,in=45] (0,0);
\draw[->,thick] (1,0) to [out=45,in=135] (2,0);
\end{scope}
\begin{scope}[shift={(12,0)}]
\draw[->,thick] (2,0) to [out=135,in=45] (0,0);
\draw[->,thick] (0,0) to [out=45,in=135] (1,0);
\end{scope}
\begin{scope}[shift={(15,0)}]
\draw[->,thick] (0,0) to [out=45,in=135] (2,0);
\draw[->,thick] (1,0) to [out=135,in=45] (0,0);
\end{scope}
\end{tikzpicture}
\end{center}
Since every $T' \in \Tcir$ has underlying graph a path, there is a unique $w = w_{T'} \in S_n$ satisfying the condition of Proposition~\ref{prop:braidcoarse}.  This gives a bijection between $\Talt$ and $S_n$, identifying $C_T$, $T \in\Talt$ and $C_w$, $w \in S_n$.
\end{example}

\begin{example}
Let $n = 4$.   We have $|\Talt|=24$, consisting of 8 trees that are stars and 16 trees that are paths.
For a generic simple polypositroid $P$, we have $|\T(P)| = 20$ by Theorem~\ref{thm:fvector}, since the cyclohedron has 20 vertices.
There are 16 trees in $T \in \Talt$ such that $\varphi(T)$ is a directed path.  By Corollary~\ref{cor:Tw}, these trees belong to $\T(P)$, for any $P$.  

There are 8 trees in $\Talt$ such that $\varphi(T)$ is a (circular-alternating) star.  These 8 trees are the four cyclic rotations of the following two trees
$$
 (2 \to 1 \leftarrow 4 \to3)\qquad \text{and} \qquad   (3 \to 4 \leftarrow 1 \to 2).
$$
For each of these trees, $C_T$ is a union of two of the cones $C_w$.  For example, take $T = (2 \to 1 \leftarrow 4 \to3)$ with dual tree $T' = (2,4 \to 3 \to 1)$.  According to Proposition~\ref{prop:braidcoarse}, we have $C_{2 \to 1 \leftarrow 4 \to3} = C_{2431} \cup C_{4231}$.  Similarly, we obtain
$$
C_{4\to 3 \leftarrow 2 \to 1} = C_{4213} \cup C_{2413}, \qquad C_{4\to 1 \leftarrow 2 \to 3} = C_{2413} \cup C_{2431}, \qquad C_{2\to 3 \leftarrow 4 \to 1} = C_{4213} \cup C_{4231}.
$$
Thus we have 
$$
C_{2 \to 1 \leftarrow 4 \to3}  \cup C_{4\to 3 \leftarrow 2 \to 1}  = C_{4\to 1 \leftarrow 2 \to 3} \cup  C_{2\to 3 \leftarrow 4 \to 1}.
$$
Each $\T(P)$ contains either both $(2 \to 1 \leftarrow 4 \to3)$ and $(4\to 3 \leftarrow 2 \to 1)$ or both $(4\to 1 \leftarrow 2 \to 3)$ and $(2\to 3 \leftarrow 4 \to 1)$.  Switching between these two choices corresponds to switching the matching $M_{\{2.4\},\{1,3\}}$ in $\E(P)$.
\end{example}
\section{Integer points in polypositroids}
We assume in this section that $H = \{x \in \R^n \mid x_1+x_2+ \cdots + x_n = k\}$, where $k$ is an integer.  A polytope $P \subset H$ is an {\it integer polytope} if its vertices have integer coordinates.  By translating $P$ and $H$,
we may and will assume that $S := P \cap \Z^n \subset \N^n$, so that to each integer point $p = (p_1,p_2,\ldots,p_n) \in P$ one may 
associate a multiset $I_p$ of size $k$ which contains $p_1$ 1's, $p_2$ 2's, and so on.  Thus if $P$ is the matroid polytope of a matroid $M$,
then the multisets $I_p$ are honest sets, equal to the bases of $M$.  

If $I = \{i_1\leq i_2 \leq \cdots \leq i_k\}$ and $J = \{j_1 \leq j_2 \leq \cdots \leq j_k\}$ are two multisets consisting of elements in $\{1,2,\ldots,n\}$, we define two multisets $\sort_1(I,J)$ and $\sort_2(I,J)$ of the same size as follows.  Let $I \cup J = \{a_1 \leq a_2 \leq \cdots \leq a_{2k}\}$.  Then $\sort_1(I,J) = \{a_1,a_3,\ldots,a_{2k-1}\}$ and $\sort_2(I,J) = \{a_2,a_4,\ldots,a_{2k}\}$.  The following characterization of integer alcoved polytopes is given in \cite{LP}.

\begin{theorem}[{\cite[Theorem 3.1]{LP}}] \label{thm:sorted}
Suppose $P \subset H$ is an integer polytope such that $S:= P \cap \Z^n \subset \N^n$.  Then
$P$ is an alcoved polytope if and only if for any $p,p' \in S$, there exist $q,q' \in S$ so that $I_q = \sort_1(I_p,I_{p'})$ and 
$I_{q'}=\sort_2(I_p,I_{p'})$.
\end{theorem}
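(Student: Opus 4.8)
The plan is to derive both directions from a single computation. Write $h_A(x)=\sum_{i\in A}x_i$, so that for $x\in\N^n\cap H_k$ and $A\subseteq[n]$ one has $h_A(x)=\#(I_x\cap A)$ counted with multiplicity, and recall $\sort_1(I,I')\cup\sort_2(I,I')=I\cup I'$ as multisets. For $p,p'\in S$ let $q,q'$ be the nonnegative integer vectors with $I_q=\sort_1(I_p,I_{p'})$ and $I_{q'}=\sort_2(I_p,I_{p'})$. Writing $I_p\cup I_{p'}=\{a_1\le\cdots\le a_{2k}\}$, the number of $a_\ell\le j$ is $h_j(p)+h_j(p')$, and since $I_q$ is the odd-indexed subsequence and $I_{q'}$ the even-indexed one,
$$h_j(q)=\Big\lceil\tfrac{h_j(p)+h_j(p')}{2}\Big\rceil,\qquad h_j(q')=\Big\lfloor\tfrac{h_j(p)+h_j(p')}{2}\Big\rfloor\qquad(0\le j\le n).$$

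For the ``only if'' direction I would argue directly. Here $P$ is cut out by $(h_i-h_j)(x)\le a_{ij}$ with the $a_{ij}$ minimal (the support-function values) together with $h_n(x)=k$. From the display $h_n(q)=h_n(q')=k$, and $h_j(p)+h_j(p')$ is non-decreasing in $j$, so $h_j(q),h_j(q')$ are too; hence $q,q'\in\N^n\cap H_k$. For $i\ne j$ set $A=h_i(p)+h_i(p')$ and $B=h_j(p)+h_j(p')$, so $A-B=(h_i(p)-h_j(p))+(h_i(p')-h_j(p'))\le 2a_{ij}$; then $(h_i-h_j)(q)=\lceil A/2\rceil-\lceil B/2\rceil\le\tfrac{A+1}{2}-\tfrac{B}{2}=\tfrac{A-B+1}{2}\le a_{ij}+\tfrac12$, which being an integer is $\le a_{ij}$; the same estimate with floors gives $(h_i-h_j)(q')\le a_{ij}$. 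Thus $q,q'$ satisfy every defining inequality of $P$, so $q,q'\in P\cap\Z^n=S$.

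For the ``if'' direction, first I would reduce to a lattice-point statement. Suppose $S=P\cap\Z^n$ is closed under sorting; since $P$ is an integer polytope, $P=\conv S$. Let $Q=\env(P)$, whose data are $a_{ij}=\max_{x\in P}(h_i-h_j)(x)=\max_{s\in S}(h_i(s)-h_j(s))\in\Z$; then $Q$ is an integer alcoved polytope, hence has integer vertices and so equals $\conv(Q\cap\Z^n)$ by \cite{LP}. One checks that $P$ is alcoved $\iff P=Q\iff Q\cap\Z^n\subseteq S$. So the whole content of this direction is the inclusion $\env(\conv S)\cap\Z^n\subseteq S$ for sorting-closed $S$; equivalently, for an arbitrary finite $S\subseteq\N^n\cap H_k$, the sorting-closure of $S$ equals $\env(\conv S)\cap\Z^n$ (the inclusion $\subseteq$ being the ``only if'' direction applied to $\env(\conv S)$).

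To prove $\env(\conv S)\cap\Z^n\subseteq S$ I would fix $r$ in the left side and reach it from $S$ by sorting. In the prefix-sum coordinates $y_j=h_j(x)$ the pair $(\sort_1,\sort_2)$ is, by the display, coordinatewise rounded averaging. Starting from any $s\in S$ with $s\ne r$, the plan is to pair $s$ with a partner $s'\in S$ so that one of $\sort_1(s,s'),\sort_2(s,s')$ agrees with $r$ in strictly more prefix sums; this stays in $S$ by closure, and the nonnegative integer $\sum_i|r_i-s_i|$ strictly decreases, so the process terminates at $r\in S$. For $n\le 2$ the envelope is an interval and the partners exist by an elementary one-coordinate bisection. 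For general $n$ I would induct on $n$: conditioning on the set $G$ of prefix sums already matched to $r$ splits both $\env(\conv S)$ and the corresponding slice of $S$ into products over the gaps of $G$, each factor living in fewer than $n$ coordinates and again sorting-closed, so the inductive hypothesis supplies the next match. The hard part will be exactly this conditioning step: passing to a slice $\{h_j=\text{const},\ j\in G\}$ can only lower the values $\max_{s\in S}(\cdot)$, so the envelope of the slice may be strictly smaller than the slice of $\env(\conv S)$; one must therefore choose the partners $s'$ so that the recursion only ever conditions on prefix sums that $r$ itself realizes, keeping $r$ inside the envelope of the current slice. Organizing this selection so the induction goes through is the crux, and is what is carried out in \cite[Theorem 3.1]{LP}.
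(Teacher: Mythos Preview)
The paper does not prove this theorem; it quotes it from \cite{LP}. So there is no in-paper argument to compare against, and your proposal must stand on its own.

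Your ``only if'' direction is correct and clean: the prefix-sum identities $h_j(q)=\lceil(h_j(p)+h_j(p'))/2\rceil$, $h_j(q')=\lfloor(h_j(p)+h_j(p'))/2\rfloor$ are right, and since $P$ is an integer polytope the minimal $a_{ij}$ are integers, so the half-integer bound rounds down as you say.

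The ``if'' direction, however, is not a proof. Your reduction to showing $\env(\conv S)\cap\Z^n\subseteq S$ is fine, but the argument that follows is only a sketch whose crux you explicitly leave unaddressed, and your final sentence appeals to \cite[Theorem 3.1]{LP}---which is precisely the statement you are trying to prove. That is circular. There are also internal problems in the sketch: you claim both that the new point ``agrees with $r$ in strictly more prefix sums'' and that ``$\sum_i|r_i-s_i|$ strictly decreases'', but these are different progress measures and neither is actually established; and the existence of a suitable partner $s'\in S$ at each step is asserted, not proved. The inductive scheme of conditioning on matched prefix sums is, as you yourself note, exactly where the difficulty lies, and you do not resolve it. For the record, the argument in \cite{LP} does not proceed by this kind of bisection; it goes through the alcove triangulation of integer alcoved polytopes and the associated Gr\"obner-basis/triangulation machinery, so your description of what \cite{LP} ``carries out'' is also inaccurate.
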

If a collection $S$ of nonnegative integer points satisfies the condition in Theorem \ref{thm:sorted}, then we call $S$ {\it sort-closed}.

Murota \cite{Mur} studies certain collections of lattice points called {\it $M$-convex sets}, which are essentially equivalent to the {\it discrete polymatroids} of Herzog and Hibi \cite{HH}.  We use the terminology of the latter.  A {\it base polymatroid} is a generalized permutohedron $P$ such that all the values $f_P(S)$ of the support function are nonnegative; see for example \cite[Section 3]{CL}.  Any generalized permutohedron can be translated so that the nonnegativity condition holds.  A {\it discrete (base) polymatroid} is a collection of multisubsets of $[n]= \{1,2,\ldots,n\}$ satisfying an exchange criterion.  The exchange criterion can be formulated in the language of generalized permutohedra as follows.

\begin{theorem}[{\cite[Theorem 2.3]{HH}}]\label{thm:HH}
Suppose $P \subset H$ is an integer polytope.  Let $S:= P \cap \Z^n$.  Then $P$ is a generalized
permutohedron if and only if for any $p, q \in S$ we have:
\begin{quote}
whenever $p_i > q_i$ we can find $j$ so that $p_j < q_j$ and $p_i -e_i+e_j \in S$.
\end{quote}
\end{theorem}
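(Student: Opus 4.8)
The plan is to pass through the description of a generalized permutohedron by the submodular inequalities \eqref{eq:permfacet}: if $P$ is a generalized permutohedron then $P=\{x\in H\mid x(T)\le f_P(T)\ \text{for all } T\subsetneq[n]\}$ with $f_P|_{2^{[n]}}$ submodular, and here $f_P(T)=\max_{x\in P}x(T)\in\Z$ because $P$ is an integer polytope. I will prove the two implications separately. Throughout, for $x\in\R^n$ and $T\subseteq[n]$ write $x(T)=\sum_{t\in T}x_t$, and note $x([n])=k$ for $x\in H$.

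For ``generalized permutohedron $\Rightarrow$ exchange'', let $p,q\in S$ with $p_i>q_i$. Call $T\subseteq[n]$ \emph{tight} if $p(T)=f_P(T)$; using submodularity of $f_P$ together with $p([n])=k=f_P([n])$, the tight sets form a lattice (closed under $\cup$ and $\cap$, containing $\emptyset,[n]$). Let $U$ be the union of all tight sets not containing $i$, so $U$ is tight and $i\notin U$. For any $j\notin U$, the point $p-e_i+e_j$ satisfies every inequality $x(T)\le f_P(T)$: the only ones in danger have $j\in T$, $i\notin T$, but such a $T$ cannot be tight (else $T\subseteq U$), so $p(T)\le f_P(T)-1$ by integrality and $p(T)+1\le f_P(T)$. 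Hence $p-e_i+e_j\in P\cap\Z^n=S$. It remains to exhibit such a $j$ with $p_j<q_j$: since $U$ is tight, $\sum_{t\in U}(p_t-q_t)=f_P(U)-q(U)\ge 0$; since $p([n])=q([n])$, $\sum_{t\notin U}(p_t-q_t)\le 0$; and $i\notin U$ with $p_i-q_i>0$ then force $\sum_{t\notin U,\,t\ne i}(p_t-q_t)<0$, so some $j\notin U$ has $p_j<q_j$.

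For ``exchange $\Rightarrow$ generalized permutohedron'', it suffices to show every edge of $P$ is a multiple of some $e_a-e_b$. Fix an edge $E$ with vertex endpoints $u,v\in S$ and a linear functional $\ell$ (write $\ell_i:=\ell(e_i)$) whose maximum $M$ over $P$ is attained exactly on $E$. Put $w=u-v\ne 0$, so $\sum_i w_i=0$, and suppose toward a contradiction that $w$ is not a multiple of any $e_a-e_b$; then $A:=\{i\mid w_i>0\}$ and $B:=\{i\mid w_i<0\}$ are not both singletons. For each $a\in A$ we have $u_a>v_a$, so the exchange property gives $\beta(a)\in B$ with $u-e_a+e_{\beta(a)}\in S\subseteq P$; from $M\ge\ell(u-e_a+e_{\beta(a)})=M-\ell_a+\ell_{\beta(a)}$ we get $\ell_{\beta(a)}\le\ell_a$, and equality would put $u-e_a+e_{\beta(a)}$ on $E$, hence $w\parallel e_{\beta(a)}-e_a$, a contradiction; so $\ell_{\beta(a)}<\ell_a$. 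Symmetrically, for each $b\in B$ there is $\alpha(b)\in A$ with $\ell_{\alpha(b)}<\ell_b$. Then $\alpha\circ\beta\colon A\to A$ strictly decreases $\ell$, so iterating it from any $a_0\in A$ produces an infinite strictly $\ell$-decreasing sequence inside the finite set $A$, which is absurd.

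The bookkeeping above is routine; the one genuinely delicate point is in the second implication. Naively one would apply a single exchange step at an endpoint of $E$ and ``read off'' the edge direction, but the exchanged point need not land back on the face $E$. The device that rescues the argument is to run exchanges at \emph{both} endpoints of $E$ and compose them, converting the failure of the naive approach into a strictly decreasing self-map of the finite set $A$. I expect this to be the main obstacle to a clean write-up.
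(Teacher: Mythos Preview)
The paper does not supply a proof of this theorem; it simply cites \cite[Theorem~2.3]{HH} and moves on. So there is no ``paper's own proof'' to compare against, and your argument must be judged on its own.

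Your proof is correct in both directions. In the forward direction, the tight-set lattice argument is the standard polymatroid proof: the only point worth a remark is that integrality of $f_P(T)$ (which you correctly note, since the maximum of a linear functional on an integer polytope is attained at an integer vertex) is what turns the strict inequality $p(T)<f_P(T)$ into $p(T)\le f_P(T)-1$. Without that you would not be able to conclude $p-e_i+e_j\in P$. In the reverse direction, your two-sided exchange trick is clean: the single-endpoint exchange can land off the edge, but composing $\alpha\circ\beta$ gives a strictly $\ell$-decreasing self-map of the finite set $A$, an immediate contradiction. (The clause ``then $A$ and $B$ are not both singletons'' is true but unused; the contradiction hypothesis is invoked only to rule out the equality case $\ell_{\beta(a)}=\ell_a$, which would force $w\parallel e_a-e_{\beta(a)}$.)

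This is essentially the argument one finds in the discrete-polymatroid literature (Murota, Herzog--Hibi), recast in the generalized-permutohedron language of this paper. Nothing is missing.
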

If a collection $S$ of integer points satisfies the condition in Theorem \ref{thm:HH} we say $S$ satisfies the Exchange Lemma.  Combining Theorem~\ref{thm:sorted} and Theorem~\ref{thm:HH}, we obtain the following charaterization of integer polypositroids:
\begin{theorem}\label{thm:polyinteger}
Suppose $P \subset H$ is an integer polytope such that $S:= P \cap \Z^n \subset \N^n$.  
Then $P$ is an integer polypositroid if and only if
$S$ is sort-closed, and $S$ satisfies the Exchange Lemma.
\end{theorem}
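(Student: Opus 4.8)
The theorem to be proved, Theorem~\ref{thm:polyinteger}, is essentially a conjunction of two already-established characterizations: Theorem~\ref{thm:sorted} characterizes integer alcoved polytopes by the sort-closed condition, and Theorem~\ref{thm:HH} characterizes integer generalized permutohedra by the Exchange Lemma. Since by Definition~\ref{def:poly} an integer polypositroid is by definition a polytope $P \subset H$ that is simultaneously an integer alcoved polytope and an integer generalized permutohedron, the statement should follow almost immediately by combining the two. The first thing I would do is spell out this reduction: $P$ is an integer polypositroid $\iff$ $P$ is an integer alcoved polytope \emph{and} $P$ is an integer generalized permutohedron $\iff$ ($S$ is sort-closed) and ($S$ satisfies the Exchange Lemma), where the last equivalence is the conjunction of Theorem~\ref{thm:sorted} and Theorem~\ref{thm:HH}.

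The one point that requires a small amount of care is a compatibility check on hypotheses. Theorem~\ref{thm:sorted} is stated for integer polytopes $P \subset H$ with $S := P \cap \Z^n \subset \N^n$, and it characterizes the alcoved property in terms of $S$. Theorem~\ref{thm:HH}, as quoted, is stated for integer polytopes $P \subset H$ with $S := P \cap \Z^n$, characterizing the generalized permutohedron property; the translation normalization $S \subset \N^n$ is harmless here since the Exchange Lemma condition is translation-equivariant (translating by an integer vector carries $S$ to a translate and preserves the condition verbatim). So both theorems apply to the same $P$ and the same set $S$ under the standing assumption $S \subset \N^n$ of this section, and one simply conjoins the two biconditionals. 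I would state this explicitly to reassure the reader that there is no hidden issue about whether $P$ is determined by $S$ in each case — for integer polytopes both the alcoved and the generalized permutohedron properties are detected purely by the lattice points $S$, per the cited theorems.

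The proof is therefore a two-line argument, which I would write as: By Definition~\ref{def:poly}, $P$ is an integer polypositroid if and only if it is both an integer alcoved polytope and an integer generalized permutohedron. By Theorem~\ref{thm:sorted}, the former holds if and only if $S$ is sort-closed; by Theorem~\ref{thm:HH}, the latter holds if and only if $S$ satisfies the Exchange Lemma. Conjoining these two equivalences gives the claim.

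**Expected main obstacle.** There is essentially no obstacle; the only thing to watch is the normalization hypothesis $S \subset \N^n$ and making sure it is compatible with the invocation of Theorem~\ref{thm:HH} (which I would note is invariant under integer translation, so no loss of generality). If one wanted to be thorough, one could also remark that the definition of sort-closed and of the Exchange Lemma each presuppose $S$ nonempty, matching the nonempty polytope $P$, and that the equivalences hold vacuously or trivially in degenerate cases (a single lattice point). I do not expect to need anything beyond citing the two prior theorems.
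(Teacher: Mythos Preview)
Your proposal is correct and matches the paper's approach exactly: the paper states Theorem~\ref{thm:polyinteger} immediately after the sentence ``Combining Theorem~\ref{thm:sorted} and Theorem~\ref{thm:HH}, we obtain the following characterization of integer polypositroids,'' and gives no further proof. Your additional remarks about the compatibility of the hypothesis $S \subset \N^n$ with the invocation of Theorem~\ref{thm:HH} are a reasonable bit of care that the paper leaves implicit.
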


In the case that $P$ consists of $0$-$1$ vectors, Theorem~\ref{thm:polyinteger} characterizes positroids as those collections $M \subset \binom{[n]}{k}$ of $k$-element subsets that are both sort-closed and satisfies the Exchange Lemma.

\begin{corollary}\label{cor:sortclosed}  Positroids are exactly the sort-closed matroids.
\end{corollary}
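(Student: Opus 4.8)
The plan is to read off the corollary directly from Theorem~\ref{thm:polyinteger} (equivalently, from Theorem~\ref{thm:positroid} together with Theorem~\ref{thm:sorted}), specialized to the case where the polytope has $0$-$1$ vertices.

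First I would set up the dictionary between the combinatorial and polytopal notions. Given a matroid $\M$ of rank $k$ on $[n]$, its matroid polytope $P_\M = \conv(e_I \mid I \in \M)$ lies in the hypersimplex $\Delta(k,n) \subset H_k$, whose integer points are exactly the vectors $e_I$, $I \in \binom{[n]}{k}$. Since a matroid polytope contains no lattice points beyond its vertices (immediate from the facet description $\sum_{i \in A} x_i \le \mathrm{rank}(A)$, as $e_I$ satisfies these inequalities iff $I$ is a basis), we get $S := P_\M \cap \Z^n = \{e_I \mid I \in \M\}$, which we identify with $\M$. I would also record the elementary fact that for $k$-subsets $I, J$ the multisets $\sort_1(I,J)$ and $\sort_2(I,J)$ are again honest $k$-element subsets — no value occurs more than twice in the multiset $I \cup J$ — so that ``$S$ is sort-closed'' in the sense of Theorem~\ref{thm:sorted} is precisely the assertion that $\sort_1(I,J), \sort_2(I,J) \in \M$ for all $I, J \in \M$, i.e., that $\M$ is a sort-closed matroid in the classical sense of \cite{Oh,TP}.

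With this dictionary in hand, the proof is a short chain of equivalences. By Theorem~\ref{thm:positroid}, $\M$ is a positroid if and only if $P_\M$ is an alcoved polytope; by Theorem~\ref{thm:sorted} (applied to $P = P_\M$, whose lattice-point set is $S = \M$), $P_\M$ is alcoved if and only if $\M$ is sort-closed. Hence a matroid is a positroid exactly when it is sort-closed. A variant uses Theorem~\ref{thm:polyinteger} directly: $P_\M$ is an integer polypositroid iff $S$ is sort-closed and satisfies the Exchange Lemma; but matroid polytopes are generalized permutohedra (by \cite{GGMS}), so by Theorem~\ref{thm:HH} the Exchange Lemma holds automatically for $S$, and being a polypositroid collapses to being sort-closed.

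I do not expect any genuine obstacle; the entire content sits in the previously established theorems. The only steps needing (routine) verification are the two bookkeeping points above: that the lattice points of a matroid polytope are precisely the basis indicators, so that Theorem~\ref{thm:sorted} may be invoked with $S = \M$, and that $\sort_1$ and $\sort_2$ of two sets are again sets, so that the polytopal notion of sort-closedness coincides with the combinatorial one appearing in the statement.
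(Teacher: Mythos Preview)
Your proposal is correct and follows essentially the same route as the paper: the corollary is stated immediately after Theorem~\ref{thm:polyinteger}, and the paper's preceding sentence already explains that specializing to $0$-$1$ vectors yields the characterization of positroids as sort-closed collections satisfying the Exchange Lemma, the latter condition being automatic for matroids. Your added bookkeeping checks (that the lattice points of $P_\M$ are exactly the basis indicators, and that $\sort_1,\sort_2$ of two sets are sets) are the right details to record; the only minor slip is the citation---the notion of sort-closed here is from \cite{LP}, not \cite{Oh,TP}.
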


\begin{remark}
Corollary~\ref{cor:sortclosed} can also be deduced directly from the characterization of positroids as matroids associated to points $X \in \Gr(k,n)_{\geq 0}$ in the totally nonnegative Grassmannian.  Namely, the Pl\"ucker coordinates $\Delta_I(X)$ of such a point satisfy inequalities that give a sort-closed matroid; see \cite[Proposition 8.7]{Lam}.
\end{remark}

\part{Coxeter polypositroids}
Generalized permutohedra are defined by specifying the possible directions of edges.
Alcoved polytopes are defined by specifying the possible directions of normal vectors to facets.  The set of allowed edge directions and the set of allowed facet normal directions is related by the linear transformation $e_i \mapsto h_i$.  We give this linear transformation a root-system theoretic interpretation, and develop the theory of Coxeter polypositroids.

\section{Coxeter elements}

\subsection{Root systems}
\label{sec:root_systems}
First, we recall some terminology and a few well known facts related to root systems and
Weyl groups; see \cite{Bou, Hum} for more details.

Let $V\simeq\R^r$ be a vector space of dimension $r \geq 2$ equipped with a symmetric
positive definite bilinear form $(x,y)$.  Let $R\subset V$ be an
irreducible and reduced crystallographic {\it root system} of rank $r$.  For a root $\alpha\in R$, the
corresponding {\it coroot} is $\alpha^\vee = 2\alpha/(\alpha,\alpha)$, and the
{\it reflection\/} $s_\alpha\in GL(V)$ with respect to $\alpha$ is given by 
$$
s_\alpha:x\mapsto x - (\alpha^\vee,x)\,\alpha, \quad\textrm{for } x\in V.  
$$
The {\it Weyl group\/} $W\subset GL(V)$ is the group generated by the reflections 
$s_\alpha$, $\alpha\in R$.

Let us fix a choice of {\it positive roots\/} $R^+\subset R$ and the
corresponding choice of {\it simple roots\/} $\alpha_1,\dots,\alpha_r$ in $R$ and simple coroots $\alpha_1^\vee,\ldots,\alpha_r^\vee$.  The {\it Cartan matrix} $A = (A_{ij})$ is given by 
\begin{equation}\label{eq:Cartan}
A_{ij} = (\alpha_i^\vee,\alpha_j) \in \Z.
\end{equation}
Let $s_i = s_{\alpha_i}$ be the {\it simple reflections.} 
It is well known that all possible choices of positive roots are conjugate to 
each other by the action of the Weyl group $W$.

Let $\omega_1,\dots,\omega_r\in V$ be the basis 
of $V$ dual to the basis of simple coroots $\alpha_1^\vee,\dots, \alpha_r^\vee$, 
that is, $(\alpha_j^\vee,\omega_i)=\delta_{ij}$ for any $i,j\in\{1,\dots,r\}$.  The vectors $\omega_1,\dots,\omega_r$ are called {\it fundamental weights\/}.  Let $\Lambda \subset V$ denote the weight lattice spanned by $\omega_1,\dots,\omega_r$.

\begin{remark}
Many of our results hold even for non-crystallographic root systems.  However, for the connections to cluster algebras in Section~\ref{sec:cluster}, we must use a crystallographic root system.
\end{remark}

\subsection{Coxeter elements}
A {\it standard Coxeter element\/} $c=s_{i_1} s_{i_2}\cdots s_{i_r} \in W$ is the product
of the simple reflections $s_1,\dots,s_r$ written in some order $s_{i_1},\dots,s_{i_r}$.
More generally, a {\it Coxeter element\/} $c'= s_{i_1}'s_{i_2}'\cdots s_{i_r}'\in W$
is a similar product for some (possibly different) choice of simple reflections
$s_1',\dots,s_r'$.
In other words, Coxeter elements are Weyl group conjugates $c'=wcw^{-1}$, $w\in W$, of standard Coxeter elements $c$.
Moreover, any two Coxeter elements are conjugates of each other.
Thus all Coxeter elements have the same order, called the {\it Coxeter number\/} $h$.

The eigenvalues of a Coxeter element are 
$e^{2\pi\sqrt{-1}\, m_i /h}$, where $m_1,\dots,m_r\in \{1,\dots,h-1\}$ 
are the {\it exponents\/} of the root system.  In particular, $1$ 
is not an eigevalue of $c$.  This implies the following claim.

\begin{lemma} \label{lem:invertible}
For any Coxeter element $c$, the transformation $I-c$ is an invertible element of $GL(V)$, and the inverse is given by $ (I-c)^{-1} = -{1\over h} \sum_{j=1}^{h-1} j\, c^j$.
\end{lemma}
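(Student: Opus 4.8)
The plan is to verify the explicit formula directly, taking invertibility of $I-c$ as already recorded: since the exponents $m_1,\dots,m_r$ lie strictly between $0$ and $h$, the number $1$ is not among the eigenvalues $e^{2\pi\sqrt{-1}\,m_i/h}$ of $c$, so $I-c$ has trivial kernel and hence lies in $GL(V)$. The two facts about $c$ that I would use are: first, $c^h = I$, because $c$ has order equal to the Coxeter number $h$; second, $\sum_{j=0}^{h-1} c^j = 0$ as an operator on $V$. The second follows from the first together with invertibility of $I-c$: indeed $(I-c)\sum_{j=0}^{h-1} c^j = I - c^h = 0$, and cancelling the invertible factor $I-c$ gives $\sum_{j=0}^{h-1} c^j = 0$, whence $\sum_{j=1}^{h-1} c^j = -I$.

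Next I would set $S := \sum_{j=1}^{h-1} j\, c^j$ and compute $(I-c)S$ by a telescoping manipulation. We have $(I-c)S = \sum_{j=1}^{h-1} j\, c^j - \sum_{j=1}^{h-1} j\, c^{j+1}$; reindexing the second sum by $k = j+1$ and using $c^h = I$ to rewrite its top term $ (h-1)c^h = (h-1)I$, the coefficients of $c^2,\dots,c^{h-1}$ collapse to $1$ each, leaving $(I-c)S = \sum_{j=1}^{h-1} c^j - (h-1)I$. Substituting $\sum_{j=1}^{h-1} c^j = -I$ from the previous paragraph yields $(I-c)S = -I - (h-1)I = -hI$, i.e. $(I-c)\bigl(-\tfrac1h S\bigr) = I$. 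Since $I-c$ is invertible, a right inverse is the inverse, so $(I-c)^{-1} = -\tfrac1h \sum_{j=1}^{h-1} j\, c^j$, as claimed.

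There is essentially no genuine obstacle here; the only care needed is the bookkeeping in the telescoping sum and the correct treatment of the boundary term $c^h = I$. As an alternative, one could check the identity eigenvalue by eigenvalue over $\C$: for an eigenvalue $\zeta \neq 1$ of $c$ with $\zeta^h = 1$, differentiating $\sum_{j=0}^{h-1} x^j = (x^h-1)/(x-1)$ and evaluating at $\zeta$ gives $\sum_{j=1}^{h-1} j\zeta^j = h/(\zeta-1)$, so $-\tfrac1h \sum_{j=1}^{h-1} j\zeta^j = (1-\zeta)^{-1}$, which is precisely the eigenvalue of $(I-c)^{-1}$ on the corresponding eigenvector; but the operator computation above is cleaner and sidesteps any diagonalizability discussion.
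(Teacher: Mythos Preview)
Your proof is correct and follows essentially the same approach as the paper: both deduce $\sum_{j=0}^{h-1} c^j = 0$ from the fact that $1$ is not an eigenvalue of $c$, then compute $(I-c)\sum_{j=1}^{h-1} j c^j = \sum_{j=1}^{h-1} c^j - (h-1)I = -hI$ by the same telescoping. Your write-up is a bit more detailed in justifying $\sum c^j = 0$ and includes the optional eigenvalue-by-eigenvalue sanity check, but the core argument is identical.
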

\begin{proof}
Since all eigenvalues of $c$ are $h$-th roots of unity, not including the identity, we have $I+c+c^2+ \cdots + c^{h-1} = 0$.  Thus
\begin{align*}
\begin{split}
(I-c) ( \sum_{j=1}^{h-1} j\, c^j) =(\sum_{j=1}^{h-1} c^j) - (h-1)I = -hI. 
\end{split}\qedhere
\end{align*}
%
\end{proof}

We say that a choice of positive roots $R^+$ is {\it compatible} with $c$ if $c$ is a standard Coxeter element with respect to $R^+$.  

Let $\Gamma = \{1,c,c^2,\ldots,c^{h-1}\} \subset W$ be the subgroup generated by $c$.
Given $R^+$ compatible with $c$, and a reduced factorization $c = s_1 s_2 \cdots s_r$, we obtain a total ordering of the root system $R$: set $\beta_1 =\alpha_1$, $\beta_2 = s_1 \alpha_2$, $\ldots$, $\beta_r = s_1 s_2 \cdots s_{r-1} \alpha_r$.  Then define $\beta_i$ for $i \in \Z$ recursively by $\beta_{i+r}:= c \beta_i$.  For each $i = 1,2,\ldots,r$, the roots $\beta_i, \beta_{i+r},\ldots,\beta_{i+(h-1)r}$ is a $\Gamma$-orbit in $R$.

\begin{proposition}\label{prop:listroots}\
\begin{enumerate}
\item
The set of $h \cdot r$ vectors $ \{\beta_i \mid  1 \leq i \leq hr \}$ is exactly the set of all roots in $R$ without repetitions.  In particular, $|R| = hr$.
\item
For each $i = 1,2,\ldots,r$, there exists a unique integer $M(i) \in [1,h-1]$ such that
$$
\beta_{i}, c\beta_i, \ldots, c^{M(i)-1}\beta_i \in R^+, \text{ and } c^{M(i)} \beta_{i}, c^{M(i)+1} \beta_i, \ldots, c^{h-1}\beta_i \in R^-.
$$
\end{enumerate}
\end{proposition}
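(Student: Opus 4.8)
The plan is to build everything on one observation: \emph{every orbit of $\Gamma=\langle c\rangle$ on $R$ meets both $R^+$ and $R^-$.} Granting this, the Proposition follows by bookkeeping, using two standard facts: the given reduced factorization $c=s_1\cdots s_r$ has inversion set $N(c):=\{\alpha\in R^+\mid c^{-1}\alpha\in R^-\}=\{\beta_1,\dots,\beta_r\}$ (so $\beta_1,\dots,\beta_r$ are distinct positive roots, each with $c^{h-1}\beta_i=c^{-1}\beta_i\in R^-$); and $|R|=rh$.

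First I would prove the observation. If some root $\gamma$ satisfied $c^j\gamma\in R^+$ for every $j$, then $v:=\sum_{j=0}^{h-1}c^j\gamma$ would be fixed by $c$, hence $v=0$ by Lemma~\ref{lem:invertible}; but $v$ is a sum of positive roots, so it is a nonzero nonnegative combination of the simple roots, a contradiction. Applying this to $\gamma$ and to $-\gamma$ rules out any $\Gamma$-orbit contained in $R^+$ or in $R^-$.

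Now for a $\Gamma$-orbit $O$, record cyclically around $O$ whether each element lies in $R^+$ or in $R^-$; since $O$ is sign-mixed this cyclic word has some number $t_O\ge1$ of maximal runs of positive roots. A root $\delta\in O$ lies in $N(c)$ exactly when it begins one of these runs (i.e.\ $\delta\in R^+$ but $c^{-1}\delta\in R^-$), so $|O\cap N(c)|=t_O$. Summing over orbits gives $\sum_O t_O=|N(c)|=r$, so there are at most $r$ orbits; since each orbit has size at most $|\Gamma|=h$ and $|R|=rh$, there are at least $r$ orbits. Hence there are exactly $r$ orbits, each of size $h$, each with $t_O=1$, and --- because $N(c)=\{\beta_1,\dots,\beta_r\}$ --- the orbits are exactly the (distinct) sets $\Gamma\beta_1,\dots,\Gamma\beta_r$. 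Part~(1) is then immediate: $\{\beta_i\mid1\le i\le hr\}=\bigsqcup_{i=1}^r\Gamma\beta_i=R$, of size $hr$. For part~(2): $\Gamma\beta_i$ has a single run of positive roots and a single run of negative roots, and the positive run begins at the unique element of $\Gamma\beta_i\cap N(c)$, which is $\beta_i$; reading the cycle from $\beta_i$ thus gives $c^0\beta_i,\dots,c^{M(i)-1}\beta_i\in R^+$ and $c^{M(i)}\beta_i,\dots,c^{h-1}\beta_i\in R^-$, where $M(i)$ is the length of the positive run. That run is nonempty (it contains $\beta_i$) and excludes position $h-1$ (since $c^{h-1}\beta_i=c^{-1}\beta_i\in R^-$), so $M(i)\in[1,h-1]$; its uniqueness is clear.

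The single external input is $|R|=rh$, which I would cite (e.g.\ \cite{Bou,Hum}); it is what forces the number of $\Gamma$-orbits to be at least $r$. I do not anticipate a real obstacle; the one point to be careful about is the logical order --- the sign-mixed observation must be established before one can speak of ``runs'' in every orbit. (For intuition one may instead project $R$ to the Coxeter plane, where $\Gamma\beta_i$ becomes a regular $h$-gon and ``lies in $R^+$'' becomes ``lies in an open half-plane'', making part~(2) visually obvious; but the argument above uses only what is already available in the paper.)
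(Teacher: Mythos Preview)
Your argument is correct. The paper's own proof is simply a citation to Bourbaki (Ch.~VI, \S1, n$^\circ$11, Prop.~33), so you have supplied a genuine argument where the paper gives none. Your route is different in the sense of being more self-contained: you reduce the entire proposition to the single standard fact $|R|=rh$, together with Lemma~\ref{lem:invertible} and the elementary description of the inversion set $N(c)=\{\beta_1,\dots,\beta_r\}$ of a Coxeter element. The key idea --- that every $\Gamma$-orbit on $R$ is sign-mixed because the orbit sum is $c$-invariant hence zero --- is clean, and the counting (runs in each orbit correspond bijectively to elements of $N(c)$, forcing exactly $r$ orbits each of size $h$ with a single positive run) is carried out correctly. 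One cosmetic remark: since part~(1) concludes ``in particular $|R|=hr$,'' you might phrase your external citation of $|R|=rh$ as the well-known identity (provable e.g.\ via degrees of invariants) so that no reader suspects circularity; but there is none, and your acknowledgment of this input is appropriate.
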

\begin{proof}
%
%
Follows from \cite[Ch.\ VI, \S 1, n${}^\circ$ 11, Prop.~33]{Bou}.  
\end{proof}

The first part of the following result is \cite[Theorem 3.6]{KT}.  It is stated there for simply-laced Weyl groups, but holds in the multiply-laced types as well.
\begin{proposition} \label{prop:compatible}
Let $R_1^+$ and $R_2^+$ be two positive systems compatible with $c$.  Then $R_1^+$ and $R_2^+$ are related by a sequence of elementary transformations $R^+ \mapsto (R^+)'$  of the following form: suppose $s_i$ is a simple generator for $R^+$ and $c$ has a reduced factorization either starting or ending in $s_i$, then set $(R^+)' = s_i \cdot R^+$.

If $c = s_1 \cdots s_r$ and $(R')^+ = s_1 R^+$, then the total ordering of $R$ coming from $((R')^+,c)$ is the cyclic shift $(\beta_2,\beta_3,\ldots,\beta_{hr},\beta_1)$.
\end{proposition}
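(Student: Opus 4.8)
The statement has two parts. For the first part, the plan is to reduce to the classical fact that any two reduced words for $c$ are connected by braid and commutation moves, combined with Kang-Takahashi \cite[Theorem 3.6]{KT}. The key observation is that a positive system $R^+$ compatible with $c$ is equivalent to a choice of simple system $\{s_1',\ldots,s_r'\}$ together with a reduced factorization $c = s'_{i_1}\cdots s'_{i_r}$. So the data we are really comparing is a pair (simple system, reduced word). I would argue as follows. First, for a \emph{fixed} positive system $R^+$, all reduced factorizations of $c$ give the same positive system, so changing the reduced word is not an elementary transformation but does nothing. Next, an elementary transformation $R^+ \mapsto s_i R^+$ where $c$ has a reduced factorization starting with $s_i$: conjugating $c$ by $s_i$ and noting $s_i c s_i = s_i(s_i s_{i_2}\cdots s_{i_r})s_i = (s_{i_2}\cdots s_{i_r}) s_i$ shows $c$ is still standard with respect to $s_i R^+$, with reduced word obtained by cycling $s_i$ from the front to the back. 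Symmetrically for a factorization ending in $s_i$. So elementary transformations correspond exactly to cyclic rotations of reduced words (through the different simple systems). The content of \cite[Theorem 3.6]{KT}, which I would invoke directly, is that any two $c$-compatible positive systems are connected by such moves; the proof there works in all types since the underlying combinatorics of reduced words and cyclic rotations is type-independent.

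For the second part — identifying the total ordering of $R$ attached to $((R')^+, c)$ when $(R')^+ = s_1 R^+$ and $c = s_1 s_2\cdots s_r$ — the plan is a direct computation using the recursive definition of the $\beta_i$. With respect to $R^+$ we have $\beta_1 = \alpha_1$, $\beta_2 = s_1\alpha_2$, $\ldots$, $\beta_r = s_1\cdots s_{r-1}\alpha_r$, and $\beta_{i+r} = c\beta_i$. Now pass to $(R')^+ = s_1 R^+$: its simple system is $s_1\{\alpha_1,\ldots,\alpha_r\}$, and the reduced factorization of $c$ adapted to it is the cyclic rotation $c = s_2\cdots s_r s_1$ (here $s_1$ is viewed as the reflection in the root $s_1\alpha_1 = -\alpha_1$, which is simple for $(R')^+$). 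Relabel the simple roots of $(R')^+$ as $\alpha'_1 = s_1\alpha_2,\ \alpha'_2 = s_1\alpha_3,\ \ldots,\ \alpha'_{r-1} = s_1\alpha_r,\ \alpha'_r = s_1\alpha_1 = -\alpha_1$, matching the factorization order $s_2, s_3, \ldots, s_r, s_1$. Then the new first root is $\beta'_1 = \alpha'_1 = s_1\alpha_2 = \beta_2$. The new second root is $\beta'_2 = s_{i'_1}\alpha'_2 = s_2 (s_1\alpha_3)$; since $s_1 s_2 s_1 = s_2$ when $\langle\alpha_1,\alpha_2\rangle$ commute, and more generally $s_2(s_1\alpha_3) = s_1(s_1 s_2 s_1)(\alpha_3)$, I would check this equals $s_1 s_2\alpha_3 = \beta_3$. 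Continuing in this way (an easy induction using $s_1$-conjugation to move the leading $s_1$ past the product), one gets $\beta'_j = \beta_{j+1}$ for $j = 1,\ldots, r-1$, and $\beta'_r = s_2\cdots s_r(s_1\alpha_1) = -s_2\cdots s_r\alpha_1$, which I would verify equals $c\beta_1 = \beta_{r+1}$ by writing $c\beta_1 = s_1 s_2\cdots s_r\alpha_1$ and using $s_1\alpha_1 = -\alpha_1$. Then $\beta'_{j+r} = c\beta'_j$ propagates the shift to all of $\Z$, giving the claimed cyclic shift $(\beta_2, \beta_3, \ldots, \beta_{hr}, \beta_1)$.

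The main obstacle I anticipate is the bookkeeping in the second part: carefully tracking how the leading simple reflection $s_1$ interacts with the partial products $s_2 s_3\cdots s_j$ when acting on roots, and confirming that the index shift is exactly by one and wraps around correctly (i.e.\ that $\beta'_r$ lands on $\beta_{r+1}$ rather than, say, $\beta_r$ or $\beta_{r+2}$). This is the step where one must be careful about the distinction between simple reflections as abstract generators and as reflections in specific roots, since $s_1$ as a reflection in $\alpha_1$ and as a reflection in $-\alpha_1$ act the same way on $V$ but play different structural roles. A clean way to handle this is to work entirely with the Weyl group action on $V$, never re-interpreting generators, and just verify the identity $s_1\cdot(\text{$i$-th root for }(R^+,c)) = (\text{$(i{-}1)$-th root for }((R')^+,c))$ for all $i \in \Z$ by the recursion; the cyclic-shift statement then follows formally. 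The first part should be essentially a citation plus the short conjugation computation showing elementary transformations are cyclic word rotations.
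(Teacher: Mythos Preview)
Your approach matches the paper's, which simply cites \cite[Theorem~3.6]{KT} for the first part and states the second without proof. Your explanation of why elementary transformations correspond to cyclic rotations of reduced words is correct and more informative than the bare citation.

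For the second part there is a bookkeeping slip that, left uncorrected, derails the telescoping. When you pass to $(R')^+ = s_1 R^+$, the new simple \emph{roots} are indeed $\alpha'_j = s_1 \alpha_{j+1}$ (for $j < r$) and $\alpha'_r = -\alpha_1$, but the corresponding new simple \emph{reflections} are $t_j = s_{s_1\alpha_{j+1}} = s_1 s_{j+1} s_1$ and $t_r = s_1$ --- not $s_2, s_3, \ldots, s_r, s_1$ as you write. (The identity $c = t_1 \cdots t_r$ does hold, since $(s_1 s_2 s_1)(s_1 s_3 s_1)\cdots(s_1 s_r s_1)\, s_1 = s_1 s_2 \cdots s_r$.) Thus your expression $\beta'_2 = s_2(s_1\alpha_3)$ should be $\beta'_2 = t_1 \alpha'_2 = (s_1 s_2 s_1)(s_1 \alpha_3) = s_1 s_2 \alpha_3 = \beta_3$; the attempted manipulation of $s_2 s_1\alpha_3$ into $s_1 s_2 \alpha_3$ is the step that fails in general. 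With the conjugated reflections in place the product telescopes cleanly: for $1 \le j \le r-1$,
\[
\beta'_j \;=\; t_1 \cdots t_{j-1}\,\alpha'_j \;=\; (s_1 s_2 s_1)(s_1 s_3 s_1)\cdots(s_1 s_j s_1)(s_1\alpha_{j+1}) \;=\; s_1 s_2 \cdots s_j\,\alpha_{j+1} \;=\; \beta_{j+1},
\]
and $\beta'_r = t_1 \cdots t_{r-1}\,\alpha'_r = s_1 s_2 \cdots s_r s_1(-\alpha_1) = c\alpha_1 = c\beta_1 = \beta_{r+1}$. The recursion $\beta'_{j+r} = c\beta'_j$ then propagates the shift. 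So your plan is sound; only the identification of the new simple reflections needs correcting, and once corrected the verification is actually simpler than you anticipated.
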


For the remainder of this section we fix a Coxeter element $c$ and a choice of positive roots $R^+$ compatible with $c$.  We extend the definition of the fundamental weights $\omega_i$ by defining $\omega_i$ for $i \in \Z$ recursively by $\omega_{i+r}:= c\omega_i$.

\begin{proposition} \label{prop:c_orbits}
We have $(I-c)\omega_i = \beta_i$ for all $i \in \Z$.  
\end{proposition}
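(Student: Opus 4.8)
The plan is to reduce the claim to the indices $i\in\{1,2,\dots,r\}$ and then verify it by a short direct computation with the action of simple reflections on fundamental weights.

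First I would dispose of general $i\in\Z$ using the recursions that define $\omega_i$ and $\beta_i$ outside the fundamental range. Since $c$ commutes with $I-c$, and $\omega_{i+r}:=c\,\omega_i$, $\beta_{i+r}:=c\,\beta_i$ by definition, the identity $(I-c)\omega_i=\beta_i$ for one value of $i$ immediately gives $(I-c)\omega_{i+r}=(I-c)c\,\omega_i=c\,(I-c)\omega_i=c\,\beta_i=\beta_{i+r}$; since $c$ is invertible (Lemma~\ref{lem:invertible}), one may also run this backwards, so it is enough to treat $i\in\{1,\dots,r\}$.

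For those $i$, I would fix the reduced factorization $c=s_1 s_2\cdots s_r$ compatible with $R^+$, so that $\beta_i=s_1 s_2\cdots s_{i-1}\alpha_i$. The only input needed is the formula $s_j\omega_i=\omega_i-(\alpha_j^\vee,\omega_i)\,\alpha_j=\omega_i-\delta_{ij}\alpha_j$: thus $s_j$ fixes $\omega_i$ whenever $j\neq i$, while $s_i\omega_i=\omega_i-\alpha_i$. Reading the product $c\,\omega_i=s_1\cdots s_r\,\omega_i$ from the right, the reflections $s_{i+1},\dots,s_r$ act first and all fix $\omega_i$, so
$$c\,\omega_i=s_1\cdots s_{i-1}s_i\,\omega_i=s_1\cdots s_{i-1}(\omega_i-\alpha_i)=s_1\cdots s_{i-1}\omega_i-s_1\cdots s_{i-1}\alpha_i=\omega_i-\beta_i,$$
using that $s_1,\dots,s_{i-1}$ also fix $\omega_i$ and that $s_1\cdots s_{i-1}\alpha_i=\beta_i$ by definition. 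Rearranging gives $(I-c)\omega_i=\beta_i$, as desired.

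I do not expect any genuine obstacle here: once the pieces are assembled this is essentially a one-line computation. The only points requiring care are the mutual consistency of the two recursive extensions $\omega_{i+r}=c\,\omega_i$ and $\beta_{i+r}=c\,\beta_i$ under $I-c$ (which holds precisely because $c$ and $I-c$ commute), and the bookkeeping of the order of the simple reflections in $c=s_1\cdots s_r$, so that exactly the reflections standing to the right of $s_i$ are recognized as fixing $\omega_i$.
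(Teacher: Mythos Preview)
Your proof is correct and follows essentially the same approach as the paper: both reduce to $i\in\{1,\dots,r\}$ via the recursions $\omega_{i+r}=c\,\omega_i$, $\beta_{i+r}=c\,\beta_i$ and the commutation of $c$ with $I-c$, and then compute $c\,\omega_i=s_1\cdots s_r\,\omega_i=\omega_i-s_1\cdots s_{i-1}\alpha_i$ using $s_j\omega_i=\omega_i-\delta_{ij}\alpha_j$. One tiny quibble: the invertibility you need to run the recursion backwards is that of $c$ (immediate, since $c\in W$), not of $I-c$, so the reference to Lemma~\ref{lem:invertible} is not quite the right citation.
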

\begin{proof}
According to the definitions, $s_j(\omega_i) = \omega_i - \delta_{ij}\,\alpha_j$.
Repeatedly applying the simple reflections $s_r, s_{r-1},\dots,s_1$ to 
$\omega_i$, we get $c(\omega_i) = s_1\cdots s_r (\omega_i) = \omega_i - s_1\cdots s_{i-1}(\alpha_i)$.  Thus
$$
(I-c)(\omega_i) = s_1 \cdots s_{i-1}(\alpha_i) = \beta_i,\quad
\textrm{for } i=1,\dots,r.
$$
The statement now follows from Proposition \ref{prop:listroots}.
\end{proof}

For convenience, when $\beta \in R$, we use the notation $\tbeta$ to denote $(I-c)^{-1} \beta \in \tR$.  Thus $\omega_i = \tbeta_i$ for $i = 1,2,\ldots,hr$.

Let $w_0 \in W$ be the longest element and let $i \mapsto i^\star$ denote the bijection on $\{1,2,\ldots,r\}$ determined by $w_0 \alpha_i = - \alpha_{i^\star}$.

\begin{lemma}\label{lem:bound}
Fix $k \in I$.  We have
\begin{enumerate}
\item
$(c^m \beta^\vee_i,\omega_k)\geq 0$, for $0\leq m<M(k^\star)$ and any 
$i=1,\dots,r$;
\item
$(c^m \beta^\vee_i,\omega_k) =  0$, if $M(i) < M(k^\star)$ and $M(i) \leq m<M(k^\star)$; 
\item
$(c^m \beta^\vee_i,\omega_k) =  0$, if $M(k^\star) \leq M(i)$ and $M(k^\star) \leq m < M(i)$; 
\item
$(c^m \beta^\vee_i,\omega_k)\leq 0$, for $M(k^\star)\leq m <h$ and any 
$i=1,\dots,r$.
\end{enumerate}
\end{lemma}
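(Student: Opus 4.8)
The plan is to compute the pairing $(c^m\beta_i^\vee,\omega_k)$ in two different ways, obtaining two one-sided sign estimates whose intersection yields all four assertions. Write $\lambda:=2/(\beta_i,\beta_i)>0$; since $c$ is orthogonal, $(c^m\beta_i,c^m\beta_i)=(\beta_i,\beta_i)$ and so $c^m\beta_i^\vee=\lambda\,c^m\beta_i$. For the first computation, note that for a root $\gamma=\sum_l n_l\alpha_l$ one has $(\gamma,\omega_k)=\tfrac12(\alpha_k,\alpha_k)\,n_k$, which has the same sign as $n_k$; hence $(c^m\beta_i^\vee,\omega_k)=\lambda(c^m\beta_i,\omega_k)$ is $\ge 0$ when $c^m\beta_i\in R^+$ and $\le 0$ when $c^m\beta_i\in R^-$. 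By Proposition~\ref{prop:listroots}(2), $c^m\beta_i\in R^+$ precisely for $0\le m<M(i)$ and $c^m\beta_i\in R^-$ precisely for $M(i)\le m<h$.

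For the second computation I would use Proposition~\ref{prop:c_orbits}: $\omega_k=(I-c)^{-1}\beta_k$ and $(I-c)\omega_i=\beta_i$. Since $c$ is orthogonal, $c^*=c^{-1}$, and one checks $(I-c^{-1})(I-c)^{-1}=-c^{-1}$; therefore
\[
(c^m\beta_i^\vee,\omega_k)=\lambda\,(c^m\beta_i,(I-c)^{-1}\beta_k)=\lambda\,(c^m(I-c)\omega_i,(I-c)^{-1}\beta_k)=-\lambda\,(c^m\omega_i,c^{-1}\beta_k)=-\lambda\,(\omega_i,c^{-m-1}\beta_k).
\]
As before $(\omega_i,c^{-m-1}\beta_k)$ has the sign of the $\alpha_i$-coefficient of $c^{-m-1}\beta_k$, so it is $\ge 0$ when $c^{-m-1}\beta_k\in R^+$ and $\le 0$ when $c^{-m-1}\beta_k\in R^-$. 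Now $c^{-m-1}\beta_k=c^{h-m-1}\beta_k$, which by Proposition~\ref{prop:listroots}(2) lies in $R^+$ iff $0\le h-m-1<M(k)$, i.e. iff $m\ge h-M(k)$, and in $R^-$ iff $m<h-M(k)$. Here I invoke the standard identity $M(k)+M(k^\star)=h$ for Coxeter elements (equivalently $c^{M(k)}\beta_k=-\beta_{k^\star}$, which follows since $\gamma\mapsto-\gamma$ carries the $c$-orbit of $\beta_k$ onto that of $\beta_{k^\star}$; see \cite{Bou}). Thus $(c^m\beta_i^\vee,\omega_k)=-\lambda(\omega_i,c^{-m-1}\beta_k)$ is $\ge 0$ for $0\le m<M(k^\star)$ and $\le 0$ for $M(k^\star)\le m<h$.

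Combining the two computations finishes the proof. Assertions (1) and (4) are exactly the output of the second computation. For (2), when $M(i)\le m<M(k^\star)$ the first computation gives $(c^m\beta_i^\vee,\omega_k)\le 0$ while the second gives $\ge 0$, forcing the value $0$; symmetrically, for (3), when $M(k^\star)\le m<M(i)$ the first gives $\ge 0$ and the second gives $\le 0$, again forcing $0$. Everything here is linear algebra with the orthogonal operator $c$ together with Propositions~\ref{prop:listroots} and~\ref{prop:c_orbits}; the one non-formal ingredient is the identity $M(k)+M(k^\star)=h$, and I expect that to be the only subtle point — it is precisely what identifies the switchover index of the sign of $(c^m\beta_i^\vee,\omega_k)$ with $M(k^\star)$ rather than with a quantity read off directly from $M(k)$.
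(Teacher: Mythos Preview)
Your proof is correct and takes a genuinely different, more elegant route than the paper's. The paper argues combinatorially on the Dynkin diagram: after handling the easy range $m<M(i)$, it tracks the support $S(c^m\beta_i^\vee)$ via oriented paths in the Dynkin diagram (using all three parts of Lemma~\ref{lem:YZ}) to show that the $\alpha_k^\vee$-coefficient vanishes in the remaining ranges. Your argument instead exploits the orthogonality of $c$ together with $(I-c)\omega_j=\beta_j$ to derive the duality identity $(c^m\beta_i^\vee,\omega_k)=-\lambda(\omega_i,c^{-m-1}\beta_k)$, which immediately gives a second one-sided sign bound; squeezing this against the obvious bound coming from $c^m\beta_i\in R^\pm$ yields all four statements simultaneously. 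What your approach buys is a cleaner proof that uses only the identity $M(k)+M(k^\star)=h$ (Lemma~\ref{lem:YZ}(2)) rather than the full Dynkin-diagram analysis, and it exposes a symmetry between $i$ and $k$ that the paper's argument obscures. One cosmetic point: the identity you cite as ``standard'' is exactly Lemma~\ref{lem:YZ}(2), so you may cite it there rather than \cite{Bou}.
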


Define $i \prec_c j$ if $i$ and $j$ are connected in the Dynkin diagram, and $s_i$ precedes $s_j$ in $c$.  Orient the Coxeter diagram of $W$ so that an edge $(i,j)$ is oriented $j \to i$ if $i \prec_c j$.  We will use the following formulae from \cite{YZ}.  

\begin{lemma}\label{lem:YZ} We have
\begin{enumerate}
\item
$c^{M(i)}\beta_i = -\beta_{i^*}$;
\item
$M(i) + M(i^\star) = h$;
\item
If $j \to i$ then we have 
$$
M(i) - M(j) =\begin{cases} 1 & \mbox{if $i^\star \to j^\star$} \\
0 & \mbox{if $j^\star \to i^\star$}
\end{cases}.
$$
\end{enumerate}
\end{lemma}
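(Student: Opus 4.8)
These three identities are, in slightly different notation, results of Yang and Zelevinsky \cite{YZ}, so the shortest plan is simply to cite them; \cite{YZ} states them for simply-laced $W$, but (exactly as with Proposition~\ref{prop:compatible}) their proofs use only the combinatorics of the Coxeter element and go through in the multiply-laced types. For a self-contained treatment I would take (1) as the fundamental statement, derive (2) from it formally, and obtain (3) from a local analysis along an edge; part (1) is the one genuinely nontrivial step.

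For part (1), $c^{M(i)}\beta_i = -\beta_{i^{\star}}$, I would pass to the Dynkin quiver $Q$ on the Coxeter diagram with the orientation attached to $c$ (arrow $j \to i$ when $i \prec_c j$). For this $Q$ the labelling $1,\dots,r$ is admissible (each $i$ is a sink once $1,\dots,i-1$ are deleted), $c$ acts on the root lattice as the inverse of the Coxeter transformation $\Phi_Q$, and $\beta_i = \underline{\dim}\,P_i$ is the dimension vector of the $i$-th indecomposable projective (by Proposition~\ref{prop:c_orbits} together with Gabriel's theorem). Since $Q$ is Dynkin, every indecomposable is preprojective: the $\tau^{-1}$-orbit of $P_i$ consists of modules with dimension vectors $\beta_i, c\beta_i, c^2\beta_i, \dots$, which by Proposition~\ref{prop:listroots}(2) remain positive for exactly $M(i)$ terms, the last one being the indecomposable injective $I_{i^{\star}}$ — the relevant permutation being $i \mapsto i^{\star}$ because $w_0\alpha_i = -\alpha_{i^{\star}}$. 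Thus $c^{M(i)-1}\beta_i = \underline{\dim}\,I_{i^{\star}}$, and applying $c$ once more gives $c^{M(i)}\beta_i = c\,\underline{\dim}\,I_{i^{\star}} = -\underline{\dim}\,P_{i^{\star}} = -\beta_{i^{\star}}$, using the standard identity $\underline{\dim}\,I_k = -\Phi_Q(\underline{\dim}\,P_k)$ together with $\Phi_Q = c^{-1}$. The same computation can be run purely combinatorially through Reading's nested-word description of the $c$-sorting word of $w_0$; either way, identifying the end of the $i$-th orbit segment with an injective is the crux and the main obstacle.

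Granting (1), part (2) is immediate. Applying $c^k$ to (1) gives $c^{M(i)+k}\beta_i = -c^k\beta_{i^{\star}}$ for all $k \geq 0$; since every $\Gamma$-orbit of roots has exactly $h$ elements (Proposition~\ref{prop:listroots}(1)), taking $k = h - M(i)$ yields $c^{h-M(i)}\beta_{i^{\star}} = -\beta_i = -\beta_{(i^{\star})^{\star}}$, while (1) applied to $i^{\star}$ gives $c^{M(i^{\star})}\beta_{i^{\star}} = -\beta_{(i^{\star})^{\star}} = -\beta_i$ as well. Comparing the two, $c^{\,h - M(i) - M(i^{\star})}\beta_{i^{\star}} = \beta_{i^{\star}}$, so $h - M(i) - M(i^{\star}) \equiv 0 \pmod{h}$, and since $M(i), M(i^{\star}) \in \{1,\dots,h-1\}$ this forces $M(i) + M(i^{\star}) = h$.

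For part (3), $M(i)$ is the number of copies of $s_i$ occurring in the $c$-sorting word of $w_0$ (equivalently, one plus the number of Auslander--Reiten steps from $P_i$ to $I_{i^{\star}}$), and for an edge $j \to i$ of the Coxeter diagram the reducedness of that word — equivalently the slice structure of the AR quiver, in which $P_i, P_j$ are joined by an arrow and likewise $I_{i^{\star}}, I_{j^{\star}}$ — forces $M(i) - M(j) \in \{0,1\}$, with the value governed by the orientation of the image edge $\{i^{\star}, j^{\star}\}$; this is consistent with $M(i) - M(j) = M(j^{\star}) - M(i^{\star})$, which already follows from (2). As sanity checks I would verify all three formulae by hand in the rank-$2$ cases and in type $A_3$ for several choices of $c$.
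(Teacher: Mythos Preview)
The paper does not prove this lemma at all; it is introduced with the sentence ``We will use the following formulae from \cite{YZ}'' and is simply cited as a fact. Your plan to cite \cite{YZ} therefore matches the paper's treatment exactly.

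One correction: \cite{YZ} is \emph{not} restricted to the simply-laced case---Yang and Zelevinsky work throughout with an arbitrary finite crystallographic root system---so the parenthetical about extending from simply-laced types is unnecessary (and the analogy with Proposition~\ref{prop:compatible}, which cites \cite{KT}, does not apply here). Your self-contained sketch via Gabriel's theorem and Auslander--Reiten theory is a nice supplement, but note that as written it only literally applies in the simply-laced case; for multiply-laced types you would need the species/valued-quiver version, so if you keep that argument you should either restrict its scope or indicate the modification. The formal derivations of (2) from (1) and of (3) from the $c$-sorting word / slice structure are fine and standard.
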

%

\begin{proof}[Proof of Lemma \ref{lem:bound}]
Fix $i$ and $j$.  We prove $(c^m \beta^\vee_i,\omega_j) \geq 0$ for $0 \leq m < M(j^\star)$.  If $m < M(i)$, then by definition $c^m \beta_i \in R^+$, so $c^m \beta^\vee_i \in (R^\vee)^+$ and the inequality is clear.  Thus we may assume that $M(i) < M(j^\star)$ and $M(i) \leq m < M(j^\star)$.  In particular, we are assuming that $i \neq j$.

Define the support $S(\beta^\vee) \subseteq I$ of a coroot $\beta^\vee \in R^\vee$ to be the (positive or negative) simple coroots that occur in the expansion of $\beta^\vee$ into simple coroots.  For a nonnegative integer $a$, let $P_a(i) \subseteq I$ be the set of vertices $j \in I$ that can be reached from $i \in I$ by a path where at most $a$ edges are in the wrong direction.  One can check that
$$
S(\beta^\vee_k) \subset P_0(k)
$$
and by induction on $a$, we have for $a \geq 0$,
$$
S(c^a \beta^\vee_k) \subset P_a(k).
$$
By Lemma \ref{lem:YZ}(1), it follows that we have 
$$
S(c^m\beta^\vee_i) \subset P_{m-M(i)}(i^\star)
$$
for $m \geq M(i)$.
But by Lemma \ref{lem:YZ}(3), $M(j^\star) - M(i)$ is bounded above by the number of edges directed in the wrong direction on the path from $i^\star$ to $j$.  Thus if $m < M(j^\star)$, we have $j \notin P_{m-M(i)}(i^\star)$ and $(c^m \beta^\vee_i,\omega_j) = 0 \geq 0$.  This proves statements (1) and (2).  Statements (3) and (4) are similar. 
\end{proof}

\begin{example}\label{ex:Aroot}
We consider the root system of type $A_{n-1}$.  Let $V = H_0 = \{x \mid x_1+x_2+\cdots+x_n=0\} \subset \R^n$ and $R = \{e_i - e_j \mid i \neq j\}$.  Then $W$ is the symmetric group $S_n$.  We take as positive simple roots $\alpha_i = e_{i+1}-e_i$.  Then the linear functional $(\cdot, \omega_k): V \to \R$ can be identified with the function $h_n - h_k \in (\R^n)^*$.

Now choose the Coxeter element $c = s_1 s_2 \cdots s_{n-1}$, which coincides with our choice in Section~\ref{sec:alcenv}.  The Dynkin diagram is oriented as follows:
$$
1 \leftarrow 2 \leftarrow \cdots \leftarrow (n-1).
$$
We have $\beta_i = s_1 \cdots s_{i-1} (e_{i+1}-e_r) = e_{i+1}-e_1$ for $i=1,2,\ldots,n-1$.  We have $c(e_i) = e_{i+1}$, where $e_{n+1}:=e_1$.  We compute that $i^\star = n-i = M(i)$.  It is straightforward to verify Lemma~\ref{lem:bound} directly, noting also that for $R= A_{n-1}$ we have $\beta = \beta^\vee$.
\end{example}

\section{Generalized $W$-permutohedra}

A {\it $W$-permutohedron\/} $P$ is a convex polytope in the space $V$ 
which is the convex hull of an orbit $W(x)$ of the Weyl group $W$, for some $x\in V$ not lying in any of the hyperplanes $H_\alpha := \{x \in V \mid (x,\alpha) = 0\}$, $\alpha \in R$. 
One key property of $W$-permutohedra is that every edge of $P$ is parallel to
some coroot $\alpha^\vee\in R^\vee$.

\begin{definition}   A {\it generalized $W$-permutohedron\/} $P$ is a convex 
polytope in the space $V$ such that every edge $[u,v]$ of $P$ is parallel to 
a coroot $\alpha^\vee\in R^\vee$, i.e., $u-v= a \alpha^\vee$, for some $a\in \R_{> 0}$.
\end{definition}

Note that the notion of a generalized $W$-permutohedron is unchanged when we replace the root system $R$ by the dual root system $R^\vee$.  Furthermore, the class of generalized $W$-permutohedra is preserved by the action of $W$.

The normal fan to a $W$-permutohedron is the $W$-Coxeter fan, the fan associated to the hyperplane arrangement consisting of all hyperplanes $H_\alpha$ as $\alpha \in R$ varies.  The maximal cones of the $W$-Coxeter fan are indexed by $w \in W$.  The one-dimensional cones, or rays, of the $W$-Coxeter fan are generated by the set $W \cdot \{\omega_1,\ldots,\omega_r\}$ of vectors lying in the $W$-orbit of a fundamental weight.  Generalized $W$-permutohedra can be equivalently defined as convex polytopes whose normal fan refines the $W$-Coxeter fan (see for example \cite[Theorem~15.3]{PRW}).  In particular, any facet of a generalized $W$-permutohedron has a normal vector that lies in the $W$-orbit of a fundamental weight.  Thus a generalized $W$-permutohedron is given by a collection of inequalities
\begin{equation}\label{eq:genpermineq}
(x, \omega) \leq a_\omega, \qquad \omega \in W \cdot \{\omega_1,\ldots,\omega_r\},
\end{equation}
where we always assume that $a_\omega$ has been taken minimal.  Let $\CsubW = \{(a_\omega)\}$ denote the cone of vectors $(a_\omega)$ arising from generalized $W$-permutohedra.  This cone is studied in \cite{ACEP}.

Let the {\it dominant cone\/}
\begin{equation}
\label{eq:C}
C=\R_{\geq 0} \<\alpha^\vee_1,\dots,\alpha^\vee_r\> \subset V
\end{equation}
be the cone of nonnegative linear combinations of the simple (equivalently,
positive) roots in $R$.  For example, with $R = A_{n-1}$ and the conventions of Example~\ref{ex:Aroot}, this agrees with the cone \eqref{eq:typeAcone}, intersected with $H_0 \simeq V$.
The cone $C$ is the dual of the {\it dominant Weyl chamber}
$D=\R_{\geq 0}\<\omega_1,\dots,\omega_r\>$.
Namely, $D=\{y\in V\mid (x,y)\geq 0\textrm{ for any } x \in C\}$
and $C=\{x\in V\mid (x,y)\geq 0\textrm{ for any } y \in D\}$.  The following result follows from the statement that the normal fan of a generalized $W$-permutohedra is a refinement of the $W$-Coxeter fan.

\begin{theorem}   A polytope $P$ is a generalized $W$-permutohedron if and only if
it has the following form
$$
P = \bigcap_{w\in W} (v_w + w(C)),
$$
where $v_w\in V$, $w\in W$, is a collection of points in $V$ such that,
for any $u, w\in W$,
$$
v_u \in v_w + w(C),
\quad\textrm{or, equivalently,}\quad
v_u-v_w\in w(C)\cap(-u(C)).
$$
The points $v_w, w\in W$, are exactly all vertices of the polytope $P$
(possibly with repetitions).
\label{th:intersection_C}
\end{theorem}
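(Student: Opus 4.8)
The plan is to adapt the type $A$ argument behind Lemma~\ref{lem:envelope} and Proposition~\ref{prop:genpermbalanced} to the root system setting. The two inputs are: (i) the characterization of generalized $W$-permutohedra as exactly those polytopes for which every chamber $w(D)$ of the $W$-Coxeter fan is contained in a single maximal cone of the normal fan (equivalently, in the normal cone of a single vertex); and (ii) the polar duality $C=\{x\in V\mid (x,y)\ge 0\ \forall y\in D\}$, $D=\{y\in V\mid (x,y)\ge 0\ \forall x\in C\}$ from \eqref{eq:C}. The local fact I would record first: since the simple coroots are linearly independent, $C$ is a pointed simplicial cone, $w(C)=\{z\mid (z,y)\ge 0\text{ for all }y\in w(D)\}$, and for any $v$ the translated cone $v+w(C)$ is cut out precisely by the inequalities $(x,-w(\omega_i))\le(v,-w(\omega_i))$, $i=1,\dots,r$; its facet normals $-w(\omega_i)$ run over the extreme rays of the chamber $-w(D)$ and lie in $W\cdot\{\omega_1,\dots,\omega_r\}$, and for $y$ in the interior of $-w(D)$ the linear functional $(\cdot,y)$ attains its maximum over $v+w(C)$ \emph{uniquely} at the apex $v$ (and still at $v$ for $y=-w(\omega_i)$).

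For the ``only if'' direction, let $P$ be a generalized $W$-permutohedron, which we may take full-dimensional. By (i), each chamber $-w(D)$ lies in the normal cone of a unique vertex $v_w$, and these exhaust the vertices of $P$ (with repetitions when a normal cone strictly contains a chamber). Since the normal cone of $v_w$ contains $-w(D)$, dualizing gives $P-v_w\subseteq w(C)$, i.e.\ $P\subseteq v_w+w(C)$, hence $P\subseteq\bigcap_w(v_w+w(C))$. For the reverse inclusion, if $x\notin P$ then $x$ violates one of the defining inequalities $(x,\omega)\le a_\omega$ of \eqref{eq:genpermineq} with $\omega\in W\cdot\{\omega_1,\dots,\omega_r\}$; writing $\omega=-w(\omega_i)$ for a suitable $w$ (possible since $W\cdot\{\omega_1,\dots,\omega_r\}$ is stable under negation), the local fact gives $a_\omega=f_P(\omega)=(v_w,\omega)$, so $(x-v_w,-\omega)<0$ while $-\omega=w(\omega_i)\in w(D)$, whence $x\notin v_w+w(C)$. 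Therefore $P=\bigcap_w(v_w+w(C))$; the compatibility conditions follow at once, since $v_u\in P\subseteq v_w+w(C)$ gives $v_u-v_w\in w(C)$ and, by symmetry in $u$ and $w$, also $v_u-v_w\in-u(C)$.

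For the ``if'' direction, suppose $P=\bigcap_w(v_w+w(C))$ with the stated compatibility. Then $v_e\in v_w+w(C)$ for all $w$, so $v_e\in P\ne\emptyset$, and $P$ is a polyhedron whose recession cone is $\bigcap_w w(C)=\{z\mid (z,y)\ge 0\text{ for all }y\in V\}=\{0\}$ because the Weyl chambers $w(D)$ cover $V$; hence $P$ is a polytope. For each $u$, by definition $P\subseteq v_u+u(C)$, and by compatibility $v_u\in P$, so the local fact shows $v_u$ is the unique maximizer over $P$ of $(\cdot,y)$ for all $y$ in the interior of $-u(D)$; thus $v_u$ is a vertex whose normal cone contains $-u(D)$. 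Since the $-u(D)$ exhaust the maximal cones of the $W$-Coxeter fan, (i) shows $P$ is a generalized $W$-permutohedron; and any vertex $v$ of $P$ has a full-dimensional normal cone containing some chamber $-u(D)$, which by the uniqueness just noted forces $v=v_u$. Hence the $v_w$ are exactly the vertices of $P$.

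The main obstacle is bookkeeping rather than conceptual: one must keep straight several sign and duality conventions --- inner versus outer facet normals, the polar of a normal cone, whether a given weight $\omega\in W\cdot\{\omega_i\}$ is an extreme ray of $w(D)$ or of $-w(D)$, and the distinction between the actions of $w$ and of $ww_0$ --- so that the two inclusions proving $P=\bigcap_w(v_w+w(C))$ line up correctly with the inequality presentation \eqref{eq:genpermineq}. A secondary subtlety, handled above by extracting uniqueness of the maximizer from the pointedness of $w(C)$ rather than from genericity of $P$, is the claim that the $v_w$ are exactly the vertices even when $P$ is not full-dimensional.
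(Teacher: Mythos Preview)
Your proof is correct and follows exactly the route the paper indicates: the paper offers no detailed argument for this theorem, merely remarking that it ``follows from the statement that the normal fan of a generalized $W$-permutohedron is a refinement of the $W$-Coxeter fan,'' which is precisely your input (i), combined with the polar duality between $w(D)$ and $w(C)$. Your write-up is a faithful and complete expansion of that sketch; the one cosmetic point is that the clause ``which we may take full-dimensional'' in the only-if direction is unnecessary---as you note yourself at the end, the argument via uniqueness of the maximizer over the pointed cone $w(C)$ already handles the degenerate case.
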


Define 
the {\it dominance order $\leq_C$} as the partial order on points $V$ given by
$x\leq_C y$ if $y-x\in C$.  Similarly, for $w\in W$, the 
{\it $w$-dominance order\/} $\leq_{w(C)}$ is the partial order on
$V$ given by $x\leq_{w(C)} y$ if $y-x\in w(C)$, for $w\in W$.  The following
result easily follows from Theorem~\ref{th:intersection_C}.

\begin{corollary}\label{cor:genW}
A polytope $P$ be a generalized $W$-permutohedron if and only if
for each $w\in W$, $P$ has a unique minimum element $v_w\in P$ 
in the $w$-dominance order $\leq_{w(C)}$.
\end{corollary}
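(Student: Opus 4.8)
The plan is to deduce Corollary~\ref{cor:genW} directly from Theorem~\ref{th:intersection_C}, exploiting the tautology that, for $x \in V$ and $w \in W$, one has $x \in v_w + w(C)$ if and only if $v_w \leq_{w(C)} x$. Read index-by-index in $w$, the representation $P = \bigcap_{w \in W}(v_w + w(C))$ of a generalized $W$-permutohedron is thus nothing but the assertion that each $v_w$ is a lower bound for $P$ in the order $\leq_{w(C)}$; what remains to be supplied is that the $v_w$ actually lie in $P$, that they are the unique such lower bounds, and --- for the converse --- that these lower bounds reconstruct $P$.

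For the ``only if'' direction, suppose $P$ is a generalized $W$-permutohedron and write $P = \bigcap_{u \in W}(v_u + u(C))$ with the $v_u$ as in Theorem~\ref{th:intersection_C}. Fix $w$. First I would check $v_w \in P$: the compatibility condition of Theorem~\ref{th:intersection_C}, applied with the two group elements in the opposite order, gives $v_w - v_u \in u(C)$ for every $u \in W$, i.e. $v_w \in v_u + u(C)$ for all $u$, hence $v_w \in \bigcap_u(v_u + u(C)) = P$. Since every $x \in P$ satisfies $x \in v_w + w(C)$, i.e. $v_w \leq_{w(C)} x$, the point $v_w$ is a minimum of $P$ in $\leq_{w(C)}$. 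For uniqueness, note that $C = \R_{\geq 0}\langle \alpha_1^\vee, \dots, \alpha_r^\vee \rangle$ is simplicial, hence pointed (the simple coroots are a basis of $V$), so $w(C)$ is pointed and $\leq_{w(C)}$ is a genuine partial order; two minima of $P$ are therefore equal.

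For the ``if'' direction, suppose that for each $w$ the polytope $P$ has a unique $\leq_{w(C)}$-minimum $v_w \in P$. The same tautology gives $P \subseteq Q := \bigcap_{w \in W}(v_w + w(C))$. Applying this inclusion to each $x = v_u \in P$ yields $v_u - v_w \in w(C)$ for all $u,w$; interchanging $u$ and $w$ gives $v_u - v_w \in -u(C)$ as well, so the collection $\{v_w\}$ satisfies the hypotheses of Theorem~\ref{th:intersection_C}. Hence $Q$ is a generalized $W$-permutohedron whose vertices are exactly the $v_w$, so $Q = \conv\{v_w : w \in W\}$; since each $v_w$ lies in the convex set $P$, we get $Q \subseteq P$, and therefore $P = Q$ is a generalized $W$-permutohedron.

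The step I expect to require the most care is the inclusion $Q \subseteq P$ in the converse: it cannot be extracted from the order-theoretic hypothesis alone, and one must invoke the structural content of Theorem~\ref{th:intersection_C} that $Q$ is a bounded polytope equal to the convex hull of the $v_w$. Concretely, $Q$ has recession cone $\bigcap_{w} w(C)$, which is dual to $\conv\bigl(\bigcup_w w(D)\bigr) = V$ since the Weyl chambers $w(D)$ tile $V$; thus $\bigcap_w w(C) = \{0\}$, $Q$ is bounded, and Theorem~\ref{th:intersection_C} identifies its vertex set. Once this is in place, everything else is a routine unwinding of the definitions of $\leq_{w(C)}$ and of the intersection representation.
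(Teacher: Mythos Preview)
Your proof is correct and is precisely the intended approach: the paper gives no proof at all, merely stating that the corollary ``easily follows from Theorem~\ref{th:intersection_C}'', and you have supplied exactly the unwinding of that theorem via the tautology $x \in v_w + w(C) \Leftrightarrow v_w \leq_{w(C)} x$. Your care with the converse direction---verifying the compatibility conditions for the $v_w$, invoking the vertex description from Theorem~\ref{th:intersection_C} to get $Q = \conv\{v_w\} \subseteq P$, and checking boundedness of $Q$ via the recession-cone computation $\bigcap_w w(C) = \{0\}$---fills in the details the paper leaves implicit.
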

\cite[Theorem~15.3]{PRW} also implies that the condition on the points
$v_w$, $w\in W$ can be reformulated, as follows.
It is enough to require the condition
$v_u - v_w \in w(C)\cap (-u(C))$ only for the pairs $u,w\in W$ such that
$u= w\, s_i$, for a simple reflection $s_i$.
In this case, the cone $w(C)\cap (-w \, s_i (C))$ is 
the $1$-dimensional cone spanned by the coroot $w(\alpha^\vee_i)$:
$$
w(C)\cap (-w\,s_i(C)) = \R_{\geq 0} \<w(\alpha^\vee_i)\>.
$$

\begin{theorem}
A polytope $P$ is a generalized $W$-permutohedron if and only if
it has the following form
$$
P = \bigcap_{w\in W} (v_w + w(C)),
$$
where $v_w\in V$, $w\in W$, is a collection of points in $V$ such that,
for any $w\in W$ and $i=1,\dots,r$,
$$
v_{w}-v_{ws_i} = a\, w(\alpha^\vee_i)\quad\textrm{for }a\in\R_{\geq 0}.
$$
\label{th:intersection_C_simple}
\end{theorem}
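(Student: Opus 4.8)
The plan is to obtain Theorem~\ref{th:intersection_C_simple} from Theorem~\ref{th:intersection_C} combined with the reformulation recorded just above it (namely \cite[Theorem~15.3]{PRW}). For a family of points $(v_w)_{w\in W}$ this amounts to showing that the full system of conditions ``$v_u-v_w\in w(C)\cap(-u(C))$ for all $u,w\in W$'' is equivalent to the a priori weaker system in which one imposes the condition only for adjacent pairs $u=ws_i$. Since the identity $w(C)\cap(-ws_i(C))=\R_{\geq 0}\langle w(\alpha_i^\vee)\rangle$ recalled before the theorem rewrites the adjacent-pair condition as ``$v_{ws_i}-v_w$ is a nonnegative multiple of $w(\alpha_i^\vee)$,'' this equivalence is exactly the statement to be proved.

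First I would establish that cone identity; it also yields the ``only if'' direction at once, since for a generalized $W$-permutohedron Theorem~\ref{th:intersection_C} already gives $v_u-v_w\in w(C)\cap(-u(C))$, which we then specialize to $u=ws_i$. By $W$-equivariance the identity reduces to $C\cap(-s_i(C))=\R_{\geq 0}\,\alpha_i^\vee$. The point is that in the basis of simple coroots $\alpha_1^\vee,\dots,\alpha_r^\vee$ the reflection $s_i$ is unitriangular: $s_i(\alpha_i^\vee)=-\alpha_i^\vee$, while $s_i(\alpha_j^\vee)\equiv\alpha_j^\vee\pmod{\R\alpha_i^\vee}$ for $j\neq i$. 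Writing $x=\sum_j c_j\alpha_j^\vee$, membership in $C=\R_{\geq 0}\langle\alpha_1^\vee,\dots,\alpha_r^\vee\rangle$ forces all $c_j\geq 0$, and $-s_i(x)\in C$ then forces $c_j\leq 0$ for $j\neq i$, so $x\in\R_{\geq 0}\,\alpha_i^\vee$; the reverse inclusion is clear because $\alpha_i^\vee\in C$ and $\alpha_i^\vee=-s_i(\alpha_i^\vee)\in -s_i(C)$.

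For the ``if'' direction I would argue by telescoping along a reduced word. Assume $P=\bigcap_w(v_w+w(C))$ with $v_{ws_i}-v_w\in\R_{\geq 0}\langle w(\alpha_i^\vee)\rangle$ for all $w$ and all simple $s_i$; I want to recover $v_u-v_w\in w(C)\cap(-u(C))$ for all $u,w$, at which point Theorem~\ref{th:intersection_C} applies. By the symmetry $u\leftrightarrow w$ it suffices to prove $v_u-v_w\in w(C)$. Fix a reduced word $w^{-1}u=s_{i_1}s_{i_2}\cdots s_{i_m}$ and set $w_0=w$, $w_k=w_{k-1}s_{i_k}$, so $w_m=u$; then
\[
v_u-v_w=\sum_{k=1}^m(v_{w_k}-v_{w_{k-1}})=\sum_{k=1}^m a_k\,w_{k-1}(\alpha_{i_k}^\vee)
\]
with each $a_k\geq 0$. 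Because the word is reduced, $\beta_k:=s_{i_1}\cdots s_{i_{k-1}}(\alpha_{i_k})$ is a positive root, so $\beta_k^\vee=s_{i_1}\cdots s_{i_{k-1}}(\alpha_{i_k}^\vee)$ is a positive coroot and hence lies in $C$ (positive coroots are nonnegative combinations of simple coroots). Therefore $w_{k-1}(\alpha_{i_k}^\vee)=w(\beta_k^\vee)\in w(C)$, and since $w(C)$ is a convex cone, $v_u-v_w$ lies in $w(C)$.

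I do not expect a deep obstacle: the theorem is essentially a repackaging of Theorem~\ref{th:intersection_C} through \cite[Theorem~15.3]{PRW}. The step that needs genuine care is the telescoping --- a blind telescoping only expresses $v_u-v_w$ as \emph{some} signed combination of coroots, and it is the reduced-word structure, i.e. the positivity of the intermediate roots $\beta_k$, that keeps every partial increment inside the single cone $w(C)$, which is precisely what Theorem~\ref{th:intersection_C} requires.
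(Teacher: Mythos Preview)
Your proof is correct and follows the same outline as the paper: derive Theorem~\ref{th:intersection_C_simple} from Theorem~\ref{th:intersection_C} by showing that the full system $v_u-v_w\in w(C)\cap(-u(C))$ is equivalent to its restriction to adjacent pairs $u=ws_i$, together with the cone identity $w(C)\cap(-ws_i(C))=\R_{\geq 0}\langle w(\alpha_i^\vee)\rangle$. The only difference is that the paper delegates the equivalence of the two systems to \cite[Theorem~15.3]{PRW}, whereas you supply a direct, self-contained argument via telescoping along a reduced word (using that $s_{i_1}\cdots s_{i_{k-1}}(\alpha_{i_k}^\vee)$ is a positive coroot and hence lies in $C$); this is a clean and elementary justification of exactly the implication the paper cites away.
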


\section{Twisted $(W,c)$-alcoved polytopes}
In \cite{LP2}, we studied the $W$-alcoved polytopes: polytopes in $V$ with the property that all facet normals belong to $R$.  Here, we introduce a twisted variant of $W$-alcoved polytopes that depends on the choice of a Coxeter element $c$.
\subsection{Coxeter twisted roots}
\begin{definition}
Define the {\it $c$-twisted root system} $\tilde R$ as
the image $\tilde R = (I-c)^{-1}(R)$ of the root system $R$ 
under the transformation $(I-c)^{-1}$.
We call the elements of $\tilde R$ the {\it $c$-twisted roots}, or simply {\it twisted roots} if the Coxeter element is understood.
\end{definition}
We have that $\tilde R = -\tilde R$.  Note also that $\tilde R$ does not depend on the choice of positive system $R^+$.  

According to Lemma~\ref{prop:c_orbits}, the twisted root system 
$\tilde R$ is exactly the set of weights that lie in the $\Gamma$-orbits
of the vectors $\omega_1,\omega_2,\ldots,\omega_r$:
\begin{equation}\label{eq:tR}
\tilde R=
(I-c)^{-1}(R)= \{c^t(\omega_i)\mid t=0,\dots,h-1; \ i=1,\ldots,r\} = \{\omega_i \mid i = 1,\ldots,hr\}. 
\end{equation}
Note that $\tilde R = \Gamma \cdot \{\omega_1,\ldots,\omega_r\} \subset W \cdot \{\omega_1,\ldots,\omega_r\}$.

\begin{definition}
A $(W,c)$-twisted alcoved polytope is a polytope $P \subset V$ whose facets are normal to twisted roots.  More precisely, $P$ is a nonempty set with the presentation
$$
P = P(a_\omega) =  \{x \in V \mid (x,\omega) \leq a_\omega \mbox{ for $\omega \in \tilde R$}\}.
$$
Here, $a_\omega$ are arbitrary real numbers which we always assume to be chosen minimal.
\end{definition}

\begin{example}\label{ex:Aroot2}
We continue Example~\ref{ex:Aroot}.  Applying Lemma~\ref{lem:invertible}, the twisted roots are given by 
$$
\tR= \left\{-\frac{1}{n}\left(\sum_{k=1}^{n-1} k (e_{i+k}-e_{j+k}) \right) \mid i \neq j \right\} \subset V.
$$
For example, $(I-c)^{-1}\alpha_1 = \frac{1}{5}(-4,1,1,1,1)$.  In $(\R^n)^*/h_n$, this is equal to $h_n-h_1 =(0,1,1,1,1)$.  Identifying $\tR$ with a subset of $(\R^n)^*/h_n$, we get $\tR = \{h_i - h_j \mid i \neq j\}$.  The notion of $(W,c)$-twisted alcoved polytope here agrees with our notion of alcoved polytope in Definition~\ref{def:alcoved} .  
\end{example}

If $W$ and $c$ are understood to be fixed, we may simply use the name ``twisted alcoved polytope''.

\begin{remark}\label{rem:conj}
Suppose that $c$ and $c'$ are two Coxeter elements.  Then there exists $w \in W$ so that $c' = wcw^{-1}$.  The $c$-twisted roots $\tilde R_c$ and the $c'$-twisted roots $\tilde R_{c'}$ are related by $\tilde R_{c'} = w \cdot \tilde R_c$.  We have that $P \subset V$ is a $(W,c')$-twisted alcoved polytope if and only if $w \cdot P \subset V$ is a $(W,c)$-twisted alcoved polytope.
\end{remark}

\begin{definition}
For a compact subset $Q \subset V$ the {\it $(W,c)$-twisted alcoved envelope} $\env(Q)$ is the smallest $(W,c)$-twisted alcoved polytope containing $Q$.
\end{definition}

Clearly, $\env(P) = P$ if and only if $P$ is a $(W,c)$-twisted alcoved polytope. 
Note that the intersection of generalized $W$-permutohedra may not be a generalized $W$-permutohedron.  Thus the ``generalized $W$-permutohedron envelope'' is not a well-defined operation.

Recall that the dominant cone $C$ was defined in \eqref{eq:C}. 
Let $Q \subset V$ be a compact subset.  For $i = 0,1,\ldots,h-1$, let $v_i \in V$ be  the minimum point in dominance order $\leq_{c^i(C)}$ such that $Q \subset v_i + c^i(C)$.  Thus, every facet of $v_i+c^i(C)$ touches $Q$. 

\begin{proposition}\label{prop:envelope}
The $(W,c)$-twisted alcoved envelope of $Q$ is given by the following intersection:
$$
\env(Q) = \env_c(Q)= \bigcap_{i=0}^{h-1}( v_i + c^i(C)).
$$
\end{proposition}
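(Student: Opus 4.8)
The plan is to reduce the statement to the explicit inequality description of the twisted alcoved envelope and then match each defining inequality with a facet of one of the translated cones $v_i+c^i(C)$. The two substantive ingredients are the description \eqref{eq:tR} of $\tilde R$ (which rests on Proposition~\ref{prop:c_orbits}) and the identity $\tilde R=-\tilde R$; the rest is bookkeeping. First I would record the explicit form of $\env(Q)$: for $\omega\in\tilde R$ set $f_\omega:=\max_{q\in Q}(q,\omega)$, a maximum since $Q$ is compact. Then $\env(Q)=\{x\in V\mid (x,\omega)\le f_\omega\text{ for all }\omega\in\tilde R\}$. Indeed, the right-hand side contains $Q$ (hence is nonempty) and is bounded, because $\tilde R$ contains the basis $\omega_1,\dots,\omega_r$ together with its negatives, so it is a $(W,c)$-twisted alcoved polytope; and any $(W,c)$-twisted alcoved polytope containing $Q$ has minimal parameters $a_\omega\ge f_\omega$, hence contains the right-hand side. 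This is the exact analogue of Definition~\ref{def:env}.

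Next, by \eqref{eq:tR} we have $\tilde R=\{c^i(\omega_j)\mid 0\le i\le h-1,\ 1\le j\le r\}$, and since $|\tilde R|=hr$ the map $(i,j)\mapsto c^i(\omega_j)$ is a bijection onto $\tilde R$; composing with $\omega\mapsto-\omega$ and using $\tilde R=-\tilde R$, the map $(i,j)\mapsto-c^i(\omega_j)$ is also a bijection onto $\tilde R$. Therefore
\[
\env(Q)=\bigcap_{i=0}^{h-1}\ \bigcap_{j=1}^{r}\ \bigl\{x\in V\mid (x,-c^i(\omega_j))\le f_{-c^i(\omega_j)}\bigr\}.
\]

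Finally I would identify the inner intersection over $j$ with $v_i+c^i(C)$. Since $c\in W$ is orthogonal and $C=\{x\mid (x,\omega_j)\ge 0,\ j=1,\dots,r\}$ (as $C$ is dual to $D=\R_{\ge 0}\langle\omega_1,\dots,\omega_r\rangle$), we get $c^i(C)=\{x\mid (x,c^i(\omega_j))\ge 0\}$, hence $v_i+c^i(C)=\{x\mid (x,-c^i(\omega_j))\le-(v_i,c^i(\omega_j)),\ j=1,\dots,r\}$. This is a full-dimensional simplicial cone with exactly $r$ facets, since $\{c^i(\omega_j)\}_j$ is a basis; the facet with outer normal $-c^i(\omega_j)$ is $\{(x,-c^i(\omega_j))=-(v_i,c^i(\omega_j))\}$, and it meets $Q$ iff $\min_{q\in Q}(q,c^i(\omega_j))$ is attained and equals $(v_i,c^i(\omega_j))$. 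Thus the defining property of $v_i$ in the statement — that $Q\subset v_i+c^i(C)$ and every facet of $v_i+c^i(C)$ touches $Q$, which pins down $v_i$ uniquely because $\{c^i(\omega_j)\}_j$ is a basis — says precisely that $-(v_i,c^i(\omega_j))=\max_{q\in Q}(q,-c^i(\omega_j))=f_{-c^i(\omega_j)}$ for every $j$. Substituting this into the display yields $\env(Q)=\bigcap_{i=0}^{h-1}(v_i+c^i(C))$. The one point that needs care — the ``main obstacle'', mild as it is — is to keep straight that the facet normals of $v_i+c^i(C)$ are the $-c^i(\omega_j)$ rather than the $c^i(\omega_j)$, so that it is the orbit $\{-c^i(\omega_j)\}_j$ one reads off against the $\omega\mapsto-\omega$ reindexing of $\tilde R$; this reindexing is legitimate only because $\tilde R=-\tilde R$.
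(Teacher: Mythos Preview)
Your proof is correct and follows essentially the same route as the paper: write $\env(Q)$ as the intersection of the half-spaces $H_\omega^+$ over $\omega\in\tilde R$, identify $v_i+c^i(C)=\bigcap_j H_{-c^i\omega_j}^+$ using that $C$ has inward normals $\omega_1,\dots,\omega_r$ and $c$ is orthogonal, and then reindex via $\tilde R=-\tilde R$. You have simply made the bijection $(i,j)\mapsto -c^i(\omega_j)$ and the equality $-(v_i,c^i(\omega_j))=f_{-c^i(\omega_j)}$ more explicit than the paper does.
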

\begin{proof}
Since $Q$ is compact, for each $\omega \in \tilde R$, there is a minimal number $a_\omega$ such that $Q$ belongs to the half space $H_{\omega}^+ := \{x \in V \mid (x,\omega) \leq a_\omega\}$.  Then $\env(Q) = \bigcap_{\omega \in \tilde R} H_{\omega}^+$.
The convex cone $C$ has inward pointing normals given by $\omega_1,\omega_2,\ldots,\omega_r$.  Thus $v_0 + C = \bigcap_{j=1}^r H_{-\omega_j}^+$ and similarly we have $v_i + c^i(C) = \bigcap_{j=1}^r H_{-c^i\omega_j}^+$.  The claim follows from \eqref{eq:tR} and the fact that $\tR = -\tR$.
\end{proof}
\subsection{Faces of twisted alcoved polytopes} \label{ssec:facetwist}
We call a $(W,c)$-twisted alcoved polytope $P$ {\it generic} if it is full-dimensional and every twisted root $\omega \in \tilde R$ defines a facet $F = \{(x,\omega) = a_\omega\} \cap P$ of $P$.  

Now let $P$ be a $(W,c)$-twisted alcoved polytope and $F$ a face of $P$.  Let $S(F) \subset \tilde R$ be the set of twisted roots $\omega$ such that $F$ lies on the hyperplane $(x,\omega) = a_\omega$ and $P$ lies in the halfspace $(x,\omega) \leq a_\omega$.  The set $S(F)$ is a $(W,c)$-analogue of the graph $T_F$ of Section~\ref{ssec:facegraphs}.

Suppose that $\beta$ and $\beta'$ are two distinct roots.  We say that $\beta$ and $\beta'$ are {\it alternating} if $(\beta,\beta') \geq 0$.  Note that $\beta$ and $\beta'$ are alternating if and only if $-\beta$ and $-\beta'$ are.  Also note that this notion of alternating agrees with the notion used in the definition of alternating trees in Section~\ref{ssec:facegraphs}.  Suppose that $\omega, \omega' \in \tR$ are distinct twisted roots.  Then we say that $\omega$ and $\omega'$ are alternating if $(I-c)\omega$ and $(I-c)\omega'$ are.

\begin{lemma}\label{lem:Walternating}
Suppose that $P$ is a generic simple $(W,c)$-twisted alcoved polytope and $v$ a vertex of $P$.  Then $S(v) \subset \tR$ is a basis of $V$ and that consists of pairwise alternating twisted roots.
\end{lemma}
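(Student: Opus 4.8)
The plan is to establish the two assertions --- that $S(v)$ is a basis of $V$, and that its elements are pairwise alternating --- separately, both times using genericity and simplicity together with the description of facets in Proposition~\ref{prop:envelope}. For the basis claim: since $P$ is generic, every twisted root $\omega\in\tR$ whose supporting hyperplane $(x,\omega)=a_\omega$ passes through $v$ is the normal of a facet of $P$ containing $v$; that is, $S(v)=\{\omega\in\tR \mid (v,\omega)=a_\omega\}$ is exactly the set of facet normals through $v$. Since $P$ is full-dimensional and simple, $v$ lies on exactly $r=\dim V$ facets and their normals are linearly independent, so $S(v)$ is a basis of $V$. I would also record the easy remark that $\omega$ and $-\omega$ cannot both lie in $S(v)$: otherwise $(v,\omega)=a_\omega=-a_{-\omega}$ would force $P$ into the hyperplane $(x,\omega)=a_\omega$, contradicting full-dimensionality.

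For the alternating claim I would argue by contradiction. Suppose $\omega\neq\omega'$ lie in $S(v)$ and are not alternating, i.e.\ $(\beta,\beta')<0$ where $\beta:=(I-c)\omega$ and $\beta':=(I-c)\omega'$ are the corresponding roots. Since $\omega\neq\omega'$ and (by the remark above) $\omega\neq-\omega'$, and since $I-c$ is invertible, we have $\beta\neq\pm\beta'$; the standard root-system fact that $\beta+\beta'\in R$ whenever $(\beta,\beta')<0$ and $\beta\neq-\beta'$ (see \cite{Bou, Hum}) then shows that $\mu:=(I-c)^{-1}(\beta+\beta')=\omega+\omega'$ is again a twisted root. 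Because $P$ is generic, $\mu$ defines a facet $F_\mu$ of $P$, namely the face of $P$ on which $(\,\cdot\,,\mu)$ is maximized. But for every $x\in P$ we have $(x,\mu)=(x,\omega)+(x,\omega')\leq a_\omega+a_{\omega'}$, with equality precisely when $x$ lies on both facets $F_\omega$ and $F_{\omega'}$; since $v$ lies on both, the maximum equals $a_\omega+a_{\omega'}$ and $F_\mu=F_\omega\cap F_{\omega'}$. As $\omega$ and $\omega'$ are distinct and not opposite and $R$ is reduced, $F_\omega$ and $F_{\omega'}$ are distinct facets, so $F_\omega\cap F_{\omega'}$ has dimension at most $r-2$, contradicting $\dim F_\mu=r-1$. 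Hence $(\beta,\beta')\geq0$, and $S(v)$ consists of pairwise alternating twisted roots.

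I expect the only genuinely substantive point is the step where $(\beta,\beta')<0$ forces $\omega+\omega'\in\tR$, which rests entirely on the elementary root-string lemma; everything else is bookkeeping with simple polytopes and supporting hyperplanes. The one place that requires care is ensuring $\beta\neq-\beta'$ so that the root-string lemma applies, and this is exactly where full-dimensionality of $P$ enters, via the observation that $S(v)$ meets each pair $\{\omega,-\omega\}$ at most once. This argument is the root-theoretic generalization of the proof of Lemma~\ref{lem:Talternating}: in type $A$ a non-alternating pattern $k\to j\to i$ at a vertex of $T_v$ contributes facet normals $h_j-h_k$ and $h_i-h_j$ whose sum $h_i-h_k$ is again a twisted root, which is the incarnation here of $\omega+\omega'$ being a twisted root. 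Alternatively, the final contradiction can be phrased in the language of normal fans: $\R_{\geq0}(\omega+\omega')$ would be a ray of the normal fan of $P$ lying in the relative interior of a $2$-dimensional face of the simplicial normal cone at $v$, which is impossible.
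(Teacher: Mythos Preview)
Your proof is correct and follows essentially the same approach as the paper's. The only cosmetic differences are that the paper invokes the existence of a root $\beta''=b\beta+c\beta'$ with $b,c>0$ rather than the specific sum $\beta+\beta'$, and phrases the contradiction as ``$\tilde\beta''\in S(v)$, contradicting that $S(v)$ is a basis'' rather than your equivalent formulation in terms of the dimension of $F_\mu$; you are also slightly more explicit in ruling out $\beta=-\beta'$ before applying the root-string lemma.
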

\begin{proof}
That $S(v)$ is a basis follows from the assumption that $P$ is generic and simple.  Suppose $\tbeta, \tbeta' \in S(v)$ are not alternating.  Then $(\beta ,\beta' ) < 0$ so there is a root of the form $\beta'' = b \beta + c \beta'$ with $b,c \in \R_{>0}$.  Thus there is a twisted root of the form $\tbeta'' = b\tbeta + c\tbeta'$.  Clearly, $a_{\tbeta''} = \max_{x \in P} (x,\tbeta'') \leq b a_{\tbeta} + c a_{\tbeta'}$.  We have
$$
a_{\tbeta"} \geq (v, \tbeta'') = (v,b\tbeta + c\tbeta') = b a_{\tbeta} + c a_{\tbeta'} \geq a_{\tbeta''},
$$
implying that $(v,\tbeta'') \in a_{\tbeta''}$ and thus $\tbeta'' \in S(v)$, contradicting the statement that $S(v)$ is a basis.
\end{proof}

\section{$(W,c)$-polypositroids}

\begin{definition}\label{def:Wpoly}
A convex polytope $P \subset V$ is called a {\it $(W,c)$-polypositroid\/} 
if it is both a generalized $W$-permutohedron and a $(W,c)$-twisted alcoved polytope.  In other words, we require that
\begin{enumerate}
\item[(1)] every edge $[u,v]$ of $P$ is parallel to a coroot $\alpha^\vee\in R^\vee$, i.e.,
$u-v= a \alpha^\vee$,  $a\in\R_{>0}$;
\item[(2)] every facet of $P$ is orthogonal to a twisted root 
$(I-c)^{-1}(\beta)\in\tilde R$, $\beta\in R$.
\end{enumerate}
\end{definition}

By Example~\ref{ex:Aroot2}, Definition~\ref{def:Wpoly} agrees with the notion of ``polypositroid in $H_0$" of Definition~\ref{def:poly} with the choices of Example~\ref{ex:Aroot}.  If $W$ and $c$ are understood to be fixed, we may simply use the name ``polypositroid''.  Let us hasten to point out that the notion of a $(W,c)$-polypositroid \emph{does not} depend on a choice of $R^+$.

\begin{remark}\label{rem:cc'}
Suppose $c$ and $c'$ are two Coxeter elements.  Then there exists $w \in W$ so that $c' = wcw^{-1}$.  We have that $P$ is a $(W,c')$-polypositroid if and only if $w \cdot P$ is a $(W,c)$-polypositroid.
\end{remark}

\begin{remark}
The notion of $(W,c)$-polypositroid is unchanged if the root and coroot vectors are dilated by scalars.  In particular, the notion of $(W,c)$-polypositroid is identical for a root system $R$ and its dual $R^\vee$.
\end{remark}

\section{Coxeter necklaces}
Consider the cone 
$$
A :=\R_{\geq 0}\<\beta^\vee_1,\beta^\vee_2,\ldots,\beta^\vee_r\>.
$$
Since for $i = 1,2,\ldots,r$ we have $\beta^\vee_i \in R^+$ and $c^{-1}(\beta^\vee_i) \in R^-$, we have $A\subseteq C\cap (-c(C))$.  But in general these two cones 
are not equal to each other.  Note also that $A$ depends only on $c$ and $R^+$, and not on the choice of reduced factorization of $c$ (though the enumeration of the set $\{\beta^\vee_1,\beta^\vee_2,\ldots,\beta^\vee_r\}$ does depend on the reduced factorization).

\begin{definition}\label{def:necklace}
A {\it $(W,R^+,c)$-Coxeter necklace} is a sequence $(v_0,v_1,\ldots,v_{h-1})$ of points in $V$, such that for any $i=1,\dots,h$,
$$
v_{i}-v_{i-1} \in c^i(A).
$$
Here, we set $v_{i+h} := v_i$ and thus $v_h:=v_0$,.
\end{definition}

With the choices made in Example~\ref{ex:Aroot}, Definition~\ref{def:necklace} agrees with the notion of Coxeter necklace in Definition~\ref{def:Anecklace}.

\begin{proposition}\label{prop:necklace}
Suppose that $\v = (v_0,v_1,\ldots,v_{h-1})$ is a $(W,R^+,c)$-Coxeter necklace.  Then each point $v_i$ is a vertex of the $(W,c)$-twisted alcoved polytope
$$
Q(\v) := \bigcap_{i=0}^{h-1}( v_i + c^i(C)).
$$
In particular, $\env_c(\v) = Q(\v)$.
\end{proposition}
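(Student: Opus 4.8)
The plan is to reduce the whole proposition to the single inclusion
\[
v_i-v_l\in c^l(C)\qquad\text{for all }i,l\in\{0,1,\dots,h-1\},
\]
and then prove this by telescoping around the cycle and invoking Lemma~\ref{lem:bound}. Granting the displayed inclusion, the rest is formal: fixing $i$ and letting $l$ range shows $v_i\in\bigcap_l(v_l+c^l(C))=Q(\v)$. Since the simplicial cone $v_i+c^i(C)$ has $v_i$ as its unique $\le_{c^i(C)}$-minimum, i.e.\ its unique vertex, and $Q(\v)\subseteq v_i+c^i(C)$, the point $v_i$ is a vertex of $Q(\v)$. Moreover the inclusion (with $l=i$) says $v_j-v_i\in c^i(C)$ for all $j$, so $v_i$ is the $\le_{c^i(C)}$-minimum of the finite set $\{v_0,\dots,v_{h-1}\}$; applying Proposition~\ref{prop:envelope} to that set gives $\env_c(\v)=\bigcap_l(v_l+c^l(C))=Q(\v)$, and in particular $Q(\v)$, being an envelope, is a genuine $(W,c)$-twisted alcoved polytope.

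To prove the inclusion, recall that $c\in W$ is orthogonal, so $c^l(C)=\{x\in V\mid(x,c^l\omega_m)\ge 0,\ m=1,\dots,r\}$, and it suffices to check $(v_i-v_l,c^l\omega_m)\ge0$ for each $m$. Fix $m$. By the defining relation of a Coxeter necklace, each consecutive difference $D_k:=v_k-v_{k-1}$ lies in $c^{k-1}(A)=\R_{\ge0}\langle c^{k-1}\beta^\vee_1,\dots,c^{k-1}\beta^\vee_r\rangle$ (equivalently $v_{l+1}-v_l\in c^l(A)$, cf.\ Definition~\ref{def:necklace}). Put $d:=(i-l)\bmod h$. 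There are two telescoping expressions for $v_i-v_l$: the sum of the $D_k$ along the increasing arc of the cycle $\Z/h\Z$ from $l$ to $i$, and minus the sum of the $D_k$ along the complementary decreasing arc. Expanding each $D_k$ in its generators and pairing against $c^l\omega_m$, using orthogonality of $c$, every term is a nonnegative multiple of some $(c^{k-1}\beta^\vee_p,c^l\omega_m)=(c^{a}\beta^\vee_p,\omega_m)$; along the increasing arc the exponents $a=(k-1-l)\bmod h$ run through $\{0,1,\dots,d-1\}$, and along the decreasing arc they run through $\{d,d+1,\dots,h-1\}$.

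Now apply Lemma~\ref{lem:bound}, parts (1) and (4), with the fundamental weight $\omega_m$: $(c^a\beta^\vee_p,\omega_m)\ge0$ whenever $0\le a<M(m^\star)$, and $(c^a\beta^\vee_p,\omega_m)\le0$ whenever $M(m^\star)\le a<h$. If $d\le M(m^\star)$, use the increasing arc: its exponents lie in $\{0,\dots,d-1\}\subseteq[0,M(m^\star))$, so $(v_i-v_l,c^l\omega_m)$ is a sum of nonnegative terms. If $d\ge M(m^\star)$, use the decreasing arc: its exponents lie in $\{d,\dots,h-1\}\subseteq[M(m^\star),h)$, so each $(D_k,c^l\omega_m)\le0$ and the overall minus sign makes $(v_i-v_l,c^l\omega_m)\ge0$. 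These two cases exhaust all $d\in\{0,\dots,h-1\}$, so $(v_i-v_l,c^l\omega_m)\ge0$ for every $m$, which is exactly the inclusion $v_i-v_l\in c^l(C)$.

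I expect the only real obstacle to be the index bookkeeping in the middle step: one must align the apex label $l$ of the cone $c^l(C)$ with the exponents produced when the consecutive differences along an arc are paired against the facet normals $c^l\omega_m$, and then observe that for any target $i$ exactly one of the two arcs places all of those exponents into the sign-controlled range of Lemma~\ref{lem:bound}. Only the inequality parts (1) and (4) of that lemma are needed; the vanishing statements (2), (3) and the identity $M(m)+M(m^\star)=h$ from Lemma~\ref{lem:YZ} are not required, though the latter neatly explains why the two exponent ranges abut precisely at $M(m^\star)$. Everything else — the reduction in the first paragraph and the orthogonality manipulations — is routine.
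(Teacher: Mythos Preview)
Your argument is correct and is essentially identical to the paper's: both reduce to showing $(v_i-v_l,c^l\omega_m)\ge0$, telescope the difference along one of the two arcs of $\Z/h\Z$, and appeal to parts (1) and (4) of Lemma~\ref{lem:bound} according to whether $d\le M(m^\star)$ or $d\ge M(m^\star)$; the paper simply normalizes to $l=0$ via the $c$-action rather than carrying $c^l$ through via orthogonality. Note that your indexing $D_k=v_k-v_{k-1}\in c^{k-1}(A)$ matches the convention the paper's proof actually uses (``$v_{m+1}-v_m$ is a nonnegative linear combination of the coroots $c^m\beta_i^\vee$''), even though Definition~\ref{def:necklace} as written has $c^i(A)$ rather than $c^{i-1}(A)$.
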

\begin{proof}
It suffices to show that $v_i \in Q(\v)$, or equivalently, $v_j-v_i\in c^i(C)$, for any $i,j\in\{0,\dots,h-1\}$.  Rotating by a power of $c$, it is enough to show that $v_j - v_0\in C$.  Equivalently, we need to show that $(v_j-v_0,\omega_k)\geq 0$, for any $j$ and 
any $k$.

We have $v_j - v_0 = (v_1- v_0) + (v_2 - v_1) + \cdots + (v_j-v_{j-1})$.
The vector $v_{m+1}-v_m$ is a nonnegative
linear combination of the coroots $c^m\, \beta^\vee_i$, for 
$i=1,\dots,r$.
According to Lemma~\ref{lem:bound},
if $j\leq M(k^\star)$, 
then $(\beta^\vee,\omega_k)\geq 0$,
for all coroots $\beta^\vee$ involved in the expression of $v_{m+1}-v_m$,
for $m=j-1,j-2,\dots,0$.
This implies that $(v_j-v_0,\omega_k)\geq 0$.

On the other hand, if $j \geq M(k^\star)$, we can express $v_j-v_0$ in a different 
way as $v_j-v_0= v_j-v_h=-((v_{j+1}-v_{j})+(v_{j+2}-v_{j+1})+\cdots+(v_{h}-v_{h-1}))$.
Now, according to Lemma~\ref{lem:bound}, we have $(\beta^\vee,\omega_k)\leq 0$,
for any coroot $\beta^\vee$ involved in the expression of $v_{m+1}-v_m$,
for $m=j,j+1,\dots,h-1$.
Thus again we get $(v_j-v_0,\omega_k)\geq 0$.
\end{proof}

\begin{lemma}\label{lem:Rchange}
Let $R_1^+$ and $R_2^+$ be two choices of positive roots for $R$ compatible with $c$.  Then there is a bijection $\v \mapsto \v'$ between $(W,R_1^+,c)$-Coxeter necklaces and $(W,R_2^+,c)$-Coxeter necklaces such that $Q(\v) = Q(\v')$.
\end{lemma}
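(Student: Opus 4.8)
The plan is to reduce to a single elementary transformation of the positive system and then transport $\v$ through the twisted alcoved envelope. By Proposition~\ref{prop:compatible}, $R_1^+$ and $R_2^+$ are joined by a chain of moves $R^+\mapsto s_iR^+$, where $s_i$ is a simple generator for $R^+$ and $c$ has a reduced word beginning or ending in $s_i$; so it suffices to treat one such move. After fixing a reduced factorization I would assume $c=s_1s_2\cdots s_r$ with respect to $R_1^+$ and $R_2^+=s_1R_1^+$ (the case of a word ending in $s_i$ being symmetric, e.g.\ by passing to $c^{-1}$ and reversing the cyclic order). The simple roots of $R_2^+$ are $s_1$ applied to those of $R_1^+$, so its dominant cone is $C_2=s_1(C_1)$; and by the last assertion of Proposition~\ref{prop:compatible} the ordered roots of $(R_2^+,c)$ are the shift $(\beta_2,\dots,\beta_{hr},\beta_1)$, whence by Proposition~\ref{prop:c_orbits} and Lemma~\ref{lem:invertible} the (extended) fundamental weights of $(R_2^+,c)$ are $\omega_{m+1}$, and its cone $A$ is $A_2:=\R_{\ge0}\langle\beta^\vee_2,\dots,\beta^\vee_{r+1}\rangle$. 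Thus, writing $g_m:=\min_{x\in Q}(x,\omega_m)$, the facet inequalities of $v_i+c^i(C_1)$ involve the weights $\omega_m$ with $m$ in the length-$r$ window $[ir+1,ir+r]$, those of $w+c^j(C_2)$ involve $m$ in the shifted window $[jr+2,jr+r+1]$, and $c^{j-1}(A_2)=\R_{\ge0}\langle\beta^\vee_{(j-1)r+2},\dots,\beta^\vee_{(j-1)r+r+1}\rangle$.

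Given a $(W,R_1^+,c)$-Coxeter necklace $\v=(v_0,\dots,v_{h-1})$, set $Q:=Q(\v)$. By Proposition~\ref{prop:necklace}, $v_i\in Q$ and $Q\subseteq v_i+c^i(C_1)$, hence $(v_i,\omega_m)=g_m$ for $m\in[ir+1,ir+r]$; since these windows partition $\{1,\dots,hr\}$ this pins down the $g_m$. I would then define $w_j$ to be the unique point with $(w_j,\omega_m)=g_m$ for $m\in[jr+2,jr+r+1]$ (unique because those $\omega_m$ form a basis of $V$), equivalently the support point of $Q$ for the cone $c^j(C_2)$, and put $\v':=(w_0,\dots,w_{h-1})$. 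Since the shifted windows again cover $\{1,\dots,hr\}$ and $\tilde R=-\tilde R$ does not depend on the positive system, Proposition~\ref{prop:envelope} applied with $R_2^+$ gives $\bigcap_j(w_j+c^j(C_2))=\env_c(Q)=Q$; so once $\v'$ is known to be a Coxeter necklace we get $Q(\v')=Q(\v)$.

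The heart of the argument is checking that $\v'$ is a $(W,R_2^+,c)$-Coxeter necklace. The windows of $v_j$ and $w_j$ agree except that $v_j$ retains the index $jr+1$ while $w_j$ retains $jr+r+1$; hence $w_j-v_j$ is orthogonal to $\omega_m$ for every $m\in[jr+2,jr+r]$, which forces $w_j-v_j=\lambda_j\,c^j\beta^\vee_1$, as $c^j\beta^\vee_1=c^j\alpha^\vee_1$ is the coroot dual to $\omega_{jr+1}$ inside the basis $\{\omega_{jr+1},\dots,\omega_{jr+r}\}$. Pairing with $\omega_{jr+r+1}=c^{j+1}\omega_1$ and using $(w_j,\omega_{jr+r+1})=g_{jr+r+1}=(v_{j+1},\omega_{jr+r+1})$ gives $-\lambda_j=(v_{j+1}-v_j,\omega_{jr+r+1})$; writing $v_{j+1}-v_j=\sum_{l=1}^r t^{(j)}_l\,c^j\beta^\vee_l$ with all $t^{(j)}_l\ge0$ (the necklace condition for $\v$) and using $(c^{-1}\beta^\vee_l,\omega_1)=(\beta^\vee_{l-r},\omega_1)=-\delta_{l,1}$ — which follows from the telescoping identity $c^{-1}s_1\cdots s_{l-1}=s_rs_{r-1}\cdots s_l$ together with $s_i\omega_1=\omega_1$ for $i\ne1$ — one gets $\lambda_j=t^{(j)}_1\ge0$. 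Then, with $v_j-v_{j-1}=\sum_{l=1}^r t^{(j-1)}_l\,c^{j-1}\beta^\vee_l$ (so $\lambda_{j-1}=t^{(j-1)}_1$),
\[
w_j-w_{j-1}=(v_j-v_{j-1})+\lambda_j c^j\beta^\vee_1-\lambda_{j-1}c^{j-1}\beta^\vee_1=\sum_{l=2}^r t^{(j-1)}_l\,\beta^\vee_{(j-1)r+l}+\lambda_j\,\beta^\vee_{jr+1},
\]
a nonnegative combination of $\beta^\vee_{(j-1)r+2},\dots,\beta^\vee_{(j-1)r+r+1}$, i.e.\ an element of $c^{j-1}(A_2)$, as required.

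Finally, bijectivity: the same construction with $R_1^+$ and $R_2^+$ interchanged (valid because $R_1^+=s_1R_2^+$ and $c$ is again a standard Coxeter element for $R_2^+$) produces a map back, and the two are mutually inverse because $Q(\v')=Q(\v)$ and, by Proposition~\ref{prop:necklace}, each $v_i$ is recovered from $Q$ as its support point for $c^i(C_1)$. I expect the main obstacle to be the third step, specifically pinning down the sign so that $\lambda_j\ge0$: this is exactly where the combinatorics of the reduced word for $c$ enters (a direct root-system computation, closely related to the identities in Lemmas~\ref{lem:YZ} and~\ref{lem:bound}), the remainder being routine bookkeeping with cyclic windows. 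One must also fix a single cyclic indexing convention in Definition~\ref{def:necklace} and apply it consistently to both positive systems.
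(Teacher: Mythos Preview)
Your proof is correct. Both you and the paper reduce to an elementary move $R_2^+=s_1R_1^+$ via Proposition~\ref{prop:compatible} and arrive at the same formula $v'_i=v_i+t^{(i)}_1\beta^\vee_{ir+1}$ (where $t^{(i)}_1$ is the $\beta^\vee_{ir+1}$-coefficient of $v_{i+1}-v_i$). The organization differs: the paper writes down this formula directly, declares the necklace bijection ``clear'' (the same two-line telescoping you do at the end), and then proves $Q(\v)=Q(\v')$ by showing each $v'_j\in Q(\v)$, invoking the same Lemma~\ref{lem:bound} argument that underlies Proposition~\ref{prop:necklace}. You instead define $w_j$ geometrically as the support point of $Q$ for the cone $c^j(C_2)$; this makes $Q(\v')=Q$ automatic via Proposition~\ref{prop:envelope}, and the work shifts to verifying the necklace condition, which you do by the explicit root-system computation $(c^{-1}\beta^\vee_l,\omega_1)=-\delta_{l,1}$. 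Your route is more self-contained (it avoids Lemma~\ref{lem:bound} entirely), while the paper's is shorter because it reuses machinery already established. Your final remark about the indexing convention in Definition~\ref{def:necklace} is also apt: the paper uses $v_{m+1}-v_m\in c^m(A)$ in the proof of Proposition~\ref{prop:necklace} and in \eqref{eq:m2v}, which is off by one from the stated definition; you handle this consistently.
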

\begin{proof}
By Proposition \ref{prop:compatible}, we may assume that $R_2^+ = s_1 \cdot R_1^+$, and $c = s_1 s_2 \cdots s_r$ is a reduced factorization in $R_1^+$.  Letting $s'_i = s_1 s_i s_1$ be the simple generators of $R_2^+$, we have $c =s'_2 \cdots s'_r s'_1$.  Let $\beta_1,\beta_2,\ldots$ be the enumeration of $R$ associated to the factorization $c = s_1 s_2 \cdots s_r$.  Then the enumeration of $R$ associated to $c =s'_2 \cdots s'_r s'_1$ is $\beta_2, \beta_3,\ldots$.  

Let $v'_i = v_i + [\beta^\vee_{ir+1}](v_{i+1}-v_i)$, where $[\beta^\vee_{ir+1}](v_{i+1}-v_i)$ denotes the coefficient of $\beta^\vee_{ir+1}$ in the vector $v_{i+1}-v_i$ which lies in the cone $c^i(A)$ spanned by $\beta^\vee_{ir+1},\beta^\vee_{ir+2},\ldots,\beta^\vee_{(i+1)r}$.  It is clear that $\v \mapsto \v'$ is a bijection from $(W,R_1^+,c)$-Coxeter necklaces to $(W,R_2^+,c)$-Coxeter necklaces.  We claim that $Q(\v) = Q(\v')$.  It suffices to show that each $v'_j$ belongs to $Q(\v)$, and we can reduce to showing that $v'_j - v_0 = (v_j - v_0) + a \beta^\vee_{jr+1} \in A$, where $a \in \R_{\geq 0}$.  The proof is identical to that of Proposition \ref{prop:necklace}, using Lemma \ref{lem:bound}.
\end{proof}

\begin{lemma}\label{lem:cchange}
Let $c$ and $c' = wcw^{-1}$ be two Coxeter elements.  Then $\v$ is a $(W,R^+,c)$-Coxeter necklace if and only if $w \cdot \v$ is a $(W,w \cdot R^+, c')$-Coxeter necklace.  We have $w \cdot Q(\v) = Q(w \cdot \v)$ where $Q(\v)$ denotes a $(W,c)$-twisted alcoved polytope and $Q(w\cdot v)$ denotes a $(W,c')$-twisted alcoved polytope.
\end{lemma}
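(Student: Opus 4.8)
The plan is to verify that conjugation by $w$ is an isometry of $V$ that carries every piece of structure attached to $(R^+,c)$ to the corresponding piece attached to $(w\cdot R^+,c')$; once this is in place, both assertions are immediate. First I would record that $w\cdot R^+$ is compatible with $c'$: if $c=s_{i_1}s_{i_2}\cdots s_{i_r}$ is a reduced factorization relative to $R^+$, then $c'=wcw^{-1}=(ws_{i_1}w^{-1})\cdots(ws_{i_r}w^{-1})$, and $ws_{i}w^{-1}=s_{w\alpha_i}$ is the simple reflection of $w\cdot R^+$ in the simple root $w\alpha_i$, so this displays $c'$ as a standard Coxeter element for $w\cdot R^+$. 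Tracking the enumeration $(\beta_j)_{j\in\Z}$ of $R$ through this factorization gives $\beta'_j=(ws_{i_1}w^{-1})\cdots(ws_{i_{j-1}}w^{-1})(w\alpha_{i_j})=w\beta_j$ for $j=1,\dots,r$, and then $\beta'_{j+r}=c'\beta'_j=wc\beta_j=w\beta_{j+r}$, so $\beta'_j=w\beta_j$ for all $j\in\Z$, where primes denote objects built from $(w\cdot R^+,c')$. Since $w$ preserves the bilinear form, $w(\gamma^\vee)=(w\gamma)^\vee$ for every root $\gamma$; hence the cones $A$ and $C$ transform as $A'=w(A)$ and $C'=w(C)$.

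Next I would deduce the two claims. Because $c$ and $c'$ are conjugate they have the same order $h$, so $(W,R^+,c)$- and $(W,w\cdot R^+,c')$-Coxeter necklaces are sequences of the same length. From $wc^i(A)=(wcw^{-1})^iw(A)=(c')^i(A')$, applying $w$ to the defining relations $v_i-v_{i-1}\in c^i(A)$ yields $wv_i-wv_{i-1}\in(c')^i(A')$, which is exactly the condition that $w\cdot\v=(wv_0,\dots,wv_{h-1})$ be a $(W,w\cdot R^+,c')$-Coxeter necklace; invertibility of $w$ gives the converse. For the polytope, the same identity with $C$ in place of $A$ gives
$$
w\cdot Q(\v)=\bigcap_{i=0}^{h-1}w\bigl(v_i+c^i(C)\bigr)=\bigcap_{i=0}^{h-1}\bigl(wv_i+(c')^i(C')\bigr)=Q(w\cdot\v),
$$
where the last equality is the defining intersection of Proposition~\ref{prop:necklace} written for the pair $(w\cdot R^+,c')$.

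I do not expect a genuine obstacle: the argument is purely formal, and the only point demanding a moment's care is the identity $w(\gamma^\vee)=(w\gamma)^\vee$, which holds because $W$ acts on $V$ by orthogonal transformations. One should also note that, by Lemma~\ref{lem:Rchange} applied to the Coxeter element $c'$, the $(W,c')$-twisted alcoved polytope $Q(w\cdot\v)$ does not depend on which positive system compatible with $c'$ is used to present it, so the computation above is consistent with any such presentation; alternatively one simply reads the statement as comparing the canonical presentations attached to $R^+$ and $w\cdot R^+$.
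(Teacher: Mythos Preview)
Your proof is correct and is essentially the same approach as the paper's, which simply records the lemma as ``Clear from the definitions.'' You have carefully unpacked that one-line justification by tracking how conjugation by $w$ transports the simple system, the root enumeration, the cones $A$ and $C$, and hence the necklace condition and the defining intersection for $Q(\v)$.
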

\begin{proof}
Clear from the definitions.
\end{proof}

We call a Coxeter necklace $\v = (v_0,v_1,\ldots,v_{h-1})$ {\it generic} if each difference $v_{i}-v_{i-1}$ belongs to the interior of the cone $c^i(A)$.

\begin{lemma}\label{lem:onevertex}
Suppose that $\v$ is a generic $(W,R^+,c)$-Coxeter necklace.  Then for each twisted root $\omega$, the face $\{ x \in Q(\v) \mid (x,\omega) =0 \}$ contains at most one of the vertices $v_0,v_1,\ldots,v_{h-1}$.  In particular, $Q(\v)$ is a generic $(W,c)$-twisted alcoved polytope.
\end{lemma}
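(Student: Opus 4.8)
The plan is to reduce the statement to a one‑dimensional unimodality fact about the sequence $v_0,\ldots,v_{h-1}$. Fix $\omega\in\tilde R$ and let $F_\omega$ be the face of $Q(\v)$ on which the functional $(\,\cdot\,,\omega)$ is maximized, with maximum value $a_\omega$ as in the definition of a $(W,c)$‑twisted alcoved polytope. Since $Q(\v)=\bigcap_i(v_i+c^i(C))$ and $\tilde R=-\tilde R$, the number $a_\omega$ equals $(v_\iota,\omega)$ for the unique index $\iota$ determined by the $\Gamma$‑orbit position of $-\omega$ among $\omega_1,\ldots,\omega_r$ (cf.\ \eqref{eq:tR} and Proposition~\ref{prop:envelope}); in particular the maximum of $(\,\cdot\,,\omega)$ over $Q(\v)$ is attained at a vertex among the $v_j$. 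Combined with the fact that each $v_j$ is a vertex of $Q(\v)$ (Proposition~\ref{prop:necklace}), this gives $v_j\in F_\omega$ if and only if $(v_j,\omega)=\max_{0\le m<h}(v_m,\omega)$. So it suffices to show that $j\mapsto(v_j,\omega)$ attains its maximum at a unique $j\in\mathbb{Z}/h\mathbb{Z}$.

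First I would reduce to $\omega=\omega_k$ a fundamental weight. For $t\in\mathbb{Z}/h\mathbb{Z}$ the shifted sequence $\v^{(t)}:=(c^{-t}v_t,c^{-t}v_{t+1},\ldots,c^{-t}v_{t+h-1})$ is again a generic $(W,R^+,c)$‑Coxeter necklace, with $Q(\v^{(t)})=c^{-t}(Q(\v))$; since $c^{-t}$ is orthogonal it carries $F_\omega(Q(\v))$ to $F_{c^{-t}\omega}(Q(\v^{(t)}))$ and $v_i$ to the vertex $c^{-t}v_i$ of $Q(\v^{(t)})$, so the count of special vertices on $F_\omega$ is unchanged. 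As $\tilde R=\{c^t\omega_k\mid t\in\mathbb{Z}/h\mathbb{Z},\ 1\le k\le r\}$, we may assume $\omega=\omega_k$. Then write $v_m-v_{m-1}=\sum_{l=1}^r a_{m,l}\,c^m\beta^\vee_l$ with all $a_{m,l}>0$ by genericity, so $(v_m-v_{m-1},\omega_k)=\sum_l a_{m,l}(c^m\beta^\vee_l,\omega_k)$. By Lemma~\ref{lem:bound}(1),(4), for each fixed $m$ all of the numbers $(c^m\beta^\vee_l,\omega_k)$, $l=1,\ldots,r$, have the same sign, and that sign is nonnegative for $m$ in one cyclic arc of indices (the one cut out by $M(k^\star)$) and nonpositive on the complement — this is exactly the sign pattern driving the proof of Proposition~\ref{prop:necklace}. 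Hence the consecutive differences of $j\mapsto(v_j,\omega_k)$ are $\ge0$ along one arc and $\le0$ along the other, so the function is weakly unimodal on $\mathbb{Z}/h\mathbb{Z}$ and its set $S$ of maximizers is a cyclic interval.

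To finish, suppose $|S|\ge2$, say $j,j+1\in S$. Then $(v_{j+1}-v_j,\omega_k)=0$, i.e.\ $\sum_l a_{j+1,l}(c^{j+1}\beta^\vee_l,\omega_k)=0$ with all $a_{j+1,l}>0$ and all summands of a single sign; therefore $(c^{j+1}\beta^\vee_l,\omega_k)=0$ for every $l$. But $\beta^\vee_1,\ldots,\beta^\vee_r$ is a basis of $V$ — because $\beta_l=(I-c)\omega_l$ by Proposition~\ref{prop:c_orbits}, $I-c$ is invertible by Lemma~\ref{lem:invertible}, and $\omega_1,\ldots,\omega_r$ is a basis — hence $c^{j+1}\beta^\vee_1,\ldots,c^{j+1}\beta^\vee_r$ is a basis as well, forcing $\omega_k=0$, a contradiction. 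Thus $|S|=1$, which proves the first assertion.

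For the ``in particular'' claim: $Q(\v)$ is a $(W,c)$‑twisted alcoved polytope by construction (Proposition~\ref{prop:necklace}); genericity makes $v_{i+1}-v_i$ lie in the interior of $c^i(C)$, so the centroid $\tfrac1h\sum_j v_j$ lies in $\bigcap_i\bigl(v_i+\operatorname{int}c^i(C)\bigr)=\operatorname{int}Q(\v)$ and $Q(\v)$ is full‑dimensional. Fixing $\omega\in\tilde R$ and letting $v_\iota$ be the unique vertex on $F_\omega$ from above, genericity also places $v_\iota$ in the interior of $v_j+c^j(C)$ for $j\ne\iota$, so the tangent cone of $Q(\v)$ at $v_\iota$ is the simplicial cone $c^\iota(C)$, whose $r$ facets are normal to $c^\iota\omega_1,\ldots,c^\iota\omega_r$; since $\pm\omega$ is one of these, $\{x:(x,\omega)=a_\omega\}$ cuts out a facet of $Q(\v)$, and as $\omega$ was arbitrary $Q(\v)$ is generic. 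I expect the main obstacle to be the unimodality step — extracting cleanly from Lemma~\ref{lem:bound} (together with Lemma~\ref{lem:YZ} and the integers $M(i)$) that the inner products $(c^m\beta^\vee_l,\omega_k)$ change sign exactly once as $m$ runs around $\mathbb{Z}/h\mathbb{Z}$; the basis/rank argument that collapses a plateau of length $\ge2$ is short, and the full‑dimensionality and tangent‑cone identifications needed for genericity are routine.
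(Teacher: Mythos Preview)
Your proof is correct and follows the same strategy as the paper: reduce to $\omega=\omega_k$ via the $c$-action, then invoke the sign pattern from Lemma~\ref{lem:bound} (exactly as in the proof of Proposition~\ref{prop:necklace}) together with genericity to force strictness. The only cosmetic difference is that the paper obtains strictness by pointing to the specific nonzero pairings $(\beta^\vee_k,\omega_k)>0$ and $(\beta^\vee_{k-r},\omega_k)<0$, whereas you phrase it as a basis argument (if all $(c^{m}\beta^\vee_l,\omega_k)=0$ then $\omega_k=0$); your explicit tangent-cone argument for the ``in particular'' clause is more detailed than what the paper writes but not needed beyond what the first part already gives.
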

\begin{proof}
By acting with $c$, we may assume that $\omega = \omega_k$ where $k \in \{1,2,\ldots,r\}$.  In the proof of Proposition \ref{prop:necklace}, we note that we have the strict inequalities $(\beta^\vee_k,\omega_k) >0$ and $(\beta^\vee_{k-r},\omega_k) < 0$.
\end{proof}

Whereas Theorem~\ref{th:intersection_C} describes a generalized
$W$-permutohedron as an intersection of $|W|$ cones, our next result describes a $(W,c)$-polypositroid as an intersection of $h$ cones.

\begin{proposition}\label{prop:genWenv}
Fix a Coxeter element $c$.  Suppose $P$ is a $W$-generalized permutohedron.  Then for any choice of $R^+$, we have 
$$
\env_c(P) = Q(\v) = \bigcap_{i=0}^{h-1} (v_i + c^i(C)),
$$
where $\v = (v_0,\dots,v_{h-1})$ is a $(W,R^+,c)$-Coxeter necklace.  In particular, this holds for $P$ a $(W,c)$-polypositroid, in which case $\env_c(P) = P$.
\end{proposition}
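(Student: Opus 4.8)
The plan is to combine two explicit intersection descriptions, one for each side of the claimed equality. First I would invoke Proposition~\ref{prop:envelope} (with the cone $C$ determined by the chosen $R^+$), which already presents the twisted alcoved envelope as an intersection $\env_c(P) = \bigcap_{i=0}^{h-1}(v_i + c^i(C))$, where $v_i$ is the unique apex with $P\subseteq v_i+c^i(C)$ and every facet of $v_i+c^i(C)$ touching $P$. On the other side, since $P$ is a generalized $W$-permutohedron, Theorem~\ref{th:intersection_C} gives $P = \bigcap_{w\in W}(v_w + w(C))$ with $v_w\in P$ the unique $\leq_{w(C)}$-minimal point of $P$. The first step is then the identification $v_i = v_{c^i}$: one has $P\subseteq v_{c^i}+c^i(C)$ with $v_{c^i}\in P$, and since $C$ is a pointed cone the apex $v_{c^i}$ cannot be raised in $\leq_{c^i(C)}$ without moving $P$ outside the cone, so it coincides with the $v_i$ of Proposition~\ref{prop:envelope}. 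Thus $\env_c(P) = \bigcap_{i=0}^{h-1}(v_{c^i}+c^i(C))$.

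The second, and main, step is to show that $\v := (v_{c^0},v_{c^1},\dots,v_{c^{h-1}})$ is a $(W,R^+,c)$-Coxeter necklace; this is the root-system analogue of Proposition~\ref{prop:genpermbalanced}(1), and I would repeat that argument in the present generality. Fix a reduced factorization $c = s_1 s_2\cdots s_r$ compatible with $R^+$ and, for each $i$, walk along the gallery $c^{i-1},\, c^{i-1}s_1,\, c^{i-1}s_1 s_2,\, \dots,\, c^{i-1}s_1\cdots s_r = c^i$ in $W$. Applying Theorem~\ref{th:intersection_C_simple} to the $j$-th step shows that the difference of the vertices of $P$ labelled by consecutive elements of the gallery is a nonnegative multiple of $c^{i-1}s_1\cdots s_{j-1}(\alpha^\vee_j)$, and by the definition of $\beta_j$ this coroot equals $c^{i-1}(\beta^\vee_j)$. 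Telescoping over $j = 1,\dots,r$ gives
$$
v_{c^i} - v_{c^{i-1}} \in \R_{\geq 0}\<c^{i-1}\beta^\vee_1,\dots,c^{i-1}\beta^\vee_r\> = c^{i-1}(A),
$$
which is precisely the defining inequality of a $(W,R^+,c)$-Coxeter necklace (Definition~\ref{def:necklace}). Hence $Q(\v) = \bigcap_{i=0}^{h-1}(v_{c^i}+c^i(C)) = \env_c(P)$, and the $v_{c^i}$ are the vertices of $Q(\v)$ by Proposition~\ref{prop:necklace}. Since none of this depended on the choice of $R^+$ beyond compatibility with $c$, the statement holds for any such $R^+$.

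Finally, if $P$ is a $(W,c)$-polypositroid then in particular it is a $(W,c)$-twisted alcoved polytope, so $\env_c(P) = P$, and therefore $P = Q(\v)$ for the Coxeter necklace just produced.

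I expect the only real difficulty to be the telescoping argument in the second step. The cheap containment $v_{c^i}-v_{c^{i-1}} \in c^{i-1}(C)\cap(-c^i(C)) = c^{i-1}\bigl(C\cap(-c(C))\bigr)$, which follows immediately from $v_{c^{i-1}},v_{c^i}\in P$ together with $P\subseteq v_{c^{i-1}}+c^{i-1}(C)$ and $P\subseteq v_{c^i}+c^i(C)$, is not enough, because $A\subsetneq C\cap(-c(C))$ in general. One genuinely needs Theorem~\ref{th:intersection_C_simple} to see that along a reduced-word gallery realizing $c$ the successive edge directions are exactly the coroots $c^{i-1}(\beta^\vee_j)$, so that the net displacement lands in the smaller cone $c^{i-1}(A)$; this is the one place where the hypothesis that $P$ is a generalized $W$-permutohedron, and not merely a twisted alcoved polytope, enters.
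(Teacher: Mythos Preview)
Your proposal is correct and follows essentially the same approach as the paper: invoke Proposition~\ref{prop:envelope} to write $\env_c(P)$ as the intersection of cones, identify the apices $v_i$ with the vertices $v_{c^i}$ of the generalized $W$-permutohedron (the paper cites Corollary~\ref{cor:genW} here rather than Theorem~\ref{th:intersection_C}, but these are equivalent), and then verify the necklace condition by walking along the reduced-word gallery from $c^{i-1}$ to $c^i$ and telescoping via Theorem~\ref{th:intersection_C_simple}. The only cosmetic difference is that the paper reduces to the single case $v_1-v_0\in A$ and obtains the remaining cases by acting with powers of $c$, whereas you carry out the gallery argument uniformly for every $i$; your closing remark that the cheap containment in $C\cap(-c(C))$ is insufficient because $A\subsetneq C\cap(-c(C))$ is a good observation that the paper leaves implicit.
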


\begin{proof}
By Proposition \ref{prop:envelope}, any $(W,c)$-twisted alcoved polytope has the form $P = \bigcap_{i=0}^{h-1} (v_i + c^i(C))$ for some uniquely determined points $v_i$, and thus this holds for $\env_c(P)$.  

Since $P$ is also a $W$-generalized permutohedron, by Corollary \ref{cor:genW}, we must have $v_i = v_{c^i}$, the vertex of $P$ that is the minimum in $c^i(C)$-dominance order.
Let us show that the conditions $v_i-v_{i-1} \in c^i(A)$ hold.
It is enough to show this for $i=0$.
(The general case is obtained by the action of $c^i$.)
Consider the following sequence of vertices that connect $v_0=v_{id}$ with $v_1=v_{c}$:
$$
v_{id},\  v_{s_1}, \ v_{s_1s_2},\ \dots,\ v_{s_1s_2\cdots s_r} =v_c.
$$
According to Theorem~\ref{th:intersection_C_simple},
$v_{s_1 s_2\cdots s_i}-v_{s_1 s_2 \cdots s_{i-1}} = a_i \, s_1\cdots 
s_{i-1}(\alpha^\vee_i)$, for $a_i\in\R_{\geq 0}$.
Thus $v_1 - v_0 = \sum_{i=1}^r a_i\, \beta^\vee_i \in A$.  We conclude that $\v=(v_0,v_1,\ldots,v_{h-1})$ is a Coxeter necklace, as required.
\end{proof}

\section{Balanced arrays}
\begin{definition}
A \emph{$W$-balanced array} is a collection $(m_\alpha)_{\alpha \in R}$ of nonnegative real numbers satisfying the equality
\begin{equation}\label{eq:ba}
\sum_\alpha m_\alpha \alpha^\vee = 0.
\end{equation}
A {\it $W$-balanced pair} is a pair $((m_\alpha),z)$ consisting of a $W$-balanced array $(m_\alpha)$ and a vector $z \in V$.  
\end{definition}

Now let $c=s_1s_2\cdots s_r$ be a standard Coxeter element with respect to $R^+$ and $\beta_1,\beta_2,\ldots,$ be the corresponding ordering of roots.  We define a \emph{Coxeter necklace} 
$$
\v(m_\alpha,z) = (v_0,v_1,\ldots,v_{h-1},v_h = v_0)
$$
by setting $v_0 = z$ and 
\begin{equation}\label{eq:m2v}
v_i = v_{i-1} + \sum_{k=(i-1)r+1}^{ir} m_{\beta_k} \beta^\vee_k
\end{equation}
for $i=1,2,\ldots,h-1$.  The equality $v_h = v_0$ follows from \eqref{eq:ba} and Proposition~\ref{prop:listroots}.

\begin{proposition}\label{prop:balneck}
The map $(m_\alpha,z) \mapsto \v(m_\alpha,z)$ is a bijection between $W$-balanced pairs and $(W,R^+,c)$-Coxeter necklaces, for any choice of $(R^+,c)$.
\end{proposition}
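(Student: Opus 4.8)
The plan is to make bijectivity transparent by writing down the inverse map explicitly. First I would fix the reduced factorization $c = s_1 s_2\cdots s_r$ and the associated enumeration $\beta_1,\beta_2,\dots$ of the roots (so $\beta_{k+r} = c\beta_k$), and recall from Proposition~\ref{prop:listroots}(1) that $\beta_1,\dots,\beta_{hr}$ is exactly the set $R$ with no repetitions. Write $B_i := \{(i-1)r+1,\dots,ir\}$, so that $\{\beta_k \mid k\in B_i\} = c^{i-1}\{\beta_1,\dots,\beta_r\}$ and the step $v_i - v_{i-1}$ prescribed by \eqref{eq:m2v} is exactly $\sum_{k\in B_i} m_{\beta_k}\,\beta^\vee_k$.

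The key structural input I will need is that $\{\beta^\vee_1,\dots,\beta^\vee_r\}$ is a linear basis of $V$, so that $A = \R_{\geq 0}\langle\beta^\vee_1,\dots,\beta^\vee_r\rangle$ is a simplicial cone. This follows because $\beta_j = s_1\cdots s_{j-1}\alpha_j$ is a positive root expressible using only $\alpha_1,\dots,\alpha_j$, hence $\beta_j\in\alpha_j + \sum_{l<j}\Z_{\geq 0}\,\alpha_l$; thus the transition matrix from the simple roots to $\beta_1,\dots,\beta_r$ is unitriangular, and passing to coroots merely rescales each vector by a positive scalar. Applying the invertible operator $c^{i-1}$ and using $W$-equivariance of $\alpha\mapsto\alpha^\vee$, the set $\{\beta^\vee_k\mid k\in B_i\} = c^{i-1}\{\beta^\vee_1,\dots,\beta^\vee_r\}$ is a basis of $V$ and $c^{i-1}(A)$ is the simplicial cone it spans; consequently every point of $c^{i-1}(A)$ has unique and nonnegative coordinates in that basis.

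Given this, well-definedness of $(m_\alpha,z)\mapsto\v(m_\alpha,z)$ is immediate: since the $m_\alpha$ are nonnegative, each step $v_i - v_{i-1} = \sum_{k\in B_i} m_{\beta_k}\,\beta^\vee_k$ lies in $c^{i-1}(A)$, the cone demanded of the $i$-th step of a Coxeter necklace; and summing \eqref{eq:m2v} over $i=1,\dots,h$ gives $v_h - v_0 = \sum_{k=1}^{hr} m_{\beta_k}\,\beta^\vee_k = \sum_{\alpha\in R} m_\alpha\,\alpha^\vee = 0$ by Proposition~\ref{prop:listroots}(1) and the balance relation \eqref{eq:ba}, so $v_h = v_0$ and $\v(m_\alpha,z)$ is a genuine $(W,R^+,c)$-Coxeter necklace.

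For bijectivity I would invert directly: given a Coxeter necklace $\v=(v_0,\dots,v_{h-1})$, set $z:=v_0$ and, for each $i$, define $(m_{\beta_k})_{k\in B_i}$ to be the coordinates of $v_i - v_{i-1}\in c^{i-1}(A)$ in the basis $\{\beta^\vee_k\mid k\in B_i\}$; these exist uniquely and are nonnegative by the previous paragraph. As $i$ runs over $1,\dots,h$ this assigns a nonnegative $m_\alpha$ to each root exactly once (the $B_i$ partition $\{1,\dots,hr\}$ and the $\beta_k$ are distinct), and $\sum_\alpha m_\alpha\alpha^\vee = \sum_{i=1}^h(v_i - v_{i-1}) = v_h - v_0 = 0$, so $(m_\alpha)$ is $W$-balanced. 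This $((m_\alpha),z)$ is visibly the unique preimage of $\v$ under \eqref{eq:m2v}, which gives the bijection, uniformly in the choice of $(R^+,c)$. The only step deserving genuine care is the simpliciality claim of the second paragraph — this is precisely where the hypothesis that $c$ is a Coxeter element enters, via Proposition~\ref{prop:listroots} — while the rest is bookkeeping.
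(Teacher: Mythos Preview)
Your argument is correct and is exactly the approach the paper takes, just with the details fleshed out: the paper's entire proof is the observation that $\{\beta^\vee_{(i-1)r+1},\ldots,\beta^\vee_{ir}\}$ is a basis, so the $m_\beta$ are uniquely recoverable from each step $v_i-v_{i-1}$. Your unitriangularity argument for simpliciality of $A$ and explicit description of the inverse map supply the verification the paper leaves implicit.
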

\begin{proof}
In \eqref{eq:m2v}, the $m_\beta$ can be recovered from $\v(m_\alpha,z)$ because $\{\beta^\vee_{(i-1)r+1}, \ldots,\beta^\vee_{ir}\}$ is a basis.  The result easily follows.
\end{proof}

\begin{proposition}\label{prop:changeword}
Fix a $W$-balanced pair $((m_\alpha),z)$.  Then the $(W,R^+,c)$-Coxeter necklace $\v(m_\alpha,z)$ depends on $(R^+,c)$ and not on the reduced word of $c$.
\end{proposition}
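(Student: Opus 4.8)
The plan is to reduce the claim to the single elementary transformation $R^+ \mapsto (R^+)' = s_1 \cdot R^+$ described in Proposition~\ref{prop:compatible}, since any two reduced words of a fixed Coxeter element $c$ (with a fixed compatible $R^+$) are related by a sequence of such moves together with the trivial observation that two reduced words of $c$ differ by commutations of adjacent commuting simple reflections. Concretely, if $c = s_1 s_2 \cdots s_r$ and $s_1, s_2$ commute (so that $\alpha_1$ and $\alpha_2$ are orthogonal), then swapping $s_1$ and $s_2$ permutes $\beta_1$ and $\beta_2$ but leaves the sets $\{\beta^\vee_{(i-1)r+1},\ldots,\beta^\vee_{ir}\}$ unchanged for each $i$; since \eqref{eq:m2v} only uses the \emph{sum} $\sum m_{\beta_k}\beta^\vee_k$ over each such block, the vectors $v_i$ are unchanged. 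So the only thing to check is invariance under the cyclic shift of the ordering of roots coming from the move $c = s_1 \cdots s_r \mapsto c = s_2 \cdots s_r s_1$ (viewed with respect to $(R^+)' = s_1 R^+$), which by Proposition~\ref{prop:compatible} replaces the ordering $(\beta_1,\beta_2,\ldots)$ by $(\beta_2,\beta_3,\ldots)$.

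The key observation is then exactly the content of Lemma~\ref{lem:Rchange}: that lemma already constructs a bijection $\v \mapsto \v'$ between $(W,R_1^+,c)$-Coxeter necklaces and $(W,R_2^+,c)$-Coxeter necklaces with $Q(\v) = Q(\v')$, where $R_2^+ = s_1 R_1^+$, and the bijection is given explicitly by $v'_i = v_i + [\beta^\vee_{ir+1}](v_{i+1}-v_i)$. So the plan is: start from $((m_\alpha),z)$, build the necklace $\v = \v(m_\alpha, z)$ with respect to $(R^+,c)$ using the reduced word $c = s_1\cdots s_r$, and show that the necklace built from the \emph{same} balanced pair with respect to the reduced word $c = s_2 \cdots s_r s_1$ (and $R^+$, or equivalently $(R^+)' = s_1 R^+$) is precisely $\v'$ as produced by Lemma~\ref{lem:Rchange}. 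This is a direct computation: in the new ordering the first block is $\{\beta^\vee_2,\ldots,\beta^\vee_{r+1}\}$, and $v'_1 = z + \sum_{k=2}^{r+1} m_{\beta_k}\beta^\vee_k = v_0 + (v_1 - v_0 - m_{\beta_1}\beta^\vee_1) + m_{\beta_{r+1}}\beta^\vee_{r+1} = v_1 + m_{\beta_{r+1}}\beta^\vee_{r+1} - m_{\beta_1}\beta^\vee_1$; one checks this matches $v_1 + [\beta^\vee_{r+1}](v_2-v_1) - [\beta^\vee_1](v_1 - v_0)$, recalling $\beta_{r+1} = c\beta_1$ so $m_{\beta_{r+1}} = m_{c\beta_1}$ and using \eqref{eq:m2v} to identify the coefficients. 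The same shift applies in every block by acting with $c^i$.

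I expect the main obstacle to be purely bookkeeping: keeping straight the re-indexing of roots under the cyclic shift $\beta_k \mapsto \beta_{k+1}$, the relation $\beta_{k+r} = c\beta_k$, and the fact that a balanced pair assigns weights to roots $\alpha \in R$ (not to indices), so that $m_{\beta_{k+r}} = m_{c\beta_k}$ is a genuine constraint rather than a free choice. Once the cyclic-shift case and the commutation case are both handled, the general statement follows because, by the standard theory of reduced words (Matsumoto/Tits for the single Coxeter element, together with Proposition~\ref{prop:compatible}), any two reduced words of $c$ compatible with a given $R^+$ are connected by commutation moves and cyclic shifts of the above form; invariance of $\v(m_\alpha,z)$ under each elementary move gives invariance overall. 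Combined with Lemma~\ref{lem:cchange}, which handles the passage between different Coxeter elements $c$ and $c' = wcw^{-1}$, this yields the full independence claim, and the compatibility $Q(\v) = Q(\v')$ is inherited from Lemma~\ref{lem:Rchange}.
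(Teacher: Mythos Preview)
Your commutation argument is exactly the paper's proof, and it is already the complete argument. The paper's one-line proof observes that changing the reduced word of $c$ permutes the set $\{\beta_1,\ldots,\beta_r\}$ (this set is the inversion set of $c$ with respect to $R^+$, hence intrinsic to the pair $(R^+,c)$); since \eqref{eq:m2v} only uses the block sums $\sum_{k=(i-1)r+1}^{ir} m_{\beta_k}\beta_k^\vee$, the necklace is unchanged. Your commutation case makes this explicit: for a Coxeter element every simple reflection appears exactly once, so by Matsumoto's theorem any two reduced words differ only by commutations of adjacent commuting simple reflections, and you verify that such a swap permutes $\beta_j,\beta_{j+1}$ and fixes the rest. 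That is the whole proof.

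Everything you write about cyclic shifts is unnecessary and, as stated, confused. A cyclic shift $s_1\cdots s_r \mapsto s_2\cdots s_r s_1$ is \emph{not} a passage between two reduced words of $c$ with respect to the same $R^+$: with respect to $R^+$ the product $s_2\cdots s_r s_1$ equals $s_1 c s_1 \neq c$, and only after replacing $R^+$ by $s_1 R^+$ does it become a reduced word for $c$. So cyclic shifts change $R^+$, which the proposition explicitly holds fixed. Indeed, your own computation shows that the necklace \emph{does} change under a cyclic shift (you obtain $v'_1 = v_1 + m_{\beta_{r+1}}\beta^\vee_{r+1} - m_{\beta_1}\beta^\vee_1 \neq v_1$ in general), so this move cannot be used to prove invariance. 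Drop the cyclic-shift discussion and the references to Lemmas~\ref{lem:Rchange} and~\ref{lem:cchange}; what remains is correct and matches the paper.
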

\begin{proof}
Changing the reduced word of $c$ replaces $\{\beta_1,\ldots,\beta_r\}$ by a permutation of the same set.
\end{proof}

For a balanced pair $(m_\alpha,z)$, we also define a \emph{Coxeter belt} 
$$
\u(m_\alpha,z):=(u_0,u_1,\ldots, u_{hr-1},u_{hr} = u_0)
$$
by setting $u_0 = z$ and
$$
u_i = u_{i-1} + m_{\beta_i} \beta^\vee_i
$$
for $i = 1,2,\ldots,hr-1$.  Note that given $(m_\alpha,z)$, the Coxeter belt depends on $R^+$, on $c$, and on a reduced word for $c$.

Suppose $R^+$ has simple roots $\alpha_1,\ldots,\alpha_r$.  Then $s_1 R^+$ is also a positive system, and its simple roots are $-\alpha_1, s_1 \alpha_2,\ldots, s_1 \alpha_r$.

\begin{proposition}\label{prop:changeR+}
The $(W,R^+,c)$-Coxeter belt for the $W$-balanced pair $((m_\alpha),z)$ with respect to $(R^+,c=s_1s_2\cdots s_r)$ is the same, up to a cyclic shift, as the $(W,s_1R^+,c)$-Coxeter belt for the balanced pair $((m_\alpha),z'=u_1)$ with respect to $(s_1 R^+,c = (s_1s_2s_1) \cdots (s_1s_r s_1) s_1)$.
\end{proposition}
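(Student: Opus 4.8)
The plan is a definition-chase: identify the new positive system $s_1R^+$ and its simple reflections, use Proposition~\ref{prop:compatible} to see that the root ordering attached to the second pair is the one-step cyclic shift of the root ordering attached to the first pair, and then observe that the second Coxeter belt is literally the first one cyclically shifted by one step.

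First I would fix notation for $s_1R^+$: its simple roots are $-\alpha_1, s_1\alpha_2,\dots,s_1\alpha_r$, and its simple reflections are $s_i':=s_1s_is_1$ for $i=1,\dots,r$ (so $s_1'=s_1$). A direct telescoping computation gives $s_2's_3'\cdots s_r's_1' = s_1s_2\cdots s_r = c$, so $c=(s_1s_2s_1)(s_1s_3s_1)\cdots(s_1s_rs_1)s_1$ is a reduced factorization of $c$ with respect to $s_1R^+$ — exactly the factorization named in the statement. Writing $\beta_1,\beta_2,\dots$ for the ordering of $R$ attached to $(R^+,c=s_1\cdots s_r)$ and $\beta_1',\beta_2',\dots$ for the one attached to $(s_1R^+,c=s_2'\cdots s_r's_1')$, Proposition~\ref{prop:compatible} gives $\beta_i'=\beta_{i+1}$ for every $i$, with indices taken modulo $hr$ (recall $\beta_{i+hr}=c^h\beta_i=\beta_i$); hence $(\beta_i')^\vee=\beta_{i+1}^\vee$ and $m_{\beta_i'}=m_{\beta_{i+1}}$ for all $i$. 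Alternatively I could verify this by hand: $\beta_1'=s_1\alpha_2=\beta_2$, $\beta_k'=s_2'\cdots s_k'(s_1\alpha_{k+1})=s_1s_2\cdots s_k\alpha_{k+1}=\beta_{k+1}$ for $1\le k\le r-1$, and $\beta_r'=s_2'\cdots s_r'(-\alpha_1)=cs_1(-\alpha_1)=c\alpha_1=\beta_{r+1}$, after which the recursion $\beta_{j+r}'=c\beta_j'$ propagates $\beta_i'=\beta_{i+1}$ to all indices.

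Next I would write out the two belts and compare them directly. For $\u=\u(m_\alpha,z)=(u_0,\dots,u_{hr-1},u_{hr}=u_0)$ we have $u_0=z$ and $u_i-u_{i-1}=m_{\beta_i}\beta_i^\vee$ for $1\le i\le hr-1$; the closure $u_{hr}=u_0$ means this step relation also holds for $i=hr$, since $\sum_{i=1}^{hr}m_{\beta_i}\beta_i^\vee=\sum_{\alpha\in R}m_\alpha\alpha^\vee=0$ by the balancing identity \eqref{eq:ba} and Proposition~\ref{prop:listroots}(1) — the same argument that gave $v_h=v_0$ for necklaces. For $\u'=\u'(m_\alpha,z')$ with $z'=u_1$, built from $(s_1R^+,c=s_2'\cdots s_r's_1')$, we have $u_0'=u_1$ and $u_i'-u_{i-1}'=m_{\beta_i'}(\beta_i')^\vee=m_{\beta_{i+1}}\beta_{i+1}^\vee$. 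A one-line induction on $i$ then shows $u_i'=u_{i+1}$ for $i=0,1,\dots,hr-1$: the base case is $u_0'=u_1$, and the inductive step combines the hypothesis $u_{i-1}'=u_i$ with the step relation $u_{i+1}=u_i+m_{\beta_{i+1}}\beta_{i+1}^\vee$ of $\u$ at index $i+1$ (legitimate for $1\le i+1\le hr$, using $u_{hr}=u_0$ in the last step). Therefore $\u'=(u_1,u_2,\dots,u_{hr-1},u_0)$, which is precisely $\u=(u_0,u_1,\dots,u_{hr-1})$ cyclically shifted by one position.

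I do not expect a real obstacle: the statement is essentially bookkeeping. The only point requiring care is making Proposition~\ref{prop:compatible} (or the hands-on computation above) match exactly the factorization $c=(s_1s_2s_1)\cdots(s_1s_rs_1)s_1$ in the statement and keeping the cyclic indexing straight; once $\beta_i'=\beta_{i+1}$ is in place, the identity $u_i'=u_{i+1}$ falls out immediately.
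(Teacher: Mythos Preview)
Your proof is correct and follows the same approach as the paper, which simply cites Proposition~\ref{prop:compatible}; you have carefully unpacked the details that the paper leaves implicit.
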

\begin{proof} Follows from Proposition~\ref{prop:compatible}.
\end{proof}

Given a balanced pair $(m_\alpha,z)$, let $Q(m_\alpha,z):=Q(\v(m_\alpha,z))$.
\begin{corollary}\label{cor:belt}
Let $(m_\alpha,z)$ be a balanced pair.  Then $\env_c(\u(m_\alpha,z)) = Q(m_\alpha,z)$, and each point of the Coxeter belt $\u(m_\alpha,z)$ is a vertex of $Q(m_\alpha,z)$.
\end{corollary}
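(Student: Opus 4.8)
The plan is to bootstrap both assertions from Proposition~\ref{prop:necklace} applied to $\v=\v(m_\alpha,z)$, using the explicit recursions for the necklace and the belt. First I would record that the necklace is the subsequence of the belt obtained by keeping every $r$-th point: comparing \eqref{eq:m2v} with the definition of $\u(m_\alpha,z)$, both $v_i$ and $u_{ir}$ equal $z+\sum_{k=1}^{ir}m_{\beta_k}\beta^\vee_k$, so $v_i=u_{ir}$ and $\{v_0,\dots,v_{h-1}\}\subseteq\{u_0,\dots,u_{hr-1}\}$. Writing $j=ir+p$ with $0\le p\le r$ and using $\beta^\vee_{ir+s}=c^i\beta^\vee_s$, I will use the two expressions
\begin{equation*}
u_j=v_i+\sum_{s=1}^{p}m_{\beta_{ir+s}}\,c^i\beta^\vee_s=v_{i+1}-\sum_{s=p+1}^{r}m_{\beta_{ir+s}}\,c^i\beta^\vee_s .
\end{equation*}

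For the identity $\env_c(\u(m_\alpha,z))=Q(m_\alpha,z)=Q(\v)$, the inclusion $\supseteq$ is immediate from monotonicity of $\env_c$ and $\{v_0,\dots,v_{h-1}\}\subseteq\{u_0,\dots,u_{hr-1}\}$, since $\env_c(\v)=Q(\v)$ by Proposition~\ref{prop:necklace}. For $\subseteq$ it suffices to prove $u_j\in Q(\v)=\bigcap_{i'=0}^{h-1}(v_{i'}+c^{i'}(C))$ for every $j$, because $Q(\v)$ is already $(W,c)$-twisted alcoved and hence $\env_c(\u)\subseteq\env_c(Q(\v))=Q(\v)$. To check $(u_j-v_{i'},c^{i'}\omega_k)\ge 0$ for $k=1,\dots,r$, set $m:=(i-i')\bmod h$, so that $(c^i\beta^\vee_s,c^{i'}\omega_k)=(c^m\beta^\vee_s,\omega_k)$. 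By Lemma~\ref{lem:bound}(1),(4) the sign of this pairing does not depend on $s$: it is $\ge 0$ for all $s$ when $m<M(k^\star)$, and $\le 0$ for all $s$ when $m\ge M(k^\star)$. In the first case I expand $u_j$ about $v_i$ and combine $(v_i-v_{i'},c^{i'}\omega_k)\ge 0$ (valid since $v_i\in Q(\v)$ by Proposition~\ref{prop:necklace}) with $\sum_{s\le p}m_{\beta_{ir+s}}(c^m\beta^\vee_s,\omega_k)\ge0$; in the second case I expand $u_j$ about $v_{i+1}$ symmetrically. This is the same mechanism as in the proof of Proposition~\ref{prop:necklace}.

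It then remains to upgrade ``$u_j\in Q(\v)$'' to ``$u_j$ is a vertex of $Q(\v)$''. Write $j=ir+p$ with $0\le p\le r-1$. I would exhibit $r$ of the defining inequalities $(x-v_{i'},c^{i'}\omega_k)\ge0$ of $Q(\v)$ that are active at $u_j$ and whose normals span $V^\ast$. From the expansion about $v_i$, $(u_j-v_i,c^i\omega_k)=\sum_{s=1}^{p}m_{\beta_{ir+s}}(\beta^\vee_s,\omega_k)$, and since $\beta_s=s_1\cdots s_{s-1}\alpha_s$ lies in the span of $\alpha_1,\dots,\alpha_s$, its coroot lies in the span of $\alpha^\vee_1,\dots,\alpha^\vee_s$; hence this pairing vanishes for $k>s$, in particular for all $k\in\{p+1,\dots,r\}$. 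From the expansion about $v_{i+1}$, $(u_j-v_{i+1},c^{i+1}\omega_k)=-\sum_{s>p}m_{\beta_{ir+s}}(c^{-1}\beta^\vee_s,\omega_k)$, and a direct computation with the reduced word shows $c^{-1}\beta_s$ is the negative of a positive root supported on $\{s,\dots,r\}$, so this vanishes for all $k\in\{1,\dots,p\}$. The resulting $r$ active normals $c^i\omega_{p+1},\dots,c^i\omega_r,c^{i+1}\omega_1,\dots,c^{i+1}\omega_p$ equal, by the recursion $\omega_{l+r}=c\omega_l$, the image under $c^i$ of the $r$ consecutive weights $\omega_{p+1},\omega_{p+2},\dots,\omega_{p+r}$. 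Granting these are linearly independent, I conclude: if $u_j=\tfrac12(a+b)$ with $a,b\in Q(\v)$, then $a$ and $b$ satisfy each of these active equalities as well, so independence of the normals forces $a=b=u_j$, and $u_j$ is a vertex.

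The step I expect to be the main obstacle is the linear independence of $r$ consecutive weights $\omega_{p+1},\dots,\omega_{p+r}$. Because $I-c$ is invertible (Lemma~\ref{lem:invertible}) and $(I-c)\omega_l=\beta_l$ (Proposition~\ref{prop:c_orbits}), this is equivalent to linear independence of the $r$ consecutive roots $\beta_{p+1},\dots,\beta_{p+r}$, which holds for $p=0$ since the change of basis from $\alpha_1,\dots,\alpha_r$ to $\beta_1,\dots,\beta_r$ is unitriangular, and then propagates to every $p$ because, by Proposition~\ref{prop:compatible}, passing from $(R^+,c)$ to $(s_1R^+,c)$ cyclically shifts the enumeration $(\beta_1,\beta_2,\dots)$ by one place. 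Finally, if $\v$ fails to be generic (some $m_\alpha=0$, so some consecutive belt points coincide), no modification is needed: the extreme-point argument above uses no genericity, so each of the distinct belt points is still a vertex.
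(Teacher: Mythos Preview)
Your argument is correct. Both parts---membership of each $u_j$ in $Q(\v)$ via the sign analysis from Lemma~\ref{lem:bound}, and the vertex claim via exhibiting $r$ active hyperplanes with independent normals---go through as written. The computation of $c^{-1}\beta_s=-s_r\cdots s_{s+1}\alpha_s$ is right, and the linear independence of $\beta_{p+1},\dots,\beta_{p+r}$ does follow from iterating Proposition~\ref{prop:compatible} as you describe.

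The paper's route is shorter and more conceptual. Rather than re-running the sign analysis for every belt point, the paper observes (via Proposition~\ref{prop:changeR+}) that cyclically shifting the Coxeter belt by one step is the same as passing from $R^+$ to $s_1R^+$; hence every $u_j$ is the point $v'_0$ of the $(W,(R')^+,c)$-Coxeter necklace for a suitable positive system $(R')^+$. Proposition~\ref{prop:necklace} then says immediately that $u_j$ is a vertex of $Q(\v')=Q(\v)$ (the equality of polytopes coming from Lemma~\ref{lem:Rchange}). In effect, the paper packages your explicit hyperplane computation and your linear-independence step into the single statement ``$u_j$ is a necklace point for another $R^+$.'' What your approach buys is self-containment: you never leave the fixed $R^+$ except for the independence lemma, and the $r$ active facets $c^i\omega_{p+1},\dots,c^i\omega_{p+r}$ you identify are informative in their own right. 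What the paper's approach buys is brevity and a cleaner explanation of \emph{why} belt points are vertices: they are necklace points in disguise.
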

\begin{proof}
By Proposition~\ref{prop:necklace}, we have $\env_c(\v(m_\alpha,z)) = Q(m_\alpha,z)$.  Since the Coxeter necklace $\v(m_\alpha,z)$ is a subset of the Coxeter belt $\u(m_\alpha,z)$, we have $\env_c(\u(m_\alpha,z)) \supseteq \env_c(\v(m_\alpha,z))$.  To establish equality, it suffices to show that each $u_i$ belongs to $Q(m_\alpha,z)$.  By combining the action of $c$ with Proposition~\ref{prop:changeR+}, it suffices to show that $u_1 \in Q(\v(m_\alpha,z))$.  This follows from Lemma~\ref{lem:Rchange}.  

The claim that every point on the Coxeter belt is a vertex of $Q(m_\alpha,z)$ follows from applying Proposition~\ref{prop:necklace} to the $(W,s_1R^+,c)$-Coxeter necklace appearing in Proposition~\ref{prop:changeR+}.
\end{proof}

\begin{definition}
A {\it $(W,c)$-balancedtope} is a polytope of the form $Q(m_\alpha,z)$.  
\end{definition}
By Propositions~\ref{prop:changeword} and \ref{prop:changeR+} and Corollary~\ref{cor:belt}, up to changing $z$ (or equivalently, up to a translation), the $(W,c)$-balancedtope $Q(m_\alpha,z)$ does not depend on the choice of reduced word of $c$, or on the choice of $R^+$.  

\section{Prepolypositroids}

Let $\R^{\tR} = \{(a_\omega)_{\omega \in \tR}\}$ denote the vector space whose coordinates are labeled by the set $\tR$ of twisted roots.  Let $\CpolyW \subset \R^{\tR}$ denote the cone cut out by the inequalities
\begin{equation}\label{eq:conefacets}
a_{c^{m-1}\omega_k}+ a_{c^m\omega_k} \geq \sum_{k \to i} -A_{ik} a_{c^m \omega_i}+ \sum_{i \to k} -A_{ik} a_{c^{m-1} \omega_i}
\end{equation}
for $k \in \Z$ and $1 \leq m \leq r$.
Recall that $j \to i$ if $i$ and $j$ are connected in the Dynkin diagram and $i$ occurs before $j$ in all reduced words of $c$. The twisted roots appearing in \eqref{eq:conefacets} all belong to the set $\{c^{m-1}\omega_k = \tbeta_{k+(m-1)r},\tbeta_{k+(m-1)r+1},\ldots,\tbeta_{k+mr}= c^m \omega_k\}$.


\begin{proposition}\label{prop:coneindep}
The inequalities \eqref{eq:conefacets} depend on $c$ and not on the choice of reduced word of $c$, or on the choice of $R^+$.
\end{proposition}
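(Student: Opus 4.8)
The plan is to establish the statement in two halves: invariance under changing the reduced word of $c$ with $(R^+,c)$ fixed, and invariance under changing $R^+$ among positive systems compatible with $c$. For the first half I would reduce to a single commutation move $s_ts_{t+1}=s_{t+1}s_t$ applied to the word $c=s_1\cdots s_r$ at two adjacent positions $t,t+1$ (necessarily with $s_t,s_{t+1}$ non-adjacent in the Dynkin diagram), using the standard fact that any two reduced words of a Coxeter element are related by such commutations --- a braid relation of length $\geq 3$ cannot be applied, as it would force a generator to occur twice. For the second half I would reduce, via Proposition~\ref{prop:compatible}, to a single elementary transformation $R^+\mapsto s_1R^+$. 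In each case the goal is to produce an explicit bijection of the index set $\{(k,m)\mid k\in\Z,\ 1\le m\le r\}$ carrying the family of inequalities \eqref{eq:conefacets} onto itself.

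The crux is a reformulation of \eqref{eq:conefacets} that exposes the dependence on the auxiliary choices. Writing $N=k+(m-1)r$ and letting $\bar k\in\{1,\dots,r\}$ be the residue of $k$ modulo $r$, and using $c^t\omega_i=\omega_{i+tr}$, the left-hand side of \eqref{eq:conefacets} is $a_{\omega_N}+a_{\omega_{N+r}}$. On the right-hand side, a summand with $k\to i$ (so $i<\bar k$ in the word) contributes $c^m\omega_i=\omega_{N+(r+i-\bar k)}$, and a summand with $i\to k$ (so $i>\bar k$) contributes $c^{m-1}\omega_i=\omega_{N+(i-\bar k)}$; in either case the offset lies in $\{1,\dots,r-1\}$ and equals $(i-\bar k)\bmod r$. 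So \eqref{eq:conefacets} becomes the ``window inequality''
\[
a_{\omega_N}+a_{\omega_{N+r}}\ \geq\ \sum_{i}(-A_{i\bar k})\,a_{\omega_{N+d(i)}},\qquad d(i):=(i-\bar k)\bmod r\in\{1,\dots,r-1\},
\]
the sum running over the Dynkin neighbours $i$ of $\bar k$. In this form the inequality depends only on $N$ modulo $hr$ (recall $\omega_{j+hr}=\omega_j$) together with the labelled Dynkin diagram --- its edge set and Cartan integers --- the orientation $\to$ having served only to decide which of $\omega_{i+mr}$, $\omega_{i+(m-1)r}$ lands in the window $\{\omega_N,\dots,\omega_{N+r}\}$. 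Also, $(k,m)\mapsto N\bmod hr$ is $r$-to-one, so the family indexed by $(k,m)$ is literally the family indexed by $N\in\Z/hr\Z$.

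Each elementary move induces a relabelling $\rho\in S_r$ of the Dynkin nodes, with new Cartan matrix the $\rho$-relabelling of the old, together with a re-indexing of the bi-infinite sequence $(\omega_j)_{j\in\Z}$ that is compatible with $\rho$ block-by-block. For the commutation move one checks (using $s_t\alpha_{t+1}=\alpha_{t+1}$) that $\rho$ is the transposition $(t\ t{+}1)$ and that $(\omega_j)$ is re-indexed by the permutation of $\Z/hr\Z$ which applies $\rho$ within each block of $r$ consecutive indices. For $R^+\mapsto s_1R^+$ with $c=s_1\cdots s_r$ reduced for $R^+$, the last part of Proposition~\ref{prop:compatible} gives that the root ordering becomes the cyclic shift $(\beta_2,\dots,\beta_{hr},\beta_1)$, so $\omega_j$ is replaced by $\omega_{j+1}$, while $\rho$ is the $r$-cycle induced by the reduced word $c=(s_1s_2s_1)\cdots(s_1s_rs_1)s_1$ of $c$ for $s_1R^+$ (the new Cartan matrix being computed from the fact that $s_1$ preserves the form and $s_{-\alpha_1}=s_{\alpha_1}$). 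In each case one then verifies that the window inequality attached to a parameter $N$ is carried precisely onto the one attached to the image of $N$ under the weight re-indexing, so the two families coincide.

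I expect the reformulation in the second paragraph --- pinning down the offsets and the residue wraparound of $\bar k,i\in\{1,\dots,r\}$ --- to be the main obstacle, followed by the bookkeeping in the third. The delicate point in that bookkeeping is that $\rho$ need not preserve the linear order on $\{1,\dots,r\}$: under $R^+\mapsto s_1R^+$ the reflection $s_1$ moves from the first position of the word for $c$ to the last, so the Dynkin orientation at $\alpha_1$ is reversed. One must check that this does no harm, which it does not precisely because the window form records only the residues $d(i)=(i-\bar k)\bmod r$, insensitive to interchanging ``earlier'' and ``later''; they are preserved outright by the cyclic relabelling in the $R^+$-move and are matched against the compensating weight re-indexing in the commutation move. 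Should one prefer to avoid the window form altogether, the same conclusions can be obtained by a longer direct case analysis, matching the new inequality with index $(k,m)$ to the old one with index $(k+1,m)$ or $(1,m+1)$ according as $\bar k<r$ or $\bar k=r$.
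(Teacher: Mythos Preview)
Your plan is correct, but it is considerably more elaborate than the paper's argument, particularly for the reduced-word half. The paper observes that \eqref{eq:conefacets} is written entirely in terms of data that make no reference to a reduced word: the fundamental weights $\omega_k$, the Cartan integers $A_{ik}$, the Coxeter element $c$, and the orientation $k\to i$ (defined by ``$s_i$ precedes $s_k$ in $c$'', which is well-defined for Dynkin-adjacent $i,k$ since $s_i,s_k$ do not commute). So the first half is literally one sentence; your commutation-move analysis and window reformulation are valid but unnecessary. Your window form is a pleasant repackaging---noting that the orientation's only role is to choose which of $c^m\omega_i,c^{m-1}\omega_i$ lands in the window---but the paper never needs it.

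For the $R^+$ half, both you and the paper reduce via Proposition~\ref{prop:compatible} to the elementary move $R^+\mapsto s_1R^+$. The paper then works directly with the Dynkin-node labelling (where $\alpha'_j=s_1\alpha_j$, $\omega'_j=s_1\omega_j$, $A'_{ij}=A_{ij}$) and simply notes that the orientation is preserved except at edges touching node $1$, where it reverses; one then checks termwise that this reversal exactly cancels against $\omega'_1=c\omega_1$, matching the new $(k,m)$ inequality to the old $(k,m)$ for $k\ne 1$ and to the old $(1,m+1)$ for $k=1$. Your route via the cyclic shift of $(\omega_j)$ and an $r$-cycle relabelling $\rho$ reaches the same conclusion but requires keeping track of the position-versus-node indexing; your final paragraph correctly identifies this as the delicate point and resolves it. The trade-off is that your window form makes the $R^+$-invariance a clean ``cyclic shift'' statement at the cost of front-loading the bookkeeping, whereas the paper's direct verification is shorter overall.
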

\begin{proof}
That the inequalities \eqref{eq:conefacets} do not depend on the reduced word of $c$ is apparent.  For the second part, using Proposition~\ref{prop:compatible}, we need to check what happens if we replace $R^+$ by $s_1 R^+$, where $c = s_1s_2 \cdots s_r$.  If $j \to_{c,R^+} i$ then we also have $j \to_{c,s_1R^+} i$ unless one of $i,j$ is equal to 1, in which case the relation reverses.  This is exactly the required condition for the inequalities \eqref{eq:conefacets} to be preserved when $R^+$ is changed to $s_1 R^+$.  
\end{proof}

\begin{example}
Let us take $(W,R^+,c)$ as in Examples~\ref{ex:Aroot} and~\ref{ex:Aroot2}.  
In the notation of Part~\ref{part:1}, we have $a_{ij} = a_{h_i - h_j}$.  The inequalities \eqref{eq:conefacets} are of three types: (1) $a_{i, i+1} + a_{i+1,i+2} \geq a_{i,i+2}$, (2) $a_{i+1, i} + a_{i+2,i+1} \geq a_{i+2,i}$, and (3) $a_{i,j} + a_{i+1,j+1} \geq a_{i,j+1} + a_{j,j+1}$ where $i,i+1,j,j+1$ are distinct.  The inequalities (1) and (2) are special cases of the triangle inequality \eqref{eq:triangle}, while (3) is a special case of \eqref{eq:anoncross}.  It will follow from Theorem~\ref{thm:pretypeA}, and can be verified directly, that the inequalities \eqref{eq:triangle} and \eqref{eq:anoncross} are consequences of the smaller set of inequalities \eqref{eq:conefacets}.  Indeed, the $n(n-1) = |R|$ inequalities in \eqref{eq:conefacets} are exactly the facet inequalities appearing in Corollary~\ref{cor:conefacets}.
\end{example}

\begin{definition}
A {\it $(W,c)$-prepolypositroid} is a $(W,c)$-twisted alcoved polytope cut out by the halfspaces $(x,\omega) \leq a_{\omega}$, $\omega \in \tR$ where $a_\omega \in \CpolyW$.
\end{definition}
We call $\CpolyW$ the {\it cone of $(W,c)$-prepolypositroids}.

\begin{theorem}\label{thm:threecones}
There are natural isomorphisms between the following three cones:
\begin{enumerate}
\item the cone of $(W,c)$-prepolypositroids.
\item the cone of $W$-balanced pairs $((m_\alpha), z)$;
\item the cone of $(W,R^+,c)$-Coxeter necklaces for any choice of $R^+$.
\end{enumerate}
Furthermore, if a $(W,c)$-prepolypositroid $P$ arises from $(a_\omega) \in \CpolyW$ then each $a_\omega \in \R$ is minimal, i.e., is a value of the support function of $P$.
\end{theorem}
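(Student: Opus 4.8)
The plan is to construct the two isomorphisms $(2)\leftrightarrow(3)$ and $(3)\leftrightarrow(1)$ separately, and then verify the minimality assertion. The isomorphism between the cone of $W$-balanced pairs and the cone of $(W,R^+,c)$-Coxeter necklaces is already in hand: Proposition~\ref{prop:balneck} gives the bijection $(m_\alpha,z)\mapsto \v(m_\alpha,z)$, and the defining formula \eqref{eq:m2v} is manifestly linear in $(m_\alpha,z)$, so it is an isomorphism of cones. (The independence of the choice of reduced word is Proposition~\ref{prop:changeword}, and Lemma~\ref{lem:Rchange} together with Proposition~\ref{prop:changeR+} handles the comparison between different choices of $R^+$.) So the real content is the isomorphism between Coxeter necklaces and $(W,c)$-prepolypositroids.

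For $(3)\to(1)$: given a $(W,R^+,c)$-Coxeter necklace $\v=(v_0,\dots,v_{h-1})$, Proposition~\ref{prop:necklace} shows $Q(\v)=\bigcap_i (v_i+c^i(C))$ is a $(W,c)$-twisted alcoved polytope with each $v_i$ a vertex. I would then compute the support-function values $a_\omega := \max_{x\in Q(\v)}(x,\omega)$ for $\omega\in\tR$. Writing $\omega = c^m\omega_k$, I expect (by the argument in the proof of Proposition~\ref{prop:necklace}, using Lemma~\ref{lem:bound}) that this maximum is attained at the vertex $v_j$ where $j$ is the ``switching index'' $M(k^\star)$ appropriately rotated, giving an explicit linear formula $a_{c^m\omega_k} = (v_?, c^m\omega_k)$. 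The key step is then to check that these $a_\omega$, expressed in terms of $(m_\alpha)$ via \eqref{eq:m2v}, satisfy the inequalities \eqref{eq:conefacets}, and in fact satisfy them with the ``slack'' equal to $m_{\beta}$ for an appropriate root $\beta$ — this is the analogue of Corollary~\ref{cor:conefacets} / Corollary~\ref{cor:rays} in type $A$, where the $n(n-1)$ facet inequalities of $\Cpoly$ have slacks $m_{ij}$. Concretely: the difference (LHS $-$ RHS) of \eqref{eq:conefacets} should equal $m_{\beta_{k+(m-1)r+?}}\ge 0$, using the fact that $\{\beta^\vee_{(i-1)r+1},\dots,\beta^\vee_{ir}\}$ is a basis and that the Cartan-matrix coefficients $-A_{ik}$ record exactly how $c$ acts on the $\omega_i$ (Proposition~\ref{prop:c_orbits}, $s_j\omega_i=\omega_i-\delta_{ij}\alpha_j$). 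For $(1)\to(3)$: given $(a_\omega)\in\CpolyW$, Proposition~\ref{prop:envelope} writes the twisted alcoved polytope $P=P(a_\omega)$ as $\bigcap_i(v_i+c^i(C))$ for uniquely determined points $v_i$; I must show $\v=(v_0,\dots,v_{h-1})$ is a Coxeter necklace, i.e.\ $v_i-v_{i-1}\in c^i(A)$. This amounts to expanding $v_i-v_{i-1}$ in the basis $\{c^i\beta^\vee_1,\dots,c^i\beta^\vee_r\}$ and checking the coefficients are $\ge 0$; the coefficient of $c^i\beta^\vee_k$ should come out to be precisely the slack of the corresponding inequality \eqref{eq:conefacets}, which is $\ge 0$ by hypothesis. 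The two constructions are mutually inverse because the passage $\v\mapsto(a_\omega)\mapsto\v$ recovers the $v_i$ as the $c^i(C)$-minima of $Q(\v)$, which are the $v_i$ by Proposition~\ref{prop:necklace}, and the reverse composition is forced by uniqueness in Proposition~\ref{prop:envelope}.

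The minimality statement then comes for free once the $(3)\to(1)$ direction is set up correctly: by construction $a_\omega = \max_{x\in Q(\v)}(x,\omega)$ is the support-function value, and $Q(\v)=P(a_\omega)$ (so the presentation uses exactly these minimal constants). The main obstacle I anticipate is the bookkeeping in the key computation — verifying that the slack in \eqref{eq:conefacets} is exactly a single nonnegative $m_\beta$. This requires carefully tracking the reduced-word ordering $\beta_1,\dots,\beta_{hr}$, the indices $k+(m-1)r,\dots,k+mr$, the role of the two sums $\sum_{k\to i}$ and $\sum_{i\to k}$ over Dynkin-diagram neighbors, and the identity $c\omega_k = \omega_{k+r}$. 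I would first do this verification in a rank-2 example ($A_2$, then $B_2$) to pin down signs and index shifts, then argue in general using Proposition~\ref{prop:c_orbits} and the fact that $(I-c)\omega_i=\beta_i$. Everything else (linearity, cone structure, compatibility with changing $R^+$ and the reduced word) reduces to the already-proven Propositions~\ref{prop:coneindep}, \ref{prop:changeword}, \ref{prop:changeR+}, \ref{prop:balneck} and Lemma~\ref{lem:Rchange}.
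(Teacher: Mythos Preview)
Your plan is correct and essentially matches the paper's proof: the isomorphism $(2)\leftrightarrow(3)$ is Proposition~\ref{prop:balneck}, and for $(2)\leftrightarrow(1)$ the paper (via Proposition~\ref{prop:satisfy} and Lemma~\ref{lem:face2face}) proves exactly your ``key step'' that the slack in each inequality \eqref{eq:conefacets} equals a single $m_\alpha$, whence the linear map $(m_\alpha,z)\mapsto(a_\omega)$ carries facets to facets and is an isomorphism of cones; minimality is then immediate from \eqref{eq:maxQ}. The one simplification you should note: rather than tracking indices in general, the paper uses Proposition~\ref{prop:coneindep} (invariance of the inequality set under change of $R^+$ and reduced word) to reduce the slack computation to the single case $c=s_1\cdots s_r$, $m=1$, $k=1$, where it is a one-line check that $a_{-c\omega_1}=m_{\alpha_1}$---this bypasses the bookkeeping you anticipate.
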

Proposition~\ref{prop:balneck} gives the isomorphism between (2) and (3).  For the remainder of this section, our aim is to show that $((m_\alpha), z) \mapsto Q(m_\alpha, z)$ is a bijection between balanced pairs and $(W,c)$-prepolypositroids.

\begin{proposition}\label{prop:homogeneous}
We have 
\begin{equation}\label{eq:facetequal}
c^{m-1}\omega_k + c^m\omega_k = \sum_{k \to i} -A_{ik} c^m \omega_i + \sum_{i \to k} -A_{ik} c^{m-1} \omega_i 
\end{equation}
for any $k$ and any $m$.
\end{proposition}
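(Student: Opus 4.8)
The plan is to reduce the identity \eqref{eq:facetequal} to a statement about a single $\Gamma$-orbit and then verify it using Proposition~\ref{prop:c_orbits} together with the formulas of Lemma~\ref{lem:YZ}. First I would observe that since $c$ acts linearly on $V$, it suffices to prove \eqref{eq:facetequal} for one value of $m$ in each residue class modulo $r$; applying powers of $c$ then gives all the others. In fact, by Proposition~\ref{prop:changeword} and the argument in Proposition~\ref{prop:coneindep}, the statement does not depend on the chosen reduced word of $c$ or on $R^+$, so I am free to take $c = s_1 s_2 \cdots s_r$ and $m = 1$, i.e.\ to prove
$$
\omega_k + c\,\omega_k = \sum_{k \to i} -A_{ik}\, c\,\omega_i + \sum_{i \to k} -A_{ik}\, \omega_i .
$$
Here recall $\omega_{i+r} = c\,\omega_i$, so the twisted roots in \eqref{eq:conefacets} are indeed the consecutive ones $\tbeta_{k+(m-1)r},\dots,\tbeta_{k+mr}$, and the content is an identity among the $\omega_i$.

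The key computational input is Proposition~\ref{prop:c_orbits}: $(I-c)\omega_i = \beta_i$ for all $i$. Applying $(I-c)$ to both sides of the desired identity turns it into the equivalent root-system identity
$$
\beta_k + c\,\beta_k = \sum_{k\to i} -A_{ik}\, c\,\beta_i + \sum_{i\to k} -A_{ik}\, \beta_i ,
$$
since $(I-c)$ commutes with $c$ and is invertible (Lemma~\ref{lem:invertible}), so nothing is lost. Now I would expand using the recursive definition $\beta_i = s_1\cdots s_{i-1}\alpha_i$ and the reflection formula $s_j \omega_i = \omega_i - \delta_{ij}\alpha_j$, or alternatively work directly with the action of $c = s_1\cdots s_r$ on the weight lattice. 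The relevant bookkeeping is exactly the computation that appears in the proof of Proposition~\ref{prop:c_orbits}: $c\,\omega_i = \omega_i - \beta_i$, together with the fact that $\beta_i - \beta_i^\vee$-type corrections are governed by the Cartan integers $A_{ik} = (\alpha_i^\vee,\alpha_k)$ precisely for the neighbors of $i$ in the Dynkin diagram. The directed relation $j \to i$ (meaning $i$ precedes $j$ in $c$, i.e.\ $i \prec_c j$) controls whether the $\alpha_i$-contribution to $c\,\beta_k$ has already been "used up" by the partial product $s_1\cdots s_{k-1}$ or not — this is the same dichotomy that produces the two cases in Lemma~\ref{lem:YZ}(3).

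I expect the main obstacle to be purely notational: carefully tracking, for a fixed $k$, how $c\,\beta_k = s_1\cdots s_r\,\beta_k$ decomposes in terms of $\beta_i$ and $\beta_i^\vee$ for the neighbors $i$ of $k$, and matching the sign conventions so that the coefficients come out as $-A_{ik}$ with the split between $c\,\omega_i$ (for $k\to i$) and $\omega_i$ (for $i\to k$) exactly as stated. A clean way to organize this is to verify the identity on the basis of simple coroots by pairing both sides with each $\alpha_j^\vee$: the left side pairs to $(\alpha_j^\vee,\omega_k) + (\alpha_j^\vee, c\,\omega_k) = \delta_{jk} + (c^{-1}\alpha_j^\vee,\omega_k)$, and the right side pairs to a sum of Cartan integers; then one checks equality case by case according to whether $j = k$, $j$ is a neighbor of $k$ with $k\to j$, $j$ is a neighbor with $j\to k$, or $j$ is non-adjacent to $k$. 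In the non-adjacent case both sides vanish; in the adjacent cases the identity reduces to the defining relation $c^{-1}\alpha_j^\vee = s_r\cdots s_1 \alpha_j^\vee$ expanded one step, which is elementary. This is exactly the type of finite, diagram-local check that Lemma~\ref{lem:bound} and Lemma~\ref{lem:YZ} are built to handle, so I anticipate no conceptual difficulty, only the need for care with orientations.
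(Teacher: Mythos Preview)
Your proposal is correct in spirit --- the pairing check against each $\alpha_j^\vee$ is exactly how the paper concludes --- but you are working harder than necessary. The paper exploits Proposition~\ref{prop:coneindep} more fully: not only can one take $m=1$, but by also using the freedom to change $R^+$ (via the elementary moves of Proposition~\ref{prop:compatible}) one may assume $c = s_1 s_2 \cdots s_r$ \emph{and} $k=1$. With $k=1$ the situation collapses: since $s_1$ is initial, every neighbor $j$ of $1$ satisfies $j \to 1$, so the $k\to i$ sum is empty and the identity to verify is simply
\[
\omega_1 + c\,\omega_1 \;=\; \sum_{j \sim 1} -A_{j1}\,\omega_j.
\]
Because $c\,\omega_1 = \omega_1 - \alpha_1$ (as $\beta_1 = \alpha_1$), the left side is $2\omega_1 - \alpha_1$, and pairing with $\alpha_j^\vee$ gives $2\delta_{j1} - A_{j1}$, which matches the right side in each of the three cases $j=1$, $j$ adjacent to $1$, $j$ non-adjacent. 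That is the entire proof.

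Your detour through applying $(I-c)$ to convert to a root identity, and the appeal to Lemma~\ref{lem:YZ}, is sound but unnecessary; the point of the paper's reduction is precisely to avoid the case split between $k\to i$ and $i\to k$ that you anticipate as the ``main obstacle.'' Once you see that the family of identities \eqref{eq:facetequal} is invariant under the operations of Proposition~\ref{prop:coneindep}, you only need to check a single member of the orbit, and the member with $(k,m)=(1,1)$ is essentially the definition of the Cartan matrix.
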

\begin{proof}
By Proposition \ref{prop:coneindep}, to verify the claim we can assume that $c = s_1s_2 \cdots s_r$ and verify the equality
$$
\omega_1 + c \omega_1 = \sum_{j} -A_{j1} \omega_j
$$
where the sum is over all $j$ connected to $1$ in the Dynkin diagram.  The LHS is equal to $2\omega_1 - \alpha_1$, and we check that 
$$
(\alpha_j^\vee, 2\omega_1 - \alpha_1) = \begin{cases} -A_{j1} & \mbox{if $j$ is connected to $i$ in the Dynkin diagram} \\
0 & \mbox{otherwise.} 
\end{cases} 
$$
\end{proof}

Given a $(W,c)$-balancedtope $Q(m_\alpha,z)$, we define a collection $(a_\omega)$ of real numbers, one for each twisted root $\omega \in \tR$ by 
\begin{equation}\label{eq:maxQ}
a_\omega=a_\omega(Q(m_\alpha,z)) := \max((x,\omega) \mid x \in Q(m_\alpha,z)).
\end{equation}

\begin{proposition}\label{prop:satisfy}
For any $(W,c)$-balancedtope $Q(m_\alpha,z)$, the collection $(a_\omega)$ satisfies the inequalities \eqref{eq:conefacets}.
\end{proposition}
\begin{proof}
It follows from Lemma~\ref{lem:YZ} that the inequalities \eqref{eq:conefacets} is equivalent to the set of ``negated" inequalities 
\begin{equation}\label{eq:conefacets2}
a_{-c^{m-1}\omega_k}+ a_{-c^m\omega_k} \geq \sum_{k \to i} -A_{ik} a_{-c^m \omega_i}+ \sum_{i \to k} -A_{ik} a_{-c^{m-1} \omega_i}.
\end{equation}
for all $k$ and $m$.  By Proposition \ref{prop:coneindep}, to verify the claim we can assume that $c = s_1s_2 \cdots s_r$ and verify just one of the inequalities \eqref{eq:conefacets2}, say
$$
a_{-\omega_1} + a_{-c \omega_1 } \geq \sum_{j} -A_{j1} a_{-\omega_j}
$$
where the sum is over all $j$ connected to $1$ in the Dynkin diagram.  By Proposition \ref{prop:homogeneous}, the claim is translation invariant.  Thus we may assume that $v_0 = 0$, that is, $z = 0$.  

The maximum of $-\omega_1,\ldots,-\omega_r$ on $Q(m_\alpha,z)$ occurs at the vertex $v_0$.  The maximum of $-c\omega_1$ occurs at vertex $v_1$, where the value taken is greater than or equal to $(-c\omega_1, v_0) = 0$.  The required inequality now follows from $a_{-c\omega_1} \geq 0$.
\end{proof}

By Proposition~\ref{prop:satisfy}, we have an injective and linear map $(m_\alpha,z) \mapsto (a_\omega)$ from the cone of $W$-balanced pairs to the cone $\CpolyW$ of $(W,c)$-prepolypositroids.  The cone of balanced pairs has exactly $|R|$ facets, given by $m_\alpha = 0$, for $\alpha \in R$.  Each equality $a_{-c^{m-1}\omega_k}+ a_{-c^m\omega_k} = \sum_{k \to i} -A_{ik} a_{-c^m \omega_i}+ \sum_{i \to k} -A_{ik} a_{-c^{m-1} \omega_i}$ defines a face of $\CpolyW$.

\begin{lemma}\label{lem:face2face}
The map $(m_\alpha,z) \mapsto (a_\omega)$ sends the facet $\{m_\alpha = 0\}$ of the cone of $W$-balanced pairs to the face $\{a_{-c^{m-1}\omega_k}+ a_{-c^m\omega_k} = \sum_{k \to i} -A_{ik} a_{-c^m \omega_i}+ \sum_{i \to k} -A_{ik} a_{-c^{m-1} \omega_i}\}$ of $\CpolyW$, where $\alpha$ satisfies $\tilde \alpha = (I-c)^{-1} \alpha = c^{m-1} \omega_k$.
\end{lemma}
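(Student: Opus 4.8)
Write $\Phi$ for the injective linear map $(m_\alpha,z)\mapsto(a_\omega)$ of Proposition~\ref{prop:satisfy}. The plan is to prove the sharper statement that a $W$-balanced pair $((m_\alpha),z)$ satisfies $m_\alpha=0$ if and only if its image $\Phi(m_\alpha,z)$ satisfies the defining equality of the face, namely equality in the instance of \eqref{eq:conefacets2} whose distinguished twisted root is $c^{m-1}\omega_k=\tilde\alpha$. Since $\Phi$ is linear with image inside $\CpolyW$, this immediately gives that $\Phi$ carries the facet $\{m_\alpha=0\}$ into the face and that the $\Phi$-preimage of the face is exactly that facet. It is enough to work with the negated inequalities \eqref{eq:conefacets2}, since by Lemma~\ref{lem:YZ} these are equivalent to \eqref{eq:conefacets} and equality transfers between them.

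I would first reduce to a normal form. By Propositions~\ref{prop:changeword} and~\ref{prop:changeR+}, the Coxeter necklace $\v(m_\alpha,z)$—hence $Q(m_\alpha,z)$ and the numbers $a_\omega$—is unchanged up to a translation when the reduced word of $c$ is changed or $R^+$ is replaced by $s_1R^+$, while by Proposition~\ref{prop:coneindep} the cone $\CpolyW$ and all its faces depend only on $c$, and the facet $\{m_\alpha=0\}$ of the balanced cone does not reference $R^+$ at all. Composing such operations (which act on the enumeration of twisted roots by cyclic shifts, by Propositions~\ref{prop:compatible} and~\ref{prop:changeR+}), one can arrange that $c=s_1s_2\cdots s_r$ is a standard reduced word and that $\alpha=\beta_1=\alpha_1$, so $\tilde\alpha=(I-c)^{-1}\alpha_1=\omega_1$. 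Because $s_1$ occurs first in $c$, every Dynkin edge at vertex $1$ is oriented towards $1$, so the instance of \eqref{eq:conefacets2} attached to $\tilde\alpha=\omega_1$ reads
\begin{equation*}
a_{-\omega_1}+a_{-c\omega_1}\ \geq\ \sum_{i\to 1}(-A_{i1})\,a_{-\omega_i}.
\end{equation*}

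Now I would evaluate both sides on $\Phi(m_\alpha,z)$. Writing $\v(m_\alpha,z)=(v_0,\dots,v_{h-1})$ with $v_0=z$, Proposition~\ref{prop:necklace} shows that each $v_i$ maximizes every functional $-c^i\omega_j$ over $Q(m_\alpha,z)$; hence $a_{-\omega_j}=(v_0,-\omega_j)=-(z,\omega_j)$ and $a_{-c\omega_1}=(v_1,-c\omega_1)$. By \eqref{eq:m2v}, $v_1=z+\sum_{k=1}^r m_{\beta_k}\beta^\vee_k$, so $a_{-c\omega_1}=-(z,c\omega_1)-\sum_{k=1}^r m_{\beta_k}(\beta^\vee_k,c\omega_1)$. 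The crux is the computation of the pairings $(\beta^\vee_k,c\omega_1)=(c^{-1}\beta^\vee_k,\omega_1)$. For $k=1$ we have $\beta_1=\alpha_1$ and, by Proposition~\ref{prop:c_orbits}, $c\omega_1=\omega_1-\beta_1=\omega_1-\alpha_1$, so $(\alpha^\vee_1,c\omega_1)=(\alpha^\vee_1,\omega_1)-(\alpha^\vee_1,\alpha_1)=1-2=-1$. For $k\geq 2$, cancelling the common factor in $c^{-1}\beta_k=(s_rs_{r-1}\cdots s_1)(s_1\cdots s_{k-1}\alpha_k)=s_rs_{r-1}\cdots s_k\,\alpha_k$ shows $c^{-1}\beta_k$ lies in the span of $\alpha_k,\dots,\alpha_r$; being a root of the parabolic subsystem on $\{\alpha_k,\dots,\alpha_r\}$, its coroot $c^{-1}\beta^\vee_k$ lies in the span of $\alpha^\vee_k,\dots,\alpha^\vee_r$, and since $k\geq 2$ this gives $(c^{-1}\beta^\vee_k,\omega_1)=0$. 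Therefore $a_{-c\omega_1}=-(z,c\omega_1)+m_{\beta_1}$. Substituting into the displayed inequality and using the homogeneous identity $\omega_1+c\omega_1=\sum_{i\to 1}(-A_{i1})\omega_i$ of Proposition~\ref{prop:homogeneous} to cancel all terms involving $z$, the inequality collapses to $m_{\beta_1}\geq 0$, and equality holds precisely when $m_{\beta_1}=m_\alpha=0$. Undoing the normalization proves the lemma.

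The step I expect to be the main obstacle is the pairing computation for $k\geq 2$: Lemma~\ref{lem:bound} only yields the inequality $(c^{-1}\beta^\vee_k,\omega_1)\leq 0$, whereas the argument needs the exact vanishing, which I would obtain from the telescoping identity $s_r\cdots s_1\cdot s_1\cdots s_{k-1}=s_r\cdots s_k$ together with the compatibility of the passage to coroots with parabolic subsystems. A secondary, purely bookkeeping point is verifying that each reduction operation (change of reduced word, change of $R^+$, cyclic action) transports the facet $\{m_\alpha=0\}$, the cone $\CpolyW$, and the distinguished face simultaneously and compatibly; this follows from Propositions~\ref{prop:coneindep},~\ref{prop:changeword}, and~\ref{prop:changeR+}, but requires care with the indexing.
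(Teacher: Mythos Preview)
Your proposal is correct and follows essentially the same approach as the paper: reduce via the cyclic symmetry (Propositions~\ref{prop:coneindep} and~\ref{prop:changeR+}) to the case $c=s_1\cdots s_r$ and $\alpha=\alpha_1$, then compute directly that $(\beta_k^\vee,c\omega_1)=-\delta_{k1}$ using the telescoping identity $c^{-1}s_1\cdots s_{k-1}=s_r\cdots s_k$ so that the inequality collapses to $m_{\alpha_1}\geq 0$. The paper's proof is terser—it sets $z=0$ and simply refers back to the end of the proof of Proposition~\ref{prop:satisfy}—while you keep $z$ general and cancel it via Proposition~\ref{prop:homogeneous}, but the substance is identical.
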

\begin{proof}
In the end of the proof of Proposition \ref{prop:satisfy}, it suffices to note that $a_{-c\omega_1} = m_{\alpha_1}$.  Indeed, for $i \in[2,r]$, we have
$$
(\beta_i^\vee,c\omega_1) = ( c^{-1}s_1 s_2 \cdots s_{i-1}\alpha_i^\vee,\omega_1) =  (s_{r} \cdots s_i\alpha_i^\vee,\omega_1) = 0
$$
but $(\beta^\vee_1,c\omega_1) = (\alpha^\vee_1,\omega_1-\alpha_1) = -1$.
\end{proof}
It follows that each equality $\{a_{-c^{m-1}\omega_k}+ a_{-c^m\omega_k} = \sum_{k \to i} -A_{ik} a_{-c^m \omega_i}+ \sum_{i \to k} -A_{ik} a_{-c^{m-1} \omega_i}\}$ defines a facet of $\CpolyW$.

\begin{proof}[Proof of Theorem~\ref{thm:threecones}]
We compare (1) and (2).
It follows from Lemma~\ref{lem:face2face} that each equality $a_{-c^{m-1}\omega_k}+ a_{-c^m\omega_k} = \sum_{k \to i} -A_{ik} a_{-c^m \omega_i}+ \sum_{i \to k} -A_{ik} a_{-c^{m-1} \omega_i}$ defines a facet of the cone of $(W,c)$-prepolypositroids.  It then follows from the same lemma that the map $(m_\alpha,z) \mapsto (a_\omega)$ is a linear isomorphism, completing the proof of the isomorphism between the three cones.

The last sentence of Theorem~\ref{thm:threecones} follows from \eqref{eq:maxQ}.
\end{proof}

\section{From prepolypositroids to polypositroids}

\subsection{Alcoved envelope of generalized $W$-permutohedra}

\begin{theorem}\label{thm:permtopre} \
\begin{enumerate}
\item
The cone of generalized $W$-permutohedra $\CsubW$ projects to to the cone of $(W,c)$-prepolypositroids $\CpolyW$ by projecting the vector $(a_{w\omega_i}) \in \R^{W \cdot \{\omega_1,\ldots,\omega_r\}}$ of \eqref{eq:genpermineq} to $(a_{c^m \omega_i}) \in \R^{\tR}$.
\item
The $(W,c)$-twisted alcoved envelope of a generalized $W$-permutohedron is a $(W,c)$-prepolypositroid.
\item
The vertices $(v_\id, v_c, v_{c^2},\ldots, v_{c^{h-1}})$ of a generalized $W$-permutohedron is a $(W,c)$-Coxeter necklace.
\end{enumerate}
\end{theorem}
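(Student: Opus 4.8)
The three assertions are tightly linked, and the plan is to prove them in the order (3), (2), (1), each feeding the next. Part (3) is essentially already contained in the proof of Proposition~\ref{prop:genWenv}: given a generalized $W$-permutohedron $P$, Corollary~\ref{cor:genW} supplies its vertices $v_w$, $w\in W$; fixing a reduced word $c=s_1s_2\cdots s_r$ and walking along $v_{\id},v_{s_1},v_{s_1s_2},\ldots,v_{s_1\cdots s_r}=v_c$, Theorem~\ref{th:intersection_C_simple} gives $v_{s_1\cdots s_i}-v_{s_1\cdots s_{i-1}}=a_i\,s_1\cdots s_{i-1}(\alpha_i^\vee)=a_i\beta_i^\vee$ with $a_i\ge 0$, so that $v_c-v_{\id}=\sum_{i=1}^r a_i\beta_i^\vee$ lies in the cone $A$. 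Applying powers of $c$, which permute the vertices compatibly, shows that $(v_{\id},v_c,\ldots,v_{c^{h-1}})$ satisfies the defining condition of Definition~\ref{def:necklace}, hence is a $(W,R^+,c)$-Coxeter necklace; by Lemma~\ref{lem:Rchange} the choice of $R^+$ compatible with $c$ is immaterial. In short, part (3) may simply be quoted from Proposition~\ref{prop:genWenv}.

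For part (2), I would argue as follows. Let $P$ be a generalized $W$-permutohedron. By Proposition~\ref{prop:genWenv}, $\env_c(P)=Q(\v)$ where $\v=(v_{\id},v_c,\ldots,v_{c^{h-1}})$ is the Coxeter necklace of part (3); by Proposition~\ref{prop:balneck} one may write $\v=\v(m_\alpha,z)$ for a $W$-balanced pair $((m_\alpha),z)$, so that $\env_c(P)=Q(m_\alpha,z)$ is a $(W,c)$-balancedtope. Proposition~\ref{prop:satisfy} (equivalently, Theorem~\ref{thm:threecones}) then shows that the vector $(a_\omega)_{\omega\in\tR}$ of support values \eqref{eq:maxQ} attached to $Q(m_\alpha,z)$ satisfies the inequalities \eqref{eq:conefacets}, i.e.\ lies in $\CpolyW$. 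Hence $\env_c(P)$ is a $(W,c)$-prepolypositroid.

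For part (1), the stated map is the coordinate-forgetting projection $\R^{W\cdot\{\omega_1,\ldots,\omega_r\}}\to\R^{\tR}$, legitimate because $\tR=\Gamma\cdot\{\omega_1,\ldots,\omega_r\}\subseteq W\cdot\{\omega_1,\ldots,\omega_r\}$ by \eqref{eq:tR}; it is plainly linear and sends $0$ to $0$, so the only thing to check is that it carries $\CsubW$ into $\CpolyW$. By Proposition~\ref{prop:envelope}, $\env_c(P)$ is cut out precisely by the inequalities $(x,\omega)\le\max_{y\in P}(y,\omega)$ for $\omega\in\tR$, so the restriction to $\tR$ of the support function of $P$ \emph{is} the support-function vector of $\env_c(P)$ in the sense of \eqref{eq:maxQ}, and part (2) places it in $\CpolyW$. (Surjectivity of this projection onto $\CpolyW$ is a separate matter, true in type $A$ by Theorem~\ref{thm:subpoly} and conjectured in general in Conjecture~\ref{conj:genpermalc}, and is not needed here.) Given Propositions~\ref{prop:genWenv}, \ref{prop:balneck}, \ref{prop:satisfy} and Theorem~\ref{thm:threecones}, I anticipate no genuine difficulty; the one point that wants care is this last identification of the minimal ``$a_\omega$'' of $P$ with those of $\env_c(P)$ — resting on Proposition~\ref{prop:envelope} and the inclusion $\tR\subseteq W\cdot\{\omega_1,\ldots,\omega_r\}$ — together with keeping straight the indexing conventions linking Definition~\ref{def:necklace}, the chunked construction of Proposition~\ref{prop:balneck}, and the powers of $c$ appearing in part (3).
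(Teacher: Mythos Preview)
Your proof is correct and follows essentially the same route as the paper's own argument: part (3) is quoted from the proof of Proposition~\ref{prop:genWenv}, and parts (2) and (1) are then deduced via the identification of $(W,R^+,c)$-Coxeter necklaces with $(W,c)$-prepolypositroids in Theorem~\ref{thm:threecones}. The paper's proof is just the one-line version of what you wrote; your extra care in explaining how the projected support values of $P$ coincide with those of $\env_c(P)$ (via Proposition~\ref{prop:envelope}) is a welcome clarification rather than a different approach.
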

\begin{proof}
(3) was established in the proof of Proposition~\ref{prop:genWenv}.  (1) and (2) thus follows from Theorem~\ref{thm:threecones}.
\end{proof}

\begin{conjecture}\label{conj:genpermalc}
The maps in Theorem \ref{thm:permtopre} are surjective.
\end{conjecture}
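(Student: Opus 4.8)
The plan is to prove that the map $\pi\colon \CsubW \to \CpolyW$ of Theorem~\ref{thm:permtopre}(1) is surjective; by Theorem~\ref{thm:threecones} all three maps of Theorem~\ref{thm:permtopre} are surjective as soon as $\pi$ is, and surjectivity of $\pi$ is equivalent to the geometric statement that every $(W,c)$-prepolypositroid is the $c$-twisted alcoved envelope of some generalized $W$-permutohedron. Since $\pi$ is a homomorphism of cones and $\CsubW$ is closed under Minkowski sum (addition), the image $\pi(\CsubW)$ is a subcone of the polyhedral cone $\CpolyW$; it contains the lineality space of $\CpolyW$ (translations, realized by single points), so it suffices to hit every extremal ray of $\CpolyW$. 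I would therefore first reduce the conjecture to: for each extremal ray $\rho$ of $\CpolyW$, there is a generalized $W$-permutohedron $P$ with $\pi\bigl(f_P|_{W\cdot\{\omega_1,\ldots,\omega_r\}}\bigr)\in\rho$.

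Next I would describe the extremal rays. By Theorem~\ref{thm:threecones}, modulo translations $\CpolyW$ is isomorphic to the cone $\{(m_\alpha)_{\alpha\in R}\mid m_\alpha\ge 0,\ \sum_\alpha m_\alpha\alpha^\vee=0\}$ of $W$-balanced arrays, i.e.\ $\R^R_{\ge 0}$ intersected with a linear subspace. Its extremal rays are the ``positive circuits'': rays spanned by arrays $(m_\alpha)$ whose support $S\subseteq R$ is minimal subject to the coroots $\{\alpha^\vee\mid\alpha\in S\}$ admitting a positive linear dependence (which is then unique up to scale). This generalizes the description of the rays of $\Cpoly'$ by directed cycles in Corollary~\ref{cor:rays}, to which it specializes in type $A$. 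For such an $S$ the balancedtope $Q(m_\alpha,z)$ is low-dimensional, with affine hull expected to be the translate of $\mathrm{span}\{\alpha^\vee\mid\alpha\in S\}$; its vertex set contains the closed polygonal Coxeter belt $\u(m_\alpha,z)$ traced by the nonzero steps $\beta_i^\vee$, $\beta_i\in S$ (Corollary~\ref{cor:belt}).

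For the construction I would try the simplest candidate first: the zonotope $Z_S:=\sum_{\alpha\in S}[0,m_\alpha\alpha^\vee]$. Its edges are parallel to coroots, so $Z_S$ is a generalized $W$-permutohedron, and one only has to verify that its $c$-twisted alcoved envelope equals $Q(m_\alpha,z)$ for a suitable $z$. Via Proposition~\ref{prop:envelope} and Theorem~\ref{thm:threecones}, this reduces to the support-function identity
\[
\max_{x\in Z_S}(x,\omega)\;=\;\sum_{\alpha\in S}m_\alpha\max\bigl(0,(\alpha^\vee,\omega)\bigr)\;=\;a_\omega\bigl(Q(m_\alpha,z)\bigr)\qquad(\omega\in\tR),
\]
whose right-hand side is read off from the Coxeter necklace exactly as in the proof of Proposition~\ref{prop:satisfy}; proving it should be a matter of the sign control over the pairings $(c^m\beta_i^\vee,\omega_k)$ supplied by Lemma~\ref{lem:bound} together with the bookkeeping of Lemma~\ref{lem:YZ}. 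If $Z_S$ proves too crude (it ignores the cyclic order of the belt), the fallback is to replace it by a Minkowski sum of coordinate-type simplices attached to the arcs of the belt, or by a generalized associahedron summand, engineered so that its $c$-twisted envelope is $Q(m_\alpha,z)$.

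The \textbf{main obstacle} is precisely the point where the type $A$ argument does not transfer. In type $A$ one proves surjectivity (Theorem~\ref{thm:subpoly}) by showing every polypositroid is its own envelope and a generalized permutohedron, which rests on Lemma~\ref{lem:noncrossing} and Lemma~\ref{lem:possibleedge}: faces of $Q(\v)$ have noncrossing face graphs, and noncrossing face data forces every edge into a root direction. The Coxeter analogue one would want is that, for a generic $(W,R^+,c)$-Coxeter necklace, the face data $S(F)\subseteq\tR$ of $Q(\v)$ is ``$c$-noncrossing'' in the sense of Section~\ref{sec:Wnormal}, and that $c$-noncrossing face data forces the edges of $Q(\v)$ to be parallel to coroots --- which would show directly that every $(W,c)$-balancedtope is a generalized $W$-permutohedron, hence its own envelope, hence in $\pi(\CsubW)$. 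The difficulty is that $\CpolyW$ is cut out by only the $|R|$ inequalities \eqref{eq:conefacets}, whereas forcing ``edges are coroots'' seems a priori to need the further cluster-exchange inequalities of Theorem~\ref{thm:moreinequalities}; Conjecture~\ref{conj:genpermalc} is essentially the assertion that \eqref{eq:conefacets} already implies everything needed (as happens in type $A$, where \eqref{eq:anoncross} is a consequence of \eqref{eq:conefacets}). I would attack this by proving, first in low rank and then in general, that \eqref{eq:conefacets} implies the exchange inequalities, using the $c$-noncrossing tree combinatorics and the reflection-factorization bookkeeping of Section~\ref{sec:Wnormal}; this is where I expect the real work, and why the statement is still only a conjecture.
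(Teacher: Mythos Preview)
This statement is an open \emph{conjecture} in the paper; there is no proof to compare to. The paper verifies it only in type $A$ (via Theorem~\ref{thm:pretypeA}) and checks a single $D_4$ instance by hand in Section~\ref{ssec:D4ex}. Your proposal is a plan of attack, and both of its branches have genuine gaps.

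\medskip
\textbf{The ``balancedtopes are generalized $W$-permutohedra'' route is closed.} Your final paragraph proposes to transplant the type $A$ argument: show face data of $Q(\v)$ is $c$-noncrossing, deduce that all edges are coroot directions, and conclude that every balancedtope is already a generalized $W$-permutohedron. The paper's own $D_4$ example (Section~\ref{ssec:D4ex}) refutes this: the prepolypositroid displayed there has an edge in the direction $(0,0,0,1)$, which is not a root direction, so it is \emph{not} a generalized $W$-permutohedron. The conjecture only asserts that it is the \emph{envelope} of one, and indeed the paper exhibits such a permutohedron explicitly. Note also that the step you single out as ``the real work''---that \eqref{eq:conefacets} implies the cluster exchange inequalities---is already established as Theorem~\ref{thm:moreinequalities}; it is true but does not force edges into root directions, as the $D_4$ example shows. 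The obstruction is at the last arrow (noncrossing face data $\Rightarrow$ edges are coroots), not at the first.

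\medskip
\textbf{The zonotope candidate fails already in type $A$.} Take $R=A_2$ and the $3$-cycle $S=\{e_2-e_1,\,e_3-e_2,\,e_1-e_3\}$ with all $m_\alpha=1$. The zonotope $Z_S=[0,e_2-e_1]+[0,e_3-e_2]+[0,e_1-e_3]$ is the regular hexagon; its Coxeter necklace (the vertices $v_{\id},v_c,v_{c^2}$) has successive differences
\[
(-2,1,1),\qquad (1,-2,1),\qquad (1,1,-2),
\]
each of which decomposes as a sum of \emph{two} coroots. Hence the balanced array attached to $\env_c(Z_S)$ has $m'_\alpha=1$ for \emph{all six} roots, not just the three in $S$; equivalently, $\env_c(Z_S)$ is the hexagon itself, while the balancedtope $Q(m_\alpha,z)$ for the $3$-cycle is a triangle. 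So $\env_c(Z_S)$ does not lie on the prescribed extremal ray. The zonotope forgets the cyclic order of the support along the Coxeter belt, exactly as you suspected, and your proposed ``fallbacks'' (simplices along belt arcs, associahedron summands) are at this point only heuristics without a verified construction.

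\medskip
In summary: reducing to extremal rays and seeking an explicit generalized $W$-permutohedron landing on each ray is a reasonable strategy, but neither the zonotope nor the type-$A$-style argument you offer works, and the conjecture remains open.
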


\subsection{Type $A$}

\begin{theorem}\label{thm:pretypeA}
Suppose $R$ is of type $A$.  Then every $(W,c)$-prepolypositroid is also a generalized $W$-permutohedron.  Thus the class of $(W,c)$-prepolypositroids is identical to the class of $(W,c)$-polypositroids.
\end{theorem}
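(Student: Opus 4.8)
The plan is to deduce the theorem from the structural results already established, rather than to manipulate the inequalities \eqref{eq:conefacets} directly. First I would reduce to the case $c = (1\,2\,\cdots\,n)$, the standard long cycle of $S_n$: every Coxeter element of $S_n$ is conjugate to this one by some $w \in W$, and the conjugation $c \mapsto wcw^{-1}$, $P \mapsto w\cdot P$ carries $(W,c)$-twisted alcoved polytopes and generalized $W$-permutohedra (the latter class being $W$-invariant) to the corresponding objects for $wcw^{-1}$, and — via Theorem~\ref{thm:threecones}, Lemma~\ref{lem:cchange}, and Proposition~\ref{prop:balneck} — does the same for $(W,c)$-prepolypositroids. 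So it suffices to treat the standard $c$. For this choice (Examples~\ref{ex:Aroot} and~\ref{ex:Aroot2}) all the Part~\ref{part:1} identifications are in force: $(W,c)$-twisted alcoved polytopes are alcoved polytopes, generalized $W$-permutohedra are generalized permutohedra, $(W,R^+,c)$-Coxeter necklaces are Coxeter necklaces in the sense of Definition~\ref{def:Anecklace}, the Coxeter number is $h = n$, the cone $C$ of \eqref{eq:C} equals the cone \eqref{eq:typeAcone}, and $Q(\v) = \bigcap_{i=0}^{h-1}(v_i + c^i(C))$ coincides with the polytope $Q(\v)$ of \eqref{eq:Qdef}.

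Now let $P$ be a $(W,c)$-prepolypositroid. By Theorem~\ref{thm:threecones}, $P$ is a $(W,c)$-balancedtope, i.e.\ $P = Q(m_\alpha,z)$ for some $W$-balanced pair $((m_\alpha),z)$ (two twisted alcoved polytopes with the same support values on $\tR$ coincide, so the isomorphism of Theorem~\ref{thm:threecones} genuinely identifies $P$ with $Q(m_\alpha,z)$). By Proposition~\ref{prop:balneck}, $Q(m_\alpha,z) = Q(\v)$ with $\v = \v(m_\alpha,z)$ a $(W,R^+,c)$-Coxeter necklace, hence — by the first paragraph — a Coxeter necklace in $H_0$ in the sense of Definition~\ref{def:Anecklace}. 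By Theorem~\ref{thm:main} (equivalence of (1) and (2)), $Q(\v)$ is a polypositroid, in particular a generalized permutohedron, i.e.\ a generalized $W$-permutohedron. Thus $P$ is both a $(W,c)$-twisted alcoved polytope and a generalized $W$-permutohedron, so $P$ is a $(W,c)$-polypositroid. This proves the first assertion; the reverse inclusion (every $(W,c)$-polypositroid is a $(W,c)$-prepolypositroid) holds in every type, since a $(W,c)$-polypositroid $P$ is a generalized $W$-permutohedron with $\env_c(P) = P$ and is therefore a prepolypositroid by Theorem~\ref{thm:permtopre}(2).

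I expect the only real work to lie in the bookkeeping of the first paragraph: verifying that the conjugation reduction is legitimate for prepolypositroids and not merely for polypositroids (where it is Remark~\ref{rem:cc'}), and that each ``agrees with'' identification between the Coxeter-theoretic notions and the Part~\ref{part:1} notions is exact under the conventions of Example~\ref{ex:Aroot}. An alternative, more computational route avoids this by showing directly that the $n(n-1)$ inequalities \eqref{eq:conefacets} imply the full families \eqref{eq:triangle} and \eqref{eq:anoncross}, whence $P \in \Cpoly$ by Theorems~\ref{thm:triangle} and~\ref{thm:conenoncross}. In type $A$ with $c = (1\,2\,\cdots\,n)$ the inequalities \eqref{eq:conefacets} are exactly the instances of \eqref{eq:triangle} and \eqref{eq:anoncross} in which the indices are cyclically adjacent; the general instances are recovered by telescoping, since the ``noncrossing'' family expresses a supermodularity of $(i,j)\mapsto a_{ij}$ in cyclically consecutive coordinates, so that summing consecutive instances yields $a_{ik}+a_{jl}\geq a_{il}+a_{jk}$ for arbitrary cyclically ordered $i,j,k,l$, while the ``triangle'' family provides both the base cases and the inequalities \eqref{eq:triangle}. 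The main nuisance along this route is the wrap-around in the cyclic index ranges together with the degenerate cases in which two of the four indices collide — precisely the fiddly bookkeeping that the structural proof sidesteps.
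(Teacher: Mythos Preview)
Your proposal is correct and follows essentially the same route as the paper's proof: reduce to the standard long cycle via conjugation, then invoke Theorem~\ref{thm:threecones} together with Theorem~\ref{thm:main} to identify $(W,c)$-prepolypositroids with polypositroids. You are in fact more careful than the paper on two points: you justify the conjugation step for \emph{prepolypositroids} (via Lemma~\ref{lem:cchange} and Proposition~\ref{prop:balneck}) rather than only citing Remark~\ref{rem:cc'}, and you spell out the reverse inclusion using Theorem~\ref{thm:permtopre}(2).
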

\begin{proof}
Let $c$ be the Coxeter element of Example~\ref{ex:Aroot}.  Then by Theorem~\ref{thm:main} and Theorem~\ref{thm:threecones}, the class of $(W,c)$-prepolypositroids is exactly the class of polypositroids.  Thus the result holds in this case.  Now let $c' = wcw^{-1}$ be an arbitrary Coxeter element.  Since the class of generalized $W$-permutohedra is preserved under the action of $W$, the result holds by Remark~\ref{rem:cc'}.
\end{proof}

It follows from Theorem~\ref{thm:pretypeA} that Conjecture~\ref{conj:genpermalc} holds in type $A$.

\subsection{A prepolypositroid that is not a polypositroid}\label{ssec:D4ex}
We give an example of a $(W,c)$-prepolypositroid that is not a $(W,c)$-polypositroid. 
Let $R = D_4$.  We take as positive simple roots
$$
\alpha_1=(1,-1,0,0), \qquad \alpha_2=(0,1,-1,0), \qquad \alpha_3=(0,0,1,-1), \qquad \alpha_4=(0,0,1,1),
$$
and let $c = s_1s_2s_3s_4$, so that the Dynkin diagram is oriented
$$
\begin{tikzpicture}[scale=0.7]
\tikzset{>=latex}
\filldraw[black] (0:0) circle (1.5pt);
\filldraw[black] (180:1) circle (1.5pt);
\filldraw[black] (-60:1) circle (1.5pt);
\filldraw[black] (60:1) circle (1.5pt);
\draw[->,thick] (0:0)--(180:1);
\draw[->,thick] (-60:1)--(0:0);
\draw[->,thick] (60:1)--(0:0);
\node at (180:1.2) {$1$};
\node at (0:0.3) {$2$};
\node at (-60:1.2) {$4$};
\node at (60:1.3) {$3$};
\end{tikzpicture}
$$
%
%
The ordering of the 24 roots of $R$ is given by (here $r=4$ and $h=6$)
\begin{align*}
&(1, -1, 0, 0), &&(1, 0, -1, 0), &&(1, 0, 0, -1), &&(1, 0, 0, 1), \\ 
&(0, 1, -1,0), &&(1, 1, 0, 0), &&(0, 1, 0, 1), &&(0, 1, 0, -1), \\ 
&(1, 0, 1, 0), &&(0, 1,1, 0), &&(0, 0, 1, -1), &&(0, 0, 1, 1), \\
&(-1, 1, 0, 0), &&(-1, 0, 1, 0), &&(-1, 0, 0, 1), &&(-1, 0, 0, -1), \\
&(0, -1, 1, 0), &&(-1, -1, 0, 0), &&(0, -1, 0, -1), &&(0, -1, 0, 1), \\
&(-1, 0, -1, 0), &&(0, -1, -1, 0), &&(0, 0, -1, 1), &&(0, 0, -1, -1),
\end{align*}
and we see that $M(k) = 3$ for $k=1,2,3,4$.
The twisted roots $\tR$ are, in the same order,
\begin{align*}
\label{eq:D4twist}
&(1, 0, 0, 0), &&(1, 1, 0, 0), &&(\tfrac{1}{2}, \tfrac{1}{2}, \tfrac{1}{2}, -\tfrac{1}{2}), &&(\tfrac{1}{2}, \tfrac{1}{2}, \tfrac{1}{2},\tfrac{1}{2}), \\
&(0, 1, 0, 0), &&(0, 1, 1, 0), &&(-\tfrac{1}{2}, \tfrac{1}{2}, \tfrac{1}{2}, \tfrac{1}{2}), &&(-\tfrac{1}{2}, \tfrac{1}{2}, \tfrac{1}{2}, -\tfrac{1}{2}), \\
&(0, 0, 1, 0), &&(-1, 0, 1, 0), &&(-\tfrac{1}{2}, -\tfrac{1}{2}, \tfrac{1}{2}, -\tfrac{1}{2}), &&(-\tfrac{1}{2}, -\tfrac{1}{2}, \tfrac{1}{2}, \tfrac{1}{2}),\\
&(-1, 0, 0, 0), &&(-1, -1, 0, 0), &&(-\tfrac{1}{2}, -\tfrac{1}{2}, -\tfrac{1}{2}, \tfrac{1}{2}), &&(-\tfrac{1}{2}, -\tfrac{1}{2}, -\tfrac{1}{2}, -\tfrac{1}{2}), \\
&(0, -1, 0, 0), &&(0, -1, -1, 0), &&(\tfrac{1}{2}, -\tfrac{1}{2}, -\tfrac{1}{2}, -\tfrac{1}{2}), &&(\tfrac{1}{2}, -\tfrac{1}{2}, -\tfrac{1}{2}, \tfrac{1}{2}), \\
&(0, 0, -1, 0), &&(1, 0, -1, 0), &&(\tfrac{1}{2}, \tfrac{1}{2}, -\tfrac{1}{2}, \tfrac{1}{2}), &&(\tfrac{1}{2}, \tfrac{1}{2}, -\tfrac{1}{2}, -\tfrac{1}{2}).
\end{align*}
We consider the Coxeter necklace
$$
\v=( v_0=(0, 0, 0, 0), v_1=v_2=(1, 0, 0, 1), v_3=v_4=(1, 1, 1, 1), v_5=(0, 0, 1, 1)).
$$
The polytope $Q(\v)$ is a $(W,c)$-prepolypositroid and the $a_\omega$ are given by 
$$(1, 2, 1, 2, 1, 2, 1, 0, 1, 1, 0, 1, 0, 0, 0, 0, 0, 0, 0, 1, 0, 1, 1,0)$$
in the same order as $\tR$.
The inequalities \eqref{eq:conefacets} can be verified directly.  For example, we have
$$
2+2 = a_{(1,1,0,0)}+a_{(0,1,1,0)} \geq a_{(\tfrac{1}{2}, \tfrac{1}{2}, \tfrac{1}{2}, -\tfrac{1}{2})}+ a_{(\tfrac{1}{2}, \tfrac{1}{2}, \tfrac{1}{2},\tfrac{1}{2})} + a_{(0,1,0,0)} \geq 1+2+1.
$$
$$ 1+1 = a_{(\tfrac{1}{2}, \tfrac{1}{2}, \tfrac{1}{2}, -\tfrac{1}{2})} + a_{(-\tfrac{1}{2}, \tfrac{1}{2}, \tfrac{1}{2}, \tfrac{1}{2})} \geq a_{(0, 1, 1, 0)}=2.$$
Now, one computes (for example, by \cite{polymake}) that $Q(\v)$ has seven vertices
$$
\{(0, 0, 1, 1), (1, 0, 1, 2), (1, 0, 0, 1), (1, 1, 1, 1), (1, 0, 1,0), (0, 0, 0, 0), (0, 1, 0, 1)\}
$$
and that there is an edge connecting $(1,0,1,2)$ and $(1,0,1,0)$.  Indeed, one can check that this edge is the intersection of the three facets indexed by $(1,0,0,0)$, $(0,0,1,0)$, and $(0,-1,0,0)$.  This edge is in the direction $(0,0,0,1)$, which is not a root direction.  Thus, $Q(\v)$ is not a generalized $W$-permutohedron.

It turns out that $Q(\v)$ {\bf is} the $(W,c)$-twisted alcoved envelope of a generalized permutohedron, namely one with vertices
$$
\{(0, 0, 1, 1), (1, 0, 0, 1), (1, 1, 1, 1), (1, 0, 1, 0), (0, 0, 0,0), (0, 1, 0, 1)\},
$$
consistent with Conjecture~\ref{conj:genpermalc}.

\section{Prepolypositroids and finite type cluster algebras}\label{sec:cluster}

We briefly recall some basic facts concerning finite type cluster algebras, following \cite{YZ}.  Let $\A(W,R^+,c)$ denote the finite type cluster algebra with principal coefficients of type $R$ and associated to a compatible pair $(R^+,c)$.  Usually, the choice of positive system $R^+$ is not made explicit in the theory of cluster algebras, but for our purposes it is necessary.

The cluster algebra $\A(W,R^+,c)$ is a commutative subring of the field of rational functions $\C(x_1,x_2,\ldots,x_r,y_1,y_2,\ldots,y_r)$, where $x_i$ (resp. $y_i$) are called {\it initial mutable cluster variables} (resp. {\it coefficient variables}).  The choice of $c$ determines an initial exchange matrix $B$, given by the formula \cite[(1.4)]{YZ}.  The cluster algebra $\A(W,R^+,c)$ contains a distinguished set of {\it cluster variables}, and associated to each cluster variable is a \emph{$g$-vector} which belongs to $V$.

\begin{theorem}[{\cite[Theorem 1.4 and Theorem 1.10]{YZ}}]
The cluster variables $x_{\tbeta}$ of $\A(W,R^+,c)$ are labeled by the set 
$$
\Pi(c):= \{c^m \omega_i \mid i = 1,2,\ldots,r \text{ and } 0 \leq m \leq M(i)\}
$$
where $M(i)$ is defined in Proposition~\ref{prop:listroots}.  Furthermore, $\tbeta$ is the {\rm $g$-vector} of $x_{\tbeta}$.
\end{theorem}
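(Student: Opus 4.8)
The plan is to deduce this as a dictionary translation of \cite[Theorems~1.4 and~1.10]{YZ}: no new argument is needed, only a matching of conventions, and the combinatorial input for that matching is already assembled in Proposition~\ref{prop:listroots} and Lemma~\ref{lem:YZ}. Recall that a finite type cluster algebra of rank $r$ with principal coefficients has exactly $r + |R^+|$ non-coefficient cluster variables, each carrying a $g$-vector in the weight lattice $\Lambda$, and that the map (cluster variable)~$\mapsto$~($g$-vector) is injective; so it suffices to identify the set of $g$-vectors with $\Pi(c)$ and to check that the labelling is the stated one.

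The first step is to rewrite $\Pi(c)$ root-theoretically. For fixed $i$, Proposition~\ref{prop:listroots}(2) gives $\beta_i, c\beta_i,\ldots,c^{M(i)-1}\beta_i \in R^+$ and $c^{M(i)}\beta_i \in R^-$, and these are precisely the roots of the $\Gamma$-orbit of $\beta_i$; since the $\Gamma$-orbits partition $R$ (Proposition~\ref{prop:listroots}(1)), as $(i,m)$ runs over $1\le i\le r$, $0\le m\le M(i)-1$ the roots $c^m\beta_i$ run over $R^+$ without repetition, so $\sum_i M(i) = |R^+| = hr/2$. Applying $(I-c)^{-1}$ (which commutes with $c$), using $\omega_i = \tbeta_i$ (Proposition~\ref{prop:c_orbits}) and $c^{M(i)}\beta_i = -\beta_{i^\star}$ (Lemma~\ref{lem:YZ}(1)), we get $c^{M(i)}\omega_i = -\omega_{i^\star}$; as $i\mapsto i^\star$ is an involution of $\{1,\ldots,r\}$ this yields
$$
\Pi(c)\ =\ \{\,\tbeta \mid \beta\in R^+\,\}\ \sqcup\ \{-\omega_1,\ldots,-\omega_r\},
$$
a set of cardinality $|R^+| + r$ --- the correct count of cluster variables. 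This presentation also makes visible that $\Pi(c)$ is independent of the reduced word of $c$ and that its dependence on $R^+$ is only through the relabelling of the $\tbeta_i$ (cf.\ Proposition~\ref{prop:changeword}).

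The second step is to match the $g$-vectors. Under the usual identification of the $g$-vector lattice with $\Lambda$ sending the $j$-th standard basis vector of $\Z^r$ to $\omega_j = \tbeta_j$, the initial cluster variables $x_1,\ldots,x_r$ acquire $g$-vectors $\omega_1,\ldots,\omega_r$, i.e.\ they are the variables $x_{\tbeta_1},\ldots,x_{\tbeta_r}$ in the notation of the theorem, and the remaining elements of $\Pi(c)$ are the $g$-vectors of the non-initial cluster variables, reached from the initial seed by the mutation sequence encoded by the $c$-order. That each $x_{\tbeta}$ indeed has $g$-vector exactly $\tbeta$ is \cite[Theorem~1.10]{YZ}, once the initial exchange matrix $B$ of \cite[(1.4)]{YZ} is read off from the Cartan data $A_{ij} = (\alpha_i^\vee,\alpha_j)$ and the $c$-orientation $j\to i$; \cite[Theorem~1.4]{YZ} supplies the enumeration of the cluster variables themselves.

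I expect the only real difficulty to be bookkeeping of conventions: roots versus coroots, the direction of the Dynkin orientation attached to $c$, $g$-vectors versus $c$-vectors versus denominator vectors in \cite{YZ}, the relabelling $i\mapsto i^\star$ by $w_0$, and the dependence on the choice of $R^+$ compatible with $c$ (which is usually suppressed in the cluster literature but is part of the data here). All of these must be aligned so that the statement comes out as ``$x_{\tbeta}$ has $g$-vector $\tbeta$'' with the signs in the right places; a change of $R^+$ within its compatibility class is controlled by Proposition~\ref{prop:compatible}, which realizes it as the cyclic shift $(\beta_2,\ldots,\beta_{hr},\beta_1)$ of the root ordering together with the matching relabelling of cluster variables.
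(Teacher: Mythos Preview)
Your proposal is correct and matches the paper's approach: the paper gives no proof of its own, simply citing this statement as \cite[Theorems~1.4 and~1.10]{YZ}. Your dictionary translation --- rewriting $\Pi(c)$ as $\{\tbeta : \beta\in R^+\}\sqcup\{-\omega_1,\ldots,-\omega_r\}$ via Proposition~\ref{prop:listroots}, Proposition~\ref{prop:c_orbits}, and Lemma~\ref{lem:YZ}, and then matching $g$-vectors under the identification $\Z^r\cong\Lambda$ --- is a correct elaboration of how the conventions align, but it goes beyond what the paper itself records.
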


It follows from Lemma~\ref{lem:YZ} that $\pm \omega_i \in \Pi(c)$.  The cluster variables $x_{\tbeta}$ are arranged into {\it clusters}.  We say that $\tbeta$ and $\tgamma$ (or  $x_{\tbeta}$ and $x_{\tgamma}$) are {\it $(R^+,c)$-compatible}, or simply {\it compatible}, if they belong to the same cluster; the clusters are exactly the collections of $r$ pairwise-compatible cluster variables.  The {\it $(W,R^+,c)$-cluster fan} (also called the {\it $c$-Cambrian fan}) is the complete fan in $V$ with cones ${\rm span}_{\geq 0}(\tgamma_1,\tgamma_2,\ldots,\tgamma_s)$ where $\{\tgamma_1,\tgamma_2,\ldots,\tgamma_s\} \subset \Pi(c)$ is a set of pairwise $(R^+,c)$-compatible vectors.  A polytope with normal fan equal to the $(W,R^+,c)$-cluster fan is called a {\it $(W,R^+,c)$-generalized associahedron}.  The following result combines work of Hohlweg, Lange and Thomas \cite{HLT}, Reading and Speyer \cite{RS}, and Yang and Zelevinsky \cite[Remark 6.1]{YZ}.

\begin{theorem}\label{thm:genass}
The $(W,R^+,c)$-cluster fan is a refinement of the $W$-Coxeter fan.  Furthermore, $(W,R^+,c)$-generalized associahedra exist and are generalized $W$-permutohedra.
\end{theorem}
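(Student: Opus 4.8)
The plan is to assemble the theorem from three results already in the literature, so that the substantive part is matching up conventions rather than producing a new argument. Recall that, as we have set it up, the $(W,R^+,c)$-cluster fan is the fan whose maximal cones are the nonnegative spans of the $g$-vectors of the cluster variables in a cluster of $\A(W,R^+,c)$; by the Yang--Zelevinsky theorem quoted above, the rays of this fan are the $\tbeta \in \Pi(c) \subset \tR$. The first step is to identify this $g$-vector fan with the $c$-Cambrian fan attached to the compatible pair $(R^+,c)$ --- the coincidence asserted by the parenthetical remark in the definition of the cluster fan, and made precise in \cite[Remark~6.1]{YZ}. Once this is in place, the first sentence of the theorem, that the cluster fan refines the $W$-Coxeter fan, is exactly the main theorem of Reading and Speyer \cite{RS}.

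For the second sentence, I would first invoke the realization (polytopality) theorem of Hohlweg, Lange and Thomas \cite{HLT}, which provides, for every compatible pair $(R^+,c)$, a polytope whose normal fan is the $(W,R^+,c)$-cluster fan; thus a $(W,R^+,c)$-generalized associahedron exists. Granting this, the assertion that such a polytope is a generalized $W$-permutohedron is then automatic: its normal fan is the cluster fan, which by the first sentence refines the $W$-Coxeter fan, and any polytope whose normal fan refines the $W$-Coxeter fan is a generalized $W$-permutohedron by the normal-fan characterization of generalized $W$-permutohedra recorded above (\cite[Theorem~15.3]{PRW}).

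The only genuine obstacle is bookkeeping. The three sources \cite{YZ}, \cite{RS} and \cite{HLT} each organize Cambrian/cluster combinatorics with their own conventions: the choice of ambient positive system (which must be kept fixed and compatible with $c$, since the notion of compatible cluster variables, and hence the cones of the fan, depends on it), the orientation of the Coxeter element and the corresponding sign of the initial exchange matrix, and whether one phrases matters via $g$-vectors, via denominator vectors, or via almost positive roots. Reconciling these dictionaries --- using Proposition~\ref{prop:compatible} to pass between positive systems compatible with $c$ when needed --- is the step that actually requires care; once it is carried out no further geometry is involved and the theorem follows.
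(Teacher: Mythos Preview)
Your proposal is correct and matches the paper's approach exactly: the paper offers no proof at all, merely attributing the theorem to the combined work of Hohlweg--Lange--Thomas \cite{HLT}, Reading--Speyer \cite{RS}, and Yang--Zelevinsky \cite[Remark~6.1]{YZ}, which are precisely the three sources you invoke. Your write-up in fact goes further than the paper by spelling out which piece comes from which reference and flagging the convention-matching as the only genuine work.
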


The $W$-Coxeter fan has maximal cones $C_w$ labeled by Weyl group elements $w \in W$.  By Theorem~\ref{thm:genass} every maximal cone of the $(W,R^+,c)$-cluster fan is a union of the cones $C_w$. A {\it $(W,R^+,c)$-singleton} \cite{HLT} is a Weyl group element $w$ such that $C_w$ is itself a maximal cone of the $(W,R^+,c)$-cluster fan.  Hohlweg, Lange and Thomas \cite[Theorem 1.2]{HLT} characterize the set of $(W,R^+,c)$-singletons as prefixes (up to commutation relations) of a particular reduced word of $w_0$ that depends on $R^+$ and $c$.

The following result should be compared with Corollary~\ref{cor:Tw}.
\begin{proposition}
Let $P$ be a generic simple $(W,c)$-polypositroid.
For any choice of $R^+$, and any $(W,R^+,c)$-singleton $w$, the cone $C_w$ is a maximal cone of the normal fan $\Ncal(P)$.
\end{proposition}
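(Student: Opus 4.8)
The plan is to exhibit $C_w$ as the outer normal cone of a vertex $v$ of $P$, in the spirit of the last sentence of Corollary~\ref{cor:Tw}. Since $P$ is a generalized $W$-permutohedron, every maximal cone of its normal fan $\Ncal(P)$ is a union of maximal cones of the $W$-Coxeter fan; in particular the Weyl chamber $C_w = w(D) = \mathrm{span}_{\geq 0}\langle w\omega_1,\dots,w\omega_r\rangle$ is contained in a unique maximal cone $N_v$ of $\Ncal(P)$, and it remains to prove $N_v = C_w$. The first observation I would record is that each $w\omega_i$ is a twisted root: as $w$ is a $(W,R^+,c)$-singleton, $C_w$ is a maximal cone of the $(W,R^+,c)$-cluster fan, whose rays are generated by the vectors of $\Pi(c)\subseteq\tilde R$; since the rays of the $W$-Coxeter fan are in bijection with $W\cdot\{\omega_1,\dots,\omega_r\}\supseteq\tilde R$ (no two distinct elements of which are positive scalar multiples of one another), the ray generators $w\omega_i$ of $C_w$ must in fact lie in $\Pi(c)$, hence in $\tilde R$.

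Next I would bring in a $(W,R^+,c)$-generalized associahedron $\mathrm{Ass}$, which exists by Theorem~\ref{thm:genass} and whose normal fan is the $(W,R^+,c)$-cluster fan, so that $C_w$ is one of \emph{its} maximal cones (in fact $\mathrm{Ass}$ is itself a $(W,c)$-polypositroid, its facet normals lying in $\Pi(c)\subseteq\tilde R$, but only the preceding is needed). For any $N>0$, the normal fan of the Minkowski sum $N\,\mathrm{Ass}+P$ is the common refinement $\Ncal(\mathrm{Ass})\wedge\Ncal(P)$; because $C_w$ is a maximal cone of $\Ncal(\mathrm{Ass})$ and lies inside the single cone $N_v$ of $\Ncal(P)$, the cone $C_w = C_w\cap N_v$ is itself a maximal (full-dimensional) cone of this refinement. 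Hence $N\,\mathrm{Ass}+P$ has a vertex with outer normal cone $C_w$, which by additivity of Minkowski sums must be $N p + v$, where $p$ is the unique vertex of $\mathrm{Ass}$ with normal cone $C_w$ and $v$ is the vertex of $P$ from the first paragraph. Using that support functions add under Minkowski sum, $N(p,\omega)+(v,\omega) = N\,a_\omega(\mathrm{Ass})+a_\omega(P)$ for every $\omega\in C_w$ and every $N>0$; comparing coefficients of $N$ gives $(v,\omega)=a_\omega(P)$, and in particular $(v,w\omega_i)=a_{w\omega_i}(P)$ for all $i$.

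To finish: since $P$ is generic and the $w\omega_i$ are twisted roots, $F_i := \{x\in P \mid (x,w\omega_i)=a_{w\omega_i}(P)\}$ is a facet of $P$, and the previous step shows $v\in F_1\cap\cdots\cap F_r$. As $w\omega_1,\dots,w\omega_r$ is a basis of $V$, the $F_i$ are $r$ distinct facets of $P$ through $v$; since $P$ is simple these are all the facets containing $v$, so $N_v = \mathrm{span}_{\geq 0}\langle w\omega_1,\dots,w\omega_r\rangle = C_w$, as desired.

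I expect the second paragraph to be the crux. The naive alternative — define $v$ by the equations $(v,w\omega_i)=a_{w\omega_i}(P)$ and verify $v\in P$ by bounding $(v,\omega)$ for all $\omega\in\tilde R$ using the facet inequalities \eqref{eq:conefacets} (or the exchange-relation inequalities valid on $\CpolyW$) in an induction over mutations out of the cluster $\{w\omega_1,\dots,w\omega_r\}$ — does not seem to close: at intermediate clusters the inequalities combine with the wrong sign, and one must separately treat the twisted roots outside $\Pi(c)$. Routing through $N\,\mathrm{Ass}+P$ and the common-refinement description of its normal fan sidesteps both issues; the only external facts used are $\Ncal(A+B)=\Ncal(A)\wedge\Ncal(B)$ and the persistence of $C_w$ as a maximal cone of the refinement, together with the routine checks that $\mathrm{Ass}$ is full-dimensional and that distinct elements of $W\cdot\{\omega_1,\dots,\omega_r\}$ span distinct rays.
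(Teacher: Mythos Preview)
Your proof is correct, and the overall architecture — show that the rays $w\omega_i$ of $C_w$ lie in $\tilde R$, then use genericity and simplicity to force $N_v=C_w$ — is exactly the paper's. The paper phrases the last step purely in terms of fans: since $\Ncal(P)$ coarsens the $W$-Coxeter fan, $C_w$ lies in a unique maximal cone $C$ of $\Ncal(P)$; each ray $w\omega_i$ is a twisted root, hence (by genericity) a ray of $\Ncal(P)$, hence a ray of $C$; and as $C$ is simplicial (simplicity) with $r$ rays, $C=C_w$.

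The only substantive difference is your second paragraph, and it is an unnecessary detour. You introduce $N\,\mathrm{Ass}+P$ and the common refinement of normal fans solely to conclude that $(v,w\omega_i)=a_{w\omega_i}(P)$. But this is immediate from your first paragraph: you already have $w\omega_i\in C_w\subseteq N_v$, and by the very definition of the outer normal cone $N_v$, every $\omega\in N_v$ satisfies $(v,\omega)=\max_{x\in P}(x,\omega)=a_\omega(P)$. No Minkowski sums, no associahedron, no limiting argument in $N$ are needed. So your third paragraph can follow your first directly, and your remark that the second paragraph is ``the crux'' is mistaken — it is in fact the step that disappears entirely. The paper's three-line proof is what remains once you delete it.
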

\begin{proof}
By definition, the normal fan $\Ncal(P)$ is a coarsening of the $W$-Coxeter fan.  It follows that there exists a maximal cone $C$ of $\Ncal(P)$ that contains the simplicial cone $C_w$.  But each generating ray of $C_w$ must be a generating ray of $C$, for otherwise $P$ would not be generic.  But then we must have $C = C_w$, since both are simplicial cones.
\end{proof}

The cluster variables are related by {\it exchange relations}.  A distinguished subset of the exchange relations are called {\it primitive exchange relations} in \cite{YZ}.

\begin{theorem}[{\cite[Theorem 1.5]{YZ}}]
The primitive exchange relations of $\A(W,R^+,c)$ are of the form 
\begin{equation}\label{eq:initial}
x_{-\omega_k} x_{\omega_k} = y_k \prod_{k \to i} x_{\omega_i}^{-A_{ik}} \prod_{i \to k} x_{-\omega_i}^{-A_{ik}} + 1;
\end{equation}
for $k = 1,2,\ldots,r$ and
\begin{equation}\label{eq:noninitial}
x_{c^{m-1}\omega_k} x_{c^m\omega_k} = \prod_{k \to i} x_{c^m\omega_i}^{-A_{ik}} \prod_{i \to k} x_{c^{m-1}\omega_i}^{-A_{ik}} + Y,
\end{equation}
for $k = 1,2,\ldots,r$ and $1 \leq m \leq M(k)$, where $Y$ is some monomial in the $y_i$.
\end{theorem}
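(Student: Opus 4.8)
This theorem is a transcription of \cite[Theorem 1.5]{YZ} into our notation, so the plan is to recall Yang--Zelevinsky's construction and match conventions carefully. First I would recall that the compatible pair $(R^+,c)$, together with the orientation $j \to i$ of the Dynkin diagram, determines the initial exchange matrix $B = B(c)$ of $\A(W,R^+,c)$ via \cite[(1.4)]{YZ}; its off-diagonal entries are, up to sign, the Cartan entries $A_{ik}$ of \eqref{eq:Cartan}, with the sign recording whether $i \to k$ or $k \to i$. By the theorem cited just above (combining \cite[Theorem 1.4]{YZ} and \cite[Theorem 1.10]{YZ}), the cluster variables are the $x_{\tbeta}$ with $\tbeta \in \Pi(c)$, and $\tbeta$ is the $g$-vector of $x_{\tbeta}$; in particular $\pm \omega_i \in \Pi(c)$, with the initial mutable cluster variables $x_1,\dots,x_r$ being the $x_{-\omega_i}$.

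The primitive exchange relations come in two flavors. The first, \eqref{eq:initial}, is the mutation of the initial seed in direction $k$: in the principal-coefficient normalization the constant term is $1$ and the coefficient monomial is the single variable $y_k$, while the $x$-monomial reads off the $k$-th column of $B(c)$, separated into the incoming part $\prod_{k \to i} x_{\omega_i}^{-A_{ik}}$ and the outgoing part $\prod_{i \to k} x_{-\omega_i}^{-A_{ik}}$. The second family, \eqref{eq:noninitial}, relates cluster variables $x_{c^{m-1}\omega_k}$ and $x_{c^m\omega_k}$ sitting in consecutive positions along a single $\Gamma$-orbit of a fundamental weight; one obtains these by iterating the mutation sequence of \cite{YZ} that advances the $g$-vectors by $c$, each step cyclically rotating every label by $c$, turning the exponents into those displayed in \eqref{eq:noninitial}, and replacing the constant term by a monomial $Y$ in the $y_i$ read off from the $F$-polynomials (equivalently the $C$-matrices) of the algebra. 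The bound $m \le M(k)$ is precisely the condition from Proposition~\ref{prop:listroots} and Lemma~\ref{lem:YZ} that $c^{m}\omega_k$ is still the $g$-vector of a genuine cluster variable rather than the negative $-\omega_{k^\star}$ of an initial one, so that no relation is double-counted.

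The only real work is the bookkeeping of conventions: the sign and transpose of $B(c)$, the direction in which the $\Gamma$-orbit is traversed, and the identification of the arrow ``$i \to k$'' with the ordering of simple reflections in $c$ must all be fixed so that \eqref{eq:noninitial} is consistent with the $g$-vector labeling and, here, with the ambient inequalities \eqref{eq:conefacets}. The one genuinely substantive point is that the right-hand coefficient of \eqref{eq:noninitial} is an honest \emph{monomial} in the $y_i$, with no denominators; this is the separation-of-additions/positivity property established in \cite{YZ}, ultimately a feature of finite type. Finally, since the present setting requires an explicit choice of $R^+$ (which \cite{YZ} suppress), I would observe that replacing $R^+$ by $s_1 R^+$ as in Proposition~\ref{prop:compatible} merely relabels the combinatorial data, so the statement depends only on $(W,c)$.
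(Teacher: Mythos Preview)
Your proposal is appropriate: the paper provides no proof of this statement at all, simply citing it as \cite[Theorem 1.5]{YZ}. Your outline of the convention-matching (identifying the initial exchange matrix $B(c)$ with the orientation $j\to i$, reading the exponents off the Cartan matrix, and iterating the $c$-rotation to obtain \eqref{eq:noninitial}) is exactly the right way to justify the transcription, and in fact goes further than the paper itself, which leaves all of this implicit.
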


The relations \eqref{eq:initial} and \eqref{eq:noninitial} are homogeneous with respect to the $g$-vector grading.  In particular, we have for each $k =1,2,\ldots,r$,
\begin{equation}\label{eq:ginitial}
-\omega_k + \omega_k = 0;
\end{equation}
and for each $ 1 \leq m \leq M(k)$,
\begin{equation}\label{eq:gnoninitial}
c^{m-1}\omega_k + c^m\omega_k = \sum_{k \to i} -A_{ik} c^m \omega_i + \sum_{i \to k} -A_{ik} c^{m-1} \omega_i. 
\end{equation}
The latter we recognize as a special case of Proposition~\ref{prop:satisfy}.

More generally, we say that $\tbeta, \tgamma \in \Gamma(c)$ are an {\it exchangeable pair} if we have a (necessarily unique) exchange relation that exchanges $x_{\tbeta}$ for $x_{\tgamma}$.  This exchange relation takes the form
\begin{equation}\label{eq:exchange}
x_{\tbeta} x_{\tgamma} = \prod_{\tdelta \in E(\tbeta,\tgamma)} x_\tdelta^{c_{\beta,\gamma;\delta}} + \text{ other monomial},
\end{equation}
where $c_{\beta,\gamma;\delta} > 0$, and $E(\beta,\gamma) \subset \Pi(c)$ consists of elements that are pairwise compatible, and compatible with both $\tbeta$ and $\tgamma$.   Here, the key point is that one of the two monomials on the RHS of the exchange relation \eqref{eq:exchange} does not involve any of the coefficient variables, known as {\it sign-coherence}.  For a description of all the exchange relations in a principal coefficient finite-type cluster algebra, see \cite{ST}.  Note that in type $A$, any incompatible pair $(\tbeta,\tgamma)$ is automatically exchangeable, but this is not the case in general type.

The following is the main result of this section.
\begin{theorem}\label{thm:moreinequalities}
Let $P$ be a $(W,c)$-prepolypositroid defined by the inequalities $(x,\omega) \leq a_{\omega}$ where $(a_\omega) \in \CpolyW$.  Then for any choice of $R^+$ compatible with $c$, and any pair $(x_{\tbeta},x_{\tgamma})$ of exchangeables cluster variables we have 
\begin{equation}\label{eq:moreineq}
a_{\tbeta} + a_{\tgamma} \geq \sum_{\tdelta \in E(\tbeta,\tgamma)} c_{\beta,\gamma;\delta} a_{\tdelta}
\end{equation}
where $E(\tbeta,\tgamma)$ and $c_{\beta,\gamma;\delta}$ are defined in \eqref{eq:exchange}.
\end{theorem}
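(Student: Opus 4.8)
The plan is to reduce the general exchange relation to the case of the primitive exchange relations \eqref{eq:gnoninitial}, which by Proposition~\ref{prop:satisfy} already correspond to inequalities known to hold on $\CpolyW$, and then to propagate these inequalities along mutation sequences using the work of Padrol, Palu, Pilaud, and Plamondon \cite{PPPP}. First I would recall from \cite{PPPP} the polytopal realization of the generalized associahedron in which each cluster variable $x_{\tbeta}$ contributes a facet with normal vector $\tbeta$ (its $g$-vector), and each exchange relation \eqref{eq:exchange} corresponds to a ``flip'' across a codimension-one face; the $g$-vector identity underlying \eqref{eq:exchange}, namely $\tbeta + \tgamma = \sum_{\tdelta \in E(\tbeta,\tgamma)} c_{\beta,\gamma;\delta}\, \tdelta$, holds by the homogeneity of the exchange relation with respect to the $g$-vector grading (the analogue of \eqref{eq:gnoninitial}). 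Thus \eqref{eq:moreineq} is the statement that the support function $(a_\omega)$ of $P$, extended by this homogeneity relation, is ``submodular'' at every such flip.

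The key steps, in order, are as follows. Step 1: Establish that it suffices to verify \eqref{eq:moreineq} for exchangeable pairs $(\tbeta,\tgamma)$ whose clusters differ by a single mutation from some cluster, and that by acting with powers of $c$ and changing $R^+$ via Proposition~\ref{prop:compatible} (as in the proof of Proposition~\ref{prop:coneindep}), we may normalize so that $(\tbeta,\tgamma)$ is among the primitive exchangeable pairs of \eqref{eq:noninitial}, i.e. $\tbeta = c^{m-1}\omega_k$, $\tgamma = c^m\omega_k$. Step 2: For such a pair, \eqref{eq:moreineq} is exactly the defining inequality \eqref{eq:conefacets} (or its negated form \eqref{eq:conefacets2}, via Lemma~\ref{lem:YZ}), which holds for every point of $\CpolyW$ by definition, and which Proposition~\ref{prop:satisfy} confirms is realized by actual balancedtopes. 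Step 3: Handle the reduction in Step 1 by invoking the result of \cite{PPPP} that every exchange relation of $\A(W,R^+,c)$ is obtained from the primitive ones by applying the combinatorial mutation rule, and that each such relation is ``$g$-vector homogeneous''; the inequality \eqref{eq:moreineq} is then inherited because the cone $\CpolyW$ is cut out by the $c$-rotates of \eqref{eq:conefacets} over all choices of $R^+$ (Proposition~\ref{prop:coneindep}), and mutation in one direction corresponds precisely to passing between two such compatible choices.

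The main obstacle I expect is Step 3: it is not a priori obvious that the single family of inequalities \eqref{eq:conefacets}, ranging over all $(R^+,c)$ compatible, is closed under the operation of ``mutating an exchange relation,'' i.e. that the linear inequality attached to a non-primitive exchange relation is a nonnegative combination of the primitive ones. In type $A$ this is automatic because every incompatible pair is exchangeable and the inequalities \eqref{eq:triangle} and \eqref{eq:anoncross} already generate everything (Theorem~\ref{thm:pretypeA}), but in general type the precise bookkeeping of which $x_{\tdelta}$ appear with which multiplicities $c_{\beta,\gamma;\delta}$ requires the sign-coherence statements of \cite{ST} together with the polytopal geometry of \cite{PPPP}. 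I would organize this step by passing to the normal fan: since the generalized associahedron $A$ of \cite{PPPP} has a facet with outer normal $\tbeta$ for each $\tbeta\in\Pi(c)$, and since $A$ is a generalized $W$-permutohedron (Theorem~\ref{thm:genass}) hence a limit of $(W,c)$-prepolypositroids via Theorem~\ref{thm:permtopre}, the inequality \eqref{eq:moreineq} with equality holds for the support function of $A$ at every flip; perturbing within $\CpolyW$ preserves the inequality by convexity/linearity of the cone, giving the general case.
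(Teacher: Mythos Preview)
Your overall architecture is right: the point is that the linear inequality \eqref{eq:moreineq} should be a nonnegative combination of the defining inequalities \eqref{eq:conefacets}. You even name this as ``the main obstacle'' in your Step~3. But neither of the two mechanisms you propose for overcoming it actually works.

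First, the symmetry reduction in Step~1/Step~3 is too weak. Rotating by powers of~$c$ and changing $R^+$ via Proposition~\ref{prop:compatible} only accounts for source/sink mutations of the initial seed; these operations recover exactly the $|R|=hr$ primitive exchange relations, nothing more. A general exchangeable pair $(x_{\tbeta},x_{\tgamma})$ need not become primitive for any acyclic seed, so ``mutation in one direction corresponds precisely to passing between two such compatible choices'' is false beyond the source/sink case. (In type $A_3$ there are already 21 exchange relations but only 12 primitive ones across all $R^+$.)

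Second, the perturbation argument at the end is circular. Knowing that \eqref{eq:moreineq} holds (with equality or not) at the single point of $\CpolyW$ coming from a generalized associahedron says nothing about the rest of the cone: \eqref{eq:moreineq} is a \emph{linear} inequality, and a linear inequality valid at one point of a cone need not be valid everywhere. What you need is precisely that \eqref{eq:moreineq} lies in the nonnegative span of the facet inequalities \eqref{eq:conefacets}, which is the very statement you flagged as the obstacle.

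The paper closes this gap by citing a specific result: \cite[Proposition~2.22]{PPPP} shows that the $g$-vector identity $\tbeta+\tgamma=\sum_{\tdelta}c_{\beta,\gamma;\delta}\,\tdelta$ attached to \emph{any} exchange relation is a positive linear combination of the primitive identities \eqref{eq:gnoninitial}. Replacing each primitive identity by the corresponding inequality \eqref{eq:conefacets} then writes \eqref{eq:moreineq} as a positive sum of defining inequalities of $\CpolyW$, and the proof is complete in two lines. This is exactly the ``nonnegative combination'' statement you were looking for; the content is in \cite{PPPP}, not in any symmetry or limiting argument.
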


\begin{proof}
We begin by noting that \eqref{eq:exchange} is homogeneous with respect to the $g$-vector grading, so
\begin{equation}\label{eq:anyexc}
\tbeta + \tgamma = \sum_{\tdelta \in E(\tbeta,\tgamma)} c_{\beta,\gamma;\delta} \tdelta,
\end{equation}
see \cite{ST,PPPP}.  In \cite[Proposition 2.22]{PPPP}, it is shown that any linear dependence \eqref{eq:anyexc} is a positive sum of linear dependencies of the form \eqref{eq:gnoninitial}.  Replacing a linear dependence \eqref{eq:gnoninitial} by the corresponding inequality \eqref{eq:conefacets}, we deduce that the inequality \eqref{eq:moreineq} is a positive sum of the inequalities \eqref{eq:conefacets}. 
\end{proof}
%

\begin{example}
Pick $(W,R^+,c)$ as in Example~\ref{ex:Aroot}.  Then the inequalities \eqref{eq:anyexc} are all of the form \eqref{eq:triangle} or \eqref{eq:anoncross}.
\end{example}

The following is a variant of the noncrossing condition of Lemma~\ref{lem:Talternating} .  
\begin{corollary}\label{cor:exchange}
Let $P$ be a generic simple $(W,c)$-prepolypositroid and $F$ be a face of $P$.  Let $S(F) \subset \tR$ be as defined in Section~\ref{ssec:facetwist}.  Then for any choice of $R^+$ compatible with $c$, and any exchangeable pair $(\tbeta,\gamma) \in \Gamma(c)$, we have that $S(F)$ can contain at most one of $\tbeta$ and $\tgamma$.
\end{corollary}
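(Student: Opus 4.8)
The idea is to argue by contradiction in exactly the same spirit as Lemma~\ref{lem:Walternating} and Lemma~\ref{lem:onevertex}, using the inequality of Theorem~\ref{thm:moreinequalities} together with the fact that $P$ is generic and simple. Suppose $F$ is a face of $P$ and $(\tbeta,\tgamma)$ is an exchangeable pair with both $\tbeta,\tgamma\in S(F)$. By definition of $S(F)$, the face $F$ lies on the two hyperplanes $(x,\tbeta)=a_{\tbeta}$ and $(x,\tgamma)=a_{\tgamma}$, and $P$ lies in the two halfspaces $(x,\tbeta)\le a_{\tbeta}$, $(x,\tgamma)\le a_{\tgamma}$. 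Pick any point $v\in F$. The first step is to combine the homogeneity relation \eqref{eq:anyexc}, namely $\tbeta+\tgamma=\sum_{\tdelta\in E(\tbeta,\tgamma)}c_{\beta,\gamma;\delta}\,\tdelta$, evaluated at $v$, with the inequalities $(v,\tdelta)\le a_{\tdelta}$ valid for all $\tdelta\in\tR$ (in particular for $\tdelta\in E(\tbeta,\tgamma)$), to obtain
\[
a_{\tbeta}+a_{\tgamma}=(v,\tbeta)+(v,\tgamma)=\sum_{\tdelta\in E(\tbeta,\tgamma)}c_{\beta,\gamma;\delta}(v,\tdelta)\le \sum_{\tdelta\in E(\tbeta,\tgamma)}c_{\beta,\gamma;\delta}\,a_{\tdelta}.
\]
On the other hand Theorem~\ref{thm:moreinequalities} gives the reverse inequality $a_{\tbeta}+a_{\tgamma}\ge\sum_{\tdelta}c_{\beta,\gamma;\delta}a_{\tdelta}$, so in fact equality holds throughout, and hence $(v,\tdelta)=a_{\tdelta}$ for every $\tdelta\in E(\tbeta,\tgamma)$ with $c_{\beta,\gamma;\delta}>0$.

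The second step is to derive a contradiction with genericity and simplicity of $P$. Since $E(\tbeta,\tgamma)$ consists of vectors pairwise compatible with each other and with both $\tbeta$ and $\tgamma$, and since $\tbeta\ne\tgamma$, the linear relation \eqref{eq:anyexc} is a genuine dependence: the twisted roots $\{\tbeta,\tgamma\}\cup\{\tdelta\in E(\tbeta,\tgamma):c_{\beta,\gamma;\delta}>0\}$ are linearly dependent (indeed $\tbeta$ and $\tgamma$ are distinct and not parallel — were they parallel, $\tbeta=-\tgamma$ and the relation would force all $c_{\beta,\gamma;\delta}a_{\tdelta}$ to vanish, which one handles separately, but an exchangeable pair is never an antipodal pair since $a_{\omega_k}+a_{-\omega_k}\ge 0$ holds with the RHS empty, giving the degenerate case $E=\emptyset$ which is precisely \eqref{eq:ginitial} and cannot arise for $\tbeta,\tgamma$ both in $S(F)$ unless $F$ is not a proper face). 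So $F$ lies on the hyperplanes $(x,\omega)=a_{\omega}$ for $\omega$ ranging over a set of twisted roots spanning a subspace of dimension $\le\#E(\tbeta,\tgamma)+1$ while containing $\#E(\tbeta,\tgamma)+2$ of them — a nontrivial linear dependence among the active facet normals at points of $F$. Following the pattern of Lemma~\ref{lem:Walternating}, this contradicts the assumption that $P$ is generic and simple: a linear dependence among the facet normals $\tdelta\in S(F)$ at an interior point of a face would force that face to have lower dimension than the number of facets through it, or would force an extra facet normal $\omega\in\tR$ into $S(v)$ at a vertex $v$ of $F$, violating the basis property established in Lemma~\ref{lem:Walternating}.

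The cleanest way to package the second step is probably to reduce to a vertex: by simplicity choose a vertex $v$ of $F$; then $S(v)\subset\tR$ is a basis of $V$ by Lemma~\ref{lem:Walternating}, and it contains $S(F)\supseteq\{\tbeta,\tgamma\}$ as well as all $\tdelta\in E(\tbeta,\tgamma)$ with positive coefficient (from the equality case above), and these fail to be linearly independent because of \eqref{eq:anyexc}, contradiction. The main obstacle I anticipate is the degenerate case $E(\tbeta,\tgamma)=\emptyset$, i.e.\ the primitive initial exchange \eqref{eq:initial}/\eqref{eq:ginitial} where $\tgamma=-\tbeta=\pm\omega_k$: here \eqref{eq:anyexc} reads $-\omega_k+\omega_k=0$ and gives no dependence among distinct active normals directly; one must instead observe that $\omega_k$ and $-\omega_k$ are not alternating (their inner product is negative), so Lemma~\ref{lem:Walternating} already forbids both lying in $S(v)$ for a vertex $v$, hence both cannot lie in $S(F)$. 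With this case dispatched separately, the argument above covers all exchangeable pairs.
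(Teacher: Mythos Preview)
Your proof is correct and follows essentially the same route as the paper: assume $\tbeta,\tgamma\in S(F)$, evaluate the linear relation \eqref{eq:anyexc} at a point of $F$ to get one inequality, invoke Theorem~\ref{thm:moreinequalities} for the reverse, and conclude that all $\tdelta\in E(\tbeta,\tgamma)$ also lie in $S(F)$, producing a linear dependence in $S(v)$ for a vertex $v$ of $F$ and contradicting Lemma~\ref{lem:Walternating}. One simplification: your separate treatment of the degenerate case $E(\tbeta,\tgamma)=\emptyset$ (i.e.\ $\{\tbeta,\tgamma\}=\{\omega_k,-\omega_k\}$) is unnecessary, since then $\tbeta$ and $\tgamma=-\tbeta$ are already linearly dependent and the same contradiction with $S(v)$ being a basis applies directly.
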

\begin{proof}
Suppose we have an exchangeable pair $\tbeta,\tgamma \in S(F)$.  Let $x \in F$.  Then by \eqref{eq:anyexc} we have
$$
a_{\tbeta} + a_{\tgamma} =(x,\tbeta + \tgamma) = (x, \sum_{\tdelta \in E(\tbeta,\tgamma)} c_{\beta,\gamma;\delta} \tdelta) \leq \sum_{\tdelta \in E(\tbeta,\tgamma)} c_{\beta,\gamma;\delta} a_{\tdelta}.
$$
By \eqref{eq:moreineq}, we must have equality, giving $\tdelta \in S(F)$ for $\tdelta \in E(\tbeta,\tgamma)$.  But then $S(F)$ contains twisted roots that are not linearly independent, contradicting the assumption that $P$ is generic simple.
\end{proof}
Corollary \ref{cor:exchange} has the following defect: while $(W,c)$-prepolypositroids depend only on the choice of $c$, the notion of an exchangeable pair $(\tbeta,\tgamma)$ depends additionally on a choice of $R^+$.  We thus pose the following question: 
\begin{question}
Which pairs of $c$-twisted roots is exchangeable for {\it some} choice of $R^+$ compatible with $c$?
\end{question}

\begin{corollary}\label{cor:manyassociahedron}
Let $P$ be a generic simple $(W,c)$-prepolypositroid, and $R^+$ be a positive system compatible with $c$.  Then removing the facets $(x,\omega) \leq a_\omega$ indexed by facet normals $\omega \notin \Pi(c)$ gives a $(W,R^+,c)$-generalized associahedron.
\end{corollary}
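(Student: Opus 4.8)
We set $P' := \bigcap_{\omega \in \Pi(c)} \{x \in V \mid (x,\omega) \le a_\omega\}$, the polytope obtained from $P$ by deleting the facets whose normals lie outside $\Pi(c)$. The plan is to show that $P'$ has normal fan equal to the $(W,R^+,c)$-cluster fan (the $c$-Cambrian fan), which by definition makes it a $(W,R^+,c)$-generalized associahedron. First I would record the easy bookkeeping: by Lemma~\ref{lem:YZ}(1) we have $c^{M(i)}\omega_i = -\omega_{i^\star}$, so $\pm\omega_i \in \Pi(c)$ for every $i$; hence $\Pi(c)$ positively spans $V$ and $P'$ is a nonempty (since $P\subseteq P'$) bounded polytope. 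Moreover, since $P \subseteq P'$ and $P' \subseteq \{(x,\omega)\le a_\omega\}$ for $\omega \in \Pi(c)$, the last sentence of Theorem~\ref{thm:threecones} yields $\max_{x\in P'}(x,\omega) = a_\omega$ for all $\omega \in \Pi(c)$, so the support function of $P'$ restricted to $\Pi(c)$ is exactly $(a_\omega)|_{\Pi(c)}$.

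The conceptual core is to recognize $(a_\omega)|_{\Pi(c)}$ as a point in the deformation cone (type cone) of the cluster fan. The $(W,R^+,c)$-cluster fan is a complete simplicial fan with rays $\Pi(c)$ and maximal cones $\mathrm{span}_{\ge 0}(C)$ over clusters $C$, and it is polytopal by Theorem~\ref{thm:genass}. I would then invoke the description, due to Padrol--Palu--Pilaud--Plamondon \cite{PPPP}, of its deformation cone: a tuple $(b_\omega)_{\omega\in\Pi(c)}$ arises as the restriction to $\Pi(c)$ of the support function of a polytope whose normal fan coarsens the cluster fan if and only if it satisfies the wall-crossing inequalities, which are precisely the inequalities \eqref{eq:moreineq} indexed by exchange relations; and $(b_\omega)$ lies in the interior of this cone (equivalently, all those inequalities are strict) exactly when $\bigcap_{\omega\in\Pi(c)}\{(x,\omega)\le b_\omega\}$ has normal fan equal to the cluster fan. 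By Theorem~\ref{thm:moreinequalities}, $(a_\omega)|_{\Pi(c)}$ satisfies every instance of \eqref{eq:moreineq}, so it lies in the deformation cone and $P'$ has normal fan a coarsening of the cluster fan.

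It then remains to prove that this coarsening is trivial, i.e.\ that $(a_\omega)|_{\Pi(c)}$ is interior to the deformation cone. By \cite[Proposition 2.22]{PPPP} (used already in the proof of Theorem~\ref{thm:moreinequalities}), every inequality \eqref{eq:moreineq} is a nontrivial nonnegative combination of the primitive inequalities \eqref{eq:conefacets}, so it suffices to check that $(a_\omega)$ makes every primitive inequality \eqref{eq:conefacets} strict. By Theorem~\ref{thm:threecones} and Lemma~\ref{lem:face2face}, the equality locus of a given inequality \eqref{eq:conefacets} corresponds, under the isomorphism with the cone of $W$-balanced pairs, to a coordinate hyperplane $\{m_\alpha = 0\}$; hence it is enough to show that the balanced pair $(m_\alpha,z)$ underlying the generic simple prepolypositroid $P$ has $m_\alpha > 0$ for all $\alpha\in R$. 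I would argue the contrapositive: if $m_\alpha = 0$ for some $\alpha$, then two consecutive vertices of the Coxeter belt $\u(m_\alpha,z)$ coincide (Corollary~\ref{cor:belt}), and one checks that the twisted root responsible for that collapsed belt edge then fails to define a facet of $Q(m_\alpha,z) = P$, contradicting genericity (alternatively, one can run the vertex analysis of Lemma~\ref{lem:Walternating} and Corollary~\ref{cor:exchange} to exclude a degenerate belt edge at a vertex of a generic simple prepolypositroid). Granting this, $P'$ has normal fan exactly the $(W,R^+,c)$-cluster fan, hence is a $(W,R^+,c)$-generalized associahedron and, by Theorem~\ref{thm:genass}, automatically a generalized $W$-permutohedron.

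The main obstacle is the strictness step: translating the genericity and simplicity of $P$ into the positivity of all the weights $m_\alpha$ (equivalently, strict satisfaction of every primitive inequality \eqref{eq:conefacets}). The secondary point requiring care is to match precisely the \cite{PPPP} description of the deformation cone of the cluster fan with the inequalities \eqref{eq:moreineq}--\eqref{eq:conefacets} as written here — in particular, that the $\sum_k M(k) = |R^+|$ relations of the form \eqref{eq:gnoninitial} index the facets of that deformation cone modulo its $r$-dimensional lineality space.
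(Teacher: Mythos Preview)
Your approach is essentially the same as the paper's: both identify the relevant $(a_\omega)|_{\Pi(c)}$ as a point in the interior of the deformation cone of the $(W,R^+,c)$-generalized associahedron and cite \cite{PPPP}. The paper is slightly more direct: it invokes \cite[Theorem~2.23]{PPPP}, which says exactly that this deformation cone (interior) is cut out by the strict primitive inequalities \eqref{eq:conefacets} for $1\le m\le M(k)$. There is thus no need to detour through Theorem~\ref{thm:moreinequalities}, general exchange relations, or \cite[Proposition~2.22]{PPPP}; one only needs strictness of the primitive inequalities.

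On that strictness step your sketch has a small but real slip. From $m_\alpha=0$ you conclude that ``the twisted root responsible for that collapsed belt edge then fails to define a facet, contradicting genericity''. That is not what happens. Reduce (via Proposition~\ref{prop:coneindep} and the action of $c$) to $\alpha=\alpha_1$ with $c=s_1\cdots s_r$ and $v_0=0$. The computation in Lemma~\ref{lem:face2face} gives $a_{-c\omega_1}=m_{\alpha_1}=0$, while $a_{-\omega_i}=(v_0,-\omega_i)=0$ for all $i$. Hence the vertex $v_0$ of $P$ (Proposition~\ref{prop:necklace}) lies on the $r+1$ supporting hyperplanes for $-\omega_1,\ldots,-\omega_r,-c\omega_1$. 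Genericity says each of these is a genuine facet, and simplicity says a vertex meets exactly $r$ facets --- contradiction. So the contradiction is with \emph{simplicity}, not with genericity; nothing forces a facet to vanish. With this correction your argument goes through and matches the paper's (which, incidentally, asserts the strict inequalities without spelling out this justification).
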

\begin{proof}
Removing the stated facets gives the polytope cut out by the inequalities $(x,\omega) \leq a_\omega$ for $\omega \in \Pi(c)$.  These $a_\omega$ satisfy 
\begin{equation}
a_{c^{m-1}\omega_k} + a_{c^m\omega_k} > \sum_{k \to i} -A_{ik} a_{c^m \omega_i} + \sum_{i \to k} -A_{ik} a_{c^{m-1} \omega_i},
\end{equation}
for each $ 1 \leq m \leq M(k)$.  According to \cite[Theorem 2.23]{PPPP}, these inequalities cut out the {\it deformation cone} of the $(W,R^+,c)$-generalized associahedron.  In other words, the inequalities $(x,\omega) \leq a_\omega$, $\omega \in \Pi(c)$ define a $(W,R^+,c)$-generalized associahedron.
\end{proof}

On the other hand, not every maximal cone in the normal fan of a generic simple $(W,c)$-prepolypositroid $P$ is a maximal cone in some $(W,R^+,c)$-cluster fan, as the following example shows.

\begin{example}
We continue the example from Section~\ref{ssec:D4ex}.
By slightly perturbing the $W$-balanced pair associated to $\v$, we obtain a generic simple $(W,c)$-prepolypositroid $Q(\v')$ whose normal fan is a refinement of that of $Q(\v)$ but not a refinement of the $W$-Coxeter fan.  There is a vertex $v$ of $Q(\v')$ with $S(v)$ given by
$$
S(v) = \{(1,0,0,0),(0,0,1,0),(-\tfrac{1}{2},-\tfrac{1}{2},\tfrac{1}{2},-\tfrac{1}{2}),(0,-1,0,0)\}.
$$
The roots in $S(v)$ are pairwise $c$-noncrossing in the sense of Section~\ref{ssec:cnoncross}.  The dual cone is spanned by the vectors
$$
\{(0, -1, 0, 1),(1, 0, 0, -1), (0, 0, 1, 1), (0, 0, 0, -2)\},
$$
the last of which is not in a direction of a root.  Since the $(W,R^+,c)$-cluster fan is a refinement the $W$-Coxeter fan (Theorem~\ref{thm:genass}), the vertex cone $C_v$ in the normal fan of $P$ is not a maximal cone for any cluster fan associated to $(W,c)$.
\end{example}

\section{Normal fans of $(W,c)$-prepolypositroids}\label{sec:Wnormal}


%

\subsection{Coxeter noncrossing roots}\label{ssec:cnoncross}
Recall that a pair of distinct roots $(\beta,\gamma) \in R$ is said to be {\it alternating} if $(\beta,\gamma) = (\gamma,\beta)\geq 0$.  Let us say that $(\beta,\gamma) \in R$ are \emph{$c$-noncrossing} if either $( \gamma^\vee,\tbeta) = 0$ or $(\beta^\vee,\tgamma) = 0$.  We say that $(\tbeta,\tgamma) \in \tR$ are alternating (resp. $c$-noncrossing) if $(\beta,\gamma) \in R$ are.  It is straightforward to see that with the choices in Example~\ref{ex:Aroot}, ``alternating" and ``$c$-noncrossing" agrees with the corresponding notions in Part~\ref{part:1}.

\begin{lemma}\label{lem:ceasy}
Two roots $\beta,\gamma$ are alternating (resp. $c$-noncrossing) if and only if $c\beta,c\gamma$ are alternating (resp. $c$-noncrossing).
\end{lemma}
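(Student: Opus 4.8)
The plan is to deduce both equivalences from a few elementary equivariance properties of the Coxeter element $c$ acting on $V$. Since $c \in W$, it preserves the bilinear form $(\cdot,\cdot)$ and permutes $R$; since $c$ commutes with $I-c$, it commutes with $(I-c)^{-1}$, so that $\widetilde{c\delta} = (I-c)^{-1}(c\delta) = c\,(I-c)^{-1}\delta = c\,\tdelta$ for every root $\delta$; and passing to coroots is $W$-equivariant, since $(c\delta)^\vee = 2\,c\delta/(c\delta,c\delta) = 2\,c\delta/(\delta,\delta) = c(\delta^\vee)$. (That $c$ maps $\tR$ bijectively onto $\tR$ also follows, e.g.\ from \eqref{eq:tR}.)

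For the \emph{alternating} assertion, first note that $c$ is invertible, so $c\beta \neq c\gamma$ whenever $\beta \neq \gamma$; and $(c\beta, c\gamma) = (\beta,\gamma)$ because $c$ preserves the form. Hence $(c\beta, c\gamma) \geq 0$ if and only if $(\beta,\gamma) \geq 0$, which is exactly the claim.

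For the \emph{$c$-noncrossing} assertion, I would combine the three identities above: using that $c$ preserves the form together with $\widetilde{c\beta} = c\,\tbeta$ and $(c\gamma)^\vee = c(\gamma^\vee)$, one gets
$$
\bigl((c\gamma)^\vee,\ \widetilde{c\beta}\bigr) = \bigl(c(\gamma^\vee),\ c\,\tbeta\bigr) = (\gamma^\vee,\ \tbeta),
$$
and symmetrically $\bigl((c\beta)^\vee,\ \widetilde{c\gamma}\bigr) = (\beta^\vee,\ \tgamma)$. Therefore the disjunction ``$(\gamma^\vee,\tbeta) = 0$ or $(\beta^\vee,\tgamma) = 0$'' that defines $c$-noncrossingness of the pair $(\beta,\gamma)$ holds if and only if the corresponding disjunction for the pair $(c\beta, c\gamma)$ holds.

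There is essentially no obstacle here: the lemma is a formal consequence of the fact that the coroot operation $\delta \mapsto \delta^\vee$ and the twisting operation $\delta \mapsto \tdelta$ both commute with the orthogonal action of $c$. The only point that needs care is bookkeeping these commutations cleanly, after which both halves are immediate; the lemma is stated precisely so that the later development of alternating and $c$-noncrossing roots can be carried out one $c$-orbit at a time.
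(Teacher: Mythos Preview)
Your proof is correct; the paper states this lemma without proof (it is regarded as immediate), and your argument via the $W$-equivariance of the form, of $\delta\mapsto\delta^\vee$, and of $\delta\mapsto\tdelta$ is exactly the intended one.
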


\begin{lemma}\label{lem:noncrossingchange}
Let $c$ be a Coxeter element and $c' = wcw^{-1}$.  Then 
$(\beta,\beta')$ are $c$-noncrossing if and only if $(w\beta,w\beta')$ are $c'$-noncrossing.
\end{lemma}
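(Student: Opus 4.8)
The plan is to reduce the statement to the single algebraic identity $(I-c')^{-1} = w(I-c)^{-1}w^{-1}$ together with the fact that $w\in W$ is an isometry of the form $(\cdot,\cdot)$.

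First I would observe that $c' = wcw^{-1}$ is again a Coxeter element, so by Lemma~\ref{lem:invertible} the operator $I-c'$ is invertible; since $I-c' = w(I-c)w^{-1}$, we get $(I-c')^{-1} = w(I-c)^{-1}w^{-1}$. Consequently, for any root $\beta\in R$, the $c'$-twisted root attached to $w\beta$ is
\[
(I-c')^{-1}(w\beta) = w(I-c)^{-1}w^{-1}(w\beta) = w\,(I-c)^{-1}\beta = w\,\tbeta,
\]
where $\tbeta=(I-c)^{-1}\beta$ is the $c$-twisted root. In other words, the passage from $(R,c)$ to $(R,c')$ intertwines twisted roots via the linear map $w$.

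Next I would record the two standard facts about $w\in W$: it preserves the bilinear form, so $(wx,wy)=(x,y)$ for all $x,y\in V$; and it preserves root lengths, so $(w\gamma)^\vee = w(\gamma^\vee)$ for every $\gamma\in R$. Combining these with the displayed identity gives, for any two roots $\beta,\gamma$,
\[
\bigl((w\gamma)^\vee,\ (I-c')^{-1}(w\beta)\bigr) = \bigl(w\gamma^\vee,\ w\,\tbeta\bigr) = (\gamma^\vee,\tbeta),
\]
and symmetrically $\bigl((w\beta)^\vee,\ (I-c')^{-1}(w\gamma)\bigr) = (\beta^\vee,\tgamma)$. Hence $(\gamma^\vee,\tbeta)=0$ iff the corresponding $c'$-quantity for $(w\beta,w\gamma)$ vanishes, and likewise for the other pairing. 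Since by definition $(\beta,\gamma)$ is $c$-noncrossing exactly when at least one of $(\gamma^\vee,\tbeta)$, $(\beta^\vee,\tgamma)$ is zero — and $(w\beta,w\gamma)$ is $c'$-noncrossing exactly when at least one of the transported quantities is zero — the equivalence follows immediately. (One may also note that this simultaneously reproves the ``$\Leftrightarrow$'' for the \emph{alternating} condition, which is in any case obvious from $(w\beta,w\gamma)=(\beta,\gamma)$ and is independent of $c$.)

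I do not anticipate a real obstacle here: the only points needing care are the invertibility of $I-c'$ (handled by Lemma~\ref{lem:invertible}) and the two elementary properties of $w$ as an element of the Weyl group, namely that it is an isometry and therefore carries coroots to coroots.
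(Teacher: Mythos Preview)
Your argument is correct. The paper does not supply a proof of this lemma (nor of the companion Lemma~\ref{lem:ceasy}); both are recorded as immediate consequences of the definitions. Your write-up simply makes explicit the computation the reader is expected to do: the conjugation identity $(I-c')^{-1}=w(I-c)^{-1}w^{-1}$, $W$-invariance of the form, and $(w\gamma)^\vee=w\gamma^\vee$. There is no alternative route to discuss here---you have written out the intended one-line verification in full detail.
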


\begin{conjecture}\label{conj:twodim}
Let $P$ be a generic simple $(W,c)$-prepolypositroid and suppose that $(\tbeta,\tgamma) \in \tR$ span a two-dimensional face of the normal fan of $P$.  Then $(\beta,\gamma)$ must be alternating and $c$-noncrossing.
\end{conjecture}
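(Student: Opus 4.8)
\emph{The plan.} Throughout, write $F$ for the face of $P$ whose normal cone in $\Ncal(P)$ is $\mathrm{span}_{\ge 0}(\tbeta,\tgamma)$. Since $P$ is generic and simple, $F$ has codimension $2$ and is contained in exactly two facets, so $S(F)=\{\tbeta,\tgamma\}$ in the notation of Section~\ref{ssec:facetwist}. I would first make several harmless reductions. By Lemma~\ref{lem:noncrossingchange} and Remark~\ref{rem:cc'} we may assume $c=s_1s_2\cdots s_r$ is a standard Coxeter element with respect to a compatible $R^+$, and by Lemma~\ref{lem:Rchange} the choice of $R^+$ is immaterial. By Theorem~\ref{thm:threecones}, $P=Q(\v)$ for some $(W,R^+,c)$-Coxeter necklace $\v=(v_0,\dots,v_{h-1})$; perturbing the associated $W$-balanced pair we may assume $\v$ is generic, since $P$ is simple hence combinatorially stable under small deformations of its facets, and a generic necklace produces a generic twisted alcoved polytope by Lemma~\ref{lem:onevertex}. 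By Proposition~\ref{prop:necklace} the facet with normal $\omega=c^i\omega_m\in\tR$ satisfies $a_\omega=(v_i,\omega)$, the maximum being attained at the vertex $v_i$.

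\emph{The alternating condition.} This is the standard positive-combination argument already used for Lemma~\ref{lem:Walternating}, applied to a codimension-$2$ face. Suppose $(\beta,\gamma)<0$. Since distinct roots of a reduced system are proportional only if opposite, and $S(F)=\{\tbeta,\tgamma\}$ cannot consist of two antipodal facet normals ($P$ being full-dimensional), we have $\gamma\ne-\beta$, so there is a root $\delta=b\beta+b'\gamma$ with $b,b'\in\R_{>0}$ and $\tdelta\notin\{\pm\tbeta,\pm\tgamma\}$. For $x\in F$ one gets $(x,\tdelta)=b\,a_{\tbeta}+b'\,a_{\tgamma}\le a_{\tdelta}$, while trivially $a_{\tdelta}\le b\,a_{\tbeta}+b'\,a_{\tgamma}$; hence equality holds and $F$ lies on the hyperplane $(x,\tdelta)=a_{\tdelta}$, forcing $\tdelta\in S(F)$ and contradicting $|S(F)|=2$.

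\emph{The $c$-noncrossing condition.} This is the main work and the expected obstacle. Normalizing by a power of $c$ (Lemma~\ref{lem:ceasy}) assume $\tbeta=\omega_j$ with $j\in\{1,\dots,r\}$ and $\tgamma=c^b\omega_l$ with $0\le b\le h-1$, so $\beta=\beta_j$, $\gamma=c^b\beta_l$, and we must show $(c^b\beta_l^\vee,\omega_j)=0$ or $(\beta_j^\vee,c^b\omega_l)=0$. Suppose both are nonzero. For $x\in F$ we have the two equalities $(x,\omega_j)=(v_0,\omega_j)$, $(x,c^b\omega_l)=(v_b,c^b\omega_l)$, and, since $v_0+C$ and $v_b+c^b(C)$ are defining cones of $Q(\v)$, the two inequalities $(x,\omega_l)\le(v_0,\omega_l)$ and $(x,c^b\omega_j)\le(v_b,c^b\omega_j)$. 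The plan is to eliminate $x$ from these four relations to obtain an inequality $(v_b-v_0,\eta)\ge 0$ for a suitable weight $\eta\in\Lambda$, and then contradict it: because $\v$ is generic and each $v_{m+1}-v_m$ is a positive combination of the coroots $c^m\beta^\vee_i$, the sign bounds of Lemma~\ref{lem:bound} — made strict at the boundary indices exactly as in the proof of Lemma~\ref{lem:onevertex} — give $(v_b-v_0,\eta)<0$. This mirrors the type-$A$ computation in Lemma~\ref{lem:noncrossing}, where the four relations were combined via the interval identity $h_{[r,s]}-h_{[r',s]}=h_{[r,s']}-h_{[r',s']}=h_{[r,r'-1]}$. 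The difficulty is that there is no such clean identity in general type: one must use the hypothesis ``alternating but not $c$-noncrossing'' together with Lemma~\ref{lem:YZ} to pin down the range of $b$ in which $\beta_j$ and $c^b\beta_l$ genuinely interact, identify the correct $\eta$ (plausibly a positive combination $\omega_{j'}-\omega_{l'}$ of fundamental weights), and verify that the two expressions for $(x,\eta)$ coming from the face equations agree modulo the facet inequalities. I expect organizing this in a type-uniform way, rather than case-by-case, to be the crux; the description of linear dependences among twisted roots from \cite{PPPP} and the cluster inequalities of Theorem~\ref{thm:moreinequalities} should be the right tool for producing $\eta$.

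\emph{An alternative route.} One may instead argue through Corollary~\ref{cor:exchange}: since $\tbeta,\tgamma\in S(F)$ and $P$ is generic simple, $(\tbeta,\tgamma)$ is \emph{not} an exchangeable pair for any $R^+$ compatible with $c$. It would therefore suffice to prove that an alternating pair of twisted roots which is not $c$-noncrossing is always exchangeable for some compatible $R^+$ — which is precisely the question posed after Corollary~\ref{cor:exchange}. Here one would choose $R^+$ (moving the labeling of $\tR$ by the elementary transformations of Proposition~\ref{prop:compatible}) so that $\tbeta,\tgamma\in\Pi(c)$, show the pair is $(R^+,c)$-incompatible, and then upgrade ``incompatible'' to ``exchangeable''. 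The obstacle on this route is genuine: in general type an incompatible pair need not be exchangeable, so this last step needs further input from the combinatorics of $c$-Cambrian fans. Either way, the alternating part is routine and the content of the conjecture lies entirely in the $c$-noncrossing assertion.
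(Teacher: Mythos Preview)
This statement is a \emph{conjecture} in the paper, not a theorem: the authors do not give a proof. What they do establish is exactly the two pieces you isolate. The alternating part follows from Lemma~\ref{lem:Walternating} (a vertex $v\in F$ has $S(F)\subset S(v)$, and $S(v)$ consists of pairwise alternating twisted roots); your direct codimension-$2$ version of that argument is correct and equivalent. The full conjecture is verified only in type~$A$, via Lemma~\ref{lem:Talternating}. Beyond that the paper offers only circumstantial evidence: Proposition~\ref{prop:noncrosscluster} identifies ``alternating and $c$-noncrossing'' with cluster compatibility (for bipartite $R^+$), so the conjecture is consistent with Corollary~\ref{cor:exchange}. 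No general proof of the $c$-noncrossing assertion is given or claimed.

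Your self-assessment of the two proposed routes is accurate and matches the state of the paper. The first route --- eliminate $x$ from the four relations to produce a weight $\eta$ and contradict a strict sign from Lemma~\ref{lem:bound} --- is the natural generalization of the type-$A$ argument in Lemma~\ref{lem:noncrossing}, but the interval identity used there has no uniform replacement, and the paper does not supply one. The second route --- upgrade ``not $c$-noncrossing'' to ``exchangeable for some $R^+$'' and invoke Corollary~\ref{cor:exchange} --- runs into the very question the authors pose after that corollary, and they explicitly note that outside type~$A$ incompatible pairs need not be exchangeable. So neither route closes, and that is precisely why the statement remains a conjecture.
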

By Lemma~\ref{lem:Walternating}, the alternating part of Conjecture~\ref{conj:twodim} holds.
By Lemma~\ref{lem:Talternating}, Conjecture~\ref{conj:twodim} holds when $R$ is of type $A$.  We will show in Proposition~\ref{prop:noncrosscluster} that the condition ``alternating and $c$-noncrossing" is essentially the same as cluster compatibility.  Thus Conjecture~\ref{conj:twodim} is consistent with Corollary~\ref{cor:exchange}, since exchangeable pairs of cluster variables are incompatible (and the converse holds in type $A$).

\begin{remark}
The notion of $c$-noncrossing depends only on the choice of $c$, and not of $R^+$.  Furthermore, $(\beta,\gamma)$ is $c$-noncrossing if and only if $(-\beta,\gamma)$, $(\beta,-\gamma)$, and $(-\beta,-\gamma)$ are.  This is consistent with our usage of ``noncrossing" in type $A$ for directed edges: two directed edges are noncrossing if the underlying undirected edges are.
\end{remark}

\subsection{Reflection factorizations}
For $w \in W$, write $\ell_R(w)$ for the length of the shortest factorization of $w$ into reflections $s_\gamma \in W$, $\gamma \in R$.  We define a partial order $\leq_R$ on $W$ by
$$
u \leq v \qquad \Leftrightarrow \qquad \ell_R(v) = \ell_R(u) + \ell_R(v u^{-1}).
$$
Note that $\ell_R$ and $\leq_R$ do not depend on the choice of $R^+$.  It is well-known that for any Coxeter element $c$, we have $\ell_R(c) = r$.   We refer the reader to \cite{Bes,BW} for general background on reflection factorizations and reflection order.

%

\subsection{Bipartite positive systems}
We say that $(R^+,c)$ is {\it bipartite}, or $c$ (resp. $R^+$) is bipartite with respect to $R^+$ (resp. $c$), if $R^+$ is compatible with $c$ and in addition, 
\begin{equation}\label{eq:bipartitec}
c = \tau_+ \tau_-, \qquad \tau_+ =  s_{i_1} \cdots s_{i_t}, \qquad \text{and} \qquad \tau_- = s_{i_{t+1}} \cdots s_{i_r},
\end{equation}
where the partition $I = \{1,2,\ldots,r\} = I_+ \sqcup I_-$, with $I_+:= \{i_1, \ldots, i_t\}$ and $I_-:= \{i_{t+1},\ldots,i_r\}$ makes the Dynkin diagram bipartite.  

\begin{lemma}\label{lem:bipartitepair}
Let $c$ be a fixed Coxeter element.  Then a choice of $R^+$ bipartite with respect to $c$ exists.  There are exactly $2h$ bipartite $(R')^+$ with respect to $c$, and they are of the form $\tau_+ \tau_- \tau_+ \cdots \tau_- \tau_+ R^+$ or  $\tau_- \tau_+ \cdots \tau_- \tau_+ R^+$.  
\end{lemma}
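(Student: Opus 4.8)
The plan is to translate the statement into the language of the simply transitive action of $W$ on positive systems. Since $R$ is irreducible of rank $r\ge 2$, its Dynkin diagram is a connected tree, hence bipartite, and the $2$-colouring $I=\{1,\dots,r\}=I_+\sqcup I_-$ is unique up to swapping the two colours, with $I_\pm\ne\emptyset$. For any compatible pair $(R^+,c)$ the simple reflections indexed by a single colour class pairwise commute (non-adjacent vertices), so $\tau_+:=\prod_{i\in I_+}s_i$ and $\tau_-:=\prod_{j\in I_-}s_j$ are well-defined involutions, and ``$(R^+,c)$ is bipartite'' means precisely that the reduced word of $c$ lists one colour class entirely before the other, i.e.\ $c=\tau_+\tau_-$ after possibly renaming the classes. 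For \emph{existence}: fix any $R_0^+$ together with its standard bipartite Coxeter element $c_0=\tau_+^{(0)}\tau_-^{(0)}$; since all Coxeter elements are conjugate, write $c=wc_0w^{-1}$ for some $w\in W$. Then $(wR_0^+,c)$ is compatible and bipartite, because the simple reflections of $wR_0^+$ are the $ws_iw^{-1}$, its Dynkin diagram is canonically identified with that of $R_0^+$ via $w$ (so the colouring is preserved), and $c=(w\tau_+^{(0)}w^{-1})(w\tau_-^{(0)}w^{-1})$.

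For the count, fix one bipartite $(R^+,c)$ with $c=\tau_+\tau_-$. Every positive system is $wR^+$ for a unique $w\in W$, with simple reflections $ws_iw^{-1}$; under the induced canonical identification of Dynkin diagrams (conjugation by $w$ is an isometry, so it preserves the Cartan matrix) the colour classes correspond to themselves, so the sub-word of $c$ coloured $I_\pm$ must be $w\tau_\pm w^{-1}$. Hence $(wR^+,c)$ is bipartite if and only if $c=(w\tau_+w^{-1})(w\tau_-w^{-1})=wcw^{-1}$ or $c=(w\tau_-w^{-1})(w\tau_+w^{-1})=wc^{-1}w^{-1}$, i.e.\ if and only if $w$ centralizes $c$ or $wcw^{-1}=c^{-1}$. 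Now invoke the classical fact that the centralizer of a Coxeter element is the cyclic group $\langle c\rangle$ of order $h$ (Springer's theory of regular elements; it rests on the $e^{2\pi i/h}$-eigenspace of $c$ on $V\otimes\C$ being a line, which is implicit in the discussion of exponents before Lemma~\ref{lem:invertible}). The set of $w$ with $wcw^{-1}=c^{-1}$ is then a single left coset of $\langle c\rangle$, and it is nonempty since $\tau_+c\tau_+^{-1}=\tau_+\tau_+\tau_-\tau_+=\tau_-\tau_+=c^{-1}$, so it equals $\tau_+\langle c\rangle$. Finally $\langle c\rangle$ and $\tau_+\langle c\rangle$ are disjoint because $\tau_+\notin\langle c\rangle$: by \eqref{eq:tR} the weights $\omega_1,\dots,\omega_{hr}$ are pairwise distinct and $c$ permutes them by the index shift $i\mapsto i+r$, so no power $c^m$ with $1\le m\le h-1$ fixes any $\omega_j$, whereas $\tau_+$ fixes $\omega_j$ for every $j\in I_-$ (and $\tau_+\ne 1$). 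Therefore exactly $2h$ positive systems are bipartite with respect to $c$, namely $wR^+$ for $w\in\langle c\rangle\sqcup\tau_+\langle c\rangle$.

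To match the forms in the statement, use $c=\tau_+\tau_-$ and $c^{-1}=\tau_-\tau_+$: for $0\le j\le h-1$ the element $c^{-j}=(\tau_-\tau_+)^j$ is the alternating word $\tau_-\tau_+\cdots\tau_-\tau_+$ of length $2j$ (with $j=0$ giving $R^+$ itself), while $\tau_+c^{-j}=\tau_+(\tau_-\tau_+)^j$ is the alternating word $\tau_+\tau_-\tau_+\cdots\tau_-\tau_+$ of length $2j+1$. Since $\langle c\rangle=\{c^{-j}\}_{j=0}^{h-1}$ and $\tau_+\langle c\rangle=\{\tau_+c^{-j}\}_{j=0}^{h-1}$, applying these $2h$ elements to $R^+$ yields exactly the two families of alternating products claimed in the lemma.

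The one genuinely external input is the identity $C_W(c)=\langle c\rangle$; if one wants a proof internal to this paper, the alternative is to run the argument through the elementary moves of Proposition~\ref{prop:compatible}: from a bipartite system, the only way to reach another bipartite system is to conjugate past an \emph{entire} colour block, since any partial block move puts a single ``wrong-coloured'' letter at the opposite end of the reduced word of $c$ and so destroys the bipartite shape; hence the bipartite systems form a single cycle $\cdots\leftrightarrow\tau_-R^+\leftrightarrow R^+\leftrightarrow\tau_+R^+\leftrightarrow\tau_+\tau_-R^+\leftrightarrow\cdots$ which closes up after $2h$ steps because $(\tau_+\tau_-)^h=c^h=1$. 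Everything else is bookkeeping with the relations $\tau_\pm^2=1$ and with the $W$-equivariance of the whole picture under conjugation of $c$ (Remarks~\ref{rem:conj} and~\ref{rem:cc'}).
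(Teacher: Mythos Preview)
Your proof is correct and takes a genuinely different route from the paper.  The paper's argument is a two-line citation: it observes that exactly two orientations of the Dynkin diagram are bipartite, and then invokes \cite[Theorem~3.6]{KT} (the result behind Proposition~\ref{prop:compatible}) to conclude that for each such orientation there are $h$ compatible positive systems, giving $2h$ in total.

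Your main argument instead characterizes the bipartite $(wR^+,c)$ as those with $w\in C_W(c)\cup \tau_+C_W(c)$ and then invokes Springer's $C_W(c)=\langle c\rangle$.  This is more conceptual---it explains the $2h$ as $|\langle c\rangle|+|\tau_+\langle c\rangle|$ and makes the explicit list of alternating $\tau_\pm$-words fall out immediately---at the cost of importing the regular-element theorem rather than the Kirillov--Thind result.  Your verification that $\tau_+\notin\langle c\rangle$ via the distinctness of the $\omega_i$ in \eqref{eq:tR} is a nice internal touch.  The ``alternative'' you sketch at the end, walking around the cycle of bipartite systems by colour-block moves, is essentially the paper's approach phrased more explicitly; note that to conclude the cycle has length \emph{exactly} $2h$ (not a proper divisor) you still need $\tau_+\notin\langle c\rangle$ and that $c$ has order exactly $h$, which you proved in the main argument but did not repeat there.
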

\begin{proof}
There are exactly two orientations of the Dynkin diagram that correspond to bipartite $(R^+,c)$.  The claim thus follows from \cite[Theorem 3.6]{KT}.  This theorem is stated for simply-laced root systems, but the statement and proof are valid also for multiply-laced Weyl groups.
\end{proof}


%

When $(R^+,c)$ is bipartite, the ordering of Proposition~\ref{prop:listroots} induces an ordering of $R$ of the form $(A_1 \prec A_2 \prec \cdots \prec A_{2h}=A_0)$ where $|A_i| = t$ or $|A_i|=r-t$ depending on whether $i$ is odd or $i$ is even.  The ordering within each $A_i$ depends on the choice of a reduced word of $c$, but the sets $A_i$ themselves do not. We have
$$
R^+ = \bigsqcup_{i=1}^h A_i \qquad R^- = \bigsqcup_{i=h+1}^{2h} A_i \qquad R_{\geq -1} = \bigsqcup_{i=0}^{h+1}A_i.
$$
By Lemma~\ref{lem:bipartitepair}, the ordering of $R$ corresponding to another bipartite $(R')^+$ is of the form $(A_k \prec A_{k+1} \prec A_{k+2} \prec \cdots \prec A_{k-1})$.

We say that $\beta,\gamma \in R$ are {\it $c$-opposed} if $\beta \in A_i$ and $\gamma \in A_{i+h}$ for some choice of bipartite $R^+$ with respect to $c$.  The notion of $c$-opposed does not depend on the choice of bipartite $R^+$.


If $R^+$ and $c$ are fixed, we write $\alpha \prec_{R^+,c} \alpha'$ (or simply $\alpha \prec \alpha'$) if $\alpha$ precedes $\alpha'$ in the ordering of $R$ from Proposition~\ref{prop:listroots}.

\begin{lemma}[{\cite[Lemma 3.9]{BW}}]\label{lem:BW}
Suppose that $\beta \prec_{R^+,c} \gamma$ are distinct positive roots.  Then the following are equivalent:
\begin{enumerate}
\item $s_\beta s_\gamma \leq_R c^{-1}$;
\item $(\beta^\vee, \tgamma) = 0$.
\end{enumerate}
\end{lemma}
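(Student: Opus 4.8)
The statement asserts the equivalence, for distinct positive roots $\beta \prec_{R^+,c} \gamma$, of the absolute-order condition $s_\beta s_\gamma \leq_R c^{-1}$ and the orthogonality condition $(\beta^\vee,\tgamma) = 0$, where $\tgamma = (I-c)^{-1}\gamma$. Since this is quoted from \cite{BW}, the plan is essentially to reconstruct that argument using the machinery already assembled in the excerpt, in particular Proposition~\ref{prop:listroots}, Proposition~\ref{prop:c_orbits}, Lemma~\ref{lem:YZ}, and Lemma~\ref{lem:bound}. First I would translate the right-hand side into a statement purely about the ordering of $R$ coming from $(R^+,c)$. Writing $\beta = \beta_j$ and $\gamma = \beta_k$ with $1 \le j < k \le r$ in the notation of Proposition~\ref{prop:listroots} (if $\gamma$ is a later root in the same $\Gamma$-orbit as a simple root, one reduces to this case by applying a power of $c^{-1}$ and using Lemma~\ref{lem:ceasy}-type invariance, i.e.\ that both conditions are $c$-equivariant — the left side because conjugating by $c$ preserves $\leq_R$ and $c^{-1}$, the right side by Proposition~\ref{prop:c_orbits}), the quantity $(\beta^\vee,\tgamma)$ becomes $(\beta_j^\vee, \omega_k)$, a pairing of a coroot with a fundamental weight.

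The key computation is then to evaluate $(\beta_j^\vee,\omega_k)$ via the "support" analysis already carried out in the proof of Lemma~\ref{lem:bound}. There, one shows $S(\beta_j^\vee) \subseteq P_0(j)$ and more generally $S(c^a\beta_j^\vee)\subseteq P_a(j)$, and using Lemma~\ref{lem:YZ}(1) and (3) one controls exactly when $k \notin S(\beta_j^\vee)$, i.e.\ when the pairing vanishes. So the right-hand condition $(\beta_j^\vee,\omega_k)=0$ is equivalent to a combinatorial statement about whether $k$ lies in the support of $\beta_j^\vee$ expanded in simple coroots, which in turn is governed by the relative position of $j$ and $k$ in the Dynkin diagram and in the Coxeter ordering. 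For the left-hand side, I would use the standard fact (which may be cited from \cite{Bes} or \cite{BW}) that $\ell_R(s_\beta s_\gamma) = 2$ precisely when $\beta,\gamma$ are linearly independent, and that $s_\beta s_\gamma \leq_R c^{-1}$ holds iff there is a reflection factorization of $c^{-1}$ of length $r$ beginning with $s_\gamma s_\beta$ (note the order reversal); equivalently, by the theory of factorizations of Coxeter elements, iff $\{\beta,\gamma\}$ extends to a "$c$-compatible" set of roots indexing a reduced reflection factorization. The bridge between the two sides is the classical dictionary between reflection factorizations of $c$ and the ordering $\beta_1 \prec \beta_2 \prec \cdots$: a pair $\beta_j \prec \beta_k$ with $j<k$ is part of such a factorization exactly when the unique root "between" them in the relevant sense is orthogonal, which is precisely the vanishing $(\beta_j^\vee,\omega_k)=0$.

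The main obstacle I anticipate is making the reflection-order side rigorous without reproving substantial parts of the Bessis/Brady–Watt theory: specifically, establishing cleanly that $s_\beta s_\gamma \leq_R c^{-1}$ is equivalent to the orthogonality/support condition extracted from Lemma~\ref{lem:bound}. The cleanest route is probably to reduce both conditions to the same combinatorial criterion on the Dynkin diagram: (i) for the orthogonality side, use the explicit support bound $S(c^m\beta_i^\vee)\subseteq P_{m-M(i)}(i^\star)$ from the proof of Lemma~\ref{lem:bound} together with Lemma~\ref{lem:YZ}(2),(3) to pin down exactly when $(\beta_j^\vee,\omega_k) = 0$; (ii) for the absolute-order side, invoke the characterization of the interval $[e,c^{-1}]_R$ in the absolute order in terms of noncrossing partitions / the inductive structure of Coxeter factorizations, and match it term by term. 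Since the statement is explicitly attributed to \cite{BW} and only used as a black box downstream (in Section~\ref{ssec:cnoncross} and the subsequent material on $c$-noncrossing roots), it is also entirely legitimate to simply cite \cite[Lemma 3.9]{BW} for the proof; the value of the sketch above is to indicate how it connects to the quantities $\tbeta$, $M(i)$, and the support analysis that the present paper has already developed, which is the form in which the lemma will actually be applied.
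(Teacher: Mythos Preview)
The paper does not supply a proof of this lemma at all: it is simply quoted as \cite[Lemma 3.9]{BW} and used as a black box, exactly as you suggest in your final paragraph. So your fallback recommendation matches the paper's treatment precisely.

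Regarding your attempted reconstruction, there are two concrete gaps worth flagging. First, the reduction ``apply a power of $c^{-1}$ to arrange $1 \le j < k \le r$'' is not generally possible: the difference $k-j$ between the indices of $\beta = \beta_j$ and $\gamma = \beta_k$ in the ordering of Proposition~\ref{prop:listroots} can be as large as roughly $hr/2$, so at most one of $\beta,\gamma$ can be brought into the initial block $\{\beta_1,\dots,\beta_r\}$. You can reduce so that $\tgamma = \omega_k$ is a fundamental weight, but then $\beta$ is a general $c^m\beta_i$ and the support analysis of Lemma~\ref{lem:bound} only tells you the \emph{sign} of $(\beta^\vee,\omega_k)$ in certain ranges of $m$, not a sharp vanishing criterion. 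Second, and more seriously, the ``bridge'' paragraph does not actually supply an argument: the assertion that ``a pair $\beta_j \prec \beta_k$ is part of such a factorization exactly when the unique root between them in the relevant sense is orthogonal'' is a restatement of the lemma, not a proof of it. The genuine content of \cite[Lemma 3.9]{BW} lies precisely in establishing this bridge, and their argument goes through Carter's characterization of absolute order via moved spaces together with a direct analysis of the geometry of $(I-c)^{-1}$, not through the Dynkin-diagram support bounds you invoke. Since reconstructing that argument would amount to reproving the Brady--Watt result, citing it is indeed the right call.
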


\begin{lemma}[{\cite[Lemma 5.6]{BW}}]\label{lem:BW2}
Suppose that $\beta \prec_{R^+,c} \gamma$ are distinct positive roots.  Then: 
\begin{enumerate}
\item
if $s_\beta s_\gamma \leq_R c^{-1}$ we have $(\beta,\gamma) \geq 0$;
\item
if $s_\gamma s_\beta \leq_R c^{-1}$ we have $(\beta,\gamma) \leq 0$.
\end{enumerate}
\end{lemma}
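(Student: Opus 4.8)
The plan is to derive both parts from Lemma~\ref{lem:BW} together with the identity $c\tgamma=\tgamma-\gamma$ (immediate from $(I-c)\tgamma=\gamma$, Proposition~\ref{prop:c_orbits}), which rewrites $(\beta,\gamma)$ as a single root pairing whose sign can be read off from Lemma~\ref{lem:bound}. For part (1): if $s_\beta s_\gamma\leq_R c^{-1}$ then Lemma~\ref{lem:BW} gives $(\beta^\vee,\tgamma)=0$, hence $(\beta,\tgamma)=0$, and expanding $\gamma=\tgamma-c\tgamma$ and using $W$-invariance of the form,
\[
(\beta,\gamma)=(\beta,\tgamma)-(\beta,c\tgamma)=-\,(c^{-1}\beta,\tgamma),
\]
so it suffices to prove $(c^{-1}\beta,\tgamma)\leq0$.

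Next, fixing a reduced word for $c$, I would write $\beta=\beta_a$, $\gamma=\beta_b$ with $a<b$ in $[1,hr]$ (the meaning of $\beta\prec_{R^+,c}\gamma$) and $a=i+mr$, $b=k+nr$ with $i,k\in[1,r]$; since $|k-i|<r$, the inequality $a<b$ forces $m\leq n$. As $c^{-1}\beta=c^{m-1}\beta_i$ and $\tgamma=\omega_b=c^n\omega_k$, the number $(c^{-1}\beta,\tgamma)$ is a positive multiple of $(c^{m-1-n}\beta_i^\vee,\omega_k)$; reducing the exponent modulo $h$ and using Proposition~\ref{prop:listroots}(2) together with $M(i)+M(i^\star)=h$ from Lemma~\ref{lem:YZ}(2), the root $c^{m-1-n}\beta_i$ is negative exactly when $n-m<M(i^\star)$ and positive exactly when $n-m\geq M(i^\star)$. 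When $n-m<M(i^\star)$ — in particular whenever $m=n$ — the coroot $c^{m-1-n}\beta_i^\vee$ is negative, so $(c^{m-1-n}\beta_i^\vee,\omega_k)\leq0$ and we are done. The remaining case $n-m\geq M(i^\star)$ is the hard one: there $(c^{m-1-n}\beta_i^\vee,\omega_k)\geq0$, so one must show it vanishes, i.e.\ that $k$ is not in the support of $c^{m-1-n}\beta_i^\vee$. Here the plan is to feed in the information left over from the hypothesis, namely $(c^{m-n}\beta_i^\vee,\omega_k)=0$ (which for $n-m=M(i^\star)$ becomes $(\beta_{i^\star}^\vee,\omega_k)=0$ after $c^{m-n}\beta_i=-\beta_{i^\star}$ from Lemma~\ref{lem:YZ}(1)), and to combine it with the constraint that $\beta_b=c^n\beta_k$ is a positive root, i.e.\ $n<M(k)$, together with the ``nonnegative then nonpositive'' shape of the sequence $\bigl(c^t\beta_i^\vee,\omega_k\bigr)_t$ from Lemma~\ref{lem:bound}: the positivity of $\beta_b$ is precisely what excludes the quadruples $(i,k,m,n)$ for which the pairing would be strictly positive.

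For part (2): if $s_\gamma s_\beta\leq_R c^{-1}$ one runs the analogous argument for the reversed product. Taking inverses turns the hypothesis into $s_\beta s_\gamma\leq_R c$, and one applies the companion of Lemma~\ref{lem:BW} that characterizes $s_\gamma s_\beta\leq_R c^{-1}$ (equivalently, Lemma~\ref{lem:BW} for the Coxeter element $c^{-1}$, using that $\prec_{R^+,c^{-1}}$ is the opposite order to $\prec_{R^+,c}$ on $R^+$ and that the $c^{-1}$-twisted roots coincide with the $c$-twisted roots since $(I-c^{-1})^{-1}=(I-c)^{-1}c$ and $cR=R$) to obtain $(\gamma^\vee,\tbeta)=0$. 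Then $(\beta,\gamma)=(\gamma,\beta)=-\,(c^{-1}\gamma,\tbeta)$, which up to a positive factor equals $-\,(c^{n-1-m}\beta_k^\vee,\omega_i)$; since $n-1-m\leq n-1<M(k)$, whenever $n-1-m\geq0$ the root $c^{n-1-m}\beta_k$ is positive, so $(c^{n-1-m}\beta_k^\vee,\omega_i)\geq0$ and $(\beta,\gamma)\leq0$, and the one remaining sub-case $m=n$ (so $n-1-m=-1$ and $c^{-1}\beta_k$ is a negative root) is handled by the same support/vanishing analysis as in part (1).

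I expect the main obstacle to be exactly this ``hard case'' bookkeeping: pinning down which quadruples $(i,k,m,n)$ satisfy the hypothesis-vanishing subject to $\beta_a,\beta_b$ positive and $a<b$, and then tracking the sign of the shifted pairing through Lemma~\ref{lem:bound}, Lemma~\ref{lem:YZ}, and the support-path estimate from the proof of Lemma~\ref{lem:bound}. A cleaner-looking alternative for this step is a reduction to rank two: the hypothesis $s_\beta s_\gamma\leq_R c^{-1}$ forces $W':=\langle s_\beta,s_\gamma\rangle$ to be a (dihedral) parabolic subgroup on which $c^{-1}$ induces a Coxeter element, and the two Coxeter elements $s_\beta s_\gamma,\ s_\gamma s_\beta$ of $W'$ are distinguished precisely by the sign of $(\beta,\gamma)$; the whole content then collapses to checking that the order on the roots of $W'$ inherited from $\prec_{R^+,c}$ is compatible with this induced Coxeter element — which is where the orientation bookkeeping reappears, but now only in rank two.
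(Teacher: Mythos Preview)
The paper does not prove this lemma; it is quoted from Brady--Watt \cite[Lemma~5.6]{BW} without argument. So there is no ``paper's approach'' to compare against, only the question of whether your outline stands on its own.

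For part~(1), your reduction to the sign of $(c^{m-1-n}\beta_i^\vee,\omega_k)$ is correct, and the ``hard case'' $n-m\ge M(i^\star)$ that worries you closes in one line without any further bookkeeping. Set $m''=h-1-(n-m)$. Positivity of $\gamma=\beta_b$ gives $n<M(k)$, hence
\[
m''=h-1-(n-m)\ \ge\ h-1-n\ >\ h-1-M(k)=M(k^\star)-1,
\]
so $m''\ge M(k^\star)$, and Lemma~\ref{lem:bound}(4) gives $(c^{m''}\beta_i^\vee,\omega_k)\le 0$. On the other hand the hard-case assumption itself gives $m''\le h-1-M(i^\star)=M(i)-1$, so $c^{m''}\beta_i\in R^+$ and the pairing is $\ge 0$. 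Thus it vanishes and $(\beta,\gamma)=0$. Your concern here is unfounded; the argument is complete.

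For part~(2) there is a genuine gap in your derivation of the ``companion'' $(\gamma^\vee,\tbeta)=0$. Taking inverses in $s_\gamma s_\beta\le_R c^{-1}$ yields $s_\beta s_\gamma\le_R c$; but applying Lemma~\ref{lem:BW} to the Coxeter element $c^{-1}$ with $\gamma\prec_{R^+,c^{-1}}\beta$ characterizes $s_\gamma s_\beta\le_R c$, not $s_\beta s_\gamma\le_R c$. The two products are swapped, and moreover the $c^{-1}$-twist of $\beta$ is $(I-c^{-1})^{-1}\beta=-c\tbeta$, not $\tbeta$. So your justification does not produce $(\gamma^\vee,\tbeta)=0$. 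The companion statement you want is true and is also in Brady--Watt (it is what makes Corollary~\ref{cor:cnoncrossing} work), but it needs its own argument. The rank-two reduction you sketch at the end is exactly the right fix, and is essentially how Brady--Watt prove both parts: restrict to the dihedral subgroup $\langle s_\beta,s_\gamma\rangle$, where the only length-two elements below the induced Coxeter element are $s_\beta s_\gamma$ and $s_\gamma s_\beta$, and observe that which one occurs is governed by the sign of $(\beta,\gamma)$.
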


\begin{lemma}\label{lem:copposed}
Suppose that $\beta,\gamma \in R$ are $c$-opposed and $\beta \neq -\gamma$.  Then: 
\begin{enumerate}
\item $(\beta,\gamma) = (\gamma^\vee,\tbeta) = (\beta^\vee,\tgamma) = 0$;
\item $s_\beta s_\gamma =s_\gamma s_\beta \leq_R c^{-1}$.
\end{enumerate}
\end{lemma}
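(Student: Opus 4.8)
The plan is to exploit the freedom in the choice of $R^+$ to reduce to the simplest possible configuration, and then compute directly. The statement of the lemma --- the pairings $(\beta,\gamma)$, the twisted roots $\tbeta,\tgamma\in\tR$, the relation $s_\beta s_\gamma\leq_R c^{-1}$, and even the hypothesis that $\beta,\gamma$ are $c$-opposed --- depends only on $c$ and not on the compatible positive system. So I would begin by fixing a bipartite $R^+$ (which exists by Lemma~\ref{lem:bipartitepair}); write $c=\tau_+\tau_-$ for the bipartition $I=I_+\sqcup I_-$, and let $A_1\prec A_2\prec\cdots\prec A_{2h}$ be the associated block decomposition of $R$. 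Being $c$-opposed means $\beta\in A_i$ and $\gamma\in A_{i+h}$ for some $i$, and since the $2h$ bipartite positive systems of Lemma~\ref{lem:bipartitepair} realize all $2h$ cyclic rotations of the block ordering, I may relabel so that $\beta\in A_1$ and $\gamma\in A_{h+1}$ (all the relevant quantities transform equivariantly under such a relabeling).

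The next --- and, I expect, most delicate --- step is to identify the blocks $A_1$ and $A_{h+1}$ explicitly. From $\beta_j=s_{i_1}\cdots s_{i_{j-1}}\alpha_{i_j}$ together with the fact that the indices of $I_+$ are pairwise non-adjacent in the Dynkin diagram, so that their simple reflections commute and fix one another's simple roots, one gets $A_1=\{\alpha_j\mid j\in I_+\}$, a set of pairwise orthogonal simple roots. For $A_{h+1}$, I would observe that the opposite positive system $R^-=-R^+$ has the same simple reflections as $R^+$, so $c=\tau_+\tau_-$ is again a bipartite standard Coxeter element with respect to $R^-$; its first block is then $\{-\alpha_j\mid j\in I_+\}=-A_1$, and since the positive roots of $R^-$ are exactly $A_{h+1}\sqcup\cdots\sqcup A_{2h}$, the block ordering attached to $(R^-,c)$ must be $A_{h+1}\prec\cdots\prec A_h$. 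Hence $A_{h+1}=-A_1$. After the reduction we may therefore write $\beta=\alpha_a$ and $\gamma=-\alpha_b$ with $a,b\in I_+$, and the hypothesis $\beta\neq-\gamma$ becomes exactly $a\neq b$, whence $(\alpha_a,\alpha_b)=0$.

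With this configuration in hand both parts are short. For (1): $(\beta,\gamma)=-(\alpha_a,\alpha_b)=0$; and since $\alpha_a=\beta_j$ for some index $j$ in the first period, Proposition~\ref{prop:c_orbits} gives $\tbeta=(I-c)^{-1}\alpha_a=\omega_a$, so $(\gamma^\vee,\tbeta)=-(\alpha_b^\vee,\omega_a)=-\delta_{ab}=0$, and likewise $\tgamma=-\omega_b$ yields $(\beta^\vee,\tgamma)=-\delta_{ab}=0$. For (2): orthogonality gives $s_\beta s_\gamma=s_{\alpha_a}s_{\alpha_b}=s_{\alpha_b}s_{\alpha_a}=s_\gamma s_\beta$; and since $\tau_+=\prod_{j\in I_+}s_{\alpha_j}$ is a product of pairwise commuting reflections in pairwise orthogonal roots, the subproduct satisfies $s_{\alpha_a}s_{\alpha_b}\leq_R\tau_+$, while $c^{-1}=\tau_-\tau_+$ with $\ell_R(c^{-1})=\ell_R(\tau_-)+\ell_R(\tau_+)=r$ gives $\tau_+\leq_R c^{-1}$, so transitivity of $\leq_R$ finishes it (alternatively, once (1) is known, (2) follows from Lemma~\ref{lem:BW} applied to the pair $\{\alpha_a,\alpha_b\}$). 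The only real work is in the second paragraph: handling the bipartite block combinatorics and checking that the change-of-positive-system reductions are legitimate; once $(\beta,\gamma)$ is brought to the form $(\alpha_a,-\alpha_b)$ with $\alpha_a\perp\alpha_b$ simple roots indexed by $I_+$, the rest is essentially a one-line computation.
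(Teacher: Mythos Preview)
Your proposal is correct and follows essentially the same route as the paper: choose a bipartite $R^+$ so that $\beta\in A_1$ and $\gamma\in A_{h+1}=-A_1$, write $\beta=\alpha_a$, $\gamma=-\alpha_b$ with $a,b\in I_+$ distinct (hence non-adjacent), and then both (1) and (2) are immediate computations. You supply more justification than the paper does for the reduction step and for the identity $A_{h+1}=-A_1$ (which the paper simply asserts), but the argument is the same.
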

\begin{proof}
We have $A_{i+h} = -A_i$.  Suppose that $\beta$ and $\gamma$ are $c$-opposed.  Then we may choose bipartite $R^+$ so that $\beta \in A_1$ and $\gamma \in A_{1+h}$.  Then $\beta = \alpha_i$ and $\gamma = -\alpha_j$ where $i,j \in I_+$. Since $i,j$ are not adjacent, $(\beta,\gamma)=0$ follows.  Also $\tbeta = \omega_i$ and $\tgamma = -\omega_j$, so (1) follows.  (2) is also clear from \eqref{eq:bipartitec}.
\end{proof}

\begin{proposition}\label{prop:noncrossfact}
Let $(R^+,c)$ be bipartite and $\beta \prec_{R^+,c} \gamma$ be distinct positive roots.  Then $s_\beta s_\gamma \leq_R c^{-1}$ if and only if $\beta,\gamma$ are alternating $c$-noncrossing.
%
\end{proposition}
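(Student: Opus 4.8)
The plan is to reduce both directions to Lemma~\ref{lem:BW} and Lemma~\ref{lem:BW2}, the only real work being a ``reversed'' form of Lemma~\ref{lem:BW}. The forward implication is essentially immediate: if $s_\beta s_\gamma \leq_R c^{-1}$, then Lemma~\ref{lem:BW2}(1) gives $(\beta,\gamma)\geq 0$, so $\beta$ and $\gamma$ are alternating, while Lemma~\ref{lem:BW} gives $(\beta^\vee,\tgamma)=0$, so $\beta$ and $\gamma$ are $c$-noncrossing. This part uses neither the bipartite hypothesis nor Lemma~\ref{lem:BW2}(2).

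For the converse I would first establish the following reversed version of Lemma~\ref{lem:BW}: for distinct positive roots $\beta\prec_{R^+,c}\gamma$, one has $s_\gamma s_\beta \leq_R c^{-1}$ if and only if $(\gamma^\vee,\tbeta)=0$. The idea is to change the positive system. Iterating Proposition~\ref{prop:compatible}, cyclically shift $R^+$ to a positive system $(R')^+$ compatible with $c$ whose induced ordering of $R$ (from Proposition~\ref{prop:listroots}) begins at $\gamma$. Because $(R^+,c)$ is bipartite, $-\beta$ occurs exactly $|R^+|$ positions after $\beta$ in the cyclic ordering of $R$, and $\beta$ precedes $\gamma$; hence $\gamma$ and $-\beta$ are both positive roots for $(R')^+$ and $\gamma$ precedes $-\beta$ in the ordering from $((R')^+,c)$. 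Applying Lemma~\ref{lem:BW} to the pair $(\gamma,-\beta)$ in $((R')^+,c)$, together with $s_{-\beta}=s_\beta$, $(I-c)^{-1}(-\beta)=-\tbeta$, and the fact that $\ell_R$ and $\leq_R$ do not depend on the choice of positive system, yields exactly $s_\gamma s_\beta \leq_R c^{-1}$ iff $(\gamma^\vee,\tbeta)=0$.

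Granting this, the converse runs as follows. Suppose $\beta,\gamma$ are alternating and $c$-noncrossing, so $(\beta,\gamma)\geq 0$ and either $(\beta^\vee,\tgamma)=0$ or $(\gamma^\vee,\tbeta)=0$. If $(\beta^\vee,\tgamma)=0$, Lemma~\ref{lem:BW} gives $s_\beta s_\gamma \leq_R c^{-1}$. In the remaining case we may assume $(\beta^\vee,\tgamma)\neq 0$ and $(\gamma^\vee,\tbeta)=0$, and I claim this contradicts $(\beta,\gamma)\geq 0$: by the reversed lemma $s_\gamma s_\beta \leq_R c^{-1}$, so Lemma~\ref{lem:BW2}(2) forces $(\beta,\gamma)\leq 0$, whence $(\beta,\gamma)=0$; but then $s_\beta$ and $s_\gamma$ commute, so $s_\beta s_\gamma = s_\gamma s_\beta \leq_R c^{-1}$, and Lemma~\ref{lem:BW} gives $(\beta^\vee,\tgamma)=0$, a contradiction. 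Hence in all cases $s_\beta s_\gamma \leq_R c^{-1}$.

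The step I expect to be the main obstacle is the reversed form of Lemma~\ref{lem:BW}: one must check carefully that the cyclic shift of the positive system produced by Proposition~\ref{prop:compatible} really places $\gamma$ and $-\beta$ (rather than $\beta$) on the positive side, in the correct relative order, which is precisely where the bipartite hypothesis is used; alternatively, this reversed statement can be extracted directly from the reflection-order analysis in \cite{BW}. Everything else is a short formal manipulation of the two cited lemmas.
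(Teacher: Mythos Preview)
Your forward direction and your overall plan for the converse are sound, and the ``reversed Lemma~\ref{lem:BW}'' you formulate is indeed correct. However, your proposed proof of this reversed lemma contains a concrete error: the claim that in the bipartite case $-\beta$ sits exactly $|R^+|$ positions after $\beta$ in the cyclic ordering is \emph{false}. What the bipartite hypothesis gives is only the set-level statement $A_{i+h}=-A_i$; the internal ordering within a block can be reversed. For instance, in type $A_3$ with $c=s_1s_3s_2$ one computes
\[
\beta_1=\alpha_1,\quad \beta_2=\alpha_3,\quad\ldots,\quad \beta_7=-\alpha_3,\quad \beta_8=-\alpha_1,
\]
so $-\beta_1$ is at position $8$, not $7=1+|R^+|$. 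Consequently, when $\beta$ and $\gamma$ lie in the \emph{same} block $A_i$, your cyclic shift to start at $\gamma$ need not place $-\beta$ in the new positive system: in the example above, with $\beta=\alpha_1$ and $\gamma=\alpha_3$, the shifted positive system consists of positions $2$ through $7$, which excludes $-\alpha_1$. So your derivation of the reversed lemma breaks down precisely in the same-block case.

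The paper's proof runs into the identical obstruction and deals with it by treating the same-block case separately: roots in a common $A_i$ are (up to the $c$-action) pairwise orthogonal simple roots, so $s_\beta s_\gamma\leq_R c^{-1}$ follows directly from Lemma~\ref{lem:consectree}. Once that case is disposed of, the block-level shift (rather than a shift to $\gamma$ itself) does place $\gamma$ and $-\beta$ in the new positive system with $\gamma\prec-\beta$, and Lemma~\ref{lem:BW} applies as you intended. The paper also reaches $(\beta,\gamma)=0$ by a slightly different route---arranging $\beta$ to be simple and reading off $(\gamma^\vee,\alpha_i)\leq 0$ from $(\gamma^\vee,\omega_i)=0$---rather than via your reversed lemma plus Lemma~\ref{lem:BW2}(2); but once the same-block gap is patched, your packaging is equally valid and arguably cleaner.
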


\begin{proof}
If $s_\beta s_\gamma \leq_R c^{-1}$, then by Lemma~\ref{lem:BW2} we have $(\beta,\gamma) \geq 0$, and by Lemma~\ref{lem:BW} we have $(\beta^\vee,\tgamma)=0$.  Thus $(\beta,\gamma)$ is alternating $c$-noncrossing.  

Conversely, suppose that $(\beta,\gamma)$ is alternating and $c$-noncrossing.  If $(\beta^\vee,\tgamma) =0$, then by Lemma~\ref{lem:BW} we have $s_\beta s_\gamma \leq c^{-1}$, and we are done.  Next suppose that we have $(\gamma^\vee, \tbeta) = 0$.  We claim that $(\beta,\gamma) = 0$.  To see this, we assume that $R^+$ has been chosen so that $\beta \in A_1$ while $\gamma \in \bigcup_{i=1}^h A_i$.  Thus $\beta = \alpha_i$ and $\tbeta= \omega_i$.  The condition $(\gamma^\vee,\omega_i) = 0$ implies that $(\gamma^\vee,\alpha_i) \leq 0$ and the alternating condition gives $(\gamma^\vee,\alpha_i) \geq 0$.  Thus, $(\beta,\gamma) = 0$, establishing our claim.

We are thus in the situation that $(\gamma^\vee, \tbeta) = 0$ and $(\beta,\gamma) = 0$.  If $\beta,\gamma \in A_i$ for some $i$ (that is, they are close together in the ordering), then by Lemma~\ref{lem:consectree}, we have $\prod_{\delta \in A_i}s_\delta  \prod_{\delta' \in A_{i+1}} s_{\delta'} = c^{-1}$, so we know that $s_\beta s_\gamma \leq c^{-1}$ and we are done.  Now, if $\beta \in A_i$ then $-\beta \in A_{i+h}$.
Thus, we may find a different positive system $(R')^+$, bipartite with respect to $c$, so that $\gamma \prec_{(R')^+,c} -\beta$, and $\gamma,-\beta \in (R')^+$.  By Lemma~\ref{lem:BW}, we have $s_\gamma s_\beta \leq c^{-1}$, and since $s_\gamma s_\beta= s_\beta s_\gamma$, we are done.
\end{proof}

\begin{corollary}\label{cor:cnoncrossing}
Suppose that $\beta,\gamma$ are distinct roots such that $\beta \neq -\gamma$.  Then the following are equivalent:
\begin{enumerate}
\item $\beta$ and $\gamma$ are $c$-noncrossing;
\item
either $s_\beta s_\gamma \leq_R c^{-1}$ or $s_\gamma s_\beta \leq_R c^{-1}$.
\end{enumerate}
\end{corollary}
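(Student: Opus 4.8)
The plan is to reduce everything to Lemma~\ref{lem:BW}, which already handles a pair of distinct positive roots $\beta \prec_{R^+,c}\gamma$. First I would record the symmetries shared by both conditions in the corollary. Statement (1) is unchanged under replacing $\beta$ by $-\beta$ or $\gamma$ by $-\gamma$ (Remark following Conjecture~\ref{conj:twodim}), and so is statement (2) since $s_{-\delta}=s_\delta$; both conditions are also symmetric in the unordered pair $\{\beta,\gamma\}$. Since $(I-c)^{-1}$ is linear, $(I-c)^{-1}(-\delta)=-\tdelta$, so the vanishing conditions $(\beta^\vee,\tgamma)=0$ and $(\gamma^\vee,\tbeta)=0$ are likewise insensitive to the signs of $\beta,\gamma$. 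Hence it is enough to choose convenient positive systems compatible with $c$.

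Next I would dispose of the degenerate cases using Lemma~\ref{lem:copposed}. If $\beta,\gamma$ are $c$-opposed, then Lemma~\ref{lem:copposed}(1) gives $(\gamma^\vee,\tbeta)=0$ so (1) holds, and Lemma~\ref{lem:copposed}(2) gives $s_\beta s_\gamma\leq_R c^{-1}$ so (2) holds; the equivalence is then vacuously true. Similarly, if $\beta$ and $\gamma$ lie in a common layer $A_i$ of the bipartite layer decomposition associated to some positive system bipartite with respect to $c$, then $A_{i+h}=-A_i$ shows that $\beta$ and $-\gamma$ are $c$-opposed and distinct, so applying the previous case to $(\beta,-\gamma)$ and invoking the sign-symmetries above shows that both (1) and (2) hold for $(\beta,\gamma)$ as well.

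In the remaining case $\beta,\gamma$ are distinct, not $c$-opposed, and share no layer. Using Lemma~\ref{lem:bipartitepair} and the ensuing description of the $2h$ bipartite positive systems as the cyclic rotations $(A_k\prec A_{k+1}\prec\cdots\prec A_{k-1})$ of a fixed layered order $R=A_1\sqcup\cdots\sqcup A_{2h}$ (with $A_{i+h}=-A_i$ and $R^+$ equal to a block of $h$ consecutive layers), I would first pick a bipartite $R^+$ in which $\beta,\gamma$ are both positive — possible exactly because they are not $c$-opposed — and, relabelling via the $\{\beta,\gamma\}$-symmetry, assume $\beta\prec_{R^+,c}\gamma$. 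The combinatorial heart is to then also produce a bipartite positive system $(R')^+$ in which $\gamma$ and $-\beta$ are both positive with $\gamma\prec_{(R')^+,c}-\beta$: writing $\beta\in A_p$, $\gamma\in A_q$, the relation $\beta\prec\gamma$ inside a window of $h$ consecutive layers forces $1\le (q-p)\bmod 2h\le h-1$, whence the layer $A_{p+h}$ containing $-\beta$ sits $h-((q-p)\bmod 2h)\in\{1,\dots,h-1\}$ steps after $A_q$, so the window $A_q,A_{q+1},\dots,A_{q+h-1}$ contains $A_q$ followed by $A_{p+h}$. Granting these two positive systems, the proof finishes mechanically: $c$-noncrossing of $(\beta,\gamma)$ means $(\beta^\vee,\tgamma)=0$ or $(\gamma^\vee,\tbeta)=0$; in the first case Lemma~\ref{lem:BW} applied in $R^+$ yields $s_\beta s_\gamma\leq_R c^{-1}$, and in the second case, since $(\gamma^\vee,(I-c)^{-1}(-\beta))=-(\gamma^\vee,\tbeta)=0$, Lemma~\ref{lem:BW} applied in $(R')^+$ yields $s_\gamma s_{-\beta}=s_\gamma s_\beta\leq_R c^{-1}$. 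The converse is the same two applications of Lemma~\ref{lem:BW} read backwards.

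I expect the main obstacle to be exactly that window-existence step: checking carefully, from Lemma~\ref{lem:bipartitepair} and the surrounding description of the cyclic layer decomposition, that two roots which are neither equal, negatives of each other, $c$-opposed, nor cohabiting a layer can always be placed — with the required relative order and after at most one sign change — inside bipartite positive systems compatible with $c$ so that Lemma~\ref{lem:BW} applies. The rest is routine bookkeeping with the sign symmetries of $(I-c)^{-1}$ and the identity $s_\delta=s_{-\delta}$.
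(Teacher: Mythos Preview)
Your argument is correct and follows essentially the same route as the paper: reduce to bipartite positive systems, dispose of the $c$-opposed case via Lemma~\ref{lem:copposed}, and in the generic case use the cyclic layer decomposition to place $\beta,\gamma$ (or $\gamma,-\beta$) into a window of $h$ consecutive layers so that the Brady--Watt lemma applies. The window-existence step you flagged as the main obstacle is exactly the one the paper relies on too, and your verification of it is clean.

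The one genuine difference is that the paper routes the argument through Proposition~\ref{prop:noncrossfact} (alternating $c$-noncrossing $\Leftrightarrow$ $s_\beta s_\gamma\le_R c^{-1}$), first using a sign change to make the pair alternating; you instead appeal to Lemma~\ref{lem:BW} directly and never touch the alternating condition. This is a modest streamlining: Proposition~\ref{prop:noncrossfact} needs both Lemma~\ref{lem:BW} and Lemma~\ref{lem:BW2} plus a short case analysis, whereas your proof shows that for the corollary only Lemma~\ref{lem:BW} is needed. The paper's route, on the other hand, packages the alternating condition into a standalone statement that is reused elsewhere (e.g., in Proposition~\ref{prop:noncrosscluster}).
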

\begin{proof}
Suppose (1) holds.  Replacing $\gamma$ by $-\gamma$ does not change either of the conditions (1) or (2).  Thus we may assume that $\beta, \gamma$ are alternating $c$-noncrossing.  If $\beta,\gamma$ are $c$-opposed, we apply Lemma~\ref{lem:copposed}.  Otherwise, we pick any bipartite $R^+$ containing $\beta$ and $\gamma$ and apply Proposition~\ref{prop:noncrossfact}.

Conversely, suppose (2) holds, for concreteness let us assume that $s_\beta s_\gamma \leq_R c^{-1}$.  If $\beta,\gamma$ are $c$-opposed, we apply Lemma~\ref{lem:copposed}.  Otherwise, we can find $R^+$ such that either $\beta \prec_{R^+,c} \gamma$ are both positive roots, or $-\beta \prec_{R^+,c} \gamma$ are both positive roots.  By Proposition~\ref{prop:noncrossfact}, in both cases (2) holds.
\end{proof}
%

\subsection{Cluster compatibility and Coxeter-noncrossing}\label{ssec:ccompat}

Let 
$$R_{\geq -1} := R^+ \cup \{-\alpha_1,\ldots,-\alpha_r\}$$ denote the set of {\it almost simple roots}.  The notion of (cluster) compatibility of a pair of almost simple roots $\beta,\beta' \in R_{\geq -1}$ is defined in \cite{FZ}, and it is related to the notion of compatibility of a pair $\omega,\omega' \in \Pi(c)$ by \cite[(5.6)]{YZ}.  Namely, 
let $\psi: \Pi(c) \to R_{\geq -1}$ be defined by
\begin{equation}\label{eq:YZpsi}
\psi(\omega) = \begin{cases} -\alpha_i & \mbox{if $\omega = \omega_i$ for some $i=1,2,\ldots,r$} \\
c^{-1} \omega - \omega = c^{-1} (I-c) \omega & \mbox{otherwise.}
\end{cases}
\end{equation}
Then 
$$
\omega,\omega' \in \Pi(c) \text{ are compatible if and only if } \psi(\omega),\psi(\omega') \in R_{\geq -1}  \text{ are.}
$$
\begin{theorem}[{\cite[Theorem 8.3]{BW}}]\label{thm:BW}
Suppose that $\beta \prec_{R^+,c} \gamma$ are two distinct positive roots.  Then
$(\beta,\gamma)$ are $(W,R^+,c)$-compatible if and only if $s_\beta s_\gamma \leq_R c^{-1}$.
\end{theorem}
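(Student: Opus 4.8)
The plan is to reduce the theorem to facts already established in the excerpt, primarily Lemma~\ref{lem:BW}, Lemma~\ref{lem:BW2}, Proposition~\ref{prop:noncrossfact}, and Corollary~\ref{cor:cnoncrossing}, together with the translation \eqref{eq:YZpsi} between $\Pi(c)$ and $R_{\geq -1}$. The statement to prove says that, for distinct positive roots $\beta \prec_{R^+,c} \gamma$, cluster compatibility of $(\beta,\gamma)$ is equivalent to $s_\beta s_\gamma \leq_R c^{-1}$. Since the excerpt has developed the notion ``alternating and $c$-noncrossing'' and has shown (Corollary~\ref{cor:cnoncrossing}, Proposition~\ref{prop:noncrossfact}) that for $\beta \prec_{R^+,c} \gamma$ positive we have $s_\beta s_\gamma \leq_R c^{-1}$ iff $(\beta,\gamma)$ is alternating $c$-noncrossing, the heart of the matter is to identify cluster compatibility of a pair of positive roots (under $\prec_{R^+,c}$) with being alternating and $c$-noncrossing. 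So I would restate the target as: $(\beta,\gamma)$ are $(W,R^+,c)$-compatible $\iff$ $(\beta,\gamma)$ are alternating and $c$-noncrossing.

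First I would set up the dictionary. A pair $\omega,\omega' \in \Pi(c)$ is compatible iff $\psi(\omega),\psi(\omega') \in R_{\geq -1}$ are cluster-compatible in the sense of \cite{FZ}. Because we are told $\tbeta$ is precisely the $g$-vector of the cluster variable $x_{\tbeta}$, and $\tbeta = (I-c)^{-1}\beta$, the positive roots $\beta \in R^+$ with $\beta \prec_{R^+,c}$-position in $[1,M(\cdot)]$ correspond under $(I-c)^{-1}$ to elements $\omega = \tbeta \in \Pi(c)$, and then $\psi(\omega) = c^{-1}(I-c)\omega = c^{-1}\beta$. So I need to check that $(\beta,\gamma)$ positive with $\beta \prec_{R^+,c}\gamma$ corresponds to the pair $\psi^{-1}$ of almost-positive roots $c^{-1}\beta, c^{-1}\gamma$ (or $-\alpha_i$ when the root lies in the first block $A_1$), and that the Fomin–Zelevinsky compatibility of the latter pair is what governs cluster compatibility. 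This is a bookkeeping step comparing the indexing conventions of \cite{FZ}, \cite{YZ}, and the ordering of Proposition~\ref{prop:listroots}; it should be routine but must be done carefully because of the $\psi$ shift by $c^{-1}$ and the special treatment of $\omega_i$.

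Then I would prove the equivalence by the following chain. By Lemma~\ref{lem:BW}, for $\beta \prec_{R^+,c}\gamma$ positive, $(\beta^\vee,\tgamma)=0 \iff s_\beta s_\gamma \leq_R c^{-1}$; and by Lemma~\ref{lem:BW2}, when $s_\beta s_\gamma \leq_R c^{-1}$ we automatically get $(\beta,\gamma)\geq 0$, i.e.\ alternating. Conversely if $s_\gamma s_\beta \leq_R c^{-1}$ then $(\beta,\gamma) \leq 0$, so the pair is not alternating (unless orthogonal, handled separately). Hence $s_\beta s_\gamma \leq_R c^{-1}$ for the \emph{ordered} pair is equivalent to ``$(\beta,\gamma)$ alternating and $c$-noncrossing'' — this is essentially the content of Proposition~\ref{prop:noncrossfact}, which I would invoke after first reducing to a bipartite $(R^+,c)$ using Lemma~\ref{lem:bipartitepair} and Lemma~\ref{lem:noncrossingchange}/Proposition~\ref{prop:compatible} to transport between positive systems (cluster compatibility and the order $\leq_R$ are independent of $R^+$, while $\prec_{R^+,c}$ is not, so care is needed — but the combination ``$s_\beta s_\gamma \leq_R c^{-1}$ or $s_\gamma s_\beta \leq_R c^{-1}$'' is $R^+$-independent by Corollary~\ref{cor:cnoncrossing}). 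Finally I would match ``alternating and $c$-noncrossing'' with Fomin–Zelevinsky compatibility of $\psi(\omega),\psi(\omega')$: this is a known fact about $c$-Cambrian/cluster fans (Reading–Speyer \cite{RS}, and precisely the form recorded in \cite[Theorem 8.3]{BW} whose proof I am reconstructing), and the cleanest route is to cite that the $(W,R^+,c)$-cluster fan has rays $\tbeta$, $\beta \in R_{\geq -1}$-type, and two rays span a cone of that fan iff the corresponding roots are compatible iff they are ``$c$-noncrossing and alternating'' — an identification established in the simplicial-fan literature.

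The main obstacle I expect is \emph{not} any single hard inequality but rather the index-chasing: reconciling three different but compatible conventions — Yang–Zelevinsky's $\Pi(c)$ and the map $\psi$; Fomin–Zelevinsky's almost-positive roots $R_{\geq-1}$ and their compatibility degree; and the reflection-order combinatorics $\leq_R$, $\prec_{R^+,c}$ of \cite{BW} — and in particular handling the asymmetric cases where one of $\beta,\gamma$ lies in the first block $A_1$ (so that $\psi$ returns a negative simple root $-\alpha_i$ rather than $c^{-1}\beta$), and the degenerate orthogonal cases $(\beta,\gamma)=0$ where both $s_\beta s_\gamma \leq_R c^{-1}$ and $s_\gamma s_\beta \leq_R c^{-1}$ can hold. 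For those I would lean on Lemma~\ref{lem:copposed} (the $c$-opposed case) and on the ``consecutive block'' factorization $\prod_{\delta\in A_i}s_\delta\prod_{\delta'\in A_{i+1}}s_{\delta'}=c^{-1}$ used in the proof of Proposition~\ref{prop:noncrossfact}, which already handles exactly these configurations.
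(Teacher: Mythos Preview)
The paper does not prove Theorem~\ref{thm:BW}; it is quoted from Brady--Watt \cite[Theorem~8.3]{BW} and used as a black box (for instance in the proof of Proposition~\ref{prop:noncrosscluster}). So there is no ``paper's own proof'' to compare against.

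Your proposal has a genuine circularity. You correctly note that the paper's machinery (Lemmas~\ref{lem:BW} and~\ref{lem:BW2}, Proposition~\ref{prop:noncrossfact}) establishes, for bipartite $(R^+,c)$ and $\beta \prec \gamma$ positive, that $s_\beta s_\gamma \leq_R c^{-1}$ is equivalent to ``$(\beta,\gamma)$ alternating and $c$-noncrossing''. But the remaining step --- identifying ``alternating and $c$-noncrossing'' with $(W,R^+,c)$-cluster compatibility --- is exactly the content of Proposition~\ref{prop:noncrosscluster}, and that proposition is proved in the paper \emph{by invoking} Theorem~\ref{thm:BW}. When you write that this identification ``is a known fact about $c$-Cambrian/cluster fans \ldots\ and precisely the form recorded in \cite[Theorem~8.3]{BW} whose proof I am reconstructing'', you are appealing to the very statement under consideration. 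Your outline therefore reduces to: (i) $s_\beta s_\gamma \leq_R c^{-1}$ iff alternating $c$-noncrossing (done, via the cited lemmas); (ii) alternating $c$-noncrossing iff cluster compatible (deferred to external references, or circular). Step (ii) carries all the substance, and you have not supplied an argument for it independent of Theorem~\ref{thm:BW} itself. A genuine reconstruction would have to go back to the Fomin--Zelevinsky compatibility degree on $R_{\geq -1}$ and compute it directly in terms of reflection factorizations, which is the actual work done in \cite{BW}.

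A secondary issue: Proposition~\ref{prop:noncrossfact} is stated only for bipartite $(R^+,c)$, while Theorem~\ref{thm:BW} is asserted for arbitrary compatible $(R^+,c)$. Your proposed reduction via Proposition~\ref{prop:compatible} must track how both the order $\prec_{R^+,c}$ and the notion of $(W,R^+,c)$-compatibility transform when passing to a bipartite positive system; neither is $R^+$-independent, so this step is not automatic.
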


\begin{proposition}\label{prop:noncrosscluster}
Suppose that $(R^+,c)$ is bipartite and $\beta,\gamma \in R^+.$
Then $(\beta,\gamma)$ is alternating $c$-noncrossing if and only if they are $(W,R^+,c)$-compatible.
\end{proposition}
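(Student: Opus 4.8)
The plan is to reduce Proposition~\ref{prop:noncrosscluster} to the reflection-factorization criterion, since both sides of the biconditional have already been translated into statements about $s_\beta s_\gamma \leq_R c^{-1}$. First I would dispose of the trivial degenerate case $\beta = -\gamma$: this is impossible here because we have assumed $\beta,\gamma \in R^+$ are distinct positive roots, so $\beta \neq -\gamma$ automatically and we may invoke the results that require this hypothesis. Next, by relabelling if necessary, assume $\beta \prec_{R^+,c} \gamma$ in the ordering of Proposition~\ref{prop:listroots} (the statement is symmetric in $\beta,\gamma$, and only the ``$\prec$'' orientation is needed for the cited lemmas).

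The forward direction: suppose $(\beta,\gamma)$ is alternating $c$-noncrossing. Since $\beta,\gamma \in R^+$ are distinct with $\beta \neq -\gamma$, Corollary~\ref{cor:cnoncrossing} gives that either $s_\beta s_\gamma \leq_R c^{-1}$ or $s_\gamma s_\beta \leq_R c^{-1}$. In the first case Theorem~\ref{thm:BW} immediately yields that $(\beta,\gamma)$ are $(W,R^+,c)$-compatible, and we are done. In the second case I must rule it out or convert it: but $s_\gamma s_\beta \leq_R c^{-1}$ with $\beta \prec_{R^+,c} \gamma$ forces, by Lemma~\ref{lem:BW2}(2), $(\beta,\gamma) \leq 0$; combined with the alternating hypothesis $(\beta,\gamma) \geq 0$ this gives $(\beta,\gamma) = 0$, and then $s_\beta s_\gamma = s_\gamma s_\beta$ (commuting reflections), so $s_\beta s_\gamma \leq_R c^{-1}$ after all, and Theorem~\ref{thm:BW} again applies.

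The reverse direction: suppose $(\beta,\gamma)$ are $(W,R^+,c)$-compatible with $\beta \prec_{R^+,c}\gamma$. By Theorem~\ref{thm:BW}, $s_\beta s_\gamma \leq_R c^{-1}$. Then Lemma~\ref{lem:BW2}(1) gives $(\beta,\gamma) \geq 0$, i.e.\ $\beta,\gamma$ are alternating, and Lemma~\ref{lem:BW}, in the direction ``(1) implies (2)'', gives $(\beta^\vee, \tgamma) = 0$, which is precisely one of the two defining conditions of $c$-noncrossing. Hence $(\beta,\gamma)$ is alternating $c$-noncrossing, completing the equivalence. The main obstacle is organizational rather than technical: making sure the asymmetry in the ordering $\beta \prec_{R^+,c} \gamma$ is handled consistently across Lemmas~\ref{lem:BW}, \ref{lem:BW2} and Theorem~\ref{thm:BW}, and correctly invoking Proposition~\ref{prop:noncrossfact} (or Corollary~\ref{cor:cnoncrossing}) which already packages the subtle case analysis — in particular the $c$-opposed case and the need to pass to a different bipartite positive system — so that nothing is double-counted. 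One should note that the bipartiteness hypothesis on $(R^+,c)$ is used through Proposition~\ref{prop:noncrossfact}; alternatively, the whole argument can be phrased directly via Corollary~\ref{cor:cnoncrossing} and Theorem~\ref{thm:BW} without re-deriving the bipartite case analysis.
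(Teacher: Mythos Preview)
Your proof is correct and follows essentially the same route as the paper: both reduce the statement to Theorem~\ref{thm:BW}, translating $(W,R^+,c)$-compatibility into the condition $s_\beta s_\gamma \leq_R c^{-1}$, and then identifying this with ``alternating $c$-noncrossing''. The only difference is that the paper dispatches the second equivalence in one stroke by citing Proposition~\ref{prop:noncrossfact} (which is exactly that biconditional for bipartite $(R^+,c)$), whereas you unpack it via Corollary~\ref{cor:cnoncrossing} together with Lemmas~\ref{lem:BW} and~\ref{lem:BW2}, handling the spurious case $s_\gamma s_\beta \leq_R c^{-1}$ by forcing $(\beta,\gamma)=0$. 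Since Corollary~\ref{cor:cnoncrossing} is itself proved using Proposition~\ref{prop:noncrossfact}, the two arguments draw on the same ingredients; yours is simply more explicit.
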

\begin{proof}
Let us suppose that $\beta \prec \gamma$.  By Theorem~\ref{thm:BW}, we must show that $\beta,\gamma$ are alternating $c$-noncrossing if and only if $s_\beta s_\gamma \leq_R c^{-1}$.  This follows from Proposition~\ref{prop:noncrossfact}.
\end{proof}

Proposition~\ref{prop:noncrosscluster} says that the condition ``alternating and $c$-noncrossing" is an approximation to the notion of cluster compatibility that does not depend on a choice of $R^+$.


\subsection{Coxeter noncrossing trees}

An $r$-tuple $T=(\gamma_1,\ldots,\gamma_r)$ of roots is called a {\it tree} if they form a basis of $V$. 

\begin{definition}
A \emph{$c$-noncrossing tree} is an ordered sequence $T=(\gamma_1,\ldots,\gamma_r)$ of roots such that 
$$
s_{\gamma_1} s_{\gamma_2} \cdots s_{\gamma_r} = c^{-1}.
$$
Let $\T_{W,c}$ denote the set of $c$-noncrossing trees.
\end{definition}

The terminology is justified by the following result.
\begin{lemma}
A $c$-noncrossing tree $T=(\gamma_1,\ldots,\gamma_r)$ is an ordered basis of $V$. 
\end{lemma}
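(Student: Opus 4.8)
The plan is to show that a reflection factorization $s_{\gamma_1}s_{\gamma_2}\cdots s_{\gamma_r} = c^{-1}$ of a Coxeter element of length $r$ must use a linearly independent set of roots, hence a basis since $\dim V = r$. The key input is the general fact about reflection length in finite Coxeter groups: for $w \in W$, the reflection length $\ell_R(w)$ equals the codimension of the fixed space $\mathrm{Fix}(w) = \{x \in V \mid w(x) = x\}$, equivalently $\ell_R(w) = \dim(\mathrm{im}(I - w))$ (this is a classical result of Carter). Since $c^{-1}$ is a Coxeter element, $I - c^{-1}$ is invertible by Lemma~\ref{lem:invertible} (applied to the Coxeter element $c^{-1}$), so $\mathrm{im}(I-c^{-1}) = V$ and therefore $\ell_R(c^{-1}) = r$.

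First I would recall the elementary inequality: if $w = s_{\gamma_1}\cdots s_{\gamma_r}$, then $\mathrm{im}(I - w) \subseteq \mathrm{span}(\gamma_1,\ldots,\gamma_r)$. This follows by an easy induction on $r$: for a single reflection $s_\gamma$, $I - s_\gamma$ has image $\R\gamma$; and for a product, $I - s_{\gamma_1}w' = (I - s_{\gamma_1}) + s_{\gamma_1}(I - w')$, whose image lies in $\R\gamma_1 + s_{\gamma_1}(\mathrm{span}(\gamma_2,\ldots,\gamma_r)) = \R\gamma_1 + \mathrm{span}(s_{\gamma_1}\gamma_2,\ldots,s_{\gamma_1}\gamma_r)$; but each $s_{\gamma_1}\gamma_i$ lies in $\mathrm{span}(\gamma_1,\gamma_i)$, so the image is contained in $\mathrm{span}(\gamma_1,\ldots,\gamma_r)$, as claimed.

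Next, combining the two facts: $V = \mathrm{im}(I - c^{-1}) \subseteq \mathrm{span}(\gamma_1,\ldots,\gamma_r)$, so the $r$ roots $\gamma_1,\ldots,\gamma_r$ span the $r$-dimensional space $V$, hence are linearly independent and form an ordered basis. I would phrase the proof to cite the relevant statements in the excerpt (Lemma~\ref{lem:invertible} for invertibility of $I-c$) together with the standard reflection-length formula, perhaps with a pointer to \cite{Bes} or \cite{BW}, which are already referenced in the surrounding text.

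The main obstacle is purely expository: deciding how much of the reflection-length theory to take as known versus prove. The cleanest route avoids the full equality $\ell_R(w) = \mathrm{codim}\,\mathrm{Fix}(w)$ and only needs the one-line induction above plus invertibility of $I - c^{-1}$; this makes the argument self-contained given Lemma~\ref{lem:invertible}. I expect no genuine mathematical difficulty — the whole point of the lemma is that ``$c$-noncrossing tree'' is well-named, and this is the minimal sanity check that the $r$-tuple deserves to be called a tree (basis).
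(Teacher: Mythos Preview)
Your proof is correct and takes essentially the same approach as the paper: both reduce to the invertibility of $I-c^{-1}$ from Lemma~\ref{lem:invertible}. The only cosmetic difference is that the paper argues contrapositively via fixed vectors (any vector in the orthogonal complement of $\mathrm{span}(\gamma_1,\ldots,\gamma_r)$ is fixed by each $s_{\gamma_i}$ and hence by $c^{-1}$), whereas you argue directly via the dual statement $\mathrm{im}(I-c^{-1})\subseteq \mathrm{span}(\gamma_1,\ldots,\gamma_r)$; these are equivalent for orthogonal transformations.
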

\begin{proof}
Suppose the roots $\gamma_1,\ldots,\gamma_r$ span a proper subspace $W \subsetneq V$.  Then any vector in the orthogonal complement $W^\perp \subset V$ (with respect to $(\cdot,\cdot)$) will be invariant under $c^{-1}$.  This contradicts the fact that $c^{-1}$ does not have the eigenvalue one; see Lemma~\ref{lem:invertible}.
\end{proof}

\begin{remark}
For a $c$-noncrossing tree $T=(\gamma_1,\ldots,\gamma_r)$, define the operation of {\it $i$-th sign reversal}
$$
T\mapsto (\gamma_1,\ldots,\gamma_r);
$$
and the operation of {\it $j$-th conjugation\/} 
$$ 
T \mapsto
(\gamma_1,\dots,\gamma_{j-1}, s_{\gamma_j}(\gamma_{j+1}), \gamma_j, \gamma_{j+2},\dots,
\gamma_r).
$$
These operations transform $c$-noncrossing trees into $c$-noncrossing trees.  It follows from the results of Deligne \cite{Del} and Bessis \cite{Bes} that any two $c$-noncrossing trees are related by repeated application of sign reversal and conjugation.  Furthermore, the conjugation actions give an action of the braid group.
\end{remark}

\begin{lemma}\label{lem:consectree}
Let $(\beta_1,\beta_2,\ldots,\beta_{hr})$ denote the ordering of $R$ of Proposition~\ref{prop:listroots}.  Then for any $i$, we have that $(\beta_i,\beta_{i+1},\ldots,\beta_{i+r-1})$ is a $c$-noncrossing tree.
\end{lemma}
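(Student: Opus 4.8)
The claim is that for the global ordering $(\beta_1,\beta_2,\ldots,\beta_{hr})$ of $R$ coming from a compatible pair $(R^+,c)$ with a fixed reduced word $c = s_1 s_2 \cdots s_r$, every window $(\beta_i,\beta_{i+1},\ldots,\beta_{i+r-1})$ of $r$ consecutive roots is a $c$-noncrossing tree, i.e. $s_{\beta_i} s_{\beta_{i+1}} \cdots s_{\beta_{i+r-1}} = c^{-1}$. I would first reduce to the base window $i=1$. Indeed, $\beta_{j+r} = c\beta_j$ for all $j$, hence $s_{\beta_{j+r}} = c\, s_{\beta_j}\, c^{-1}$, so the product over the window starting at $i+1$ is $c$ times the product over the window starting at $i$ times $c^{-1}$; by induction on $i$, if the base window gives $c^{-1}$ then every window gives $c\, c^{-1}\, c^{-1} = c^{-1}$. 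So it suffices to prove $s_{\beta_1} s_{\beta_2} \cdots s_{\beta_r} = c^{-1}$.

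Now for the base window I would unwind the definition $\beta_k = s_1 s_2 \cdots s_{k-1}(\alpha_k)$ for $k=1,\ldots,r$. The standard conjugation identity for reflections gives $s_{\beta_k} = s_{w\alpha_k} = w\, s_{\alpha_k}\, w^{-1}$ with $w = s_1 \cdots s_{k-1}$, so $s_{\beta_k} = (s_1 \cdots s_{k-1})\, s_k\, (s_{k-1} \cdots s_1)$. Multiplying these together for $k=1,2,\ldots,r$, the trailing factor $(s_{k-1}\cdots s_1)$ of the $k$-th term cancels against the leading factor $(s_1 \cdots s_{k-1})$ of the $(k+1)$-th term (telescoping), leaving $s_{\beta_1} \cdots s_{\beta_r} = s_1 s_2 \cdots s_r = c$. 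Wait — this yields $c$, not $c^{-1}$; I would then take inverses, noting $(s_{\beta_1}\cdots s_{\beta_r})^{-1} = s_{\beta_r} \cdots s_{\beta_1}$, or equivalently observe that the window read in the \emph{reverse} order multiplies to $c^{-1}$. This forces a small bookkeeping check: the definition of a $c$-noncrossing tree uses $s_{\gamma_1}\cdots s_{\gamma_r}=c^{-1}$, and consecutive windows in the paper's convention should be read so that the telescoping produces $c^{-1}$ directly; I would reconcile the ordering convention in Proposition~\ref{prop:listroots} (whether $\beta_1 \prec \beta_2 \prec \cdots$ corresponds to the reflection word for $c$ or for $c^{-1}$) and, if needed, simply note that the ordering $(\beta_1,\ldots,\beta_{hr})$ is the ordering for $c^{-1}$ in reversed reading, which is exactly what makes the telescoping land on $c^{-1}$. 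Either way the computation is the same telescoping product.

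With the base case established, the induction over $i$ described above finishes the proof; one also checks the wraparound ($i$ near $hr$) causes no issue because the relation $\beta_{j+hr}=\beta_j$ holds (as $c^h = 1$), so windows are genuinely cyclic.

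\textbf{Main obstacle.} The mathematics here is entirely routine — a telescoping product of conjugated reflections plus a one-line induction. The only real pitfall is a convention mismatch: getting $c$ versus $c^{-1}$, and matching the reading direction of the window against the paper's definition of a $c$-noncrossing tree. So the "hard part" is purely bookkeeping: being careful about whether the global ordering of $R$ in Proposition~\ref{prop:listroots} is set up for $c$ or for $c^{-1}$, and reading the window in the order that makes the reflection product collapse to $c^{-1}$. Once that is pinned down, the proof is immediate.
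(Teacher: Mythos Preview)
Your overall shape is right (reduce to the base window, then telescope), but both steps contain genuine errors, not just bookkeeping.

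\textbf{Reduction to $i=1$.} The relation ``the product over the window starting at $i+1$ is $c$ times the product over the window starting at $i$ times $c^{-1}$'' is false. The identity $\beta_{j+r}=c\beta_j$ only tells you that the window at $i+r$ is the $c$-conjugate of the window at $i$; it says nothing about shifting by $1$. What actually happens when you shift by $1$ is
\[
\mathrm{prod}(i+1)=s_{\beta_i}\cdot\mathrm{prod}(i)\cdot c\,s_{\beta_i}\,c^{-1},
\]
since you drop $s_{\beta_i}$ on the left and append $s_{\beta_{i+r}}=c\,s_{\beta_i}\,c^{-1}$ on the right. If $\mathrm{prod}(i)=c^{-1}$ this does collapse to $c^{-1}$, so a correct induction works --- but not with the formula you wrote. (The paper bypasses this entirely: Proposition~\ref{prop:compatible} says that passing from $R^+$ to $s_1R^+$ cyclically shifts the ordering $(\beta_1,\ldots,\beta_{hr})$ by one, so every window is the \emph{first} window for some compatible positive system, and the notion of $c$-noncrossing tree is independent of $R^+$.)

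\textbf{Base case.} This is not a convention issue --- your telescoping arithmetic is simply off, and the answer really is $c^{-1}$. With $s_{\beta_k}=(s_1\cdots s_{k-1})\,s_k\,(s_{k-1}\cdots s_1)$, the trailing factor of the $k$-th term is $(s_{k-1}\cdots s_1)$ while the leading factor of the $(k{+}1)$-th is $(s_1\cdots s_k)$; these are not inverses, so they do not cancel to the identity. Computing the partial products directly (as the paper does) gives
\[
s_{\beta_1}\cdots s_{\beta_k}=s_k s_{k-1}\cdots s_1
\]
by an easy induction on $k$, hence $s_{\beta_1}\cdots s_{\beta_r}=s_r\cdots s_1=c^{-1}$ on the nose. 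There is no sign/order discrepancy to reconcile.
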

\begin{proof}
By Proposition~\ref{prop:compatible}, it suffices to show this for $i = 1$.  Suppose $c = s_1s_2 \cdots s_r$.  We calculate
$$
s_{\beta_1} \cdots s_{\beta_r} = s_1 (s_1 s_2 s_1) (s_1 s_2 s_3 s_2 s_1) \cdots (s_1 \cdots s_r \cdots s_1) = s_r s_{r-1} \cdots s_1 = c^{-1}. 
$$
\end{proof}

According to Corollary~\ref{cor:cnoncrossing}, the ``$c$-noncrossing" condition characterizes when a pair of roots can belong to a $c$-noncrossing tree.  However, in general this pairwise condition is insufficient to characterize $c$-noncrossing trees.

\begin{example}
Let $R = B_3$ with simple roots $\alpha_1 = (1,-1,0)$, $\alpha_2 = (0,1,-1)$, and $\alpha_3 = (0,0,1)$, and choose $c = s_1s_2s_3$.  Take the three roots 
$$
\gamma_1 = (-1,0,-1), \qquad \gamma_2 = (-1,1,0), \qquad \gamma_3 = (0,1,-1),
$$
with corresponding $c$-twisted roots
$$
\tgamma_1 = (0,0,-1), \qquad \tgamma_2 = (-1,0,0), \qquad \tgamma_3 = (0,1,0).
$$
The roots $\gamma_1,\gamma_2,\gamma_3$ are pairwise $c$-noncrossing (and also alternating).  However, all three roots are long, so no ordering of them can give a reflection factorization of $c^{-1}$.
\end{example}

\begin{definition} 
Let $T=(\gamma_1,\ldots,\gamma_r)$ be a $c$-noncrossing tree.  Then the \emph{dual tree} $T' = \varphi(T) $ is given by 
$$T' := (s_{\gamma_1} \cdots s_{\gamma_{r-1}} \gamma_r, \ldots, s_{\gamma_1} \gamma_2,  \gamma_1).$$  
Also, define $T'' = \varphi^{-1}(T)$ by
$$
T'':=(\gamma_r, s_{\gamma_r} \gamma_{r-1},\ldots, s_{\gamma_r} \cdots s_{\gamma_2} \gamma_1).
$$
\end{definition}

\begin{proposition}
Let $T$ be a $c$-noncrossing tree.  Then the trees $T' = \varphi(T)$ and $T'' = \varphi^{-1}(T)$ are $c$-noncrossing tree.  The maps $\varphi$ and $\varphi^{-1}$ are inverse bijections from $\T_{W,c}$ to $\T_{W,c}$. 
\end{proposition}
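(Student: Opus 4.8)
The plan is to verify directly, by a telescoping computation in the group $W$, that $\varphi(T)$ and $\varphi^{-1}(T)$ each satisfy the defining identity $s_{\gamma_1}\cdots s_{\gamma_r}=c^{-1}$ of a $c$-noncrossing tree, and then to check that $\varphi$ and $\varphi^{-1}$ are inverse operations. Throughout I would write $w_0=\id$ and $w_j=s_{\gamma_1}s_{\gamma_2}\cdots s_{\gamma_j}$ for $j=1,\dots,r$, so that $w_j=w_{j-1}s_{\gamma_j}$ and $w_r=c^{-1}$. The entries $\gamma_j':=w_{j-1}\gamma_j$ of $T'=\varphi(T)=(\gamma_r',\gamma_{r-1}',\dots,\gamma_1')$ are Weyl group images of roots, hence roots, so by the preceding lemma (a sequence satisfying the product identity is automatically an ordered basis) it suffices to check the product identity. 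The key computation is $s_{\gamma_j'}=w_{j-1}s_{\gamma_j}w_{j-1}^{-1}=w_jw_{j-1}^{-1}$, whence
\[
s_{\gamma_r'}s_{\gamma_{r-1}'}\cdots s_{\gamma_1'}=(w_rw_{r-1}^{-1})(w_{r-1}w_{r-2}^{-1})\cdots(w_1w_0^{-1})=w_rw_0^{-1}=c^{-1},
\]
so $\varphi(T)\in\T_{W,c}$.

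For $\varphi^{-1}(T)=(\delta_1,\dots,\delta_r)$ with $\delta_k=s_{\gamma_r}\cdots s_{\gamma_{r-k+2}}\gamma_{r-k+1}$, I would run the analogous argument with $v_0=\id$ and $v_k=s_{\gamma_r}s_{\gamma_{r-1}}\cdots s_{\gamma_{r-k+1}}$, so that $v_k=v_{k-1}s_{\gamma_{r-k+1}}$, $\delta_k=v_{k-1}\gamma_{r-k+1}$, and $s_{\delta_k}=v_kv_{k-1}^{-1}$. Here the product $s_{\delta_1}s_{\delta_2}\cdots s_{\delta_r}=(v_1v_0^{-1})(v_2v_1^{-1})\cdots(v_rv_{r-1}^{-1})$ runs in the ``increasing'' order, where the telescoping is less transparent; I would first record the elementary group identity that if $u_0=\id$ and $u_j=u_{j-1}\sigma_j$ with each $\sigma_j$ an involution, then $(u_1u_0^{-1})(u_2u_1^{-1})\cdots(u_mu_{m-1}^{-1})=u_m^{-1}$ (immediate induction, using $u_{j-1}^{-1}u_j=\sigma_j=\sigma_j^{-1}$). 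Applying this with $u_k=v_k$, $\sigma_k=s_{\gamma_{r-k+1}}$, and noting $v_r=(s_{\gamma_1}\cdots s_{\gamma_r})^{-1}=c$, we get $s_{\delta_1}\cdots s_{\delta_r}=v_r^{-1}=c^{-1}$, so $\varphi^{-1}(T)\in\T_{W,c}$ as well. Thus both $\varphi$ and $\varphi^{-1}$ are well-defined self-maps of $\T_{W,c}$.

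It then remains to see that they are mutually inverse. I would verify $\varphi^{-1}(\varphi(T))=T$ by direct substitution: plugging the entries $\gamma_j'$ into the formula defining $\varphi^{-1}$ and applying the same two telescoping identities (together with $s_{\gamma_j}^2=\id$) returns $\gamma_k$ in the $k$-th slot. Since $R$ is finite, $\T_{W,c}$ is a finite set, and a self-map of a finite set possessing a left inverse is a bijection whose left inverse is its two-sided inverse; hence $\varphi\circ\varphi^{-1}=\id$ follows automatically and no separate verification is needed. I expect the only real work to be the index bookkeeping in the $\varphi^{-1}\circ\varphi$ calculation — tracking precisely which conjugating element stands in front of each entry and confirming the cancellations — rather than anything conceptual; everything reduces to the two telescoping identities above and the fact that reflections square to the identity.
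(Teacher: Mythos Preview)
Your proposal is correct and follows essentially the same approach as the paper: both arguments reduce to the telescoping identity $s_{w_{j-1}\gamma_j}=w_jw_{j-1}^{-1}$ (the paper phrases this as repeatedly moving a reflection to the front via $s_{\gamma_1}s_{\gamma_2}\cdots=s_{\gamma_2}(s_{\gamma_2}s_{\gamma_1}s_{\gamma_2})s_{\gamma_3}\cdots$), and both treat $T''$ symmetrically. Your explicit ``increasing-order'' lemma $(u_1u_0^{-1})\cdots(u_mu_{m-1}^{-1})=u_m^{-1}$ and your use of finiteness of $\T_{W,c}$ to deduce $\varphi\circ\varphi^{-1}=\id$ from $\varphi^{-1}\circ\varphi=\id$ are minor elaborations of steps the paper leaves as ``similar'' and ``straightforward''.
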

\begin{proof}
Let $T = (\gamma_1,\ldots,\gamma_r)$ and $T' = (\gamma'_1,\ldots,\gamma'_r)$.
We have 
$$
c^{-1} = s_{\gamma_1} s_{\gamma_2} \cdots s_{\gamma_r}  = s_{\gamma_2} (s_{\gamma_2} s_{\gamma_1} s_{\gamma_2}) s_{\gamma_3} \cdots s_{\gamma_r} = \cdots = s_{\gamma'_1} \cdots s_{\gamma'_r}.
$$
The proof for $T''$ is similar, and it is straightforward to see that $\varphi$ and $\varphi^{-1}$ are inverse.
\end{proof}

For a tree $T=(\gamma_1,\ldots,\gamma_r)$, let $C_T \subset V$ denote the cone spanned by $\tgamma_1,\ldots,\tgamma_r$, and let $C'_T \subset V$ denote the cone spanned by $\gamma_1^\vee,\ldots,\gamma_r^\vee$.  The following result is a general root-system theoretic version of Proposition~\ref{prop:dualcones}.

\begin{proposition}
Let $T$ be a $c$-noncrossing tree and $T' = \varphi(T)$.  Then the two cones $C_T$ and $C'_{T'}$ are dual. 
\end{proposition}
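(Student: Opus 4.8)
The plan is to establish something slightly stronger than the stated duality: that the spanning vectors $\{\tgamma_1,\dots,\tgamma_r\}$ of $C_T$ and $\{(\gamma'_1)^\vee,\dots,(\gamma'_r)^\vee\}$ of $C'_{T'}$ form, after reindexing, a pair of dual bases of $V$ with respect to $(\cdot,\cdot)$, i.e.\ $(\tgamma_i,(\gamma'_j)^\vee)=\delta_{i,\,r+1-j}$. From this the proposition is immediate: both $C_T$ and $C'_{T'}$ are full-dimensional simplicial cones (each spanning set is a basis, since a $c$-noncrossing tree is an ordered basis and $\varphi(T)$ is again a $c$-noncrossing tree), and for a simplicial cone $\mathrm{span}_{\ge 0}(e_1,\dots,e_r)$ whose generators have dual basis $\{f_i\}$, the dual cone is exactly $\mathrm{span}_{\ge 0}(f_1,\dots,f_r)$.

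The first step is to rewrite the generators of $C'_{T'}$ in a uniform way. Set $u_k:=s_{\gamma_1}s_{\gamma_2}\cdots s_{\gamma_k}$ (so $u_0=\id$ and $u_r=c^{-1}$) and $w_i:=u_{i-1}(\gamma_i)$. Unwinding the definition of $\varphi$ gives $\gamma'_j=u_{r-j}(\gamma_{r-j+1})=w_{r+1-j}$, so $C'_{T'}=\mathrm{span}_{\ge 0}(w_1^\vee,\dots,w_r^\vee)$, where $w_i^\vee=u_{i-1}(\gamma_i^\vee)$ because $W$ acts by isometries and hence commutes with $\alpha\mapsto\alpha^\vee$. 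It then remains to prove the single operator identity
\[
\sum_{i=1}^r \tgamma_i\,(w_i^\vee,x)=x\qquad\text{for all }x\in V,
\]
which says precisely that $\{w_i^\vee\}$ is the dual basis of $\{\tgamma_i\}$.

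To prove this identity I would factor out $(I-c)^{-1}$, which is invertible by Lemma~\ref{lem:invertible}, reducing the claim to $\sum_i \gamma_i\,(w_i^\vee,x)=(I-c)x$. Using $(w_i^\vee,x)=(u_{i-1}\gamma_i^\vee,x)=(\gamma_i^\vee,u_{i-1}^{-1}x)$ together with the reflection identity $(\gamma_i^\vee,y)\gamma_i=(I-s_{\gamma_i})y$, the $i$-th summand becomes $(I-s_{\gamma_i})u_{i-1}^{-1}x=(u_{i-1}^{-1}-u_i^{-1})x$ (since $s_{\gamma_i}u_{i-1}^{-1}=u_i^{-1}$); summing over $i=1,\dots,r$ telescopes to $(u_0^{-1}-u_r^{-1})x=(I-c)x$, as wanted. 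I expect the only non-routine point to be spotting this telescoping; everything else is bookkeeping. As a sanity check I would verify the small ranks ($A_1$, $A_2$, $B_2$) by hand and confirm that the statement specializes to Proposition~\ref{prop:dualcones} in type $A$, where $\gamma^\vee=\gamma$ and the dual-basis relation reduces to the clockwise-crossing computation used there.
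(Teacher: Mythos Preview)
Your proof is correct and takes essentially the same approach as the paper: both arguments reduce to a telescoping computation using the reflection formula $(I-s_\gamma)y=(\gamma^\vee,y)\gamma$ applied to the factorization $c^{-1}=s_{\gamma_1}\cdots s_{\gamma_r}$. The paper phrases it as computing $(I-c)\kappa_j$ for the dual basis $\{\kappa_j\}$ to $\{(\gamma'_j)^\vee\}$ and recognizing the result as an entry of $\varphi^{-1}(T')$, while you phrase it as the operator identity $\sum_i\gamma_i(w_i^\vee,\,\cdot\,)=(I-c)$; these are the same calculation viewed from opposite ends.
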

\begin{proof}
Let $T' = (\delta_1,\ldots,\delta_r)$ and let $\kappa_1,\kappa_2,\ldots,\kappa_r$ be the dual basis to $\delta_1^\vee,\ldots,\delta_r^\vee$, i.e., $(\delta_i^\vee,\kappa_j) = \delta_{ij}$.  Then 
$$
c\cdot \kappa_j = s_{\delta_r} \cdots s_{\delta_1} \cdot \kappa_j = \kappa_j - s_{\delta_r} \cdots s_{\delta_{j+1}} \delta_j.
$$
Thus $(I-c)\kappa_j = s_{\delta_r} \cdots s_{\delta_{j+1}} \delta_j$, so the dual cone to $C'_{T'}$ is given by $C_{\varphi^{-1}(T')}$.
\end{proof}

\begin{corollary}
Suppose that $P$ is a simple $(W,c)$-twisted alcoved polytope, and all maximal cones of the normal fan of $P$ are of the form $C_T$ for a $c$-noncrossing tree $T$.  Then $P$ is a generalized $W$-permutohedron and thus a $(W,c)$-polypositroid.
\end{corollary}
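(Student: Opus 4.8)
The plan is to read the statement off Definition~\ref{def:Wpoly}: since $P$ is already assumed to be a $(W,c)$-twisted alcoved polytope, it suffices to show that $P$ is a generalized $W$-permutohedron, i.e.\ that every edge of $P$ is parallel to a coroot. First note that the hypothesis forces $P$ to be full-dimensional, since each $C_T$ is an $r$-dimensional cone and hence the normal fan of $P$ is complete. Because $P$ is simple, the maximal cone $C_v$ of the normal fan at a vertex $v$ of $P$ is simplicial; because $P$ is $(W,c)$-twisted alcoved, every ray of the normal fan is generated by a twisted root, so $C_v$ is spanned by exactly $r$ twisted roots $\tgamma_1,\dots,\tgamma_r$, which by hypothesis may be ordered so that $T=(\gamma_1,\dots,\gamma_r)$ is a $c$-noncrossing tree.

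Next I would use the standard fact that at a vertex $v$ of a simple polytope the $r$ edges emanating from $v$ point along the extreme rays of the cone of feasible directions $\operatorname{cone}(P-v)$, and that this cone is the dual cone of the normal cone $C_v$ (up to an overall sign determined by whether one uses the inner or outer normal fan). By the preceding Proposition applied to $T$ and $T'=\varphi(T)=(\delta_1,\dots,\delta_r)$, the dual of $C_T$ is $C'_{T'}=\operatorname{span}_{\geq 0}(\delta_1^\vee,\dots,\delta_r^\vee)$. Hence each edge of $P$ incident to $v$ is parallel to one of $\pm\delta_1^\vee,\dots,\pm\delta_r^\vee$; since $R^\vee=-R^\vee$, in particular it is parallel to a coroot.

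Letting $v$ range over all vertices of $P$, and using that every edge of $P$ is incident to a vertex, we conclude that every edge of $P$ is parallel to a coroot, so $P$ is a generalized $W$-permutohedron; being simultaneously $(W,c)$-twisted alcoved, $P$ is a $(W,c)$-polypositroid by Definition~\ref{def:Wpoly}. The only step needing care is the bookkeeping in the polar-duality argument: one must match the extreme rays of $\operatorname{cone}(P-v)$ with the basis dual to $\tgamma_1,\dots,\tgamma_r$ and confirm, via the cited duality $(C_T)^{\ast}=C'_{\varphi(T)}$, that they come out as genuine coroots $\pm\delta_i^\vee$ rather than arbitrary vectors. Once the normal-fan convention is fixed this is routine, and everything else is a direct appeal to results already established in the paper.
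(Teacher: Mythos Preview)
Your proof is correct and is exactly the argument the paper intends: the corollary is stated immediately after the duality Proposition and is meant to follow directly from it by the standard fact that at a simple vertex the edge directions span the cone dual to the normal cone. Your write-up simply makes this explicit.
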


\begin{question}\label{q:ctree}
Let $P$ be a generic simple $(W,c)$-polypositroid.  Are all maximal cones of the normal fan of $P$ of the form $C_T$ for a $c$-noncrossing tree $T$?
\end{question}
Question \ref{q:ctree} has an affirmative answer in type $A$.  See Section~\ref{ssec:typeAtree}.

\subsection{Cluster cones}
A \emph{$(W,c)$-cluster cone} is a maximal cone $C \subset V$ in the $(W,R^+,c)$-cluster fan, for some choice of positive roots $R^+ \subset R$.  It follows from the results of Brady-Watt \cite{BW} that some cluster cones are of the form $C_T$ for a $c$-noncrossing tree $T$, though we do not know whether this is true in general.

\begin{proposition}
Suppose that $(R^+,c)$ is bipartite.  Let $C ={\rm span}(\tgamma_1,\ldots,\tgamma_r)$ be a cluster cone such that $\{\tgamma_1,\ldots,\tgamma_r\} \cap \{\omega_1,\ldots,\omega_r\} = \emptyset$.  Then there is an ordering of $(\tgamma_1,\ldots,\tgamma_r)$ that gives a $c$-noncrossing tree $T$.
\end{proposition}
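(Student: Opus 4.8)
The plan is to reduce the statement to an identity about reduced reflection factorizations of $c^{-1}$, using the Yang--Zelevinsky correspondence between $\Pi(c)$ and the almost positive roots, and then to invoke the Brady--Watt description of the noncrossing partition lattice.

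First I would pass from the cluster cone to a \emph{positive cluster} of roots. Since $C = {\rm span}_{\geq 0}(\tgamma_1,\ldots,\tgamma_r)$ is a maximal cone of the $(W,R^+,c)$-cluster fan, it is simplicial and full-dimensional, so the $\tgamma_i$ are distinct and $\{\tgamma_1,\ldots,\tgamma_r\}$ is a maximal pairwise $(R^+,c)$-compatible subset of $\Pi(c)$. Put $\gamma_i = (I-c)\tgamma_i \in R$; these are distinct by Lemma~\ref{lem:invertible}. Because $\tgamma_i \notin \{\omega_1,\ldots,\omega_r\}$, formula \eqref{eq:YZpsi} gives $\psi(\tgamma_i) = c^{-1}(I-c)\tgamma_i = c^{-1}\gamma_i$, and in particular $\psi(\tgamma_i)\neq -\alpha_j$ for any $j$, so $\beta_i := c^{-1}\gamma_i = \psi(\tgamma_i)$ lies in $R^+$. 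Since $\psi \colon \Pi(c)\to R_{\geq -1}$ is a bijection preserving compatibility (\cite{YZ}, \cite{FZ}), the set $\{\beta_1,\ldots,\beta_r\}\subset R^+$ is a maximal pairwise compatible collection of positive roots. The reflection order $\prec_{R^+,c}$ restricts to a total order on them, so after relabelling we may assume $\beta_1 \prec_{R^+,c} \beta_2 \prec_{R^+,c} \cdots \prec_{R^+,c} \beta_r$.

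The crux is to show $s_{\beta_1}s_{\beta_2}\cdots s_{\beta_r} = c^{-1}$. By Theorem~\ref{thm:BW}, pairwise compatibility of the $\beta_i$ yields $s_{\beta_i}s_{\beta_j}\leq_R c^{-1}$ for $i<j$, and the task is to upgrade these pairwise relations to the single global identity. This is exactly where one needs the Brady--Watt analysis of $NC(W,c) = \{w \mid 1 \leq_R w \leq_R c^{-1}\}$: for the bipartite pair $(R^+,c)$ the reflection order arranges the reflections so that a pairwise ``noncrossing'' family of size $r=\ell_R(c^{-1})$ reads off a saturated chain from $1$ to $c^{-1}$ in the absolute order. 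Concretely, I would argue by induction on $j$ that $w_j := s_{\beta_1}\cdots s_{\beta_j}$ satisfies $w_j\leq_R c^{-1}$ and $\ell_R(w_j)=j$: the cases $j\le 2$ are Theorem~\ref{thm:BW}, and the inductive step appends $s_{\beta_{j+1}}$, which is $\prec_{R^+,c}$-larger than and compatible with each of $\beta_1,\ldots,\beta_j$; using Lemmas~\ref{lem:BW} and~\ref{lem:BW2} together with the lattice property of $NC(W,c)$ one shows $w_j s_{\beta_{j+1}}\leq_R c^{-1}$ with $\ell_R(w_j s_{\beta_{j+1}}) = j+1$. For $j=r$ this gives $w_r\leq_R c^{-1}$ with $\ell_R(w_r)=r=\ell_R(c^{-1})$, forcing $w_r = c^{-1}$. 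I expect this inductive step — finding the formulation in which it follows cleanly from \cite{BW}, \cite{Bes} — to be the main obstacle.

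Finally I would transfer the identity back to the $\tgamma_i$. Since $\gamma_i = c\beta_i$, we have $s_{\gamma_i} = c\, s_{\beta_i}\, c^{-1}$, so the product telescopes:
$$
s_{\gamma_1}s_{\gamma_2}\cdots s_{\gamma_r} \;=\; c\,\bigl(s_{\beta_1}s_{\beta_2}\cdots s_{\beta_r}\bigr)\,c^{-1} \;=\; c\,c^{-1}\,c^{-1} \;=\; c^{-1}.
$$
Hence $T := (\gamma_1,\ldots,\gamma_r)$ is a $c$-noncrossing tree, and $C_T = {\rm span}_{\geq 0}(\tgamma_1,\ldots,\tgamma_r) = C$; thus the relabelling produced above is the desired ordering of $(\tgamma_1,\ldots,\tgamma_r)$. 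Since the notion of a $c$-noncrossing tree does not depend on a choice of positive system, organizing the argument around the bipartite $R^+$ costs no generality.
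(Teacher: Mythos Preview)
Your approach is essentially the paper's: pass via $\psi$ to positive roots, order them by $\prec_{R^+,c}$, and invoke Brady--Watt. The only substantive difference is that you try to rebuild the key Brady--Watt result by an induction on $j$ (your ``main obstacle''), whereas the paper simply cites it as a black box: \cite[Note~4.2 and Theorem~8.3]{BW} already assert that a sequence $\delta_1\prec\cdots\prec\delta_r$ of positive roots forms a (maximal) simplex in the cluster complex if and only if $s_{\delta_1}\cdots s_{\delta_r}=c^{-1}$. So the obstacle you flag is not a genuine gap in the argument but just a place where you should cite the full-rank statement from \cite{BW} rather than trying to bootstrap it from the pairwise version (Theorem~\ref{thm:BW}). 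Your proposed induction via the lattice property of $NC(W,c)$ is plausible in spirit, but as written it is incomplete: knowing $w_j\leq_R c^{-1}$ and $s_{\beta_{j+1}}\leq_R c^{-1}$ does not by itself force $w_j s_{\beta_{j+1}}\leq_R c^{-1}$ with the correct length, and you would end up reproving precisely the chain-vs-simplex correspondence that \cite[Note~4.2]{BW} packages.

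Your final conjugation step is correct and worth keeping: the paper's proof stops at ``the claim follows'' once it has $s_{\beta_1}\cdots s_{\beta_r}=c^{-1}$ for $\beta_i=\psi(\tgamma_i)=c^{-1}\gamma_i$, leaving the passage back to the $\gamma_i$ implicit. Your computation $s_{\gamma_1}\cdots s_{\gamma_r}=c\,(s_{\beta_1}\cdots s_{\beta_r})\,c^{-1}=c^{-1}$ makes that step explicit.
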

\begin{proof}
As a simplicial complex on the set of rays, the $(W,R^+,c)$-cluster fan is isomorphic, via the bijection \eqref{eq:YZpsi} to the cluster complex of \cite{FZ} defined on the set of almost positive roots $R_{\geq -1}$.  The condition $\{\tgamma_1,\ldots,\tgamma_r\} \cap \{\omega_1,\ldots,\omega_r\}  = \emptyset$ is equivalent to the condition that the almost positive roots $\psi(\tgamma_i)$ are positive.

According to \cite[Note 4.2 and Theorem 8.3]{BW}, a sequence $\delta_1 \prec \delta_2 \cdots \prec \delta_r$ of positive roots forms a simplex in the cluster complex if and only if $(\delta_1,\ldots,\delta_r)$ is a $c$-noncrossing tree.  The claim follows.
\end{proof}

\begin{remark}
Reading and Speyer \cite{RS} have found a {\it linear} isomorphism from the cluster fan of \cite{FZ} (with rays the almost positive roots), to the $(W,R^+,c)$-cluster fan, called the $g$-vector fan in \cite{RS}.  
\end{remark}

\subsection{Type $A$}\label{ssec:typeAtree}
We make explicit the relation between reflection factorizations of $c^{-1}$ and noncrossing trees (in type $A$).
\begin{lemma}\label{lem:Atree}
Let $T$ be a noncrossing (undirected) tree on $[n]$.  Then there is an ordering $e_1,\ldots,e_{n-1}$ of the edges so that
$s_{e_1} \cdots s_{e_{n-1}} = c = (12\cdots n)$.  Varying the possible orderings gives the same reduced factorization of $c$ up to commutation relations.
\end{lemma}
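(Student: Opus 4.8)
The plan is to treat each edge $\{a,b\}$ of $T$ as the transposition $s_{\{a,b\}}=(a\,b)$ and to reduce both assertions to standard combinatorics of reflection factorizations of the $n$-cycle $c=(1\,2\,\cdots\,n)$ in $S_n$. Since $\ell_R(c)=n-1$, any factorization $c=s_{e_1}\cdots s_{e_{n-1}}$ into $n-1$ transpositions is automatically reduced: the partial products $w_k=s_{e_1}\cdots s_{e_k}$ then form a maximal chain $e<_R w_1<_R\cdots<_R w_{n-1}=c$ in the interval $[e,c]$ of the order $\le_R$ (this interval is the noncrossing partition lattice $NC(n)$), so each $w_k$ is a noncrossing permutation whose cycles are the connected components of the partial forest $\{e_1,\dots,e_k\}$, each listed in increasing cyclic order. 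In particular, if $(e_1,\dots,e_{n-1})$ is an ordering of $E(T)$ with product $c$ it is a reduced factorization and a maximal chain of $[e,c]$.

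For \emph{existence} I would induct on $n$, the key input being: every noncrossing tree $T$ on $[n]$ ($n\ge 2$) has an edge $\{i,i+1\}$ between cyclically consecutive vertices with one endpoint a leaf. To see this, among all leaves $\ell$ of $T$ pick one for which the shorter of the two arcs joining $\ell$ to its unique neighbour $m$ contains the fewest vertices, say $d$ of them; if $d=0$ that edge is as required. If $d\ge1$, write this shorter closed arc as $\ell,x_1,\dots,x_d,m$. Since $\{\ell,m\}$ is uncrossed and $\ell$ has no other edge, every edge of $T$ at some $x_j$ has both endpoints in $\{x_1,\dots,x_d,m\}$, so $T$ restricted to $\{\ell,x_1,\dots,x_d,m\}$ is a subtree of $T$; deleting $\ell$ and the edge $\{\ell,m\}$ leaves a noncrossing tree on $\{x_1,\dots,x_d,m\}$ with at least two vertices, hence with a leaf $z$ among the $x_j$ (at most one of its $\ge2$ leaves is $m$). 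Then $z$ is a leaf of $T$ whose neighbour lies in $\{x_1,\dots,x_d,m\}$, so the arc from $z$ to that neighbour running inside $\ell,\dots,m$ has at most $d-1$ interior vertices, contradicting minimality. Given this, conjugate $c$ by a power of itself (which fixes $c$ and only rotates labels) so that the leaf is $n$ and its neighbour is $n-1$ (Case a) or $1$ (Case b); apply induction to the noncrossing tree $T-n$ on $[n-1]$ to get $(e_1,\dots,e_{n-2})$ with $s_{e_1}\cdots s_{e_{n-2}}=(1\,2\,\cdots\,n-1)$; then use $(1\,2\,\cdots\,n-1)(n-1\,n)=(1\,2\,\cdots\,n)$ in Case a, so $(e_1,\dots,e_{n-2},\{n-1,n\})$ works, and $(1\,n)(1\,2\,\cdots\,n-1)=(1\,2\,\cdots\,n)$ in Case b, so $(\{n,1\},e_1,\dots,e_{n-2})$ works.

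For \emph{uniqueness up to commutation} I would introduce the partial order $\mathcal{P}_T$ on $E(T)$: for each vertex $v$ declare $\{v,u\}<_v\{v,u'\}$ when $u$ follows $u'$ in the cyclic order on $[n]$ beginning just after $v$ (so edges sharing a vertex are always comparable), and take the transitive closure. The goal is to show that the orderings of $E(T)$ with product $c$ are \emph{exactly} the linear extensions of $\mathcal{P}_T$. Granting this, any two such orderings are connected by a sequence of transpositions of adjacent $\mathcal{P}_T$-incomparable elements (the standard connectivity of the linear-extension graph of a finite poset); incomparable edges are vertex-disjoint, so the corresponding transpositions commute and each such swap is an application of a commutation relation that leaves the product unchanged — which is the assertion. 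The easy half is ``linear extension $\Rightarrow$ product $c$'': by existence some valid ordering exists, it is a linear extension of $\mathcal{P}_T$ by the hard half below, all linear extensions are reached from it by the above swaps, and those preserve the product.

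The main obstacle is the converse, the \emph{positional lemma}: in any ordering with product $c$, of two edges $\{v,u\}<_v\{v,u'\}$ at a common vertex the first precedes the second. By rotating labels it suffices to treat $v=n$, where the claim is that the edges at $n$ occur in decreasing order of their other endpoint. I expect this to follow by reading the word $s_{e_1}\cdots s_{e_{n-1}}$ from the right and tracking the orbit of the symbol $n$, using that each partial product $w_k\le_R c$ is noncrossing with cycles equal to the increasing listings of the components of the partial forest: inserting an edge at $n$ out of the prescribed order would force the block of $w_k$ through $n$ to be non-noncrossing, contradicting $w_k\le_R c$. (Alternatively, both the leaf-edge claim used for existence and the positional lemma are essentially standard properties of $NC(n)\cong[e,c]$ and of the transitive Hurwitz action on reduced reflection factorizations of $c$, for which one could simply cite Deligne and Brady–Watt in the spirit of the preceding subsections.)
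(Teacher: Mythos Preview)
Your proposal is correct, and the central idea—the partial order $\mathcal{P}_T$ on edges determined by the counterclockwise order of edges at each vertex—is exactly the paper's. The paper's proof simply asserts the characterization ``an ordering has product $c$ if and only if it respects the local order $f^{(i)}_1,\dots,f^{(i)}_s$ at every vertex $i$'' and then reads off both existence (since $T$ is acyclic the local orders assemble into a well-defined poset with linear extensions) and uniqueness up to commutation (incomparable edges are vertex-disjoint, hence commute). Your route differs in that you prove existence separately by an induction on $n$ using a boundary leaf, and then use that to bootstrap the ``easy'' direction of the characterization; this works but is more labor than the paper's direct deduction from the characterization. On the other hand, your treatment of the ``hard'' direction (the positional lemma) is more explicit than the paper's: your sketch via partial products $w_k\le_R c$ being noncrossing with cycles equal to the components of the partial forest is correct and can be completed by checking that merging the cycle through $n$ (which already contains $a$) with the cycle through $b>a$ via the transposition $(n\,b)$ produces a cycle that is not a cyclic shift of an increasing sequence, contradicting $w_j\le_R c$.
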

\begin{proof}
Let us draw $T$ in the interior of a disk with the vertices arranged in clockwise order on the boundary.
Let $f^{(i)}_1,f^{(i)}_2,\ldots,f^{(i)}_s$ be edges incident to the vertex $i$, in counterclockwise order.  An ordering $e_1,\ldots,e_{n-1}$ of all the edges of $T$ satisfies
$s_{e_1} \cdots s_{e_{n-1}} = c$ if and only if $\{f^{(i)}_1,\ldots,f^{(i)}_s\}$ appear in the same order in $e_1,\ldots,e_{n-1}$ for any vertex $i$.  Since $T$ contains no cycles, it is not difficult to see that such an ordering $e_1,\ldots,e_{n-1}$ exists.  The last statement follows from: $s_e$ and $s_{e'}$ commute if they have no vertex in common.
\end{proof}

\begin{example}
Let $T$ be the (solid) tree of Figure~\ref{fig:dualtrees}, with the five edges 
$$(1,2),(2,4),(2,6),(3,4),(5,6).$$  Then around the vertices $2$, $4$, and $6$, we obtain the counterclockwise orderings
$$
(1,2) < (2,6) < (2,4), \qquad (3,4) < (2,4), \qquad (5,6) < (2,6).
$$
One may check that 
$$
(123456) = (56) (12) (26) (34) (24) =  (12) (56) (26) (34) (24) =  (56) (12) (34) (26) (24)  = \cdots,
$$
consistent with Lemma~\ref{lem:Atree}.
\end{example}

\part{Membranes}

In this part we discuss membranes, which are certain triangulated
2-dimensional surfaces embedded into $\R^n$.
They can be viewed as a polypositroidal version of the plabic graphs
from \cite{TP}.

\section{Root loops and root membranes}
\label{sec:root_membranes}

Let $R$ be an irreducible reduced crystallographic root system of rank $r$
in a Eucledian vector space $V\simeq\R^r$;
and let $\Lambda\subset V$, $\Lambda\simeq \Z^r$, 
be the weight lattice for $R$;
see Section~\ref{sec:root_systems}.

\begin{definition}
A {\it plane graph\/} is a planar graph with a particular drawing on the 
plane without crossing edges, considered up to a homeomorphism.

A {\it cactus} $G$ is a finite connected undirected plane graph with 
at least 2 vertices such that every face of $G$ 
(except the outer face) is a triangle, 
i.e., every face has exactly 3 distinct vertices
connected by 3 edges.
In other words, a cactus is either a single edge, or a triangulated disk,
or a wedge of smaller cacti along their boundary vertices.

If a cactus $G$ is a wedge of smaller cacti then we say that
$G$ is {\it decomposable.}  Otherwise, we say that $G$ is 
{\it indecomposable.}


For a cactus $G$, there is a unique (up to a cyclic shift) sequence 
$B=(b_1,\dots,b_m)$  of {\it boundary vertices\/} 
connected by {\it boundary edges\/} $\{b_1,b_2\}$, $\{b_2,b_3\}$, \dots,
$\{b_m,b_1\}$
obtained by walking along the boundary of $G$ in the clockwise direction.
\end{definition}

\begin{remark}
If $G$ is a decomposable cactus, then the sequence $B$ has repeated entries.
The way a cactus decomposes into indecomposable cacti is given 
by a noncrossing set partition of $[m]$ without singleton blocks.
Blocks of this noncrossing set partition correspond to boundary vertices of
connected components of the dual plane graph $G^*$,
cf.\ Remark~\ref{rem:connected}.
\end{remark}



\begin{definition}  
(1)
An {\it $R$-loop\/} $L=(\lambda^{(1)},\dots,\lambda^{(m)})$ is a sequence 
of weights $\lambda^{(a)}\in \Lambda$, indexed by elements $a\in\Z/m\Z$,
 such that
$\lambda^{(a+1)}-\lambda^{(a)}\in R$, for any $a\in \Z/m\Z$.

\smallskip
\noindent
(2)
An {\it $R$-membrane\/} $M=(G,\emb)$ with {\it boundary loop\/}
$L=(\lambda^{(1)},\dots,\lambda^{(m)})$ is a cactus $G$ on a vertex set $\Vert$ 
with the sequence of boundary edges $B=(b_1,\dots,b_m)$
together with a (not necessarily injective) embedding map $\emb:\Vert\to \Lambda$ such that, 
\begin{itemize}
\item
$\emb(u)-\emb(v)\in R$, for any edge $\{u,v\}$ of $G$, 
and
\item
$\emb(b_a)= \lambda^{(a)}$, for any $a\in\Z/m\Z$.
\end{itemize}
(In particular, we require that $f(u)\ne f(v)$ for any edge $\{u,v\}$ of $G$.)

Equivalently, an $R$-membrane is a cactus $G$ together with a
graph homomorphism\footnote{A {\it graph homomorphism\/}
$f:G_1\to G_2$ from a graph $G_1=(V_1,E_1)$ to another graph $G_2=(V_2,E_2)$ is
a map $f:V_1\to V_2$ such that, for any edge $\{u,v\}\in E_1$,
we have $\{f(u),f(v)\}\in E_2$.}
$f$ from $G$ to the graph on the vertex set $\Lambda$ with edges
$\{\lambda,\mu\}$ for $\lambda-\mu\in R$.


\smallskip
\noindent
(3)
An {\it $R$-line segment\/} is a line segment 
$\conv(\lambda,\mu)\subset V$ where $\lambda-\mu \in R$ and 
an {\it $R$-triangle\/} is a triangle
$\conv(\lambda,\mu,\nu)\subset V$,
where $\lambda,\mu,\nu\in \Lambda$ such that 
$\lambda-\mu,\mu-\nu,\nu-\lambda\in R$.
\end{definition}

An $R$-loop $L$ can be viewed as a closed piecewise-liner curve $\<L\>$ in $V$,
and $R$-membranes $M$ with boundary loop $L$ can be viewed 
as 2-dimensional simplicial complexes embedded into $V$
as surfaces $\<M\>$ composed of $R$-triangles and $R$-line segments 
with a given boundary curve $\<L\>$, as follows.

For an $R$-loop 
$L=(\lambda^{(1)},\dots,\lambda^{(m)})$, let $\<L\>\subset V$ be the 
closed piecewise-linear curve given by the union of $R$-line segments
$$
\<L\>:=\bigcup_{a\in\Z/m\Z} [\lambda^{(a)},\lambda^{(a+1)}].
$$

Let $M=(G,\emb)$ be an $R$-membrane with boundary loop $L$.
For a face $\Delta$ of $G$ with vertices $u,v,w$,
let $\<\Delta\>:=\conv(\emb(u),\emb(v),\emb(w))\subset V$
be the corresponding $R$-triangle.
The triangulated surface $\left<M\right>\subset V$ associated with 
the membrane $M$ is given by the union 
$$
\left<M\right> :=
\<L\> \cup
\bigcup_{\Delta\textrm{ face of }G} \<\Delta\> \,.
$$




\begin{definition}
Let $\val:\{\textrm{$R$-triangles}\}\to \R_{>0}$
be any positive real function, or valuation,\footnote{There are
several natural valuations on $R$-triangles.  For example,
$\val\<\Delta\>$ can be the Euclidean area of triangle 
$\<\Delta\>\subset V$, or it can be the area of the projection 
of $\<\Delta\>$ to some plane,
or it can be $\val\<\Delta\>=1$ for all $R$-triangles.}
on the set of 
all $R$-triangles.
We say that an $R$-membrane $M$ with boundary loop $L$ is
{\it minimal,} with respect to the valuation $\val$,
if its surface area
$$
\Area\, M := \sum_{\Delta\textrm{ face of }G} 
\val \<\Delta\> 
$$
has minimal possible 
value among all membranes with the same boundary loop $L$.
\end{definition}

\begin{remark}
\label{rem:Plateau}
The famous {\it Plateau's problem\/} 
originally raised by Lagrange is the problem
in geometric measure theory concerning the existence of a minimal surface 
with a given boundary.   It was solved by Jesse Douglas \cite{Dou}
and  Tibor Rad\'o \cite{Rad}. 
We view the problem about characterization of minimal membranes $M$
with a given boundary loop $L$ as a discrete version of Plateau's problem.
Unlike the situation with its  continuous counterpart,  the 
existence of a minimal membrane is trivial.  There can be
many minimal membranes with a given boundary. 
However, we think that the characterization of minimal membranes might 
provide a better understanding of the continuous Plateau's problem.
\end{remark}


\section{Membranes of type A}
\label{sec:Mem_A}

Let us now specialize definitions from the previous section to type $A $.
Let $$
R=\{e_i - e_j\mid i,j\in[n],\ i\ne j\} \subset \R^n
$$ 
be the $A_{n-1}$ 
root system embedded in $\R^n$, and let $\Lambda \simeq \Z^n \subset \R^n$.  These are the root and weight lattices of $\GL(n)$.  Recall that $e_1,\ldots,e_n$ denote the
standard coordinate vectors in $\Z^n$.
We assume the valuation\footnote{In type $A$,
the Euclidean area of any $R$-triangle equals $\sqrt{3}$.
It is convenient to rescale it to $1$.}
of any $R$-triangle is
$\val \<\Delta\>=1$. 

In this case, we call $R$-loops and $R$-membranes  simply 
{\it loops\/} and {\it membranes.}
Let us formulate their definitions.

\begin{definition}
A {\it loop\/} $L=(\lambda^{(1)},\dots,\lambda^{(m)})$ 
is a
cyclically ordered sequence of integer vectors $\lambda^{(a)}\in \Z^n$ 
such that $\lambda^{(a+1)}-\lambda^{(a)} = e_{i_a}-e_{j_a}$, for $a\in \Z/m\Z$,
for some sequence of roots $e_{i_1}-e_{j_1},\ldots,e_{i_m}-e_{j_m}$.

\medskip

A {\it membrane\/} $M=(G,\emb)$ with {\it boundary loop\/}
$L=(\lambda^{(1)},\dots,\lambda^{(m)})$ 
is a cactus $G$ on a vertex set $\Vert$ 
with the sequence of boundary vertices $B=(b_1,\dots,b_m)$
together with 
a map $f:\Vert \to \Z^{n}$ such that
\begin{itemize}
\item
 for any edge
$\{u,v\}$ of $G$ there exist indices $i\ne j$ such that 
$\emb(u)-\emb(v) = e_i - e_j$,
\item
$\emb(b_a)=\lambda^{(a)}$, for any $a\in \Z/m\Z$.
\end{itemize}

\medskip



Let $\<L\>, \<M\>\subset \R^n$ denote the images of a loop $L$
and a membrane $M$ in $\R^n$.
\end{definition}

Note that here we do not require $m$ and $n$ to be equal.
Also note that the embedding $\<M\>\subset\R^n$ of a membrane lies
on some affine hyperplane $H=H_k :=
\{(x_1,\dots,x_n)\in\R^n\mid x_1+\cdots + x_n =k\}$, where  $k\in\Z$.
Clearly, a loop $L$ is determined (up to affine translation)
by a sequence of roots $e_{i_1}-e_{j_1},\ldots,e_{i_m}-e_{j_m}$ with
equal multisets of indices $\{i_1,\dots,i_m\}=\{j_1,\dots,j_m\}$.

All faces $\Delta$ in a membrane are of one of the following two types:
\begin{itemize}
\item
{\it black triangles\/} embedded into $\R^n$
as triangles $\<\Delta\>$ of the form $\conv(-e_i,-e_j,-e_k)$, up to an affine 
translation; and 
\item{\it white triangles} embedded as triangles $\<\Delta\>$ of the form 
 $\conv(e_i,e_j,e_k)$, up to an affine translation.
\end{itemize}

For a membrane $M=(G,f)$,  let $G^*$ be the graph which is 
the plane dual of the cactus $G$. The graph $G^*$ is drawn in a disk
so that
\begin{enumerate}
\item
There are $m$ marked points on the boundary of the disk 
(labelled $1,\dots,m$ clockwise) to which 
boundary edges of $G^*$ are attached.   Only one edge of $G^*$ 
can be attached
to a marked point on the disk.  But it is allowed that both ends
of an edge are attached to two different marked points on the boundary.
(The marked point on the boundary of the disk labelled 
$a$ corresponds to the boundary edge $\{b_a,b_{a+1}\}$ of the cactus $G$.
Note that we do not regard these $m$ marked boundary points 
as vertices of $G^*$.  The vertices of $G^*$ are 
located strictly inside the disk.)
\item
The vertices of $G^*$ are 3-valent.  
The vertices of $G^*$ are colored in 2 colors: black and white.
(The vertices of $G^*$ correspond to triangles of the membrane $M$.
They are colored according to the colors of triangles in $M$.
Note that the $m$ marked boundary points of the disk are not colored.)
\item 
The faces $F_v$ of $G^*$ (associated with vertices $v$ of $G$) 
are labelled by the vectors $f(v)\in\Z^n$.
For a pair of faces $F_u$ and $F_v$ sharing an edge, we 
have $f(u)-f(v)=e_i - e_j$, for some $i\ne j$. 
\end{enumerate}

Plane graphs $G^*$ satisfying conditions (1), (2) above
are basically (3-valent)  
plabic graphs \cite[Section~11]{TP}\footnote{``Plabic'' is an abbrevation for ``planar bicolored.''
In this work, we will consider only 3-valent plabic graphs without boundary leaves, which we simply call ``plabic graphs".  The study of more general plabic graphs can be reduced to these.}.

%

\begin{definition} 
A {\it plabic graph\/} is 
the plane dual $G^*$ of a cactus $G$
with all vertices colored in two colors: black and white.
(This graph may contain edges between 
vertices of the same color.)

A {\it $\Z^n$-labelled plabic graph\/} is a pair
$(G^*,f)$, where $G^*$ is the plane dual of a cactus $G$, such that
$M=(G,f)$ is a membrane.
Equivalently, a $\Z^n$-labelled plabic graph
is a pair $(G^*,f)$ satisfying conditions (1), (2), (3) above.
\end{definition}

Clearly, by the definition, membranes $M=(G,f)$ are in bijection 
with $\Z^n$-labelled plabic graphs $(G^*,f)$.

\begin{remark}  
\label{rem:connected}
For a  membrane $M=(G,f)$, the cactus $G$ is indecomposable
if and only if the plabic graph $G^*$ is connected.
\end{remark}

Let us give another description of membranes and 
$\Z^n$-labelled plabic graphs.
We recall the definition of strands (or trips) in plabic graphs.

\begin{definition}  \cite[Section~13]{TP} 
For a plabic graph $G^*$,
a {\it strand\/} in $G^*$ 
is a directed walk along the edges of $G^*$ that satisfies
the following ``rules of the road'':
\begin{itemize}
\item Turn right at a black vertex.
\item Turn left at a white vertex.
\end{itemize}
Each strand is either a walk between two marked points on the 
boundary of the disk, or a closed walk.

The {\it strand permutation\/} $\pi:[m]\to [m]$ of a plabic graph $G^*$ 
is given by $\pi(s)=t$, if the strand that starts at the marked
point $s$ on the boundary of the disk ends at the  marked point $t$.
\end{definition}

Let $\Strand(G^*)$ be the set of all strands in $G^*$.
For every edge $\{a,b\}$ of $G^*$, there are two strands in
$\Strand(G^*)$ that pass through the edge: one 
passing in the direction $a\to b$ and the other passing in the
direction $b\to a$.
We call such a pair of strands an {\it intersecting\/} pair of strands. 
If these two intersecting strands happen to be the same strand, we 
call it a {\it self-intersecting\/} strand.

\begin{theorem}  
\label{prop:Zn_labelled_plabic}
Let $G^*$ be a fixed plabic graph, and
let $F_0$ be a fixed reference face of $G^*$.
The set of all $\Z^n$-labelled plabic graphs $(G^*,f)$,
and thus all membranes $(G,f)$, 
are in bijection with the following data:
\begin{enumerate}
\item An integer vector in $\Z^n$, 
which is the label of the reference face $F_0$.
\item A map $g:\Strand(G^*)\to\{1,\dots,n\}$ that satisfies the condition:
\begin{equation}\label{eq:gST}
g(S)\ne g(T),\quad \textrm{for any pair of intersecting strands $S$ and $T$.} 
\end{equation}
\end{enumerate}
Explicitly, the strand labelling $g$ is obtained from the face
labelling $f$ by the following condition:
if $S\in\Strand(G^*)$ is the strand passing through some edge
$\{a,b\}$ of $G^*$ in the direction $a\to b$, and
$F_u$ and $F_v$ are the two adjacent faces of $G^*$ located, resp., 
to the left and to the right of the edge $a\to b$,
and if $f(u)-f(v)=e_i-e_j$, then $g(S) = i$.
\end{theorem}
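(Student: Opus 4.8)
The plan is to establish the bijection in Theorem~\ref{prop:Zn_labelled_plabic} by verifying that the explicit assignment $f \mapsto (f(F_0), g)$ described in the last paragraph is well-defined, injective, and surjective. First I would check that the strand labelling $g$ is well-defined: given a membrane $(G,f)$, I must show that the label $g(S)$ assigned to a strand $S$ does not depend on which edge $\{a,b\}$ along $S$ we use to compute it. For this, observe what happens when $S$ passes through a trivalent vertex $w$ of $G^*$. If $w$ is black (turn right) or white (turn left), the two consecutive edges of $S$ at $w$ bound the same face $F_u$ on one side (the ``inside'' of the turn), and the third face varies. Writing out $f(u) - f(v) = e_i - e_j$ for the incoming edge and $f(u) - f(v') = e_i - e_{j'}$ for the outgoing edge shows that the first index $i$ is preserved precisely because the common face $F_u$ sits on the correct side for both edges; this is a short local computation using the rules of the road and the black/white triangle types. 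Hence $g(S)$ is a well-defined element of $\{1,\dots,n\}$, and condition \eqref{eq:gST} holds because if $S,T$ pass through the edge $\{a,b\}$ in opposite directions, then $g(S) = i$ and $g(T) = j$ where $f(u) - f(v) = e_i - e_j$ with $i \neq j$.

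Next I would check injectivity: the face labels $f$ can be reconstructed from $(f(F_0), g)$. Since $G^*$ is connected (after reducing to the indecomposable case via Remark~\ref{rem:connected}, with the decomposable case following by gluing along boundary vertices), any face $F_v$ is connected to $F_0$ by a path of faces across shared edges. Crossing a shared edge $\{a,b\}$ in the direction $a \to b$, with the two strands through it being $S$ (direction $a\to b$) and $T$ (direction $b \to a$), the left face changes from the right face by $e_{g(S)} - e_{g(T)}$. Thus $f(F_v) - f(F_0)$ is determined by $g$ along the path, and one must check path-independence, which follows because going around any face (a trivalent vertex of $G$, i.e., a triangle of the membrane) the increments telescope to zero — again a local check at each vertex of $G^*$. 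This reconstruction procedure is exactly the inverse map, establishing injectivity and simultaneously telling us the candidate preimage for surjectivity.

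For surjectivity, given an arbitrary integer vector for $F_0$ and an arbitrary $g : \Strand(G^*) \to \{1,\dots,n\}$ satisfying \eqref{eq:gST}, I would define $f$ on faces by the path-sum formula above and verify (a) it is well-defined, i.e., path-independent — this reduces to checking that the sum of increments around each trivalent vertex of $G^*$ vanishes, which holds for any $g$ satisfying \eqref{eq:gST} because the three strands through the three edges at a vertex use the rules of the road to force the increments $e_{i_1}-e_{i_2}, e_{i_2}-e_{i_3}, e_{i_3}-e_{i_1}$ to cancel; and (b) that $(G,f)$ is a genuine membrane, i.e., $f(u) \ne f(v)$ for every edge of $G$, which is precisely the content of \eqref{eq:gST}: adjacent faces differ by $e_i - e_j$ with $i \ne j$, hence adjacent vertices of $G$ get distinct labels. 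The main obstacle — and the step deserving the most care — is the path-independence / cocycle verification at trivalent vertices: one must correctly track, for each of the two colors, which face lies on the inside of each strand's turn, and confirm that the three edge-increments around a vertex of $G^*$ sum to zero. Everything else is bookkeeping, and the reduction from general cacti to the indecomposable/connected case is handled by Remark~\ref{rem:connected} together with the observation that membrane labels on a wedge are determined independently on each piece once the shared boundary vertex is fixed.
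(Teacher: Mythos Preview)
Your proposal is correct and follows essentially the same approach as the paper: both prove well-definedness of $g$ via a local check at trivalent vertices, derive \eqref{eq:gST} from $f(u)-f(v)=e_i-e_j$ with $i\neq j$, and construct the inverse by propagating labels along paths and verifying path-independence around triangular faces. One small simplification: your reduction to the indecomposable case via Remark~\ref{rem:connected} is unnecessary, because a path of faces in $G^*$ across shared edges is the same thing as a path of vertices in $G$, and the cactus $G$ is connected by definition---so path-connectedness of the face graph holds even when $G^*$ itself is disconnected.
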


In particular, \eqref{eq:gST} implies that, for any membrane $(G,f)$,
the plabic graph $G^*$ cannot have self-intersecting strands.
Changing the label of the reference face $F_0$ 
accounts for affine translations of membranes in $\R^n$.   
Up to affine translations,
membranes correspond just to strand labellings $g$ satisfying 
condition (2).

\begin{proof}  
First, it is easy to check, using the rules of the road, that the description 
of strand labelling $g$ in terms face labelling $f$ is locally consistent,
that is, for any vertex of $G^*$, the label of some strand passing through 
this vertex obtained using its incoming edge to the vertex coincides
with the label obtained using its outgoing edge.  This implies 
the global consistence of the strand labelling $g$:  For any strand $S$,
the label $g(S)$ obtained using any edge of $S$ does not depend 
on a choice of the edge.

Condition (2) for the strand labelling $g$ follows from the fact,
that, for a pair strands $S$ and $T$ intersecting at an edge of $G^*$ with 
two faces $F_u$ and $F_v$ adjacent to the edge, 
we have $g(S)=i$ and $g(T)=j$, where $f(u)-f(v)=e_i-e_j\ne 0$.

Conversely, let $v_0$ be the vertex of the cactus $G$ corresponding
to the reference face $F_0$ of $G^*$.  
Let $f(v_0)\in \Z^n$ be any integer vector, and let $g$ be 
any strand labelling satisfying condition (2).
For any vertex $v$ of the cactus $G$, we can construct 
the vector $f(v)\in \Z^n$ by picking a path $P$ in $G$ 
from $v_0$ to $v$ and using the relationship
between $f$ and $g$, for all edges of the path $P$.
The rules of the road imply that the label $f(v)$ does not change 
if we locally modify the path $P$ along a (triangular) 
face of the cactus $G$.  This implies 
the independence of this construction for $f(v)$
from a choice of path $P$.  Clearly, this function $f$, 
constructed from $f(v_0)$ and $g$, gives a valid membrane $(G,f)$.
\end{proof}

By Theorem~\ref{prop:Zn_labelled_plabic}, the strands of a $\Z^n$-labeled plabic graph are not self-intersecting.
\begin{corollary}
The strand permutation $\pi$ of a $\Z^n$-labeled plabic graph is a {\it derangement,}
i.e., a permutation in $S_m$ such that $\pi(s)\ne s$ for any $s\in[m]$.
\end{corollary}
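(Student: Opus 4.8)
The claim to prove is the Corollary stating that the strand permutation $\pi$ of a $\Z^n$-labeled plabic graph is a derangement.

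The plan is to deduce this directly from Theorem~\ref{prop:Zn_labelled_plabic}, which I may assume. The key observation is that a fixed point $\pi(s) = s$ of the strand permutation would force a strand $S$ to begin and end at the same boundary marked point $s$. The plan is to argue that such a strand must be self-intersecting, and then invoke the fact (noted immediately after Theorem~\ref{prop:Zn_labelled_plabic}) that $\Z^n$-labeled plabic graphs have no self-intersecting strands, giving a contradiction.

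First I would recall that a strand is a directed walk obeying the rules of the road, starting at marked point $s$, and that $\pi(s) = s$ means this walk returns to the \emph{same} boundary point $s$. Since the two endpoints coincide at a single marked boundary point, and a marked boundary point has exactly one edge of $G^*$ attached to it, the strand $S$ must traverse that unique boundary edge both on its way out (in one direction) and on its way back (in the opposite direction). But the two strands passing through an edge in opposite directions form, by definition, an intersecting pair; when they are the same strand, that strand is self-intersecting. Hence $S$ is a self-intersecting strand. This contradicts the conclusion of Theorem~\ref{prop:Zn_labelled_plabic} (made explicit in the sentence ``\eqref{eq:gST} implies that, for any membrane $(G,f)$, the plabic graph $G^*$ cannot have self-intersecting strands''): condition \eqref{eq:gST} would require $g(S) \neq g(S)$, which is impossible. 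Therefore $\pi$ has no fixed points, i.e., $\pi$ is a derangement.

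The main obstacle, such as it is, is being careful about the edge cases in the geometry near a boundary marked point: one must confirm that a strand starting at $s$ genuinely uses the single boundary edge attached to $s$ as its first step, and likewise that a strand ending at $s$ uses that edge as its last step, so that if the same strand does both it traverses that edge twice in opposite directions. This is immediate from the setup of $G^*$ as the plane dual of a cactus with one edge per marked point, but it is the one place where the argument touches the combinatorial definition rather than being purely formal. Everything else is a one-line consequence of the already-established non-self-intersection property.
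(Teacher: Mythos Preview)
Your proof is correct and follows the same approach as the paper: the paper simply notes before the corollary that, by Theorem~\ref{prop:Zn_labelled_plabic}, strands of a $\Z^n$-labeled plabic graph are not self-intersecting, and leaves the corollary as an immediate consequence. You have supplied exactly the detail the paper leaves implicit, namely that a fixed point $\pi(s)=s$ forces the strand to traverse the unique boundary edge at $s$ in both directions and hence to self-intersect.
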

In \cite{TP}, strand permutations were {\it decorated\/} permutations with 2 types
of fixed points; see Section~\ref{sec:positroids}.  Here we do not allow plabic graphs to have 
boundary leaves, so their strand permutations do not have fixed points.

%
%

\section{Moves of plabic graphs and membranes}
\label{sec:moves_membranes}

In \cite[Section~12]{TP}, the three types of local moves of plabic graphs
were defined, which are shown below on Figure~\ref{fig:3_plabic_moves}.

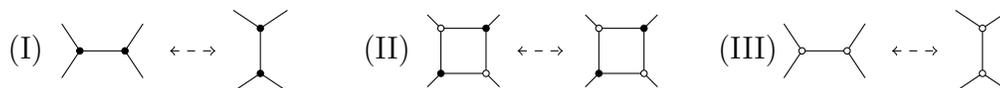
\begin{figure}[h]
\begin{center}
\begin{tikzpicture}[scale=0.6]
\node at (-1.2,0){(I)};
\filldraw[black] (0,0) circle (2pt);
\filldraw[black] (1,0) circle (2pt);
\draw (0,0)--(1,0);
\draw(0,0)--(-0.4,0.6);
\draw(0,0)--(-0.4,-0.6);
\draw(1,0)--(1.4,0.6);
\draw(1,0)--(1.4,-0.6);
\draw[dashed,<->] (2,0)--(3,0);
\begin{scope}[shift={(4,-0.5)}, rotate = 90]
\filldraw[black] (0,0) circle (2pt);
\filldraw[black] (1,0) circle (2pt);
\draw (0,0)--(1,0);
\draw(0,0)--(-0.4,0.6);
\draw(0,0)--(-0.4,-0.6);
\draw(1,0)--(1.4,0.6);
\draw(1,0)--(1.4,-0.6);
\end{scope}

\begin{scope}[shift={(8,0)}]
\node at (-1.2,0){(II)};

\draw (0,-0.5)--(1,-0.5)--(1,0.5)--(0,0.5)--(0,-0.5);
\draw (0,-0.5)--(-0.3,-0.8);
\draw (0,0.5)--(-0.3,0.8);
\draw (1,-0.5)--(1.3,-0.8);
\draw (1,0.5)--(1.3,0.8);
\filldraw[black] (0,-0.5) circle (2pt);
\filldraw[black] (1,0.5) circle (2pt);
\filldraw[fill=white] (0,0.5) circle (2pt);
\filldraw[fill=white] (1,-0.5) circle (2pt);
\draw[dashed,<->] (1.7,0)--(2.7,0);
\begin{scope}[shift={(4,-0.5)}, rotate = 90]
\draw (0,-0.5)--(1,-0.5)--(1,0.5)--(0,0.5)--(0,-0.5);
\draw (0,-0.5)--(-0.3,-0.8);
\draw (0,0.5)--(-0.3,0.8);
\draw (1,-0.5)--(1.3,-0.8);
\draw (1,0.5)--(1.3,0.8);
\filldraw[fill=white] (0,-0.5) circle (2pt);
\filldraw[fill=white] (1,0.5) circle (2pt);
\filldraw[black] (0,0.5) circle (2pt);
\filldraw[black] (1,-0.5) circle (2pt);
\end{scope}
\end{scope}

\begin{scope}[shift={(16,0)}]
\node at (-1.2,0){(III)};
\draw (0,0)--(1,0);
\draw(0,0)--(-0.4,0.6);
\draw(0,0)--(-0.4,-0.6);
\draw(1,0)--(1.4,0.6);
\draw(1,0)--(1.4,-0.6);
\filldraw[fill=white] (0,0) circle (2pt);
\filldraw[fill=white] (1,0) circle (2pt);
\draw[dashed,<->] (2,0)--(3,0);
\begin{scope}[shift={(4,-0.5)}, rotate = 90]
\draw (0,0)--(1,0);
\draw(0,0)--(-0.4,0.6);
\draw(0,0)--(-0.4,-0.6);
\draw(1,0)--(1.4,0.6);
\draw(1,0)--(1.4,-0.6);
\filldraw[fill=white] (0,0) circle (2pt);
\filldraw[fill=white] (1,0) circle (2pt);
\end{scope}
\end{scope}
\end{tikzpicture}
\end{center}
\caption{Moves of plabic graphs: 
(I) contraction-uncontraction of black vertices,
(II) square move, 
(III) contraction-uncontraction of white vertices.}
\label{fig:3_plabic_moves}
\end{figure}
%
%
It is easy to see from the rules of the road that we have:

\begin{lemma} \cite[Lemma~13.1]{TP} Any two plabic graphs 
connected with each other by a sequence of local moves of 
types {\rm (I), (II),} or {\rm (III)}
have the same strand permutations.
\end{lemma}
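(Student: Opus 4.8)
The statement to prove is Lemma~13.1 of \cite{TP} in the present formulation: any two plabic graphs related by a sequence of moves (I), (II), (III) have the same strand permutation. Since strand permutations are composed along a sequence of moves, it suffices to check invariance under a single move of each type, and indeed to check it locally: outside the small disk where the move takes place, the graph is unchanged, so the only thing that can change is how strands are routed through that local disk. The plan is therefore to fix attention on the local picture in Figure~\ref{fig:3_plabic_moves}, enumerate the strands entering the local disk through its (at most four) ``stub'' half-edges, and verify that the induced pairing of incoming stubs to outgoing stubs is the same before and after the move. Once that local pairing is shown invariant, global strand permutations agree because any strand is the concatenation of its portions outside the disk (fixed) with its portions inside the disk (determined by the invariant local pairing).

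First I would handle move (I), contraction--uncontraction of two adjacent black vertices. On the contracted side there is a single black vertex of valence $4$ with stubs, say, $a_1,a_2,a_3,a_4$ in clockwise order; ``turn right at a black vertex'' sends a strand entering along $a_i$ out along $a_{i-1}$ (indices cyclic), so the local pairing is the rotation-by-one matching $\{a_1 a_2, a_2 a_3, a_3 a_4, a_4 a_1\}$ read as in/out. On the uncontracted side there are two black trivalent vertices joined by an internal edge; I would trace the four strands through the two right-turns and the internal edge and check that the same matching of the four stubs results. Move (III) is identical with ``right'' replaced by ``left'' at white vertices, giving the reverse rotation, and the two trivalent white vertices reproduce it the same way; so (III) reduces to the same bookkeeping as (I) with colours and handedness swapped. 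Move (II), the square move, is the substantive case: here the local disk has four stubs and four internal vertices forming an alternating $4$-cycle (two black, two white), and after the move the colours at the four corners are swapped. I would trace each of the four strands through the square --- at each corner it either turns into the square or passes to an adjacent corner, according to the rules of the road --- and record which stub it exits from; then do the same for the colour-swapped square and confirm the in/out matching of the four stubs is unchanged. This is a finite case check (essentially four strands, each with a short deterministic itinerary), but it is the one place where one actually has to draw the picture carefully and use that the square alternates in colour so that the ``rules of the road'' route strands across rather than around.

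The remaining bookkeeping is routine: one must note that a strand which is a closed loop entirely inside the local disk (possible only for move (II), where the square itself could be traversed as a closed strand if no stub feeds into it --- but in the configurations relevant to plabic graphs each side of the square has an external stub, so this does not occur, or if it does, it is symmetric under the move) does not affect the strand \emph{permutation}, only the set of closed strands. I would also remark that the argument needs nothing about the $\Z^n$-labelling $f$ or the membrane structure; it is purely a statement about plabic graphs and the rules of the road, exactly as in \cite[Lemma~13.1]{TP}, so I would simply cite that lemma for the proof, indicating that the verification is the local stub-matching check just described. The main (indeed only) obstacle is carrying out the move-(II) trace without sign/handedness errors; everything else is immediate from locality of the moves and the deterministic nature of strands.
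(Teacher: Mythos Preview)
Your proposal is correct and matches the paper's approach: the paper gives no proof beyond the sentence ``It is easy to see from the rules of the road,'' and your local stub-matching check is exactly the verification that sentence refers to. One small remark: in this paper all plabic graphs are trivalent, so move (I) goes directly between two configurations of two adjacent black trivalent vertices (rotated by $90^\circ$) rather than through a $4$-valent ``contracted'' intermediate; your argument via the $4$-valent vertex is still valid as a conceptual device, since both sides realize the same cyclic order of the four stubs and hence the same right-turn routing.
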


The local moves of plabic graphs can be converted into local
moves of membranes, as follows.

\begin{lemma}
\label{lem:pyramid}
Let $M=(G,f)$ be a membrane.  Let $F_u$ be a square face of $G^*$ with 
vertices of alternating colors as we go arond $F_u$, i.e., a face of $G^*$
on which one can perform a square move {\rm (II).}
Let $F_v, F_w, F_z, F_t$ be the 4 adjacent faces of $G^*$ in the clockwise order.
Then $\conv(f(u), f(v), f(w), f(z), f(t))$ is a square pyramid in $\R^n$ 
such that
\begin{itemize}
\item 
The pyramid has one square face (the base) 
and 4 faces given by equilateral triangles.
All edges of the pyramid have equal lengths.
\item 
$f(u)$ is the apex of the pyramid.
\item
The base is the square $\conv(f(v), f(w), f(z), f(t))$
with vertices arranged as we go around the base.
\end{itemize}
\end{lemma}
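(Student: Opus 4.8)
The plan is to read off the local geometry at $u$ directly from the combinatorics of the square face $F_u$, and then do a short linear-algebra verification. First I would translate the membrane so that $f(u)=0$. Since $F_u$ is a bounded quadrilateral face of $G^*$, its boundary is a $4$-cycle in $G^*$; dualizing, this says that the vertex $u$ of the cactus $G$ has exactly four incident edges — to the four vertices $v,w,z,t$ in clockwise order, these being exactly the cactus vertices whose dual faces $F_v,F_w,F_z,F_t$ border $F_u$ — and exactly four incident triangular faces, which must then be $T_1=(u,v,w)$, $T_2=(u,w,z)$, $T_3=(u,z,t)$, $T_4=(u,t,v)$ (a ``wheel'' around $u$). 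The hypothesis that a square move is available at $F_u$ means that the four vertices of $G^*$ on $\partial F_u$, i.e.\ the triangles $T_1,\dots,T_4$, alternate in colour; replacing $f$ by $-f$ if necessary — this swaps black and white and preserves every hypothesis — I may assume $T_1,T_3$ are black and $T_2,T_4$ are white.

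The key elementary input is: if $x$ is a vertex of a black (resp.\ white) type $A$ triangle, then the other two vertices are $x+(e_p-e_q)$ and $x+(e_p-e_{q'})$ (resp.\ $x+(e_q-e_p)$ and $x+(e_{q'}-e_p)$) for a single index $p$ and distinct $q,q'$; this is immediate from the normal forms $\conv(-e_i,-e_j,-e_k)$ and $\conv(e_i,e_j,e_k)$. I would apply this at $u$ to $T_1,T_2,T_3,T_4$ in turn, using that consecutive triangles in the wheel share one of the four edges at $u$, so the corresponding root-valued edge vector out of $u$ is common to both. Propagating the resulting index identifications once around the wheel pins down the four edge vectors: there are indices $i,j,k,l$ with
$$
f(v)=e_i-e_j,\qquad f(w)=e_i-e_k,\qquad f(z)=e_l-e_k,\qquad f(t)=e_l-e_j .
$$
The step I expect to require the most care is checking that $i,j,k,l$ come out pairwise distinct; this follows by also carrying along, for each $T_a$, the condition that its three indices are distinct.

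It then remains to check that these five points form the asserted square pyramid, which is routine. Reading $v,w,z,t,v$, the consecutive difference vectors of the base are $e_j-e_k,\ e_l-e_i,\ e_k-e_j,\ e_i-e_l$; opposite ones are negatives, so $f(v),f(w),f(z),f(t)$ are, in this cyclic order, the vertices of a parallelogram, and since its two generating vectors $e_j-e_k$ and $e_l-e_i$ have disjoint supports they are orthogonal of common length $\sqrt2$, so the base is a square of side $\sqrt2$. Each segment from $f(u)=0$ to a base vertex is a root, hence also of length $\sqrt2$, so all eight edges are equal; the four lateral faces $\conv(f(u),f(v),f(w))$, $\conv(f(u),f(w),f(z))$, $\conv(f(u),f(z),f(t))$, $\conv(f(u),f(t),f(v))$ are exactly the $R$-triangles $T_1,\dots,T_4$, hence equilateral. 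Finally $f(u)=0$ is not in the affine span of the base — a one-line check using coordinates $i,j,k$ — so $\conv(f(u),f(v),f(w),f(z),f(t))$ is genuinely a square pyramid with apex $f(u)$ and base the square $\conv(f(v),f(w),f(z),f(t))$ traversed in the order $v,w,z,t$, which completes the proof; the case with $T_1,T_3$ white was reduced to this one by $f\mapsto -f$.
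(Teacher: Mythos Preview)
The paper does not actually prove this lemma; it is stated without proof and treated as immediate from the definitions of membrane, black/white triangle, and plabic dual. Your argument is correct and is precisely the direct verification that the paper leaves to the reader: you unwind the $G\leftrightarrow G^*$ duality to see the wheel of four triangles around $u$, use the normal forms $\conv(\pm e_i,\pm e_j,\pm e_k)$ to propagate index constraints around the wheel and obtain $f(v)=e_i-e_j$, $f(w)=e_i-e_k$, $f(z)=e_l-e_k$, $f(t)=e_l-e_j$ with $i,j,k,l$ distinct, and then check the elementary metric claims. The reduction to ``$T_1,T_3$ black'' via $f\mapsto -f$ is harmless, and a clean way to phrase the final affine-span check is that the linear functional $x_i+x_l$ equals $1$ on all four base vertices and $0$ at $f(u)$.
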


For a pyramid, as in the lemma above, let $\widetilde{f(u)}\in\R^n$ be
the reflection of $f(u)$ with respect to the affine plane containing 
the points $f(v), f(w), f(z), f(t)$, i.e., it is given by 
\begin{equation}
\label{eq:tilde_f_u}
\widetilde{f(u)} + f(u) = f(v)+f(z) = f(w)+f(t).
\end{equation}
Clearly, $\conv(\widetilde{f(u)}, f(u), f(v), f(w), f(z),f(t))$
is an octahedron, which is the union of two square pyramids.
The following lemma follows from the definitions.

\begin{lemma}\label{lem:membranemove}
Let $M=(G,f)$ be a membrane, and let $G^*\to \tilde G^*$ be any 
local move of the plabic graph $G^*$ of type
{\rm (I),} {\rm (II),} or {\rm (III).}
Let $\tilde G$ be the plane dual of the plabic graph $\tilde G^*$.
The vertex set $\tilde \Vert$ of $\tilde G$ can be naturally identified
with the vertex set $\Vert$ of $G$.
Let $\tilde f:\tilde \Vert \to\Z^n$ be the map defined, as follows.
\begin{itemize}
\item  For a move of type {\rm (I)} or {\rm (III),} assume that  
\begin{equation}
f(u)\ne f(w), 
\label{eq:fufw}
\end{equation}
where $F_u$ and $F_w$ are two of the four faces $F_u,F_v,F_w,F_z$
of $G^*$ involved in the move, which are not adjacent faces of $G^*$.
Then define $\tilde f := f$.
\item For a square move {\rm (II)},
let $F_u,F_v,F_w,F_z,F_t$ be the faces of $G^*$ involved in the move, 
labelled as in Lemma~\ref{lem:pyramid},
then set 
$\tilde f(u)  := \widetilde{f(u)}$, given by {\rm (\ref{eq:tilde_f_u}).}
For all other $x\ne u$, set $\tilde f(x) :=f(x)$.
\end{itemize}

Then $\tilde M := (\tilde G, \tilde f)$ is a valid membrane
with the same boundary loop $L$ as $M$.
\end{lemma}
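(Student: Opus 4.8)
The plan is to check the three defining conditions of a membrane for the pair $(\tilde G, \tilde f)$: that $\tilde G$ is a cactus, that $\tilde f$ carries every edge of $\tilde G$ to a root (which then automatically makes every face an $R$-triangle), and that the boundary loop is still $L$. That $\tilde G$ is a cactus is the standard combinatorial observation that each of the moves (I)--(III) replaces a triangulation of a small sub-disk of the triangulated disk $G$ by another triangulation of the same sub-disk, so I would simply record this. The boundary loop is unchanged because all three moves are supported in the interior: for (I) and (III) we have $\tilde f = f$ and $\tilde G$ has the same boundary walk $b_1,\dots,b_m$ as $G$; for (II) the only vertex whose label changes is the apex $u$, and $F_u$, being a bounded quadrilateral face of $G^*$, touches no boundary marked point, so $u$ is an interior vertex of $G$. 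Thus the content is entirely in the root condition on the few faces of $\tilde G$ that differ from those of $G$.

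For moves (I) and (III) I would work in the cactus $G$: the black--black (resp.\ white--white) edge of $G^*$ on which the move acts is dual to an edge of $G$ shared by two triangular faces, which together form a quadrilateral whose four vertices are dual to the faces $F_u,F_v,F_w,F_z$ of $G^*$, with $F_u,F_w$ the two non-adjacent ones; the move flips the diagonal of this quadrilateral from $\{F_v,F_z\}$ to $\{F_u,F_w\}$. Since the two original triangles are black (resp.\ white) they are translates of $\{-e_i,-e_j,-e_k\}$ (resp.\ of $\{e_i,e_j,e_k\}$) for distinct indices; writing $f(F_u)=c-e_i$, $f(F_v)=c-e_j$, $f(F_z)=c-e_k$ one gets $f(F_w)=c-e_\ell$ with $\ell$ the third index of the second triangle, so $\tilde f(F_u)-\tilde f(F_w)=e_\ell-e_i$. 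This is a root exactly when $\ell\ne i$, i.e.\ exactly when $f(F_u)\ne f(F_w)$ --- precisely hypothesis \eqref{eq:fufw}. Granting it, $\{i,j,\ell\}$ and $\{i,\ell,k\}$ are each triples of distinct indices, so the two new faces $\{f(F_u),f(F_v),f(F_w)\}$ and $\{f(F_u),f(F_w),f(F_z)\}$ are again black (resp.\ white) $R$-triangles, with the dual vertices keeping their colour; every other edge and face of $\tilde G$ agrees with one in $G$ and inherits its condition from $M$.

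For the square move (II) I would invoke Lemma~\ref{lem:pyramid}: the four triangular faces of $G$ around the $4$-valent vertex $u$ embed as the lateral faces of a square pyramid with apex $f(u)$ and base $\conv(f(v),f(w),f(z),f(t))$. Setting $\tilde f(u):=\widetilde{f(u)}=f(v)+f(z)-f(u)=f(w)+f(t)-f(u)\in\Z^n$ replaces this pyramid by its mirror image across the base plane, completing the octahedron $\conv(\widetilde{f(u)},f(u),f(v),f(w),f(z),f(t))$ noted just before the statement. From the two expressions for $\widetilde{f(u)}$ one reads off $\widetilde{f(u)}-f(v)=f(z)-f(u)$, $\widetilde{f(u)}-f(w)=f(t)-f(u)$, $\widetilde{f(u)}-f(z)=f(v)-f(u)$, $\widetilde{f(u)}-f(t)=f(w)-f(u)$, all of which are roots since they are edges of the original pyramid; hence each of the four new triangles around $u$ has all three edges in $R$ and is an $R$-triangle, and a short direct check shows each has colour opposite to the face it replaces, matching the recolouring of the four vertices of $F_u$ prescribed by the square move. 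Again, all faces and edges of $\tilde G$ outside the octahedron are untouched.

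The point that needs care is not the geometry but the local combinatorial bookkeeping: correctly matching, under plane duality, the vertices and triangles of $G$ with the faces $F_u,F_v,F_w,F_z$ of $G^*$ involved in each move (and recognising which of them are adjacent), and tracking the black/white colouring through the move. Once this local dictionary is in place the verifications above are immediate. It is also worth stressing that for (I) and (III) the hypothesis $f(F_u)\ne f(F_w)$ is genuinely needed and not a technicality: without it the flipped edge would join two vertices with equal label, and $\tilde M$ would fail to be a membrane.
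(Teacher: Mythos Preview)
Your verification is correct and complete; the paper itself gives no proof beyond ``follows from the definitions,'' and what you have written is exactly the routine check that this phrase leaves to the reader. Your identification of the faces $F_u,F_w$ (the pair becoming the new diagonal) is the right reading of the hypothesis \eqref{eq:fufw}, and your octahedron computation for move (II) matches the discussion preceding the lemma.
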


\begin{definition}
Local moves of membranes of types {\rm (I), (II), (III)} are
the moves $M\to \tilde M$ in Lemma~\ref{lem:membranemove}.
\end{definition}

Moves of types {\rm (I)} and {\rm (III)} correspond to 
{\it tetrahedron moves\/}
of membranes, where we replace 2 triangles on the surface
of a tetrahedron by the other 2 triangles.
Moves of type {\rm (II)} correspond to {\it octahedon moves\/} of membranes
where we replace 4 triangles forming a half of the surface
of an octahedron by the 4 triangles on the other half of the surface
of the octahedron, as shown in Figure~\ref{fig:tetra_octa}.  See also~\cite{FP}.

\begin{figure}[h]
\begin{center}
\begin{tikzpicture}[scale=0.6]
\node at (-2.5,0){(I) and (III)};
\filldraw[black] (0,0) circle (2pt);
\filldraw[black] (1,0) circle (2pt);
\draw (0,0)--(1,0);
\draw(0,0)--(-0.4,0.6);
\draw(0,0)--(-0.4,-0.6);
\draw(1,0)--(1.4,0.6);
\draw(1,0)--(1.4,-0.6);
\draw[dashed,<->] (2,0)--(3,0);
\begin{scope}[shift={(4,-0.5)}, rotate = 90]
\filldraw[black] (0,0) circle (2pt);
\filldraw[black] (1,0) circle (2pt);
\draw (0,0)--(1,0);
\draw(0,0)--(-0.4,0.6);
\draw(0,0)--(-0.4,-0.6);
\draw(1,0)--(1.4,0.6);
\draw(1,0)--(1.4,-0.6);
\end{scope}

\begin{scope}[shift={(0,-2.5)}]
\draw (-0.2,0)--(1.3,0);
\draw(-0.2,0)--(0.4,1)--(1.3,0);
\draw(0.4,1)--(0.1,-0.4)--(1.3,0);
\draw[dashed,<->] (2,0)--(3,0);
\begin{scope}[shift={(3.7,0)}]
\draw (-0.2,0)--(1.3,0);
\draw (0.1,-0.4)--(-0.2,0)--(0.4,1);
\draw(0.4,1)--(0.1,-0.4)--(1.3,0);
\end{scope}
\end{scope}

\begin{scope}[shift={(8,0)}]
\node at (-1.2,0){(II)};

\draw (0,-0.5)--(1,-0.5)--(1,0.5)--(0,0.5)--(0,-0.5);
\draw (0,-0.5)--(-0.3,-0.8);
\draw (0,0.5)--(-0.3,0.8);
\draw (1,-0.5)--(1.3,-0.8);
\draw (1,0.5)--(1.3,0.8);
\filldraw[black] (0,-0.5) circle (2pt);
\filldraw[black] (1,0.5) circle (2pt);
\filldraw[fill=white] (0,0.5) circle (2pt);
\filldraw[fill=white] (1,-0.5) circle (2pt);
\draw[dashed,<->] (1.7,0)--(2.7,0);
\begin{scope}[shift={(4,-0.5)}, rotate = 90]
\draw (0,-0.5)--(1,-0.5)--(1,0.5)--(0,0.5)--(0,-0.5);
\draw (0,-0.5)--(-0.3,-0.8);
\draw (0,0.5)--(-0.3,0.8);
\draw (1,-0.5)--(1.3,-0.8);
\draw (1,0.5)--(1.3,0.8);
\filldraw[fill=white] (0,-0.5) circle (2pt);
\filldraw[fill=white] (1,0.5) circle (2pt);
\filldraw[black] (0,0.5) circle (2pt);
\filldraw[black] (1,-0.5) circle (2pt);
\end{scope}
\begin{scope}[shift={(0,-2.5)}]
\draw (-0.3,0)--(0.8,0)--(1.2,0.4)--(0.1,0.4)--(-0.3,0);
\draw (0.3,1.4)--(-0.3,0);
\draw (0.3,1.4)--(0.8,0);
\draw (0.3,1.4)--(1.2,0.4);
\draw (0.3,1.4)--(0.1,0.4);
\draw[dashed,<->] (1.7,0)--(2.7,0);
\begin{scope}[shift={(3.7,0)}]
\draw (-0.3,0)--(0.8,0)--(1.2,0.4)--(0.1,0.4)--(-0.3,0);
\draw (0.3,-1)--(-0.3,0);
\draw (0.3,-1)--(0.8,0);
\draw (0.3,-1)--(1.2,0.4);
\draw (0.3,-1)--(0.1,0.4);
\end{scope}
\end{scope}
\end{scope}
\end{tikzpicture}
\end{center}
\caption{Moves of plabic graphs (top) and membranes (bottom): 
tetrahedon moves (left), and octahedron move (right).}
\label{fig:tetra_octa}
\end{figure}
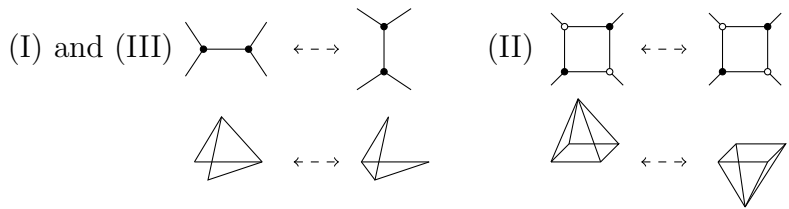

%
%
\begin{remark}
\label{rem:move_not_defined}
There is only one situation when there is a valid
a move $G^*\to \tilde G^*$ of plabic graphs, but there is no
corresponding move of membranes $M\to \tilde M$.   
This happens if the move of plabic graphs is of type (I) or (III)
and condition (\ref{eq:fufw}) fails.
In this case, in the picture shown on 
the bottom left of  Figure~\ref{fig:tetra_octa}, 
the two triangles before the move coincide with each other.
The move would transform them into two ``degenerate triangles'',
i.e., line segments, which we do not allow in a membrane.
(Recall that we require that $f(u)\ne f(v)$ for any edge $\{u,v\}$ of $G$.)

However, for any membrane $M=(G,f)$ and any square move (II) of the 
plabic graph $G^*$, there is always the associated valid 
octahedron move of the membrane $M$.  
\end{remark}

\begin{remark}
According to \cite{Po18} the three types of moves
of plabic graphs correspond to the three 3-dimensional hypersimplices: 
$\Delta_{14}$ (tetrahedron), 
$\Delta_{24}$ (octaheron), 
$\Delta_{34}$ (upside down tetrahedron).  Moves of certain 3-dimensional membranes were used in 
\cite[Section~12.6]{FP} to graphically describe 
``chain reactions'' on plabic graphs.
\end{remark}

Clearly, $\Area(M)$ is preserved under the local moves of membranes, and the
boundary loop $L$ does not change.  So the class of minimal membranes with a
given boundary loop $L$ is invariant under these 3 types of moves of membranes.

\begin{remark}\label{rem:minimalmove}
The exceptional case when condition (\ref{eq:fufw}) fails
and the move of membranes is not defined 
(see Remark~\ref{rem:move_not_defined})
can never happen in a minimal membrane.
Indeed, in this case, the two coinciding triangles in the picture 
on the bottom left of Figure~\ref{fig:tetra_octa} can be removed
from the membrane $M$, so that we get a membrane with a smaller area
but with the same boundary loop $L$.
\end{remark}

\section{Minimal membranes and reduced plabic graphs}
\label{sec:minimal_membranes_reduced}

We recall the notion of reducedness for plabic graphs.  The following definition is equivalent to
\cite[Definition~12.5]{TP} by \cite[Theorem~13.2]{TP}.

\begin{definition}
\label{def:reduced_plabic}
A plabic graph $G^*$ is {\it reduced\/} if it satisfies the conditions:
\begin{enumerate}
\item $G^*$ has no self-intersecting strands.
\item $G^*$ has no closed strands.
\item $G^*$ has no pair of strands $S,T$ with a 
{\it bad double crossing,} which means, that  
$S$ and $T$ intersect at two edges $\{a,b\}$ and $\{c,d\}$ and 
both strands are directed from $\{a,b\}$ to $\{c,d\}$.
\end{enumerate}
\end{definition}

\begin{theorem} 
\label{th:plabic_move_equiv}
\cite[Theorem~13.4]{TP}
For any two reduced plabic graphs with the same number of marked
points on the disk, the graphs have the same strand permutations
if and only if they can be obtained from 
each other by a sequence of local moves of types {\rm (I), (II), (III).}
\end{theorem}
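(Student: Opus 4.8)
The plan is to prove the two implications separately. The \emph{if} direction is exactly the lemma quoted above, \cite[Lemma~13.1]{TP}: by inspecting the rules of the road through each of the three local moves one checks that a strand entering the small disk in which a move is performed leaves it along the same boundary edge before and after the move, so the strand permutation is unchanged. This is a routine finite case-check and I would not belabor it.

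For the \emph{only if} direction I would pass from plabic graphs to their \emph{strand diagrams} and exhibit a canonical form. The first step is to record that the combinatorial content of a reduced plabic graph $G^*$ is captured by its collection of strands, drawn as curves in the disk joining the marked points: the three reducedness conditions (no self-intersections, no closed strands, no bad double crossings) guarantee that any two distinct strands cross at most once, so the picture is a \emph{reduced strand diagram} for the strand permutation $\pi$, in which strand $i$ runs from boundary point $i$ to boundary point $\pi(i)$. The second step is to show that $G^*$ can be reconstructed, up to moves (I)--(III), from this strand diagram, and that two reduced plabic graphs whose strand diagrams agree up to an ambient isotopy of the disk fixing the boundary and up to the ``triple-crossing'' (Reidemeister-III type) move of strand diagrams are connected by a sequence of moves (I)--(III); the triple-crossing move corresponds to a square move (II) performed at a trivalent triangular face, preceded and followed by black/white contractions.

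The combinatorial heart, and the step I expect to be the main obstacle, is then: any two reduced strand diagrams with the same strand permutation $\pi$ are related by triple-crossing moves. Here one uses that in a reduced diagram whether strands $i$ and $j$ cross is \emph{forced} by $\pi$ (namely iff the chords $\{i,\pi(i)\}$ and $\{j,\pi(j)\}$ link), so that all reduced diagrams for $\pi$ have the same set of crossings and differ only in how those crossings are ordered along the strands. I would then argue by induction on a complexity statistic, showing that a reduced diagram not in a prescribed canonical form admits a triple-crossing move decreasing the statistic; this is the strand-diagram analogue of the statement that any two reduced words for a permutation are connected by braid moves, and the reducedness hypotheses rule out precisely the degenerate configurations (closed loops, bad double crossings) that would block such a move. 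Translating the canonical strand diagram back through the correspondence of the previous paragraph produces a canonical reduced plabic graph $G_\pi^*$ attached to $\pi$, and the above shows every reduced plabic graph with strand permutation $\pi$ is move-equivalent to $G_\pi^*$, which yields the theorem. This is in essence the argument of \cite[Section~13]{TP}.
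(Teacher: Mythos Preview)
The paper does not give its own proof of this statement: Theorem~\ref{th:plabic_move_equiv} is stated with a citation to \cite[Theorem~13.4]{TP} and used as a black box throughout Section~\ref{sec:minimal_membranes_reduced} and later. So there is no in-paper argument to compare your proposal against.

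Your sketch is a reasonable high-level outline of the strategy one finds in \cite[Section~13]{TP}, and you correctly identify the substantive step (connectivity of reduced strand diagrams for a fixed permutation via triple-point moves, in analogy with braid moves on reduced words). A couple of technical points deserve more care if you intend this as more than a pointer to \cite{TP}. First, reducedness in the sense of Definition~\ref{def:reduced_plabic} forbids only \emph{bad} double crossings, so the assertion that ``any two distinct strands cross at most once'' is not immediate from the definition as stated here; one needs the equivalence with the original face-count definition of reducedness, or an additional argument. Second, the dictionary ``triple-crossing move $\leftrightarrow$ square move (II) conjugated by contractions'' glosses over the bookkeeping of which contractions are available and why the intermediate graphs stay reduced; this is where most of the work in \cite{TP} actually sits. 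None of this is wrong, but as written the proposal is a roadmap rather than a proof, and since the present paper only quotes the result, that is arguably the appropriate level.
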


Let us discuss a relationship between minimal membranes
and reduced plabic graphs.

\begin{theorem}
\label{th:minimal_membr=reduced}
If $M=(G,f)$ is a minimal membrane then $G^*$ is a reduced plabic graph.
Moreover, for a minimal membrane $M=(G,f)$ with boundary loop 
$L=(\lambda^{(1)},\dots,\lambda^{(m)})$ 
(where $\lambda^{(a+1)}-\lambda^{(a)} =
e_{i_a}-e_{j_a}$, for $a\in\Z/m\Z$), one can recover
the strand labelling $g$ (and thus the face labelling $f$) 
from the plabic graph $G^*$ and loop $L$ as follows:
If $S\in\Strand(G^*)$ is a strand in $G^*$ connecting two marked points
labelled $s$ (the source of $S$) and $t$ (the target of $S$) 
on the boundary of the disk,
then we have
$$
g(S) = i_s = j_t\,.
$$
\end{theorem}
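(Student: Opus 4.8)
The plan is to prove the two assertions of Theorem~\ref{th:minimal_membr=reduced} in sequence: first that the plabic graph $G^*$ of a minimal membrane is reduced, and then that the strand labelling can be recovered from $G^*$ together with the boundary loop $L$ via the formula $g(S) = i_s = j_t$.

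For the first assertion, I would argue by contradiction, checking that each of the three failure modes in Definition~\ref{def:reduced_plabic} forces the membrane to be non-minimal. Self-intersecting strands are already ruled out for any membrane by Theorem~\ref{prop:Zn_labelled_plabic} (condition \eqref{eq:gST} cannot hold for a self-intersecting strand). For closed strands and for bad double crossings, the idea is the standard one from the theory of reduced plabic graphs: a closed strand or a bad double crossing can be ``undone'' by a sequence of local moves of types (I), (II), (III) followed by a reduction that removes a bigon/monogon of faces, and by Remark~\ref{rem:minimalmove} and the paragraph preceding it, local moves preserve $\Area(M)$ while the reduction strictly decreases it, all with the same boundary loop $L$. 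Concretely, I would invoke the move-equivalence machinery: if $G^*$ has a closed strand or a bad double crossing, then (after local moves, which by Lemma~\ref{lem:membranemove} lift to membrane moves preserving area and boundary) one reaches a configuration where two triangles coincide or a small disk of triangles bounds a trivial piece of the membrane, and excising it yields a membrane with strictly smaller area and the same boundary — contradicting minimality. The main subtlety here is making precise that the reduction at the plabic-graph level lifts to an area-decreasing operation on membranes; this is where one must use that the offending sub-configuration, once isolated by moves, actually carries zero net ``area'' in the membrane because the relevant face labels collapse (as in the failure of \eqref{eq:fufw}).

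For the second assertion, fix a strand $S \in \Strand(G^*)$ running from boundary marked point $s$ to boundary marked point $t$. The marked point $a$ corresponds to the boundary edge $\{b_a, b_{a+1}\}$ of the cactus $G$, i.e.\ to the segment $[\lambda^{(a)}, \lambda^{(a+1)}]$ of $\langle L \rangle$. The strand $S$ enters the disk through marked point $s$ and leaves through marked point $t$. Applying the explicit dictionary of Theorem~\ref{prop:Zn_labelled_plabic} at the first edge of $S$ (the one incident to the source marked point $s$): the two faces of $G^*$ adjacent to that edge are the outer face and the face $F_{b_s}$ (or $F_{b_{s+1}}$); reading off $f(u) - f(v) = e_i - e_j$ with the correct left/right orientation dictated by the rules of the road gives $g(S) = i_s$, since $\lambda^{(s+1)} - \lambda^{(s)} = e_{i_s} - e_{j_s}$ and the relevant boundary triangle sits against the segment $[\lambda^{(s)},\lambda^{(s+1)}]$. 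Symmetrically, examining the last edge of $S$ at the target marked point $t$ and using that strands reverse their ``source/target role'' at the two ends — the strand through the boundary edge at $t$, traversed toward the boundary, places $F_{b_t}$ and the outer face on opposite sides — yields $g(S) = j_t$. Thus $g(S) = i_s = j_t$. In particular the identity $i_s = j_t$ is forced (it is automatically true because $\pi$ is a derangement and the multisets $\{i_1,\dots,i_m\}$ and $\{j_1,\dots,j_m\}$ agree, but the point is the \emph{labelled} refinement). Once $g$ is determined on all strands, $f$ is recovered up to translation by Theorem~\ref{prop:Zn_labelled_plabic}, and the translation is pinned down by the requirement $f(b_a) = \lambda^{(a)}$.

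I expect the first assertion — that minimality implies reducedness — to be the main obstacle, since it requires carefully transporting the plabic-graph reduction operations (removal of closed strands and bad double crossings) through the membrane correspondence and verifying that each such operation strictly decreases the surface area $\Area(M)$ while preserving the boundary loop; the exceptional non-liftable case of moves (Remark~\ref{rem:move_not_defined}) is exactly the mechanism that produces the area drop, so the argument must be organized around identifying, after a suitable sequence of moves, a pair of coincident triangles or a contractible sub-membrane to excise. The second assertion, by contrast, is a direct local computation with the rules of the road and the $f \leftrightarrow g$ dictionary already established in Theorem~\ref{prop:Zn_labelled_plabic}; the only care needed is bookkeeping of orientations (left versus right of a directed edge) at the two boundary endpoints of $S$.
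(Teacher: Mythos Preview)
Your treatment of the second assertion matches the paper's exactly: apply the $f\leftrightarrow g$ dictionary of Theorem~\ref{prop:Zn_labelled_plabic} at the first and last edges of the strand $S$ to read off $g(S)=i_s$ and $g(S)=j_t$.

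For the first assertion, your plan is in the right spirit but the paper is more streamlined, and one sentence in your write-up inverts the logic. The paper does not handle the three failure modes of Definition~\ref{def:reduced_plabic} separately. Instead it invokes a single fact from \cite{TP}: any non-reduced plabic graph can be transformed, by moves (I), (II), (III), into one containing a \emph{double edge}. Given that, the argument is short. Since $M$ is minimal, Remark~\ref{rem:minimalmove} guarantees that the exceptional case of Remark~\ref{rem:move_not_defined} \emph{never} arises along the way, so the entire sequence of plabic moves lifts to membrane moves by Lemma~\ref{lem:membranemove}; one lands at a minimal membrane whose plabic graph has a double edge. Then split on the colors of the two endpoints: if they differ, the double edge forces a self-intersecting strand, impossible for any membrane by Theorem~\ref{prop:Zn_labelled_plabic}; if they agree, the two corresponding triangles can be excised, producing a membrane with the same boundary loop and strictly smaller area, contradicting minimality.

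The point to correct in your account is the claim that ``the exceptional non-liftable case of moves is exactly the mechanism that produces the area drop.'' It is the opposite: for a minimal membrane that case is \emph{precluded} (Remark~\ref{rem:minimalmove}), which is precisely why every move in the \cite{TP} sequence lifts. The area drop comes only at the terminal double-edge configuration, not from any move failing en route. If you reorganize around this, your argument coincides with the paper's; your proposed case-by-case handling of closed strands and bad double crossings would work too, but it amounts to reproving the reduction from \cite{TP} that the paper simply cites.
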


\begin{proof}
Suppose that $M=(G,f)$ is a minimal membrane, but
$G^*$ is not a reduced plabic graph. 
According to \cite[Sections~12, 13]{TP}
one can apply a sequence of moves 
(I), (II), (III) to $G^*$ and obtain a plabic graph with a double 
edge.
By Remark~\ref{rem:minimalmove}, if $M=(G,f)$ is a 
{\it minimal\/} membrane, then, for any sequence of local moves of plabic
graphs starting with $G^*$, there is an associated sequence of 
local moves of membranes.
So we get some minimal membrane whose plabic graph has a double edge.
However, we have:
\begin{enumerate}
\item
There is no membrane (minimal or not) whose plabic graph 
has a double edge with vertices of different colors.
Indeed, such a plabic graph would have a self-intersecting strand,
which contradicts  Theorem~\ref{prop:Zn_labelled_plabic}.
\item 
If the plabic graph of a membrane has a double edge with vertices
of the same color, then the membrane is not minimal, because
we can remove two triangles from it (corresponding to the two vertices
of the plabic graph connected by the double edge), and get a membrane
of smaller area with the same boundary loop.
\end{enumerate}
In both cases, we get a contradiction, which proves the first part of
theorem.

The second part follows from the fact that a reduced plabic graph $G^*$
cannot contain a closed strand.  So any strand $S$ of $G^*$ connects two marked
points $s$ (source) and $t$ (target) on the boundary of the disk.
Applying Theorem~\ref{prop:Zn_labelled_plabic} 
(that relates 
the strand labelling $g$ to the face labelling of $f$ of a plabic graph)
to the first and the last edges of the strand $S$,
i.e., the edges of $S$ connected to the marked points $s$ and $t$
on the boundary of the disk,  
we get exactly the needed equality $g(S)= i_s = j_t$.
\end{proof}


In general, a non-minimal membrane
$M=(G,f)$ can give rise to a reduced plabic graph $G^*$.
However, for a special class of loops, minimality of $M$ is 
equivalent to reducedness of $G^*$.

Recall that a sequence $(c_1,\dots,c_m)$ of real numbers is {\it unimodal\/} 
if $c_1\leq \cdots \leq c_k \geq c_{k+1} \geq \cdots \geq c_m$, 
for some $k\in[m]$.
Let us say that a sequence $(c_1,\dots,c_m)$ 
is {\it cyclically unimodal\/} if it 
is unimodal up to a possible cyclic shift, i.e.,
if $(c_r, c_{r+1}, \dots , c_m, c_1,\dots,c_{r-1})$ is unimodal
for some $r\in [m]$.

\begin{definition}
\label{def:unimodal_loop}
We say that a loop $L=(\lambda^{(1)},\dots,\lambda^{(m)})$ is 
{\it unimodal\/} if each of its coordinate sequences
$(\lambda^{(1)}_i,\dots,\lambda^{(m)}_i)$, for $i\in[n]$, 
is a cyclically unimodal sequence.

Equivalently, a loop $L$ (with $\lambda^{(a+1)}-\lambda^{(a)}=e_{i_a}-e_{j_a}$, 
for $a\in\Z/m\Z$) is unimodal if there is 
no $4$-tuple of indices $a<b<c<d$ in $[m]$ 
such that $i_a=j_b=i_c=j_d$ or $j_a=i_b=j_c=i_d$.
\end{definition}


Consider a disk with $m$ marked points on its boundary 
labelled $1,\dots,m$ in the clockwise order.
For $s,t\in[m]$, let $|s,t|$ denote the chord in the disk that connects 
two marked boundary points labelled $s$ and $t$.
We say that two chords $|s,t|$ and $|s',t'|$ are 
{\it noncrossing\/}
if they do not intersect each other in the disk.

\begin{definition}   
\label{def:pi_L}
For a unimodal loop 
$L=(\lambda^{(1)},\dots,\lambda^{(m)})$ 
(with $\lambda^{(a+1)}-\lambda^{(a)}=e_{i_a}-e_{j_a}$ for $a\in\Z/m\Z$),
define $\pi=\pi_L$ as the unique permutation $\pi:[m]\to [m]$
such that 
\begin{itemize}
\item For any $s\in [m]$, we have $i_s=j_{\pi(s)}$. 
\item If $i_s = i_{s'}$, for some $s\ne s'\in [m]$,
then the two chords $|s, \pi(s)|$ and $|s',\pi(s')|$ are
noncrossing.
\end{itemize}

Explicitly, the permutation $\pi=\pi_L$ is given, as follows.
For any $i\in[n]$, let $s_1,\dots,s_p$ be all indices such that
$i_{s_1}=i_{s_2}=\cdots = i_{s_p}=i$; and let 
$t_1,\dots,t_p$ be all indices such that
$j_{t_1}=j_{t_2}=\cdots = j_{t_p}=i$.  Since $L$ is unimodal, we may assume 
that these indices are arranged 
as $s_1<s_2<\cdots<s_p< t_1<t_2<\cdots<t_p$ (up to a cyclic shift).
Then we have 
$\pi(s_1)=t_p$, $\pi(s_2)=t_{p-1}$, \ldots, $\pi(s_p)=t_1$.
\end{definition}


\begin{theorem}
\label{th:mem=plabic_separated}
Fix a unimodal loop $L$.
The map $M=(G,f)\mapsto G^*$ gives a bijection between 
the following two sets:
\begin{itemize}
\item The set of minimal membranes $M$ with boundary loop $L$. 
\item The set of reduced plabic graphs $G^*$ with strand permutation $\pi_L$.
\end{itemize}
All minimal membranes with boundary loop $L$ are connected by 
local moves of membranes of types {\rm (I), (II), (III).}
\end{theorem}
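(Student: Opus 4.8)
The plan is to establish the bijection in two stages, first handling the ``shape'' data (the underlying cactus, equivalently the plabic graph) and then the ``labelling'' data (the face labels $f$, equivalently the strand labels $g$), and finally to deduce connectedness by moves from the corresponding statement for reduced plabic graphs.

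\textbf{Step 1: well-definedness of the map.} First I would check that if $M=(G,f)$ is a minimal membrane with boundary loop $L$, then $G^*$ is a reduced plabic graph with strand permutation $\pi_L$. Reducedness is exactly Theorem~\ref{th:minimal_membr=reduced}. For the strand permutation, by Theorem~\ref{th:minimal_membr=reduced} we recover the strand labelling from $g(S)=i_s=j_t$ whenever $S$ runs from $s$ to $t$. So the strand permutation $\pi$ of $G^*$ satisfies $i_s = j_{\pi(s)}$ for all $s$. To pin down that $\pi=\pi_L$, I need the second defining property of $\pi_L$ in Definition~\ref{def:pi_L}: if $i_s = i_{s'}$ then the chords $|s,\pi(s)|$ and $|s',\pi(s')|$ are noncrossing. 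This should follow from the no-bad-double-crossing condition (Definition~\ref{def:reduced_plabic}(3)) together with unimodality of $L$: two strands with the same label $g(S)=g(S')=i$ can never cross in a reduced plabic graph (a crossing of two equally-labelled strands violates \eqref{eq:gST} at the crossing edge, since the two faces there would differ by $e_i-e_i=0$), so the chords joining their endpoints are noncrossing. Hence $\pi=\pi_L$.

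\textbf{Step 2: surjectivity and injectivity.} Given a reduced plabic graph $G^*$ with strand permutation $\pi_L$, I want to produce a unique minimal membrane $M=(G,f)$ with boundary loop $L$ and $G^{**}=G^*$. By Theorem~\ref{prop:Zn_labelled_plabic}, membranes with underlying plabic graph $G^*$ correspond to a choice of reference-face label in $\Z^n$ together with a strand labelling $g\colon\Strand(G^*)\to\{1,\dots,n\}$ satisfying \eqref{eq:gST}. Since $G^*$ is reduced it has no closed strands, so every strand $S$ goes from some $s$ to $t=\pi_L(s)$; \emph{define} $g(S):=i_s$. I must verify (a) this is consistent, i.e. $i_s=j_{\pi_L(s)}$, which is the first bullet of Definition~\ref{def:pi_L}; (b) \eqref{eq:gST} holds, i.e. intersecting strands get distinct labels --- here is where unimodality and reducedness enter again: if two intersecting strands $S\ne T$ had $g(S)=g(T)=i$, then looking at the cyclic positions of their four endpoints on the boundary and using the explicit description of $\pi_L$ in Definition~\ref{def:pi_L} (the ``antidiagonal'' matching $s_a\mapsto t_{p+1-a}$), one sees the chords would be noncrossing, contradicting that $S,T$ intersect. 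This produces a membrane $M$, unique up to the reference-face label; fixing the boundary condition $f(b_a)=\lambda^{(a)}$ pins down the translate. That $M$ is \emph{minimal} follows because all membranes with boundary loop $L$ producing a reduced $G^*$ have the same area: by Step 1 the map lands in reduced plabic graphs with permutation $\pi_L$, and by Theorem~\ref{th:plabic_move_equiv} any two such are connected by moves (I),(II),(III), which preserve area; since some membrane with this boundary is minimal, all are. Injectivity of $M\mapsto G^*$ on minimal membranes then follows from the uniqueness of $g$ just established (a minimal membrane is determined by $G^*$ and $L$, by Theorem~\ref{th:minimal_membr=reduced}).

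\textbf{Step 3: connectedness by moves.} Finally, since the bijection identifies minimal membranes with boundary loop $L$ with reduced plabic graphs of strand permutation $\pi_L$, and since by Theorem~\ref{th:plabic_move_equiv} any two such reduced plabic graphs are related by a sequence of moves (I),(II),(III), I get that any two minimal membranes with boundary $L$ are related by the corresponding membrane moves. The one subtlety to address is Remark~\ref{rem:move_not_defined}: a plabic-graph move of type (I) or (III) might fail to lift to a membrane move if condition \eqref{eq:fufw} fails; but Remark~\ref{rem:minimalmove} says this degenerate situation never occurs in a minimal membrane, so every plabic move in the connecting sequence does lift. This completes the proof.

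\textbf{Main obstacle.} I expect the crux to be Step 2(b) --- proving that the strand labelling defined via $g(S)=i_s$ satisfies \eqref{eq:gST}, i.e. that intersecting strands receive different labels. This is essentially a combinatorial statement relating the crossing pattern of chords $|s,\pi_L(s)|$ in the disk to the explicit formula for $\pi_L$, and it is where the unimodality hypothesis on $L$ is genuinely used; everything else is bookkeeping with the already-established Theorems~\ref{prop:Zn_labelled_plabic}, \ref{th:minimal_membr=reduced}, and \ref{th:plabic_move_equiv}.
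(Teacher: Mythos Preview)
Your Steps 1 and 3 match the paper's argument. The paper differs from you only in how it handles surjectivity: it does \emph{not} construct the inverse map explicitly. Once Step 1 shows that every minimal membrane yields a reduced plabic graph with strand permutation $\pi_L$ (and the map is injective by Theorem~\ref{th:minimal_membr=reduced}), the paper simply takes some minimal membrane $M_0$, connects $G_0^*$ to the target $G^*$ by plabic moves (Theorem~\ref{th:plabic_move_equiv}), and lifts each move to a membrane move using Remark~\ref{rem:minimalmove}. This yields surjectivity and move-connectedness simultaneously, and never requires checking \eqref{eq:gST} for a candidate labelling.

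Your direct construction is a legitimate alternative, but your justification of Step 2(b) has a gap. The sentence ``the chords would be noncrossing, contradicting that $S,T$ intersect'' is not valid as stated: in a planar graph two strands can intersect (an even number of times) even when their boundary chords are noncrossing. What actually rescues the argument is reducedness together with the specific \emph{nested} configuration forced by unimodality. If $S$ and $T$ intersect at $e_1,\dots,e_{2k}$ (in order along $S$), the no-bad-double-crossing condition forces $T$ to meet them in the reverse order $e_{2k},\dots,e_1$. Concatenating the initial arc of $S$ (from $s_a$ to $e_1$) with the final arc of $T$ (from $e_1$ to $t_{p+1-b}$), and likewise the initial arc of $T$ with the final arc of $S$, produces two disjoint simple arcs in the disk joining $s_a$ to $t_{p+1-b}$ and $s_b$ to $t_{p+1-a}$; but these endpoint pairs interlace on the boundary, contradicting planarity. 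That is the missing step. With it your route works; the paper's move-based route simply sidesteps the issue.
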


\begin{proof}  
Let $M=(G,f)$ be a minimal membrane with boundary loop $L$.
According to 
Theorem~\ref{prop:Zn_labelled_plabic}, two strands $S$ and $T$
of $G^*$ that have the same strand label $g(S)=g(T)$ cannot intersect
each other.
For $i\in[n]$, let $s_1,\dots,s_p, t_1,\dots,t_p$ be sequence 
of sources and targets as in 
Definition~\ref{def:pi_L}. 
  These sources should be connected
with the targets by strands $S_1,\dots,S_p$ 
that all have the same label $g(S_1)=\dots=g(S_p)=i$.
Since the sources and the targets are separated from each other
on the boundary of the disk,
there exists a unique matching between them, given by a collection 
of pairwise noncrossing strands.
Namely, the strand $S_a$ starting at the source $s_a$ should end
at the target $t_{p+1-a}$,
for $a=1,\dots,p$.  Thus the strand permutation of $G^*$ is exactly 
the permutation $\pi_L$ given by Definition~\ref{def:pi_L}.
Since we know that all reduced plabic graphs with the same strand
permutation are connected with each other by the local moves,
and there are corresponding local moves of membranes,
we deduce all the claims of theorem.
\end{proof}


Let us give explicit expressions for the surface area and 
the number of lattice points of a minimal 
membrane using the results of \cite{TP}.

%

\begin{definition} 
\label{def:anti_exceed_align}
\cite[Section~17]{TP} \ 
Let 
$\pi:[m]\to[m]$ be a derangement, i.e., a permutation such that
$\pi(a)\ne a$, for any $a$.   Define:
\begin{itemize}
\item The {\it number of anti-exceedances\/} in $\pi$
$$
k(\pi) := \#\{a\in[m]\mid \pi(a)<a\}.
$$
\item The {\it number of alignments\/} in $\pi$
$$
A(\pi) := \left\{(a,b)\in[m]^2\mid 
\begin{array}{l}
a<b<\pi(b) <\pi(a),   \textrm{ or }
\pi(a)<a<b<\pi(b),  \textrm{ or }
\\
\pi(b) <\pi(a) < a < b,\textrm{ or }
b<\pi(b) <\pi(a) < a
\end{array}
\right \}
$$
\end{itemize}
\end{definition}

Recall that the surface area $\Area(M)$ of a membrane 
$M=(G,f)$ is the number of faces (triangles) $\Delta$ of the cactus $G$,
or, equivalently, the number of vertices of the plabic graph $G^*$.
Also denote the number of {\it lattice points\/} of $M$ by
$$
\mathrm{LatticePoints}(M) :=\# (\left<M\right>\cap \Z^n) 
= \#\{\textrm{vertices of } G\}
= \#\{\textrm{faces of } G^*\}.
$$

\begin{proposition}[{cf.~\cite[Proposition~17.10]{TP}}]
\label{prop:alignments_formula_for_area}

Let $L$ be a unimodal loop, and let $\pi=\pi_L:[m]\to [m]$ 
be the associated permutation; see Definition~\ref{def:pi_L}.
The number of lattice points and 
the surface area of 
a minimal membrane $M$ with boundary loop $L$ are equal to 
$$
\begin{array}{l}
\mathrm{LatticePoints}(M) = k(\pi)(m-k(\pi))-A(\pi) + 1,\\[.05in]
\Area(M) = 2\left(k(\pi)(m-k(\pi))-A(\pi)\right) - m.
\end{array}
$$
\end{proposition}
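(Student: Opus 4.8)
The plan is to deduce Proposition~\ref{prop:alignments_formula_for_area} from the corresponding enumerative results of \cite{TP} by combining Theorem~\ref{th:mem=plabic_separated} with the combinatorics of reduced plabic graphs. First I would fix a unimodal loop $L$ and pass to a minimal membrane $M=(G,f)$ with boundary loop $L$; by Theorem~\ref{th:mem=plabic_separated} the dual $G^*$ is a reduced plabic graph with strand permutation $\pi=\pi_L$, and conversely. The two quantities to be computed, $\mathrm{LatticePoints}(M) = \#\{\text{faces of }G^*\}$ and $\Area(M) = \#\{\text{vertices of }G^*\}$, are intrinsic to $G^*$ and depend only on $\pi$ among reduced plabic graphs with $m$ boundary points, since all such graphs are connected by the moves (I), (II), (III), each of which preserves both face count and vertex count (moves (I) and (III) are contraction/uncontraction, which we must handle with a little care — I would work with the $3$-valent representative throughout, where each move visibly preserves these counts, or alternatively invoke that the face count is manifestly move-invariant and the vertex count for a $3$-valent reduced plabic graph is determined by $m$ and the face count via Euler's formula).

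The key input from \cite{TP} is the count of faces of a reduced plabic graph in terms of its decorated permutation. In \cite[Proposition~17.10]{TP} (and the surrounding discussion in \cite[Section~17]{TP}), the number of faces of a reduced plabic graph with decorated permutation $\pi$ on $[m]$ is shown to equal $k(\pi)(m-k(\pi)) - A(\pi) + 1$, where $k(\pi)$ is the number of anti-exceedances and $A(\pi)$ the number of alignments. Here the relevant $\pi$ is a genuine derangement (no fixed points), because we do not allow boundary leaves, so the decoration plays no role. This immediately gives $\mathrm{LatticePoints}(M) = k(\pi)(m-k(\pi)) - A(\pi) + 1$.

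For the area, I would use the Euler characteristic of the cactus $G$ (a triangulated disk, up to wedging — but for the connected/indecomposable case it is a genuine triangulated disk, and the decomposable case follows by summing over indecomposable pieces, whose contributions to $V-E+F$ add up correctly once one accounts for the identified boundary vertices). For a triangulated disk with $m$ boundary edges, if $t$ denotes the number of triangles (i.e. $\Area(M)$) and $v = \mathrm{LatticePoints}(M)$ the number of vertices, then counting edge-face incidences gives $3t = 2(\text{interior edges}) + m$, and Euler's formula $v - e + (t+1) = 2$ with $e = \text{interior edges} + m$ yields, after elimination, $t = 2v - m - 2$. Substituting $v = k(\pi)(m-k(\pi)) - A(\pi) + 1$ gives $\Area(M) = 2(k(\pi)(m-k(\pi)) - A(\pi)) - m$, as claimed. (Equivalently one can phrase this purely in terms of $G^*$: a connected reduced plabic graph dual to a triangulated disk with $m$ boundary points has $F$ faces and $V$ vertices with $V = 2F - m - 2$, since $G^*$ is $3$-valent.)

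The main obstacle I anticipate is not the Euler-characteristic bookkeeping but rather making airtight the identification of $\pi_L$ (Definition~\ref{def:pi_L}) with the strand permutation of the minimal membrane, together with the subtle point that $\#\{\text{faces of }G^*\}$ for a reduced plabic graph depends \emph{only} on the strand permutation $\pi$ and the number of boundary points $m$. The first point is exactly the content of the last step of the proof of Theorem~\ref{th:mem=plabic_separated}, which I can cite. The second point is implicit in \cite[Section~17]{TP}: it is where one uses that two reduced plabic graphs with the same strand permutation are related by moves (I), (II), (III) (Theorem~\ref{th:plabic_move_equiv}) and that these moves preserve the number of faces — a quick check case by case. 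The decomposable case (when the loop is ``reducible'' and the cactus wedges) should be handled by noting that $\pi_L$ then decomposes as a direct sum over the noncrossing blocks and that both $k$ and $A$ are additive in the appropriate sense over such blocks, with the ``$+1$'' and ``$-m$'' constants matching up under the wedge operation; alternatively, one can simply reduce to the indecomposable case and invoke $\Area$ and $\mathrm{LatticePoints}$ being additive over wedge summands (with shared boundary vertices counted once).
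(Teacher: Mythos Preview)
Your proposal is correct and follows essentially the same route as the paper: identify $\mathrm{LatticePoints}(M)$ with the face count of the reduced plabic graph $G^*$, invoke \cite[Proposition~17.10]{TP} for that count, and then obtain $\Area(M)$ from an Euler-characteristic computation using 3-valency. The only cosmetic difference is that the paper runs Euler's formula on $G^*$ while you run it on the dual cactus $G$; these are equivalent, and your extra remarks on move-invariance and the decomposable case are more cautious than the paper's own terse proof.
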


\begin{proof}
For a membrane $M=(G,f)$, 
the number
$\mathrm{LatticePoints}(M)$ equals the number of faces 
of the plabic graph $G^*$.  
The needed expression for the 
number of faces of a reduced plabic graph was given in
\cite[Proposition~17.10]{TP}. 

$\Area(M)$ equals the number of vertices of the plabic graph $G^*$.
Using Euler's formula 
together with the fact that $G^*$ is a 3-valent graph, we 
deduce
$$
\Area(M) = 2 (\mathrm{LatticePoints}\, M  -1) - m, 
$$
which gives the stated expression for $\Area(M)$.
\end{proof}

In the following sections, we will discuss several special classes
of unimodal loops and membranes that have special properties:
$$
\left\{\hskip-.1in
\begin{array}{c}\textrm{positroid}\\ \textrm{loops}\end{array}
\hskip-.1in \right\} 
\subset
\left\{\hskip-.1in
\begin{array}{c}\textrm{polypositroid}\\ \textrm{loops}\end{array}
\hskip-.1in \right\} 
\subset
\left\{\hskip-.1in
\begin{array}{c}\textrm{$j$-increasing}\\ \textrm{loops}\end{array}
\hskip-.1in \right\} 
\subset
\left\{\hskip-.1in
\begin{array}{c}\textrm{unimodal}\\ \textrm{loops}\end{array}
\hskip-.1in \right\} 
$$

They are described in terms of the associated sequences of roots
$e_{i_1}-e_{j_1},\dots,e_{i_m}-e_{j_m}\in\Z^n$, by the following 
conditions:

\begin{itemize}
\item
Positroid loops: $m=n$; $i_1,\dots,i_n$ is a permutation of $1,\dots,n$;
and $j_a=a$, for $a=1,\dots,n$.
\item
Polypositroid  loops: $j_1\leq \cdots \leq j_m$;
and if $j_a=j_{a+1}$, then $i_{a+1} \in \{i_a-1, i_a - 2 ,\dots, j_a + 1\}$
(a cyclic interval in $[n]$).
\item
$j$-increasing loops: $j_1\leq \cdots \leq j_m$.
\end{itemize}

For example, we'll see that positroid loops (considered up to
affine translations) are in bijection with positroids,
and polypositroid loops are in bijection with 
integer polypositroids.

\section{Positroid membranes}
\label{sec:positr_membr}
We now discuss the distinguished class of loops and membranes related to
positroids.  Assume (in the notations of Section~\ref{sec:Mem_A}), that $L$ is a
loop such that $m=n$ and $\{i_1,\dots,i_n\}=\{j_1,\dots,j_n\}=[n]$ are usual
sets.  Moreover, by permuting the coordinates in $\R^n$, we assume that 
$j_a = a$, for $a=1,\dots,n$.  Such loops $L$  (up to affine translations)
correspond to Grassmann necklaces associated with positroids.

Let $\M\subset {[n]\choose k}$ be a positroid and $\I=(I_1,\dots,I_n)$
be the associated Grassmann necklace;
see Section~\ref{sec:positroids}.  To simplify the presentation, we shall assume that $\M$ has no loops or co-loops and thus $I_{a+1}\ne I_a$ for any $a$.  We have 
$I_{a+1} = (I_a \setminus \{a\})\cup \{i_a\}$, for $a\in\Z/n\Z$,
where $\pi=\pi_\M:a\to i_a$ is a certain permutation 
(derangement) of size $n$.

Consider the loop $L=L_\M:=(e_{I_1},\dots,e_{I_n})$.  
(Recall that $e_I:=\sum_{i\in I} e_i$.) 
It corresponds 
to the sequence of roots $e_{i_1} - e_1, \dots, e_{i_n}-e_n$.

\begin{theorem}
\label{th:min_memb_positroids}
Minimal membranes $M$ with boundary loop $L_\M$ are in bijection 
with reduced plabic graphs with strand permutations $\pi_\M$.
The bijection is given by $M=(G,f)\mapsto G^*$.

Moreover, for any such minimal membrane $M$, its  embedding $\<M\>\subset \R^n$
is contained in the positroid polytope $P_\M:=\conv(e_I\mid I\in \M)$.
\end{theorem}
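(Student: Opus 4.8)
The plan is to reduce everything to a statement about the vertices of the membrane: since $P_\M$ is convex and, by definition, $\langle M\rangle=\langle L\rangle\cup\bigcup_{\Delta}\langle\Delta\rangle$ is the union of the $R$-triangles $\langle\Delta\rangle=\conv(f(u),f(v),f(w))$ attached to the faces $\{u,v,w\}$ of $G$ together with the boundary segments $[\lambda^{(a)},\lambda^{(a+1)}]$, it is enough to check that $f(v)\in P_\M$ for every vertex $v$ of $G$ and that each $\lambda^{(a)}\in P_\M$. The latter is immediate: $\lambda^{(a)}=e_{I_a}$, and by Theorem~\ref{thm:Oh} the set $I_a$ is a base of $\M$, so $e_{I_a}\in P_\M$. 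In fact I will prove the stronger statement that $f(v)=e_{J_v}$ for some base $J_v\in\M$, from which $f(v)\in P_\M$ follows.

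For this I would use the description of minimal membranes from the first statement of the theorem, or directly Theorems~\ref{th:minimal_membr=reduced} and~\ref{th:mem=plabic_separated}: the positroid loop $L_\M$ is unimodal with $\pi_{L_\M}=\pi_\M$ (its indices $j_a=a$ are distinct, so the conditions of Definitions~\ref{def:unimodal_loop} and~\ref{def:pi_L} are automatic), so minimality of $M$ forces $G^*$ to be a reduced plabic graph with strand permutation $\pi_\M$, and by Theorem~\ref{th:minimal_membr=reduced} the face labelling $f$ is recovered from $G^*$ and $L_\M$ through the strand labelling $g(S)=i_s=j_t=t$, where $S$ runs from boundary point $s$ to boundary point $t=\pi_\M(s)=i_s$. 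On the combinatorial side I would invoke Postnikov's face labelling of a reduced plabic graph associated to $\M$ (\cite[Section~13]{TP}): it attaches to each face $F$ of $G^*$ a base $\mathcal{Q}(F)\in\M$, the boundary faces receive the Grassmann necklace sets $\mathcal{Q}(F_{b_a})=I_a$, and across each edge of $G^*$ the label changes by deleting and adjoining a single element according to the labels $g(S),g(T)$ of the two intersecting strands at that edge. Thus the $0$-$1$ vectors $v\mapsto e_{\mathcal{Q}(F_v)}$ obey exactly the local difference relations that Theorem~\ref{prop:Zn_labelled_plabic} imposes on $v\mapsto f(v)$.

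Now the faces of $G^*$ are the vertices of the cactus $G$, two faces being adjacent precisely when the corresponding vertices are joined by an edge of $G$; since $G$ is connected, every vertex can be reached from a boundary vertex along such edges. The two labellings $v\mapsto f(v)$ and $v\mapsto e_{\mathcal{Q}(F_v)}$ therefore agree everywhere, being equal on the boundary vertices ($f(b_a)=e_{I_a}=e_{\mathcal{Q}(F_{b_a})}$) and satisfying the same edge-by-edge relations. Hence $f(v)=e_{\mathcal{Q}(F_v)}$ with $\mathcal{Q}(F_v)\in\M$, so $f(v)\in P_\M$, and combined with the first paragraph this yields $\langle M\rangle\subset P_\M$. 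If $G$ is decomposable, so $G^*$ is disconnected, the same argument is run on each connected component, whose boundary faces again carry the relevant Grassmann necklace entries.

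The step that I expect to require the most care is reconciling the conventions in the middle paragraph: one must check that the rule for reconstructing $f$ across an edge in Theorem~\ref{prop:Zn_labelled_plabic} (subtract $e_{g(T)}$, add $e_{g(S)}$ on the appropriate side), the identification $g(S)=i_s=j_t$ of Theorem~\ref{th:minimal_membr=reduced}, and Postnikov's labelling $\mathcal{Q}$ all match up under one consistent choice of strand orientation and of source/target indexing. Once this bookkeeping is settled, the only nontrivial external input is the classical fact that the faces of a reduced plabic graph of $\M$ are labelled by bases of $\M$, with boundary labels the Grassmann necklace; the remainder is the elementary convexity argument of the first paragraph.
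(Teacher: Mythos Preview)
Your proposal is correct and follows essentially the same route as the paper: unimodality of $L_\M$ gives the bijection via Theorem~\ref{th:mem=plabic_separated}, and then one identifies $f(v)$ with $e_{I(F_v)}$ for the face labels $I(F_v)$ of the reduced plabic graph and invokes the result that these face labels are bases of $\M$. The paper attributes the latter to \cite{OPS} rather than \cite{TP}, and is less explicit than you are about the convexity reduction and the edge-by-edge propagation, but the substance is the same.
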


\begin{proof}
Clearly, the loop $L_\M$ is unimodal.  The first part of the above 
theorem follows from Theorem~\ref{th:mem=plabic_separated}.


For such a minimal positroid membrane $M=(G,f)$,
the vectors $f(v)\in \Z^n$ are related to the 
{\it face labels\/} $I(F)\in {[n]\choose k}$ 
of the corresponding plabic graph $G^*$, which were 
studied in \cite{OPS}, as follows:  
$f(v) = e_{I(F_{v})}$, for any face $F_v$ of $G^*$.
Indeed, in the case of positroid membranes,
the relationship between $f$ and the 
strand labelling $g$ (given in Theorem~\ref{prop:Zn_labelled_plabic})
specializes to the definition of face labels of reduced plabic graphs. 

The second claim of the above theorem now follows from the result
proved in \cite{OPS}
that any face
label $I(F)$ of a reduced plabic graph $G^*$ associated with a positroid $\M$
belongs to the positroid: $I(F)\in \M$.
\end{proof}

\section{Polypositroid membranes}
\label{sec:polypositroid_membr}

We now discuss the class of loops and membranes
related to integer polypositoroids,
which includes positroid loops and membranes from Section~\ref{sec:positr_membr}.
Recall that in Section~\ref{sec:para_poly}, we gave bijections between
polypositroids $P\subset \R^n$, Coxeter necklaces
$\v=(v^{(1)},\dots,v^{(n)})$, and balanced digraphs.
If $P$ is an integer polypositroid, then the $v^{(i)}$ are integer vectors,
and the edge weights $m_{ij}$ of the balanced diagraph are nonnegative integers.

Define the {\it perimeter\/}\footnote{$\Perim\, M$ equals the 
usual Euclidien length of the perimeter of $\<M\>$ divided by $\sqrt{2}$.} 
of a membrane $M$ with boundary loop 
$L = (\lambda^{(1)},\dots,\lambda^{(m)})$
by $\Perim\, M = m$.

\begin{definition} 
\label{def:min_P_membr}
Let $P\subset\R^n$ be an integer polypositroid, and let 
$\v=(v^{(1)},\dots,v^{(n)})$ be its Coxeter necklace. 
We say that a membrane $M$ is a {\it minimal $P$-membrane\/} if 
\begin{enumerate}
\item The boundary loop $L = (\lambda^{(1)},\dots,\lambda^{(m)})$ of $M$ 
contains the points $v^{(1)}$, \dots, $v^{(n)}$ in this particular cyclic order.
\item The membrane $M$ has minimal possible perimeter $\Perim\,M$ 
among all membranes satisfying condition (1).
\item The membrane $M$ has minimal possible surface area $\Area\,M$ among 
all membranes satisfying conditions (1) and (2).
\end{enumerate}
\end{definition}

\begin{remark}
In the above definition, it is important to first minimize  
the perimeter of $M$, and only after that minimize the its surface area.
If we skip condition (2), we can always find a membrane $M$
satisfying (1), whose surface area $\Area\,M$ equals zero.
\end{remark}

Define the {\it standard root order\/} as  the total order $<$ on all roots 
$e_i-e_j$, $i,j\in[n]$, $i\ne j$: 
\begin{equation}\label{eq:standardroot}
\begin{array}{l}
(e_n-e_{1}) < (e_{n-1}-e_1)<(e_{n-2}-e_1)< \cdots < (e_2 - e_1) < \\
<(e_{1}-e_{2}) < (e_{n}-e_2)<(e_{n-1}-e_2)<\cdots < (e_3 - e_2) < \\
<(e_{2}-e_{3})< (e_{1}-e_3)  < (e_{n}-e_3)<\cdots < (e_4 - e_3) < \\
\qquad \qquad \qquad \qquad
\cdots \cdots \cdots
\cdots
\\
<(e_{n-1}-e_n)< (e_{n-2}-e_n) < (e_{n-3}-e_n)< \cdots < (e_1-e_n).
\end{array}
\end{equation}
In other words, we have $e_i-e_j < e_{i'}-e_{j'}$ whenever $j<j'$, 
or ($j=j'$ and $i'\in\{i-1,i-2,\dots, j+1\}$), 
where elements of the interval are considered modulo $n$.  
\begin{remark} The total order \eqref{eq:standardroot} differs from the one in Example~\ref{ex:Aroot}: instead it arises from the simple system $\{e_n-e_1, e_{n-1}-e_n, \ldots, e_2-e_3\}$ and Coxeter element $ c= s_0 s_{n-1} s_{n-2} \cdots s_2$ where $s_0:=(1n)$ is the reflection associated to the root $e_n-e_1$.
\end{remark}

\begin{definition}
A {\it polypositroid loop\/} is a loop $L = (\lambda^{(1)},\dots,\lambda^{(m)})$,
with $\lambda^{(a)}\in\Z^n$, such that 
$\lambda^{(a+1)}-\lambda^{(a)}$,
$a=1,\dots,m$, is a 
weakly increasing sequence of roots in the standard root order.
\end{definition}

The following two lemmas easily follow from the definitions.

\begin{lemma}
Any polypositroid loop is unimodal. 
\end{lemma}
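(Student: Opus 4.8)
The plan is to show that a polypositroid loop $L = (\lambda^{(1)},\dots,\lambda^{(m)})$ is unimodal, i.e., that each coordinate sequence $(\lambda^{(1)}_i,\dots,\lambda^{(m)}_i)$ is cyclically unimodal. The key point is that the increments $\lambda^{(a+1)}-\lambda^{(a)} = e_{i_a}-e_{j_a}$ are weakly increasing in the standard root order \eqref{eq:standardroot}. Fix a coordinate $i\in[n]$. An increment $e_{i_a}-e_{j_a}$ changes the $i$-th coordinate of the running point by $+1$ if $i_a = i$, by $-1$ if $j_a = i$, and by $0$ otherwise. So I need to understand, as $a$ runs over $\Z/m\Z$, the positions where $i_a = i$ (the ``$+1$ steps'') and where $j_a = i$ (the ``$-1$ steps''), and show that along the cyclic order these two blocks each occur contiguously, with all the $+1$ steps preceding all the $-1$ steps in some cyclic rotation.

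First I would recall the explicit recipe for the standard root order: $e_i - e_j < e_{i'}-e_{j'}$ whenever $j < j'$, or $j = j'$ and $i'\in\{i-1,i-2,\dots,j+1\}$ (indices modulo $n$). So as $a$ increases, the value $j_a$ is weakly increasing (as an element of $[n]$ in its usual linear order, reading the cyclic sequence from an appropriate starting point); and within a block of constant $j_a = j$, the value $i_a$ decreases through the cyclic interval $[j+1,\,j-1]$ — that is, $i_a$ runs down from some starting value towards $j+1$. The roots with $j_a = i$ form one contiguous block $B_i^-$ of the cyclic sequence (all the $-1$ steps for coordinate $i$), since $j_a$ is cyclically weakly increasing. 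For the $+1$ steps: $i_a = i$ occurs, within the block where $j_a = j$ (for each $j\ne i$), at exactly one position, namely when $i_a$ has decreased down to $i$ (if $i$ lies in the cyclic interval $[j+1,j-1]$, which it does for every $j\ne i$). I would check that as $j$ ranges over its cyclic order and we track where $i_a = i$ inside each $j$-block, these positions are themselves consecutive in the cyclic sequence — this is the combinatorial heart of the argument, and I expect it to follow from carefully unwinding the two-level ordering (outer order on $j_a$, inner decreasing order on $i_a$).

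Granting that the set $A_i^+ = \{a : i_a = i\}$ and the set $A_i^- = \{a : j_a = i\}$ are each cyclic intervals, it remains to argue they can be arranged so that $A_i^+$ comes entirely before $A_i^-$ (or vice versa) in a single cyclic rotation, which gives cyclic unimodality: the running $i$-th coordinate increases by $1$ through $A_i^+$, stays constant outside $A_i^\pm$, and decreases by $1$ through $A_i^-$, and since $L$ is a loop the total change is $0$ so $|A_i^+| = |A_i^-|$. To see the relative position, I would note that $A_i^-$ is the block where $j_a = i$, which sits between the blocks $j_a = i-1$ and $j_a = i+1$ in the cyclic order on $j$; and every $+1$ step $i_a = i$ lies in a block $j_a = j$ with $j \ne i$, hence outside $A_i^-$. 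Since $A_i^+$ is a cyclic interval disjoint from the cyclic interval $A_i^-$, the two intervals are ``consecutive'' on the circle in the required sense, and we may cyclically shift so that $A_i^+$ precedes $A_i^-$. Equivalently, one can invoke the alternative characterization in Definition~\ref{def:unimodal_loop}: it suffices to rule out indices $a<b<c<d$ with $i_a = j_b = i_c = j_d$ or with $j_a = i_b = j_c = i_d$; but such a configuration would force the root order to be non-monotone (e.g. $j_b = j_d = i$ with $b < c < d$ but $i_c = i$ forces a root with second index $\ne i$ between two roots with second index $i$, contradicting weak monotonicity of $j_a$), giving the contradiction directly.

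The main obstacle I anticipate is the bookkeeping with the cyclic (as opposed to linear) nature of everything: both the root order \eqref{eq:standardroot} and the loop are only defined up to cyclic rotation, and indices in $[n]$ are taken modulo $n$, so I must be careful to fix a basepoint before speaking of "weakly increasing" and then check the argument is rotation-independent. Concretely, the cleanest route is probably to prove the contrapositive via the $4$-tuple criterion of Definition~\ref{def:unimodal_loop}, since that criterion is already stated cyclically and reduces the whole claim to: a weakly increasing (in the standard root order) cyclic sequence of roots cannot contain a subsequence realizing the forbidden $4$-tuple pattern. That reduces everything to a short case check on the two-level structure of \eqref{eq:standardroot}, which I would carry out directly.
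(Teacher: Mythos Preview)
The paper does not spell out a proof here, stating only that the lemma follows easily from the definitions. Your second route, via the $4$-tuple criterion of Definition~\ref{def:unimodal_loop}, is correct and is presumably the intended argument: if $a<b<c<d$ with $i_a=j_b=i_c=j_d$, then $j_b=j_d$ while $j_c\ne i_c=j_b$ (since $e_{i_c}-e_{j_c}$ must be a root), contradicting $j_b\le j_c\le j_d$; the pattern $j_a=i_b=j_c=i_d$ is handled symmetrically using $a<b<c$.

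Your first route, however, rests on a claim that is false: the set $A_i^+=\{a:i_a=i\}$ need not be a cyclic interval for a polypositroid loop. For $n=5$, the root sequence
\[
e_5-e_1,\; e_3-e_1,\; e_1-e_2,\; e_4-e_2,\; e_2-e_3,\; e_1-e_3,\; e_3-e_4,\; e_2-e_5
\]
is weakly increasing in the standard root order and sums to zero, hence defines a polypositroid loop of length $8$; here $A_3^+=\{2,7\}$, which is not a cyclic interval of $\Z/8\Z$. The coordinate-$3$ sequence is nonetheless cyclically unimodal, because $A_3^-=\{5,6\}$ sits entirely between the two elements of $A_3^+$ on one arc of the circle. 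Cyclic unimodality only requires that $A_i^+$ and $A_i^-$ do not \emph{interleave}, and that is exactly the $4$-tuple criterion. So approach~1 was aiming for a statement stronger than needed and stronger than true; drop it and keep approach~2.
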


Let $P\subset\R^n$ be an integer polypositroid with Coxeter necklace
$\v=(v^{(1)},\dots,v^{(n)})$,
and let $m_{ij}$ be the edge multiplicities of the associated balanced
digraph.
Define the loop $L_P=(\lambda^{(1)},\dots,\lambda^{(m)})$,
such that $\lambda^{(1)} = v^{(1)}$, and the sequence of roots 
$\lambda^{(a+1)}-\lambda^{(a)}$, $a=1,\dots,m$, is 
the weakly increasing sequence of roots (in the standard root order)
such that each root $e_j - e_i$ is repeated $m_{ij}$ times.
Note that $m$ equals the total number of edges of the balanced bigraph.

\begin{lemma}  
{\rm (1)}  The map $P\to L_P$ is a bijection between integer polypositroids
$P\subset \R^n$ and polypositroid loops in $\R^n$.

\smallskip
\noindent
{\rm (2)} 
The Coxeter necklace $\v$ is a subsequence of the associated
polypositroid loop $L_P$.
Explictly, $v^{(i+1)}=\lambda^{(1+d_1+d_2+\cdots + d_i)}$, for $i=0,\dots,n$; 
here $d_i=-(v^{(i+1)}-v^{(i)})_i =\sum_{j \neq i} m_{ij}$,
is the outdegree (or the indegree) of vertex $i$
of the associated balanced digraph.
\end{lemma}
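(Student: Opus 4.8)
The plan is to derive part (1) from the chain of bijections in Theorem~\ref{thm:main} together with Lemma~\ref{lem:balancedseqgraph}, and to obtain part (2) as a direct cyclic-index computation with the formula $v^{(i+1)}_j=m_{jj}+m_{j+1,j}+\cdots+m_{ij}$ used in the proof of Lemma~\ref{lem:balancedseqgraph}.

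For part (1), I would first reduce to a statement about balanced digraphs: by Theorem~\ref{thm:main} and Lemma~\ref{lem:balancedseqgraph}, integer polypositroids $P\subset\R^n$ correspond bijectively to balanced digraphs $G=(m_{ij})$ with integer weights and $m_{ij}\geq 0$ for $i\neq j$ (the weights $m_{ii}$ may be negative), and $G$ determines its Coxeter necklace $\v=\v(G)$, hence the vertex $v^{(1)}$. The combinatorial observation that makes everything work is that the standard root order \eqref{eq:standardroot} refines the partition of $R$ into the $n$ blocks $B_i=\{e_j-e_i\mid j\neq i\}$, with $B_1<B_2<\cdots<B_n$; a root step $e_j-e_i$ lies in $B_i$, and the edges out of vertex $i$ in $G$ contribute precisely $d_i=\sum_{j\neq i}m_{ij}$ such steps. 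Going forward, $L_P$ is then well-defined (start at $v^{(1)}$, then list $m_{ij}$ copies of $e_j-e_i$ in standard order), and it closes into a loop exactly because of balancedness: $\sum_{i\neq j}m_{ij}(e_j-e_i)=\sum_k(\sum_i m_{ik}-\sum_j m_{kj})e_k=0$; its steps are weakly increasing by construction, so $L_P$ is a polypositroid loop. Going backward, given a polypositroid loop $L$ I would set $m_{ij}$ ($i\neq j$) to be the number of $a$ with $\lambda^{(a+1)}-\lambda^{(a)}=e_j-e_i$ — these satisfy $\sum_{i\neq j}m_{ij}(e_j-e_i)=0$ since $L$ is a loop — and recover the Coxeter necklace by $v^{(i+1)}:=\lambda^{(1+d_1+\cdots+d_i)}$. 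Since $L$ is weakly increasing its steps appear grouped by block in the order $B_1,\dots,B_n$, so $v^{(i+1)}-v^{(i)}$ is the sum of the block-$i$ steps $\sum_{j\neq i}m_{ij}(e_j-e_i)$, which is nonnegative in every coordinate except the $i$-th; thus $\v$ satisfies \eqref{eq:neckdef}, and $P:=Q(\v)$ is an integer polypositroid. The two constructions are manifestly inverse: the balanced digraph $G(\v)$ recovers the counted weights $m_{ij}$ off the diagonal, and since $\v$ is built with $v^{(1)}=\lambda^{(1)}$ we get $L_P=L$; conversely $\v$ is recovered from $L_P$ by the same formula. This gives the bijection of (1).

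For part (2), the computation just used is exactly the assertion. For an integer polypositroid $P$ with Coxeter necklace $\v$ and balanced digraph $G=G(\v)$, formula \eqref{eq:Gv} gives $(v^{(i+1)}-v^{(i)})_j=m_{ij}$ for $j\neq i$, and, using $v^{(i)}_i=\sum_\ell m_{\ell i}$ and the equality of in- and out-degree, $(v^{(i+1)}-v^{(i)})_i=-\sum_{j\neq i}m_{ji}=-\sum_{j\neq i}m_{ij}=-d_i$; hence $v^{(i+1)}-v^{(i)}=\sum_{j\neq i}m_{ij}(e_j-e_i)$, which is the sum of the $d_i$ block-$i$ steps of $L_P$. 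Summing over $i'=1,\dots,i$ and using $\lambda^{(1)}=v^{(1)}$ gives $v^{(i+1)}=\lambda^{(1+d_1+\cdots+d_i)}$; the endpoint cases $i=0$ (empty sum) and $i=n$ (where $d_1+\cdots+d_n=m$ and the index is taken mod $m$, so $\lambda^{(1+m)}=\lambda^{(1)}=v^{(1)}$) come out automatically, and $d_i=-(v^{(i+1)}-v^{(i)})_i=\sum_{j\neq i}m_{ij}$ is the degree identity already invoked.

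The only place I expect to have to be genuinely careful is the bookkeeping: matching the convention ``the root $e_j-e_i$ occurs $m_{ij}$ times'' (edge $i\to j$, out-edge of $i$) against the block order of \eqref{eq:standardroot}, and verifying that ``weakly increasing in the standard root order'' really does force the steps of a polypositroid loop to appear as the concatenation of blocks $B_1,\dots,B_n$, so that the partial sums of steps land exactly on the Coxeter-necklace points. Beyond that the argument is routine manipulation with the formula $v^{(i+1)}_j=m_{jj}+\cdots+m_{ij}$ and the balancedness relation \eqref{eq:bal}.
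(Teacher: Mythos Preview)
Your proposal is correct and is essentially a careful unpacking of what the paper means when it says this lemma ``easily follows from the definitions.'' You use exactly the intended ingredients---the bijection of Theorem~\ref{thm:main}, the formula $v^{(i+1)}_j=m_{jj}+\cdots+m_{ij}$ from Lemma~\ref{lem:balancedseqgraph}, and the block structure of the standard root order~\eqref{eq:standardroot}---and your bookkeeping (including the computation $(v^{(i+1)}-v^{(i)})_i=-\sum_{j\neq i}m_{ji}=-d_i$ via balancedness) is right.
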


Recall that, for a unimodal loop $L$, we defined 
the permutation $\pi_L:[m]\to[m]$,
see Definition~\ref{def:pi_L}.

\begin{theorem}
Let $P\subset\R^n$ be an integer polypositroid,
and let $L=L_P$ be the corresponding polypositroid loop.
The following two sets coincide:
\begin{itemize}
\item
the set of minimal $P$-membranes,
\item 
the set of minimal membranes with boundary loop $L$.
\end{itemize}
Each of these sets is in bijection (via $M=(G,f)\mapsto G^*$)
with 
\begin{itemize}
\item 
the set of reduced plabic graphs with strand permutation 
$\pi_L$.
\end{itemize}


\noindent
All minimal $P$-membranes are connected 
with each other a sequences of local moves of types
{\rm (I), (II), (III).}
\end{theorem}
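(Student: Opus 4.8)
The plan is to reduce the entire statement to Theorem~\ref{th:mem=plabic_separated}. Since $L=L_P$ is a polypositroid loop it is unimodal (by the lemma above), so Theorem~\ref{th:mem=plabic_separated} applies directly to $L_P$: it yields the bijection $M=(G,f)\mapsto G^*$ with reduced plabic graphs having strand permutation $\pi_{L_P}$, and the connectivity of all such membranes under the moves {\rm (I), (II), (III)}. Hence everything follows once one shows that the set of minimal $P$-membranes coincides with the set of minimal membranes with boundary loop $L_P$. By the lemma describing $L_P$, the Coxeter necklace $\v$ is a subsequence of $L_P$ with the $v^{(i)}$ occurring in the required cyclic order, so $L_P$ satisfies condition~(1) of Definition~\ref{def:min_P_membr}.

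Next I would pin down which loops arise as boundaries of membranes satisfying (1) and (2). Given any loop $L'$ satisfying (1), break it into $n$ arcs, the $i$-th running from $v^{(i)}$ to $v^{(i+1)}$; since the $i$-th coordinate must decrease by $d_i=\sum_{j\ne i}m_{ij}$ along this arc and each root step changes it by at most $1$, the $i$-th arc has at least $d_i$ steps, so $\Perim\,L'\ge\sum_i d_i=\Perim\,L_P$. Thus $L_P$ has minimal perimeter among (1)-loops, and $L'$ realizes this minimum precisely when along the $i$-th arc the coordinate $i$ drops by exactly $1$ at each of its $d_i$ steps — i.e. when each step of that arc is a root $e_j-e_i$, the value $j$ occurring with multiplicity $m_{ij}$. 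Let $\mathcal L_0$ denote this set of minimal-perimeter (1)-loops; then membranes satisfying (1) and (2) are exactly those with boundary loop in $\mathcal L_0$, and $L_P\in\mathcal L_0$.

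I would then observe that every $L'\in\mathcal L_0$ is unimodal: coordinate $j$ is weakly increasing along each arc other than the $j$-th and strictly decreasing along the $j$-th, so its cyclic coordinate sequence has a single descending run. Hence Proposition~\ref{prop:alignments_formula_for_area} applies throughout $\mathcal L_0$, and the minimal membrane area over $L'$ equals $2\bigl(k(\pi_{L'})(m-k(\pi_{L'}))-A(\pi_{L'})\bigr)-m$. A local check, using the recipe of Theorem~\ref{prop:Zn_labelled_plabic} and Definition~\ref{def:pi_L}, shows that transposing two consecutive steps within an arc precomposes $\pi_{L'}$ with the transposition of those two positions; because the two displaced values are never equal to either of the two positions, this operation preserves the number of anti-exceedances. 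Since any two loops in $\mathcal L_0$ are connected by such moves, $k(\pi_{L'})=:k$ is constant on $\mathcal L_0$, and minimizing the membrane area over $\mathcal L_0$ is equivalent to maximizing the number of alignments $A(\pi_{L'})$.

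The crux — and the main obstacle — is then to show that $A(\pi_{L'})$ is maximized over $\mathcal L_0$ \emph{uniquely} at $L'=L_P$. The plan is to prove that replacing an out-of-order consecutive pair of steps within an arc by the in-order pair (in the standard root order) strictly increases $A(\pi_{L'})$; since $L_P$ is the unique member of $\mathcal L_0$ all of whose arcs are sorted in the standard root order, repeatedly applying such sorting moves carries any $L'\ne L_P$ to $L_P$ with $A$ strictly increasing at each step, so $A(\pi_{L'})<A(\pi_{L_P})$. (The geometric reformulation, possibly easier to execute, is that the minimal membrane area over a loop $L$ is at least $\int_V|w_L|\,d\mu$, where $w_L$ is the winding number of the closed curve $\langle L\rangle$ and $\mu$ normalizes unit $R$-triangles to measure $1$; the standard-order arcs make $w_{L_P}\ge 0$ everywhere, so the minimal membrane over $L_P$ is coherently oriented and meets this bound with equality, whereas de-sorting a consecutive pair adds $\pm\mathbf 1_\Pi$ on a nontrivial parallelogram $\Pi$ and strictly increases the total mass.) Granting this, a minimal $P$-membrane has perimeter $m$, boundary loop in $\mathcal L_0$, and minimal area there, hence boundary loop $L_P$ and area-minimal over $L_P$; conversely any minimal membrane over $L_P$ satisfies (1) and (2) (as $L_P\in\mathcal L_0$) and (3) (as $L_P$ attains the minimal area over $\mathcal L_0$). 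This gives the set equality, and the bijection with reduced plabic graphs of strand permutation $\pi_L$ together with the connectivity under local moves is then the content of Theorem~\ref{th:mem=plabic_separated} applied to $L_P$.
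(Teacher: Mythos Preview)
Your approach is essentially the same as the paper's: both characterize the minimal-perimeter loops $\mathcal L_0$ as those with the correct multiset of roots on each arc, observe that all such loops are unimodal so that Proposition~\ref{prop:alignments_formula_for_area} applies, note that $k(\pi_{L'})$ is constant on $\mathcal L_0$, and reduce condition~(3) to the claim that $A(\pi_{L'})$ is maximized over $\mathcal L_0$ uniquely at $L'=L_P$, then invoke Theorem~\ref{th:mem=plabic_separated}. You are somewhat more explicit than the paper at two points: you verify that swapping two consecutive steps within an arc precomposes $\pi_{L'}$ by the adjacent transposition and fixes $k$, and you outline a strategy (sorting moves strictly increase $A$, or the winding-number lower bound) for the alignment-maximization step, whereas the paper simply asserts ``This maximum is achieved if and only if $L=L_P$.''
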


\begin{proof}
Let us show that the boundary loop $L$ of any minimal $P$-membrane
is exactly the polypositroid loop $L_P$.
Indeed, by Definition~\ref{def:min_P_membr}, for each $i=1,\dots,n$, 
the loop $L$ contains the points $v^{(i)}$ and $v^{(i+1)}$ conected 
by a shortest possible piecewise-linear curve with line segments given
some roots.  Thus this portion of the loop $L$
between the points $v^{(i)}$ and $v^{(i+1)}$ should contain
exactly $m_{ij}$ copies of the root $e_j-e_i$, for all $j\ne i$,
cf. formulas~(\ref{eq:neckdef}) and (\ref{eq:Gv}) in
Section~\ref{sec:para_poly}.
Note that any way to arrange these roots 
(in each portion of $L$ between $v^{(i)}$ and $v^{(i+1)}$) would produce 
a unimodal loop.  So the surface area of the membrane $M$ is given 
by Proposition~\ref{prop:alignments_formula_for_area} in terms 
of the number of alignments and the number of anti-exceedances of
the permutation $\pi$ associated with $L$ 
(see Definitions~\ref{def:pi_L}
and \ref{def:anti_exceed_align}).
The number of anti-exceedances of $\pi$ equals $k(\pi)=\sum_{i>j} m_{ij}$.
In order to minimize the surface area of $M$, we need to maximize
the number of alignments $A(\pi)$ of $\pi$.
This maximum is achieved if and only if $L=L_P$.
The theorem now follow from Theorem~\ref{th:mem=plabic_separated}.
\end{proof}

For a balanced digraph on the vertex set $[n]$ with edge
multiplicities $m_{ij}$, $i,j\in[n]$ define the number $m$
of edges, the number $k$ of anti-exceedances,
and the number $A$ of alignments, as follows:
$$
\begin{array}{l}
\displaystyle
m :=\sum_{i\neq j} m_{ij},
\qquad 
k := \sum_{i>j} m_{ij},\\[.1in]
\displaystyle
A := \sum_{
\scriptsize \begin{array}{l}
i<i'<j'<j, \textrm{ or}\\
i'<j'<j<i, \textrm{ or}\\
j'<j<i<i', \textrm{ or}\\
j<i<i'<j'
\end{array}
} m_{ij}\, m_{i'j'}
+ \sum_{i,\, j<j'} (m_{ij} m_{ij'} + m_{ji}m_{j'i})
+ \sum_{i\neq j} {m_{ij} \choose 2}.
\end{array}
$$
(Notice that we regard a pair of edges of the digraph with the same
initial points and/or the same end-points as an alignment.)
Proposition~\ref{prop:alignments_formula_for_area} implies the 
following formulae.

\begin{corollary}
The number of lattice points and the surface area of 
any minimal $P$-membrane $M$ are equal to 
$$
\begin{array}{l}
\mathrm{LatticePoints}(M)= k(m-k)-A + 1,\\[.05in]
\Area(M) = 2\left(k(m-k)-A\right) - m,
\end{array}
$$
where $m$ is the number of edges, $k$ is the number 
of anti-exceedances, and $A$ is the number of alignments 
of the balanced digraph associated with $P$.
\end{corollary}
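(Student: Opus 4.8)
The plan is to reduce the statement to Proposition~\ref{prop:alignments_formula_for_area} by identifying the three quantities $m$, $k(\pi_L)$, $A(\pi_L)$ appearing there — the perimeter, the number of anti-exceedances, and the number of alignments of the permutation $\pi_L$ attached to the polypositroid loop $L_P$ — with the combinatorial numbers defined from the balanced digraph of $P$. First I would invoke the theorem just proved: the set of minimal $P$-membranes equals the set of minimal membranes with boundary loop $L=L_P$, and the boundary loop of such a membrane is precisely $L_P$. Since $L_P$ has exactly one step for each directed edge of the balanced digraph counted with multiplicity, its perimeter is $\Perim(M)=\sum_{i\neq j}m_{ij}$, which is the ``$m$'' of Proposition~\ref{prop:alignments_formula_for_area}. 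As $L_P$ is unimodal (by the lemma above), that proposition applies with $\pi=\pi_L$, and it then suffices to show $k(\pi_L)=k$ and $A(\pi_L)=A$.

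Next I would compute $k(\pi_L)$. The key observation is that, by \eqref{eq:standardroot}, the roots in the standard root order fall into $n$ consecutive blocks indexed by the subtracted coordinate, with blocks ordered by index; hence a step of $L_P$ coming from the digraph edge $i\to j$ (root $e_j-e_i$) lies in block $i$. For a fixed vertex $i$, all outgoing edges $i\to j''$ lie in block $i$, whereas an incoming edge $j'\to i$ lies in block $j'$ with $j'\neq i$ (there are no loops in $L_P$). Since, by Definition~\ref{def:pi_L}, $\pi_L$ matches the positions of incoming edges at $i$ to positions of outgoing edges at $i$, an incoming edge $j'\to i$ produces an anti-exceedance precisely when $j'>i$: then its block follows block $i$, so its position exceeds every position $\pi_L$ could send it to. Summing over all incoming edges and all vertices gives $k(\pi_L)=\sum_{j'>i}m_{j'i}=\sum_{i>j}m_{ij}=k$; this is the balanced-digraph analogue of the anti-exceedance count of a decorated permutation in \cite{TP}, compatible with formulas \eqref{eq:neckdef} and \eqref{eq:Gv}.

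The main work, and the main obstacle, is the computation $A(\pi_L)=A$. Here I would again use that the relative order of two positions of $L_P$ is controlled, via \eqref{eq:standardroot}, by the tails of the corresponding edges (the block) and, within a block, by the cyclic order of the heads, together with the explicit block-reversing formula for $\pi_L$ in Definition~\ref{def:pi_L}. The plan is to run through the four cyclic patterns of $\{a,b,\pi(a),\pi(b)\}$ in Definition~\ref{def:anti_exceed_align}: a pair of ``generic'' edges $i\to j$, $i'\to j'$ with distinct tails, distinct heads, and no shared vertex contributes an alignment exactly under the four quadruple conditions $i<i'<j'<j$, $i'<j'<j<i$, $j'<j<i<i'$, $j<i<i'<j'$; a pair of edges sharing a tail or sharing a head is always an alignment, producing the term $\sum_{i,\,j<j'}(m_{ij}m_{ij'}+m_{ji}m_{j'i})$; and $m_{ij}$ parallel copies of a single root contribute $\binom{m_{ij}}{2}$ alignments, giving $\sum_{i\neq j}\binom{m_{ij}}{2}$. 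The delicate points I anticipate are bookkeeping with the cyclic rather than linear standard root order, correctly enumerating the degenerate alignments (shared endpoints and parallel edges), and keeping the edge-orientation conventions consistent; I expect this case analysis to be the only nontrivial ingredient.

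Once $k(\pi_L)=k$, $A(\pi_L)=A$, and $\Perim(M)=m$ are established, substituting into the two formulas of Proposition~\ref{prop:alignments_formula_for_area} yields the displayed expressions for $\mathrm{LatticePoints}(M)$ and $\Area(M)$ immediately.
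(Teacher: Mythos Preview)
Your proposal is correct and follows exactly the approach the paper takes: the corollary is deduced from Proposition~\ref{prop:alignments_formula_for_area} by identifying the perimeter, $k(\pi_L)$, and $A(\pi_L)$ of the polypositroid loop $L_P$ with the balanced-digraph quantities $m$, $k$, and $A$ defined just before the statement. The paper treats these identifications as immediate from the definitions (and already notes $k(\pi)=\sum_{i>j}m_{ij}$ in the proof of the preceding theorem), whereas you spell out the block structure of the standard root order and sketch the alignment case analysis; your extra detail is sound and the only delicate point, as you correctly anticipate, is the bookkeeping for $A(\pi_L)=A$.
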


\section{Positroid lifts}
\label{sec:postr_lifts}

For a membrane $M=(G,f)$ and a vertex $v$ of $G$, let $f(v)_i$ denote
the $i$-th coordinate of the vector $f(v)\in\Z^n$.

\begin{lemma}
\label{lem:min_max_boundary}
Let $M=(G,f)$ be a minimal membrane, and let $i\in[n]$. 
The minimal/maximal value 
of the $i$-th coordinate $f(v)_i$ over all vertices $v$ of $G$ is achieved on some boundary 
vertex $b_j$ of $G$.
\end{lemma}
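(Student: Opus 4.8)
The plan is to argue by contradiction using the minimality hypothesis together with the structure of membranes. Suppose that for some coordinate $i$, the minimal value $c := \min_v f(v)_i$ is achieved \emph{only} at interior vertices of $G$ (the argument for the maximal value is symmetric, replacing $f$ by $-f$, which swaps the roles of black and white triangles). Let $W \subseteq \Vert$ be the set of vertices $v$ with $f(v)_i = c$; by assumption $W$ contains no boundary vertex, so $W$ lies in the interior of the cactus $G$. The key local observation is that for any edge $\{u,v\}$ of $G$ we have $f(u)-f(v) = e_p - e_q$, so the $i$-th coordinate changes by $+1$, $-1$, or $0$ along an edge, and it can never drop below $c$; hence every edge incident to a vertex of $W$ either stays within $W$ or increases the $i$-th coordinate by $1$. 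First I would analyze the induced subcomplex on $W$: because every face of $G$ is a triangle with edge differences $e_p-e_q$, and a triangle cannot have all three vertices at the same value of a fixed coordinate (the three pairwise differences $e_p-e_q$, $e_q-e_r$, $e_r-e_p$ cannot all have zero $i$-th coordinate unless it degenerates), each triangular face contains at most two vertices of $W$.

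Next I would exploit this to perform a modification of the membrane that strictly decreases $\Area(M)$ while keeping the boundary loop $L$ fixed, contradicting minimality. The idea is that the vertices of $W$, together with the faces having exactly two vertices in $W$, form a collection of ``ridges'' of the surface that can be pushed off: concretely, for each vertex $v\in W$ one can attempt to replace $f(v)$ by a value with strictly larger $i$-th coordinate, or else excise the triangles clustered around $v$ and re-triangulate the resulting hole using fewer triangles. The cleanest way to make this rigorous is probably to observe that near a vertex $v\in W$ the link of $v$ in the cactus is a path (since $v$ is interior and $G$ is a cactus, or a union of such paths if $v$ is a cut vertex), and the triangles around $v$ project, in the $i$-th coordinate, to a configuration that can be ``flattened'': one applies a sequence of octahedron moves (type (II)) and tetrahedron moves (type (I), (III)) — which preserve $\Area$ — to bring $v$ into a position where a triangle becomes degenerate, and then that degenerate triangle is removed (cf.\ Remark~\ref{rem:minimalmove}), strictly decreasing the area. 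Since a minimal membrane admits all such moves by Remark~\ref{rem:minimalmove}, this yields the contradiction.

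I expect the main obstacle to be handling cut vertices of the cactus $G$ and making the ``flattening/excision'' step precise: a priori $W$ could be a complicated interior subcomplex, and one must ensure that some vertex of $W$ is amenable to a move that genuinely reduces area rather than merely shuffling triangles around. One robust way around this is to pick $v\in W$ extremal in a secondary sense — say, minimizing some other coordinate, or lying on the ``outermost'' layer of $W$ relative to the rest of the membrane — so that on one side of $v$ all incident triangles have their third vertex strictly above level $c$; then the wedge of triangles at $v$ behaves like a convex corner and a single octahedron/tetrahedron move together with a degenerate-triangle removal applies. An alternative, and perhaps more transparent, route avoiding all move bookkeeping: directly build a competitor membrane $M' = (G', f')$ by deleting from $G$ the open star of a well-chosen $v\in W$ and filling the boundary of the resulting polygonal hole (whose vertices all lie at level $\ge c$, with only $v$ at level $c$) with a strictly smaller membrane — the existence of \emph{some} filling membrane is guaranteed since the boundary of the hole is a loop, and a crude fan triangulation already uses fewer triangles than the star of $v$ plus one. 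Either way, the contradiction with minimality of $\Area(M)$ completes the proof; I would present the direct competitor-membrane argument as the primary one, since it sidesteps the delicate analysis of which moves are defined.
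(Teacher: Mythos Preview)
Your approach has a genuine gap. The claim that ``a triangle cannot have all three vertices at the same value of a fixed coordinate'' is false: a triangular face of the cactus has edge vectors $e_p-e_q$, $e_q-e_r$, $e_r-e_p$ for three \emph{distinct} indices $p,q,r\in[n]$, and whenever $i\notin\{p,q,r\}$ all three vertices of that triangle share the same $i$-th coordinate. So the level set $W$ can contain entire triangles, and your structural analysis of $W$ as a $1$-dimensional ``ridge'' breaks down. This in turn undermines both proposed surgery arguments: the ``flattening via moves until a triangle degenerates'' step has no reason to terminate (indeed, moves preserve both minimality and area, so you cannot force a degeneration starting from a minimal membrane), and the ``delete the star of $v$ and fan-triangulate the hole'' step ignores the crucial constraint that every edge of the new triangulation must be a root $e_a-e_b$ --- a generic fan triangulation of the link polygon will not satisfy this, and there is no a~priori bound showing that \emph{some} root-edge filling uses fewer triangles.

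The paper's proof takes a completely different route that avoids any local surgery. Since $M$ is minimal, Theorem~\ref{th:minimal_membr=reduced} says the dual plabic graph $G^*$ is reduced. Pass to $G^*$: the faces of $G^*$ on which the $i$-th coordinate attains its minimum form a region $R$, and by hypothesis $R$ avoids all boundary faces, so $\partial R$ is a closed curve $C$ of edges of $G^*$. By the face/strand correspondence of Theorem~\ref{prop:Zn_labelled_plabic}, every edge of $C$ (oriented clockwise around $R$) is traversed in that direction by a strand with label $g(S)=i$; but intersecting strands must have distinct labels, so all edges of $C$ lie on a \emph{single} strand, which is therefore closed or self-intersecting --- contradicting reducedness (Definition~\ref{def:reduced_plabic}). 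This argument is short and global; the heavy lifting was already done in showing minimal $\Rightarrow$ reduced.
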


\begin{proof} 
Suppose that this is not true, and the minimal 
value of the $i$-th coordinate is achieved on some internal vertex $v$ of $G$,
and it is strictly less than $f(b_j)_i$ for all boundary vertices $b_j$.
Let $F_v$ be the face of the plabic graph $G^*$ that corresponds to
the vertex $v$ of $G$. The $i$-th coordinate might take the same minimal 
value on some other vertices of $G$ that correspond to other faces of $G^*$
adjacent to $F_v$.  Let $R$ be the maximal connected region 
formed by such faces of the plabic $G^*$.  By our assumption, the region $R$
does not include any boundary regions of $G^*$, thus the region $R$ 
is bounded by a closed curve $C$ formed by some edges of $G^*$.  
Assume that $C$ is oriented clockwise.
For any other face of $G^*$ adjacent to $R$, 
the $i$-th coordinate is strictly greater.
This mean that, for any edge $a\to b$ of $G^*$ on the curve $C$
(oriented in same the clockwise direction),  the strand $S$ that passes 
through the edge $a\to b$ has label $g(S)=i$; see
Theorem~\ref{prop:Zn_labelled_plabic}.
Since any two intersecting strands cannot have the same label, we conclude
that all edges on the closed curve $C$ belong to the same strand $S$.
This means that the strand $S$ is either self-intersecting or closed.
Since the membrane $M$ is minimal, the plabic graph $G^*$ is reduced,
see Theorem~\ref{th:minimal_membr=reduced}. 
However, by Definition~\ref{def:reduced_plabic} a reduced plabic graph cannot contain self-intersecting 
or closed strands.
We obtain a contradiction. 
The proof of the claim about the maximal value of $f(v)_i$ is analogous.
\end{proof}

Let $\d=(d_1,\dots,d_n)$ be a nonnegative integer vector. 
We say that a loop $L=(\lambda^{(1)},\dots,\lambda^{(m)})\in (\Z^n)^m$ 
is {\it $\d$-boxed\/} if $\min_{j\in [m]} \lambda^{(j)}_i = 0$
and $\max_{j\in [m]} \lambda^{(j)}_i = d_i$, for all $i\in[n]$.
In other words, the curve $\<L\>$ lies in the box
$[0,d_1]\times \cdots \times[0,d_n]\subset \R^n$ and have points on each facet
of the box.  



According to Lemma~\ref{lem:min_max_boundary}, for any minimal membrane
$M$ with a $\d$-boxed boundary loop $L$, we have 
$\<M\> \subset [0,d_1]\times \cdots \times[0,d_n]$.
Moreover, both $\<L\>$ and $\<M\>$ belong to the intersection
the box $[0,d_1]\times \cdots \times[0,d_n]$ with some affine hyperplane
$x_1+\cdots + x_n = k$.

\begin{definition}
Let $d=d_1+\cdots+d_n$.
For an integer vector $\lambda=(\lambda_1,\dots,\lambda_n)
\in [0,d_1]\times \cdots \times[0,d_n]$,
let  $\lift(\lambda)$ be the 01-vector in $\Z^d$ given by 
$$
\lift(\lambda) := (0^{d_1-\lambda_1}, 1^{\lambda_1}, 
0^{d_2-\lambda_2}, 1^{\lambda_2}, \dots,
0^{d_n-\lambda_n}, 1^{\lambda_n}) \in \{0,1\}^d,
$$
where $a^r$ denotes $a$ repeated $r$ times.

Let $L=(\lambda^{(1)},\dots,\lambda^{(m)})\in (\Z^n)^m$ 
be a $\d$-boxed loop.
The {\it lift\/} of $L$ is the loop
$\lift(L):=(\lift(\lambda^{(1)}),\dots,\lift(\lambda^{(m)}))
\in (\Z^d)^m$.

For a minimal membrane $M=(G,f)$ with boundary loop $L$,
the {\it lift\/} of $M$ is the membrane $\lift(M) := (G,\lift(f))$,
where $\lift(f): v\mapsto \lift(f(v))\in \Z^d$, for a vertex $v$ of $G$. 
\end{definition}

Let $\proj:\R^d\to\R^n$ be the map\footnote{The map $\proj$ is 
a projection if all $d_i$'s are positive.}
given by 
$$
\proj:(x_1,\dots,x_d)\mapsto 
(x_1+ x_2 + \cdots + x_{d_1}, 
x_{d_1+1} + \cdots + x_{d_1 + d_2},  \cdots,
x_{d_1+\cdots +  d_{n-1} + 1} + \cdots 
+ x_{d}).
$$
For a membrane $\tilde M = (G,\tilde f)$, where $f:\Vert\to\Z^d$,
define $\proj(\tilde M) := (G, \proj(\tilde f))$.


\begin{proposition}
\label{prop:L-to-positroid}
Let $L=(\lambda^{(1)},\dots,\lambda^{(m)})\in (\Z^n)^m$ be
a $\d$-boxed unimodal loop.  Then $m=d:=d_1+\cdots + d_n$.
The sequence of roots
$e_{i_a}-e_{j_a} = \lift(\lambda^{(a+1)})-\lift(\lambda^{(a)})\in\Z^m$,
$a=1,\dots,m$, associated with the loop $\lift(L)$ satisfies 
the condition: both sequences $i_1,\dots,i_m$
and $j_1,\dots,j_m$ are permutations of $1,\dots,m$.

The following three sets are in bijection with each other:
\begin{itemize}
\item
Minimal membranes $M$ with boundary loop $L$.
\item
Minimal membranes $\tilde M$ with boundary loop $\lift(L)$.
\item
Reduced plabic graphs $G^*$ with strand permutation $\pi_L$,
see Definition~\ref{def:pi_L}.
\end{itemize}
Explicitly, the bijections are given by the maps (which form
a commutative diagram):
$M\mapsto \tilde M = \lift(M)$, $\tilde M\mapsto M = \proj(\tilde M)$,
$M=(G,f) \mapsto G^*$, $\tilde M=(G,\tilde f) \mapsto G^*$. 
\end{proposition}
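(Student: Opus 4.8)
The plan is to reduce everything to the already-established bijection between minimal membranes with a fixed unimodal boundary loop and reduced plabic graphs with the corresponding strand permutation (Theorem~\ref{th:mem=plabic_separated}). The key observation to set up first is the equality $m = d$. Since $L$ is $\d$-boxed, each coordinate sequence $(\lambda^{(1)}_i,\ldots,\lambda^{(m)}_i)$ is cyclically unimodal with minimum $0$ and maximum $d_i$; walking around the loop, the $i$-th coordinate goes up by $1$ exactly $d_i$ times and down by $1$ exactly $d_i$ times. Summing over $i$, the number of roots $e_{i_a}-e_{j_a}$ with $i_a = i$ equals $d_i$ (up-steps in coordinate $i$), and the number with $j_a = i$ also equals $d_i$ (down-steps). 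Hence $m = \sum_i d_i = d$. Moreover, after lifting, the root $\lift(\lambda^{(a+1)})-\lift(\lambda^{(a)})$ is obtained from $e_{i_a}-e_{j_a}$ by replacing the up-step in block $i_a$ and the down-step in block $j_a$ by single-coordinate changes in $\Z^d$; because within each block $i$ the $d_i$ up-steps occur at distinct heights (by cyclic unimodality, the coordinate passes through each of the values $1,\ldots,d_i$ on the way up and each of $d_i-1,\ldots,0$ on the way down), the indices $i_1,\ldots,i_m$ used in $\lift(L)$ are all distinct, hence a permutation of $[d]=[m]$; similarly for $j_1,\ldots,j_m$. This is exactly the statement that $\lift(L)$ is of "positroid loop" type.

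Next I would check that $\lift$ and $\proj$ are mutually inverse at the level of loops and membranes. For a single weight $\lambda \in [0,d_1]\times\cdots\times[0,d_n]$ we have $\proj(\lift(\lambda)) = \lambda$ directly from the definitions, so $\proj(\lift(L)) = L$. Conversely, the image of $\lift$ consists precisely of those $01$-vectors in $\Z^d$ that are "sorted within each block", i.e.\ each block of $d_i$ consecutive coordinates is of the form $0^{d_i-\lambda_i}1^{\lambda_i}$; and the key point is that if $M=(G,f)$ is a \emph{minimal} membrane with boundary loop $L$, then $\lift(M)=(G,\lift\circ f)$ is a legitimate membrane: one must verify that for every edge $\{u,v\}$ of $G$, $\lift(f(u)) - \lift(f(v))$ is again a root $e_i - e_j$. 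Since $f(u)-f(v) = e_p - e_q$ for some $p \neq q$, the vectors $f(u),f(v)$ agree in all coordinates except $p$ (where they differ by $1$) and $q$; the blocks other than $p,q$ are unchanged under lifting, and in blocks $p$ and $q$ the lift changes by moving a single $1$. So $\lift(f(u))-\lift(f(v))$ has exactly one $+1$ and one $-1$ entry — it is a root — provided the two lifted vectors are distinct, which holds because $f(u)\neq f(v)$. Thus $\lift(M)$ is a membrane with boundary loop $\lift(L)$, and it has the same underlying cactus $G$, hence $\Area(\lift(M)) = \Area(M)$. Minimality is preserved in both directions because $\proj$ and $\lift$ set up a bijection between \emph{all} membranes with boundary $L$ whose face labels are block-sorted and all membranes with boundary $\lift(L)$ — and a subtle but important point is that \emph{every} minimal membrane with boundary $\lift(L)$ automatically has block-sorted face labels. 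This last fact follows from Lemma~\ref{lem:min_max_boundary}: within a block, the relevant coordinates of any face label are squeezed between the values on the boundary loop $\lift(L)$, which are themselves block-sorted, forcing block-sortedness throughout; alternatively it follows because $\lift(L)$ is already of positroid type and Theorem~\ref{th:min_memb_positroids} pins down the face labels as $e_{I(F)}$ for face labels $I(F)$ of a reduced plabic graph, which are compatible with the projection.

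Finally I would assemble the three-way bijection. By Theorem~\ref{th:mem=plabic_separated} applied to the unimodal loop $L$, minimal membranes with boundary $L$ are in bijection with reduced plabic graphs with strand permutation $\pi_L$ via $M=(G,f)\mapsto G^*$. By the same theorem applied to the unimodal (indeed positroid-type) loop $\lift(L)$, minimal membranes with boundary $\lift(L)$ are in bijection with reduced plabic graphs with strand permutation $\pi_{\lift(L)}$ via $\tilde M = (G,\tilde f)\mapsto G^*$. It remains to identify $\pi_L = \pi_{\lift(L)}$, which I would get from the explicit description of $\pi_L$ in Definition~\ref{def:pi_L}: the sources and targets of a given label $i\in[n]$ for $L$ correspond, after lifting, to the sources and targets of the $d_i$ labels in block $i$ for $\lift(L)$, and the noncrossing matching condition within block $i$ matches the noncrossing condition in Definition~\ref{def:pi_L}; so the two permutations of $[m]$ coincide. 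Composing, $M \leftrightarrow G^* \leftrightarrow \tilde M$, and one checks directly that $\tilde M = \lift(M)$ and $M = \proj(\tilde M)$, giving the commutative diagram. The connectivity statement (all minimal membranes joined by moves of types (I), (II), (III)) is inherited from Theorem~\ref{th:mem=plabic_separated}. The main obstacle I anticipate is the verification that minimal membranes with boundary $\lift(L)$ necessarily have block-sorted face labels — i.e.\ that nothing outside the image of $\lift$ arises — since without this the map $\proj$ on minimal membranes might fail to be well-defined or injective; I expect Lemma~\ref{lem:min_max_boundary} together with the positroid-polytope containment of Theorem~\ref{th:min_memb_positroids} to be exactly what resolves it.
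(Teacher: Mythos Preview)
Your approach is correct and is essentially the paper's: the core is exactly that both $L$ and $\lift(L)$ are unimodal with $\pi_L = \pi_{\lift(L)}$, so Theorem~\ref{th:mem=plabic_separated} applied twice gives the three-way bijection through $G^*$. The paper's proof is far terser than yours (it just says ``easily follows from the definitions'' for the first part and invokes $\pi_L=\pi_{\lift(L)}$ plus Theorem~\ref{th:mem=plabic_separated} for the rest), and your expanded verification that $\lift(M)$ is a genuine membrane fills in a detail the paper leaves implicit.

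One simplification: your anticipated obstacle about block-sorted face labels of minimal membranes with boundary $\lift(L)$ can be sidestepped entirely. You have already shown that $\lift$ is a well-defined injection from minimal membranes with boundary $L$ to minimal membranes with boundary $\lift(L)$ (same cactus, same area, and the minimal area only depends on $\pi_L=\pi_{\lift(L)}$ by Proposition~\ref{prop:alignments_formula_for_area}). Since both sets are in bijection with reduced plabic graphs with strand permutation $\pi_L$, they have the same cardinality, so $\lift$ is a bijection; its inverse must be $\proj$ because $\proj\circ\lift=\id$. This is how the paper implicitly argues, and it avoids having to prove block-sortedness directly. That said, your route via Lemma~\ref{lem:min_max_boundary} also works.
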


The loop $\lift(L)$ is obtained from a positroid loop in $\Z^m$ by a 
permutation of coordinates in $\Z^m$.   This is exactly the positroid
whose permutation is equal to $\pi_L$.
Thus the lifted membranes $\tilde M$ are permuted positroid membranes.

\begin{proof}
The first claim easily follows from the definitions.
The claim about bijections between the sets
of membranes $M$, $\tilde M$, and reduced plabic graphs $G^*$ 
follows from Theorem~\ref{th:mem=plabic_separated}.
Indeed, both loops $L$ and $\lift(L)$ correspond to the 
same permutation $\pi_L = \pi_{\lift(L)}$,
see Definition~\ref{def:pi_L}.
\end{proof}

Let us specialize this construction to polypositroid loops.
As in the previous section,
let $P\subset \R^n$ be an integer polypositroid. 
Assume that $P$ belongs to the positive orthant $\R_{\geq 0}^n$
and has points on each coordinate plane in $\R^n$.
Let $m_{ij}$ be the edge multiplicities 
of the balanced digraph associated with $P$, 
let $m :=\sum_{i,j} m_{ij}$ be the total number of edges
of the digraph,
let $k=\sum_{i>j} m_{ij}$ be its number of anti-exceedances,
and let $d_i := \sum_{j} m_{ij}$, $i\in[n]$, 
be the outdegrees (or the indegrees) of the balanced digraph.
%
%
Clearly, we have $m=d_1+\cdots + d_n$.
Let $\d=(d_1,\dots,d_n)$.



\begin{corollary}
For a polypositroid $P$ as above,
the polypositroid loop $L=L_P$ is $\d$-boxed. 
Its lift $\lift(L)$ is a positroid loop in $\Z^m$.

The maps $\lift:M\mapsto\tilde M$ and $\proj:\tilde M\mapsto M$  
give a bijection between minimal membranes $M$ with boundary loop $L$ 
minimal positroid membranes $\tilde M$ with boundary loop $\lift(L)$.
\end{corollary}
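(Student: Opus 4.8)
The plan is to deduce the corollary by combining the immediately preceding theorem with Proposition~\ref{prop:L-to-positroid}. That theorem already identifies the minimal $P$-membranes with the minimal membranes having boundary loop $L=L_P$ (all joined by local moves), so the only genuinely new ingredient needed is that $L_P$ is a $\d$-boxed loop; once this is in hand, the lift/projection statement follows formally from Proposition~\ref{prop:L-to-positroid} and the remark after it.

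First I would verify the $\d$-boxedness of $L_P$, i.e.\ that $\min_a \lambda^{(a)}_i=0$ and $\max_a \lambda^{(a)}_i=d_i$ for every $i$. By the lemma exhibiting the Coxeter necklace $\v=(v^{(1)},\dots,v^{(n)})$ as the subsequence $v^{(i+1)}=\lambda^{(1+d_1+\cdots+d_i)}$ of $L_P$, the arc of $L_P$ from $v^{(i)}$ to $v^{(i+1)}$ is exactly the block of $d_i$ roots of $L_P$ with subtracted index $i$, namely the roots $e_j-e_i$ each repeated $m_{ij}$ times; traversing this block decreases the $i$-th coordinate by $d_i$, while in every other block (subtracted index $b\ne i$) the only roots affecting coordinate $i$ are the $e_i-e_b$, which only weakly increase it. Hence the $i$-th coordinate sequence of $L_P$ is cyclically unimodal with valley at $v^{(i+1)}$ and peak at $v^{(i)}$, so $\min_a\lambda^{(a)}_i=v^{(i+1)}_i$ and $\max_a\lambda^{(a)}_i=v^{(i)}_i$. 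For the polypositroid $P=Q(\v)$ the parametrization gives $f_{[r,s]}=v^{(r)}_{[r,s]}$, whence $\max_{x\in P}x_i=f_{[i,i]}=v^{(i)}_i$ and $\min_{x\in P}x_i=k-f_{[i+1,i-1]}=v^{(i+1)}_i$; and $v^{(i)}_i-v^{(i+1)}_i=d_i$ by \eqref{eq:Gv}. Since $P\subset\R^n_{\ge 0}$ and $P$ meets each hyperplane $\{x_i=0\}$, we conclude $v^{(i+1)}_i=0$ and $v^{(i)}_i=d_i$, which is precisely the assertion that $L_P$ is $\d$-boxed; in particular $m=d_1+\cdots+d_n$. (If some $d_i=0$ the box degenerates in that direction, causing no difficulty.)

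Since a polypositroid loop is unimodal by the earlier lemma, Proposition~\ref{prop:L-to-positroid} now applies to $L=L_P$: the maps $\lift$ and $\proj$ are mutually inverse bijections between minimal membranes with boundary loop $L_P$ and minimal membranes with boundary loop $\lift(L_P)$, and both are in bijection with the reduced plabic graphs of strand permutation $\pi_{L_P}$. As noted after that proposition, $\lift(L_P)$ is, up to a permutation of the coordinates of $\Z^m$, the positroid loop of the positroid with decorated permutation $\pi_{L_P}$, so the minimal membranes with boundary loop $\lift(L_P)$ are exactly the minimal positroid membranes for that positroid; combined with the preceding theorem's identification of the minimal $P$-membranes with the minimal membranes of boundary loop $L_P$, this yields the asserted bijection $\lift\colon M\mapsto\tilde M$, $\proj\colon\tilde M\mapsto M$. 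I expect the $\d$-boxedness of $L_P$ to be the only nontrivial step — it is where the hypothesis that $P$ lies in the positive orthant and touches every coordinate hyperplane is used — while the remaining steps are formal consequences of results already proved.
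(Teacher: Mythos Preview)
Your proof is correct and follows the paper's approach exactly: the paper merely says the first claims are ``straightforward from the definitions'' and the bijection is a special case of Proposition~\ref{prop:L-to-positroid}, and you have supplied the details for $\d$-boxedness that the paper omits. The only point to sharpen is that for a polypositroid loop the lift is an \emph{honest} positroid loop, not just one up to a coordinate permutation as in the general remark you quote: your own computation $v^{(i)}_i=d_i$, $v^{(i+1)}_i=0$ shows that at the $a$-th step of $\lift(L_P)$ the coordinate flipping from $1$ to $0$ is precisely position $a$ in $\Z^m$, so $J_a=a$ as required.
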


The first claims in this statement are straightforward from the definitions,
the last claim is a special case of Proposition~\ref{prop:L-to-positroid}.

Let $\M\subset {[m]\choose k}$ be the positroid associated 
with the positroid loop $\lift(L)$, and let 
$P_\M:=\conv(e_I\mid I \in \M\}\subset \R^m$ be the positroid polytope
of the positroid $\M$.

\begin{lemma}
Under the map $\proj:\R^m\to\R^n$,  
we have $\proj(P_\M)\subseteq P$.
\end{lemma}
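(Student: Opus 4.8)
The plan is to compare the two polytopes facet-by-facet using their alcoved descriptions. Since $P$ is an integer polypositroid in $\R^n$ sitting in the positive orthant, it is an alcoved polytope, cut out by the inequalities $x_{[r,s]} \leq f_{[r,s]}$ over cyclic intervals $[r,s]\subset [n]$, together with $x_1+\cdots+x_n = k$. The positroid polytope $P_\M \subset \R^m$ is likewise alcoved (being a positroid polytope, by Theorem~\ref{thm:positroid}), cut out by inequalities of the form $y_{[a,b]} \leq g_{[a,b]}$ over cyclic intervals $[a,b] \subset [m]$. The key observation is that the coordinate blocks of the map $\proj$ are aligned with the block structure coming from $\d = (d_1,\dots,d_n)$: the map $\proj$ sends the cyclic interval $[a_r, b_s]$ in $[m]$ — where $a_r = d_1+\cdots+d_{r-1}+1$ is the start of the $r$-th block and $b_s = d_1+\cdots+d_s$ is the end of the $s$-th block — to the linear functional $x_{[r,s]}$ on $\R^n$. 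First I would check that $\proj$ maps $H_k \subset \R^m$ into $H_k \subset \R^n$, which is immediate since $\proj$ sums up all coordinates.

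Next I would show that for each cyclic interval $[r,s]\subset [n]$, the inequality $x_{[r,s]}\leq f_{[r,s]}$ holds on $\proj(P_\M)$. Given $y\in P_\M$, write $x = \proj(y)$, so $x_{[r,s]} = y_{[a_r,b_s]}$. Since $[a_r,b_s]$ is a cyclic interval in $[m]$, we have $y_{[a_r,b_s]}\leq g_{[a_r,b_s]}$, so it suffices to prove $g_{[a_r,b_s]} = f_{[r,s]}$. Here I would use the explicit relationship between the polypositroid loop $L = L_P$ and the positroid loop $\lift(L)$, together with Lemma~\ref{lem:envelope}: the value $g_{[a_r,b_s]}$ of the support function of $P_\M$ on $h_{[a_r,b_s]}$ equals $\max_j \lift(\lambda^{(j)})_{[a_r,b_s]}$, where $L = L_P = (\lambda^{(1)},\dots,\lambda^{(m)})$ and the maximum is over the vertices of $L$. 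But $\lift(\lambda)_{[a_r,b_s]} = \lambda_{[r,s]}$ by the definition of $\lift$: the $r$-th block of $\lift(\lambda)$ contributes exactly $\lambda_r$ ones, and more generally summing the blocks $r$ through $s$ of $\lift(\lambda)$ gives $\lambda_r + \cdots + \lambda_s = \lambda_{[r,s]}$ (with the appropriate cyclic interpretation when $r > s$). Hence $g_{[a_r,b_s]} = \max_j \lambda^{(j)}_{[r,s]} = f_{[r,s]}$, the last equality because $L_P$ contains the Coxeter necklace $\v$ of $P$ as a subsequence (so the maximum of $x_{[r,s]}$ over $\<L_P\>$ equals its maximum over $P = Q(\v)$, which is $f_{[r,s]}$ by Lemma~\ref{lem:envelope}).

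Finally, since every facet-defining inequality of $P$ (all of the form $x_{[r,s]}\leq f_{[r,s]}$, $[r,s]$ a cyclic interval, plus the hyperplane equation) is satisfied on $\proj(P_\M)$, we conclude $\proj(P_\M) \subseteq P$. The main obstacle I anticipate is bookkeeping the cyclic-interval correspondence carefully: a cyclic interval $[r,s]$ in $[n]$ with $r > s$ wraps around, and one must verify that $\proj$ pulls it back to a genuine cyclic interval $[a_r, b_s]$ in $[m]$ and that the identity $\lift(\lambda)_{[a_r,b_s]} = \lambda_{[r,s]}$ still holds in the wraparound case; this is where the alignment of the $\d$-block decomposition with the positions $1,2,\dots,m$ (in that cyclic order) is used essentially. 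Once that combinatorial correspondence is set up, the rest is a direct comparison of support function values.
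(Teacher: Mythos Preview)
Your proposal is correct and follows essentially the same approach as the paper: both compare the alcoved facet descriptions of $P_\M$ and $P$, observing that the inequalities for $P_\M$ over ``block'' cyclic intervals $[a_r,b_s]\subset[m]$ pull back under $\proj$ to the defining inequalities $x_{[r,s]}\leq f_{[r,s]}$ of $P$. The paper's proof is a two-sentence sketch (``one can check from the definitions''); you have simply filled in that check, including the identification $g_{[a_r,b_s]}=f_{[r,s]}$ via the lift map and the necklace vertices.
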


\begin{proof} 
The positroid polytope $P_\M$, being an alcoved polytope in $\R^m$,
is given by inequalities of the form $x_i+x_{i+1} + \cdots + x_j \leq c_{ij}$
for all cyclic intervals $[i,j]$ in $[m]$.
Similarly, the polytope $P$, being an alcoved polytope in $\R^n$,
is given by inequalities of the form $y_a+y_{a+1}+\cdots+ y_b \leq d_{ab}$
for all cyclic intervals $[a,b]$ in $[n]$.
One can check from the definitions that the inequalities 
for $P_\M$ corresponding to cyclic intervals $[i,j]$ that consist of unions 
of blocks $\{1,\dots,d_1\}$, $\{d_1+1,\dots,d_1+d_2\}$, etc., project 
exactly to the inequalities defining the polytope $P$. 
\end{proof}

Theorem~\ref{th:min_memb_positroids} now implies the following claim.

\begin{corollary}
\label{cor:MinP}
For any minimal $P$-membrane $M$, 
the embedding $\<M\>\subset \R^n$ belongs to the polypositroid:
$\<M\>\subset P$.
\end{corollary}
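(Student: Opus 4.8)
\textbf{Plan for the proof of Corollary~\ref{cor:MinP}.}
The strategy is to reduce the statement to the positroid case, which is already handled by Theorem~\ref{th:min_memb_positroids}, via the lift/projection machinery developed just above. Let $M = (G,f)$ be a minimal $P$-membrane. By the preceding corollary (the one asserting $\lift:M\mapsto \tilde M$, $\proj:\tilde M \mapsto M$ give a bijection between minimal $P$-membranes and minimal positroid membranes with boundary loop $\lift(L)$ where $L = L_P$), the membrane $\lift(M) = (G,\lift(f))$ is a minimal membrane with boundary loop $\lift(L)$, and $\lift(L)$ is (up to a permutation of the coordinates of $\R^m$) the positroid loop $L_\M$ associated to the positroid $\M \subset \binom{[m]}{k}$ whose decorated permutation is $\pi_L$. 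Moreover, by construction $M = \proj(\lift(M))$, so that $\langle M\rangle = \proj(\langle \lift(M)\rangle)$.

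First I would invoke Theorem~\ref{th:min_memb_positroids}: since $\lift(M)$ is a minimal positroid membrane for the positroid $\M$ (after the fixed coordinate permutation, which does not affect containment statements once applied consistently), its embedding satisfies $\langle \lift(M)\rangle \subset P_\M = \conv(e_I \mid I \in \M) \subset \R^m$. Next I would apply the map $\proj:\R^m \to \R^n$ to both sides: since $\proj$ is linear (hence carries convex sets to convex sets and respects inclusions), we get
$$
\langle M \rangle = \proj(\langle \lift(M)\rangle) \subseteq \proj(P_\M).
$$
Finally, by the Lemma immediately preceding this corollary, $\proj(P_\M) \subseteq P$. Chaining these two inclusions gives $\langle M\rangle \subseteq P$, which is the assertion.

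The only genuine bookkeeping point — and the step I expect to require the most care — is keeping track of the coordinate permutation of $\R^m$ that intervenes when one says ``$\lift(L)$ is a positroid loop''. The loop $\lift(L)$ has associated root sequence $e_{i_a}-e_{j_a}$ with $j_1,\dots,j_m$ a permutation (not the identity) of $[m]$, whereas the positroid convention of Section~\ref{sec:positr_membr} normalizes $j_a = a$. Permuting the coordinates of $\R^m$ to achieve this normalization is an isometry of the ambient space that carries $\lift(M)$ to an honest positroid membrane and carries $P_\M$ to the positroid polytope in standard form; one must simply check that the projection map $\proj$ is defined using the \emph{same} block structure $\{1,\dots,d_1\},\{d_1+1,\dots,d_1+d_2\},\dots$ that is compatible with this permutation, which is exactly how $\lift$ and $\proj$ were set up. Once this compatibility is noted, no further computation is needed: everything else is the formal composition of inclusions above, and the result follows.
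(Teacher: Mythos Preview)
Your proposal is correct and follows exactly the paper's approach: the paper's one-line justification ``Theorem~\ref{th:min_memb_positroids} now implies the following claim'' unpacks precisely into the chain $\langle M\rangle = \proj(\langle\lift(M)\rangle) \subseteq \proj(P_\M) \subseteq P$ that you spell out, using the preceding corollary, Theorem~\ref{th:min_memb_positroids}, and the lemma $\proj(P_\M)\subseteq P$. Your remark about the coordinate permutation is a fair piece of bookkeeping, but as you note it is handled by the way $\lift$ and $\proj$ are defined and does not require additional argument.
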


\begin{remark}
The same plabic graph $G^*$ can appear in different membranes
$M=(G,f)$
of different dimensions.   For any reduced plabic graph $G^*$ with $m$ boundary
edges, there is always the associated positroid membrane of dimension $m-1$
that lies in a hyperplane $\{x_1+\cdots + x_m = k\}\subset \R^m$.
But there might also be other lower dimensional membranes with the 
same plabic graph, which are obtained by projections of this positroid
membrane.
\end{remark}

Define the {\it essential dimension\/} of a reduced 
plabic graph $G^*$ as the minimal dimension a minimal membrane
whose plabic graph is equal to $G^*$.

\begin{proposition} 
For a reduced plabic graph $G^*$ with $m$ boundary 
edges, the essential dimension equals $m-1$ if and only if
$G^*$ is a plabic graph for the top positroid cell in $\Gr(k,m)_{\geq 0}$
for some $k\in[m-1]$, i.e., if its strand permutation 
is $\pi:i\mapsto i+k\pmod m$.
\end{proposition}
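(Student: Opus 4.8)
The plan is to analyze both directions of the ``if and only if'' by exploiting the relation between membranes, $\d$-boxed loops, and lifts established above. Recall from Proposition~\ref{prop:L-to-positroid} and the discussion of essential dimension that a reduced plabic graph $G^*$ with $m$ boundary edges and strand permutation $\pi$ is the plabic graph of the positroid membrane $\tilde M$ of dimension $m-1$, and that any lower-dimensional membrane with this plabic graph arises by a coordinate-collapsing projection $\proj:\R^m \to \R^n$ corresponding to a composition $[m] = \{1,\ldots,d_1\} \sqcup \{d_1+1,\ldots,d_1+d_2\} \sqcup \cdots$ with $d_1 + \cdots + d_n = m$. So the essential dimension of $G^*$ equals $m-1$ precisely when no such nontrivial grouping (i.e., no grouping with some $n < m$) yields a valid membrane with the same plabic graph. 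The key point is to translate ``valid membrane after grouping'' into a combinatorial condition on $\pi$.

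First I would make explicit what the grouping operation does to the loop/strand data. Using Theorem~\ref{prop:Zn_labelled_plabic}, a membrane with plabic graph $G^*$ is determined (up to translation) by a strand labelling $g:\Strand(G^*) \to \{1,\ldots,n\}$ satisfying $g(S) \ne g(T)$ for intersecting strands $S,T$. When $G^*$ is reduced with strand permutation $\pi$ on $[m]$, the strands are indexed by $[m]$ (strand $s$ goes from boundary point $s$ to $\pi(s)$). Thus a valid $n$-dimensional membrane is exactly a surjection $g:[m] \to [n]$ such that $g(s) \ne g(t)$ whenever strands $s$ and $t$ intersect. The essential dimension is therefore $m - 1$ iff the only such surjections (up to relabelling of $[n]$) are the bijection $[m]\to[m]$; equivalently, iff the ``strand intersection graph'' $\Gamma(G^*)$ on vertex set $[m]$ — with an edge between $s$ and $t$ when the corresponding strands cross — has no proper coloring with fewer than $m$ colors, i.e., $\Gamma(G^*)$ is the complete graph $K_m$. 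So the whole statement reduces to: \emph{every pair of distinct strands in $G^*$ crosses if and only if $\pi(i) = i + k \pmod m$ for some $k$.}

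Next I would identify the strand intersection graph combinatorially. For a reduced plabic graph, two strands $s$ and $t$ cross at most once, and whether they cross is governed entirely by $\pi$: strands $s$ and $t$ cross iff the chords $|s,\pi(s)|$ and $|t,\pi(t)|$ on the boundary circle are ``linked'' in the appropriate sense — this is the standard dictionary between reduced plabic graphs and their strand permutations (e.g.\ alignments/crossings of \cite[Section~14, Section~17]{TP}; compare Definition~\ref{def:anti_exceed_align}). Concretely, strands $s,t$ fail to cross iff $(s,t)$ (or $(t,s)$) forms an \emph{alignment} of $\pi$ in the sense of Definition~\ref{def:anti_exceed_align}. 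Hence $\Gamma(G^*) = K_m$ iff $\pi$ has \emph{no alignments}, i.e., $A(\pi) = 0$. It is then a direct combinatorial check, which I would carry out carefully, that among derangements of $[m]$ the condition $A(\pi) = 0$ holds exactly for the permutations $\pi: i \mapsto i + k \pmod m$ with $\gcd$-free $k \in [m-1]$ — or rather for all $k\in[m-1]$: for such a rotation, any two distinct chords $|i, i+k|$ and $|j, j+k|$ are linked, giving no alignments; conversely, if $\pi$ is not such a rotation, one produces two indices whose chords are unlinked, yielding an alignment. This last equivalence — that zero alignments forces a cyclic rotation — is the main obstacle, since it requires a genuine (though elementary) argument about non-crossing structure of the chord diagram of a derangement, and one must be careful about the boundary cases where chords share an endpoint (which cannot happen for a derangement with $k$ fixed since $i + k = j$, $j + k = i$ would force $2k \equiv 0$, still consistent with a rotation). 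Assembling these three reductions — essential dimension $=m-1$ $\iff$ $\Gamma(G^*) = K_m$ $\iff$ $A(\pi)=0$ $\iff$ $\pi$ is a cyclic shift $i\mapsto i+k$ — completes the proof, with the identification of $k$ as the anti-exceedance number $k(\pi)$ being automatic from the definition.
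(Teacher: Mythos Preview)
Your chain of reductions contains a genuine gap at the very first step. You assert that the essential dimension of $G^*$ equals $m-1$ if and only if the strand intersection graph $\Gamma(G^*)$ is the complete graph $K_m$. But essential dimension is defined using \emph{minimal} membranes, while a strand labelling $g:[m]\to[n]$ satisfying \eqref{eq:gST} produces only a membrane, with no guarantee of minimality. So the implication ``$\Gamma(G^*)\ne K_m \Rightarrow$ essential dimension $<m-1$'' fails, and with it your equivalence.

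Concretely, take $G^*$ a reduced plabic graph for the top cell of $\Gr(1,4)$, say the one dual to the cactus on $\{e_1,e_2,e_3,e_4\}$ with diagonal $\{e_1,e_3\}$. Here $\pi:i\mapsto i+1\pmod 4$ has $A(\pi)=0$, yet $G^*$ has only $5$ edges while there are $\binom{4}{2}=6$ pairs of strands; since each edge is traversed by exactly one pair of strands, some pair (in fact strands $1$ and $3$) do not meet. So $\Gamma(G^*)\ne K_4$ even though $\pi$ is a rotation, refuting the claimed equivalence ``$\Gamma(G^*)=K_m\iff A(\pi)=0$''. Moreover, giving strands $1$ and $3$ the same label (i.e.\ projecting $(x_2,x_4)\mapsto x_2+x_4$) does produce a membrane in $\R^3$ with plabic graph $G^*$, but its boundary loop is $e_1\to e_{2'}\to e_3\to e_{2'}\to e_1$, which bounds a membrane of area $0$; so the projected membrane is not minimal and says nothing about essential dimension. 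Thus whether two strands intersect genuinely depends on the particular reduced plabic graph, not just on $\pi$, and your dichotomy ``cross $\iff$ not an alignment'' is false in the direction you need.

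The paper's proof circumvents exactly this obstacle. For the direction ``$\pi$ a rotation $\Rightarrow$ essential dimension $m-1$'' it does not claim that every pair of strands meets in $G^*$; instead it argues that if some minimal membrane lived in $\R^n$ with $n<m$, two strands would share a label, and then uses that (i) for the top cell any two strands meet in \emph{some} reduced plabic graph, and (ii) local moves connect all reduced plabic graphs with the same $\pi$ and preserve minimality of membranes. Moving to such a graph forces two intersecting strands to carry the same label, contradicting Theorem~\ref{prop:Zn_labelled_plabic}. This move argument is the missing idea in your approach.
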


\begin{proof}
If a graph $G^*$ has a maximal possible essential dimenion $m-1$ 
then its strand permutation $\pi$ does not have alignments.
Indeed, if $\pi$ has an alignment $(i,j)$ then we can project
the positroid membrane for $G^*$ to a lower-dimensional 
membrane by mapping $(x_i,x_j)\mapsto x_i + x_j$
and leaving all other coordinates.
The only permutations with no alignments are are permutations
given by $\pi:i\mapsto i+k\pmod m$, for some $k$.

Assume now that $G^*$ is a plabic graph for the top 
positroid cell in $\Gr(k,m)_{\geq 0}$ and $M=(G,f)$ is a minimal membrane
with $f:\Vert \to \Z^n$.  If $n<m$, then we can find two different 
strands $S$ and $T$ in $G^*$, with the same label $g(S)=g(T)$;
see Theorem~\ref{prop:Zn_labelled_plabic}.
According to Theorem~\ref{prop:Zn_labelled_plabic},
the strands $S$ and $T$ can not intersect in the plabic graph $G^*$.
It not hard to show, using the techniques of \cite{TP}, that for any $i$
and $j$, there is {\it some\/} plabic graph for the top cell
in $\Gr(k,m)_{\geq 0}$ whose $i$-th and $j$-th strands intersect.
Also, according to Theorem~\ref{th:plabic_move_equiv} 
(\cite[Theorem~13.4]{TP}) all plabic graphs for the top cell are 
connected with each other by local moves.
This means that even if the strands $S$ and $T$ do not intersect
in $G^*$, one can always find a sequence of local moves that result in 
a plabic graph where the pair strands with the same sources and targets
as $S$ and $T$ intersect each other.
Since local moves preserve minimal membranes, we deduce that for 
two different strands, we cannot have $g(S)=g(T)$.  Thus $n=m$.
\end{proof}


Reduced plabic graphs of essential dimension 2 are 
the bipartite plabic graphs that can be drawn on the plane 
as subgraphs of the regular hexagonal lattice.

%
%
%
%
%
%
%
%
%
%

\section{Semisimple membranes}
\label{sec:semisimple_membranes}


Recall (e.g., see \cite{Hum}), that for a Coxeter element $c\in W$ in the Weyl
group associated with root system $R\subset V\simeq \R^r$, there exists a
unique 2-dimensional plane $P\subset V$, $P\simeq\R^2$, called the {\it Coxeter
plane,} such that $P$ is $c$-invariant, and the Coxeter element $c$ acts on $P$
by rotations by $2\pi/h$.  Note that the Coxeter element $c$ defines an
orientation on the Coxeter plane $P$, assuming that $c$ acts on $P$ by a
clockwise rotation.  Let $p:V\to P$ be the orthogonal projection onto the
Coxeter plane.

\begin{definition}
\label{def:simple_semi_simple}
Fix a Coxeter element $c\in W$. 
We say that an $R$-membrane $M$ is {\it semisimple\/} if 
the projection $p:\left<M\right> 
\to p(\left<M\right>)$ onto 
the Coxeter plane is a bijective map  between 
$\left<M\right>$ and $p(\left<M\right>)$
and each component of the projection $p(\<L\>)$ of
the boundary loop $L$ of $M$ is oriented clockwise
in the Coxeter plane.


Equivalently, an $R$-membrane $M=(G,f)$ is semisimple if the 
orientation of any triangle $\Delta$ in the cactus $G$ agrees
with the orientation of the projection $p(\<\Delta\>)$
in the Coxeter plane.
\end{definition}

\begin{remark}
A {\it simple\/} membrane $M$ is a semisimple membrane such that
$\<M\>\simeq p(\<M\>)$ is homeomorphic to a disk
(or to a line segment when $m=2$).
A semisimple membrane is simple if and only if the graph $G^*$
is connected.
Any semisimple membrane is obtained by taking wedges of
simple membranes along their boundary vertices.
\end{remark}


Let us now discuss the type $A$ case.
Assume that $c=(12\cdots n)\in S_n$ is the standard  long cycle in $S_n$, which is a Coxeter element in type $A$ case.
We can identify the corresponding Coxeter plane with $\R^2$
and assume that 
$$
p:\R^n\to\R^2,
\quad\quad p:e_i \mapsto u_i, \textrm{ for } i=1,\dots,n,
$$
is the projection that sends
the coordinate vectors $e_1,\dots,e_n$ in $\R^n$ to the vertices
$u_1,\dots,u_n$ of a regular $n$-gon in $\R^2$ centered at the origin $0$
arranged in the clockwise order.

Recall that a loop $L =(\lambda^{(1)},\dots,\lambda^{(m)})$, with 
$\lambda^{(a+1)}-\lambda^{(a)}=e_{i_a}-e_{j_a}$, 
for $a\in\Z/m\Z$, is called {\it $j$-increasing\/}
if $j_1\leq j_2 \leq \cdots \leq j_m$.
In particular, any polypositroid loop is $j$-increasing.

\begin{theorem}  
\label{th:minimal=semisimple}
Let $L$ be any $j$-increasing polypositroid loop.
A membrane $M$ with boundary loop $L$ is minimal if and only if
$M$ is semisimple.
\end{theorem}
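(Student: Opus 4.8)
The plan is to argue both implications by relating the surface area of a membrane with boundary loop $L$ to the (signed) area of the projection to the Coxeter plane $P\simeq\R^2$. The key geometric fact is that for each triangle $\Delta$ of the cactus $G$, the projection $p(\langle\Delta\rangle)$ is a triangle in $P$ whose unsigned area is a fixed positive constant (the projections of the vectors $e_i,e_j,e_k$ to vertices of a regular $n$-gon make $p(\langle\Delta\rangle)$ a triangle of one of finitely many congruence types, all of the same area since $e_i-e_j$, $e_j-e_k$, $e_k-e_i$ project to equal-length vectors at $2\pi/n$-spaced angles). Thus $\mathrm{Area}(M)$, which by our normalization counts the number of triangles, equals (up to the same universal constant) the total unsigned projected area $\sum_\Delta |\mathrm{area}\, p(\langle\Delta\rangle)|$, while the total \emph{signed} projected area $\sum_\Delta \pm\,\mathrm{area}\,p(\langle\Delta\rangle)$ depends only on the projected boundary loop $p(\langle L\rangle)$, by Stokes/the shoelace formula, and equals a constant $S(L)$ independent of $M$. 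Since $M$ is semisimple exactly when every triangle's orientation agrees with the orientation of its projection (Definition~\ref{def:simple_semi_simple}), for semisimple $M$ we get $\mathrm{Area}(M) = c_0\cdot |S(L)|$ is the common minimum value, so every semisimple membrane is minimal.

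For the converse --- every minimal $M$ is semisimple --- I would first show that at least one semisimple membrane with boundary loop $L$ exists when $L$ is a $j$-increasing polypositroid loop. Here I would use the earlier sections: by the Lemma following Definition~\ref{def:unimodal_loop}, $L$ is unimodal, and by Theorem~\ref{th:mem=plabic_separated} the minimal membranes with boundary $L$ correspond to reduced plabic graphs with strand permutation $\pi_L$. One then exhibits an explicit minimal membrane: take the $\langle L\rangle$ and "fill it in" compatibly with the clockwise rotation structure of $c$ on $P$ --- concretely, use the $j$-increasing structure to peel off triangles in the standard root order, which produces a triangulation projecting bijectively onto the planar region bounded by $p(\langle L\rangle)$. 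Establishing that this construction yields a genuine membrane (faces are honest triangles, no degenerate edges) and that it is semisimple is the technical heart; the $j$-increasing and polypositroid hypotheses on $L$ are precisely what guarantee $p(\langle L\rangle)$ bounds a region traversed clockwise with no self-overlap, so that "fill in the planar region and lift" is well defined.

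Once one semisimple (hence minimal) membrane exists, I would show that minimality forces semisimplicity via the area identity above: for any membrane $M$ with boundary $L$ we have
$$
\mathrm{Area}(M) \;=\; c_0\sum_{\Delta}\bigl|\mathrm{area}\,p(\langle\Delta\rangle)\bigr| \;\geq\; c_0\Bigl|\sum_{\Delta}\pm\,\mathrm{area}\,p(\langle\Delta\rangle)\Bigr| \;=\; c_0\,|S(L)|,
$$
with equality if and only if all the signs agree, i.e. if and only if $M$ is semisimple. Since a semisimple membrane attains $c_0|S(L)|$, a minimal membrane has area exactly $c_0|S(L)|$ and therefore must have all signs equal, hence is semisimple. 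The main obstacle I anticipate is making the "signed area depends only on the boundary" step fully rigorous for cacti (which are wedges of disks, not just disks): one must check that the signed projected area is additive over the wedge decomposition and that each component contributes the signed area enclosed by its projected boundary loop, which requires care because $p$ restricted to a non-semisimple membrane need not be injective and the projected surface can cover regions of $P$ with multiplicity --- the cleanest route is to compute $\sum_\Delta \pm\,\mathrm{area}\,p(\langle\Delta\rangle)$ as a sum of $2\times2$ determinants telescoping over the triangulation and collapsing to a boundary sum, which works regardless of injectivity. A secondary point requiring attention is verifying that the universal constant $c_0$ is indeed the same for black and white triangles and for all the congruence types that occur, which follows from the regular $n$-gon symmetry but should be stated explicitly.
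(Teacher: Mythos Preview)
Your central geometric claim is false, and the whole area-inequality argument collapses without it. You assert that every projected triangle $p(\langle\Delta\rangle)$ has the same unsigned area, justifying this by saying the edge vectors $e_i-e_j$ project to ``equal-length vectors at $2\pi/n$-spaced angles.'' But the projection sends $e_i-e_j$ to $u_i-u_j$, the chord of a regular $n$-gon joining vertices $i$ and $j$, and these chords have \emph{different} lengths depending on $|i-j|\pmod n$. Concretely, for $n=5$ the triangle $\conv(u_1,u_2,u_3)$ and the triangle $\conv(u_1,u_2,u_4)$ have different areas. So there is no universal constant $c_0$ with $\Area(M)=c_0\sum_\Delta|\mathrm{area}\,p(\langle\Delta\rangle)|$; your inequality bounds the \emph{projected}-area functional, not the triangle count $\Area(M)$ used in the definition of minimality. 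The ``secondary point'' you flag at the end is in fact the fatal one.

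The paper's proof is entirely different and does not use any area comparison. For the direction minimal $\Rightarrow$ semisimple, it uses that the plabic graph $G^*$ of a minimal membrane is reduced (Theorem~\ref{th:minimal_membr=reduced}), and then argues triangle-by-triangle: the three strands leaving a vertex of $G^*$ reach the boundary without crossing (reducedness), so their targets $t_1,t_2,t_3$ lie in clockwise order on the disk; since $L$ is $j$-increasing, the strand labels $g(S_a)=j_{t_a}$ are therefore cyclically increasing, which forces the orientation of each triangle $\langle\Delta\rangle$ to agree with that of its Coxeter-plane projection. For semisimple $\Rightarrow$ minimal, the paper shows local moves preserve semisimplicity (the tetrahedron case uses convexity of the $n$-gon, the octahedron case is immediate), and then observes that a non-minimal membrane could be transformed by moves into one containing two coinciding triangles, impossible for a semisimple membrane whose projection is injective.
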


\begin{proof}
Let $M=(G,f)$ be a minimal membrane with boundary loop $L$.
So the plabic graph $G^*$ is 
reduced.  Let $\Delta$ be any triangle in the cactus $G$, let $d$ be the 
corresponding vertex in the plabic graph $G^*$, and let 
$S_1, S_2, S_3$ be the three strands in $G^*$ that pass through the
three edges of $G^*$ adjacent to the vertex $d$ 
in the directions away from $d$ arranged, resp., in the clockwise order.
Since $G^*$ is a reduced plabic graph, the segments
of these three strands between the vertex $d$ and their
target points $t_1,t_2,t_3$ 
on the boundary of the disk cannot intersect each other.
So the three target points $t_1,t_2,t_3$ of the strands 
$S_1,S_2,S_3$, resp., are arranged
in the clockwise order on the boundary of the disk.
Thus the labels $g(S_1), g(S_2), g(S_3)\in [n]$
of these three strands (which are 
$j_{t_1}, j_{t_2}, j_{t_3}$ resp.; 
see Theorem~\ref{prop:Zn_labelled_plabic})
are ordered as $g(S_1)<g(S_2)< g(S_3)$ (up to a cyclic shift). 

The triangle $\Delta$ is embedded into $\R^n$ 
either as $\conv(-e_{g(S_1)}, -e_{g(S_2)}, -e_{g(S_3)})$
or as $\conv(e_{g(S_1)}, e_{g(S_2)}, e_{g(S_3)})$
(up to a parallel translation)
depending on the color of the triangle.
In both cases, the rules of the road imply that the orientation 
of the triangle $\Delta$ in the cactus $G$ agrees with the orientation 
of the projection $p(\<\Delta\>)$ onto the Coxeter plane.
This implies that the membrane $M$ is semisimple.

On the other hand, let us now assume that $M$ is a semisimple membrane
and deduce that it should be a minimal membrane.
Whenever we apply a local move to $M$, it remains semisimple.
Indeed, for a square move (II), i.e., an octahedron move
shown on Figure~\ref{fig:tetra_octa}, if the upper half of the 
surface of the octahedron projects bijectively onto the Coxeter plane,
then the lower part of the surface projects bijectively onto the 
Coxeter plane.  
For a tetrahedron move, i.e., a move of type (I) or (III)
shown on Figure~\ref{fig:tetra_octa}, observe that the four
vertices of the two triangles involved in the move
project onto the Coxeter plane as some points 
$u_i +v , u_j + v , u_k + v , u_l +v$.  Since $u_1,\ldots,u_n$
are vertices of a convex $n$-gon, it is impossible that 
one of these four points lies in the convex hull of the three
other points.  Thus if the union of two triangles involved in 
a move of type (I) or (III) projects bijectively onto the Coxeter plane,
then the same remains true after the move.

If $M$ is not minimal, then we can find a sequence of local moves
that results in a plabic graph with a pair of parallel edges
and a membrane with two coinciding triangles
$\<\Delta\>=\<\Delta'\>$.
However, in a semisimple membrane two triangles cannot coincide.
Thus $M$ should be a minimal membrane.
\end{proof}

\begin{remark}
For a positroid $\M$,
projections of minimal membranes with boundary loop $L_\M$
onto the Coxeter plane
are related to the {\it plabic tilings\/} of Oh, Postnikov, and Speyer~\cite{OPS}.
Plabic tilings are certain subdivisions (or tilings) of a polygon on 
the plane into smaller polygons (or tiles) colored in two colors. 
The tiles are not necessarily triangles.
 A plabic tiling corresponds to an equivalence class 
under moves of types (I) and (III)
of projections of minimal membranes with boundary loop $L_\M$ 
onto the Coxeter plane. 
In other words, a plabic tiling
is obtained from a projection of a membrane by combining its adjacent
triangles colored in the same color into tiles.
\end{remark}


\section{Higher octahedron recurrence and cluster algebras}
\label{sec:higher_octahedron}


Consider the collection of variables $x_{\lambda}$,
labelled by integer vectors $\lambda\in\Z^n$, that satisfy the following 
{\it higher octahedron relations}:  
\begin{equation}
\label{eq:higher_octahedron}
x_{e_i+e_k+\lambda}\cdot x_{e_j+e_l+\lambda}=
x_{e_i+e_j+\lambda}\cdot x_{e_k+e_l+\lambda}+
x_{e_i+e_l+\lambda}\cdot x_{e_j+e_k+\lambda}\,,
\end{equation}
for any $i<j<k<l$ in $[n]$, and any $\lambda\in\Z^n$.  We will call the recurrence~(\ref{eq:higher_octahedron})
the {\it higher octahedron recurrence}. The polytope $\conv(e_i+e_k, e_j+e_l, e_i+e_j,
e_k+e_l, e_i+e_l, e_j+e_k)$ is an octahedron in $\R^n$,
which explains the name of the above relations.

Clearly, each relation~\eqref{eq:higher_octahedron}
involves only the variables $x_\lambda$,
for $\lambda$ in an affine hyperplane  $\{\lambda_1+\cdots+\lambda_n=
\mathrm{Const}\}$.  So, essentially, \eqref{eq:higher_octahedron} is a recurrence relation 
on variables corresponding to points of the $(n-1)$-dimensional integer lattice.
For $n=4$, this recurrence on $\Z^3$ is equivalent to 
the {\it octahedron recurrence}; see for example \cite{Spe}.

Let us now define algebras generated by certain finite subsets
of variables $x_\lambda$ satisfying the octahedron relations.

\begin{definition}
For a loop $L\subset \Z^n$, define its {\it cloud\/} 
as the union of integer lattice points of $\<M\>$ over all minimal
membranes $M$ with boundary loop $L$:
$$
\cloud(L) : = \bigcup_{M = (G,f) \textrm{ min. membr. with bound. }L }
\{f(v)\mid v \textrm{ is vertex of } G\}.
$$
\end{definition}

\begin{remark}
According to Corollary~\ref{cor:MinP}, for a polypositroid loop $L=L_P$,
$\cloud(L_P)$ belongs to the set $P\cap \Z^n$ of lattice points of
the polypositroid $P$.
For some (poly)positroids, $\cloud(L_P) = P \cap \Z^n$.
For example, the equality holds if $P = P_\M$ where $\M = \binom{[n]}{k}$ is the uniform matroid.
In this case, $P$ is the hypersimplex $\Delta(k,n)$
and $\cloud(L_P)=\{e_I \mid I\in {[n]\choose k}\}$
is the set of all lattice points of $\Delta(k,n)$.

However, in general $\cloud(L_P)$ is not equal to $P\cap\Z^n$.
For example, if the loop $L$ is a wedge of line segments, 
then any minimal membrane with boundary $L$ has no triangles.
In this case, $\cloud(L)=L$.  Apart from some trivial cases,
this set cannot be equal to the set of lattice points of the 
polypositroid $P$, which is a convex polytope.

For the case $n=3$, a generic polypositroid is a hexagon,
and the associated cloud is a triangle with line segments attached
to its vertices.
\end{remark}

\begin{definition}  For a loop $L$, let 
$\Octa_L:=\C[x_\lambda,x_\mu^{-1}]_{\lambda\in\cloud(L),\mu\in L}$
be the commutative algebra over $\C$ generated by the variables 
$x_\lambda$, for $\lambda\in\cloud(L)$, and $x_\mu^{-1}$ for $\mu \in L$, 
modulo the octahedron 
relations~\eqref{eq:higher_octahedron}.  We call $\Octa_L$ the 
{\it octahedron algebra\/} of the loop $L$.
\end{definition}

Recall that, for any finite quiver (i.e., a directed graph) $Q$ with 
a chosen subset of vertices $B$, there is cluster algebra, whose
initial cluster variables correspond to vertices of $Q$ and 
frozen cluster variables correspond to the subset of vertices $B$,
see \cite{FZ02}.  By convention, we will assume that the inverses of 
frozen cluster variables belong to the cluster algebra.

\begin{definition}
For a membrane $M=(G,f)$, define the {\it quiver\/} of $M$ as the directed
graph $Q(M)$ on the same set of vertices $\Vert$ as the cactus $G$,
whose edges $u\to v$ are the edges of $G$ that separate triangles
of different colors, 
directed so that the adjacent black triangle is on the right
of the edge $u\to v$ and the adjacent white triangle is on the left
of the edge $u\to v$.

Let $\A_M$ denote the cluster algebra over $\C$
given by the quiver $Q(M)$ of the membrane $M$ with 
frozen variables corresponding to the boundary vertices $b_i\in\Vert$
of $G$.
\end{definition}

\begin{theorem}
Let $L$ be any $j$-increasing loop.
(In particular, $L$ can be any polypositroid loop.)
Let $M=(G,f)$ be any minimal membrane with boundary loop $L$, and let $\Vert$
be the vertex set $G$.  

For any other minimal 
membrane $M'$ with the same
boundary loop $L$, the quivers $Q(M)$ and $Q(M')$ are mutation equivalent, and we have a canonical isomorphism $\A_{M}\simeq \A_{M'}$.
The octahedron algebra $\Octa_L$ is
a (finitely generated) subalgebra of the cluster algebra $\A_M$.

More explicitly, let us identify the collection of variables
$\{x_{f(v)}\mid v \in \Vert\}$ with the initial cluster of 
$\A_M$.  We have

\begin{enumerate}
\item
The collection of variables $\{x_{f(v)}\mid v\in \Vert\}$
is an algebraically independent set in $\Octa_L$.
Any other $x_\lambda$, for $\lambda\in\cloud(L)$ is expressed in terms
of these variables by Laurent polynomials with positive integer
coefficients.


\item
Local moves of membranes  of types {\rm (I)} and {\rm (III)} 
(tetrahedron moves)
do not change the set of variables $\{x_{f(v)}\mid v\in \Vert\}$ and
they do not change the quiver $Q(M)$ of $M$ and the cluster algebra 
$\A_M$.

\item Local moves of membranes of type {\rm (II)} (octahedron moves)
change exactly one element in the set $\{x_{f(v)}\mid v\in \Vert\}$.
They correspond to (a certain class of) mutations 
of the cluster algebra $\A_M$.

\item
Any minimal membrane $M'=(G',f')$ with the same boundary loop $L$
is obtained from $M$ by a sequence of local moves.
The collection of variables $\{x_{f'(v')}\mid v'\in\Vert'\}$
(where $\Vert'$ is the set of vertices of $G'$) is a cluster 
of the cluster algebra $\A_M$.

\item 
The isomorphism $\mu_{M,M'}:\A_M\to \A_M'$ given 
by the composition of mutations coming from a sequence of local
moves connecting the membranes $M$ and $M'$ depends only on the membranes
$M$ and $M'$, and it does not depend on a choice of a sequence of local moves
connecting the membranes.

\item The octahedron algebra $\Octa_L$ is the subalgebra of the
cluster algebra $\A_M$ generated by all cluster variables
from all clusters of $\A_M$ (and inverses of frozen variables) that correspond to minimal membranes
$M'$ with the same boundary loop $L$.
\end{enumerate}
\end{theorem}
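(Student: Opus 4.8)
The plan is to reduce the entire theorem to a single combinatorial assertion and then let cluster-algebra formalism do the rest. The assertion is: \emph{a tetrahedron move (type (I) or (III)) of a minimal membrane leaves the quiver $Q(M)$ unchanged, whereas an octahedron move (type (II)) at an interior vertex $u$ mutates $Q(M)$ at $u$, and the exchange relation it produces is exactly the higher octahedron relation \eqref{eq:higher_octahedron}}. Since a $j$-increasing loop is unimodal, Theorem~\ref{th:mem=plabic_separated} tells us that the minimal membranes with boundary loop $L$ are exactly the membranes over the reduced plabic graphs with strand permutation $\pi_L$, and that any two of them are joined by a sequence of moves of types (I), (II), (III); by Remark~\ref{rem:minimalmove} each such move of reduced plabic graphs is realized by a move of membranes. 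Combined with the assertion, this immediately gives that $Q(M)$ and $Q(M')$ are mutation equivalent, that $\A_M\simeq\A_{M'}$, and parts (2) and (3).

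To prove the assertion I would argue locally. A tetrahedron move swaps two triangles of the \emph{same} colour (the one named in (I) or (III)), so the flipped interior diagonal of $G$ is a non-quiver edge both before and after, and each of the four surrounding edges keeps its colour pattern, hence its status (quiver edge or not) and its orientation; thus $Q(M)=Q(\tilde M)$ and the cluster algebra is literally unchanged, which is part (2). For an octahedron move at $u$, the hypothesis that $F_u$ is a square face of $G^*$ means $u$ is an interior vertex of $G$ surrounded by four triangles whose colours alternate, so $u$ has precisely four incident quiver edges, alternating in direction, with the two arrows into $u$ coming from one pair $\{v,z\}$ of opposite base vertices and the two arrows out of $u$ going to the opposite pair $\{w,t\}$ (notation of Lemma~\ref{lem:pyramid}); hence the mutated variable $x'$ at $u$ satisfies $x_{f(u)}\,x' = x_{f(v)}x_{f(z)} + x_{f(w)}x_{f(t)}$. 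By Lemma~\ref{lem:pyramid} and \eqref{eq:tilde_f_u} the six points $f(u),\widetilde{f(u)},f(v),f(w),f(z),f(t)$ are the vertices of a unit octahedron whose three pairs of antipodes are $\{f(u),\widetilde{f(u)}\}$, $\{f(v),f(z)\}$, $\{f(w),f(t)\}$; semisimplicity of minimal membranes (Theorem~\ref{th:minimal=semisimple}) together with the clockwise ordering of strand targets forces $\{f(u),\widetilde{f(u)}\}$ to be the crossing diagonal, so $x'=x_{\widetilde{f(u)}}$ and the relation is exactly \eqref{eq:higher_octahedron}. It remains to check that $\mu_u(Q(M))$ — reverse the four arrows at $u$, add the four arrows from $\{v,z\}$ to $\{w,t\}$, and cancel $2$-cycles against the arrows already present among $v,w,z,t$ — is the quiver dual to the re-triangulated membrane $\tilde M$. \emph{This $2$-cycle bookkeeping is the main obstacle}: it is the membrane analogue of ``flip of a triangulation $=$ quiver mutation'' (Fomin--Shapiro--Thurston) and of ``urban renewal $=$ mutation'' for dimer models (Goncharov--Kenyon), both of which give workable templates, and it reduces to a finite case check on the colours of the triangles adjacent to the edges $uv,vw,\dots$.

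Granting the assertion, the remaining parts are formal, worked inside the fixed rational function field $\C(x_{f(v)}\mid v\in\Vert)$, which is the fraction field of $\A_M$ (the initial cluster is algebraically independent by basic cluster theory). A cluster variable is a specific element of this field, so no choices enter: the tuple $\{x_{f'(v')}\}_{v'}$ attached to a minimal membrane $M'$ reachable from $M$ is the cluster produced by the corresponding mutation sequence (part (4)), the isomorphism $\mu_{M,M'}$ is the identity of the field and hence independent of the sequence of moves (part (5)), and each membrane move is a mutation at a vertex with an octahedral local configuration — which is why in general only a proper subclass of mutations of $\A_M$ arises. Consequently $x_\lambda\mapsto$ (the cluster variable at $v'$ in the cluster of a minimal membrane $M'$ with $f'(v')=\lambda$) is a well-defined algebra homomorphism $\phi\colon\Octa_L\to\A_M$ surjecting onto the subalgebra generated by those cluster variables; the $x_{f(v)}$ are algebraically independent in $\Octa_L$ because their $\phi$-images are, and the positivity in part (1) is the positive Laurent phenomenon for cluster algebras from quivers. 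Finite generation holds because $\pi_L$ admits only finitely many reduced plabic graphs, so $\cloud(L)$ is finite.

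The last point is the injectivity of $\phi$, equivalently that the higher octahedron relations form a complete and consistent set of relations among the cloud variables; this is the one place beyond pure formalism in the remaining parts. I would establish it via the positroid lift of Section~\ref{sec:postr_lifts}: translating $L$ to be $\d$-boxed, Proposition~\ref{prop:L-to-positroid} identifies the minimal $L$-membranes with the minimal $\lift(L)$-membranes and realizes $\lift(L)$ as a positroid loop up to a permutation of coordinates, under which octahedron moves, hence the relevant octahedron relations, correspond. This reduces the statement to positroid loops, where $\cloud$ consists of face labels of reduced plabic graphs and the octahedron relations are their three-term Pl\"ucker relations; the identification of the resulting algebra with the (cluster) coordinate ring of the open positroid variety is supplied by the work of Oh--Postnikov--Speyer \cite{OPS}, Knutson--Lam--Speyer \cite{KLS}, and Galashin--Lam \cite{GL}, and this also yields part (6).
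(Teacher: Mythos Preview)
Your proposal is correct and follows the same architecture as the paper's own proof: tetrahedron moves preserve $Q(M)$; octahedron moves are cluster mutations whose exchange relation is the higher octahedron relation; connectivity of minimal membranes comes from Theorem~\ref{th:mem=plabic_separated} (since $j$-increasing loops are unimodal); and parts (1) and (5) are read off from the Laurent phenomenon and Lee--Schiffler positivity. The paper obtains the cyclic order $i<j<k<l$ of the four strand labels at a square face by invoking the argument inside the proof of Theorem~\ref{th:minimal=semisimple} directly (rather than semisimplicity as a statement, as you do), and handles your ``2-cycle bookkeeping'' with the terse ``one easily checks that $Q(\tilde M)$ is a mutation of $Q(M)$''---so on these points you and the paper agree in substance.

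The one genuine difference is part~(6). The paper writes only ``clear from the above discussion''; you instead reduce to the positroid case via the lift of Section~\ref{sec:postr_lifts} and invoke \cite{OPS,KLS,GL}. Your caution is warranted: for $\phi\colon\Octa_L\to\A_M$ to be a well-defined ring homomorphism one needs \emph{every} octahedron relation among cloud variables to hold in $\A_M$, not only those arising from an actual membrane move, and parts (2)--(5) do not supply this. Under the lift an octahedron in $\cloud(L)$ goes to an octahedron in $\cloud(\lift(L))$ (indices stay cyclically ordered block-by-block), and in the positroid case these are three-term Pl\"ucker relations, which hold in the Grassmannian and hence in $\A_M\cong\C[\mathring\Pi_\M]$. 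One small recalibration: once $\phi$ is well-defined, injectivity is automatic---the images $\phi(x_{f(v)})$ form the initial cluster and are algebraically independent, and every $x_\lambda$ is already a Laurent polynomial in them inside $\Octa_L$---so the ``completeness'' half of your complete/consistent dichotomy comes for free; the substantive step is consistency (well-definedness). Your route costs the machinery of \cite{GL}, but it makes explicit a point the paper leaves to the reader.
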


\begin{proof}  
Part (2) is clear, because
tetrahedron moves of membranes
do not change the set of points $\{f(v)\mid v\in \Vert\}$ 
and they do not change the quiver $Q$ of a membrane.

Observe that, if we apply a move $M\to\tilde M$
 of type (II) (an octahedron move), i.e., apply a square move 
of the associated plabic graph $G^*$,
then the labels $i,j,k,l$ of the 4 strands (arranged clockwise) 
going out of the vertices 
of the square are ordered as $i<j<k<l$ (up to a cyclic shift),
see the argument in the proof of Thereom~\ref{th:minimal=semisimple}.
One easily checks that the quiver $Q(\tilde M)$ is a mutation 
of the quiver $Q(M)$.  This move results in replacing one element
$x_{e_i + e_k + \lambda}$ of the set $\{x_{f(v)}\mid v\in\Vert\}$
by $x_{e_j+e_l+\lambda}$.  One checks 
from the definitions that the transformation:
$$
x_{e_i + e_k + \lambda}\to 
x_{e_j + e_l + \lambda}=
(x_{e_i+e_j+\lambda}\cdot x_{e_k+e_l+\lambda}+
x_{e_i+e_l+\lambda}\cdot x_{e_j+e_k+\lambda})/x_{e_i+e_k+\lambda},
$$
is exactly the cluster mutation of the associated variable 
in the initial cluster of $\A_M$.
So we get part (3).

Part (4) follows from 
Theorem~\ref{th:mem=plabic_separated}
and the fact the that loop $L$ is unimodal.

Part (1) now follows
from general results on cluster algebras,
namely, Fomin-Zelevinsky's Laurent phenomenon \cite{FZ}
and the positivity result of Lee-Schiffler \cite{LS}.

Part (5) follows from the observation that each element 
of the initial seed of the cluster algebra $\A_{M'}$ corresponds to
some variable $x_{f'(v')}$, $v'\in \Vert'$.
By part (1), this element is expressed by a Laurent polynomial
in terms of the variables $x_{f(v)}$, $v\in\Vert$, corresponding to
the initial seed of $\A_M$.  This Laurent expression depends 
only on the membrane $M$ 
and the integer vector $f'(v')\in\Z^n$, and it does not 
depend on a choice of a sequence of local moves connecting the membranes
$M$ and $M'$.

Part (6) is clear from the above discussion.
\end{proof}

For a $j$-increasing loop $L$, we denote by $\A_L$ the cluster algebra $\A_M$ for a minimal membrane $M$ with boundary loop $L$.  The quiver $Q(M)$, and thus the cluster algebra $\A_M$,
of a minimal membrane $M=(G,f)$ depends only on the reduced plabic graph 
$G^*$.  Since every reduced plabic graph appears in a 
minimal {\it positroid\/} membrane,
the class of cluster algebras $\A_L$ is as general as its
subclass corresponding to positroid loops. 
However, the description of these cluster algebras in terms of the higher 
octahedron recurrence allows us 
to associate some cluster variables with points of the integer lattice 
$\Z^n$ and some clusters with membranes, which provides 
an additional geometrical intuition into the structure
of these cluster algebras.   

Let $\M$ be a positroid and let $\A_{L_\M}$ be the cluster algebra for the positroid loop $L_\M$ (see Section~\ref{sec:positr_membr}).  The cluster algebra $\A_{L_\M}$ implicitly appeared in \cite{TP}.
It was shown in \cite{GL} that the cluster algebra $\A_{L_\M}$ is
isomorphic to the coordinate ring $\C[\mathring \Pi_\M]$ of an {\it open positroid variety} $\mathring\Pi_\M$ \cite{KLS}, confirming conjectures of Muller--Speyer \cite{MS} and Leclerc \cite{Lec}.

\begin{remark}
Suppose that $L_1$ and $L_2$ are two $j$-increasing loops, and $M_1=(G_1,f_1)$ and $M_2=(G_2,f_2)$ are minimal membranes with boundary loops $L_1$ and $L_2$ respectively.  If the reduced plabic graphs $G_1^*$ and $G_2^*$ are connected by the local moves (I), (II), (III), then it follows from the above remarks that $\cloud(L_1)$ and $\cloud(L_2)$ are naturally in bijection.
\end{remark}

\begin{remark}
The cluster algebra $\A_L$ is typically a cluster algebra of infinite type, with 
infinitely many cluster variables.  On the other hand, $\cloud(L)$ is a finite set, 
corresponding to a finite subset of the cluster variables of $\A_L$, and thus the octahedron algebra $\Octa_L$ is a finitely-generated subalgebra of the cluster algebra $\A_L$.  However, even when $\A_L$ is of infinite type, we may have $\Octa_L = \A_L$.  For example, this holds when the reduced plabic graph $G^*$ corresponds to the top positroid cell of $\Gr(k,n)$.  In this case, $\A_L$ is isomorphic to the homogeneous coordinate ring of the Grassmannian with the cyclic minors $\Delta_{12\cdots k}, \Delta_{23 \cdots (k+1)},\ldots$ inverted.  The equality $\Octa_L=\A_L$ follows from the fact that the homogeneous coordinate ring of the Grassmannian is generated by Pl\"ucker coordinates $\Delta_I$.
\end{remark}



\section{Asymptotic cluster algebra}
\label{sec:asympt_membr}

As we discussed in Section~\ref{sec:higher_octahedron}, membranes
are closely related to a class of cluster algebras generated by some 
collections of variables satisfying the higher octahedron recurrence. 
Minimal membranes correspond to certain clusters in these algebras 
and local moves of membranes correspond to cluster mutations.
It would be interesting to investigate the asymptotic behavior 
of these structures under dilations of the boundary loop.

We can call this area of research the ``Asymptotic Cluster Algebra''.
We anticipate that many results from statistical physics and
from asymptotic representation theory (e.g., the study of asymptotics 
properties of representations of symmetric groups), will have their
analogs in the asymptotic cluster algebra.

Under dilations of the boundary loop $L$,
minimal membranes might approach a certain limit surface $S$.

Let $\Memb(L)$ be the set of all minimal membranes with boundary loop
$L$.  For $t\in\Z_{>0}$, let $tL$ denote the loop $L$ dilated $t$ times.

\begin{conjecture}   Let $L$ be a unimodal loop.
For a positive integer $t$, consider the uniform distribution on the set 
$\Memb(tL)$.

There exists a unique surface $S\subset \R^n$ (with boundary $\<L\>$)
such that, for any $\epsilon>0$, there exists $N>0$ such that, for 
any $t\geq N$,  the probability that ${1\over t} \<M\>$,
for $M\in\Memb(tL)$,
belongs to the $\epsilon$-neighborhood of $S$ is greater than $1-\epsilon$.
\end{conjecture}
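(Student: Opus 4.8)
The plan is to establish the existence of a limit surface as a \emph{variational} statement, following the strategy that governs limit-shape phenomena for dimer models and related statistical-mechanical systems (the octahedron recurrence being, via Sections~\ref{sec:moves_membranes} and~\ref{sec:higher_octahedron}, a higher-dimensional avatar of the dimer/Aztec-diamond picture). The first step would be to attach to each minimal membrane $M \in \Memb(tL)$ a \emph{height function}: because $M$ is semisimple (Theorem~\ref{th:minimal=semisimple}, applicable once we restrict to $j$-increasing loops, or with an extra argument for general unimodal $L$), the projection $p:\<M\> \to p(\<M\>)$ to the Coxeter plane is a bijection, so $\<M\>$ is the graph of a function over the planar region $p(\<tL\>)$; after rescaling by $1/t$, these become uniformly Lipschitz functions on a fixed domain $\Omega := p(\<L\>)$, hence (Arzel\`a--Ascoli) precompact in the uniform topology. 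Thus along subsequences ${1\over t}\<M_t\>$ converges to some surface; the content of the conjecture is that with high probability under the uniform measure the limit is a single deterministic surface $S$, independent of subsequence.

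The second step is to identify $S$ as the maximizer of a \emph{surface tension} (entropy) functional $\mathcal{E}(u) = \int_\Omega \sigma(\nabla u)\, dA$ over Lipschitz functions $u$ with the prescribed boundary values $u|_{\partial\Omega}$ coming from $\<L\>$. Here $\sigma$ is the exponential growth rate of the number of minimal membranes with a given asymptotic slope, which should be computed by a local (per-unit-area) counting argument: the number of ways to triangulate a ``flat'' patch of membrane with a fixed slope, equivalently the number of reduced plabic graphs / plabic tilings of a fixed type in the given homotopy class, is controlled by the alignment/anti-exceedance formulas of Proposition~\ref{prop:alignments_formula_for_area} together with the known asymptotics of the number of reduced plabic graphs with fixed strand permutation. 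One then proves the two standard halves of a large-deviations statement: (a) an \emph{upper bound}, that for any open neighborhood $U$ of a non-optimal surface the log-count of membranes landing in $U$ is strictly below the log-count of $\Memb(tL)$, obtained by a patching/subadditivity argument on $\Omega$; and (b) a matching \emph{lower bound} near the optimizer $S$, obtained by a gluing construction that produces exponentially many membranes close to any target slope profile. Strict concavity of $\sigma$ (which must be verified, but is expected from the cluster-algebra positivity in Section~\ref{sec:higher_octahedron}) then gives uniqueness of the maximizer $S$, and combining (a) and (b) yields the concentration statement in the conjecture: $\mathbb{P}\big({1\over t}\<M\> \subset (S)_\epsilon\big) \to 1$.

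The third step is bookkeeping to phrase this for \emph{all} unimodal loops, not just $j$-increasing ones: for a general unimodal $L$ one still has, by Theorem~\ref{th:mem=plabic_separated}, a bijection $\Memb(tL) \leftrightarrow \{\text{reduced plabic graphs with permutation } \pi_{tL}\}$, and one can transport the height-function machinery through the combinatorics of $\pi_L$ rather than through a literal Coxeter-plane projection (the semisimple picture is the geometrically transparent special case). The variational problem is then set up on the ``amoeba-like'' region determined by $\pi_L$, and $\sigma$ depends only on the local combinatorial type; the rest of the argument is unchanged.

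\textbf{Main obstacle.} The hard part will be step two: proving that the normalized logarithm of $\#\Memb(tL)$ has a limit and that this limit is given by a variational formula $\sup_u \mathcal{E}(u)$ with a \emph{strictly concave} surface-tension density $\sigma$. Establishing the existence and concavity of $\sigma$ requires a genuinely new local computation --- counting reduced plabic graphs (or octahedron-recurrence configurations) of fixed asymptotic slope with controlled error --- and even granting $\sigma$, upgrading the subadditive upper bound and the gluing lower bound to a sharp large-deviation principle with a unique minimizer is the technical core. This is precisely the step where ``Plateau's problem'' for membranes (Remark~\ref{rem:Plateau}) meets limit-shape theory, and it is the reason the statement is posed as a conjecture rather than a theorem; a complete proof would likely need either an exact solvability input (a determinantal or transfer-matrix formula for $\#\Memb(tL)$) or a robust concentration-of-measure argument tailored to the cluster-algebra structure of $\Octa_L$.
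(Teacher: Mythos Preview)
The statement you are attempting to prove is stated in the paper as a \emph{conjecture}, not a theorem, and the paper gives no proof of it. Section~\ref{sec:asympt_membr} poses it as an open problem, remarking only that it should be related to the $6$- and $8$-vertex models (for $n=4$) and to Plateau's problem, and that the authors ``anticipate that many results from statistical physics and from asymptotic representation theory \ldots\ will have their analogs.'' So there is no paper proof to compare your proposal against.

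That said, your proposal is not a proof either --- and to your credit you say so explicitly in the final paragraph. What you have written is a plausible strategy modeled on the Cohn--Kenyon--Propp variational principle for dimers, but several of the steps are genuine open problems rather than routine bookkeeping:
\begin{itemize}
\item The semisimplicity input (Theorem~\ref{th:minimal=semisimple}) is proved only for $j$-increasing loops, not for all unimodal loops; your ``extra argument for general unimodal $L$'' in step one is not supplied, and the alternative you sketch in step three (working directly with $\pi_L$ rather than the Coxeter-plane height function) is a change of language, not a proof.
\item The existence of the surface-tension density $\sigma$, its strict concavity, and the matching large-deviation upper and lower bounds are all asserted but not established. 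You correctly identify this as ``the hard part,'' but that means the proposal reduces the conjecture to another conjecture of comparable difficulty.
\item Even the precompactness step is not free for general unimodal $L$: without a height-function representation over a fixed planar domain, Arzel\`a--Ascoli does not apply directly, and one would need some other tightness argument for the rescaled surfaces $\tfrac{1}{t}\langle M\rangle$.
\end{itemize}
In short, your write-up is a reasonable heuristic roadmap in the spirit the paper itself suggests, but it does not close the gap; the conjecture remains open.
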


A related conjecture can be formulated in terms of measures.
For a loop $L$, consider the measure 
$\mu_L$ on the set
$\cloud(L)\subset \Z^n$ given by 
$$
\mu_L (a) = 
{ \#\{M \in \Memb(L) \textrm{ such that }
a\in \<M\>\}\over
 \#\Memb(L) \times  \# (\textrm{lattice points in 
any } M\in \Memb(L))},
$$
for $a\in\Z^n$. 
Clearly, $\sum_{a\in\Z^n} \mu_L(a) = 1$,
so $\mu_L$ is a probability distribution.

Equivalently, $\mu_L(a)$ is the probability that a random
(uniformly chosen) minimal membrane with boundary $L$ contains a lattice
point $a$.  In other words, $\mu_L$ is the density of a random membrane.

For a positive integer $t$, let $\mu_{L,t}(a) := \mu_{tL}(ta)$.
The measure (probability distribution) $\mu_{L,t}$ is supported on 
a certain subset of the lattice $({1\over t} \Z)^n$.

\begin{conjecture}
Let $L$ be a unimodular loop.  As $t\to \infty$, 
the measures $\mu_{L,t}$ converge to a certain limit measure
$$
\mu_{L,\infty}:=\lim_{t\to\infty} \mu_{L,t}
$$
supported on a certain limit surface $S\subset\R^n$.
\end{conjecture}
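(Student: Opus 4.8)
The plan is to establish a \emph{variational principle} in the spirit of the Cohn--Kenyon--Propp limit-shape theorems for tilings, treating a minimal membrane as a discrete surface and the uniform measure on $\Memb(tL)$ as a statistical-mechanical model whose asymptotics are controlled by a surface tension functional. The two conjectures would be proved together, the surface $S$ being the graph of the minimizer and $\mu_{L,\infty}$ the corresponding lattice-point density.

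First I would reduce everything to a two-dimensional picture. By Theorem~\ref{th:minimal=semisimple}, for a $j$-increasing (in particular polypositroid) loop $L$ every minimal membrane with boundary $L$ is semisimple, so $p\colon \<M\> \to p(\<M\>)$ is a bijection onto the region $\Omega = \Omega_L$ of the Coxeter plane bounded by $p(\<L\>)$. Thus a minimal membrane is the graph of a piecewise-linear, $H_k$-valued ``height function'' $h$ on $\Omega$, triangulated along root directions, with $h$ prescribed on $\partial\Omega$ by $\<L\>$; equivalently, by Theorem~\ref{th:mem=plabic_separated}, $\Memb(tL)$ is in bijection with the reduced plabic graphs of strand permutation $\pi_{tL}$. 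Either description turns the uniform measure on $\Memb(tL)$ into a Gibbs measure on vector-valued height functions on $\tfrac1t\Omega$. Next I would introduce the surface tension: for an admissible tangent plane $s$ define $\sigma(s)$ as the area-normalized exponential growth rate of the number of minimal membranes filling a large region whose boundary facets all have slope near $s$. Existence of this limit should follow by a superadditivity argument (Fekete's lemma) once one proves a \emph{patching lemma} --- that two minimal membranes agreeing on a common subloop can be spliced after a controlled sequence of octahedron/tetrahedron moves --- for which I expect the cluster/octahedron-recurrence structure of Section~\ref{sec:higher_octahedron} and the plabic-tiling picture of \cite{OPS} to be the right tools. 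Granting this, a standard coarse-graining and concentration argument gives $\log\#\{M\in\Memb(tL) : \tfrac1t\<M\> \text{ is $\epsilon$-close to the graph of } h\} = t^2\big(\iint_\Omega \sigma(\nabla h)\,dA + o(1)\big)$ for Lipschitz $h$ with the correct boundary data.

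If $\sigma$ is \emph{strictly concave}, the functional $h \mapsto \iint_\Omega \sigma(\nabla h)\,dA$ has a unique maximizer $h_\infty$ under the boundary constraint, its graph is the limit surface $S$, and the uniform measure on $\Memb(tL)$ concentrates on $\epsilon$-neighbourhoods of $S$, which is the first conjecture. The second then follows: $\mu_{L,t}(a)$ is the probability that a uniformly random $M\in\Memb(tL)$ passes through $ta$, and since such an $M$ is a graph over $\tfrac1t\Omega$ whose density of (rescaled) lattice points per unit projected area is a local function $\rho(\nabla h)$ of its slope, the measures $\mu_{L,t}$ converge to the push-forward under $x \mapsto (x, h_\infty(x))$ of the absolutely continuous measure on $\Omega$ with density proportional to $\rho(\nabla h_\infty(x))$; this is $\mu_{L,\infty}$, supported on $S$.

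The main obstacle is the surface tension $\sigma$ itself: first the patching lemma needed for its existence, and --- far more seriously --- strict concavity. In the classical theorems strict concavity is extracted from an exact (determinantal or product) formula for $\sigma$, and no such formula is currently known for membranes. I would therefore first test the whole program on the hypersimplex case $\M = \binom{[n]}{k}$, where $\cloud(L_\M) = \Delta(k,n)\cap\Z^n$ and the model is a genuinely higher-dimensional analogue of lozenge tilings, in the hope that the higher octahedron recurrence carries enough integrability to compute $\sigma$ explicitly. The general polypositroid case would then be approached by comparison: Corollary~\ref{cor:MinP} confines $\<M\>$ inside the polypositroid $P$, and the positroid-lift construction of Section~\ref{sec:postr_lifts} transports membranes (and, one hopes, entropy estimates) from the permuted top-cell model to arbitrary $L_P$.
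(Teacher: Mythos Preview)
This statement is a \emph{conjecture}: the paper does not prove it, nor does it claim to. Section~\ref{sec:asympt_membr} explicitly frames these asymptotic questions as an open research direction (``Asymptotic Cluster Algebra''), poses the two conjectures, and offers only the remark that for $n=4$ the model is related to the 6- and 8-vertex models. There is therefore no proof in the paper to compare your proposal against.

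What you have written is not a proof either, but a research programme --- and you are candid about this. The two load-bearing steps are (i) existence of the surface tension $\sigma$ via a patching/superadditivity argument, and (ii) strict concavity of $\sigma$; you yourself flag (ii) as ``far more serious'' and lacking any known exact formula. Until those are supplied, the argument does not close. There is also a scope mismatch in your reduction: you invoke Theorem~\ref{th:minimal=semisimple} to pass to height functions on the Coxeter plane, but that theorem is stated only for $j$-increasing (hence polypositroid) loops, whereas the conjecture is posed for arbitrary unimodal loops. For a general unimodal loop there is no guarantee that minimal membranes are semisimple, so the graph-of-a-height-function description may fail at the outset. Your outline is a reasonable plan of attack for the polypositroid subclass, but it should be presented as such rather than as a proof of the conjecture.
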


\begin{remark}
For $n=4$, semisimple membranes are related to the 6-vertex and 8-vertex models, 
whose asymptotic properties have been extensively studied.
\end{remark}

\begin{remark}
It would be interesting to investigate a relationship 
between the ``limit membrane'' $S$ and Plateau's problem; see Remark~\ref{rem:Plateau}.
\end{remark}

\end{document}